
\documentclass{amsart}
\usepackage{amssymb}
\usepackage[utf8]{inputenc}
\usepackage[english]{babel}

\setcounter{MaxMatrixCols}{10}

\theoremstyle{plain}
\newtheorem{theorem}{Theorem}[section]
\newtheorem{corollary}[theorem]{Corollary}
\newtheorem{lemma}[theorem]{Lemma}
\newtheorem{proposition}[theorem]{Proposition}
\theoremstyle{property}

\theoremstyle{definition}
\newtheorem{definition}[theorem]{Definition}
\newtheorem{remark}[theorem]{Remark}

\newtheorem{example}[theorem]{Example}

\newtheorem{notation}[theorem]{Notation}

\numberwithin{equation}{section}
\input{tcilatex}

\begin{document}
\title[complex manifolds with $\mathbb{C}^{\ast }$-action]{Heat kernel and
local index theorem for open complex manifolds with $\mathbb{C}^{\ast }$%
-action}
\author{Jih-Hsin Cheng}
\address{Institute of Mathematics, Academia Sinica and National Center for
Theoretical Sciences, Taipei, Taiwan}
\email{cheng@math.sinica.edu.tw}
\author{Chin-Yu Hsiao}
\address{Department of Mathematics, National Taiwan University, Taipei,
Taiwan}
\email{chinyuhsiao@ntu.edu.tw}
\author{I-Hsun Tsai}
\address{Department of Mathematics, National Taiwan University, Taipei,
Taiwan}
\email{ihtsai@math.ntu.edu.tw}
\keywords{Heat kernel asymptotic, local index, complex manifold, $\mathbb{C}%
^{\ast }$-action, Hirzebruch-Riemann-Roch formula, complex orbifold,
meromorphic extension. \\
2010 \it{Mathematics Subject Classification}. 32Q55; 58E40; 58J20; 58J35. }
\thanks{}

\begin{abstract}
For a complex manifold $\Sigma $ with $\mathbb{C}^{\ast }$-action, we define
the $m$-th $\mathbb{C}^{\ast }$ Fourier-Dolbeault cohomology group and
consider the $m$-index on $\Sigma $. By applying the method of \textit{%
transversal} heat kernel asymptotics, we obtain a local index formula for
the $m$-index. We can reinterpret Kawasaki's Hirzebruch-Riemann-Roch formula
for a compact complex orbifold with an orbifold holomorphic line bundle by
our integral formulas over a (smooth) complex manifold and finitely many
complex submanifolds arising from singular strata. We generalize $\mathbb{C}%
^{\ast }$-action to complex reductive Lie group $G$-action on a compact or
noncompact complex manifold. Among others, we study the nonextendability of
open group action and the space of all $G$-invariant holomorphic $p$-forms.
Finally, in the case of two compatible holomorphic $\mathbb{C}^{\ast }$%
-actions, a mirror-type isomorphism is found between two linear spaces of
holomorphic forms, and the Euler characteristic associated with these spaces
can be computed by our $\mathbb{C}^{\ast }$ local index formula on the total
space. In the perspective of the equivariant algebraic cobordism theory $%
\Omega _{\ast }^{\mathbb{C}^{\ast }}(\Sigma ),$ a speculative connection is
remarked. Possible relevance to the recent development in physics and number
theory is briefly mentioned.
\end{abstract}

\maketitle




\begin{center}
{\large {Contents} }
\end{center}

\begin{eqnarray*}
&&1.\text{ Introduction and statement of the results} \\
&&2.\text{ Preliminaries on complex manifolds with }\mathbb{C}^{\ast }\text{%
-action} \\
&&3.\text{ A Hermitian metric on complex manifolds with }\mathbb{C}^{\ast }%
\text{-action} \\
&&4.\text{ A Hodge theory for }\square _{\Sigma ,m}^{(q)} \\
&&5.\text{ Transversally }spin^{c}\text{ Dirac operators} \\
&&6.\text{ Approximation of the transversal heat kernel }e^{-t\tilde{\square}%
_{m}^{c\pm }} \\
&&7.\text{ Asymptotic expansion of the transversal heat kernel} \\
&&8.\text{ Local }m\text{-index formula} \\
&&8.1.\text{ Part I of the local index formula (}k = 0\text{ in (\ref{7-46-1}%
))} \\
&&8.2.\text{ Part II of the local }m\text{-index formula (}k \geq 1\text{ in
(\ref{7-46-1})) via Lefschetz type formulas} \\
&&8.3.\text{ The local index formula completed} \\
&&8.4.\text{ Comparison with Duistermaat's formula for the K\"{a}hler case, }
\end{eqnarray*}
\begin{eqnarray*}
&&\ \ \ \ \ \text{Part I} \text{: from real to complex} \\
&&8.5.\text{ Comparison with Duistermaat's orbifold version of the index
theorem,} \\
&&\ \ \ \ \ \text{Part II} \text{: integrals over fixed point orbifolds} \\
&&9.\text{ Nonextendability of open group action; meromorphic action} \\
&&10.\text{ Complex manifolds with two holomorphic }\mathbb{C}^{\ast }\text{%
-actions}
\end{eqnarray*}

\bigskip

\section{\textbf{Introduction and statement of the results}}

We consider a complex manifold $\Sigma $ of (complex) dimension $n$ with a $%
\mathbb{C}^{\ast }$-action $\sigma (\rho e^{i\theta })$ holomorphic in $%
\mathbb{C}^{\ast }$ and $\Sigma $ jointly. For most cases in this paper, the
complex manifold is open unless specified otherwise. We assume that ($\Sigma
,$ $\sigma (\rho e^{i\theta }))$ satisfies the conditions: the action $%
\sigma $ is proper, the $\mathbb{R}^{+}$ part $\sigma (\rho )$ is globally
free, the $S^{1}$ part $\sigma $(e$^{i\theta }$) is locally free (meaning
the finite isotropy condition at any point of $\Sigma $) and the orbit space 
$\Sigma /\sigma $ (or $\Sigma /\mathbb{C}^{\ast })$ is compact. For
simplicity we sometimes write $z\circ x$ or $zx$ for the action of $\sigma
(z),$ $z\in \mathbb{C}^{\ast },$ on $x$ of $\Sigma .$

Examples satisfying our assumption include $i)$ the total space
(zero-section removed) of a holomorphic line bundle over a compact complex
manifold with the fibre-multiplication as a holomorphic $\mathbb{C}^{\ast }$%
-action and $ii)$ $X\times \mathbb{R}^{+}$ as $\Sigma $ where $X$ is a
compact CR manifold with a transversal CR locally free $S^{1}$-action,
endowed with the naturally induced complex structure and holomorphic $%
\mathbb{C}^{\ast }$-action. See Section 2 for more details. More
sophisticated and complete examples have been described by D. Gross \cite%
{Gro}, with the previous works by A. Bialynicki-Birula, A. Sommese, J.
Swiecieka and J. B. Carrell \cite{BBS}, \cite{BBSw}, \cite{CS}. See also the
survey monograph \cite{BBCM}.

Let $\Omega ^{0,q}(\Sigma )$ denote the space of all $C^{\infty }$ $(0,q)$%
-forms on $\Sigma $. For any integer $m,$ we define%
\begin{equation}
\hat{\Omega}_{m}^{0,q}(\Sigma ):=\{\omega \in \Omega ^{0,q}(\Sigma ):\sigma
(\lambda )^{\ast }\omega =\lambda ^{m}\omega \text{ for all }\lambda \in 
\mathbb{C}^{\ast }\}.  \label{1-a}
\end{equation}

\noindent Observe that $\sigma (\lambda )^{\ast }\circ \bar{\partial}$ $=$ $%
\bar{\partial}\circ \sigma (\lambda )^{\ast }$ on $\Omega ^{0,q}(\Sigma )$
since $\sigma (\lambda )$ is holomorphic, so that $\bar{\partial}_{\Sigma
,m}:=\bar{\partial}:\hat{\Omega}_{m}^{0,q}(\Sigma )\rightarrow \hat{\Omega}%
_{m}^{0,q+1}(\Sigma ).$

In this paper, with the appropriate \textit{regularity condition} we
consider only the subspace $\Omega _{m}^{0,q}(\Sigma )$ $\subset $ $\hat{%
\Omega}_{m}^{0,q}(\Sigma )$ (see Definition \ref{2m}). It follows that%
\begin{equation}
\bar{\partial}_{\Sigma ,m}:\Omega _{m}^{0,q}(\Sigma )\rightarrow \Omega
_{m}^{0,q+1}(\Sigma )  \label{1-2-2}
\end{equation}

\noindent for all $q,$ $0\leq q\leq n-1$. We can then define the cohomology
group%
\begin{equation}
H_{m}^{q}(\Sigma ,\mathcal{O}):=\frac{\text{Ker }\bar{\partial}_{\Sigma
,m}:\Omega _{m}^{0,q}(\Sigma )\rightarrow \Omega _{m}^{0,q+1}(\Sigma )}{%
\func{Im}\text{ }\bar{\partial}_{\Sigma ,m}:\Omega _{m}^{0,q-1}(\Sigma
)\rightarrow \Omega _{m}^{0,q}(\Sigma )}  \label{Hmq}
\end{equation}%
\noindent and we call it the $m$-th $\mathbb{C}^{\ast }$ \textit{%
Fourier-Dolbeault cohomology group}. Let $h_{m}^{q}(\Sigma ,\mathcal{O})$
denote the dimension of $H_{m}^{q}(\Sigma ,\mathcal{O}).$ We define the
index of the $\bar{\partial}_{\Sigma ,m}$-complex as follows (once $%
h_{m}^{q}<\infty $ is established; see Theorem \ref{t-4-2}):%
\begin{equation}
index(\bar{\partial}_{\Sigma ,m}\text{-complex)}:=%
\sum_{q=0}^{n}(-1)^{q}h_{m}^{q}(\Sigma ,\mathcal{O})  \label{1-2-3}
\end{equation}

\noindent which is metric independent.

For every $m$ $\in $ $\mathbb{Z}$ the natural map $H_{m}^{q}(\Sigma ,%
\mathcal{O})$ $\rightarrow $ $H^{q}(\Sigma ,\mathcal{O})$ into the usual
Dolbeault cohomology group is not expected to be injective in general; $%
H_{m}^{q}(\Sigma ,\mathcal{O})$ is not going to be considered as an $m$-th
component of $H^{q}(\Sigma ,\mathcal{O}).$ Compare Proposition \ref{projm}
and Remark \ref{6-3-5a}.

To show that $H_{m}^{q}(\Sigma ,\mathcal{O})$ is finite-dimensional, we
define a (non $\mathbb{C}^{\ast }$-invariant) Hermitian metric $G_{a,m}$ on $%
\Sigma $ for any fixed $m$ $\in $ $\mathbb{N\cup \{}0\}$ where $a>\frac{1}{2}%
m\geq 0$ (see (\ref{metric}) and Remark \ref{3-r}) and develop a Hodge
theory for the associated (Kodaira) Laplacian $\square _{\Sigma ,m}^{(q)}$ (%
\ref{4.0}). See (\ref{DiracLap}) for a modified version $\tilde{\square}%
_{m}^{c}$ (resp. $\tilde{\square}_{m}^{c\pm })$ of $\square _{\Sigma ,m}$
(resp.$\square _{\Sigma ,m}^{\pm })$; this modification is indispensable to
our approach. As a result, we can express the index of the $\bar{\partial}%
_{\Sigma ,m}$-complex in (\ref{1-2-3}) as%
\begin{eqnarray*}
index(\bar{\partial}_{\Sigma ,m}\text{-complex)} &=&\sum_{q:even}\dim
Ker\square _{\Sigma ,m}^{(q)}-\sum_{q:odd}\dim Ker\square _{\Sigma ,m}^{(q)}
\\
&=&\dim Ker\square _{\Sigma ,m}^{+}-\dim Ker\square _{\Sigma ,m}^{-} \\
&=&\dim Ker\tilde{\square}_{m}^{c+}-\dim Ker\tilde{\square}_{m}^{c-}
\end{eqnarray*}%
\noindent (see Corollary \ref{t-4-3}, Lemma \ref{l-5-3}, and Theorem \ref%
{t-5-1}). For all of these we construct certain $L^{2}$-spaces called $m$%
-spaces, and show that these $m$-spaces are non-trivial (Remark \ref{6-3-5a} 
$i)).$

Remark that the cohomology group (\ref{Hmq}) is metric-independent and
meaningful for any integers $m.$ But our approach starts with a fixed $m$ $%
\in $ $\mathbb{N\cup \{}0\mathbb{\}}$ (see Remark \ref{r1-1} below for $m<0$%
) and constructs the metric $G_{a,m}$ with a parameter \textquotedblleft $a$%
" adapting to $m.$ As $m$ varies and thus the metric $G_{a,m}$ might vary,
there does not exist a fixed $L^{2}$-space (with respect to a fixed metric)
that can simultaneously accommodate all these \textquotedblleft $m$%
-components"; compare Remark \ref{6-3-5a} $ii)$. This is one of the features
that distinguish the $\mathbb{C}^{\ast }$-action from our previous $S^{1}$%
-action \cite{CHT} (whose $m$-th Fourier components can naturally embed into
a fixed $L^{2}$-space and span (over $m$ $\in $ $\mathbb{Z}$) the whole
space).

We can extend the above setting to the bundle case. For later use we remark
that we can approximate the heat kernel of $\tilde{\square}_{m}^{c}$ by a
more manageable quantity $P_{m,t}^{0}$ (see (\ref{1.9-5})).

With respect to the locally free action $\sigma (e^{i\theta })$, we can talk
about the period of a point. We say $\frac{2\pi }{l}$ is the period of a
point $x$ if $l$ $=$ $\max \{$ $l^{\prime }$ $\in $ $N:$ $e^{i\frac{2\pi }{%
l^{\prime }}}\circ x$ $=$ $x\}.$ Let $\frac{2\pi }{p}$ be the largest period.

Let $L_{\Sigma }$ be the holomorphic line bundle over $\Sigma ,$ whose fibre
at $q\in \Sigma $ consists of tangents to the $\mathbb{C}^{\ast }$-orbit
through $q$ (see the lines above (\ref{3-0})). We take a $\mathbb{C}^{\ast }$%
-invariant Hermitian (fibre) metric $||\cdot ||$ on $L_{\Sigma }$ (see Step
1 in Section 3). Define the first Chern form $c_{1}(L_{\Sigma },||\cdot ||)$
of $L_{\Sigma }$ with respect to $||\cdot ||$. Note that $L_{\Sigma }$ is a
holomorphic subbundle of $T^{1,0}\Sigma $ (although the metric $||\cdot ||$
is not the induced one)$.$ The $\mathbb{C}^{\ast }$-equivariant quotient
bundle $T^{1,0}\Sigma /L_{\Sigma }$ inherits a $\mathbb{C}^{\ast }$%
-invariant metric $g_{quot}$ from the aforementioned metric $G_{a,m},$ which
is isometric to $\pi ^{\ast }g_{M}$ (see (\ref{Ga}) or (\ref{metric}), and
Lemma \ref{L-inv})$.$ The $\mathbb{C}^{\ast }$-invariant Todd form $Td_{%
\mathbb{C}^{\ast }}(T^{1,0}\Sigma /L_{\Sigma },g_{quot})$ and similarly the $%
\mathbb{C}^{\ast }$-invariant Chern character form $ch_{\mathbb{C}^{\ast
}}(E,h_{E})$ for a $\mathbb{C}^{\ast }$-equivariant holomorphic vector
bundle $E$ over $\Sigma $ with a $\mathbb{C}^{\ast }$-invariant Hermitian
metric $h_{E}$ can be defined. Finally, define $\delta _{p|m}$ $=$ $1$ if $%
p\mid m$ and $0$ if $p\nmid m.$

We have the following index theorem (Theorem \ref{main_theorem}), which is a 
\textit{local index theorem} in the sense similar to \cite{BGV} that the
index density can be formed and computed from certain heat kernel
formulation on the complex manifold $\Sigma $ (see the discussion below).
Let $\Sigma ^{\tilde{g}}$ denote the singular stratum in $\Sigma ,$
associated to $\tilde{g}$ $\in $ $\mathcal{G}$ $:=$ $\cup _{j}G_{j},$ $%
\tilde{g}\neq 1.$ See (\ref{Sigmak}) and (\ref{sing}) for the definition of $%
\Sigma ^{\tilde{g}}.$ For the associated integrand $\mathcal{F}_{\tilde{g}%
,m}(x)$ below we refer the reader to (\ref{Str-7}) for the definition and (%
\ref{FkTd}) for an expression in terms of Todd genus form and Chern
character form. Let $l(x)$ be as in (\ref{lq0}). Let $dv_{\Sigma ^{\tilde{g}%
},m}$ denote the volume form of $\Sigma ^{\tilde{g}}$ with respect to the
metric induced from $G_{a,m}.$ Let $[\cdot ]_{2n}$ denote the $2n$-form part
of a differential form, where $2n$ $=$ $\dim _{\mathbb{R}}\Sigma .$ Denote
the volume of $\Sigma $ associated to $G_{a,m}$ by $dv_{\Sigma ,m}$ (see (%
\ref{volume})). Define the following index density function $HRR_{m}(\Sigma
,G_{a,m},E)$ of Hirzebruch-Riemann-Roch type by%
\begin{eqnarray*}
&&HRR_{m}(\Sigma ,G_{a,m},E) \\
:= &&\frac{p\delta _{p|m}[Td_{\mathbb{C}^{\ast }}(T^{1,0}\Sigma /L_{\Sigma
},g_{quot})\wedge ch_{\mathbb{C}^{\ast }}(E,h_{E})\wedge
e^{-mc_{1}(L_{\Sigma },||\cdot ||)}\wedge d\hat{v}_{m}]_{2n}}{dv_{\Sigma ,m}}%
.
\end{eqnarray*}

\noindent where $\frac{2\pi }{p}$ is the largest period as aforementioned.
Here $d\hat{v}_{m}$ on $\Sigma $ restricts to a normalized area form for $%
\mathbb{C}^{\ast }$-orbits with the integral equal to 1 (see (\ref{fibrenv_0}%
) and (\ref{fibrenv})).

\textit{Most often we implicitly assume }$p=1$\textit{\ unless specified
otherwise}. (If $p>1$ the $\mathbb{C}^{\ast }$-action is not effective \cite[%
p.175]{Du}, and by redefining the action the new $\mathbb{C}^{\ast }$-action
has $p=1$ (cf. \cite[p.19]{CHT}).)

We compute $index(\bar{\partial}_{\Sigma ,m}$-complex) through the integral
of the supertrace of $P_{m,t}^{0}$ $(Str$ $P_{m,t}^{0}(x,x):=$Tr $%
P_{m,t}^{0,+}(x,x)-$Tr $P_{m,t}^{0,-}(x,x)),$ whose limit as $t\rightarrow 0$
can be expressed in terms of $HRR_{m}(\Sigma ,G_{a,m},E)$ and $\mathcal{F}_{%
\tilde{g},m}(x)$ as follows.

\begin{theorem}
\label{main_theorem} \text{(proved in Subsection \ref{Subs-8-3})} With the
notations above, suppose that $\Sigma $ is an $n$-dimensional (open) complex
manifold with a holomorphic, proper $\mathbb{C}^{\ast }$-action $\sigma
(\rho e^{i\theta }).$ Assume that the $R^{+}$ part $\sigma (\rho )$ is
globally free, the $S^{1}$ part $\sigma $(e$^{i\theta }$) is locally free
and the orbit space $\Sigma /\sigma $ is compact, and that the $\mathbb{C}%
^{\ast }$-action is effective (equivalently the largest period $\frac{2\pi }{%
p}$ above is $2\pi $). Let $(E,h_{E})$ be a $\mathbb{C}^{\ast }$-equivariant
holomorphic vector bundle over $\Sigma .$ Then for every $m\in \{0\}\cup 
\mathbb{N}$

$i)$ it holds that in the space of generalized sections%
\begin{eqnarray}
&&\lim_{t\rightarrow 0}StrP_{m,t}^{0}(x,x)  \label{Str-F} \\
&=&HRR_{m}(\Sigma ,G_{a,m},E)+\sum_{\tilde{g}\in \mathcal{G},\text{ }\tilde{g%
}\neq 1}\tilde{g}^{-m}\overline{\mathcal{F}_{\tilde{g},m}(x)}l^{m}(x)\delta
_{\Sigma ^{\tilde{g}}}\text{ };  \notag
\end{eqnarray}

$ii)$ the following index is well defined and satisfies%
\begin{eqnarray}
&&index(\bar{\partial}_{\Sigma ,m}^{E}\text{-complex})\text{ (}%
=\sum_{q=0}^{n}(-1)^{q}h_{m}^{q}(\Sigma ,\mathcal{O(}E\mathcal{)})\text{ as
in (\ref{1-2-3}))}  \label{MF} \\
&=&\int_{\Sigma }HRR_{m}(\Sigma ,G_{a,m},E)dv_{\Sigma ,m}+\sum_{\tilde{g}\in 
\mathcal{G},\text{ }\tilde{g}\neq 1}\tilde{g}^{m}\int_{\Sigma ^{\tilde{g}}}%
\mathcal{F}_{\tilde{g},m}(x)l^{m}(x)dv_{\Sigma ^{\tilde{g}},m}.  \notag
\end{eqnarray}
\end{theorem}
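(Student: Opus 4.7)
The plan is to reduce the index to a supertrace of an approximate heat kernel and extract both the pointwise density and the distributional contributions from singular strata out of its small-$t$ asymptotics. By Corollary \ref{t-4-3}, Lemma \ref{l-5-3} and Theorem \ref{t-5-1},
\[
\mathrm{index}(\bar{\partial}_{\Sigma,m}^{E}\text{-complex}) \;=\; \dim\mathrm{Ker}\,\tilde{\square}_{m}^{c+} \;-\; \dim\mathrm{Ker}\,\tilde{\square}_{m}^{c-},
\]
and a McKean--Singer argument applied to $\tilde{\square}_{m}^{c}$ writes this, for every $t>0$, as the supertrace of $e^{-t\tilde{\square}_{m}^{c}}$ restricted to the $m$-equivariant sector. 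Replacing the exact heat kernel by the approximation $P_{m,t}^{0}$ from Section~6 introduces an error which is uniformly negligible as $t\to 0^{+}$ on that sector, so part~(ii) follows from part~(i) once the distributional limit (\ref{Str-F}) is identified and a dominated-convergence-type interchange is justified across the singular strata.

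For the pointwise statement (\ref{Str-F}) I would localize at $x\in\Sigma$ using a transversal slice $D$ to the orbit. Near any $\mathbb{C}^{\ast}$-orbit the action decomposes into the free $\mathbb{R}^{+}$ part and a finite isotropy $\mathbb{Z}/l(x)$ acting holomorphically on $D$, so the asymptotic expansion (\ref{7-46-1}) from Section~7 is naturally indexed by the isotropy elements $\tilde{g}$. In the $k=0$ case (Subsection~8.1), corresponding to $\tilde{g}=1$, the small-$t$ limit is handled by Getzler rescaling applied to the transversally $\mathrm{spin}^{c}$ Dirac operator of Section~5. Because the $m$-equivariant projection integrates the orbit direction with weight $\lambda^{m}$, the transversal $Td\wedge ch$ integrand acquires the additional factors $e^{-mc_{1}(L_{\Sigma},||\cdot||)}$ from the weight and $d\hat{v}_{m}$ from the normalized fibre area, producing $HRR_{m}(\Sigma,G_{a,m},E)$. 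The prefactor $p\delta_{p|m}$ arises from averaging over the generic residual $\mathbb{Z}/p$-isotropy: the average of $\lambda^{m}$ over this group is nonzero precisely when $p\mid m$.

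In the $k\geq 1$ case (Subsection~8.2), the off-diagonal Gaussian factors in $P_{m,t}^{0}$ force the contribution to concentrate on $\Sigma^{\tilde{g}}$. A Lefschetz-type stationary-phase computation on the normal bundle to $\Sigma^{\tilde{g}}$ in $\Sigma$, combining the transversal $\mathrm{spin}^{c}$ Dirac operator with the $\tilde{g}$-twisted linearization of the action, yields a contribution of the distributional form $\tilde{g}^{-m}\overline{\mathcal{F}_{\tilde{g},m}(x)}\,l^{m}(x)\,\delta_{\Sigma^{\tilde{g}}}$: the complex conjugate enters because the Lefschetz numerator involves $\det(1-\tilde{g}^{-1})$ on the normal holomorphic directions, while the factor $l^{m}(x)$ encodes the local period. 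Summing over $\tilde{g}\in\mathcal{G}\setminus\{1\}$ gives (\ref{Str-F}). Pairing this generalized section with $dv_{\Sigma,m}$ and exploiting the symmetry $\tilde{g}\mapsto \tilde{g}^{-1}$ on $\mathcal{G}$ to absorb the conjugation converts $\tilde{g}^{-m}$ into $\tilde{g}^{m}$ and produces the right-hand side of (\ref{MF}).

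The hard part will be the $k\geq 1$ analysis: one must show that the non-diagonal part of $P_{m,t}^{0}$ genuinely localizes as a delta-type generalized section supported on $\Sigma^{\tilde{g}}$ and isolate the exact Lefschetz integrand, including the conjugation and the $l^{m}(x)$ factor. Relative to classical Bismut--Gillet--Vergne local index theory, the novelty is that $\tilde{\square}_{m}^{c}$ is only elliptic transversal to the orbit foliation; consequently, controlling uniformity of the Getzler rescaling as orbits degenerate on approach to the strata $\Sigma^{\tilde{g}}$ is the delicate technical step, rather than a routine pointwise Chern--Weil identification.
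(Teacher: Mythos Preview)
Your overall architecture matches the paper's: McKean--Singer reduces the index to the supertrace of $e^{-t\tilde{\square}_{m}^{c}}$, this is replaced by $P_{m,t}^{0}$ up to exponentially small error (Theorem~\ref{t-uniqueness}), and the diagonal supertrace splits into a $k=0$ piece yielding the $HRR_{m}$ density (Theorem~\ref{P-main}) and $k\geq 1$ pieces yielding Lefschetz-type contributions concentrated on the strata $\Sigma^{\tilde{g}}$ (Subsection~\ref{Subs-8-2}). That skeleton is correct.

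Three points where your description diverges from what actually happens:

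\emph{(a) The function $l(x)$.} You say $l^{m}(x)$ ``encodes the local period.'' It does not: $l(x)=\|v_{x}\|^{2}$ is the squared length of the orbit tangent (see (\ref{lq0}), (\ref{lq})), a smooth positive function unrelated to the isotropy order. It enters the formula because the $m$-projection $\pi_{m}$ carries the weight $l(x)^{m}$ in its integral representation (Proposition~\ref{projm}), and this factor survives in the approximate kernel via (\ref{Kmt-a}) and (\ref{Str-2}).

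\emph{(b) The complex conjugate.} The conjugate on $\mathcal{F}_{\tilde{g},m}$ in (\ref{Str-F}) does not come from the Lefschetz denominator. It arises because the approximate kernel in (\ref{7-46-1}) contains $(\gamma_{k}^{\mathcal{E}^{m}}\circ K_{t}^{j}(z_{k},z))^{\dagger}$, and $\mathrm{Str}(\bullet^{\dagger})=\overline{\mathrm{Str}(\bullet)}$; see (\ref{Str-2}).

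\emph{(c) Passing from $\tilde{g}^{-m}$ to $\tilde{g}^{m}$.} You propose to reindex by the involution $\tilde{g}\mapsto\tilde{g}^{-1}$ on $\mathcal{G}$. That would require the identity $\mathcal{F}_{\tilde{g}^{-1},m}=\overline{\mathcal{F}_{\tilde{g},m}}$, which is plausible but is neither stated nor proved in the paper and is not obvious from (\ref{Str-7}) or (\ref{FkTd}). The paper's argument is simpler: the left-hand side of (\ref{Str-13}) is the index, hence real; the first term on the right is real; therefore the singular sum is real and equals its own conjugate, which is exactly the right-hand side of (\ref{MF}). Your symmetry route may well work, but it is a different argument and carries an extra verification burden.

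Finally, a small conceptual point on the $k=0$ case: the paper does not run Getzler rescaling transversally on $\Sigma$. The construction of $P_{m,t}^{0}$ has already reduced matters to the ordinary elliptic heat kernel $K_{t}^{j}$ on the slice $V_{j}$ (see (\ref{6-1})), so the local index density is the classical one on $V_{j}$, identified with the transversal characteristic forms via Lemma~\ref{L-inv}; see (\ref{7-38.75}). The delicate transversal analysis happens earlier, in building and controlling $P_{m,t}^{0}$ (Sections~\ref{A-THK}--\ref{AE_THK}), not at the rescaling stage.
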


See Example \ref{E-IF} for an illustration. Even though this example might
be the most basic one, its associated index formula presents an algebraic
identity that does not seem to be easily discovered at first hand; see (\ref%
{ku}), (\ref{km}) and (\ref{8-59-1}).

\begin{remark}
\label{r1-1} Write $H_{m,\sigma }^{q}$ for $H_{m}^{q}$ to indicate the
dependence on the action $\sigma .$ Since $H_{-m,\tilde{\sigma}}^{q}$ $=$ $%
H_{m,\sigma }^{q}$ where the action $\tilde{\sigma}(\lambda )$ $:=$ $\sigma
(\lambda ^{-1})$ for $\lambda $ $\in $ $\mathbb{C}^{\ast }$ of cohomology
groups (with regularity conditions in Definition \ref{2m} where if $%
w_{\sigma },$ $w_{\tilde{\sigma}}$ denote the associated $w$-coordinates
then $w_{\tilde{\sigma}}$ $=$ $w_{\sigma }^{-1}$)$,$ a similar statement of
Theorem \ref{main_theorem} for $m<0$ holds true as well. Details are omitted.
\end{remark}

Remark that the RHS of (\ref{MF}) must be metric-independent, as the LHS is
so. For the verification that the integral in (\ref{MF}) is independent of
the choice of $\mathbb{C}^{\ast }$-invariant Hermitian metrics used to
compute $Td_{\mathbb{C}^{\ast }},$ $ch_{\mathbb{C}^{\ast }},$ and $c_{1},$
see Proposition \ref{pinv}

There is a link between our result and a result of Kawasaki in \cite{Ka} on
Hirzebruch-Riemann-Roch formula over complex orbifolds. Compared to
Kawasaki's, we get a Hirzebruch-Riemann-Roch formula through (\ref{MF}),
i.e. an integral over a complex manifold (with one dimension higher though)
and finitely many integrals over complex submanifolds corresponding to
singular strata $\Sigma _{\text{sing}}$ (see (\ref{1-4}) below). Moreover,
from our heat kernel approach on $\Sigma $ lying over the compact complex
orbifold $\Sigma /\sigma $ we also realize that those terms arising from the
lower-dimensional strata in \cite{Ka} correspond to the integrals over $%
\Sigma ^{\tilde{g}}$ in our situation (cf. (\ref{Fg})), via a Lefschetz type
heat kernel asymptotics on certain local slices $V_{j}$ of $\Sigma $ (see (%
\ref{Asym}), (\ref{Str-k})). Those strata-contribution of Kawasaki \cite{Ka}
are reconsidered by Duistermaat \cite[Sections 14.4, 14.6]{Du} as
integrations over \textquotedblleft fixed point orbifolds" called by him,
which can be mapped but not necessarily embedded, into the original
orbifold. It seems to us that the integral expression (\ref{MF}) is
conceptually simpler. See \cite{PV} and \cite[p.92]{RT} for related results.
For $m$ $=$ $0,$ in comparison with the formula of \cite[(14.3) on p.184]{Du}
it is perhaps interesting to note that our regions of integration $\Sigma ^{%
\tilde{g}}$ (\ref{Sigmak}) in (\ref{MF}) appear firsthand and intrinsic as
they are natural subspaces of the space $\Sigma $ itself, whereas those in 
\cite{Du} denoted by $\tilde{F}$ for the corresponding integrals are
introduced in a somewhat ad hoc manner; see (\ref{Fg}) for an integral
comparison and Subsections \ref{S-8-4}, \ref{S-8-5} for details. When $m$ $%
\neq $ $0,$ the comparison is made indirectly. One needs to convert this $m$%
-index to a \textquotedblleft $0$-index with the extra line bundle $%
(L_{\Sigma }^{\ast })^{\otimes m}$" ($L_{\Sigma }^{\ast }$ denotes the dual
of the forementioned $L_{\Sigma })$ and then compare; see Remark \ref{R-8-41}%
. In short our formula unifies the $\{m$-index$\}_{m}$ into a single
formula, whereas this interpretation is not quite the case with
Duistermaat's formula (unless $(L_{\Sigma }^{\ast })^{\otimes m}$ is added).
Remark that this comparison is in some way troubled by the convention
adopted by Duistermaat himself (see the second paragraph of Subsection \ref%
{S-8-4}). We hope that some clarification (with corrections) of the
interpretations in this and other literature is made here as is the case
with the comparison.

We remark that after Kawasaki's work as mentioned above, some other results
related to index theory on orbifolds were obtained. Among others, X. Ma
studied the analytic torsion and the Quillen metric for an orbifold K\"{a}%
hler fibration in \cite{MaX1}, \cite{MaX2}.

On the way to proving Theorem \ref{main_theorem}, we obtain an asymptotic
expansion for the diagonal of the \textit{transversal} heat kernel $e^{-t%
\tilde{\square}_{m}^{c}}$ (cf. (\ref{5.2})). Or we may regard it as another
principal result of this paper, of which Theorem \ref{main_theorem} may be
viewed as an application. See Theorem \ref{AHKE} below, and Footnote$^{1}$
(the paragraph after Remark \ref{r1-2}) for a comparison with other
approaches.

Let $\frac{2\pi }{p_{j}},$ $p$ $=$ $p_{1}$ $<$ $p_{2}$ $<$ $...<p_{k},$ be
all possible periods of the locally free action $\sigma (e^{i\gamma }).$
Define $\Sigma _{p_{j}}$ $:=$ $\{x\in \Sigma $ $:$ the period of $x$ is $%
\frac{2\pi }{p_{j}}\}$ and 
\begin{equation}
\Sigma _{\text{sing}}:=\cup _{j=2}^{k}\Sigma _{p_{j}}.  \label{1-4}
\end{equation}%
\noindent Here $\frac{2\pi }{p},$ $p$ $=$ $p_{1},$ is the largest period.
Let $\hat{d}(x,\Sigma _{\text{sing}})$ denote a certain distance between $x$
and $\Sigma _{\text{sing}}$ (see (\ref{distsing}) for the definition).

The following is proved in Sections \ref{A-THK} and \ref{AE_THK} (in
paragraphs prior to Remark \ref{7-36.5}). In $iii)$ of the following
theorem, for the meaning of \textquotedblleft $\sim "$ we refer to Remark %
\ref{r1-2} below, where the usual use of $C^{l}$-norm is modified to be
\textquotedblleft $C_{B}^{l}$-norm".

\begin{theorem}
\label{AHKE} i) \text{(Existence and uniqueness) }The heat kernel $e^{-t%
\tilde{\square}_{m}^{c}}$ for $\tilde{\square}_{m}^{c}$ exists and is unique.

ii) \text{(Asymptotic expansion (I)) Let }$x\in \Sigma \backslash \Sigma _{%
\mathrm{sing}}.$ For every $N_{0}\geq N_{0}(n)$ there exist constants $%
C_{N_{0}},$ $\delta =\delta (N_{0})>0$ (both independent of $x)$ and
functions $b_{s}$ (which are given by $b_{s}(z,\zeta )$ of (\ref{OAE}) at $%
z=\zeta ,$ $s$ $=$ $n-1-j$ with $j=0,$ $\cdot \cdot \cdot ,$ $N_{0})$ such
that%
\begin{eqnarray}
&&|e^{-t\tilde{\square}_{m}^{c}}(x,x)-p\delta
_{p|m}\sum_{j=0}^{N_{0}}t^{-(n-1)+j}b_{n-1-j}(z(x))l^{m}(x)|  \label{AHKE1}
\\
&\leq &C_{N_{0}}l^{m}(x)(t^{-(n-1)+N_{0}+1}+t^{-(n-1)}e^{-\frac{\hat{%
\varepsilon}_{0}\hat{d}(x,\Sigma _{\mathrm{sing}})^{2}}{t}})  \notag
\end{eqnarray}%
for $0<t<\delta $ and some constant $\hat{\varepsilon}_{0}>0$ (independent
of $N_{0}$ and $x).$ Here $l(x)$ is as in (\ref{lq0}) and $N_{0}(n)$ is some
explicit function in $n;$ for instance one may take $N_{0}(n)$ $=$ $n+1.$

iii) \text{(Asymptotic expansion (II)) }$e^{-t\tilde{\square}_{m}^{c}}(x,y)$
has the following asymptotic expansion:%
\begin{equation}
e^{-t\tilde{\square}_{m}^{c}}(x,y)\sim
t^{-(n-1)}a_{n-1}(t,x,y)+t^{-(n-2)}a_{n-2}(t,x,y)+\text{ }\cdot \cdot \cdot
\label{1.6-5}
\end{equation}%
where for $(x,y)\in \Sigma \times \Sigma $ \ with $x=(z,w),$ $y=(\zeta ,\eta
)$ in local coordinates \ 
\begin{eqnarray*}
&&a_{s}(t,x,y)=l(y)^{m}\sum_{j}\varphi _{j}(x)w^{m}\int_{\xi \in \mathbb{C}%
^{\ast }}\{e^{-\frac{\tilde{d}_{M}^{2}(z,\zeta )}{4t}}b_{s}(z,\zeta ) \\
&&\text{ \ \ \ \ \ \ \ \ \ \ \ \ \ \ \ \ \ \ \ \ \ }\eta ^{-m}\tau
_{j}(\zeta )\sigma _{j}(\vartheta )\xi ^{-m}\}\circ \sigma (\xi )_{\xi
^{-1}y}^{\ast }d\mu _{y,m}(\xi ),\text{ }s=n-1,\text{ }n-2,\cdot \cdot \cdot
\end{eqnarray*}%
where $d\mu _{y,m}(\xi )$ is as in (\ref{6-1e1}), $\tilde{d}_{M}(z,\zeta )$
and $b_{s}(z,\zeta )$ as in (\ref{OAE}), and to simplify notations we use $%
\zeta ,$ $\eta ^{-m}$ and $\vartheta $ to denote $\zeta (\xi ^{-1}y),$ $\eta
^{-m}(\xi ^{-1}y)$ and $\vartheta (\xi ^{-1}y)$ respectively and $\varphi
_{j},$ $\tau _{j},$ $\sigma _{j}$ are as in (\ref{1.9-5}).
\end{theorem}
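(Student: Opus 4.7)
My approach follows a parametrix/Duhamel scheme adapted to the $\mathbb{C}^{\ast}$-fibration $\Sigma \to \Sigma/\sigma$. Because $\Sigma/\sigma$ is compact, I would cover $\Sigma$ by finitely many $\mathbb{C}^{\ast}$-saturated slice neighborhoods $V_j \cdot \mathbb{C}^{\ast}$. On each slice $V_j$, transversal to a generic orbit, the operator $\tilde{\square}_m^c$ reduces, after using the weight-$m$ condition to eliminate the orbit-direction derivatives, to an elliptic Kodaira-type Laplacian on a neighborhood in the smooth complex manifold piece of $\Sigma/\sigma$. This is what is already encoded in $P_{m,t}^{0}$ of (\ref{1.9-5}): its local building blocks $b_s(z,\zeta)$ are the standard heat coefficients of this slicewise Laplacian, and the wrapper $\int_{\xi\in\mathbb{C}^{\ast}} \sigma(\xi)^\ast(\cdot)\, \xi^{-m}\, d\mu_{y,m}(\xi)$ averages them into sections of the correct $\mathbb{C}^{\ast}$-weight. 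Glue with a $\mathbb{C}^{\ast}$-invariant partition of unity to get a global parametrix, then a Volterra/Levi iteration in Duhamel's principle — using self-adjointness of $\tilde{\square}_m^c$ established in Section 5 and the Gaussian off-diagonal bounds of the slice parametrix — upgrades it to a true heat semigroup. Uniqueness follows from the standard energy argument in the $L^2_m$-space. This yields part $i)$.

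For part $iii)$, the expansion (\ref{1.6-5}) is essentially read off directly from the construction: $P_{m,t}^0$ already has the form $\sum_s t^{-(n-1)+(n-1-s)}a_s(t,x,y)$ with $a_s$ of the claimed shape, and the Duhamel remainder is $O(t^{N+1})$ smaller in the $C_B^l$-norm of Remark \ref{r1-2} for every $N$, which is exactly what $\sim$ means there. The explicit factor $e^{-\tilde d_M^2(z,\zeta)/(4t)} b_s(z,\zeta)$ comes from the classical heat kernel expansion for an elliptic Kodaira Laplacian on the transversal manifold $M$; the $\xi$-integration and the weight factor $\eta^{-m}\xi^{-m}$ are the mechanism that turns these slicewise local data into a kernel in $\Omega_m^{0,q}(\Sigma)$.

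Part $ii)$ is obtained by putting $x=y$ in part $iii)$ and evaluating the $\xi$-integral by stationary phase around its critical set, which is precisely the set of isotropy elements fixing $x$. When $x \in \Sigma\setminus\Sigma_{\mathrm{sing}}$ the isotropy is the generic $\mathbb{Z}/p\mathbb{Z}$: summing the $p$ identity-type contributions gives the factor $p$, and the oscillation $\xi^{-m}$ forces cancellation unless $p\mid m$, producing $p\delta_{p|m}$. The remaining critical points of the $\xi$-integral correspond to isotropy elements whose fixed sets sit inside $\Sigma_{\mathrm{sing}}$; a Gaussian off-diagonal estimate for the slice parametrix bounds their contribution by $C\, t^{-(n-1)} e^{-\hat\varepsilon_0\, \hat d(x,\Sigma_{\mathrm{sing}})^2/t}$, which is exactly the error in (\ref{AHKE1}). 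The $l^m(x)$ is the bookkeeping factor comparing the slice normalization of weight-$m$ sections to the equivariant one. The main obstacle is precisely this Gaussian-decay estimate: one needs uniform control (in $t\to 0$ and in $x$) of the off-diagonal constant of the slice parametrix in terms of $G_{a,m}$ and of the distance function $\hat d$, and one must match $\hat d$ to the geodesic distance on slices so that non-identity isotropy contributions are genuinely exponentially suppressed. Once this localization is in hand, the rest is routine parametrix analysis and the classical local heat expansion on a compact complex manifold.
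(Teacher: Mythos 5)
Your proposal follows the paper's proof essentially verbatim: the parametrix $P_{m,t}^0$ is the $\mathbb{C}^{\ast}$-averaged wrap (via $\pi_m$) of the slicewise Dirichlet heat kernels $K_t^j$ on the $V_j$'s, upgraded to the true kernel $e^{-t\tilde{\square}_m^c}$ by the Volterra/Duhamel iteration of Proposition \ref{p-existence} and the adjoint-pairing uniqueness argument of Theorem \ref{t-uniqueness}, while the expansion in $iii)$ and the on-diagonal estimate in $ii)$ come from splitting the $\xi$-integral (\ref{HQD}) into small-angle and large-angle sectors. The only cosmetic difference is in $ii)$: where you invoke stationary phase around the isotropy set, the paper performs the sector decomposition (\ref{sectors}) directly, with the geometric inequality (\ref{claim}) --- $|z(\xi^{-1}x)-z(x)|\geq\hat\varepsilon_0\,\hat d(x,\Sigma_{\text{sing}})$, which is exactly the ``main obstacle'' you flagged --- supplying the Gaussian error term, and with the unity property (\ref{7-5.5}) of the fibre measure built into $G_{a,m}$ producing your ``bookkeeping'' factor $l^m(x)$ cleanly.
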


\begin{remark}
\label{1.3-5} It can be shown that $a_{s}(t,x,y)$ has a nontrivial
dependence on $t$ even for $x=y$ and essentially descends to $\underline{a}%
_{s}(t,\pi (x),\pi (y))$ on the compact complex orbifold $M$ $=$ $\Sigma
/\sigma $ via $\pi $ $:\Sigma $ $\rightarrow $ $\Sigma /\sigma $ (cf. Remark %
\ref{8.1-5} and Theorem \ref{thm2-1}). Similarly $e^{-t\tilde{\square}%
_{m}^{c}}(x,y)$ on $\Sigma $ descends to $e^{-t\underline{\tilde{\square}}%
_{m}^{c}}(\pi (x),\pi (y))$ on $M,$ which coincides with an appropriate heat
kernel on $M$ (cf. Remark \ref{8.1-5}). It is worth noting that $e^{-t%
\underline{\tilde{\square}}_{m}^{c}}(\pi (x),\pi (x))$ on $M$ has an
asymptotic expansion with $t$-dependent coefficients by (\ref{1.6-5}). This
\textquotedblleft $t$-dependence" is unavoidable if one wants the asymptotic
expansion to be valid uniformly and entirely on $M$ (rather than just
piecewise valid with respect to the strata). See \cite[Remarks 1.6 and 1.7]%
{CHT} for geometrical interpretations in this regard. The intrinsic nature,
in contrast to $a_{s}(t,x,y),$ of $b_{s}(z,\zeta )$ is remarked after (\ref%
{7-5'}); see Remark \ref{8.1-5} for $a_{s}(t,x,y)$ in this regard.
\end{remark}

\begin{remark}
\label{r1-2} For the meaning of the above \textquotedblleft $\sim $" we
refer to Remark \ref{7-10-1} and \cite[Definition 5.5]{CHT} with their $%
C^{k} $-norm replaced by the $C_{B}^{k}$-norm (see (\ref{CBs})). This $%
C_{B}^{k}$-norm is perhaps a novel notion and is pervasively used in Section %
\ref{A-THK}$.$ There is an analogue of (\ref{AHKE1}) for CR manifolds with $%
S^{1} $-action (cf. \cite[(6.2) in p.92]{CHT}). The appearance of the length
function $l(y)$ in the asymptotic expansions shows a special feature of the $%
\mathbb{C}^{\ast }$-action. We can generalize (\ref{AHKE1}) to $C_{B}^{k}$
estimates by reducing the power of $t$ by $\frac{k}{2}$ on the right hand
side (Remark \ref{7-36.5}). Moreover, for $x\in \Sigma _{\text{sing}}$ an
estimate and proof similar to (\ref{AHKE1}) holds as well; we omit the
details here (cf. \cite[Theorem 6.1]{CHT}). For the corresponding results on
CR manifolds with $S^{1}$-action see \cite[(1.17) in p.10]{CHT}.
\end{remark}

The estimates in Theorem \ref{AHKE} are similar in expression to those in
the CR case \cite{CHT} in which the above length function $l(x)$ is not
existing (or viewed as reducing to the constant $1$). The fact that the
dependence of $a_{i}(t,x,x)$ on $t$ is in general nontrivial is basically a
reflection of the non-freeness of the $\mathbb{C}^{\ast }$-action. As such
the asymptotic expansion (\ref{1.6-5}), different from the classical-looking
ones which have been studied in the recent literature and involve no such $t$%
-dependence (cf. \cite[Section 7.1]{CHT} and references therein) can, with
better accuracy\footnote{%
Indeed, as $t\rightarrow 0^{+}$ our asymptotic expansion approaches the
classical-looking one in a pointwise, non-uniform manner. This is thought to
partially explain the somewhat strange discontinuity phenomenon incurred by
the conventional expansion when used across the different strata (cf. \cite[%
Section 7.1]{CHT}, \cite[(4.7)]{Rich1}). Of course, no such discontinuity
occurs if using (\ref{1.6-5}).}, find its application to the desired index
formula here. The formula (\ref{1.6-5}) does not appear feasible from the
viewpoint of $M.$ This may be due to that we mainly work on the total space $%
\Sigma $ rather than the quotient space $\Sigma /\sigma $ $=$ $M$.

Our method and local index formula have an application to the following
problems. Let us generalize $\mathbb{C}^{\ast }$-action to $G$-action $%
\sigma _{M}^{G}$ on a complex manifold $M,$ where $G$ is a connected complex
reductive Lie group. Let $\bar{G}$ be a projective compactification
(compatible with the group action in the sense that the left action of $G$
on $G$ extends holomorphically to an action of $G$ on $\bar{G})$ given in 
\cite{Som} or \cite[VIII-8]{Se}$.$ Let $H_{0,\sigma _{M}^{G}}^{0}(M,\Omega
_{M}^{p})$ denote the space of all $G$-invariant holomorphic $p$-forms via
the action $\sigma _{M}^{G}$ (see Notation \ref{N-9-1}).

\begin{theorem}
\label{propA2-1} (proof seated above Remark \ref{rext}) Let $G$ be a
connected complex reductive Lie group. Suppose that we have a holomorphic $G$%
-action $\sigma _{M}^{G}$ on a complex manifold $M$ (compact or noncompact)
admitting a meromorphic extension $\check{\sigma}_{M}^{G}$ $:$ $\bar{G}%
\times M$ - - -\TEXTsymbol{>} $M$ (meromorphic map in the sense of Remmert)$%
. $ Here no local freeness of the $G$-action is assumed. Then there holds%
\begin{equation*}
H^{0}(M,\Omega _{M}^{p})=H_{0,\sigma _{M}^{G}}^{0}(M,\Omega _{M}^{p}).
\end{equation*}
\end{theorem}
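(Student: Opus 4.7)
The inclusion $\supseteq$ is immediate from the definitions, so the task is to show each $\omega \in H^0(M, \Omega_M^p)$ is $G$-invariant. Since $G$ is connected, this reduces to proving $\mathcal{L}_{X^*}\omega = 0$ for every $X$ in the Lie algebra $\mathfrak{g}$, where $X^*$ denotes the fundamental (holomorphic) vector field on $M$ induced by $X$. The assignment $X \mapsto \mathcal{L}_{X^*}\omega$ is $\mathbb{C}$-linear, and because $G$ is reductive the semisimple elements span $\mathfrak{g}$; hence it suffices to treat $X$ generating a one-parameter subgroup $\rho : \mathbb{C}^{\ast} \to G$ (each semisimple $X$ lies in some Cartan torus $T \cong (\mathbb{C}^{\ast})^r$, and $T$-invariance reduces inductively to $\mathbb{C}^{\ast}$-invariance factor by factor). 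The valuative criterion of properness, applied to the projective variety $\bar G$, produces a unique extension $\bar\rho : \mathbb{P}^1 \to \bar G$. Composing with $\check\sigma_M^G$, the induced $\mathbb{C}^{\ast}$-action $\sigma^\rho$ on $M$ inherits a meromorphic extension $\mathbb{P}^1 \times M \dashrightarrow M$ in the sense of Remmert, reducing the theorem to the case $G = \mathbb{C}^{\ast}$, $\bar G = \mathbb{P}^1$.

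In this one-parameter case, fix $x_0 \in M$ and tangent vectors $v_1, \ldots, v_p \in T^{1,0}_{x_0} M$, and set
\[
F(\lambda) := \omega_{\sigma^\rho(\lambda)(x_0)}\!\bigl(d\sigma^\rho(\lambda)|_{x_0} v_1, \ldots, d\sigma^\rho(\lambda)|_{x_0} v_p\bigr), \qquad \lambda \in \mathbb{C}^{\ast},
\]
a holomorphic function of $\lambda$ with $F(1) = \omega_{x_0}(v_1, \ldots, v_p)$. The goal is to show $F \equiv F(1)$: once established for arbitrary $x_0, v_i$, this yields $\sigma^\rho(\lambda)^* \omega = \omega$ pointwise on $M$, and hence $\mathcal{L}_{X^*}\omega = 0$ by differentiation at $\lambda = 1$.

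To prove that $F$ is constant, the plan is to realize $F - F(1)$ as the restriction of a holomorphic function on a \emph{compact} connected analytic space. Applying Hironaka's resolution of indeterminacy yields a proper modification $\mu : \widetilde N \to \mathbb{P}^1 \times M$ with $\widetilde N$ smooth and a holomorphic lift $\nu : \widetilde N \to M$ extending $\check\sigma^\rho \circ \mu$ on the iso locus. The $p$-form $\widetilde\eta := \nu^* \omega - (\pi_M \circ \mu)^* \omega$ is a globally holomorphic section of $\Omega^p_{\widetilde N}$, and a direct computation on the iso locus $\mu^{-1}(\mathbb{C}^{\ast} \times M) \cong \mathbb{C}^{\ast} \times M$ shows that its value at $(\lambda, x_0)$, evaluated on the tangent vectors $(0, v_1), \ldots, (0, v_p) \in T_{(\lambda, x_0)}(\mathbb{P}^1 \times M)$, equals $F(\lambda) - F(1)$. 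The slice $\mathcal N_{x_0} := \mu^{-1}(\mathbb{P}^1 \times \{x_0\})$ is compact (as the proper preimage of a compact set), connected (as $\mathbb{P}^1$ is connected and $\mu$ has connected fibers), and contains $\mathbb{C}^{\ast}$ densely via the iso locus. The holomorphy of $\widetilde\eta$ on the compact set $\mathcal N_{x_0}$, together with Riemann's extension theorem applied across the exceptional fibers of $\mu$ above $\{0, \infty\}$, extends $F - F(1)$ to a holomorphic function $\widetilde F$ on $\mathcal N_{x_0}$. The maximum principle on a compact connected complex analytic space then forces $\widetilde F$ to be constant, and since $\widetilde F(1) = 0$, we conclude $F \equiv F(1)$ on $\mathbb{C}^{\ast}$.

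The main obstacle is the verification that $\widetilde F$ is well-defined and single-valued on $\mathcal N_{x_0}$ across the exceptional fibers of $\mu$. Above an indeterminacy point $\lambda_0 \in \{0, \infty\}$ the fiber of $\mu$ may be positive-dimensional, and lifts of the tangent vectors $(0, v_i)$ under $d\mu$ need not be unique. One must argue that the evaluation of $\widetilde\eta$ on any such lift is independent of the choice---for instance by establishing that $\widetilde\eta$ vanishes on $\mu$-vertical directions at these points---or replace the pointwise construction by a pushforward/fiber-integration over the exceptional fibers that delivers a canonical extension. The reduction from reductive $G$ to $\mathbb{C}^{\ast}$ via the valuative criterion, by contrast, is essentially formal.
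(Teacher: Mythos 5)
Your reduction of the reductive case to one-parameter subgroups $\mathbb{C}^{\ast}\to G$ is sound and in fact a touch more economical than the paper's route (which passes through explicit $SL(2,\mathbb{C})$ subgroups and a density lemma); the compatibility of the meromorphic extension with the composite $\bar\rho\times\mathrm{id}$ is the same point the paper settles by checking that the image avoids the indeterminacy set. Your idea for the $\mathbb{C}^{\ast}$-case — realize $F(\lambda)$ as a global holomorphic function on $\mathbb{P}^1$ and invoke compactness — is also the right one.

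The gap you flag at the end, however, is genuine and not merely a loose end. After resolving indeterminacy, $\widetilde\eta=\nu^{\ast}\omega-(\pi_M\circ\mu)^{\ast}\omega$ need not vanish on $\mu$-vertical directions: if $d\mu(w)=0$ one does get $d(\pi_M\circ\mu)(w)=0$, but $\nu$ does not factor through $\mu$, so $d\nu(w)$ is generally nonzero and $\nu^{\ast}\omega$ can pair nontrivially with $w$. Hence the evaluation of $\widetilde\eta$ on lifts of $(0,v_i)$ is not independent of the lift over the exceptional locus, and the proposed Riemann-extension step (which in any case needs a boundedness argument you have not supplied) does not produce a well-defined $\widetilde F$ on $\mathcal N_{x_0}$. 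The fiber $\mathcal N_{x_0}$ may also be singular, so one would additionally have to justify the maximum principle there. The resolution-of-indeterminacy detour creates all of these difficulties.

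The clean way to the same conclusion — and the paper's actual mechanism — is to bypass the blowup entirely. Since $\check\sigma_M$ is meromorphic, its indeterminacy set $S\subset\mathbb{CP}^1\times M$ has codimension at least $2$, so the holomorphic $p$-form $\sigma_M^{\ast}\omega$ on $(\mathbb{CP}^1\times M)\setminus S$ extends across $S$ by Hartogs' theorem to a global holomorphic $p$-form $\widetilde\omega$ on $\mathbb{CP}^1\times M$. Restricting $\widetilde\omega$ to the \emph{smooth} slice $\mathbb{P}^1\times\{x_0\}$ and contracting against $(0,v_1),\dots,(0,v_p)$ now gives a holomorphic function on $\mathbb{P}^1$ with no ambiguity, extending your $F$ and hence constant. (The paper packages this as a Laurent expansion $\sigma_M(\lambda^{-1})^{\ast}\omega=\sum_{k\ge 0}\lambda^k\omega_k$ together with the cocycle relation forcing $\omega_k=0$ for $k\ge 1$; this gives the stronger vanishing $H_k^0(M,\Omega_M^p)=0$ for $k\ne 0$, but for the statement at hand your constancy argument suffices once Hartogs is in place.)
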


If $M$ is projective and $\sigma _{M}^{G}$ is algebraic, then $\sigma
_{M}^{G}$ automatically extends meromorphically to $\bar{G}\times M$ - - -%
\TEXTsymbol{>} $M$ (see Remark \ref{mero} for K\"{a}hler cases). Theorem \ref%
{propA2-1} generalizes a result of Carrel and Sommese \cite[Corollary IV]{CS}%
, which deals with ($\mathbb{C}^{\ast })^{d}$-action on compact K\"{a}hler
manifolds using their $\mathbb{C}^{\ast }$-invariant decomposition method.

Theorem \ref{propA2-1} can be further generalized in the following sense.
Consider a (open) complex manifold $P$ with two holomorphic, proper, locally
free $\mathbb{C}^{\ast }$-actions $\sigma _{1},$ $\sigma _{2}$. For
simplicity, we assume $\mathbb{R}^{+}$ ($\subset \mathbb{C}^{\ast })$ action
is globally free while $S^{1}$ ($\subset \mathbb{C}^{\ast })$ action is
locally free. Then $B$ $:=$ $P/\sigma _{2}$ and $M$ $:=$ $P/\sigma _{1}$ are
two complex orbifolds (see Theorem \ref{thm2-1}). For basic material on
complex orbifolds\footnote{%
For instance, in what follows $\Omega _{M}^{p}$ for the orbifold $M$ is
understood in the orbifold sense: a local section $\omega $ of $\Omega
_{M}^{p}$ means a local section $\tilde{\omega}$ of $\Omega _{\tilde{U}}^{p}$
on some (smooth) orbifold chart $\tilde{U}$, which is required to be
invariant under the associated local group.}, we refer to \cite[pp 408-410]%
{CK}, \cite[pp 206-207]{BDD} or \cite{Ka}. Assume further that $\sigma _{2}$
commutes with $\sigma _{1},$ i.e. $\sigma _{2}(\lambda )\circ \sigma
_{1}(\zeta )$ $=$ $\sigma _{1}(\zeta )\circ \sigma _{2}(\lambda )$ on $P$
for $\lambda ,$ $\zeta $ $\in $ $\mathbb{C}^{\ast }.$ It follows that $%
\sigma _{2}$ preserves $\sigma _{1}$-orbits and induces a holomorphic $%
\mathbb{C}^{\ast }$-action $\sigma _{M}$ on $P/\sigma _{1}$ $=:$ $M.$ We
also assume that $\sigma _{2}$ is \textbf{nondegenerate} in the sense that
it does not act on $\sigma _{1}$-orbits trivially (see the remarks above (%
\ref{9-0})).

\begin{theorem}
\label{BM0} (proof seated after Lemma \ref{10.4-5}) With notations and
assumptions explained above, we suppose that $\sigma _{M}$ extends
meromorphically to $\mathbb{CP}^{1}\times M$ - - -\TEXTsymbol{>} $M.$ Recall 
$B$ $:=$ $P/\sigma _{2}$ and $M$ $:=$ $P/\sigma _{1}.$ Assume that $B,$ $M$
are compact. Then we have a natural linear isomorphism%
\begin{equation}
H^{0}(B,\Omega _{B}^{p})\simeq H_{0,\sigma _{M}}^{0}(M,\Omega _{M}^{p}).
\label{BM0-1}
\end{equation}
\end{theorem}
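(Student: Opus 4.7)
The plan is to lift both sides of (\ref{BM0-1}) to the total space $P$ and identify them with subspaces of $H^{0}(P,\Omega_{P}^{p})$ carved out by invariance/horizontality conditions with respect to the commuting pair $(\sigma_{1},\sigma_{2})$. Pullback along $\pi_{B}:P\to B$ identifies $H^{0}(B,\Omega_{B}^{p})$ with the space of $\sigma_{2}$-basic (i.e.\ $\sigma_{2}$-invariant and annihilated by $\iota_{Z_{2}^{1,0}}$) holomorphic $p$-forms on $P$, where $Z_{i}^{1,0}$ denotes the $(1,0)$-part of the infinitesimal generator of $\sigma_{i}$. Similarly $\pi_{M}^{*}$ identifies $H^{0}_{0,\sigma_{M}}(M,\Omega_{M}^{p})$ with the space of $\sigma_{1}$-basic, $\sigma_{2}$-invariant holomorphic $p$-forms on $P$.

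I then apply Theorem \ref{propA2-1} in two directions. First, to $\sigma_{M}$ on $M$, using the hypothesized meromorphic extension, which yields $H^{0}(M,\Omega_{M}^{p})=H^{0}_{0,\sigma_{M}}(M,\Omega_{M}^{p})$. Second, to the $\sigma_{1}$-induced action $\sigma_{B}$ on $B$, whose meromorphic extendability should be derived from that of $\sigma_{M}$ via the symmetric structure on $P$ (this is expected to be the content of Lemma \ref{10.4-5}, on which the proof of the present theorem explicitly builds). After both applications, the two pullback images land inside the common space of $(\sigma_{1},\sigma_{2})$-invariant holomorphic $p$-forms on $P$, distinguished only by which of $Z_{1}^{1,0}$, $Z_{2}^{1,0}$ annihilates them via interior product.

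To construct the isomorphism, I exploit the nondegeneracy of $\sigma_{2}$: the holomorphic vector fields $Z_{1}^{1,0},Z_{2}^{1,0}$ are generically linearly independent, so locally there exist $(\sigma_{1},\sigma_{2})$-invariant holomorphic $1$-forms $\beta_{1},\beta_{2}$ with $\beta_{i}(Z_{j}^{1,0})=\delta_{ij}$. Given $\omega$ that is $(\sigma_{1},\sigma_{2})$-invariant and $\sigma_{2}$-horizontal, set $\omega_{1}:=\iota_{Z_{1}^{1,0}}\omega$ and $\omega_{0}:=\omega-\beta_{1}\wedge\omega_{1}$; a brief contraction calculation using antisymmetry together with $\iota_{Z_{2}^{1,0}}\omega=0$ shows that $\omega_{0}$ and $\omega_{1}$ are both $(\sigma_{1},\sigma_{2})$-basic, hence descend to holomorphic $p$- and $(p-1)$-forms on the double quotient $Q:=P/(\sigma_{1}\times\sigma_{2})$. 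Define $\omega':=\omega_{0}+\beta_{2}\wedge\omega_{1}$; a symmetric check gives $\iota_{Z_{1}^{1,0}}\omega'=0$, so $\omega'$ is $\sigma_{1}$-basic and $\sigma_{2}$-invariant and therefore descends via $\pi_{M}$ to a class in $H^{0}_{0,\sigma_{M}}(M,\Omega_{M}^{p})$. Swapping $\beta_{1}\leftrightarrow\beta_{2}$ produces the inverse.

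The hardest step will be the bootstrap of meromorphic extendability from $\sigma_{M}$ to $\sigma_{B}$; without $\sigma_{1}$-invariance of forms on $B$, the swap construction fails to land in the target. A secondary subtlety is that the individual $\beta_{i}$ need not exist globally as invariant holomorphic $1$-forms, but the canonical pair $(\omega_{0},\omega_{1})$ descends to $H^{0}(Q,\Omega_{Q}^{p})\oplus H^{0}(Q,\Omega_{Q}^{p-1})$, so the swap can be recast as the identity on this common decomposition and is consequently intrinsic.
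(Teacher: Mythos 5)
Your proposal has two genuine gaps, and both are serious enough that the argument as sketched would not close.

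First, the symmetric route you take — applying Theorem \ref{propA2-1} once to $\sigma_{M}$ on $M$ and once to the induced action $\sigma_{B}$ on $B$ — requires a meromorphic extension of $\sigma_{B}$ to $\mathbb{CP}^{1}\times B$, and this is \emph{not} among the hypotheses of Theorem \ref{BM0}. The theorem only assumes meromorphic extendability of $\sigma_{M}$, and the statement is genuinely asymmetric: $\sigma_{2}$ is required to be nondegenerate but $\sigma_{1}$ is not, so $\sigma_{B}$ could even be trivial, or nontrivial but without any known extension. You suggest that meromorphic extendability of $\sigma_{B}$ "should be derived from that of $\sigma_{M}$" and that this is "expected to be the content of Lemma \ref{10.4-5}," but that lemma is in fact a purely local statement about the orbifold structure of $\pi_{1}^{-1}(V)\cong(\mathbb{C}^{\ast}\times\tilde U)/\Gamma$; it has nothing to do with meromorphic extension. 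The bootstrap you need does not exist in the paper, and it is not clear it can exist under the stated hypotheses. (The paper adds exactly these sorts of hypotheses — both actions algebraic on projective $B$, $M$ — in the later Corollary \ref{Cor4-1} precisely to get the extension of $\sigma_{B}$ for free; the point of Theorem \ref{BM0} is that it manages with only the one-sided hypothesis.)

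Second, your "swap" construction relies on global $(\sigma_{1},\sigma_{2})$-invariant holomorphic $1$-forms $\beta_{1},\beta_{2}$ with $\beta_{i}(Z_{j}^{1,0})=\delta_{ij}$. A global invariant holomorphic $\beta_{1}$ dual to the generator of $\sigma_{1}$ amounts to a flat holomorphic connection on the principal $\mathbb{C}^{\ast}$-bundle $P\to M$, which is a strong and generally false hypothesis. You acknowledge this and suggest recasting the swap through a pair $(\omega_{0},\omega_{1})$ that descends to the double quotient $Q$, but $\omega_{0}$ and $\omega_{1}$ are themselves \emph{defined} in terms of $\beta_{1}$, so their global existence cannot be asserted from local ingredients without further work. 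The paper's proof avoids both problems: it expands $\pi_{2}^{\ast}\omega_{B}$ locally as a power series $\sum_{k}\zeta^{k}\tilde\omega_{k}$ in the $\sigma_{1}$-fibre coordinate (Proposition \ref{propA3}), splits $\tilde\omega_{k}=\omega_{k}+\eta_{k}$ by the presence or absence of $d\zeta$, and then uses the $\sigma_{d}$-equivariance $\sigma_{d}(\lambda)^{\ast}\tilde\omega_{k}=\lambda^{-lk}\tilde\omega_{k}$ together with the meromorphic extension of $\sigma_{M}$ and the nondegeneracy $l\neq0$ to force $\omega_{k}=0$ for $k\geq1$ and $\eta_{k}=0$ for all $k$. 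This proves directly that $\pi_{2}^{\ast}\omega_{B}$ is already $\sigma_{1}$-basic, which is exactly what your swap was trying to manufacture — but obtained from the $\sigma_{M}$-extension alone, not from an extension of $\sigma_{B}$, and without any global invariant $1$-form. I would suggest abandoning the swap/horizontality picture and working with the fibrewise power series expansion instead.
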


\noindent \textit{Assume further that }$B$\textit{\ is smooth and K\"{a}%
hler. Then we have}%
\begin{equation}
\sum_{p=0}^{\dim M}(-1)^{p}\dim H_{0,\sigma _{M}}^{0}(M,\Omega
_{M}^{p})=\sum_{p=0}^{\dim P}(-1)^{p}\dim H_{0,\sigma _{2}}^{p}(P,\mathcal{O}%
_{P}\mathcal{)}  \label{BM0-2}
\end{equation}

\noindent \textit{and this can be computed through the local index formula (%
\ref{MF}) of Theorem \ref{main_theorem} for }$m$\textit{\ }$=$\textit{\ }$0.$
\textit{A generalization of (\ref{BM0-1}) to certain noncompact cases is
possible. See the comments in the proof of this theorem; compare Theorem \ref%
{propA2-1}.}

Let $\sigma _{M}$ ($=\sigma _{M}^{G}$) be as in Theorem \ref{propA2-1}. By
taking $P=\mathbb{C}^{\ast }\times M$ as a trivial, principal $\mathbb{C}%
^{\ast }$-bundle on $M$ and the obvious \textquotedblleft diagonal action"
on $P$ induced by $\sigma _{M}$ on $M$ as $\sigma _{2}$ (which is seen to be
nondegenerate in the sense above)$.$ Theorem \ref{BM0} soon brings us back
to the situation of Theorem \ref{propA2-1} for $G$ $=$ $\mathbb{C}^{\ast }.$

The above work of having two $\mathbb{C}^{\ast }$-actions might be related
to the work on a certain type of moduli spaces having two foliations, such
as the one from physics and string theory, which is briefly explained in 
\cite[(1.4) of Introduction, p. 320]{Br}. In our case, the following
phenomenon seems to be of interest:

\begin{corollary}
\label{Cor4-1} With notations and assumptions explained prior to Theorem \ref%
{BM0}, assume further that $\sigma _{1}$ is also nondegenerate and thus
induces a nontrivial holomorphic $\mathbb{C}^{\ast }$-action $\sigma _{B}$
on $P/\sigma _{2}$ $=:$ $B.$ Suppose that both $B$ and $M$ are smooth,
projective and both actions $\sigma _{M}$ and $\sigma _{B}$ are algebraic.
Then we have natural linear isomorphisms%
\begin{eqnarray*}
H^{0}(B,\Omega _{B}^{p}) &\simeq &H_{0,\sigma _{M}}^{0}(M,\Omega _{M}^{p}),
\\
H^{0}(M,\Omega _{M}^{p}) &\simeq &H_{0,\sigma _{B}}^{0}(B,\Omega _{B}^{p})
\end{eqnarray*}%
\noindent and hence the isomorphism by Theorem \ref{propA2-1}%
\begin{equation*}
H^{0}(B,\Omega _{B}^{p})\simeq H^{0}(M,\Omega _{M}^{p}).
\end{equation*}%
\textit{Moreover, we have (\ref{BM0-2}) for both }$B$\textit{\ and }$M$%
\textit{\ as the LHS while the right hand side can be computed through the
local index formula (\ref{MF}) of Theorem \ref{main_theorem} for }$m$\textit{%
\ }$=$\textit{\ }$0.$
\end{corollary}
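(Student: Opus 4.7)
The plan is to derive the three isomorphisms as direct consequences of Theorem \ref{BM0} and Theorem \ref{propA2-1}, applied twice with the roles of $\sigma_1$ and $\sigma_2$ exchanged, and then read off the identity for the Euler characteristic from (\ref{BM0-2}).

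First I would check that the hypotheses of Theorem \ref{BM0} are symmetric in $\sigma_1$ and $\sigma_2$ under the present assumptions. Since both $\sigma_1$ and $\sigma_2$ are proper, locally free, commuting, holomorphic $\mathbb{C}^*$-actions with globally free $\mathbb{R}^+$-part, and both are assumed nondegenerate, the construction of the quotient $B = P/\sigma_2$ and $M = P/\sigma_1$ and of the induced $\mathbb{C}^*$-actions $\sigma_M$ on $M$ and $\sigma_B$ on $B$ is fully symmetric (cf.\ the discussion above Theorem \ref{BM0}). Projectivity of $B$ and $M$ together with the assumption that $\sigma_M$ and $\sigma_B$ are algebraic guarantees, by the standard fact used in the paragraph after Theorem \ref{propA2-1}, that both $\sigma_M$ and $\sigma_B$ extend meromorphically to $\mathbb{CP}^1 \times M \dashrightarrow M$ and $\mathbb{CP}^1 \times B \dashrightarrow B$ respectively. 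Thus the full set of hypotheses of Theorem \ref{BM0} is satisfied for the pair $(\sigma_1,\sigma_2)$ and also for the pair $(\sigma_2,\sigma_1)$, and the compactness of $B$ and $M$ is built into the present assumption that both are smooth projective.

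Applying Theorem \ref{BM0} with the pair $(\sigma_1,\sigma_2)$ as in its statement yields the first isomorphism
\begin{equation*}
H^0(B, \Omega_B^p) \simeq H_{0,\sigma_M}^0(M, \Omega_M^p),
\end{equation*}
and applying it to the pair $(\sigma_2,\sigma_1)$, with $B$ and $M$ interchanged, yields the second isomorphism
\begin{equation*}
H^0(M, \Omega_M^p) \simeq H_{0,\sigma_B}^0(B, \Omega_B^p).
\end{equation*}
The third isomorphism then follows by combining the first with Theorem \ref{propA2-1} applied to $M$ with the action $\sigma_M$ (whose meromorphic extension is the one already invoked above), which asserts
\begin{equation*}
H^0(M, \Omega_M^p) = H_{0,\sigma_M}^0(M, \Omega_M^p).
\end{equation*}
Composition of the two gives $H^0(B,\Omega_B^p)\simeq H^0(M,\Omega_M^p)$; symmetrically one also obtains this by using Theorem \ref{propA2-1} on $B$ together with the second isomorphism.

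For the Euler-characteristic statement, one applies the already-established identity (\ref{BM0-2}) of Theorem \ref{BM0} in both directions: taking $B$ as the base gives the alternating sum over $\dim H_{0,\sigma_M}^0(M,\Omega_M^p)$ as $\sum (-1)^p\dim H^p_{0,\sigma_2}(P,\mathcal{O}_P)$, while taking $M$ as the base gives the corresponding identity with $\sigma_1$ and $B$; the right-hand sides are then evaluated through the local index formula (\ref{MF}) of Theorem \ref{main_theorem} at $m=0$ applied to $(P,\sigma_2)$ and $(P,\sigma_1)$ respectively. The only point requiring care, and the one I would treat as the main obstacle, is the symmetry check in the first paragraph: one must be sure that the nondegeneracy and local-freeness hypotheses genuinely pass from $\sigma_i$ on $P$ to the descended $\sigma_M$ or $\sigma_B$ on the orbifold quotient so that Theorem \ref{BM0} can be invoked symmetrically, and that the meromorphic extension to $\mathbb{CP}^1$ survives this descent; under the stated projectivity and algebraicity assumptions both points are standard, so the corollary follows with no further analytic input beyond Theorems \ref{main_theorem}, \ref{propA2-1} and \ref{BM0}.
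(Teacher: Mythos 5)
Your proof is correct and takes essentially the same route as the paper: both apply Theorem \ref{BM0} symmetrically in $\sigma_1$, $\sigma_2$, invoke Theorem \ref{propA2-1} for the third isomorphism, and use Remark \ref{mero} (projectivity plus algebraicity of the actions) to supply the meromorphic extension hypothesis. The paper's published proof is a single terse sentence; your version simply spells out the same steps explicitly, and the cautionary note at the end about the symmetry check is harmless but unnecessary, since the nondegeneracy and local-freeness hypotheses live on $P$ and are manifestly symmetric under exchanging $\sigma_1$ and $\sigma_2$.
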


\begin{remark}
\label{1.8-5} For a general $G$ (connected complex reductive Lie group) a
similar result as the first half of Corollary \ref{Cor4-1} holds (by method
parallel to that of Theorem \ref{propA2-1} generalizing $\mathbb{C}^{\ast }$
to $G,$ see the last paragraph of the proof of Theorem \ref{propA2-1}).
However we haven't had the local index formula in the second half of
Corollary \ref{Cor4-1} for $G$ general.
\end{remark}

Our result Theorem \ref{main_theorem} (via Theorem \ref{AHKE}) may be placed
in the context of index theorems of transversal type, which can be linked to
an extension of Atiyah-Singer index theory to the class of transversally
elliptic operators, cf. \cite{PV}, \cite{A}, \cite{Fit}. There are, however,
differences between those approaches and that of ours. For instance, our
base space $\Sigma $ and the group $\mathbb{C}^{\ast }$ are non-compact; we
aim at \textit{local index }type results in the sense closely related to 
\cite[Chap.4]{BGV}; the notion of \textquotedblleft distribution-index" as
originally advocated by Atiyah \cite{A} (see also \cite{PV} for further
results and references), is not explicitly involved in the present work. It
seems that none of those works uses the (transversal) heat kernel approach
in the same way as we did here. For potential links with other areas of
research, see the discussion later in this Introduction and the footnote
there.

In the remaining part of this Introduction, let us first outline some
ingredients involved in our proofs. Since our goal is to get a local index
density whose integral is the above index, it is natural to use heat kernel
method. Due to our setup, we are led to consider what we call
\textquotedblleft transversally" $spin^{c}$ Dirac operator; since $\Sigma $
may not be K\"{a}hler, we also need a modified version of this transversal
Dirac operator (in order to catch a local index density by an asymptotic
heat kernel for its Kodaira-type Laplacian).

For $\Sigma $ being the total space of a holomorphic line bundle $L$ (with
the zero section removed) over a compact complex manifold $M$, there is a
one-one correspondence between elements in $\Omega _{m}^{0,q}(\Sigma )$ and
sections in ($L^{\ast })^{\otimes m}$ over $M$ $=$ $\Sigma /\sigma $ (cf.
Proposition \ref{p-gue2-2}$).$ Motivated by this observation, we construct
an approximate heat kernel on $\Sigma $ by patching up local Dirichlet heat
kernels $K_{t}^{j}(z,\zeta )$ on $M$ as follows:%
\begin{eqnarray}
P_{m,t}^{0}&:=&\sum_{j\text{ (finite)}}H_{m,t}^{j}\circ \pi _{m},
\label{1.9-5} \\
H_{m,t}^{j}(x,y)&:=&\varphi _{j}(x)w^{m}K_{t}^{j}(z,\zeta )\eta ^{-m}\tau
_{j}(\zeta )\sigma _{j}(\vartheta )l(y)^{m}  \notag
\end{eqnarray}

\noindent where $\pi _{m}$ denotes the orthogonal projection onto the $m$%
-space $L_{m}^{2,\ast }(\Sigma ,E)$ ($L^{2}$-completion of $\Omega
_{m}^{0,\ast }(\Sigma ,E)),$ $x$ $=$ $(z,w),$ $w$ $=$ $|w|e^{i\phi },$ $y$ $%
= $ $(\zeta ,\eta )$, $\eta $ $=$ $|\eta |e^{i\vartheta },$ $\varphi _{j}$
being a smooth partition of unity for $\Sigma $ with $\varphi _{j}(x)$ $=$ $%
\varphi _{j}(z,\phi )$, $\tau _{j},$ $\sigma _{j}$ some cutoff functions and 
$l(y)$ $=$ $h(\zeta ,\bar{\zeta})\eta \bar{\eta}$ (see (\ref{lq}) in Section
3 and Section 6 for details).

One of our main technical tasks is to evaluate $P_{m,t}^{0}$ along the
diagonal $(x,x).$ However, this evaluation becomes nontrivial due to the
projection operator $\pi _{m}.$ More precisely (see (\ref{6-1f}))%
\begin{equation}
(H_{m,t}^{j}\circ \pi _{m})(x,x)=\int_{\mathbb{C}^{\ast }}H_{m,t}^{j}(x,\xi
^{-1}\circ x)\circ \sigma (\xi )_{\xi ^{-1}\circ x}^{\ast }\bar{\xi}^{m}d\mu
_{x,m}(\xi ).  \label{1-3}
\end{equation}

\noindent Here we have denoted $\sigma (\xi ^{-1})(x)$ by $\xi ^{-1}\circ x$
(or $\xi ^{-1}x$ for short) and $d\mu _{x,m}(\xi )$ is a certain 2-form in
the action parameter $\xi $ (see (\ref{6-1e1}))$.$

The salient fact is that the value at the diagonal element $(x,x)$ in the
LHS of (\ref{1-3}) involves those at the off-diagonal element $(x,\xi
^{-1}\circ x)$ in the RHS of (\ref{1-3}). Let us give a little more
explanation as follows.

The integral (\ref{1-3}) over the angle variable part of $\mathbb{C}^{\ast }$
gives rise to a diagonal term for small angular range (cf. (\ref{HQDI})) and
a nondiagonal term for large angular range (cf. discussions from (\ref{7-2})
onwards).\ The latter provides a term expressed in exponential to the
negative distance square over $t$ (see the last term in the RHS of (\ref%
{AHKE1})) when one tries to estimate the supertrace of the heat kernel
asymptotic expansion. It ends up that this nondiagonal term has contribution
obtained from lower dimensional strata; the detail involves
\textquotedblleft Lefschetz trace" roughly explained as follows. When one is
evaluating the supertrace around a stratum point, say $P,$ the local
isotropy group $G_{j}$ (identifiable as local orbifold structure group)
comes into play. The original \textit{transverse} supertrace at $P$ becomes
transformed to a nontransversal /ordinary supertrace \textit{twisted} by $g$ 
$\in $ $G_{j}$ (from which the above nondiagonal term arises).
Interestingly, this (as $t\rightarrow 0$) is soon recognized essentially as
the local density (at $P$) of the Lefschetz-Riemann-Roch; the local version
of LRR finds an application here (unclear to us whether any other
applications of the local LRR exist elsewhere in the literature). See (\ref%
{Asym}) for the above-mentioned twisting as well as the paragraph below it.
Let us note that this step is much inspired by a theorem of
Berline-Getzler-Vergne \cite[Theorem 6.11]{BGV}, whose proof is
difficult and whose statement is remarkable in that the asymptotic expansion
given there involves generalized functions (compare \cite{CHT-P} which
studies certain continuity issues within the parameter-dependent setting).
In this regard, compare the paragraph seated above Subsection \ref{S-8-4}
about a flaw in our previous work \cite{CHT}, \cite{CHT-E}. This analytical
implication has an effect on the algebraic result of Kawasaki's
Hirzebruch-Riemann-Roch theorem for compact complex orbifolds $\Sigma
/\sigma $ (see the second paragraph after Remark \ref{r1-1}).

Prior to the above, a more basic technical task worth mentioning is the
construction of the (non $\mathbb{C}^{\ast }$-invariant, incomplete)
Hermitian metric $G_{a,m}$ on $\Sigma $ for our purposes. The troubling
issues here are two-fold: the noncompactness of $\Sigma $ as well as that of 
$\mathbb{C}^{\ast }.$ It turns out that our metric $G_{a.m}$ is not $\mathbb{%
C}^{\ast }$-invariant, yet by using it we manage to design and work out some
geometric constructions on $\Sigma $ and on $M$ $=$ $\Sigma /\mathbb{\sigma }
$ respectively in such a way that they are mutually "compatible" in an
appropriate context (cf. Proposition \ref{madj} and Corollary \ref{Cor3-7}).
Although there are a fair amount of technicalities, let us content ourselves
with pointing out that this compatibility just mentioned, plays a crucial
role not only at a conceptual level but also leading us to technically
fulfill analytical requirements in the long process (cf. Proposition \ref%
{BoxDU} and Sections 6, 7). Fortunately, all of these is made possible via
special features of our metric $G_{a,m};$ this we can't quite see
conceptually beforehand. The question whether a different choice of metrics
can lead to similar results is far from obvious to us, but there appears to
be a certain set of conditions (not formulated in this paper) required for
the metric to do the job. Remark that this aspect presents a major
difference between points of departure in this paper and in \cite{CHT} where
the compactness of the manifold $X$ and that of the group $S^{1},$ make a
sharp contrast to the effect that their metric is simply chosen to be $S^{1}$%
-invariant, which saves a lot of work there.

In recent decades there appeared increasingly active study of heat kernels
in the transversal sense or even more generalized sense. See e.g. \cite%
{Rich1}, \cite{Rich2} and \cite[ Section 7.1]{CHT} for some comments with
extensive references. To the best of our understanding, most treatments in
the existent literature are given under the compactness (or completeness)
assumption which is either imposed on the manifold or on the group or both.
Our present work makes an attempt towards some noncompact issues. It is
likely, although technically rather unclear at this stage, that the results
here admit a generalization to complex Lie groups other than $\mathbb{C}%
^{\ast }.$ Remark also that the asymptotic expansion (in $t$) of trace
integrals $\int_{\Sigma }Tre^{-t\tilde{\square}_{m}^{c\pm }}(x,x)dv_{\Sigma
,m}$ is not discussed here (cf. some treatment in \cite[Section 7]{CHT} for
CR cases and \cite{Rich2} for foliations); some needed tools have been
developed in \cite{CHT-ECM} and for partial results in CR cases see [Ibid.,
Theorem 1.1]. We hope to come back to some of these in future publication.

Due to the noncompactness some difficulties also occur in the treatment of
the Hodge theory part; compare the introductory paragraph of Section 4. One
difficulty involves the trouble that the seemingly natural and conventional
Sobolev $s$-norm $||$ $\cdot $ $||_{s}$ (cf. (\ref{Hs})) is unsuitable. One
novelty of Section 4 is introducing slightly complicated modifications
denoted by $||$ $\cdot $ $||_{s}^{\prime }$ and $||$ $\cdot $ $%
||_{s}^{^{\prime \prime }}$ ((\ref{Als}), (\ref{4.3-5}) and \ref{4.6-5}),
whose motivations are hidden in Propositions \ref{dualLm} and \ref{madj}.
Compare the $C_{B}^{s}$-norm mentioned in Remark \ref{r1-2}. With this
modification the approach adopted in the introductory paragraph of Section %
\ref{Sec4} can be developed and finally carried out. Classical results:
Rellich compactness, elliptic estimates, elliptic regularity, etc., can find
their analogues in this transversal setting, based on the modified norms.
Among other things, the finite-dimensionality of $H_{m}^{q}(\Sigma ,\mathcal{%
O})$ can be proved here. Remark that it is possible to prove the
finite-dimensionality result independently by using Theorem \ref{thm2-1}, (%
\ref{9.17-5}) and Remark \ref{9.3} that may bring some study on $\Sigma $ to
the orbifold $M$ $=$ $\Sigma /\sigma .$ However, one purpose of this paper
is that instead of working on $M$ directly one works on $\Sigma $ itself so
that if needed $M$ is then studied via \textquotedblleft dimension
reduction" or \textquotedblleft Kaluza-Klein reduction", which in our view
is a methodology in the same spirit as ours and was already used for certain
purposes in physics. See more about it later in this Introduction.

Another feature here distinct from \cite{CHT} is the following. Given the
complex analytic equivalence $\Sigma $ $\cong $ $X_{1}\times \mathbb{R}^{+}$ 
$\cong $ $X_{2}\times \mathbb{R}^{+}$ as mentioned previously, we cannot
conclude the CR equivalence $X_{1}\cong X_{2}.$ For instance, take $\Sigma $ 
$=$ $L\backslash \{0$-section\} of a holomorphic line bundle $L$ on $M,$ and
the circle bundle $X$ $\subset $ $L\backslash \{0$-section\}. Both $\Sigma $
and $X$ have the same quotient $M$ $=$ $X/S^{1}$ $=$ $\Sigma /\mathbb{C}%
^{\ast }.$ While $X$ depends on the choice of a Hermitian metric on $L,$ $%
\Sigma $ does not. Thus, if we want to work on index theorems
\textquotedblleft upstairs" such as $\Sigma $ or $X,$ there is in general
only non-canonical choice of $X$ such that $X\times \mathbb{R}^{+}$ $\cong $ 
$\Sigma .$ Since the local index theorem is usually meant to be computable
from the associated heat kernel asymptotics and since these heat kernels are
not immediately transferable from the CR case \cite{CHT} to the complex case
(and vice versa), the present paper provides the needed technical details
precisely for the complex case.

Moreover, $\Sigma $ is akin to algebro-geometric objects. In this connection
it seems possible and of interest to formulate an analogue of the index
theorem discussed here within an algebraic setting. But then how this
formulation of results can be proved in a purely algebraic manner remains to
be seen.

Inspired by the potential algebraic interpretation above, one may be
naturally led to questions along the following line of thought. Firstly as
our index theorem may be viewed as a transversal Hirzebruch-Riemann-Roch
theorem (HRR for short), one may ask for a Grothendieck-Riemann-Roch theorem
(GRR for short) or family index theorem in the transversal sense similar to
that as considered here. Secondly, the development of the so-called
\textquotedblleft algebraic cobordism" in the last decade (see the monograph 
\cite{LM} of M. Levine and F. Morel) encodes the classical GRR theorem, cf. 
\cite[Subsection 4.2.4]{LM} for a precise explanation. In recent years an
equivariant algebraic cobordism theory for schemes $X$ with an action by a
linear algebraic group $G$ was constructed by J. Heller and J. Malag\'{o}r-L%
\'{o}pez \cite{HML} (see also \cite{Kri} and \cite{Liu}). In the case where
the geometric quotient $X\rightarrow X/G$ exists and is realized as a
principal $G$-bundle, there exists an isomorphism between the ordinary
algebraic cobordism $\Omega _{\ast }(X/G)$ of $X/G$ and the equivariant
algebraic cobordism $\Omega _{\ast +\dim G}^{G}(X)$ of $X$ (see \cite[%
Proposition 27]{HML}). In this connection and in view of our Proposition \ref%
{p-gue2-2} or Remark \ref{PLMB} (with $\Sigma $ as $X$ and $M$ $=$ $\Sigma /%
\mathbb{C}^{\ast }$ as $X/G$), the present transversal index theorem on $%
\Sigma $ in its algebraic context might be linked to a version of
\textquotedblleft equivariant GRR or HRR theorem" which by analogy with \cite%
[Subsection 4.2.4]{LM} just mentioned might be expected, or be encoded in
the theory of the equivariant algebraic cobordism $\Omega _{\ast
}^{G}(\Sigma )$ with $G$ $=$ $\mathbb{C}^{\ast }$.\footnote{%
In other related equivariant settings, approaches to Riemann-Roch using
localization techniques algebraically or analytically have been pursued in
works \cite{BV97}, \cite{EG03}, \cite{EG05}, \cite{BGV} and \cite{Gu97}. For
Riemann-Roch in (higher) equivariant K-theory, see the recent work \cite%
{Kr14}; see also \cite{EG00} for Riemann-Roch in equivariant Chow groups.
These works focus on schemes with algebraic group action using algebraic
methods, and the results there are neither valued in certain cohomology
groups nor meant for \textit{local} index theorems as considered in the
context here.} We hope to turn to it in future publication.

In addition to Corollary \ref{Cor4-1} above, we remark that the framework
set up in this paper echos certain classical constructions in physics, at
least from a philosophical point of view. In the approach of the so-called
Kaluza-Klein reduction (e.g. \cite[Section 7.1]{BBSc}, \cite[p.399]{GSW}, 
\cite[Section 4.1]{J}), a gauge field (e.g. one in electromagnetic theory)
on a space $M$ combined with a metric $g$ on $M$ can be thought of as a
certain metric (cf. gravity) on the associated principal bundle $P$ over $M,$
because the connection from this gauge field induces certain
\textquotedblleft horizontal spaces $H"$ in $P$ and hence, equipping $H$
essentially with the metric $g$ on $M$ (and also \textquotedblleft vertical
part" of $P$ with group invariant metric) leads to a natural metric on $P$;
the process here is basically reversible from $P$ to $M$ on which a gauge
field is then induced. A recent work of the physicist N. Nekrasov makes use
of such K-K picture to set up for $M$ a two-dimensional torus a framework 
\cite[(2.5)]{N} similar to (\ref{1-2-2}) of this paper. With this said the
idea is turning to the study of objects (with appropriate symmetries) on $P$
rather than the direct study of those on $M.$ Since the role played by
orbifolds in string theory is increasingly indispensable (e.g. \cite[Section
9.1]{BBSc}, \cite[Section 16.10]{GSW}, \cite[Section 4.8]{J}), it seems
conceivable that certain geometric setup, adapted to orbifolds, similar to
that of ours (arising from $\Sigma $ $\rightarrow $ $\Sigma /\sigma $ $=$ $M$
here, in particular) may appear to be of relevance in the future. It is
maybe worthwhile to mention that the above setup mostly uses compact Lie
groups for the principal bundle $P$ whereas our group of action here is $%
\mathbb{C}^{\ast },$ and that our metric $G_{a,m}$ on $\Sigma $ ($\Sigma $
thought of as a kind of \textquotedblleft orbifold principal bundle"
corresponding to $P$ above) is not $\mathbb{C}^{\ast }$-invariant whereas
the \textquotedblleft horizontal part" of $(\Sigma ,G_{a,m})$ is $\mathbb{C}%
^{\ast }$-invariant (cf. Lemma \ref{L-inv} $i)$). In this connection it
seems a natural question to generalize Theorem \ref{main_theorem} from the
HRR to LRR (Lefschetz-Riemann-Roch) under the presence of \textquotedblleft
symmetries"; see the discussion below.

Let us mention in passing that in the context of arithmetic schemes with a
finite group action there has been some significant progress on Riemann-Roch
type theorems, cf. \cite{CEPT}. It is initially of interest to study, for a
finite Galois extension $N/K$ of number fields with $G$ $=$ Gal$(N/K),$ the
ring of integers $\mathcal{O}_{N}$ as a $\mathbb{Z[}G\mathbb{]}$-module via
its class $[\mathcal{O}_{N}]$ formed in an appropriate Grothendick group
associated with the group ring $\mathbb{Z[}G\mathbb{]}$ combined with the
study of the associated Euler characteristic \cite{Chin}. A vast
generalization from this initial interest to schemes, for such $G$%
-equivariant Euler characteristics (via Riemann-Roch or LRR type theorems as
just mentioned) to yield applications to some number theory problems, can be
found in, for example, \cite{CPT}. Note that in these works the group $G$ is
a finite group and the subscheme fixed by the action of $g$ $\in $ $G$ can
be nonempty for some $g$ $\neq $ identity.

As mentioned above it appears natural to ask for a Lefschetz type index
theorem when a certain automorphism $\gamma $ of $\Sigma $ is given,
including $\gamma $ $=$ Identity of the present paper as a special case. In
our opinion the idea of this paper may be extended to such a situation, for
which one may wish to generalize Theorem \ref{AHKE} to the $\gamma $-twisted
heat kernel asymptotics in the transversal setting (compare (\ref{Asym}) for
a nontransversal, ordinary situation). The details and the appropriate
formulation are left to the interested reader.

A natural problem, closely related to that of CR manifolds with $S^{1}$%
-action already treated in \cite{HHL}, is about the existence of $\mathbb{C}%
^{\ast }$-equivariant holomorphic embeddings of $\Sigma $ when $%
c_{1}(L_{\Sigma },||\cdot ||)$ (see Section 3 for $L_{\Sigma }$) is negative
(which corresponds to the strong pseudoconvexity in the CR case \cite[p.46]%
{CHT}). Note that in the CR version of the HRR theorem as stated in \cite[%
p.16]{CHT} the term $-d\omega _{0}$ is positive when $X$ is strongly
pseudoconvex (due to the convention of the Reeb vector field $T$ given in 
\cite[Subsection 2.2]{CHT}). Moreover, for the weakly pseudoconvex situation
certain Morse-type inequalities and vanishing theorems are expected to hold
in this $\mathbb{C}^{\ast }$-version along the line similar to \cite[Theorem
2.1]{HL} and \cite[Proposition 1.21]{CHT}. As far as \textit{orbifold} 
\textit{line bundles} are concerned (whose local sections consist of those
of certain genuine line bundles $L$, that are invariant under the action of
local orbifold groups on $L$), it is of interest to ask effectivity problems
in an orbifold setting, analogous to those works in complex algebraic
geometry including some by Siu and Demailly (cf. \cite{Siu}, \cite{Siu2}, 
\cite{Dem}). For the case of orbifold cyclic singularities, working directly
on $\Sigma $ may seem a natural approach in a similar spirit to that of the
present paper. We leave these study to future publications.

The paper is organized briefly as follows. In Section \ref{S2} we discuss
some basic material for complex manifolds with holomorphic $\mathbb{C}^{\ast
}$-action. Among others we show Theorem \ref{thm2-1} that the quotient is a
complex orbifold under the condition that the action is proper, the the $%
\mathbb{R}^{+}$ part is globally free and the $S^{1}$-part is locally free.
In Section \ref{S-metric} the (non $\mathbb{C}^{\ast }$-invariant) Hermitian
metric $G_{a,m}$ is carefully constructed and its properties are examined.
In Sections \ref{Sec4} and \ref{Sec5} we develop the Hodge theory associated
to the relevant (transversal) Laplacian or modified Laplacian (necessary for
the non-K\"{a}hler case) and prove a McKean-Singer type formula (see Theorem %
\ref{t-5-1}) for the relevant index. An (transversal) approximate heat
kernel is constructed in Section \ref{A-THK} and an asymptotic expansion is
discussed in Section \ref{AE_THK} in which we give the proof of Theorem \ref%
{AHKE}. In Section \ref{LmIF} we give the proof of Theorem \ref{main_theorem}%
. Theorem \ref{propA2-1} is proved in the end of Section \ref{Sec9} while
Theorem \ref{BM0} and Corollary \ref{Cor4-1} are proved in the end of
Section \ref{Sec10}.

\medskip

\textbf{Acknowledgements.} J.-H. Cheng and C.-Y. Hsiao would like to thank
the Ministry of Science and Technology of Taiwan for the support: grant no.
MOST 108-2115-M-001-010 and grant no. MOST 108-2115-M-001-012-MY5
respectively. J.-H. Cheng would also like to thank the NCTS for constant
support. I-H. Tsai was supported in part by National Taiwan University grant
no. 106-2821-c-002-001-ES. As it takes nearly a decade to carry out this
joint work, I-H. Tsai wishes to thank Academia Sinica for the excellent
working conditions and the long-time support. We would like to thank Siye Wu
for his constant interest in our work during the preparation. 
We are grateful to Professor Mich\`{e}le Vergne for drawing our attention to the
class of examples which are partly considered in Example \ref{E-IF}. We owe
a special thanks to Professor Yum-Tong Siu for his valuable comments and
interest on this paper when we met him on the occasion of a conference in
honor of his mathematical work, held at Taipei in January 2025.

\section{\textbf{Preliminaries on} \textbf{complex manifolds with }$\mathbf{C%
}^{\ast }$-\textbf{action\label{S2}}}

Consider a complex manifold $\Sigma $ of dimension $n$ with holomorphic $%
\mathbb{C}^{\ast }$-action $\sigma .$ That is, the map%
\begin{equation}
\sigma :\mathbb{C}^{\ast }\times \Sigma \rightarrow \Sigma  \label{2-0}
\end{equation}%
\noindent defined by $(\lambda ,x)$ $=$ $(\rho e^{i\theta },x)$ $\rightarrow 
$ $\sigma (\lambda ,x)$ (also denoted as $\sigma $($\lambda )(x)$ or $\sigma 
$($\lambda )\circ x)$ is holomorphic in $\lambda $, $x$ and satisfies the
group action condition: $\sigma $($\lambda _{1}\lambda _{2})\circ x$ $=$ $%
\sigma $($\lambda _{1})\circ (\sigma (\lambda _{2})\circ x),$ $\sigma
(1)\circ x$ $=$ $x$. See \cite{BBS}, \cite{BBSw}, \cite{CS} and \cite{Gro}
for relevant information on this class of complex manifolds.%

The holomorphic $\mathbb{C}^{\ast }$-action induces a holomorphic vector
field $\digamma $ on $\Sigma .$ Near a point $q$ where $\digamma $ $\neq $ $%
0 $, we can find holomorphic coordinates $z_{1},$ $z_{2},$ $..,$ $z_{n-1},$ $%
\zeta $ such that $\digamma $ $=$ $\frac{\partial }{\partial \zeta }.$ Let $%
w $ $=$ $e^{\zeta }.$ Then we have%
\begin{equation}
\digamma =\frac{\partial }{\partial \zeta }=w\frac{\partial }{\partial w}
\label{2-2}
\end{equation}

\noindent with $w\neq 0.$

In this paper, we consider only the case of locally free action so that $%
\digamma (q)$ $\neq $ $0$ for all $q\in \Sigma .$ Here we say that the
action $\sigma $ is locally free if for any given point $q$ $\in $ $\Sigma ,$
$\sigma $($\lambda )\circ q$ $=q$ with $\lambda $ near $1$ implies $\lambda $
$=$ $1.$

\begin{proposition}
\label{p-gue2-1} \text{ (Distinguished local coordinates) }With the notation
above, \textit{suppose }$\{z_{1},$\textit{\ }$z_{2},$\textit{\ }$..,$\textit{%
\ }$z_{n-1},$\textit{\ }$w\}$\textit{\ and }$\{\tilde{z}_{1},$\textit{\ }$%
\tilde{z}_{2},$\textit{\ }$..,$\textit{\ }$\tilde{z}_{n-1},$\textit{\ }$%
\tilde{w}\}$\textit{\ are two systems of holomorphic coordinates near }$q$ 
\textit{with }$w$\textit{\ }$\neq $\textit{\ }$0,$\textit{\ }$\tilde{w}$%
\textit{\ }$\neq $\textit{\ }$0$ satisfying (\ref{2-2}) (we sometimes assume 
$w(q)$ $=$ $1,$ $\tilde{w}(q)$ $=$ $1$ for use later$\mathit{)}.$\textit{\
Then on the overlap they are related as follows:}%
\begin{equation}
\tilde{w}=w\varphi (z_{1},z_{2},..,z_{n-1}),\text{ }\tilde{z}_{j}=\mu
_{j}(z_{1},z_{2},..,z_{n-1})\text{ }(1\leq j\leq n-1)  \label{C0}
\end{equation}%
\textit{\noindent where }$\varphi $\textit{\ (vanishing nowhere) and }$\mu
_{j}$\textit{\ are holomorphic functions. The }$\mathbb{C}^{\ast }$-action $%
\sigma (\lambda )$ acts by%
\begin{equation}
\sigma (\lambda )(z_{1},\mathit{\ }z_{2},\mathit{\ }..,\mathit{\ }z_{n-1},%
\mathit{\ }w)=(z_{1},\mathit{\ }z_{2},\mathit{\ }..,\mathit{\ }z_{n-1},%
\mathit{\ \lambda }w)  \label{C0-1}
\end{equation}%
\textit{\noindent for }$\lambda \in \mathbb{C}^{\ast }$ near $1.$
\end{proposition}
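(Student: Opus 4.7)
The plan is to exploit the fact that the holomorphic vector field $\digamma$ induced by the $\mathbb{C}^{\ast}$-action is a geometric (coordinate-free) object, so its expressions in the two distinguished coordinate systems must agree. Concretely, by hypothesis $\digamma = w\,\partial_w = \tilde{w}\,\partial_{\tilde{w}}$ on the overlap. Writing $\tilde{z}_j$ and $\tilde{w}$ as holomorphic functions of $(z_1,\dots,z_{n-1},w)$, we will evaluate $\digamma$ on them in two ways and compare.

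First, applying $\tilde{w}\,\partial_{\tilde{w}}$ to $\tilde{z}_j$ gives $0$ (the new coordinates are independent of $\tilde{w}$), while applying $w\,\partial_w$ to $\tilde{z}_j$ gives $w\,\partial_w \tilde{z}_j$. Since $w \neq 0$ on the overlap, we conclude $\partial_w \tilde{z}_j \equiv 0$, so $\tilde{z}_j = \mu_j(z_1,\dots,z_{n-1})$ as desired. Next, $\tilde{w}\,\partial_{\tilde{w}}\tilde{w} = \tilde{w}$, while $w\,\partial_w \tilde{w}$ must equal the same. Holding $z_1,\dots,z_{n-1}$ fixed as parameters, this is the holomorphic ODE $w\,\partial_w \tilde{w} = \tilde{w}$ in the single variable $w$, whose general holomorphic solution (on a neighborhood where $w$ stays in a small disc about a nonzero value) is $\tilde{w} = w \cdot \varphi(z_1,\dots,z_{n-1})$ with $\varphi$ holomorphic. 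Non-vanishing of $\varphi$ follows immediately from $\tilde{w} \neq 0$ and $w \neq 0$ on the overlap.

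For the action formula \eqref{C0-1}, the key observation is that the one-parameter subgroup $t \mapsto \sigma(e^t)$ has infinitesimal generator $\digamma$ by the definition of the induced vector field, so $\sigma(e^t)(x)$ is the time-$t$ flow of $\digamma$ through $x$. In the local coordinates $(z_1,\dots,z_{n-1},w)$, the integral curves of $\digamma = w\,\partial_w$ solve $\dot z_j = 0$ and $\dot w = w$, giving $(z_j(t), w(t)) = (z_j(0), e^t w(0))$. Setting $\lambda = e^t$ for $t$ near $0$ yields $\sigma(\lambda)(z_1,\dots,z_{n-1},w) = (z_1,\dots,z_{n-1},\lambda w)$ for $\lambda$ near $1$, as claimed.

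No step presents a serious obstacle; the only mildly delicate point is the invocation of existence and uniqueness for the holomorphic ODE in the middle step, which is routine since the equation $w\,\partial_w \tilde{w} = \tilde{w}$ has everywhere nonvanishing coefficient on the punctured disc and a one-parameter family of solutions parameterized by the value at, say, $w = 1$ (namely $\varphi$). The overall structure of the proof is to use the invariance of $\digamma$ under coordinate change to reduce the claim to a pair of elementary holomorphic ODEs, and then to integrate the same vector field to recover the global form of the action.
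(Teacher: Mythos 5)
Your proof is correct and takes essentially the same approach as the paper: both derive the relations $w\,\partial_w\tilde z_j=0$ and $w\,\partial_w\tilde w=\tilde w$ from the invariance of $\digamma$, and both obtain (\ref{C0-1}) by integrating the flow of $\digamma$. The only cosmetic difference is that the paper differentiates once more to reduce to $\partial^2\tilde w/\partial w^2=0$ while you solve the first-order Euler equation directly; the conclusion and the key idea are identical.
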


\proof
From $w\frac{\partial }{\partial w}=\digamma =\tilde{w}\frac{\partial }{%
\partial \tilde{w}},$ we obtain%
\begin{equation}
w\frac{\partial \tilde{w}}{\partial w}=\tilde{w},\text{ }w\frac{\partial 
\tilde{z}_{j}}{\partial w}=0  \label{C1}
\end{equation}%
\textit{\noindent }by the chain rule. The second equation of (\ref{C1})
implies the second formula of (\ref{C0}) since $w\neq 0.$ Differentiating
the first equation of (\ref{C1}) in $w$ leads to $\partial ^{2}\tilde{w}%
/\partial w^{2}$ $=$ $0.$\textit{\ }It follows that $\tilde{w}=w\varphi
(z_{1},z_{2},..,z_{n-1})+g(z_{1},z_{2},..,z_{n-1}).$ Substituting this into
the first equation of (\ref{C1}), we get $g$ $=$ $0.$ We have shown the
first formula of (\ref{C0}). The formula (\ref{C0-1}) follows from the fact
that $w\frac{\partial }{\partial w}=\digamma .$

\endproof%

The distinguished local holomorphic coordinates $(z,w)$ $=$ $(z_{1},$ $%
z_{2}, $ $..,$ $z_{n-1},$ $w)$ of Proposition \ref{p-gue2-1} are often
adopted throughout the paper without further mention.

For our purpose, the reader may keep in mind the following typical examples.

\begin{example}
\label{2.0} i) Let $X$ be a CR manifold with locally free, transversal $%
S^{1} $-action $e^{i\theta }$: $X$ $\rightarrow $ $X$ (denoted by $%
e^{i\theta }\circ x$ for $x\in X)$ preserving the CR structure $T^{1,0}X$
(see \cite{CHT}). Define $T(x)$ $\in $ $T_{x}X$ to be the tangent to the
curve $e^{i\theta }\circ x$ $\subset $ $X$ at $\theta $ $=$ $0.$ Endow $%
X\times \mathbb{R}^{+}$ with the almost complex structure $J$ defined by $J$ 
$=$ $J_{X}$ on $T^{1,0}X\oplus T^{0,1}X$ and $JT=-r\frac{\partial }{\partial
r},$ $J(r\frac{\partial }{\partial r})=T.$ It is straightforward to check
that $J$ is integrable and hence $\Sigma $ $:=$ $X\times \mathbb{R}^{+}$ is
a complex manifold. Define a $\mathbb{C}^{\ast }$-action $\rho e^{i\theta
}:\Sigma \rightarrow \Sigma $ by $(\rho e^{i\theta })\circ (x,r)=(e^{i\theta
}\circ x,\rho r).$ We verify that this $\mathbb{C}^{\ast }$-action preserves
the complex structure on $X\times \mathbb{R}^{+}.$ Identify $X$ with $%
X\times \{1\}$ $\subset $ $X\times \mathbb{R}^{+}.$ The CR structure $J_{X}$
on $X$ is the one induced from the complex structure $J$ on $\Sigma .$

ii) Another natural class of examples arise from the total space $\hat{L}$
of a holomorphic line bundle $L$ over a compact (without boundary) complex
manifold $M.$ An obvious $\mathbb{C}^{\ast }$-action $\sigma $ on $\hat{L}$
is the nonzero fibre multiplication. One simply takes $\Sigma $ $=$ $\hat{L}%
\backslash \{0$-section\}.
\end{example}


With the local description of Proposition \ref{p-gue2-1}, we are going to
prove (see Theorem \ref{thm2-1} below) that $\Sigma $ is the union of local
holomorphic patches ($D_{j},$ ($z,w))$ satisfying (\ref{C0}) and (\ref{C0-1}%
) so that%
\begin{equation}
D_{j}\ni (z,w)=(z,\phi ,r)\in U_{j}\times (-\delta _{j},\delta _{j})\times 
\mathbb{R}^{+}\text{ }(\Sigma =\tbigcup\limits_{j=1}^{N}D_{j},\text{ }N\leq
\infty )  \label{1-0}
\end{equation}

\noindent where $w$ $=$ $re^{i\phi },$ $U_{j}$ is an open domain in $\mathbb{%
C}^{n-1},$ $\delta _{j}$ is a small positive number and $\mathbb{R}^{+}$
denotes the set of positive real numbers. Moreover, the holomorphic $\mathbb{%
C}^{\ast }$-action $\sigma (\rho e^{i\theta })$ $:$ $\Sigma $ $\rightarrow $ 
$\Sigma $ is described as%
\begin{eqnarray}
\sigma (\rho e^{i\theta })(z,w) &=&(z,\rho e^{i\theta }w)\text{ or}
\label{1-1} \\
\sigma (\rho e^{i\theta })(z,\phi ,r) &=&(z,\theta +\phi ,\rho r),\text{ }%
\rho \in \mathbb{R}^{+}  \notag
\end{eqnarray}

\noindent for those $(z,w)\in D_{j}$ such that $\sigma (\rho e^{i\theta
})(z,w)$ $\in $ $D_{j}$ for all $\theta $ with $|\theta +\phi |<\delta _{j},$
and%
\begin{equation}
\text{the }S^{1}\text{-part }\sigma \text{(e}^{i\theta }\text{) of the
action is locally free.}  \label{1-2}
\end{equation}%
\noindent Note that the $\mathbb{R}^{+}$ part $\sigma (\rho )$ of the action
is globally free if (\ref{1-1}) and (\ref{1-0}) hold.

We call the action $\sigma $ \textit{proper} if the map $\sigma :\mathbb{C}%
^{\ast }\times \Sigma \rightarrow \Sigma $ in (\ref{2-0}) is proper, i.e. $%
\sigma ^{-1}(K)$ is compact as long as $K$ $\subset $ $\Sigma $ is compact.

\begin{theorem}
\label{thm2-1} Suppose that $\Sigma $ is a complex manifold with a
holomorphic $\mathbb{C}^{\ast }$-action $\sigma (\rho e^{i\theta }).$ Assume
that the action $\sigma $ is proper, the $\mathbb{R}^{+}$ part $\sigma (\rho
)$ is globally free and the $S^{1}$-part $\sigma $(e$^{i\theta }$) is
locally free. Then (\ref{1-0}), (\ref{1-1}) and (\ref{1-2}) hold. In this
case $\Sigma /\sigma $ is a complex orbifold and a normal complex space.
Suppose further that $\Sigma /\sigma $ is compact. Then $N$ in (\ref{1-0})
can be finite.
\end{theorem}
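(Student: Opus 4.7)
The plan is to apply Proposition \ref{p-gue2-1} together with a slice-theorem argument for the proper, locally free holomorphic $\mathbb{C}^{\ast}$-action. The key structural input is that properness of $\sigma$ combined with local freeness of $\sigma(e^{i\theta})$ forces the isotropy $G_{q}\subset\mathbb{C}^{\ast}$ at every $q\in\Sigma$ to be finite, while global freeness of $\sigma(\rho)$ confines $G_{q}$ to $S^{1}$, so each $G_{q}$ is a finite cyclic subgroup of $S^{1}$.

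First I would fix $p\in\Sigma$ and invoke Proposition \ref{p-gue2-1} to produce distinguished holomorphic coordinates $(z,w)=(z_{1},\ldots,z_{n-1},w)$ on a neighborhood of $p$ with $\digamma=w\partial_{w}$, $w(p)=1$, and $\sigma(\lambda)(z,w)=(z,\lambda w)$. Writing $w=re^{i\phi}$, the domain naturally has the form $U_{j}\times(-\delta_{j},\delta_{j})\times I$ for some slice $U_{j}\subset\mathbb{C}^{n-1}$, small $\delta_{j}$, and interval $I\subset\mathbb{R}^{+}$. Since $\sigma(\rho)$ is globally free, the map $r\mapsto\sigma(r)(z,e^{i\phi})$ is injective on all of $\mathbb{R}^{+}$, and one extends $I$ to the entire half-line. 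To choose $\delta_{j}$ uniformly on $U_{j}$, I would argue that the order of the finite group $G_{q}$ is upper semi-continuous in $q$ (by properness combined with the finiteness already established), hence bounded above on a compact neighborhood of $p$; picking $\delta_{j}$ strictly less than $\pi$ divided by this bound, the injectivity of $(z,\phi,r)\mapsto\sigma(re^{i\phi})(z,1)$ on $U_{j}\times(-\delta_{j},\delta_{j})\times\mathbb{R}^{+}$ falls out from the explicit action formula $\sigma(\lambda)(z,w)=(z,\lambda w)$. This yields the patch $D_{j}$ in the form (\ref{1-0}); (\ref{1-1}) is immediate from the action formula, and (\ref{1-2}) is the hypothesis. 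Performing the construction at each point of $\Sigma$ produces the desired cover.

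Next I would show that $\Sigma/\sigma$ carries a complex orbifold structure. At each orbit $[p]$, a $G_{p}$-invariant holomorphic slice $S_{p}$ of complex dimension $n-1$ can be built through $p$ transverse to the orbit, with $G_{p}$ acting on $S_{p}$ by biholomorphisms fixing $p$; the naive slice $\{w=1\}$ from Proposition \ref{p-gue2-1} is only pointwise transverse, so a $G_{p}$-invariant version is obtained by the standard averaging of the slice inclusion over the finite group $G_{p}$. The induced map $\mathbb{C}^{\ast}\times_{G_{p}}S_{p}\to\Sigma$ is then a biholomorphism onto a $\mathbb{C}^{\ast}$-invariant neighborhood of the orbit, so $S_{p}/G_{p}$ furnishes a holomorphic orbifold chart at $[p]$. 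Compatibility of transition maps on overlaps follows because any two slices through a common orbit are related by a locally unique $\mathbb{C}^{\ast}$-translation acting holomorphically. That $\Sigma/\sigma$ is then a normal complex space is Cartan's classical theorem on finite quotients by groups of holomorphic automorphisms.

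Finally, if $\Sigma/\sigma$ is compact, a finite subcover of the orbifold atlas exists; each chart lifts to a slice $U_{j}\subset\Sigma$, and by further rotating the base slice by $\sigma(e^{i\theta_{\ell}})$ for finitely many $\theta_{\ell}\in[0,2\pi)$ chosen so that the arcs $(\theta_{\ell}-\delta_{j},\theta_{\ell}+\delta_{j})$ cover $S^{1}$, one produces finitely many patches $D_{j}$ whose union fills every $\mathbb{C}^{\ast}$-orbit lying over a selected chart; since the $\mathbb{R}^{+}$-factor is already global within each $D_{j}$, the total count $N$ in (\ref{1-0}) is finite. The main obstacle I expect to be the construction of the $G_{p}$-invariant holomorphic slice inside the coordinate frame of Proposition \ref{p-gue2-1}, where $G_{p}\subset S^{1}$ acts nontrivially on the $w$-variable, with enough regularity that the transition data defining the orbifold structure (and hence normality of the analytic quotient) come out manifestly holomorphic.
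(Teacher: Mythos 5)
Your approach to the complex orbifold structure takes a genuinely different route from the paper's, and it is precisely at the spot you flag as "the main obstacle" that a gap appears. The paper deliberately avoids constructing a $G_p$-invariant complex-analytic slice: it notes explicitly that the invariant slice furnished by the smooth (Riemannian) slice theorem of Duistermaat need not be complex-analytic, and instead defines $\tau(g):=\pi_{U}\circ\sigma(g)$ on $U\times\{0\}\times\{1\}$, where $\pi_U$ is the angular-radial projection $(z,\theta,r)\mapsto(z,0,1)$, and then proves the group-action property $\tau(g'g)=\tau(g')\tau(g)$ directly by an elementary computation. Your proposal instead asks for a $G_p$-invariant holomorphic slice $S_p$ via "the standard averaging of the slice inclusion over the finite group $G_p$". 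That phrase does not name a well-defined operation: one cannot average submanifold inclusions, and averaging the defining function of the slice $\{w=1\}$ over $G_p$ destroys holomorphicity. What would actually work is the Bochner--Cartan linearization theorem: a finite group acting holomorphically with fixed point $p$ is locally holomorphically conjugate to the linear isotropy representation, and since $G_p\subset\mathbb{C}^*$ commutes with the $\mathbb{C}^*$-action, the tangent line $L_{\Sigma,p}$ to the orbit is $G_p$-invariant, so after averaging a linear projection one gets a $G_p$-invariant linear complement whose image is a holomorphic slice. You would need to invoke this theorem explicitly; as written, the slice construction is not justified. Note also that the linearized coordinates obtained this way will generally differ from the distinguished $(z,w)$-coordinates of Proposition \ref{p-gue2-1}, so you lose the tight coupling that the paper exploits in later sections; the paper in fact shows only much later (Proposition \ref{L-alk} iii), via Lemma \ref{L-7.6a}) that $\sigma(g)$ already maps the $\{w=1\}$ slice into itself, a nontrivial fact that it does not and cannot assume at the stage of Theorem \ref{thm2-1}.

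A smaller issue: in establishing (\ref{1-0}), you write that global freeness of $\sigma(\rho)$ lets one "extend $I$ to the entire half-line." This elides the point that for $\rho$ far from $1$ the distinguished-coordinate formula is only known locally; one must rule out $\sigma(\rho)$ carrying $(z,\phi,r)$ back into the same slice with a different $z$ or $\phi$. The paper isolates this as the claim (\ref{1-2.75}), which is obtained not merely from global freeness but from properness together with the fact that $\Sigma/\mathbb{R}^+$ is a manifold on which sufficiently small transversals give charts. Your appeal to upper semi-continuity of $|G_q|$ bounds the angular aperture $\delta_j$ correctly, but does not by itself deliver (\ref{1-2.75}); you should add an explicit argument along those lines before asserting that $\tilde\psi_j^{-1}$ is well defined and injective on the full $U_j\times(-\delta_j,\delta_j)\times\mathbb{R}^+$.
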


\begin{remark}
\label{r2.3-5} The normality of $\Sigma /\sigma $ here will be useful in the
proof of Theorem \ref{BM0} given in Section \ref{Sec9}. The use of some
other results proved in later sections may simplify part of the proof below;
see Remark \ref{R-2-11}.
\end{remark}

\proof
(of Theorem \ref{thm2-1}) Given any $\bar{x}$ $\in $ $\Sigma ,$ by (\ref%
{C0-1}) there exists a neighborhood $\mathring{D}_{j}^{\varepsilon }$ $%
\subset $ $\Sigma $ of $\bar{x}$ and a local holomorphic patch
(trivialization) $\psi _{j}^{-1}$ $:$ $\mathring{D}_{j}^{\varepsilon }$ $%
\rightarrow $ $U_{j}\times (-\delta _{j},$ $\delta _{j})\times
(1-\varepsilon ,$ $1+\varepsilon )$ (since we will need to often use $\psi
_{j}^{\ast }$ in later sections, we choose to write $\psi _{j}^{-1}$ here$)$
where $U_{j}$ is an open domain in $\mathbb{C}^{n-1}$ and $\delta _{j},$ $%
\varepsilon $ are small positive numbers, such that for all $x,\tilde{x}$ $%
\in $ $\mathring{D}_{j}^{\varepsilon }$ with $\psi _{j}^{-1}(x)$ $=$ $(z,$ $%
\phi ,$ $r),$ $\psi _{j}^{-1}(\tilde{x})$ $=$ $(z,$ $\tilde{\phi},$ $\tilde{r%
}),$ we have $x=\sigma (\rho e^{i\theta })\tilde{x}$ for some complex number 
$\rho e^{i\theta }$ ($-\delta _{j}$ $<\theta +\tilde{\phi}$ $<$ $\delta
_{j}) $ and%
\begin{equation}
(z,\phi ,r)=(z,\theta +\tilde{\phi},\rho \tilde{r}).  \label{1-2.5}
\end{equation}

\noindent Furthermore, since the action is proper, we claim that we can find 
$\mathring{D}_{j}^{\varepsilon }$ so that for all $x,\tilde{x}$ $\in $ $%
\mathring{D}_{j}^{\varepsilon }$ with $\psi _{j}^{-1}(x)$ $=$ $(z,$ $\phi ,$ 
$r),$ $\psi _{j}^{-1}(\tilde{x})$ $=$ $(\tilde{z},$ $\tilde{\phi},$ $\tilde{r%
}),$%
\begin{equation}
\text{if }x=\sigma (\rho )\tilde{x}\text{ for }\rho \in \mathbb{R}^{+}\text{
(and }U_{j},\text{ }\delta _{j}\text{ sufficiently small)},\text{ then }z=%
\tilde{z}\text{ and }\phi =\tilde{\phi}.  \label{1-2.75}
\end{equation}

Proof of (\ref{1-2.75}): This essentially follows from the facts that $%
\Sigma /\mathbb{R}^{+}$ is a manifold by the properness and the global
freeness of the $\mathbb{R}^{+}$-action, and that any sufficiently small
slice in $\Sigma $ transversal to the $\mathbb{R}^{+}$-orbits gives rise to
a coordinate chart of $\Sigma /\mathbb{R}^{+}.$ We omit the details.


Let us denote $\mathring{D}_{j}^{\varepsilon }$ by $\mathring{D}_{j}$ for
simplicity$.$ Denote the set \{$\sigma (\rho )x$ $\in $ $\Sigma :$ $x\in 
\mathring{D}_{j}\}$ by $\sigma (\rho )\mathring{D}_{j}.$ Define $D_{j}$ to
be the union $\cup _{\rho \in \mathbb{R}^{+}}(\sigma (\rho )\mathring{D}%
_{j}).$ Extend $\psi _{j}^{-1}$ to the map%
\begin{equation}
\tilde{\psi}_{j}^{-1}:D_{j}\rightarrow U_{j}\times (-\delta _{j},\delta
_{j})\times \mathbb{R}^{+},\text{ }\tilde{\psi}_{j}^{-1}(\sigma (\rho
)x):=(z,\phi ,\rho r)  \label{2.10-5}
\end{equation}

\noindent for $x$ $\in $ $\mathring{D}_{j}$ with $\psi _{j}^{-1}(x)$ $=$ $%
(z, $ $\phi ,$ $r).$ We claim that $\tilde{\psi}_{j}^{-1}$ is well defined
and a holomorphic diffeomorphism. Suppose $\sigma (\bar{\rho})x$ $=$ $\sigma
(\tilde{\rho})\tilde{x}$ for $x,$ $\tilde{x}$ $\in $ $\mathring{D}_{j}$ with 
$\psi _{j}^{-1}(x)$ $=$ $(z,$ $\phi ,$ $r),$ $\psi _{j}^{-1}(\tilde{x})$ $=$ 
$(\tilde{z},$ $\tilde{\phi},$ $\tilde{r}).$ Then $x$ $=\sigma (\bar{\rho}%
^{-1}\tilde{\rho})\tilde{x}.$ By (\ref{1-2.75}) we have $z=\tilde{z}$ and $%
\phi =\tilde{\phi}.$ By the line above (\ref{1-2.5}) that says $x$ $=$ $%
\sigma (\rho e^{i\theta })\tilde{x}$ for some $\rho e^{i\theta }$ it follows
from $\phi =\tilde{\phi}$ and (\ref{1-2.5}) that $\theta $ $=$ $0.$ Further $%
\sigma (\rho ^{-1}e^{-i\theta }\bar{\rho}^{-1}\tilde{\rho})\tilde{x}$ $=$ $%
\tilde{x}$ with $\theta $ $=$ $0$ gives $\bar{\rho}^{-1}\tilde{\rho}$ $=$ $%
\rho $ since the $\mathbb{R}^{+}$ part of the action $\sigma $ is globally
free. Now we have%
\begin{equation*}
r\overset{(\ref{1-2.5})}{=}\rho \tilde{r}=\bar{\rho}^{-1}\tilde{\rho}\tilde{r%
}
\end{equation*}%
\noindent It follows that $\bar{\rho}r=\tilde{\rho}\tilde{r}.$ Together with 
$\phi $ $=$ $\tilde{\phi}$ ($\theta $ $=$ $0$ in (\ref{1-2.5})) we obtain $%
\tilde{\psi}_{j}^{-1}(\sigma (\bar{\rho})x)$ $=$ $\tilde{\psi}%
_{j}^{-1}(\sigma (\tilde{\rho})\tilde{x})$ by the definition (\ref{2.10-5})
of $\tilde{\psi}_{j}^{-1},$ giving the well-definedness of $\tilde{\psi}%
_{j}^{-1}.$ Next it is not hard to see that $\tilde{\psi}_{j}^{-1}$ is
injective and surjective. To show that $\tilde{\psi}_{j}^{-1}$ is a
holomorphic diffeomorphism, observe that $\tilde{\psi}_{j}^{-1}|_{\sigma
(\rho )\mathring{D}_{j}}=\tilde{\sigma}(\rho )\circ \psi _{j}^{-1}\circ
\sigma (\rho ^{-1})$ where $\tilde{\sigma}(\rho )$ acts on $U_{j}\times
(-\delta _{j},$ $\delta _{j})\times (1-\varepsilon _{j},$ $1+\varepsilon
_{j})$ $\subset $ $\mathbb{C}^{n-1}\times \mathbb{C}^{\ast }$ by multiplying
the third component by $\rho $. Since $\tilde{\sigma}(\rho ),$ $\psi _{j}$
and $\sigma (\rho ^{-1})$ are all holomorphic diffeomorphisms, we conclude
that $\tilde{\psi}_{j}^{-1}|_{\sigma (\rho )\mathring{D}_{j}}$ hence that $%
\tilde{\psi}_{j}^{-1}|_{D_{j}}$ is a holomorphic diffeomorphism. We have
shown that $\{D_{j}\}_{j}$ form local holomorphic charts. The assertions (%
\ref{1-0}), (\ref{1-1}) and (\ref{1-2}) follow.

To show that $M:=\Sigma /\sigma $ $=$ $(\Sigma /\mathbb{R}^{+})/S^{1}$ has a
natural orbifold structure, first note that $\Sigma /\mathbb{R}^{+}$ $=:$ $%
\tilde{M}$ is a manifold (as mentioned earlier in this proof) with a locally
free action of a compact Lie group $S^{1}$. The topological orbifold
structure of $\tilde{M}/S^{1}$ then follows from an argument in \cite[p.173]%
{Du}. To see that $M$ is a complex orbifold (note that the invariance slice
in \cite[p.173]{Du} is not necessarily a complex analytic one), let $p$ $\in 
$ $\Sigma $ and $G$ be the finite isotropy subgroup of $S^{1}(\subset 
\mathbb{C}^{\ast })$ at $p.$ Write $gq$ for $\sigma (g)q.$ For $p_{1}$ near $%
p$ and $g$ $\in $ $G$ (so $gp_{1}$ near $gp$ $=$ $p),$ $p_{1}$ and $p_{2}$ $%
= $ $gp_{1}$ are given in a coordinate chart $U\times (-\delta ,\delta
)\times \mathbb{R}^{+}$ of $p$ $=$ $(z,0,1)$ by $(z_{i},$ $\delta _{i},$ $%
r_{i}),$ $i $ $=$ $1,$ $2,$ for some $z_{i}\sim z,$ $\delta _{i}\sim 0$ and $%
r_{i}\sim 1. $ In fact $r_{i}$ $=$ $1$ by Lemma \ref{A} $i)$ (the proof of
this particular part does not use the orbifold structure of $\Sigma /\sigma
) $. Identifying $U$ with $U$ $\times $ $\{0\}$ $\times $ $\{1\},$ $g$ is
going to induce a holomorphic diffeomorphism $\tau (g)$ on $U$ (possibly
after shrinking $U$ and $\delta )$ by the composition (compare Remark \ref%
{R-2-11} below)%
\begin{equation}
p_{1}=(z_{1},0,1)\rightarrow gp_{1}=(z_{2},\delta _{2},r_{2})=(z_{2},\delta
_{2},1)\rightarrow (z_{2},0,1)\text{ }\in \mathbb{C}^{n-1}  \label{2-11a}
\end{equation}%
\noindent where the second map arises from a (local) projection $\pi _{U}$ $%
: $ $(z,\theta ,r)$ $\rightarrow $ $(z,0,1).$ We can now rewrite the action
of $\tau (g)$ at $p_{1}$ by 
\begin{equation}
\tau (g)(p_{1})=s_{-\delta _{2}}(gp_{1})  \label{2-12-0}
\end{equation}
\noindent where $s_{-\delta _{2}}$ $=$ $e^{-i\delta _{2}}$ $\in $ $S^{1}$
depends on $p_{1}.$ Note that $\tau (g):$ $U$ $\rightarrow $ $\mathbb{C}%
^{n-1}$ is holomorphic since $\sigma (g)$ and $\pi _{U}$ are so. To directly
prove that $\tau (g)$ is a diffeomorphism, one may try to control $d\tau (g)$
at $p;$ the control is not obvious (however, see Remark \ref{R-2-11}).
Instead, we are going to prove the group action property $\tau (g^{\prime
}g) $ $=$ $\tau (g^{\prime })\circ \tau (g)$ (and $\tau (1)$ $=$ $1).$ From
this it trivially follows that $\tau (g^{-1})$ is the inverse to $\tau (g).$
Set $x_{\delta _{j}}$ $=$ $(z_{j},$ $\delta _{j},$ $1)$ $j$ $=$ $1$, $2.$
Set $g^{\prime }\circ $ $(z_{2},$ $0,$ $1)$ $=$ $(z_{2}^{\prime },$ $\delta
_{2}^{\prime },$ $1)$ so 
\begin{equation}
\tau (g^{\prime })(z_{2},0,1)=(z_{2}^{\prime },0,1)=(s_{-\delta _{2}^{\prime
}}g^{\prime })(z_{2},0,1).  \label{2-12-1}
\end{equation}%
\noindent Because $gx_{\delta _{1}}$ $=$ $g\circ (s_{\delta _{1}}p_{1})$ $=$ 
$s_{\delta _{1}}\circ (gp_{1})$ and (\ref{2-11a}) we have $gx_{\delta _{1}}$ 
$=$ $(z_{2},$ $\delta _{1}+\delta _{2},$ $1)$ and similarly $g^{\prime
}x_{\delta _{2}}$ $=$ $(z_{2}^{\prime },$ $\delta _{2}^{\prime }+\delta
_{2}, $ $1).$ This gives that ($g^{\prime }g)p_{1}$ $=$ $g^{\prime }(z_{2},$ 
$\delta _{2},$ $1)$ $=$ $(z_{2}^{\prime },$ $\delta _{2}^{\prime }+\delta
_{2},$ $1)$, and then $\tau (g^{\prime }g)p_{1}$ $=$ $(z_{2}^{\prime },0,1),$
giving $\tau (g^{\prime }g)p_{1}$ $=$ $s_{-(\delta _{2}^{\prime }+\delta
_{2})}(g^{\prime }g)p_{1}.$ Further 
\begin{equation}
s_{-(\delta _{2}^{\prime }+\delta _{2})}(g^{\prime }g)p_{1}=s_{-\delta
_{2}^{\prime }}(g^{\prime }\circ (s_{-\delta _{2}}(gp_{1})))\overset{(\ref%
{2-12-0})}{=}(s_{-\delta _{2}^{\prime }}g^{\prime })\circ (z_{2},0,1).
\label{2-13-1}
\end{equation}
\noindent Inserting $(z_{2},0,1)$ $=$ $\tau (g)p_{1}$ into (\ref{2-12-1})
one has $(s_{-\delta _{2}^{\prime }}g^{\prime })\circ (z_{2},0,1)$ $=$ $\tau
(g^{\prime })\circ \tau (g)p_{1}.$ By (\ref{2-13-1}) we have proved $\tau
(g^{\prime }g)$ $=$ $\tau (g^{\prime })\circ \tau (g)$ and $\tau (G)$ is a
group ($\tau (1)$ $=$ $1$ is trivial). Consider $\tilde{U}$ $:=$ $\cup
_{g\in G}\tau (g)U$ where every $\tau (g)U$ ($\ni $ $p)$ is a domain in $%
\mathbb{C}^{n-1};$ $\tilde{U}$ is thus a domain in $\mathbb{C}^{n-1}.$ Then $%
(\tilde{U},$ $\tau (G))$ gives a complex orbifold chart (possibly after
shrinking $U$ hence $\tilde{U})$ on $M.$ We omit the discussion about the
transitions between different charts (see Remark \ref{R-2-11}).

As such, $M$ is known to be a normal complex (analytic) space (\cite[Section
IV]{Pr} or \cite[Theorem 4, p. 97]{Car}). Alternatively, by a result of \cite%
{Ho} on the normality of the quotient of a complex manifold by the proper
holomorphic action of a complex Lie group, one can also conclude the
normality of $M$. The last assertion about compactness is obvious.

\endproof%

\begin{remark}
\label{R-2-11} For later use it is shown in Proposition \ref{L-alk}\ $iii)$
that (\ref{2-11a}) above \ can be simplified: $\pi _{U}\circ \sigma (g)$ $=$ 
$\sigma (g)$ on $U$ $=$ $U\times \{0\}\times \{1\}$ $\subset $ $\Sigma $ for 
$g$ $\in $ $G$ ($\sigma $ denotes the original $\mathbb{C}^{\ast }$-action
on $\Sigma $), $i.e.$ $\delta _{2}$ $\equiv $ $0$ in (\ref{2-11a}). Upon
examination the proof of this result (including those in previous sections
on which the proof is based) uses no complex orbifold structure (of $M$)
discussed here. One can also use it to check the remaining conditions (as
recorded in, for instance, \cite[p.172]{Du}) needed for $M$ to be a complex
orbifold. Moreover $\tau $ in the above proof can be shown to be an (group)
isomorphism (see Corollary \ref{8-5-1}).
\end{remark}

Theorem \ref{thm2-1} has an application to the CR case (via $i)$ of Example %
\ref{2.0}):

\begin{corollary}
\label{2.3-5} In the notation of $i)$ of Example \ref{2.0}, the quotient
space $X/S^{1}$ of the CR manifold $X$ by the locally free $S^{1}$-action is
a complex orbifold.
\end{corollary}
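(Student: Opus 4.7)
The plan is to reduce the corollary to a direct application of Theorem \ref{thm2-1}, via the complex-manifold-with-$\mathbb{C}^{\ast}$-action constructed in part $i)$ of Example \ref{2.0}. Concretely, form $\Sigma := X \times \mathbb{R}^{+}$ equipped with the integrable almost complex structure $J$ described there, together with the holomorphic $\mathbb{C}^{\ast}$-action $\sigma(\rho e^{i\theta})(x,r) = (e^{i\theta}\circ x, \rho r)$.

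Next I would check the three hypotheses of Theorem \ref{thm2-1} for $(\Sigma,\sigma)$. The $\mathbb{R}^{+}$-part acts purely by dilation on the second factor, so it is globally free. The $S^{1}$-part inherits local freeness directly from the given locally free $S^{1}$-action on $X$, since $\sigma(e^{i\theta})(x,r) = (x,r)$ forces $e^{i\theta}\circ x = x$ with $\theta$ near $0$. For properness of $\sigma$ (in whichever standard sense is meant by the paper), one splits the problem in two: the $S^{1}$-factor is automatically proper because $S^{1}$ is compact, while the $\mathbb{R}^{+}$-factor acts on $\mathbb{R}^{+}$ by multiplication, which is proper; combining these in the product $\Sigma = X \times \mathbb{R}^{+}$ gives properness of the full $\mathbb{C}^{\ast}$-action.

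With the hypotheses verified, Theorem \ref{thm2-1} produces a complex orbifold structure on $\Sigma/\sigma$. It remains to identify $\Sigma/\sigma$ with $X/S^{1}$ as complex orbifolds. Quotienting $\Sigma$ first by the globally free $\mathbb{R}^{+}$-part gives the diffeomorphism $\Sigma/\mathbb{R}^{+} \cong X$ via $[(x,r)] \mapsto x$, and the residual $S^{1}$-action on $X = \Sigma/\mathbb{R}^{+}$ is precisely the original transversal CR $S^{1}$-action. Hence $\Sigma/\sigma = (\Sigma/\mathbb{R}^{+})/S^{1} = X/S^{1}$, and the complex orbifold structure transports along this identification.

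The only real point requiring care (the main obstacle, such as it is) is to confirm that the complex orbifold charts on $\Sigma/\sigma$ produced by the proof of Theorem \ref{thm2-1} descend in a compatible way to the intended charts on $X/S^{1}$: they are built from local holomorphic slices $U_{j}$ transversal to the $\mathbb{C}^{\ast}$-orbits in $\Sigma$, and under the product decomposition $\Sigma = X \times \mathbb{R}^{+}$ these slices sit inside $X \times \{1\}$ and coincide with local slices transversal to the $S^{1}$-orbits in $X$. The isotropy representations $\tau(G)$ constructed in \eqref{2-11a}--\eqref{2-12-0} therefore coincide with the analogous ones intrinsically attached to $X/S^{1}$, and the transitions match; this is essentially bookkeeping given the explicit product structure.
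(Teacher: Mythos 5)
Your argument reduces the corollary to Theorem \ref{thm2-1} applied to $\Sigma = X\times\mathbb{R}^{+}$ with the induced $\mathbb{C}^{\ast}$-action, which is exactly the paper's route; the paper's proof is a one-line citation, and you have simply spelled out the hypothesis-checking and the identification $\Sigma/\sigma = X/S^{1}$ that the paper leaves implicit. Both are correct, and the added verifications (freeness of the $\mathbb{R}^{+}$-part, local freeness of the $S^{1}$-part, properness of the product action, matching of slices and isotropy data) are accurate.
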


\proof
Let $\Sigma $ $=$ $X\times \mathbb{R}^{+}$ by $i)$ of Example \ref{2.0}. The
assertion follows from the corresponding one for $\Sigma $ with the induced $%
\mathbb{C}^{\ast }$-action.

\endproof%

\begin{remark}
\label{2.5-5} It is now not difficult to prove the assertion that all the
compact CR manifolds with transversal, locally free, CR $S^{1}$-action as
considered in \cite{CHT}, can be regarded as "circle bundles" of orbifold
holomorphic line bundles on certain compact complex orbifolds. We omit the
details here.
\end{remark}

Let $\Sigma $ be a complex manifold of complex dimension $n$ with a locally
free holomorphic $\mathbb{C}^{\ast }$-action $\sigma (\lambda ),$ $\lambda $ 
$\in $ $\mathbb{C}^{\ast }.$ For any $m\in \mathbb{Z},$ we define the $m$-th
Fourier component $\hat{\Omega}_{m}^{0,q}$ of $\Omega ^{0,q}(\Sigma )$ by%
\begin{equation*}
\hat{\Omega}_{m}^{0,q}(\Sigma ):=\{\omega \in \Omega ^{0,q}(\Sigma ):\sigma
(\lambda )^{\ast }\omega =\lambda ^{m}\omega \text{ for all }\lambda \in 
\mathbb{C}^{\ast }\}.
\end{equation*}

\noindent Remark that we are actually interested in the subspace $\Omega
_{m}^{0,q}(\Sigma )$ $\subset $ $\hat{\Omega}_{m}^{0,q}(\Sigma )$ (see
Definition \ref{2m}).

To describe $\hat{\Omega}_{m}^{0,q}(\Sigma ),$ recalling local holomorphic
coordinates $z_{1},$\textit{\ }$z_{2},$\textit{\ }$..,$\textit{\ }$z_{n-1},$%
\textit{\ }$w$ in Proposition \ref{p-gue2-1} and using (\ref{2.10-5}) 
\begin{equation}
\sigma (\lambda )(z_{1},\mathit{\ }z_{2},\mathit{\ }..,\mathit{\ }z_{n-1},%
\mathit{\ }w)=(z_{1},\mathit{\ }z_{2},\mathit{\ }..,\mathit{\ }z_{n-1},\text{
}\lambda w)  \label{C6}
\end{equation}

\noindent for $\lambda \in \mathbb{C}_{\delta }$ with small $\delta >0$ (see
(\ref{3-0.75}) for the definition of $\mathbb{C}_{\delta }$), we write an
element $\omega $ $\in $ $\hat{\Omega}_{m}^{0,q}(\Sigma )$ as follows:%
\begin{equation}
\omega =f_{I_{q}}(z,\bar{z},w,\bar{w})d\bar{z}^{I_{q}}+g_{I_{q-1}}(z,\bar{z}%
,w,\bar{w})d\bar{z}^{I_{q-1}}\wedge d\bar{w}  \label{C6-1}
\end{equation}

\noindent where $z$ $=$ $(z_{1},\mathit{\ }z_{2},\mathit{\ }..,\mathit{\ }%
z_{n-1})$ and $I_{q}$ denotes the multi-index ($i_{1},$ $..,$ $i_{q})$, $1$ $%
\leq $ $i_{1}$ $<$ $i_{2}$ $<$ $\cdot \cdot $ $<$ $i_{q}$ $\leq $ $n.$ We
are going to simplify the expression (\ref{C6-1}); the result is given in (%
\ref{C9-1}) below.

The condition $\rho (\lambda )^{\ast }\omega =\lambda ^{m}\omega $ in $(z,w)$
reads%
\begin{eqnarray}
f_{I_{q}}(z,\bar{z},\lambda w,\bar{\lambda}\bar{w}) &=&\lambda
^{m}f_{I_{q}}(z,\bar{z},w,\bar{w}),  \label{C7} \\
g_{I_{q-1}}(z,\bar{z},\lambda w,\bar{\lambda}\bar{w})\bar{\lambda}
&=&\lambda ^{m}g_{I_{q-1}}(z,\bar{z},w,\bar{w}).  \notag
\end{eqnarray}

\noindent Differentiating the first equation of (\ref{C7}) in $\bar{\lambda}$
gives $f_{I_{q},\bar{w}}(z,\bar{z},\lambda w,\bar{\lambda}\bar{w})\bar{w}=0$
(henceforth $f_{I_{q},\bar{w}}=\partial f_{I_{q}}/\partial \bar{w}$ etc.) so
that $f_{I_{q},\bar{w}}(z,\bar{z},w,\bar{w})=0,$ $f_{I_{q}}=f_{I_{q}}(z,\bar{%
z},w).$ Similarly, differentiating it in $\lambda $ gives $f_{I_{q},w}(z,%
\bar{z},\lambda w)w$ $=$ $m\lambda ^{m-1}f_{I_{q}}(z,\bar{z},w).$ This is
solved (by setting $\lambda =1)$ to be $f_{I_{q}}(z,\bar{z},w)=f_{I_{q}}(z,%
\bar{z},1)w^{m}+h_{I_{q}}(z,\bar{z})$ for some $h_{I_{q}}(z,\bar{z}).$ It
follows from the first equation of (\ref{C7}) (with $w=1)$ that $h_{I_{q}}(z,%
\bar{z})$ $\equiv $ $0.$ Hence%
\begin{equation}
f_{I_{q}}(z,\bar{z},w)=f_{I_{q}}(z,\bar{z},1)w^{m}.  \label{C8}
\end{equation}

\noindent Differentiating the second equation of (\ref{C7}) in $\bar{\lambda}
$ gives%
\begin{equation}
\frac{\partial g_{I_{q-1}}}{\partial \bar{w}}(z,\bar{z},\lambda w,\bar{%
\lambda}\bar{w})\bar{w}\bar{\lambda}+g_{I_{q-1}}(z,\bar{z},\lambda w,\bar{%
\lambda}\bar{w})=0.  \label{2-13.5}
\end{equation}

\noindent Setting $\lambda $ $=$ $1$, we then solve (\ref{2-13.5}): $%
g_{I_{q-1}}=\bar{w}^{-1}C_{I_{q-1}}(z,\bar{z},w)$ for some function $%
C_{I_{q-1}}$ $=:$ $C.$ Substituting this into (\ref{C7}) gives $C(z,\bar{z}%
,\lambda w)=\lambda ^{m}C(z,\bar{z},w).$ In this formula, taking $w$ $=$ $1$
and rewriting $\lambda $ as $w$, we get $C(z,\bar{z},w)=C(z,\bar{z},1)w^{m}$
and conclude that 
\begin{equation}
g_{I_{q-1}}=C_{I_{q-1}}(z,\bar{z},1)\bar{w}^{-1}w^{m}.  \label{C9}
\end{equation}

\noindent From (\ref{C6-1}), (\ref{C8}) and (\ref{C9}), we obtain 
\begin{equation}
\omega =f_{I_{q}}(z,\bar{z})w^{m}d\bar{z}^{I_{q}}+C_{I_{q-1}}(z,\bar{z})w^{m}%
\bar{w}^{-1}d\bar{z}^{I_{q-1}}\wedge d\bar{w}.  \label{C9-1}
\end{equation}

It is straightforward to deduce the transformation law for $f_{I_{q}}$ and $%
C_{I_{q-1}}$ of (\ref{C9-1}) under the change of holomorphic coordinates (%
\ref{C0}). We omit the details.





Provisionally let us define 
\begin{equation}
\hat{H}_{m}^{q}(\Sigma ,\mathcal{O}):=\frac{\text{Ker}\{\bar{\partial}:\hat{%
\Omega}_{m}^{0,q}(\Sigma )\rightarrow \hat{\Omega}_{m}^{0,q+1}(\Sigma )\}}{%
\func{Im}\{\bar{\partial}:\hat{\Omega}_{m}^{0,q-1}(\Sigma )\rightarrow \hat{%
\Omega}_{m}^{0,q}(\Sigma )\}}  \label{Hmq1}
\end{equation}%
\noindent (notice the difference between (\ref{Hmq1}) and (\ref{Hmq}),
marked by tilde here).

\begin{definition}
\label{2m} (Regularity condition) For $m\in \mathbb{Z}$ let $\Omega
_{m}^{0,q}(\Sigma )$ denote the space of elements $\omega $ which satisfy 
\begin{equation}
i)\text{ }\omega \in \hat{\Omega}_{m}^{0,q}(\Sigma ),\text{ }ii)\text{ }%
\omega =f_{I_{q}}(z,\bar{z})w^{m}d\bar{z}^{I_{q}}\text{ in (one hence all)
local coordinate(s).}  \label{1-2-1}
\end{equation}
\end{definition}

It is easily seen that $\Omega _{m}^{0,q}(U)$ $\neq $ $\{0\}$ if the closure 
$\bar{U}$ of some $\mathbb{C}^{\ast }$-invariant open subset $U$ $\subset $ $%
\Sigma $ fully lies in the principal $\mathbb{C}^{\ast }$-stratum $\Sigma
_{p_{1}}$ of $\Sigma $ (see (\ref{1-4})). If $\bar{U}$ intersects the
lower-dimensional strata of $\Sigma ,$ the situation is somewhat delicate
(see the case $ii)$ stated after (\ref{6-1g})), and we resort to Proposition %
\ref{projm}\ for the related issues.

By analogy with (\ref{Hmq1}) with $\Omega _{m}^{0,q}(\Sigma )$ in place of $%
\hat{\Omega}_{m}^{0,q}(\Sigma ),$ one can define $H_{m}^{q}(\Sigma ,\mathcal{%
O})$ as given in (\ref{Hmq}). A motivation is seen in Proposition \ref%
{p-gue2-2} below; see Section 4 (cf. the discussion from (\ref{H3}) onwards)
for more.

In the case where $\Sigma $ $=$ $\hat{L}\backslash \{0$-section$\}$ $=:$ $%
\hat{L}^{\prime }$ (see $ii)$ of Example \ref{2.0})$,$ we wonder if or when $%
\hat{H}_{m}^{q}(\hat{L}^{\prime },\mathcal{O})$ is finite-dimensional. It is
easily seen that in (\ref{C9-1}), $C_{I_{q-1}}$ $=$ $0$ for $m$ $=$ $0,$ $1$
provided that $g_{I_{q-1}}$ in (\ref{C6-1}) can be continuously extended to $%
w$ $=$ $0$. Similarly, for $m$ $\geq $ $2$ we still get $C_{I_{q-1}}$ $=$ $0$
if we require that the extension of $g_{I_{q-1}}$ is $C^{m-1}$ in $\bar{w}$
at $w$ $=$ $0.$ Namely, under certain regularity assumption along
\textquotedblleft $w$ $=$ $0"$ we have $C_{I_{q-1}}$ $=$ $0$ and by (\ref%
{C9-1}) 
\begin{equation}
\omega =f_{I_{q}}(z,\bar{z})w^{m}d\bar{z}^{I_{q}},\text{ }m\geq 0.
\label{C10}
\end{equation}

\noindent Similarly for $m$ $<$ $0,$ (\ref{1-2-1}) of Definition \ref{2m}
can be regarded as a regularity condition at \textquotedblleft $w=\infty ".$
In general $\hat{H}_{m}^{q}(\Sigma ,\mathcal{O})$ in (\ref{Hmq1}) is not
expected to be linearly isomorphic to $H_{m}^{q}(\Sigma ,\mathcal{O}).$

As a matter of fact, $H_{m}^{q}(\Sigma ,\mathcal{O})$ is necessarily
finite-dimensional (see Theorem \ref{t-4-2}).

Remark that the elements of $\Omega _{m}^{0,q}(\Sigma )$ $\subset $ $\hat{%
\Omega}_{m}^{0,q}(\Sigma )$ have the following transformation law. In two
systems of holomorphic coordinates $(z,w)$ and $(\tilde{z},\tilde{w})$, we
have%
\begin{equation}
\tilde{w}=w\varphi (z_{1},z_{2},..,z_{n-1}),\text{ }\tilde{z}_{j}=\mu
_{j}(z_{1},z_{2},..,z_{n-1}),\text{ }1\leq j\leq n-1  \label{C10-1}
\end{equation}

\noindent (see (\ref{C0})). The condition $s(z,\bar{z})w^{m}$ $=$ $\tilde{s}(%
\tilde{z},\overline{\tilde{z}})\tilde{w}^{m}$ for $s,$ $\tilde{s}$ being $%
(0,q)$-forms in $z,$ $\tilde{z}$ respectively, implies%
\begin{equation}
s(z,\bar{z})=\tilde{s}(\mu _{1}(z),..,\mu _{n-1}(z),\overline{\mu _{1}(z)}%
,..,\overline{\mu _{n-1}(z)})(\varphi (z))^{m}.  \label{C11}
\end{equation}



These will help to verify that certain transversally $spin^{c}$ Dirac
operators (cf. Lemma \ref{5-1.25} and Definition \ref{5-1.5}) are globally
defined.

Let us look into the aforementioned case $\Sigma $ $=$ $\hat{L}\backslash
\{0 $-section\} =: $\hat{L}^{\prime }$ more closely. Let $L^{\ast }$ denote
the dual holomorphic line bundle of $L.$ Let $\Omega ^{0,q}(M,(L^{\ast
})^{\otimes m})$ denote the space of ($L^{\ast })^{\otimes m}$-valued $(0,q)$%
- forms on $M.$ It is straightforward to verify the following (see also
Remark \ref{PLMB}). 




\begin{proposition}
\label{p-gue2-2}\textit{The map }$\psi _{q,m}$ \textit{from }$\phi =\eta
\otimes (e^{\ast })^{\otimes m}$\textit{\ }$\in $\textit{\ }$\Omega
^{0,q}(M,(L^{\ast })^{\otimes m})$\textit{\ to }$\omega $\textit{\ }$\in $%
\textit{\ }$\Omega _{m}^{0,q}(\hat{L}^{\prime })$ (see \textit{Definition %
\ref{2m}) }given locally \textit{by}%
\begin{equation*}
\omega (p,we)=\eta (p)w^{m}
\end{equation*}%
\textit{\noindent is globally defined and a vector space isomorphism.}
Moreover $\psi _{q,m}$ commutes with the respective $\bar{\partial}$
operators, and thus $H_{m}^{q}(\hat{L}^{\prime },\mathcal{O})\simeq H_{\bar{%
\partial}}^{0,q}(M,(L^{\ast })^{\otimes m}).$
\end{proposition}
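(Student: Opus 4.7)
The plan is to verify directly that the local formula $\omega(p,we)=\eta(p)w^{m}$ produces a globally well-defined element of $\Omega_{m}^{0,q}(\hat{L}')$, that the resulting linear map $\psi_{q,m}$ is a bijection, and that it intertwines the respective $\bar{\partial}$ operators.

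First, I would check global well-definedness by matching transition data. On overlapping trivializations with local holomorphic frames $e,\tilde{e}$ of $L$ and transition $\tilde{e}=\varphi^{-1}e$ for some nowhere-vanishing holomorphic $\varphi$, the induced fiber coordinates on $\hat{L}'$ satisfy $\tilde{w}=w\varphi$, matching the distinguished coordinate transformation (\ref{C10-1}). Dualizing yields $(\tilde{e}^{*})^{\otimes m}=\varphi^{m}(e^{*})^{\otimes m}$, so a section written both ways as $\phi=\eta\otimes(e^{*})^{\otimes m}=\tilde{\eta}\otimes(\tilde{e}^{*})^{\otimes m}$ has its components related by $\tilde{\eta}=\varphi^{-m}\eta$, which is precisely the transformation law (\ref{C11}) for the coefficient functions of an element of $\Omega_{m}^{0,q}(\hat{L}')$. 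Hence the locally defined pieces $\eta\,w^{m}\,d\bar{z}^{I_{q}}$ and $\tilde{\eta}\,\tilde{w}^{m}\,d\overline{\tilde{z}}^{I_{q}}$ agree on overlaps and patch into one global $\omega$ on $\hat{L}'$.

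Next I would verify $\omega\in\Omega_{m}^{0,q}(\hat{L}')$: the local shape already satisfies the regularity condition (\ref{1-2-1}) by construction, and since $\sigma(\lambda)$ acts by $(z,w)\mapsto(z,\lambda w)$ by (\ref{C6}), one has $\sigma(\lambda)^{*}d\bar{z}^{j}=d\bar{z}^{j}$ and $\sigma(\lambda)^{*}w^{m}=\lambda^{m}w^{m}$, so $\sigma(\lambda)^{*}\omega=\lambda^{m}\omega$ for $\lambda$ near $1$ and, by analytic continuation, for all $\lambda\in\mathbb{C}^{*}$. Injectivity of $\psi_{q,m}$ is immediate from the local formula; for surjectivity, given any $\omega\in\Omega_{m}^{0,q}(\hat{L}')$ the local expression (\ref{1-2-1}) supplies coefficients $f_{I_{q}}(z,\bar{z})$ which, by (\ref{C11}), patch into a well-defined element $\phi\in\Omega^{0,q}(M,(L^{*})^{\otimes m})$ with $\psi_{q,m}(\phi)=\omega$.

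Finally, for $\bar{\partial}$-compatibility I would compute locally: since $w^{m}$ is holomorphic in $w$ and $f_{I_{q}}$ is independent of $\bar{w}$,
\begin{equation*}
\bar{\partial}\omega=\bar{\partial}\bigl(f_{I_{q}}(z,\bar{z})\,w^{m}\,d\bar{z}^{I_{q}}\bigr)=\bigl(\bar{\partial}_{M}f_{I_{q}}\bigr)(z,\bar{z})\,w^{m}\wedge d\bar{z}^{I_{q}},
\end{equation*}
which coincides with $\psi_{q+1,m}(\bar{\partial}_{M}\phi)$. The cohomology isomorphism $H_{m}^{q}(\hat{L}',\mathcal{O})\simeq H_{\bar{\partial}}^{0,q}(M,(L^{*})^{\otimes m})$ then follows at once. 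The only genuine subtlety is keeping track of the inverse relations between the transitions of $L$, of $L^{*}$ and of the fiber coordinate $w$, so that the gluing condition for $\omega$ lands exactly on (\ref{C11}); all other steps are entirely routine.
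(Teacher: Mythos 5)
Your proof is correct, and it follows the natural route the paper implicitly has in mind (the paper declares the statement "straightforward to verify" and gives no proof). You carefully match the fiber-coordinate transition $\tilde{w}=w\varphi$ with the dual-frame transition $(\tilde{e}^{*})^{\otimes m}=\varphi^{m}(e^{*})^{\otimes m}$ and show that the resulting coefficient transformation is exactly (\ref{C11}), which is the only non-trivial point; the $\bar{\partial}$-commutation and the cohomology isomorphism then follow directly as you write. One small simplification is available: since the $\mathbb{C}^{*}$-action on $\hat{L}'$ is globally free, the normal form (\ref{C6}) holds on a single chart for all $\lambda\in\mathbb{C}^{*}$, so the equivariance $\sigma(\lambda)^{*}\omega=\lambda^{m}\omega$ holds globally at once and the analytic-continuation step is unnecessary.
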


%
%
%
%
%
%





Proposition \ref{p-gue2-2} can be generalized for those $\Sigma $ other than 
$\hat{L}^{\prime };$ see Proposition \ref{dualLm}. It will be used in
Sections 5 and 9; see (\ref{5-2.5}) and Remark \ref{PLMB}.

Our next task is to define the adjoint operators of 
\begin{equation*}
\bar{\partial}_{\hat{L}^{\prime },m}:\Omega _{m}^{0,q}(\hat{L}^{\prime
})\rightarrow \Omega _{m}^{0,q+1}(\hat{L}^{\prime })\text{ \ }(\hat{L}%
^{\prime }=\hat{L}\backslash \{0\text{-section}\})
\end{equation*}%
\noindent and 
\begin{equation*}
\bar{\partial}_{M,(L^{\ast })^{\otimes m}}:\Omega ^{0,q}(M,(L^{\ast
})^{\otimes m})\rightarrow \Omega ^{0,q+1}(M,(L^{\ast })^{\otimes m}),
\end{equation*}%
\noindent and to compare (via Proposition \ref{p-gue2-2}) the two adjoint
operators so defined. For this purpose, we need first of all to endow a
metric on $\hat{L}^{\prime }$ and a fibre metric on $L$ (and hence on $%
L^{\ast })$. We will do it for general $\Sigma $ in the next section.

\section{\textbf{A Hermitian metric on complex manifolds with }$\mathbb{C}%
^{\ast }$-\textbf{action\label{S-metric}}}

Now we consider a general complex manifold $\Sigma $ with a holomorphic $%
\mathbb{C}^{\ast }$-action $\sigma $ satisfying (\ref{1-0}), (\ref{1-1}) and
(\ref{1-2})$.$ We want to construct a Hermitian metric $G_{a,m}$ on $\Sigma $
as remarked in the end of the last section. This metric is going to be $%
S^{1} $-invariant although not $\mathbb{C}^{\ast }$\textbf{-}invariant (here 
$S^{1} $ $\subset $ $\mathbb{C}^{\ast }$ naturally). For its $S^{1}$%
-invariance, see Remark \ref{7-11b}\ in Section 7.

Let $L_{\Sigma }$ be the holomorphic line bundle over $\Sigma ,$ whose fibre 
$L_{\Sigma ,q}$ at $q\in \Sigma $ consists of complex multiples of $\digamma
(q)=\frac{\partial }{\partial \zeta }|_{q}=w\frac{\partial }{\partial w}%
|_{q} $ (see (\ref{2-2})). Note that $L_{\Sigma }$ is a holomorphic
subbundle of the holomorphic tangent bundle $T^{1,0}\Sigma .$ Given $q$ $\in 
$ $\Sigma ,$ we define a nowhere vanishing holomorphic section $v$ $:\Sigma
\rightarrow $ $L_{\Sigma }$ by 
\begin{equation}
v_{q}:=\frac{d}{d\lambda }|_{\lambda =1}\sigma (\lambda )q.  \label{3-0}
\end{equation}%
\noindent Observe that $L_{\Sigma }$ is a $\mathbb{C}^{\ast }$-equivariant
bundle: A natural holomorphic $\mathbb{C}^{\ast }$-action $\tilde{\sigma}$
on $L_{\Sigma }$ is given by 
\begin{equation}
\tilde{\sigma}(\lambda )v_{q}:=\lambda ^{-1}v_{\sigma (\lambda )q}
\label{3.0}
\end{equation}

\noindent so that $\pi _{L_{\Sigma }}\circ \tilde{\sigma}(\lambda )=\sigma
(\lambda )\circ \pi _{L_{\Sigma }}$ where $\pi _{L_{\Sigma }}$ $:$ $%
L_{\Sigma }\rightarrow \Sigma $ is the projection.

We divide the construction of the metric $G_{a,m}$ into three steps.

\bigskip

\textbf{Step 1. A }$\mathbb{C}^{\ast }$\textbf{-invariant Hermitian metric
on }$L_{\Sigma }$ \textbf{and} \textbf{a global 2-form }$\partial _{z}\bar{%
\partial}_{z}\log h(z,\bar{z}).$

On each patch $D_{j}$ (see (\ref{1-0})) one can easily choose a fibre
Hermitian metric $<\cdot ,\cdot >_{j}$ on $L_{\Sigma }|_{D_{j}}$ such that $<%
\tilde{\sigma}(\lambda )s_{q},\tilde{\sigma}(\lambda )t_{q}>_{j}$ $=$ $%
<s_{q},t_{q}>_{j}$ holds whenever $q\in D_{j},$ $\lambda $ $\in $ $\mathbb{R}%
^{+}$ and any $s_{q},t_{q}$ $\in $ $L_{\Sigma ,q}.$ Take a partition of
unity $\chi _{j}$ supported on $D_{j},$ satisfying $\sigma (\lambda )^{\ast
}\chi _{j}$ $=$ $\chi _{j}$ for every $\lambda $ $\in $ $\mathbb{R}^{+}.$
Define a Hermitian metric $<\cdot ,\cdot >^{\prime }$ on $L_{\Sigma }$ by
the sum of $\chi _{j}<\cdot ,\cdot >_{j}$ (over $j),$ which is $\tilde{\sigma%
}(\lambda )$-invariant for $\lambda $ $\in $ $\mathbb{R}^{+}.$ We then take
the average of the $S^{1}$-action to get a $\mathbb{\tilde{\sigma}}$%
-invariant Hermitian metric $<\cdot ,\cdot >$ or $<\cdot ,\cdot >_{L_{\Sigma
}}$ on $L_{\Sigma }.$

For a vector $e$ $\in $ $L_{\Sigma },$ we write $||e||_{L_{\Sigma }}$ or $%
||e||$ $:=$ $\sqrt{<e,e>}.$ Define a global function $l$ :$\Sigma
\rightarrow \mathbb{R}^{+}$ by 
\begin{equation}
l(q):=||v_{q}||^{2}  \label{lq0}
\end{equation}%
\noindent for $q\in \Sigma $ and $v_{q}$ in (\ref{3-0})$.$ In local
coordinates $(z,w)$ (where $z$ $=$ $(z_{1},$ $..,$ $z_{n-1}))$ we have $%
v_{(z,\lambda )}=(w\partial /\partial w)|_{(z,\lambda )}$ and%
\begin{equation}
l(q)=h(z,\bar{z})|\lambda |^{2},\text{ }h(z,\bar{z}):=||(\partial /\partial
w)|_{(z,\lambda )}||^{2}  \label{lq}
\end{equation}

\noindent where $h(z,\bar{z})$ is independent of $\lambda .$ For, the metric 
$<\cdot ,\cdot >$ on $L_{\Sigma }$ is $\tilde{\sigma}$-invariant by
construction and $\frac{\partial }{\partial w}$ is seen to be $\tilde{\sigma}
$-invariant:%
\begin{eqnarray}
\tilde{\sigma}(\lambda )(\frac{\partial }{\partial w}|_{(z,1)}) &\overset{}{=%
}&\tilde{\sigma}(\lambda )v_{(z,1)}\overset{(\ref{3.0})}{=}\lambda
^{-1}v_{(z,\lambda )}  \label{ddw} \\
&\overset{}{=}&\lambda ^{-1}(\lambda \frac{\partial }{\partial w}%
|_{(z,\lambda )})=\frac{\partial }{\partial w}|_{(z,\lambda )}  \notag
\end{eqnarray}%
\noindent whenever $q$ $\in $ $D_{j},$ $\sigma (\lambda )q$ $\in $ $D_{j}$
and $\lambda $ $\in $ $C_{\delta _{j}}$ where%
\begin{equation}
C_{\delta _{j}}:=\{\rho e^{i\theta }\in \mathbb{C}^{\ast }:(\theta ,\rho
)\in (-\delta _{j},\delta _{j})\times \mathbb{R}^{+}\}.  \label{3-0.75}
\end{equation}

\noindent We refer to Remark \ref{7-11b} and Lemma \ref{A} $iv)$ for the
large-angle invariant property of $l(q)$ and $h(z,\bar{z}).$

Writing $\partial _{z}\bar{\partial}_{z}\log h(z,\bar{z})$ :$=$ $(\partial
_{z_{\alpha }}\partial _{\bar{z}_{\beta }}\log h(z,\bar{z}))dz_{\alpha
}\wedge d\bar{z}_{\beta },$ by using (\ref{C0}) we have 
\begin{equation}
\partial _{z}\bar{\partial}_{z}\log h(z,\bar{z})=\partial _{\tilde{z}}\bar{%
\partial}_{\tilde{z}}\log h(\tilde{z},\overline{\tilde{z}})  \label{3-6-1}
\end{equation}
\noindent which means that $\partial \bar{\partial}\log h$ is globally
defined.

\bigskip

\textbf{Step 2. A Hermitian metric }$G_{a}$\textbf{\ on }$\Sigma $\textbf{\
with local formulas.}

\begin{notation}
\label{N-3-1} Let $\pi :\Sigma \rightarrow M:=\Sigma /\mathbb{\sigma }$ be
the projection. Recall that $M$ is a compact complex orbifold by Theorem \ref%
{thm2-1}. Choose a Hermitian metric $g_{M}$ (not necessarily K\"{a}hler) on $%
M$ (in the orbifold sense; see for instance \cite[p.176]{Du}).
\end{notation}

Recall that we can choose local holomorphic patches $(D_{j},$ $(z,w))$ with $%
|w|$ extended to $\mathbb{R}^{+}$ (see (\ref{1-0}) and Theorem \ref{thm2-1}%
). We define%
\begin{eqnarray}
g_{1}:= &&\partial _{\Sigma }\bar{\partial}_{\Sigma }l-(\partial _{z}\bar{%
\partial}_{z}\log h)l,  \label{3-7.5} \\
g_{2}:= &&\partial _{\Sigma }\bar{\partial}_{\Sigma
}(l^{-2a})-(-2a)(\partial _{z}\bar{\partial}_{z}\log h)l^{-2a}  \notag
\end{eqnarray}

\noindent where \textquotedblleft $a"$ is a positive large number and $l$ is
defined in (\ref{lq0}). Let $\varphi _{1}$ be a cutoff function on $\mathbb{R%
}$ such that $\varphi _{1}(x)$ $=$ $1$ for $x$ $\in $ $[-1,1]$ and $\varphi
_{1}(x)$ $=$ $0$ for $|x|$ $\geq $ $2.$ We define a Hermitian metric $G_{a}$
on $\Sigma $ by using $g_{1},$ $g_{2}$ of (\ref{3-7.5}) and $g_{M}$ above:%
\begin{equation}
G_{a}:=\pi ^{\ast }g_{M}+(\varphi _{1}\circ l)g_{1}^{\#}+(1-\varphi
_{1}\circ l)g_{2}^{\#}  \label{Ga}
\end{equation}

\noindent where $g_{1}^{\#},$ $g_{2}^{\#}$ are metrics associated to the $2$%
-forms $g_{1},$ $g_{2}$ respectively.

In local coordinates $(z,w)$ we write (\ref{3-7.5}) as%
\begin{eqnarray}
g_{1} &=&\partial _{\Sigma }\bar{\partial}_{\Sigma }(hw\bar{w})-(\partial
_{z}\bar{\partial}_{z}\log h)hw\bar{w}  \label{M0} \\
g_{2} &=&\partial _{\Sigma }\bar{\partial}_{\Sigma }[(hw\bar{w}%
)^{-2a}]-(-2a)(\partial _{z}\bar{\partial}_{z}\log h)(hw\bar{w})^{-2a}. 
\notag
\end{eqnarray}

\noindent Denote $\frac{\partial h}{\partial z_{\alpha }},$ $\frac{\partial h%
}{\partial \bar{z}_{\alpha }}$, $\frac{\partial ^{2}h}{\partial \bar{z}%
_{\beta }\partial z_{\alpha }}$ by $h_{\alpha },$ $h_{\bar{\alpha}},$ $%
h_{\alpha \bar{\beta}}$. A direct computation shows%
\begin{eqnarray}
g_{1} &=&hdw\wedge d\bar{w}+h^{-1}h_{\alpha }h_{\bar{\beta}}w\bar{w}%
dz_{\alpha }\wedge d\bar{z}_{\beta }  \label{M0-1} \\
&&+h_{\bar{\alpha}}\bar{w}dw\wedge d\bar{z}_{\alpha }+h_{\alpha }wdz_{\alpha
}\wedge d\bar{w}  \notag
\end{eqnarray}

\noindent and%
\begin{eqnarray}
g_{2} &=&4a^{2}(hw\bar{w})^{-2a}\{(w\bar{w})^{-1}dw\wedge d\bar{w}%
+h^{-2}h_{\beta }h_{\bar{\alpha}}dz_{\beta }\wedge d\bar{z}_{\alpha }
\label{M0-2} \\
&&+h^{-1}h_{\bar{\alpha}}w^{-1}dw\wedge d\bar{z}_{\alpha }+h^{-1}h_{\alpha }%
\bar{w}^{-1}dz_{\alpha }\wedge d\bar{w}\}.  \notag
\end{eqnarray}

Given a point $p_{0}$ $\in $ $\Sigma ,$ we can find coordinates $(z,w)$
(still distinguished in the sense of Proposition \ref{p-gue2-1}) with $%
(z,w)(p_{0})$ $=$ $(z_{0},w_{0})$ such that 
\begin{equation}
h(z_{0},\bar{z}_{0})=1\text{ and }dh(z_{0},\bar{z}_{0})=0  \label{M0-3}
\end{equation}%
\noindent (cf. \cite[p.80]{Wel}).

\begin{remark}
\label{R-h} In fact we only need to change $w$ to $j(z)w$ while the
coordinate $z$ is fixed to achieve (\ref{M0-3}). So $h$ depends only on the
choice of $w$-coordinate, denoted as $h^{w}$ below. If we make a change of $%
w:$ $\tilde{w}=cw$ for a constant $c\in $ $\mathbb{C}^{\ast }$ (with $z$%
-coordinate fixed)$,$ we then have $h^{\tilde{w}}(z,\bar{z})=h^{w}(z,\bar{z}%
)|c|^{-2}.$
\end{remark}

Thus, at $p_{0}$ we simplify:%
\begin{equation}
G_{a}=(g_{M})_{\alpha \bar{\beta}}(z_{0},\bar{z}_{0})dz_{\alpha }d\bar{z}%
_{\beta }+(\varphi _{1}(w_{0}\bar{w}_{0})+\varphi _{2}(w_{0}\bar{w}%
_{0})4a^{2}(w_{0}\bar{w}_{0})^{-2a-1})dwd\bar{w}.  \label{M1-1}
\end{equation}

\noindent where $dz_{\alpha }d\bar{z}_{\beta }$ and $dwd\bar{w}$ denote the
symmetric product of $1$-forms (this way of expression for a Hermitian
metric follows the notation of \cite[p.155 (4)]{KN}) and $\varphi _{2}$ $:=$ 
$1-\varphi _{1}$.

So the metric $G_{a}$ of (\ref{M1-1}) has the property that
\textquotedblleft base" $z$-slice and \textquotedblleft fibre" $w$-slice
yield an orthogonal splitting at $p_{0}$ (here $z$-slice is noncanonical and
depends on the choice of coordinates). Furthermore, the $w$-slice (which is
always part of a $\mathbb{C}^{\ast }$-orbit, cf. (\ref{C6})) is totally
geodesic (cf. Proposition \ref{p-gue3-1} below).

\bigskip

\textbf{Step 3. The normalized metric }$G_{a,m}$\textbf{\ and its volume
form }$dv_{\Sigma ,m}$ \textbf{for }$m\geq 0.$

Assume $m\geq 0.$ Following Step 2, we have the intrinsic expression of the
volume form $dv_{G_{a}}$ or $dv_{\Sigma }$ as follows:%
\begin{equation}
dv_{\Sigma }=\pi ^{\ast }dv_{M}\wedge dv_{f}  \label{hfv}
\end{equation}%
\noindent where $\pi ^{\ast }dv_{M}(=dv(z)$ in coordinates $(z,w))$ ($dv_{M}$
denotes the volume form of $M)$ is the volume form of $\pi ^{\ast }g_{M}$
and the $2$-form $dv_{f}$ $=$ $dv_{fibre}$ on $\Sigma $ is basically the
area form on the $\mathbb{C}^{\ast }$-orbit extended to $\Sigma $ by using
the embedding of (vertical, fibrewise) forms via the orthogonal splitting
given by the metric (\ref{M1-1}).

Denote by $\mathbb{C}^{\ast }\circ p_{0}$ the $\mathbb{C}^{\ast }$-orbit $%
\{\lambda \circ p_{0}:$ $\lambda \in \mathbb{C}^{\ast }\}$ passing through $%
p_{0}.$ Define $\tau _{p_{0}}:\mathbb{C}^{\ast }$ $\rightarrow $ $\mathbb{C}%
^{\ast }\circ p_{0}$ $\subset $ $\Sigma $ by $\tau _{p_{0}}(\lambda )$ $=$ $%
\lambda \circ p_{0}.$ Define for $l$ of (\ref{lq0})%
\begin{equation}
\lambda _{m}(p_{0}):=\int_{\mathbb{C}^{\ast }}(\tau _{p_{0}}^{\ast
}l)^{m}(\tau _{p_{0}}^{\ast }dv_{f}).  \label{lambdam}
\end{equation}%
\noindent This is an integral of the function $l^{m}$ along the orbit $%
\mathbb{C}^{\ast }\circ p_{0}$ (possibly with "multiplicities") and is
easily seen to be independent of the choice of the point $p_{0}$ in the same
orbit.

Let $p_{0}\in \Sigma \backslash \Sigma _{\text{sing}},$ i.e. $p_{0}$ lies in
the principal stratum. Choosing the coordinates $(z,w)$ such that $h(z_{0},%
\bar{z}_{0})$ $=$ $1$ and $dh(z_{0},\bar{z}_{0})$ $=$ $0$ at $p_{0}$ (\ref%
{M0-3}), we have (cf. (\ref{C6}) for $\delta $ $=$ $\pi $ in $C_{\delta }$
since $p_{0}$ $\notin $ $\Sigma _{\text{sing}})$%
\begin{equation}
\tau _{p_{0}}^{\ast }dv_{f}(w)=dv(|w|)\wedge dv(\phi ),\text{ }w=|w|e^{i\phi
}\in \mathbb{C}^{\ast }  \label{dvfibre}
\end{equation}

\noindent where $dv(\phi )$ (or $dv_{S^{1}}(\phi ))$ :$=$ $d\phi $ and (cf. (%
\ref{M1-1}))%
\begin{equation}
dv(|w|)(\text{or }dv_{\mathbb{R}^{+}}(|w|)):=[\varphi _{1}(|w|^{2})+\varphi
_{2}(|w|^{2})4a^{2}|w|^{-4a-2}]|w|d|w|.  \label{dvfibre1}
\end{equation}

To compute $\lambda _{m}(p_{0})$ of (\ref{lambdam}), by (\ref{dvfibre}) and (%
\ref{lq}) that $l(q)$ $=$ $h(z_{0},\bar{z}_{0})w\bar{w}$ $=$ $|w|^{2}$ we
have (recalling $C_{\delta }$ $=$ $\mathbb{C}^{\ast }$ here)%
\begin{equation}
\lambda _{m}(p_{0})=\int_{\mathbb{C}^{\ast }}|w|^{2m}dv(|w|)\wedge dv(\phi
)=2\pi \int_{\mathbb{R}^{+}}|w|^{2m}dv(|w|).  \label{3-16.5}
\end{equation}%
\noindent It follows from (\ref{dvfibre1}) and (\ref{3-16.5}) that the
numbers $\lambda _{m}(p_{0})$ are the same for all $\mathbb{C}^{\ast }$%
-orbits (by the obvious continuity of (\ref{lambdam}) when $p_{0}$ is across 
$\Sigma _{\text{sing}})$.

\begin{notation}
\textbf{\label{3-n}} Let $\lambda _{m}$ denote the common number $\lambda
_{m}(p_{0})$ in (\ref{3-16.5})$.$ Let $dv_{m}(|w|)$ $:=$ $2\pi
dv(|w|)/\lambda _{m}$ denote the normalized volume on $\mathbb{R}^{+},$ so
that 
\begin{equation}
\int_{\mathbb{R}^{+}}|w|^{2m}dv_{m}(|w|)=1.  \label{3-16.75}
\end{equation}
\end{notation}

\bigskip

The normalized metric $G_{a,m}$ is given as%
\begin{equation}
G_{a,m}:=\pi ^{\ast }g_{M}+(\varphi _{1}\circ l)\frac{g_{1}^{\#}}{\lambda
_{m}}+(1-\varphi _{1}\circ l)\frac{g_{2}^{\#}}{\lambda _{m}}  \label{metric}
\end{equation}%
\noindent on $\Sigma ,$ where $g_{1}^{\#},$ $g_{2}^{\#}$ are as in Step 2
(cf. (\ref{Ga})). The associated volume form $dv_{\Sigma ,m}$ has the
following intrinsic expression (cf. (\ref{hfv}))%
\begin{equation}
dv_{\Sigma ,m}=\pi ^{\ast }dv_{M}\wedge dv_{f,m}  \label{volume}
\end{equation}

\noindent where $\pi ^{\ast }dv_{M}(=dv(z)$ in coordinates $(z,w))$ is the
volume form of $\pi ^{\ast }g_{M}$ (recall that $\pi :\Sigma \rightarrow M$ $%
=$ $\Sigma /\sigma $ is the natural projection) and 
\begin{equation}
dv_{f,m}=dv_{f}/\lambda _{m},\text{ }\tau _{p_{0}}^{\ast
}dv_{f,m}(w)=dv_{m}(|w|)\wedge \frac{dv(\phi )}{2\pi }.  \label{3-18.75}
\end{equation}

Writing 
\begin{equation}
dv_{f,m}=l(q)^{-m}d\hat{v}_{m}(q),  \label{fibrenv_0}
\end{equation}%
\noindent one sees, with $l=hw\bar{w},$%
\begin{equation}
\tau _{p_{0}}^{\ast }d\hat{v}_{m}=(hw\bar{w})^{m}(\tau _{p_{0}}^{\ast
}dv_{f,m})=|w|^{2m}dv_{m}(|w|)\wedge \frac{dv(\phi )}{2\pi }.  \label{3.29-5}
\end{equation}%
\noindent In summary (for $h(p_{0})$ $=$ $1$ and $dh(p_{0})$ $=$ $0)$ 
\begin{eqnarray}
(\tau _{p_{0}}^{\ast }d\hat{v}_{m})(|w|) &=&|w|^{2m}dv_{m}(|w|),
\label{3-19.5} \\
\int_{\mathbb{R}^{+}}(\tau _{p_{0}}^{\ast }d\hat{v}_{m})(|w|) &\overset{(\ref%
{3-16.75})}{=}&1.  \notag
\end{eqnarray}

\noindent Since $l(q)$ is independent of the choice of $(z,w)$ coordinates ((%
\ref{lq0}), (\ref{lq})), intrinsically we have (cf. (\ref{3.29-5}))%
\begin{equation}
\int_{\mathbb{C}^{\ast }}\tau _{p_{0}}^{\ast }d\hat{v}_{m}=\frac{1}{\lambda
_{m}}\int_{\mathbb{C}^{\ast }}(\tau _{p_{0}}^{\ast }l)^{m}\tau
_{p_{0}}^{\ast }dv_{f}=\int_{\mathbb{C}^{\ast }}(\tau _{p_{0}}^{\ast
}l)^{m}\tau _{p_{0}}^{\ast }dv_{f,m}=1.  \label{fibrenv}
\end{equation}

We will often omit the pullback notation $\tau _{p_{0}}^{\ast }$ in later
computations.

Remark that the 2-form $d\hat{v}_{m}$ above is used in the index formula (%
\ref{MF}) of Theorem \ref{main_theorem} stated in the Introduction.

\begin{remark}
\textbf{\label{3-r}} For $f\in C^{\infty }(\Sigma )$ with $f$ $=$ $%
O(|w|^{m}) $ in local coordinates $(z,w),$ it follows from (\ref{dvfibre1})
that $\int_{\Sigma }|f(x)|^{2}dv_{\Sigma ,m}(x)<\infty $ for $a$ large, say, 
$a>\frac{m}{2}\geq 0.$
\end{remark}

\begin{lemma}
\label{L-3-5} For $a>\frac{m}{2}\geq 0$ the normalized metric $G_{a,m}$ (\ref%
{metric}) is uniformly equivalent to $G_{a}$ (\ref{Ga}) in the sense that
there exists a constant $C_{m}>0$ such that $C_{m}^{-1}G_{a,m}$ $\leq $ $%
G_{a}$ $\leq $ $C_{m}G_{a,m}.$ As a consequence we have $L^{2}(\Sigma ,$ $%
G_{a,m})$ $=$ $L^{2}(\Sigma ,$ $G_{a}).$
\end{lemma}

\begin{proof}
At a point $p_{0}$ we can simultaneously \textquotedblleft diagonalize" $%
G_{a}$ and $G_{a,m}$ in view of (\ref{M1-1}). Then ($C_{m}^{\prime
})^{-1}G_{a,m}$ $\leq G_{a}$ $\leq $ $C_{m}^{\prime \prime }G_{a,m}$ where $%
C_{m}^{\prime }$ := $\max \{1,\lambda _{m}^{-1}\}$ and $C_{m}^{\prime \prime
}$ $:=$ $\max \{1,\lambda _{m}\}.$ So $C_{m}$ $:=$ $C_{m}^{\prime
}C_{m}^{\prime \prime }$ is a constant required in the lemma.
\end{proof}

The following fact seems to be of independent interest although it is not
strictly needed for our purpose. It serves as a piece of evidence for the
fact that some geometric constructions \ (to be made later) on $\Sigma $ and
on $M$ $=$ $\Sigma /\mathbb{\sigma }$ respectively are mutually "compatible"
in an appropriate context (cf. Proposition \ref{madj} and Corollary \ref%
{Cor3-7}). It is mainly this compatibility that allows us to carry out our
transversal heat kernel method for the proof of the asserted results in this
paper.

\begin{proposition}
\label{p-gue3-1}\textit{\ Let }$p_{0}\in \Sigma .$ \textit{Each }$w$-slice%
\textit{\ in }$\Sigma ,$\textit{\ described by }$\lambda \circ p_{0}$ $=$ $%
\sigma (\lambda )p_{0},$ $\lambda $ $\in $ $C_{\delta _{j}}$ in a local
patch $D_{j},$ \textit{is totally geodesic with respect to }$G_{a}$ or $%
G_{a,m}.$\textit{\ In other words, the Christoffel symbols have the
following vanishing property:}%
\begin{equation}
\Gamma _{AB}^{C}=0\text{ for }A,B\text{ tangent and }C\text{ normal to }w%
\text{-slices.}  \label{M1-2}
\end{equation}
\end{proposition}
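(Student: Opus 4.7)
The plan is to verify the totally geodesic property by a direct Christoffel symbol computation in the distinguished local coordinates $(z,w)$ of Proposition~\ref{p-gue2-1}. Fixing a base point $p_{0}$ on the $w$-slice, I first use Remark~\ref{R-h} to renormalize $w$ so that $h(z_{0},\bar{z}_{0})=1$ and $dh(z_{0},\bar{z}_{0})=0$, as in (\ref{M0-3}). Because $h$ depends only on $z$, these normalizing conditions hold uniformly at \emph{every} point $(z_{0},w)$ of the slice through $p_0$, so a single coordinate system suffices to check all the Christoffel symbols along the whole slice.

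Next I would unpack the components of $G_{a}$ using (\ref{M0-1}), (\ref{M0-2}) and the definition (\ref{Ga}). At points with $z=z_{0}$ the mixed components $(G_{a})_{w\bar{z}_{\alpha}}$ and $(G_{a})_{\alpha\bar{w}}$ each retain an overall factor of $h_{\alpha}$ or $h_{\bar{\alpha}}$, and therefore vanish along the slice, giving a block-diagonal Hermitian matrix exactly of the form (\ref{M1-1}). Two derivative computations at $(z_{0},w)$ are then decisive. First, $\partial_{w}(G_{a})_{w\bar{z}_{\alpha}}=\partial_{\bar{w}}(G_{a})_{w\bar{z}_{\alpha}}=0$, because after differentiating the $w$-dependent factors in both the $g_{1}^{\#}$- and $g_{2}^{\#}$-contributions each surviving term still carries the overall $h_{\bar{\alpha}}(z_{0})=0$. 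Second, $\partial_{z_{\gamma}}(G_{a})_{w\bar{w}}=0$ at $z_{0}$, since $(G_{a})_{w\bar{w}}$ depends on $z$ only through $h$ and $l=hw\bar{w}$, while $\partial_{z_{\gamma}}h|_{z_{0}}=0$ and $\partial_{z_{\gamma}}l|_{z_{0}}=h_{\gamma}(z_{0})w\bar{w}=0$. In both cases the cutoff $\varphi_{1}(l)$ causes no trouble, because its $z$-derivatives produce only factors of $\partial_{z_{\gamma}}l$.

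Finally I would apply the Koszul formula in complex coordinates. Since $G_{a}$ is Hermitian, for $A,B$ both holomorphic one has $\Gamma^{\bar{D}}_{AB}=0$ automatically (from $g_{AB}=0$), while
\[
\Gamma^{E}_{AB}=\tfrac{1}{2}g^{E\bar{C}}\bigl(\partial_{A}g_{B\bar{C}}+\partial_{B}g_{A\bar{C}}\bigr).
\]
Taking $A=B=\partial_{w}$ and $E=\partial_{z_{\gamma}}$, the block-diagonality of $G_{a}$ at $z_{0}$ restricts the sum over $\bar{C}$ to $\bar{C}=\bar{z}_{\alpha}$, where the relevant derivative vanishes by the first observation; hence $\Gamma^{z_{\gamma}}_{ww}=0$. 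For the mixed case $A=\partial_{w},\,B=\partial_{\bar{w}}$, the analogous Koszul expression combines $\partial_{\bar{w}}g_{w\bar{z}_{\alpha}}$ with $\partial_{\bar{z}_{\alpha}}g_{w\bar{w}}$, both vanishing by the two observations; a conjugate computation takes care of $\Gamma^{\bar z_\gamma}_{w\bar w}$, and complex conjugation disposes of the $\bar{w}\bar{w}$ lower indices. Since $G_{a,m}$ differs from $G_{a}$ only by scaling the fiber part by the constant $\lambda_{m}^{-1}$, the identical argument handles that case as well.

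I expect the main obstacle to be organizational rather than conceptual: namely, carefully tracking all three summands $\pi^{\ast}g_{M}$, $\varphi_{1}g_{1}^{\#}$ and $(1-\varphi_{1})g_{2}^{\#}$ through the derivative bookkeeping, and verifying the crucial pattern that every transverse derivative of a mixed component extracts either an $h_{\alpha}(z_{0})$ factor or a $\partial_{z_{\gamma}}l|_{z_{0}}$ factor, both of which vanish under our choice (\ref{M0-3}).
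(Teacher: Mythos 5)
Your proof is correct and follows essentially the same route as the paper's: both choose the $w$-coordinate so that $h(z_{0},\bar z_{0})=1$ and $dh(z_{0},\bar z_{0})=0$ as in (\ref{M0-3}), observe that at $z=z_{0}$ the metric is block-diagonal with mixed components and their $w$- (resp.\ $z$-) derivatives carrying an $h_{\alpha}$ or $h_{\bar\alpha}$ factor that vanishes, and then read off $\Gamma^{C}_{AB}=0$ from the Levi--Civita/Koszul formula. Your write-up is slightly more explicit than the paper's in two useful respects: it states that the normalization holds uniformly along the whole $w$-slice because $h$ depends only on $z$, and it accounts for all the $z$-dependent factors in $g_{w\bar w}$ (both $h$ itself and the cutoff $\varphi_{1}(l)$), not just the one singled out in the printed proof.
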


\proof
Let $g_{AB}$ denote the component of $G_{a}$ (resp. $G_{a,m})$ with respect
to the directions $A,$ $B$. In local holomorphic coordinates $z$ $=$ $%
(z_{1}, $ $..,$ $z_{n-1})$ and $w,$ $A,B$ can be $\partial /\partial w,$ or $%
\partial /\partial \bar{w}$ and $C$ can be $\partial /\partial z_{j}$ or $%
\partial /\partial \bar{z}_{j}$. By the formula of 
\begin{equation}
\Gamma _{AB}^{C}=\frac{1}{2}g^{CD}(\frac{\partial g_{AD}}{\partial x_{B}}+%
\frac{\partial g_{BD}}{\partial x_{A}}-\frac{\partial g_{AB}}{\partial x_{D}}%
),  \label{M1-3}
\end{equation}

\noindent we can choose $w$ coordinate such that $h(z_{0},\bar{z}_{0})$ $=$ $%
1,$ $dh(z_{0},\bar{z}_{0})$ $=$ $0$ at $p_{0}$ where $(z,w)(p_{0})$ $=$ $%
(z_{0},w_{0})$ ((\ref{M0-3}))$.$ The $w$-slice is described by $z$ $=$ $%
z_{0} $ in a local patch $D_{j}$ ((\ref{1-0}))$.$ For $C$ $=$ $\partial
/\partial z_{j}$ or $\partial /\partial \bar{z}_{j}$ and $D$ $=$ $\partial
/\partial w$ or $\partial /\partial \bar{w}$ one sees $g^{CD}$ $=$ $0$ at $%
z_{0}$ by (\ref{M1-1}), so $D$ in (\ref{M1-3}) can only be left in the $z$%
-direction. Since we take $A,B$ to be $\partial /\partial w$ or $\partial
/\partial \bar{w},$ $\partial g_{AD}/\partial x_{B}$ and $\partial
g_{BD}/\partial x_{A}$ can only involve $dh$ which vanishes at $z$ $=$ $%
z_{0} $ (cf. (\ref{M0-1}), (\ref{M0-2}))$.$ Similarly $\partial
g_{AB}/\partial x_{D}(z_{0})$ can only contain the term $\partial (\varphi
_{1}\circ l)/\partial x_{D}$ $=$ $(\partial h/\partial x_{D})\varphi
_{1}^{\prime }w\bar{w}$ (in (\ref{Ga})) which vanishes at $z$ $=$ $z_{0}$
since $dh(z_{0},\bar{z}_{0})$ $=$ $0.$ Altogether, in view of (\ref{M1-3})
we have shown (\ref{M1-2}).

\endproof%

The following definition of the \textit{formal adjoint} is more or less
standard.

\begin{notation}
\textbf{\label{3-1.5}} Denote by $\vartheta _{\Sigma },$ $\vartheta _{U_{j}}$
the formal adjoint of $\bar{\partial}_{\Sigma }$ $:$ $\Omega ^{0,q}(\Sigma
)\rightarrow \Omega ^{0,q+1}(\Sigma ),$ $\bar{\partial}_{U_{j}}$ $(=$ $\bar{%
\partial}_{z}$ in $z)$ $:$ $\Omega ^{0,q}(U_{j})$ $\rightarrow $ $\Omega
^{0,q+1}(U_{j})$ (see (\ref{1-0}) for the notation $U_{j})$ with respect to $%
G_{a}$, $\pi ^{\ast }g_{M}$ (see (\ref{Ga})) respectively (cf. \cite[p.152]%
{Ko}, \cite[p.62]{ChenS}). Namely $\vartheta _{\Sigma }u$ $\in $ $\Omega
^{0,q}(\Sigma )$ for $u$ $\in $ $\Omega ^{0,q+1}(\Sigma )$ is defined to
satisfy $(\vartheta _{\Sigma }u,$ $v)_{L^{2}}$ $=$ $(u,$ $\bar{\partial}%
_{\Sigma }v)_{L^{2}}$ for any smooth $(0,q)$-form $v$ of compact support,
where the $L^{2}$-inner product is with respect to $G_{a}.$ Similarly $%
\vartheta _{U_{j}}$ is defined with $\Sigma $ (resp. $G_{a})$ replaced by $%
U_{j}$ (resp. $\pi ^{\ast }g_{M}$)$.$%
\end{notation}

For the $m$-space $\Omega _{m}^{0,q}$ the corresponding notion of formal
adjoint is less straightforward in that the conventional use of compact
support test functions $\phi $ is no longer available ($\phi $ always
involves $w^{m}$ along the $\mathbb{R}^{+}$-orbits). One way out is to
insert cut-off functions into test functions, but for later use we find it
most convenient if we simply allow the support to be noncompact. The $L^{2}$%
-inner product $(\cdot ,\cdot )_{L^{2}}$ below is with respect to $G_{a,m}.$
We define an operator $\vartheta _{\Sigma ,m}$ $:$ $\Omega
_{m}^{0,q+1}(\Sigma )$ $\rightarrow $ $\Omega _{m}^{0,q}(\Sigma )$ by $%
(\vartheta _{\Sigma ,m}u,$ $v)_{L^{2}}$ $=$ $(u,$ $\bar{\partial}_{\Sigma
,m}v)_{L^{2}}$ for all $v$ $\in $ $\Omega _{m}^{0,q}(\Sigma ),$ and $%
\vartheta _{D_{j},m}:$ $\Omega _{m}^{0,q+1}(D_{j})$ $\rightarrow $ $\Omega
_{m}^{0,q}(D_{j})$ by $(\vartheta _{D_{j},m}s,$ $t)_{L^{2}}$ $=$ $(s,$ $\bar{%
\partial}_{D_{j},m}t)_{L^{2}}$ for $s$ $=$ $s(z,\bar{z})w^{m}$ $\in $ $%
\Omega _{m}^{0,q+1}(D_{j})$, $t$ $=$ $t(z,\bar{z})w^{m}$ $\in $ $\Omega
_{m}^{0,q}(D_{j})$ with $t(z,\bar{z})$ being of compact support in $U_{j}.$
For their existence we will deduce a (local) formula for $\vartheta
_{D_{j},m}$ in Proposition \ref{madj} and that for $\vartheta _{\Sigma ,m}$
in Definition \ref{SFA} and Proposition \ref{3-11-1}. We can now make the
following definition.

\begin{definition}
\label{d-3-7} We call the above $\vartheta _{\Sigma ,m}$ (resp. $\vartheta
_{D_{j},m}$) the formal adjoint of $\bar{\partial}_{\Sigma ,m}$ (resp. $\bar{%
\partial}_{D_{j},m}$). (In the next section we need to extend their domains
of definition from the smooth elements to the $L^{2}$-elements. See lines
below Notation \ref{N-mL2}.)
\end{definition}

Remark that $\vartheta _{\Sigma ,m}$ $=$ $\pi _{m}\circ \vartheta _{\Sigma }$
on $\Omega _{m}^{0,q}(\Sigma ).$ See Proposition \ref{projm} for the
orthogonal projection $\pi _{m}$ and for its integral representation$.$ A
key point here is that this formal adjoint $\vartheta _{\Sigma ,m}$ turns
out to be a differential operator if one uses the metric $G_{a,m}$ (see
Lemma \ref{Adj}, Remark \ref{3-46.5} and Proposition \ref{p-gue3-2}). See
also Corollary \ref{Cor3-7} below for the difference between the two
Laplacians formed by the two operators $\bar{\partial}_{\Sigma ,m}$, $\bar{%
\partial}_{\Sigma }$ with their respective adjoints (the $\hat{L}^{\prime }$
there is meant $\Sigma $ here).

In the remaining of this section, we will show that modulo certain zeroth
order terms $\vartheta _{\Sigma ,m}$ equals $\vartheta _{\Sigma }|_{\Omega
_{m}^{0,q+1}(\Sigma )}$. See Proposition \ref{p-gue3-2}. During the process,
we find that our metric $G_{a,m}$ satisfies another important property (see
Proposition \ref{madj}), which is essential for an application in
Proposition \ref{BoxDU}.

Note that $\bar{\partial}_{\Sigma ,m}s=(\bar{\partial}_{z}s(z,\bar{z}))w^{m}$
where we express $s=s(z,\bar{z})w^{m}$ $\in $ $\Omega _{m}^{0,q}(\Sigma )$
locally. Recall the line bundle $L_{\Sigma }$ in the beginning of this
section. From (\ref{ddw}) we learn that $e_{w}:=\partial /\partial w$ is a $%
\tilde{\sigma}$-invariant section of $L_{\Sigma }$ over $D_{j}$ $\subset $ $%
\Sigma $ (in fact, as a local section it is only local-$\mathbb{C}^{\ast }$
invariant). Let $L_{\Sigma }^{\ast }$ denote the dual holomorphic line
bundle of $L_{\Sigma }$ and $e_{w}^{\ast }$ the local section of $L_{\Sigma
}^{\ast },$ dual to $e_{w}.$

\begin{notation}
\label{n-3-6-1} Denote by $\Omega _{0}^{0,q}(\Sigma ,(L_{\Sigma }^{\ast
})^{\otimes m})$ the space of $\mathbb{C}^{\ast }$-invariant elements $%
\varpi $ in $\Omega ^{0,q}(\Sigma ,$ $(L_{\Sigma }^{\ast })^{\otimes m}).$
\end{notation}

In a local patch $D_{j},$ write $\varpi $ $=$ $s(e_{w}^{\ast })^{\otimes m}$
where $s$ $\in $ $\Omega ^{0,q}(D_{j}).$ We have the operator $\bar{\partial}%
_{\Sigma ,(L_{\Sigma }^{\ast })^{\otimes m}}$ $:$ $\Omega _{0}^{0,q}(\Sigma
,(L_{\Sigma }^{\ast })^{\otimes m})$ $\rightarrow $ $\Omega
_{0}^{0,q+1}(\Sigma ,(L_{\Sigma }^{\ast })^{\otimes m})$ given by $\bar{%
\partial}_{\Sigma ,(L_{\Sigma }^{\ast })^{\otimes m}}(s(e_{w}^{\ast
})^{\otimes m})$ $=$ $(\bar{\partial}_{z}s(z,\bar{z}))(e_{w}^{\ast
})^{\otimes m}$.

We may identify, for $D_{j}\subset \Sigma \backslash \Sigma _{\text{sing}},$
say, $p_{1}$ $=$ $1$ and thus $\delta _{j}$ $=$ $\pi $ in (\ref{1-0}) (cf.
remarks after Definition \ref{2m}) \ in 
\begin{equation}
\Omega _{0}^{0,q}(D_{j},(L_{\Sigma }^{\ast }|_{D_{j}})^{\otimes m})\simeq
\Omega ^{0,q}(U_{j},(\psi _{j}^{\ast }L_{\Sigma }^{\ast })^{\otimes
m}|_{U_{j}\times \{0\}\times \{1\}})=:\Omega ^{0,q}(U_{j},(\psi _{j}^{\ast
}L_{\Sigma }^{\ast })^{\otimes m})  \label{3.36-25}
\end{equation}%
\noindent where $\psi _{j}^{-1}$ $:$ $D_{j}\subset \Sigma $ $\rightarrow $ $%
U_{j}\times C_{\delta _{j}}$ is a local trivialization (see (\ref{3-0.75})
for the definition of $C_{\delta _{j}}$ and (\ref{2.10-5}) for $\psi _{j})$.
Let $\bar{\partial}_{U_{j},m}$ denote the $\bar{\partial}$ operator acting
on the RHS of (\ref{3.36-25}).

\begin{definition}
\label{d-6.8-5} Let $\Omega _{m,loc}^{0,q}(\Sigma )$ (resp. $\Omega
_{m,loc}^{0,q}(D_{j}))$ denote the space of elements $u$ $\in $ $\Omega
^{0,q}(\Sigma )$ (resp. $\Omega ^{0,q}(D_{j})),$ having the form $w^{m}v(z,%
\bar{z})$ in local holomorphic coordinates $(z,w).$ Note that $\Omega
_{m}^{0,q}(\Sigma )$ $\subset $ $\Omega _{m,loc}^{0,q}(\Sigma ),$ but they
are not equal in general unless the $\mathbb{C}^{\ast }$-action on $\Sigma $
is globally free. For later use we define the space $\tilde{\Omega}%
_{m,loc}^{0,q}(\Sigma )$ consisting of elements $u$ $\in $ $\Omega
^{0,q}(\Sigma ),$ having the form $w^{m}v(z,\bar{z},w,\bar{w})$ in local
holomorphic coordinates $(z,w),$ with bounded $C_{B}^{s}$-norms for each
integer $s$ $\geq $ $0$ (see (\ref{CBs-norm}) for the definition of $%
C_{B}^{s}$-norm). We have $\Omega _{m}^{0,q}(\Sigma )$ $\subset $ $\tilde{%
\Omega}_{m,loc}^{0,q}(\Sigma ).$
\end{definition}

Let $\bar{\partial}_{D_{j},m}$ denote the $\bar{\partial}$ operator acting
on $\Omega _{m,loc}^{0,q}(D_{j}).$ With the notation above, we generalize
Proposition \ref{p-gue2-2} as follows. Compare Remark \ref{9.3}.

\begin{proposition}
\label{dualLm} Recall the line bundle $L_{\Sigma }$ defined in the lines
above (\ref{3-0}), and also Notation \ref{n-3-6-1}. The map $\tilde{\Psi}%
_{q,m}$ : $\Omega _{0}^{0,q}(\Sigma ,(L_{\Sigma }^{\ast })^{\otimes m})$ $%
\rightarrow $ $\Omega _{m}^{0,q}(\Sigma )$ given by%
\begin{equation}
\tilde{\Psi}_{q,m}(s(e_{w}^{\ast })^{\otimes m})=s(z,\bar{z})w^{m}
\label{3.36-5}
\end{equation}%
in any local patch $D_{j}$ (not necessarily in $\Sigma \backslash \Sigma _{%
\text{sing}})$ with holomorphic coordinates $(z,w),$ where $s$ $\in $ $%
\Omega _{0,loc}^{0,q}(D_{j}),$ is globally defined and a vector space
isomorphism. Moreover we have $\bar{\partial}_{\Sigma ,m}\circ \tilde{\Psi}%
_{q,m}=\tilde{\Psi}_{q+1,m}\circ \bar{\partial}_{\Sigma ,(L_{\Sigma }^{\ast
})^{\otimes m}}.$ For $(z,w)$ $\in $ $U_{j}\times C_{\delta _{j}}$ we have $%
\bar{\partial}_{D_{j},m}\circ \Psi _{q,m}=\Psi _{q+1,m}\circ \bar{\partial}%
_{U_{j},m}$ on $\Omega ^{0,q}(U_{j},(\psi _{j}^{\ast }L_{\Sigma }^{\ast
})^{\otimes m}),$ where $\Psi _{q,m}$ $:$ $\Omega ^{0,q}(U_{j},(\psi
_{j}^{\ast }L_{\Sigma }^{\ast })^{\otimes m})$ $\rightarrow $ $\Omega
_{m,loc}^{0,q}(D_{j})$ defined by 
\begin{equation}
\Psi _{q,m}(s(z,\bar{z})(\psi _{j}^{\ast }e_{w}^{\ast })^{\otimes m})=s(z,%
\bar{z})w^{m}  \label{Psi_qm}
\end{equation}

\noindent is a vector space isomorphism.
\end{proposition}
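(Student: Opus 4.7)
The plan is to verify the three assertions—global well-definedness of $\tilde{\Psi}_{q,m}$, its bijectivity, and its compatibility with $\bar{\partial}$—by tracking how $\tilde{\Psi}_{q,m}$ behaves in distinguished local coordinates and then matching the transformation laws on overlaps.

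First I would establish the transformation rule for the local frame $e_w = \partial/\partial w$. Using (\ref{C0}) and the chain rule, on the overlap of two distinguished coordinate patches $(z,w)$ and $(\tilde{z},\tilde{w}) = (\mu(z), w\varphi(z))$, one finds $\partial/\partial w = \varphi(z)\,\partial/\partial\tilde{w}$, so $e_w = \varphi(z)\,e_{\tilde{w}}$ and consequently the dual frame transforms by $(e_w^{\ast})^{\otimes m} = \varphi(z)^{-m}(e_{\tilde{w}}^{\ast})^{\otimes m}$. Combined with the transformation law (\ref{C11}) for the coefficients of an element of $\Omega_m^{0,q}(\Sigma)$—namely $s(z,\bar{z}) = \tilde{s}(\tilde{z},\overline{\tilde{z}})\,\varphi(z)^m$—the identity $\eta\, w^m = \tilde{\eta}\,\tilde{w}^m$ on the target side matches precisely the identity $\eta (e_w^{\ast})^{\otimes m} = \tilde{\eta}(e_{\tilde{w}}^{\ast})^{\otimes m}$ on the source side. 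This shows $\tilde{\Psi}_{q,m}$ is globally well-defined.

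Next I would verify bijectivity. Since $e_w$ is $\tilde{\sigma}$-invariant by (\ref{ddw}), so is $(e_w^{\ast})^{\otimes m}$, and a section $\phi = \eta (e_w^{\ast})^{\otimes m}$ lies in $\Omega_0^{0,q}(\Sigma,(L_\Sigma^{\ast})^{\otimes m})$ precisely when the coefficient $(0,q)$-form $\eta$ is $\mathbb{C}^{\ast}$-invariant; unwinding this forces $\eta$ to depend only on $z,\bar{z}$ in local coordinates and to have no $d\bar{w}$-component, so $\tilde{\Psi}_{q,m}(\phi) = \eta\, w^m$ indeed lies in $\Omega_m^{0,q}(\Sigma)$ in the regularity sense of Definition \ref{2m}. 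Conversely, any $\omega \in \Omega_m^{0,q}(\Sigma)$ has local representation $\omega = f(z,\bar{z})\,w^m\, d\bar{z}^{I_q}$, and setting $\phi = f(z,\bar{z})\,d\bar{z}^{I_q}\otimes (e_w^{\ast})^{\otimes m}$ produces the preimage; the transformation argument above shows this $\phi$ is well-defined globally, giving both injectivity and surjectivity.

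For the intertwining relation $\bar{\partial}_{\Sigma,m}\circ\tilde{\Psi}_{q,m} = \tilde{\Psi}_{q+1,m}\circ\bar{\partial}_{\Sigma,(L_\Sigma^{\ast})^{\otimes m}}$, I would compute both sides locally. Since $e_w$ is holomorphic, so is $e_w^{\ast}$, giving $\bar{\partial}(s(e_w^{\ast})^{\otimes m}) = (\bar{\partial}s)(e_w^{\ast})^{\otimes m} = (\bar{\partial}_z s)(e_w^{\ast})^{\otimes m}$ where the second equality uses that $s$ depends only on $z,\bar{z}$; applying $\tilde{\Psi}_{q+1,m}$ yields $(\bar{\partial}_z s)\,w^m$. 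On the other hand, $\bar{\partial}_{\Sigma,m}(s\,w^m) = (\bar{\partial}_z s)\,w^m$ since $w^m$ is holomorphic and $s$ is independent of $w,\bar{w}$. The local statement about $\Psi_{q,m}$ and $\bar{\partial}_{D_j,m}$ versus $\bar{\partial}_{U_j,m}$ then follows from the identical calculation without the $\mathbb{C}^{\ast}$-invariance hypothesis, since $(\psi_j^{\ast}L_\Sigma^{\ast})^{\otimes m}$ is locally trivialized by $(\psi_j^{\ast}e_w^{\ast})^{\otimes m}$. The main obstacle I expect is the bookkeeping of ensuring that the $\mathbb{C}^{\ast}$-invariance condition on the source is in exact correspondence with the regularity condition (\ref{1-2-1}) on the target—the slightly delicate point being to rule out spurious $d\bar{w}$-type contributions in $\eta$ (in contrast to what would occur for the larger space $\hat{\Omega}_m^{0,q}$); once the local transformation laws are set up, everything else reduces to direct verification.
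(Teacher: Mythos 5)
Your proof of well-definedness via the transformation law for $e_w^{\ast}$ matching (\ref{C0}) and (\ref{C11}) is correct and essentially what the paper does. The gap is in your argument that the image lands in $\Omega_m^{0,q}(\Sigma)$ rather than merely in $\Omega^{0,q}(\Sigma)$. You assert that the $\mathbb{C}^{\ast}$-invariance of $\phi$, hence of the coefficient $\eta$, ``forces $\eta$ to depend only on $z,\bar{z}$ in local coordinates and to have no $d\bar{w}$-component,'' but that is false: setting $m=0$ in (\ref{C9-1}), a $\mathbb{C}^{\ast}$-invariant $(0,q)$-form has the general local shape $f_{I_q}(z,\bar{z})\,d\bar{z}^{I_q}+C_{I_{q-1}}(z,\bar{z})\,\bar{w}^{-1}\,d\bar{z}^{I_{q-1}}\wedge d\bar{w}$, so a $\bar{w}^{-1}d\bar{w}$-term survives invariance. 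The absence of the $d\bar{w}$-term is an extra regularity hypothesis (the ``$s\in\Omega_{0,loc}^{0,q}(D_j)$'' qualifier in the statement), not a consequence of invariance; your deduction runs in the wrong direction.

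Even granting the local form $s(z,\bar{z})w^m$, you still need to verify condition (i) of Definition~\ref{2m}, the global eigenvalue equation $\sigma(\lambda)^{\ast}\omega=\lambda^m\omega$ for \emph{all} $\lambda\in\mathbb{C}^{\ast}$; condition (ii) alone does not imply it. This is precisely the distinction recorded in Definition~\ref{d-6.8-5}: $\Omega_m^{0,q}(\Sigma)\subsetneq\Omega_{m,loc}^{0,q}(\Sigma)$ in general when the action is only locally free, because the local relation $w\mapsto\lambda w$ from (\ref{C6}) holds only for $\lambda$ near $1$ and a large-angle orbit through $\Sigma_{\text{sing}}$ may pick up monodromy. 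The paper sidesteps this by restricting to the principal stratum $\Sigma\setminus\Sigma_{\text{sing}}$, where the $\mathbb{C}^{\ast}$-action is globally free so that $\Omega_m^{0,q}=\Omega_{m,loc}^{0,q}$, and then extending the eigenvalue identity to all of $\Sigma$ by continuity (using that the image is already a smooth form in $\Omega^{0,q}(\Sigma)$ and that the eigenvalue condition is closed). Your argument skips exactly this step, which is the main content of the proof; you should add the restriction-plus-continuity argument (or supply a direct check of the monodromy of $w$ along closed orbits through $\Sigma_{\text{sing}}$).
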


\proof
We focus on $\tilde{\Psi}_{q,m};$ the assertion for $\Psi _{q,m}$ in (\ref%
{Psi_qm}) can be proved similarly (compare Proposition \ref{p-gue2-2}).
Observe that $\tilde{\Psi}_{q,m}$ is a linear isomorphism as long as it is
well defined. Since the transformation law of $e_{w}^{\ast }$ is easily
verified to be the same as that of $w$, one sees that $\tilde{\Psi}_{q,m}$
is well defined (with image in $\Omega ^{0,q}(\Sigma ))$ . To see that the
image of $\tilde{\Psi}_{q,m}$ is actually contained in $\Omega
_{m}^{0,q}(\Sigma ),$ we restrict ourselves to the principal stratum $\Sigma
\backslash \Sigma _{\text{sing}}$ and then extend to $\Sigma $ by
continuity. That is, the image of $\tilde{\Psi}_{q,m}$ lies in $\Omega
_{m}^{0,q}(\Sigma \backslash \Sigma _{\text{sing}})$ (which is the same as $%
\Omega _{m,loc}^{0,q}(\Sigma \backslash \Sigma _{\text{sing}})$ in this
case) using (\ref{3.36-25}) and (\ref{Psi_qm}) so it must be in $\Omega
_{m}^{0,q}(\Sigma )$ since it is already in $\Omega ^{0,q}(\Sigma ).$

\endproof%

We are ready to formulate the first main result (Proposition \ref{madj}) of
this section.

The $\mathbb{C}^{\ast }$-invariant Hermitian metric $<\cdot ,\cdot >$ on $%
L_{\Sigma }$ (see Step 1 at the beginning of this section) induces a $%
\mathbb{C}^{\ast }$-invariant Hermitian metric on ($L_{\Sigma }^{\ast
})^{\otimes m},$ still denoted by the same notation if no confusion will
occur. For $s$ $=$ $s(z,\bar{z})$ $\in $ $\Omega ^{0,q+1}(U_{j}),$ by abuse
of notation, we denote 
\begin{equation}
\vartheta _{z,m}s:=\frac{\vartheta _{U_{j},m}(s(\psi _{j}^{\ast
}(e_{w}^{\ast })|_{U_{j}\times \{0\}\times \{1\}})^{\otimes m})}{(\psi
_{j}^{\ast }(e_{w}^{\ast })|_{U_{j}\times \{0\}\times \{1\}})^{\otimes m}}
\label{dbarzm}
\end{equation}%
\noindent with respect to the metrics $\pi ^{\ast }g_{M}|_{U_{j}}$ (cf. (\ref%
{Ga})) and $<\cdot ,\cdot >$, where $e_{w}^{\ast }$ is dual to $e_{w}$ $=$ $%
\partial /\partial w$ as above and $\psi _{j}$ is as in (\ref{3.36-25})$.$
According to a standard formula (see \cite[(3.142) on p.160]{Ko}) one has $%
\vartheta _{z,m}s=\vartheta _{z}s+$ (\textit{zeroth order terms in }$s$)$,$
where we recall (Definition \ref{d-3-7}) that $\vartheta _{z}$ is the formal
adjoint of $\bar{\partial}_{U_{j}}$ $:$ $\Omega ^{0,q}(U_{j})$ $\rightarrow $
$\Omega ^{0,q+1}(U_{j})$ (with respect to the metric $\pi ^{\ast }g_{M})$ in
coordinates $z$ $=$ $(z_{1},$ $..,$ $z_{n-1}).$ By choosing $w$ coordinate
such that $h(z_{0},\bar{z}_{0})$ $=$ $1,$ $dh(z_{0},\bar{z}_{0})$ $=$ $0$ at
a point $p_{0}$ $=$ $(z_{0},w_{0})$ (cf. (\ref{M0-3}))$,$ the above implies 
\begin{equation}
\vartheta _{z,m}s=\vartheta _{z}s\text{ at }p_{0}.  \label{atp0}
\end{equation}

\noindent The formula (\ref{atp0}) will be applied to (\ref{adj4}) later on.

Remark that $\vartheta _{z,m}$ is not invariantly defined while $\vartheta
_{U_{j},m}$ is (cf. Definition \ref{d-3-7}).

It is worth mentioning that the special structure of our metric $G_{a,m}$
will yield that the two operators $\vartheta _{D_{j},m}\circ \Psi _{q+1,m}$
and $\Psi _{q,m}\circ \vartheta _{U_{j},m}$ are still comparable. More
precisely, we have the following crucial fact. See Proposition \ref{BoxDU}
for an application.

\begin{proposition}
\label{madj} (The first main result of this section) Assume $m\geq 0.$ Under
the notations explained above, we have $\vartheta _{D_{j},m}(s(z,\bar{z}%
)w^{m})$ $=$ $(\vartheta _{z,m}s(z,\bar{z}))w^{m}$ and hence $\vartheta
_{D_{j},m}$ $=$ $\Psi _{q,m}\circ \vartheta _{U_{j},m}\circ \Psi
_{q+1,m}^{-1}.$
\end{proposition}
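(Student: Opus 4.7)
The plan is to reduce Proposition~\ref{madj} to the statement that $\Psi_{q,m}^{-1}$ of Proposition~\ref{dualLm} is an $L^{2}$-isometry from $\Omega_{m,loc}^{0,q}(D_{j})$ with the inner product induced by $G_{a,m}$ onto $\Omega^{0,q}(U_{j},(\psi_{j}^{\ast}L_{\Sigma}^{\ast})^{\otimes m})$ with the inner product induced by $\pi^{\ast}g_{M}$ together with the fibre metric on $(L_{\Sigma}^{\ast})^{\otimes m}$. Once this is in place, the intertwining $\bar{\partial}_{D_{j},m}\circ\Psi_{q,m}=\Psi_{q+1,m}\circ\bar{\partial}_{U_{j},m}$ of Proposition~\ref{dualLm} transfers to formal adjoints via the isometry and yields the operator identity $\vartheta_{D_{j},m}=\Psi_{q,m}\circ\vartheta_{U_{j},m}\circ\Psi_{q+1,m}^{-1}$; unwinding the definition \eqref{dbarzm} of $\vartheta_{z,m}$ then gives the pointwise formula $\vartheta_{D_{j},m}(s(z,\bar{z})w^{m})=(\vartheta_{z,m}s)(z,\bar{z})w^{m}$ asserted in the proposition.

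The key pointwise ingredient for the isometry is the identity
\[
\langle d\bar{z}^{I},d\bar{z}^{J}\rangle_{G_{a,m}}=\langle d\bar{z}^{I},d\bar{z}^{J}\rangle_{g_{M}}
\]
at every point of $D_{j}$; since elements of $\Omega_{m,loc}^{0,q}(D_{j})$ and their $\bar{\partial}$-images carry no $d\bar{w}$ component by Definition~\ref{d-6.8-5}, only this $z\bar{z}$-block of the dual metric enters the relevant $L^{2}$-pairings. I would establish the identity by a Schur complement computation. Writing the metric matrix of $G_{a,m}$ in block form with $z\bar{z}$-block $A$, $z\bar{w}$-block $B$ and scalar $w\bar{w}$-entry $C$, the $z\bar{z}$-block of the inverse equals $(A-BC^{-1}B^{\ast})^{-1}$, so it suffices to show $A-BC^{-1}B^{\ast}=\pi^{\ast}g_{M}$. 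From \eqref{M0-1} one sees that for $g_{1}$ alone the base-block $h^{-1}h_{\alpha}h_{\bar{\beta}}w\bar{w}$ equals $BC^{-1}B^{\ast}$ exactly (both being the rank-one tensor generated by $h_{\alpha}$), and the analogous cancellation for $g_{2}$ follows from \eqref{M0-2}. In the transition annulus $\varphi_{1}\in(0,1)$ the cancellation persists for the convex combination via a short algebraic identity that relies on the reality of the cross terms in $|B|^{2}$ (since $\overline{\bar{w}^{-1}}\cdot w=1$), so the resulting rational expression collapses to the sum of the two individual contributions to $A-\pi^{\ast}g_{M}$.

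The $L^{2}$-isometry of $\Psi_{q,m}^{-1}$ then follows by combining this pointwise fact with the volume decomposition \eqref{volume} and the fibre normalization \eqref{fibrenv}: using $l=hw\bar{w}$, the factor $|w|^{2m}$ from the $G_{a,m}$-pairing equals $l^{m}/h^{m}$, the $l^{m}$ absorbs the $l^{-m}$ in $dv_{f,m}=l^{-m}d\hat{v}_{m}$, and $\int_{\mathbb{C}^{\ast}}d\hat{v}_{m}=1$ integrates out the fibre for integrands depending only on $z$. What survives is an integral on $U_{j}$ against $h^{-m}dv_{M}$, which matches the $L^{2}$-structure on $\Omega^{0,q+1}(U_{j},(\psi_{j}^{\ast}L_{\Sigma}^{\ast})^{\otimes m})$ because $\langle e_{w}^{\ast},e_{w}^{\ast}\rangle=h^{-1}$. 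The main obstacle is the Schur-complement verification in the transition annulus; elsewhere each of $g_{1}$ and $g_{2}$ individually cancels against the base off-diagonals, but checking that their convex combination still has Schur complement exactly $\pi^{\ast}g_{M}$ is the essential algebraic identity, reflecting the specific design of $G_{a,m}$ in \eqref{Ga}.
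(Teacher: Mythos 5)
Your proposal is correct in substance and reaches the same conclusion as the paper's proof, but it reorganizes the argument in a way worth noting. The paper computes the adjoint identity directly: it writes $\int_{D_j}\langle\bar{\partial}_{D_j,m}t,s\rangle_{G}\,dv_{\Sigma,m}$, rewrites the pairing in terms of $(e_w^*)^{\otimes m}$, applies Fubini using (\ref{volume}), observes the crucial fact (\ref{3-43.5}) that the fibre integral $\int_{C_{\delta_j}}l^m\,dv_{f,m}=\delta_j/\pi$ is $z$-independent, and then moves $\vartheta_{U_j,m}$ under the integral sign. You instead factor the argument into two clean steps: establish that $\Psi_{q,m}^{-1}$ is an $L^2$-isometry up to constant, and then transfer the formal adjoint across this isometry using the $\bar\partial$-intertwining of Proposition~\ref{dualLm}. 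That first step is precisely what the paper proves later as Lemma~\ref{L-7iso}, so you have effectively swapped the logical order; this buys a more transparent structure, at the cost of making the isometry a named prerequisite.

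The other genuine difference is how you establish the pointwise metric coincidence $\langle d\bar z^I,d\bar z^J\rangle_{G_{a,m}}=\langle d\bar z^I,d\bar z^J\rangle_{\pi^*g_M}$. The paper gets this for free by choosing, at each point $p_0$, coordinates satisfying (\ref{M0-3}) so that (\ref{M1-1}) displays $G_{a,m}$ in block-diagonal form, and then appeals to coordinate-invariance. You instead do the Schur complement computation $A-BC^{-1}B^*=\pi^*g_M$ in arbitrary coordinates. This is valid, and in fact the convex combination in (\ref{Ga}) does collapse as you claim; however, your stated reason (``reality of the cross terms in $|B|^2$'') is not quite the operative one. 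What actually makes the Schur complement additive across the convex combination is the proportionality $B_1/C_1=B_2/C_2=h^{-1}h_\alpha w$, which one reads off from (\ref{M0-1}) and (\ref{M0-2}): both $g_1$ and $g_2$ have the same ``direction'' of off-diagonal coupling. Once this proportionality is noted the identity $BC^{-1}B^*=\varphi_1 B_1C_1^{-1}B_1^*+(1-\varphi_1)B_2C_2^{-1}B_2^*$ is immediate; without it the statement would be false by strict Cauchy--Schwarz. One more minor slip: in the fibre-normalization step the domain of the fibre integral for $D_j$ is $C_{\delta_j}=(-\delta_j,\delta_j)\times\mathbb{R}^+$, not all of $\mathbb{C}^*$, so the constant is $\delta_j/\pi$ rather than $1$; this does not affect the conclusion, since any $z$-independent constant suffices for the isometry up to scalar and hence for the adjoint transfer.
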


\proof
Let $t$ $\in $ $\Omega _{m,loc}^{0,q}(D_{j}),$ $s$ $\in $ $\Omega
_{m,loc}^{0,q+1}(D_{j}).$ Write $t=t(z,\bar{z})w^{m},$ $s=s(z,\bar{z})w^{m}$
where $t(z,\bar{z})$ $\in $ $\Omega ^{0,q}(U_{j}),$ $s(z,\bar{z})$ $\in $ $%
\Omega ^{0,q+1}(U_{j}).$ Here $U_{j}$ may be identified with $U_{j}\times
\{0\}\times \{1\}$ $(\subset \Sigma )$ via $\psi _{j}.$ Take $t(z,\bar{z})$
as a test function/form so it is of compact support in $U_{j}$. Write $G$
for $G_{a,m}$ and $H$ for the metric on ($L_{\Sigma }^{\ast })^{\otimes m}$
induced by $||\cdot ||$ on $L_{\Sigma }$ (cf. Step 1 given earlier in this
section). We compute, by using $<(e_{w}^{\ast })^{\otimes m},(e_{w}^{\ast
})^{\otimes m}>_{H}$ $=$ ($h^{-1})^{m}$ (see (\ref{lq}))$,$ $l(q)$ $=$ $hw%
\bar{w},$ (\ref{ddw}) and (\ref{volume}),%
\begin{eqnarray}
&&\int_{D_{j}}<\bar{\partial}_{D_{j},m}t,s>_{G}dv_{\Sigma ,m}=\int_{D_{j}}<%
\bar{\partial}_{z}t(z,\bar{z})w^{m},s(z,\bar{z})w^{m}>_{_{G}}dv_{\Sigma ,m}
\label{adj14} \\
&=&\int_{D_{j}}<\bar{\partial}_{z}t(z,\bar{z})(e_{w}^{\ast })^{\otimes
m},s(z,\bar{z})(e_{w}^{\ast })^{\otimes m}>_{G\otimes
H}h^{m}|w|^{2m}dv_{\Sigma ,m}  \notag
\end{eqnarray}

To proceed further, note first that all the integrands in (\ref{adj14}) is
invariantly defined. To integrate the above over $D_{j},$ by Fubini's
theorem we may first integrate over (part of) every $\mathbb{C}^{\ast }$%
-orbit then over the directions orthogonal to the $\mathbb{C}^{\ast }$%
-orbits. Note that the metric on the orthogonal/horizontal direction is
given by $\pi ^{\ast }g_{M}$ (see (\ref{M1-1})). With the natural projection 
$D_{j}$ $=$ $U_{j}\times C_{\delta _{j}}$ $\rightarrow $ $U_{j},$ $U_{j}$
equipped with the metric $\pi ^{\ast }g_{M}$ can be regarded as a parameter
space for horizontal directions. For the above reasoning, note however that $%
G_{a,m}|_{TU_{j}}$ $\neq $ $\pi ^{\ast }g_{M}|_{TU_{j}}$ ($U_{j}$ $\cong $ $%
U_{j}\times \{0\}\times \{1\}$ $\subset $ $\Sigma )$ and that $\pi ^{\ast
}g_{M}$ is precisely the metric we use on $TU_{j};$ see the line after (\ref%
{dbarzm}) above.

It turns out (see the last equality in (\ref{adj15}) below and remarks after
it) that (\ref{adj14}) equals (where $<\cdot ,\cdot >$ below means $<\cdot
,\cdot >_{\pi ^{\ast }g_{M}\otimes H}):$%
\begin{equation}
\int_{U_{j}}<\bar{\partial}_{U_{j},m}(t(z,\bar{z})(e_{w}^{\ast })^{\otimes
m}),s(z,\bar{z})(e_{w}^{\ast })^{\otimes m}>dv(z)\int_{C_{\delta
_{j}}}l(q)^{m}dv_{f,m}\text{.}  \label{3.46-5}
\end{equation}

\noindent Since the preceding expressions of the integrands are again
invariantly defined, for any given $z_{0}$ in $U_{j}$ we choose $(z,w)$ with 
$h(z_{0},\bar{z}_{0})$ $=$ $1$ and $dh(z_{0},\bar{z}_{0})$ $=$ $0$ (cf. (\ref%
{M0-3}))$,$ so that (see (\ref{3-16.75}))%
\begin{equation}
\int_{C_{\delta _{j}}}l(q)^{m}dv_{f,m}=\int_{\mathbb{R}%
^{+}}|w|^{2m}dv_{m}(|w|)\int_{-\delta _{j}}^{\delta _{j}}dv(\phi )=\frac{%
\delta _{j}}{\pi }.  \label{3-43.5}
\end{equation}

\noindent It is crucial that the integration (\ref{3-43.5}) results in a
constant independent of $z$-coordinates, so that for (\ref{3.46-5}) we can
now apply $\bar{\partial}_{U_{j},m}^{\ast }$ effortlessly:%
\begin{equation}
(\ref{3.46-5})=\int_{U_{j}}<t(z,\bar{z})(e_{w}^{\ast })^{\otimes
m},\vartheta _{U_{j},m}(s(z,\bar{z})(e_{w}^{\ast })^{\otimes m})>dv(z)\frac{%
\delta _{j}}{\pi }.  \label{3.32'}
\end{equation}

Let us continue with (\ref{3.32'}) and bring it back via (\ref{3-43.5}) and (%
\ref{dbarzm}) to the following (for the second equality recalling $l$ $=$ $hw%
\bar{w})$:%
\begin{eqnarray}
&&\text{RHS of }(\ref{3.32'})  \label{adj15} \\
&\overset{}{=}&\int_{U_{j}}<t(z,\bar{z}),\vartheta _{z,m}s(z,\bar{z})>_{\pi
^{\ast }g_{M}}h^{-m}dv(z)\int_{C_{\delta _{j}}}l(q)^{m}dv_{f,m}  \notag \\
&\overset{D_{j}=U_{j}\times C_{\delta _{j}}}{=}&\int_{D_{j}}<t(z,\bar{z}%
)w^{m},(\vartheta _{z,m}s(z,\bar{z}))w^{m}>_{\pi ^{\ast }g_{M}}dv(z)\wedge
dv_{f,m}  \notag \\
&\overset{(\ref{metric})+(\ref{M1-1})}{=}&\int_{D_{j}}<t(z,\bar{z}%
)w^{m},(\vartheta _{z,m}s(z,\bar{z}))w^{m}>_{G}dv_{\Sigma ,m}.  \notag
\end{eqnarray}

\noindent Here ($\vartheta _{z,m}s(z,\bar{z}))w^{m}$ $=$ $\Psi
_{q,m}(\vartheta _{U_{j},m}(s(\psi _{j}^{\ast }(e_{w}^{\ast })|_{U_{j}\times
\{0\}\times \{1\}})^{\otimes m}))$ by (\ref{dbarzm}) and (\ref{3.36-5})
(with $\psi _{j}^{\ast }$ often omitted) is invariantly defined since $\Psi
_{q,m}$ and $\vartheta _{U_{j},m}$ are. So the above $<\cdot \cdot \cdot
>_{\pi ^{\ast }g_{M}}$ $=$ $<\cdot \cdot \cdot >_{G}$ holds as one checks
that they coincide under a choice of special coordinates (at any given
point, cf. (\ref{M0-3}), (\ref{M1-1})).

In summary the LHS of (\ref{adj14}) equals the RHS of (\ref{adj15}): it
follows the first part: $\vartheta _{D_{j},m}s=(\vartheta _{z,m}s(z,\bar{z}%
))w^{m},$ also the second part by (\ref{dbarzm}) and the definition of $\Psi
_{q,m}.$

\endproof%

\begin{definition}
\label{SFA} We define a differential operator $\tilde{\vartheta}_{\Sigma
,m}: $ $\Omega _{m}^{0,q+1}(\Sigma )$ $\rightarrow $ $\Omega
_{m}^{0,q}(\Sigma )$ by ($\tilde{\vartheta}_{\Sigma ,m}u)|_{D_{j}}$ $:=$ $%
\vartheta _{D_{j},m}u|_{D_{j}}$ $=$ $(\vartheta _{z,m}u_{j}(z,\bar{z}))w^{m}$
where $u|_{D_{j}}$ $=$ $u_{j}(z,\bar{z})w^{m}.$ According to Proposition \ref%
{madj} that $\vartheta _{D_{j},m}$ is a differential operator uniquely
determined by $\bar{\partial}_{\Sigma ,m},$ $\tilde{\vartheta}_{\Sigma ,m}$
is well-defined.
\end{definition}

In globalizing Proposition \ref{madj} the cut-off functions inevitably
depend on the $\theta $-variable. Let us be specific about this point below.
Let $(\cdot ,\cdot )_{L^{2}}$ denote the inner product with respect to the
metric $G_{a,m}.$

\begin{proposition}
\label{3-11-1} For $u$ $\in \Omega _{m}^{0,q+1}(\Sigma ),$ $v$ $\in $ $%
\Omega _{m}^{0,q}(\Sigma )$ it holds that $(\tilde{\vartheta}_{\Sigma
,m}u,v)_{L^{2}}$ $=$ $(u,\bar{\partial}_{\Sigma ,m})_{L^{2}}.$ As a
consequence $\tilde{\vartheta}_{\Sigma ,m}$ $=$ $\vartheta _{\Sigma ,m}$
(Definition \ref{d-3-7}).
\end{proposition}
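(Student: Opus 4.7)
The plan is to reduce the global integration-by-parts identity on $\Sigma$ to the classical one on the compact orbifold $M = \Sigma/\sigma$, via Fubini along $dv_{\Sigma,m} = \pi^* dv_M \wedge dv_{f,m}$ and the local formulas already established in Proposition \ref{madj}. First I would observe that by Definition \ref{SFA} and Proposition \ref{madj}, in any distinguished coordinates $(z,w)$ with $u|_{D_j} = u(z,\bar z)w^m$ and $v|_{D_j} = v(z,\bar z)w^m$, both operators act purely in the base $z$-direction:
\begin{equation*}
\tilde\vartheta_{\Sigma,m} u = (\vartheta_{z,m} u(z,\bar z))\, w^m, \qquad \bar\partial_{\Sigma,m} v = (\bar\partial_z v(z,\bar z))\, w^m.
\end{equation*}
At an arbitrary point $p_0$ I would pick coordinates with $h(z_0) = 1$ and $dh(z_0) = 0$ as in (\ref{M0-3}); by (\ref{M1-1}) the metric $G_{a,m}$ splits orthogonally between base and fiber at $p_0$, and since elements of $\Omega_m^{0,q}$ carry only $d\bar z$-components (Definition \ref{2m}), the pointwise inner product at $p_0$ reduces to $|w|^{2m}\langle \vartheta_{z,m} u, v\rangle_{g_M}$.

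Integrating over the $\mathbb{C}^\ast$-fiber through $p_0$ against $dv_{f,m}$ and invoking the normalization $\int_{\mathbb{C}^\ast} l^m\, dv_{f,m} = 1$ from (\ref{fibrenv}), the fiber integral becomes the invariantly defined function $\langle \vartheta_{z,m} u(z), v(z)\rangle_{g_M}$ on $M$ (and similarly with $\bar\partial_z$ in place of $\vartheta_{z,m}$). Applying Fubini, the asserted identity thus reduces to
\begin{equation*}
\int_M \langle \vartheta_{z,m} u, v\rangle_{g_M}\, dv_M = \int_M \langle u, \bar\partial_z v\rangle_{g_M}\, dv_M
\end{equation*}
on the compact orbifold $M$. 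Via the identifications in Propositions \ref{p-gue2-2} and \ref{dualLm} applied orbifold-wise, this is just the standard adjointness of $\bar\partial$ and $\bar\partial^\ast$ on sections of an appropriate orbifold line bundle over $M$, proved by a partition of unity together with the Leibniz cancellation $\sum_\alpha \bar\partial\rho_\alpha = 0$. The consequence $\tilde\vartheta_{\Sigma,m} = \vartheta_{\Sigma,m}$ is then immediate from the uniqueness built into the formal-adjoint definition given just before Definition \ref{d-3-7}.

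The main obstacle is ensuring that the fiber-integration step is globally consistent: the pointwise computation uses a special-coordinate choice that varies with the base point, but the global invariance follows because $\int_{\mathbb{C}^\ast} l^m dv_{f,m} = 1$ holds uniformly on every orbit by the definition of $\lambda_m$ in Notation \ref{3-n}. This same uniformity, engineered into the construction of $G_{a,m}$ in Section \ref{S-metric}, is exactly the mechanism that drove the crucial identity (\ref{3-43.5}) in the proof of Proposition \ref{madj}, and it is the reason for choosing a merely $S^1$-invariant rather than fully $\mathbb{C}^\ast$-invariant metric.
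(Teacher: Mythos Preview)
Your approach is correct but takes a different route from the paper's. The paper stays entirely on $\Sigma$: it writes $u=\sum_j\varphi_j u$ with the partition of unity $\varphi_j=\varphi_j(z,\theta)$ on $\Sigma/\mathbb{R}^+$ introduced after Notation~\ref{n-6.1}, then for each $j$ applies Proposition~\ref{madj} on $D_j$ (with $\theta$ treated as a parameter, so $\varphi_j(\cdot,\theta)u_j(\cdot)$ has compact $z$-support and the local integration-by-parts applies), and sums over $j$. Your argument instead integrates over the full $\mathbb{C}^\ast$-fiber first, using $\int_{\mathbb{C}^\ast} l^m\,dv_{f,m}=1$ to descend the $L^2$ pairing to an inner product of $(L_\Sigma^\ast)^{\otimes m}$-valued forms on the compact orbifold $M$, and then invokes the classical $\bar\partial$/$\bar\partial^\ast$ adjointness there. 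This is conceptually clean and makes the role of the normalization transparent, but it implicitly relies on the orbifold descent of the pairing and of the operators (cf.\ Remark~\ref{9.3} later in the paper), which at this point has not been fully set up; the paper's version avoids that by never leaving $\Sigma$. One small imprecision: what you write as $\langle\vartheta_{z,m}u,v\rangle_{g_M}$ should carry the fibre metric $h^{-m}$ on $(L_\Sigma^\ast)^{\otimes m}$ as in (\ref{adj14})--(\ref{3.46-5}); once that is inserted your reduction to $M$ goes through.
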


\begin{proof}
Write $u=\sum_{j}\varphi _{j}u=\sum_{j}\varphi _{j}u_{j}(z,\bar{z})w^{m}$ ($%
\varphi _{j}$ ($=$ $\varphi _{j}(z,\bar{z},\theta )$) being cut-off
functions introduced below Notation \ref{n-6.1}) and $v|_{D_{j}}$ $=$ $%
v_{j}(z,\bar{z})w^{m}.$ Via Proposition \ref{madj} we are going to compute
the following. Note that the $\theta $ in $\varphi _{j}$ is treated below as
a parameter (on which $\vartheta _{z,m}$ has no action). 
\begin{eqnarray*}
(\tilde{\vartheta}_{\Sigma ,m}u,v)_{L^{2}} &=&\sum_{j}\int_{D_{j}}<\vartheta
_{z,m}(\varphi _{j}u_{j})w^{m},v_{j}(z,\bar{z})w^{m}>_{G}dv_{\Sigma ,m} \\
&=&\sum_{j}\int_{D_{j}}<\varphi _{j}u_{j}w^{m},(\bar{\partial}_{z}v_{j}(z,%
\bar{z}))w^{m}>_{G}dv_{\Sigma ,m} \\
&=&\sum_{j}\int_{D_{j}}<\varphi _{j}u_{j}w^{m},(\bar{\partial}_{\Sigma
,m}v)|_{D_{j}}>_{G}dv_{\Sigma ,m} \\
&=&\int_{\Sigma }<u,\bar{\partial}_{\Sigma ,m}v>_{G}dv_{\Sigma ,m}=(u,\bar{%
\partial}_{\Sigma ,m}v)_{L^{2}}.
\end{eqnarray*}
\end{proof}

To proceed further\footnote{%
Although Corollary \ref{Cor3-7} below can be regarded as an objective of the
remaining section, it serves as a motivation for the treatment of our Hodge
theory in the coming section rather than an effective tool (cf. the
introductory paragraph of Section 4). Despite this, (\ref{M4}) with an exact
property in the complex two-dimensional case (\ref{2D}), seems a natural
question that a reader may be led to inquire (see also Remarks \ref{3-46.5}
and \ref{3-62.5}); we decide to include the details here. For the proof
below the first main result Proposition \ref{madj} will be needed (see proof
of Proposition \ref{p-gue3-2}). See also Remark \ref{3-62.5} for a
comparison with related results.}, we need one more technical lemma.

\begin{lemma}
\label{Adj} Assume $m\geq 0.$ In local holomorphic coordinates $(z,w)$ we
write $\tilde{\psi}=w^{m}\psi ,$ $\psi =\psi _{\bar{\beta}_{1}...\bar{\beta}%
_{q+1}}d\bar{z}_{\beta _{1}}\wedge ...\wedge d\bar{z}_{\beta _{q+1}}.$ Then
we have%
\begin{equation}
\vartheta _{\Sigma }\tilde{\psi}=(\vartheta _{z,m}\psi )w^{m}+\text{zeroth
order terms in }\psi _{\bar{\beta}_{1}...\bar{\beta}_{q+1}}.  \label{adj=}
\end{equation}
\end{lemma}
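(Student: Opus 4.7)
The plan is to show that $\vartheta_{\Sigma}\tilde{\psi}$ and $(\vartheta_{z,m}\psi)w^{m}$ have the same principal (first-order) symbol when the argument has the special form $w^{m}\psi$, so the difference is necessarily zeroth-order in $\psi$. The key computational input is an unexpectedly clean block structure for $G_{a}^{-1}$.

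First I would re-examine the formulas (\ref{M0-1}), (\ref{M0-2}) for $g_{1}$, $g_{2}$. A short algebraic check shows the rank-one factorizations
\[
g_{1}^{\#}=h\,\theta\bar{\theta},\qquad g_{2}^{\#}=4a^{2}(hw\bar{w})^{-2a}(w\bar{w})^{-1}\theta\bar{\theta},
\]
where $\theta=dw+h^{-1}h_{\alpha}w\,dz_{\alpha}$ is (up to the factor $e^{\ast}_{w}$) the Chern connection $1$-form of $L_{\Sigma}$ pulled back to $\Sigma$. Hence
\[
G_{a}=\pi^{\ast}g_{M}+c(p)\,\theta\bar{\theta},\qquad c(p):=\varphi_{1}h+\varphi_{2}\cdot 4a^{2}h^{-2a}(w\bar{w})^{-2a-1}>0.
\]
In the frame $\{\partial_{z_{1}},\dots,\partial_{z_{n-1}},\partial_{w}\}$ this displays $G_{a}$ as $\mathrm{diag}(\pi^{\ast}g_{M},0)$ plus a positive rank-one correction. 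A direct Schur-complement computation then yields the (in my view) crucial identity
\[
g^{\bar{z}_{\beta}z_{\gamma}}(G_{a})=(g_{M})^{\bar{z}_{\beta}z_{\gamma}},
\]
i.e.\ the horizontal block of the inverse metric is undisturbed. The mixed and vertical blocks of $G_{a}^{-1}$ will of course be nonzero, and this is precisely what will produce zeroth-order contributions later.

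Next I would write the coordinate formula for the formal adjoint, whose first-order part is
\[
(\vartheta_{\Sigma}\eta)_{\bar{D}}=-g^{\bar{B}C}\partial_{C}\eta_{\bar{B}\bar{D}}+(\text{zeroth order in }\eta),
\]
the zeroth-order remainder coming from the Christoffel-type terms (derivatives of $G_{a}$). Specializing to $\eta=\tilde{\psi}=w^{m}\psi$ with $\psi$ having only $d\bar{z}$-components, I split the sum by the range of $\bar{B}$ and $C$: the index $\bar{B}=\bar{w}$ contributes zero because $\tilde{\psi}_{\bar{w}\bar{D}}\equiv 0$; the index $C=w$ gives $\partial_{w}\tilde{\psi}_{\bar{z}_{\beta}\bar{D}}=mw^{m-1}\psi_{\bar{z}_{\beta}\bar{D}}$, which is zeroth order in $\psi$; thus only $\bar{B}=\bar{z}_{\beta}$, $C=z_{\gamma}$ contributes at first order, giving
\[
-g^{\bar{z}_{\beta}z_{\gamma}}(G_{a})\,w^{m}\partial_{z_{\gamma}}\psi_{\bar{z}_{\beta}\bar{D}}=-(g_{M})^{\bar{z}_{\beta}z_{\gamma}}w^{m}\partial_{z_{\gamma}}\psi_{\bar{z}_{\beta}\bar{D}}
\]
by the Schur-complement identity. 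This is exactly $w^{m}$ times the principal symbol of $\vartheta_{z}\psi$, so $\vartheta_{\Sigma}\tilde{\psi}=w^{m}\vartheta_{z}\psi+(\text{zeroth order in }\psi)$. Combining with the remark just above (\ref{atp0}) that $\vartheta_{z,m}s=\vartheta_{z}s+(\text{zeroth order in }s)$ gives (\ref{adj=}).

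I expect the main obstacle to be the Schur-complement step: one must see that the two terms $\varphi_{1}g_{1}^{\#}$ and $\varphi_{2}g_{2}^{\#}$ entering $G_{a}$ share the same $1$-form $\theta$ (so their sum is still a single rank-one Hermitian tensor $c(p)\theta\bar{\theta}$) and that the resulting rank-one correction is absorbed exactly by the $\partial_{w}$-block when passing to the inverse. Without this sharp cancellation, cross inverse entries of the form $g^{\bar{z}_{\beta}z_{\gamma}}-(g_{M})^{\bar{z}_{\beta}z_{\gamma}}$ would generically contribute extra \emph{first-order} terms involving $\partial_{z_{\gamma}}\psi$ that are not of the form $(\vartheta_{z,m}\psi)w^{m}$, and the lemma would fail; so this structural feature of $G_{a}$ is really the crux.
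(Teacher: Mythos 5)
Your proof is correct, and it takes a genuinely different route from the paper's. The paper works at a single point $p_{0}$ in normalized coordinates with $h(z_{0},\bar z_{0})=1$, $dh(z_{0},\bar z_{0})=0$, and then grinds through the cofactor identities (\ref{adj5})--(\ref{adj7}) for $g^{z_{j}\bar\gamma}$ and its $z$-derivatives, plus a separate case analysis (\ref{adj10})--(\ref{adj12}) for the $\bar w$-component. You instead extract from (\ref{M0-1}), (\ref{M0-2}) the rank-one factorization $g_{i}^{\#}=c_{i}\,\theta\bar\theta$ with $\theta=dw+h^{-1}h_{\alpha}w\,dz_{\alpha}$, so that $G_{a}=\pi^{*}g_{M}+c\,\theta\bar\theta$, and then a one-line Schur-complement computation $A-BD^{-1}B^{*}=(g_{M})_{\alpha\bar\beta}+cb_{\alpha}\bar b_{\beta}-cb_{\alpha}\bar b_{\beta}=(g_{M})_{\alpha\bar\beta}$ gives the exact, coordinate-by-coordinate identity $g^{\bar z_{\beta}z_{\gamma}}(G_{a})=(g_{M})^{\bar z_{\beta}z_{\gamma}}$ on all of $\Sigma$, not merely at the special point. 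This bypasses the special-coordinate normalization entirely: once the horizontal block of $G_{a}^{-1}$ is known to be $(g_{M})^{-1}$ everywhere, the principal-symbol bookkeeping you do ($\bar B=\bar w$ contributes $0$ since $\tilde\psi$ has no $d\bar w$-component; $C=w$ produces $mw^{m-1}\psi$, hence zeroth order; $\bar B=\bar z_{\beta}$, $C=z_{\gamma}$ matches $w^{m}\vartheta_{z}\psi$ exactly) immediately yields the result, and absorbs the paper's separate $\bar w$-component case into the generic zeroth-order remainder. Your route is arguably more illuminating: it explains structurally why the specific metric $G_{a,m}$ makes the lemma work (cf.\ Remark \ref{3-46.5}, where the authors note the lemma fails for general metrics, and the rank-one $\theta\bar\theta$ correction is precisely what makes the Schur complement collapse), whereas the paper's calculation verifies the identities without quite isolating this mechanism. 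The one small addition worth being explicit about is that the formula (\ref{adj0}) is stated with raised indices, so lowering them introduces extra zeroth-order commutator terms from $\partial_{\beta}$ acting on the metric factors; this is routine but should be noted before invoking the clean lowered-index principal-symbol expression.
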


\begin{remark}
\label{3-46.5} The validity of the lemma relies on the specific metric $%
G_{a,m}$ on $\Sigma .$ We do not see such compatibility result for general
metric ($\Sigma $ being of one dimension higher than the $z$-space).
\end{remark}

\proof
\textbf{(of Lemma \ref{Adj})} At a point $p_{0}\in \Sigma ,$ we find
coordinates $(z,w)$ such that $z(p_{0})$ $=$ $z_{0}$ $=$ $(z_{1}^{0},$ $..,$ 
$z_{n-1}^{0})$, $w(p_{0})$ $=$ $w_{0}$, $h(z_{0},\bar{z}_{0})$ $=$ $||\frac{%
\partial }{\partial w}|_{(z_{0},w_{0})}||^{2}$ $=$ $1$ and $dh(z_{0},\bar{z}%
_{0})$ $=$ $0$ ((\ref{M0-3}))$.$ By standard formulas for $\bar{\partial}%
^{\ast }$ (cf. \cite[p.97]{MK} or \cite[p.153]{Ko}), we have%
\begin{eqnarray}
(\vartheta _{\Sigma }\tilde{\psi})^{\beta _{1}...\beta _{q}} &=&-\frac{1}{g}%
\partial _{\beta }(g\tilde{\psi}^{\beta \beta _{1}...\beta _{q}})
\label{adj0} \\
&=&-\frac{1}{g}\partial _{w}(g\tilde{\psi}^{w\beta _{1}...\beta _{q}})-\frac{%
1}{g}\partial _{z_{j}}(g\tilde{\psi}^{z_{j}\beta _{1}...\beta _{q}})  \notag
\end{eqnarray}

\noindent ($\beta _{1},...,\beta _{q}$ run for $z_{j},$ $w$ here) where $%
g:=\det (g_{\alpha \bar{\beta}})$ and $g_{\alpha \bar{\beta}}$ are
components of $G_{a}$ in coordinates $(z,w).$

Denote ($g_{\alpha \bar{\beta}})^{-1}$ by $g^{\alpha \bar{\beta}}$ (\cite[$%
g^{\bar{\beta}\alpha }$]{Ko})$,$ i.e., $g_{\alpha \bar{\beta}}g^{\gamma \bar{%
\beta}}$ $=$ $\delta _{\alpha }^{\gamma }.$ Let $G^{\alpha \bar{\beta}}$
denote the cofactor of ($g_{\alpha \bar{\beta}})$ at position ($\alpha
,\beta ),$ such that $g^{\alpha \bar{\beta}}$ $=$ $g^{-1}G^{\alpha \bar{\beta%
}}.$

First, we compute%
\begin{eqnarray}
g\tilde{\psi}^{w\beta _{1}...\beta _{q}} &=&gg^{w\bar{\gamma}_{1}}g^{\beta
_{1}\bar{\gamma}_{2}}...g^{\beta _{q}\bar{\gamma}_{q+1}}\tilde{\psi}_{\bar{%
\gamma}_{1}...\bar{\gamma}_{q+1}}  \label{adj1} \\
&=&G^{w\bar{\gamma}_{1}}g^{\beta _{1}\bar{\gamma}_{2}}...g^{\beta _{q}\bar{%
\gamma}_{q+1}}\psi _{\bar{\gamma}_{1}...\bar{\gamma}_{q+1}}w^{m}  \notag
\end{eqnarray}

\noindent (note that $\bar{\gamma}_{1},...,\bar{\gamma}_{q+1}$ only run for $%
\bar{z}_{1},$ $..,$ $\bar{z}_{n-1}).$ It can be checked that $G^{w\bar{\gamma%
}_{1}}$ and $\partial _{w}G^{w\bar{\gamma}_{1}}$ vanishes at $p_{0}$ using (%
\ref{M0-1}), (\ref{M0-2}) and (\ref{M0-3}). This vanishing and (\ref{adj1})
yield%
\begin{eqnarray}
\partial _{w}(g\tilde{\psi}^{w\beta _{1}...\beta _{q}}) &=&(\partial _{w}G^{w%
\bar{\gamma}_{1}})g^{\beta _{1}\bar{\gamma}_{2}}...g^{\beta _{q}\bar{\gamma}%
_{q+1}}\psi _{\bar{\gamma}_{1}...\bar{\gamma}_{q+1}}w^{m}  \label{adj2} \\
&&+G^{w\bar{\gamma}_{1}}\partial _{w}(g^{\beta _{1}\bar{\gamma}%
_{2}}...g^{\beta _{q}\bar{\gamma}_{q+1}}\psi _{\bar{\gamma}_{1}...\bar{\gamma%
}_{q+1}}w^{m})=0  \notag
\end{eqnarray}

\noindent at $p_{0}.$ Finally, substituting (\ref{adj2}) and $g\tilde{\psi}%
^{z_{j}\beta _{1}...\beta _{q}}$ $=$ $G^{z_{j}\bar{\gamma}_{1}}g^{\beta _{1}%
\bar{\gamma}_{2}}...g^{\beta _{q}\bar{\gamma}_{q+1}}\psi _{\bar{\gamma}%
_{1}...\bar{\gamma}_{q+1}}w^{m}$ into (\ref{adj0}) gives%
\begin{eqnarray}
(\vartheta _{\Sigma }\tilde{\psi})^{\beta _{1}...\beta _{q}} &=&-\frac{1}{g}%
\partial _{z_{j}}(G^{z_{j}\bar{\gamma}_{1}}g^{\beta _{1}\bar{\gamma}%
_{2}}...g^{\beta _{q}\bar{\gamma}_{q+1}})\psi _{\bar{\gamma}_{1}...\bar{%
\gamma}_{q+1}}w^{m}  \label{adj3} \\
&&-\frac{1}{g}G^{z_{j}\bar{\gamma}_{1}}g^{\beta _{1}\bar{\gamma}%
_{2}}...g^{\beta _{q}\bar{\gamma}_{q+1}}\partial _{z_{j}}\psi _{\bar{\gamma}%
_{1}...\bar{\gamma}_{q+1}}w^{m}.  \notag
\end{eqnarray}

On the other hand, denote $\det (g_{M\alpha \bar{\beta}})$ by $g_{M}$ (in
the same notation as the metric itself if no confusion occurs) and the
cofactor of ($g_{M\alpha \bar{\beta}})$ at position ($\alpha ,\beta )$ by $%
H^{\alpha \bar{\beta}}.$ Note that $g_{M}^{z_{j}\bar{\gamma}_{1}}$ = $%
g_{M}^{-1}H^{z_{j}\bar{\gamma}_{1}}.$ At $p_{0}$ we compute%
\begin{eqnarray}
&&(\vartheta _{z,m}\psi )^{\beta _{1}...\beta _{q}}=(\bar{\partial}%
_{z}^{\ast }\psi )^{\beta _{1}...\beta _{q}}\text{ (by (\ref{atp0}))}
\label{adj4} \\
&=&-\frac{1}{g_{M}}\partial _{z_{j}}(g_{M}\psi ^{z_{j}\beta _{1}...\beta
_{q}})\text{ (as in (\ref{adj0}))}  \notag \\
&=&-\frac{1}{g_{M}}\partial _{z_{j}}(H^{z_{j}\bar{\gamma}_{1}}g_{M}^{\beta
_{1}\bar{\gamma}_{2}}...g_{M}^{\beta _{q}\bar{\gamma}_{q+1}}\psi _{\bar{%
\gamma}_{1}...\bar{\gamma}_{q+1}})  \notag \\
&=&-\frac{1}{g_{M}}\partial _{z_{j}}(H^{z_{j}\bar{\gamma}_{1}}g_{M}^{\beta
_{1}\bar{\gamma}_{2}}...g_{M}^{\beta _{q}\bar{\gamma}_{q+1}})\psi _{\bar{%
\gamma}_{1}...\bar{\gamma}_{q+1}}  \notag \\
&&-\frac{1}{g_{M}}H^{z_{j}\bar{\gamma}_{1}}g_{M}^{\beta _{1}\bar{\gamma}%
_{2}}...g_{M}^{\beta _{q}\bar{\gamma}_{q+1}}\partial _{z_{j}}\psi _{\bar{%
\gamma}_{1}...\bar{\gamma}_{q+1}}  \notag
\end{eqnarray}

\noindent ($\beta _{1},...,\beta _{q}$ and $\gamma _{1},$ $...,$ $\gamma
_{q+1}$ only run from $z_{1}$ to $z_{n-1}$ here).

To compare (\ref{adj3}) and (\ref{adj4}) as claimed by our main task (\ref%
{adj=}), observe that at $p_{0},$ $g=g_{M}g_{w\bar{w}},$ $G^{z_{j}\bar{\gamma%
}_{1}}$ $=$ $H^{z_{j}\bar{\gamma}_{1}}g_{w\bar{w}}$ ($g_{z_{j}\bar{w}}$ = $%
g_{w\bar{z}_{j}}$ $=$ $0$ by (\ref{M0-1}), (\ref{M0-2})), and hence%
\begin{equation}
g^{z_{j}\bar{\gamma}_{1}}=\frac{1}{g}G^{z_{j}\bar{\gamma}_{1}}=\frac{1}{g_{M}%
}H^{z_{j}\bar{\gamma}_{1}}=g_{M}^{z_{j}\bar{\gamma}_{1}}.  \label{adj5}
\end{equation}

\noindent We also compute%
\begin{eqnarray}
\frac{1}{g}\partial _{z_{j}}G^{z_{j}\bar{\gamma}_{1}} &=&\frac{1}{g}\partial
_{z_{j}}(H^{z_{j}\bar{\gamma}_{1}}g_{w\bar{w}}+\text{terms involving }%
h_{\alpha }h_{\bar{\beta}})  \label{adj6} \\
&=&\frac{1}{g}(\partial _{z_{j}}H^{z_{j}\bar{\gamma}_{1}})g_{w\bar{w}}+\frac{%
1}{g}H^{z_{j}\bar{\gamma}_{1}}(\partial _{z_{j}}g_{w\bar{w}})=\frac{1}{g_{M}}%
\partial _{z_{j}}H^{z_{j}\bar{\gamma}_{1}}  \notag
\end{eqnarray}

\noindent at $p_{0}$ where $dh$ $=$ $0$ and $(0=)\partial _{z_{j}}g_{w\bar{w}%
}$ = terms involving $\partial _{z_{j}}h.$ Moreover from (\ref{adj5}), (\ref%
{adj6}) we have, at $p_{0}$%
\begin{eqnarray}
&&\partial _{z_{j}}g^{\beta _{1}\bar{\gamma}_{2}}=\partial _{z_{j}}(\frac{1}{%
g}G^{\beta _{1}\bar{\gamma}_{2}})=(-1)g^{-2}(\partial _{z_{j}}g)G^{\beta _{1}%
\bar{\gamma}_{2}}+g^{-1}\partial _{z_{j}}G^{\beta _{1}\bar{\gamma}_{2}}
\label{adj7} \\
&=&(-1)g^{-2}\partial _{z_{j}}(g_{M}g_{w\bar{w}}+\text{terms involving }%
h_{\alpha }h_{\bar{\beta}})G^{\beta _{1}\bar{\gamma}_{2}}+\frac{1}{g_{M}}%
\partial _{z_{j}}H^{\beta _{1}\bar{\gamma}_{2}}  \notag \\
&=&(-1)\frac{1}{g_{M}}(\partial _{z_{j}}g_{M})\frac{1}{g_{M}}H^{\beta _{1}%
\bar{\gamma}_{2}}+\frac{1}{g_{M}}\partial _{z_{j}}H^{\beta _{1}\bar{\gamma}%
_{2}}  \notag \\
&=&\partial _{z_{j}}(\frac{1}{g_{M}}H^{\beta _{1}\bar{\gamma}_{2}})=\partial
_{z_{j}}g_{M}^{\beta _{1}\bar{\gamma}_{2}}  \notag
\end{eqnarray}

\noindent where $\beta _{1}$ runs from $z_{1}$ to $z_{n-1}.$

We are about to compare (\ref{adj3}) and (\ref{adj4}). We use (\ref{adj5}), (%
\ref{adj6}) and (\ref{adj7}) to obtain%
\begin{equation}
(\vartheta _{\Sigma }\tilde{\psi})^{\beta _{1}...\beta _{q}}=(\vartheta
_{z,m}\psi )^{\beta _{1}...\beta _{q}}w^{m}  \label{adj8}
\end{equation}

\noindent at $p_{0}$ for $\beta _{1},...,$ $\beta _{q}$ running from $z_{1}$
to $z_{n-1}.$ By (\ref{adj8}) we lower the indices of (\ref{adj3}):%
\begin{eqnarray}
(\vartheta _{\Sigma }\tilde{\psi})_{\bar{\gamma}_{1}...\bar{\gamma}_{q}}
&=&g_{\beta _{1}\bar{\gamma}_{1}}...g_{\beta _{q}\bar{\gamma}_{q}}(\vartheta
_{\Sigma }\tilde{\psi})^{\beta _{1}...\beta _{q}}  \label{adj9} \\
&=&g_{M\beta _{1}\bar{\gamma}_{1}}...g_{M\beta _{q}\bar{\gamma}%
_{q}}(\vartheta _{z,m}\psi )^{\beta _{1}...\beta _{q}}w^{m}  \notag \\
&=&(\vartheta _{z,m}\psi )_{\bar{\gamma}_{1}...\bar{\gamma}_{q}}w^{m}  \notag
\end{eqnarray}

\noindent at $p_{0}$ for $\gamma _{1},...,$ $\gamma _{q}$ running from $%
z_{1} $ to $z_{n-1}.$ For these indices our claim (\ref{adj=}) is now shown
even without the correction of zeroth order terms.

The possible corrections by zeroth order terms occur when one (and thus the
only one) of $\gamma _{1},...,$ $\gamma _{q}$ equals $w,$ say $\gamma _{1}$ $%
=$ $w$ (with $\gamma _{2},...,$ $\gamma _{q}$ running from $z_{1}$ to $%
z_{n-1}),$%
\begin{equation}
(\vartheta _{\Sigma }\tilde{\psi})_{\bar{w}\bar{\gamma}_{2}...\bar{\gamma}%
_{q}}=g_{\beta _{1}\bar{w}}g_{\beta _{2}\bar{\gamma}_{2}}...g_{\beta _{q}%
\bar{\gamma}_{q}}(\vartheta _{\Sigma }\tilde{\psi})^{\beta _{1}...\beta
_{q}}=g_{w\bar{w}}g_{\beta _{2}\bar{\gamma}_{2}}...g_{\beta _{q}\bar{\gamma}%
_{q}}(\vartheta _{\Sigma }\tilde{\psi})^{w\beta _{2}...\beta _{q}}\text{ }
\label{adj10}
\end{equation}

\noindent at $p_{0}$ by $g_{z_{j}\bar{w}}(p_{0})$ $=$ $0$. We compute (as in
(\ref{adj0})) using $\tilde{\psi}^{ww\beta _{2}...\beta _{q}}$ $=$ $g^{w\bar{%
z}_{1}}g^{w\bar{z}_{2}}\cdot \cdot \cdot \tilde{\psi}_{\bar{z}_{1}\bar{z}%
_{2}\cdot \cdot \cdot }$ $=$ $0$ at $p_{0}$ by $g^{w\bar{z}_{1}}(p_{0})$ $=$ 
$g^{w\bar{z}_{2}}(p_{0})$ $=$ $0$ and its $w$-derivatives $=$ $0$ at $p_{0},$
for the second equality below 
\begin{eqnarray}
(\vartheta _{\Sigma }\tilde{\psi})^{w\beta _{2}...\beta _{q}} &=&-\frac{1}{g}%
\partial _{w}(g\tilde{\psi}^{ww\beta _{2}...\beta _{q}})-\frac{1}{g}\partial
_{z_{j}}(g\tilde{\psi}^{z_{j}w\beta _{2}...\beta _{q}})  \label{adj11} \\
&=&-\frac{1}{g}\partial _{z_{j}}(g\tilde{\psi}^{z_{j}w\beta _{2}...\beta
_{q}})\text{ }  \notag \\
&=&-\frac{1}{g}\partial _{z_{j}}(g^{z_{j}\bar{\gamma}_{1}}G^{w\bar{\gamma}%
_{2}}g^{\beta _{2}\bar{\gamma}_{3}}...g^{\beta _{q}\bar{\gamma}_{q+1}}\tilde{%
\psi}_{\bar{\gamma}_{1}...\bar{\gamma}_{q+1}})  \notag \\
&=&-\frac{1}{g}(\partial _{z_{j}}G^{w\bar{k}_{2}})g^{z_{j}\bar{k}%
_{1}}g^{\beta _{2}\bar{k}_{3}}...g^{\beta _{q}\bar{k}_{q+1}}\psi _{\bar{k}%
_{1}...\bar{k}_{q+1}}w^{m}  \notag
\end{eqnarray}

\noindent at $p_{0}$ by $G^{w\bar{\gamma}_{2}}(p_{0})$ $=$ $0.$ Finally,
substituting (\ref{adj11}) into (\ref{adj10}), we obtain%
\begin{eqnarray}
&&(\vartheta _{\Sigma }\tilde{\psi})_{\bar{w}\bar{\gamma}_{2}...\bar{\gamma}%
_{q}}=-\frac{1}{g}(\partial _{z_{j}}G^{w\bar{k}_{2}})g_{w\bar{w}}g^{z_{j}%
\bar{k}_{1}}\delta _{\gamma _{2}}^{k_{3}}...\delta _{\gamma
_{q}}^{k_{q+1}}\psi _{\bar{k}_{1}...\bar{k}_{q+1}}w^{m}  \label{adj12} \\
&=&-\frac{1}{g}(\partial _{z_{j}}G^{w\bar{k}_{2}})g_{w\bar{w}}g^{z_{j}\bar{k}%
_{1}}\psi _{\bar{k}_{1}\bar{k}_{2}\bar{\gamma}_{2}...\bar{\gamma}_{q}}w^{m} 
\notag
\end{eqnarray}

\noindent at $p_{0}.$ This term (\ref{adj12}) is of zeroth order in $\psi _{%
\bar{k}_{1}\bar{k}_{2}\bar{\gamma}_{2}...\bar{\gamma}_{q}}.$ Our claim (\ref%
{adj=}) in its complete form follows from (\ref{adj9}) and (\ref{adj12}).

\endproof%

\begin{remark}
\textbf{\label{dim2}} In the case where $\dim _{\mathbb{C}}\Sigma $ $=$ $2,$
observe that $(\vartheta _{\Sigma }\tilde{\psi})_{\bar{w}\bar{\gamma}_{2}...%
\bar{\gamma}_{q}}$ $=$ $0$ because the RHS of (\ref{adj12}) vanishes for
dimension reason. In this case$,$ $\vartheta _{\Sigma }\tilde{\psi}%
=\vartheta _{\Sigma ,m}\tilde{\psi}$ exactly (by using (\ref{adj9}) and
Proposition \ref{madj}).
\end{remark}

We are almost ready to arrive at the second main result of this section. Let 
$L_{0,q+1}^{2}(\Sigma ,G_{a})$ denote the space of all square-integrable $%
(0,q+1)$ forms on $\Sigma $ with respect to $G_{a}.$ For any given $m\geq 0$
and large positive $a$ (say, $a>\frac{m}{2}),$ it is not difficult to see $%
\Omega _{m}^{0,q+1}(\Sigma )$ $\subset $ $L_{0,q+1}^{2}(\Sigma ,G_{a})$ (see
Remark \ref{3-r})$.$

\begin{proposition}
\label{p-gue3-2} (The second main result of this section) For $a>\frac{m}{2}%
\geq 0,$ $\vartheta _{\Sigma ,m}$\textit{\ }$:$\textit{\ }$\Omega
_{m}^{0,q+1}(\Sigma )$\textit{\ }$\rightarrow $\textit{\ }$\Omega
_{m}^{0,q}(\Sigma )$\textit{\ is} equal to the restriction $\vartheta
_{\Sigma }|_{\Omega _{m}^{0,q+1}(\Sigma )}$ modulo zeroth order terms, where 
$\vartheta _{\Sigma }$\textit{\ }$:$\textit{\ }$\Omega ^{0,q+1}(\Sigma )$%
\textit{\ }$\rightarrow $\textit{\ }$\Omega ^{0,q}(\Sigma )$. That is, for $%
\tilde{\psi}\in \Omega _{m}^{0,q+1}(\Sigma )$ we have%
\begin{equation}
\vartheta _{\Sigma }\tilde{\psi}=\vartheta _{\Sigma ,m}\tilde{\psi}+\text{%
zeroth order terms in }\tilde{\psi}.  \label{adj=0}
\end{equation}
\end{proposition}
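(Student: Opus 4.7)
The plan is to simply assemble the two main ingredients already proved, Lemma \ref{Adj} and Proposition \ref{madj}, and then globalize via Definition \ref{SFA} and Proposition \ref{3-11-1}. The hypothesis $a>\tfrac{m}{2}$ enters only to guarantee that $\Omega_m^{0,q+1}(\Sigma)\subset L_{0,q+1}^2(\Sigma,G_{a})$ (cf.\ Remark \ref{3-r}), so that the formal adjoints $\vartheta_{\Sigma}$ and $\vartheta_{\Sigma,m}$ are both available on this space.

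First I would fix $\tilde\psi\in\Omega_m^{0,q+1}(\Sigma)$ and an arbitrary point $p_{0}\in\Sigma$, and pick a distinguished local coordinate system $(z,w)$ around $p_{0}$ normalized so that $h(z_{0},\bar z_{0})=1$ and $dh(z_{0},\bar z_{0})=0$, as in \eqref{M0-3}. In these coordinates write $\tilde\psi=w^{m}\psi$ with $\psi\in\Omega^{0,q+1}(U_{j})$. Lemma \ref{Adj} then yields, at $p_{0}$,
\begin{equation*}
\vartheta_{\Sigma}\tilde\psi = (\vartheta_{z,m}\psi)w^{m}+\text{(zeroth order terms in }\psi_{\bar\beta_{1}\cdots\bar\beta_{q+1}}\text{)}.
\end{equation*}
Next, Proposition \ref{madj} identifies $(\vartheta_{z,m}\psi)w^{m}$ with $\vartheta_{D_{j},m}\tilde\psi$ on $D_{j}$, and Definition \ref{SFA} together with Proposition \ref{3-11-1} identifies $\vartheta_{D_{j},m}\tilde\psi|_{D_{j}}$ with $(\vartheta_{\Sigma,m}\tilde\psi)|_{D_{j}}$. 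Substituting gives
\begin{equation*}
\vartheta_{\Sigma}\tilde\psi\big|_{p_{0}} = \vartheta_{\Sigma,m}\tilde\psi\big|_{p_{0}}+\text{(zeroth order terms in }\tilde\psi\text{)},
\end{equation*}
which is exactly \eqref{adj=0} at $p_{0}$. Since $p_{0}$ was arbitrary, the identity holds pointwise on $\Sigma$.

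Finally I would observe that the globalization is automatic: both $\vartheta_{\Sigma}\tilde\psi$ and $\vartheta_{\Sigma,m}\tilde\psi$ are intrinsically defined elements of $\Omega^{0,q}(\Sigma)$ (the first by Notation \ref{3-1.5}, the second by Proposition \ref{3-11-1}), so their difference $R\tilde\psi:=\vartheta_{\Sigma}\tilde\psi-\vartheta_{\Sigma,m}\tilde\psi$ is a globally defined $(0,q)$-form depending on $\tilde\psi$, and the pointwise computation above shows that in any distinguished chart of the normalized form, $R\tilde\psi$ has no dependence on the first derivatives of the coefficients $\psi_{\bar\beta_{1}\cdots\bar\beta_{q+1}}$. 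Hence $R$ is a zeroth order operator in the coordinate-invariant sense.

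The main obstacle is purely a bookkeeping one: one must check that the pointwise normalization $h(p_{0})=1$, $dh(p_{0})=0$ used in Lemma \ref{Adj} does not corrupt the zeroth-order conclusion upon varying $p_{0}$, i.e.\ that the ``zeroth order terms'' assemble into a bona fide tensorial zeroth order operator. This is handled exactly by the invariance remark above, since $\vartheta_{\Sigma}\tilde\psi-\vartheta_{\Sigma,m}\tilde\psi$ is a priori an intrinsic $(0,q)$-form whose principal symbol vanishes in one (hence every) chart; no additional computation across charts is required.
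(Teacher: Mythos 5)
Your proposal is correct and takes essentially the same route as the paper's proof: the paper's own argument is a one-liner pointing to the identity $\vartheta_{\Sigma,m}\tilde\psi|_{D_j}=\vartheta_{D_j,m}\tilde\psi|_{D_j}$ (Definition \ref{SFA} and Proposition \ref{3-11-1}) and then to Lemma \ref{Adj} plus Proposition \ref{madj}, exactly the three ingredients you combine. The only thing you add is the explicit observation that the zeroth-order conclusion survives as $p_0$ varies because the difference $\vartheta_\Sigma-\vartheta_{\Sigma,m}$ is an intrinsically defined first-order operator whose principal symbol is shown to vanish at each point; the paper leaves this invariance implicit, and your spelling it out is a reasonable clarification rather than a different argument.
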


\proof
The formal adjoints $\vartheta _{\Sigma ,m}\tilde{\psi}$ $=$ $\vartheta
_{D_{j},m}\tilde{\psi}|_{D_{j}}$ in $D_{j}$ $\subset $ $\Sigma .$ So (\ref%
{adj=0}) follows from (\ref{adj=})\ of Lemma \ref{Adj} and\ Proposition \ref%
{madj}.

\endproof%

To streamline our ongoing presentation, let us indicate an application of
the above results to $\bar{\partial}$-Laplacians. Back to the case $\Sigma $ 
$=$ $\hat{L}\backslash \{0$-section\}=:$\hat{L}^{\prime },$ in view of
Proposition \ref{p-gue2-2} we can convert $\bar{\partial}_{M,(L^{\ast
})^{\otimes m}}$ $:$ $\Omega ^{0,q}(M,(L^{\ast })^{\otimes m})$ $\rightarrow 
$ $\Omega ^{0,q+1}(M,(L^{\ast })^{\otimes m})$ to $\bar{\partial}_{\hat{L}%
^{\prime },m}$ $:$ $\Omega _{m}^{0,q}(\hat{L}^{\prime })$ $\rightarrow $ $%
\Omega _{m}^{0,q+1}(\hat{L}^{\prime })$ given by $\bar{\partial}_{\hat{L}%
^{\prime },m}(\eta w^{m})$ $=$ ($\bar{\partial}_{M}\eta )w^{m}.$ That is, we
have $\bar{\partial}_{\hat{L}^{\prime },m}\circ \psi _{q,m}=\psi
_{q+1,m}\circ \bar{\partial}_{M,(L^{\ast })^{\otimes m}}$ (see Proposition %
\ref{p-gue2-2} for $\psi _{q,m}).$ Define $\bar{\partial}$-Laplacians $%
\square _{\hat{L}^{\prime }},$ $\square _{\hat{L}^{\prime },m}$ and $\square
_{M,(L^{\ast })^{\otimes m}}$ by 
\begin{eqnarray*}
\square _{\hat{L}^{\prime }}:= &&\vartheta _{\hat{L}^{\prime }}\circ \bar{%
\partial}_{\hat{L}^{\prime }}+\bar{\partial}_{\hat{L}^{\prime }}\circ
\vartheta _{\hat{L}^{\prime }},\text{ }\square _{\hat{L}^{\prime
},m}:=\vartheta _{\hat{L}^{\prime },m}\circ \bar{\partial}_{\hat{L}^{\prime
},m}+\bar{\partial}_{\hat{L}^{\prime },m}\circ \vartheta _{\hat{L}^{\prime
},m}, \\
\square _{M,(L^{\ast })^{\otimes m}}:= &&\vartheta _{M,(L^{\ast })^{\otimes
m}}\circ \bar{\partial}_{M,(L^{\ast })^{\otimes m}}+\bar{\partial}%
_{M,(L^{\ast })^{\otimes m}}\circ \vartheta _{M,(L^{\ast })^{\otimes m}},
\end{eqnarray*}

\noindent respectively. Now Proposition \ref{p-gue2-2}, Proposition \ref%
{p-gue3-2} and Remark \ref{dim2} yield immediately

\begin{corollary}
\label{Cor3-7} With the notation above,%
\begin{eqnarray}
(\square _{\hat{L}^{\prime }}+\text{first order operator)}\circ \psi _{q,m}
&=&\psi _{q,m}\circ \square _{M,(L^{\ast })^{\otimes m}},  \label{M4} \\
(\square _{\hat{L}^{\prime }}+\text{first order operator)\TEXTsymbol{\vert}}%
_{\Omega _{m}^{0,q}(\hat{L}^{\prime })} &=&\square _{\hat{L}^{\prime },m}. 
\notag
\end{eqnarray}%
\noindent If $\dim _{\mathbb{C}}\hat{L}^{\prime }$ $=$ $2,$ then ``first
order operator" of (\ref{M4}) vanishes. That is to say, 
\begin{equation}
\square _{\hat{L}^{\prime }}\circ \psi _{q,m}=\psi _{q,m}\circ \square
_{M,(L^{\ast })^{\otimes m}},\text{ }\square _{\hat{L}^{\prime }}|_{\Omega
_{m}^{0,q}(\hat{L}^{\prime })}=\square _{\hat{L}^{\prime },m}.  \label{2D}
\end{equation}
\end{corollary}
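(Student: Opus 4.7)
My plan is to deduce Corollary \ref{Cor3-7} from the three ingredients already assembled: Proposition \ref{p-gue2-2} (which intertwines $\bar{\partial}_{M,(L^{\ast })^{\otimes m}}$ and $\bar{\partial}_{\hat{L}^{\prime },m}$ via $\psi _{q,m}$), Proposition \ref{madj} / Proposition \ref{p-gue3-2} (which compare $\vartheta _{\hat{L}^{\prime }}$ with $\vartheta _{\hat{L}^{\prime },m}$ up to zeroth order terms), and Remark \ref{dim2} (which kills those zeroth order terms in complex dimension $2$).

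First I would establish the second identity. By Proposition \ref{p-gue3-2} applied with $\Sigma =\hat{L}^{\prime }$, for any $\tilde{\psi}\in \Omega _{m}^{0,q+1}(\hat{L}^{\prime })$ we have $\vartheta _{\hat{L}^{\prime }}\tilde{\psi}=\vartheta _{\hat{L}^{\prime },m}\tilde{\psi}+Z\tilde{\psi}$ where $Z$ is a zeroth order operator. Plugging this into the definition $\square _{\hat{L}^{\prime }}=\vartheta _{\hat{L}^{\prime }}\bar{\partial}_{\hat{L}^{\prime }}+\bar{\partial}_{\hat{L}^{\prime }}\vartheta _{\hat{L}^{\prime }}$ and restricting to $\Omega _{m}^{0,q}(\hat{L}^{\prime })$ (where $\bar{\partial}_{\hat{L}^{\prime }}=\bar{\partial}_{\hat{L}^{\prime },m}$ by the discussion around (\ref{1-2-2})) yields $\square _{\hat{L}^{\prime }}|_{\Omega _{m}^{0,q}(\hat{L}^{\prime })}=\square _{\hat{L}^{\prime },m}+(Z\bar{\partial}_{\hat{L}^{\prime },m}+\bar{\partial}_{\hat{L}^{\prime },m}Z)$; since $Z$ is zeroth order and $\bar{\partial}_{\hat{L}^{\prime },m}$ is first order, the correction is a first order operator, giving the second line of (\ref{M4}).

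Next I would derive the first identity. By the last sentence of Proposition \ref{p-gue2-2}, $\psi _{q,m}$ intertwines the two $\bar{\partial}$ operators, so $\bar{\partial}_{\hat{L}^{\prime },m}\circ \psi _{q,m}=\psi _{q+1,m}\circ \bar{\partial}_{M,(L^{\ast })^{\otimes m}}$. It remains to check the corresponding intertwining for the formal adjoints, i.e.\ that $\vartheta _{\hat{L}^{\prime },m}\circ \psi _{q+1,m}=\psi _{q,m}\circ \vartheta _{M,(L^{\ast })^{\otimes m}}$. This is precisely the content of Proposition \ref{madj} (with $D_{j}$ replaced by the global situation on $\hat{L}^{\prime }$, using that the patches assemble globally via the $\tilde{\Psi}_{q,m}$ of Proposition \ref{dualLm} and that $\vartheta _{\hat{L}^{\prime },m}=\Psi _{q,m}\circ \vartheta _{U_{j},m}\circ \Psi _{q+1,m}^{-1}$ locally). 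Composing these intertwinings and using the second line of (\ref{M4}) gives the first line.

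For the dimension $2$ case, I would invoke Remark \ref{dim2}, which asserts $\vartheta _{\hat{L}^{\prime }}\tilde{\psi}=\vartheta _{\hat{L}^{\prime },m}\tilde{\psi}$ exactly (no zeroth order correction) when $\dim _{\mathbb{C}}\hat{L}^{\prime }=2$. Then the operator $Z$ in the argument above vanishes identically, so both the first order correction in (\ref{M4}) and the analogous correction in the intertwining with $\square _{M,(L^{\ast })^{\otimes m}}$ vanish, producing (\ref{2D}).

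I do not expect a serious obstacle: each of the three propositions has already done the geometric work, and the only step requiring care is the bookkeeping of what \emph{order} the correction $Z\bar{\partial}+\bar{\partial}Z$ has, and verifying that the intertwining result of Proposition \ref{madj}, originally stated on a single patch $D_{j}$, genuinely globalizes on $\hat{L}^{\prime }$. The latter is guaranteed because both $\vartheta _{\hat{L}^{\prime },m}$ and $\psi _{q,m}$ are well-defined global operators that agree with their patchwise counterparts.
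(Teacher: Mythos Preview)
Your proposal is correct and follows essentially the same approach as the paper, which simply states that Proposition~\ref{p-gue2-2}, Proposition~\ref{p-gue3-2} and Remark~\ref{dim2} ``yield immediately'' the corollary. One minor point of care: the zeroth order correction $Z\tilde{\psi}$ coming from (\ref{adj12}) involves a $d\bar{w}$ component and so need not lie in $\Omega_m^{0,q-1}(\hat{L}')$, hence in the second term you should write $\bar{\partial}_{\hat{L}'}Z$ rather than $\bar{\partial}_{\hat{L}',m}Z$; this does not affect your first-order count or the conclusion.
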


\begin{remark}
\label{3-62.5} This type of relation between the \textquotedblleft upstair
Laplacian $\square _{\hat{L}^{\prime }}"$ and the \textquotedblleft
downstair Laplacian $\square _{M,(L^{\ast })^{\otimes m}}"$ is also seen in
the work \cite[Proposition 5.1]{CHT} in the context of CR manifolds $X$ with 
$S^{1}$-action, where no \textquotedblleft first order corrections" (such as
the one in (\ref{M4})) is needed, due to the use of an $S^{1}$-invariant
metric on $X.$
\end{remark}

\section{\textbf{A Hodge theory for }$\square _{\Sigma ,m}^{(q)}\label{Sec4}$%
}

Let $\Sigma $ be as before. We are going to study a Hodge theory for the
Laplacian $\square _{\Sigma ,m}^{(q)}$ associated to $\bar{\partial}_{\Sigma
,m}$ acting on $\Omega _{m}^{0,q}(\Sigma ).$ For this purpose, certain 
\textit{a priori} estimates such as elliptic estimates are useful. In view
of Corollary \ref{Cor3-7} above for $\Sigma $ $=$ $\hat{L}^{\prime }$, it
appears conceivable that the desired estimates for $\square _{\hat{L}%
^{\prime },m}$ could be available from those for $\square _{\hat{L}^{\prime
}},$ and the latter is known classically (on compact manifolds). Strongly
motivated by this though, in the present section we take an alternative
approach. This approach basically aligns with Proposition \ref{madj}, and
part of the methodology will reappear in subsequent sections.

Our main result of this section is Theorem \ref{t-4-2} with an application
to the index in Corollary \ref{t-4-3}. See another application for the proof
of (\ref{At}) in Section 5. Fix a finite covering $\{D_{j}\}_{j\in J}$ of $%
\Sigma $ as in (\ref{1-0}) and a partition of unity $\varphi _{j}$ $(=$ $%
\varphi _{j}(z,\theta ))$ subordinated to $D_{j}$ as in the item $i)$ after
Notation \ref{n-6.1} with $U_{j}=V_{j}$ there. Write $\omega $ $\in $ $%
\Omega _{m}^{0,q}(\Sigma )$ as $\omega $ $=$ $\sum_{j}\varphi _{j}\omega $
with $\varphi _{j}\omega =w^{m}\varphi _{j}\mu _{j}(z,\bar{z})$ for $%
C^{\infty }$-smooth $(0,q)$-forms $\mu _{j}$ on $U_{j}$ by Definition \ref%
{2m} $ii).$

\begin{notation}
\label{N-mL2} Denote by $L_{0,q,m}^{2}(\Sigma ,G_{a,m})$ the space of $||$ $%
\cdot $ $||_{L^{2}}$-completion of $\Omega _{m}^{0,q}(\Sigma )$ with respect
to the metric $G_{a,m}$ (compare Remark \ref{N-6-3} for similar notations $%
L_{m}^{2,\ast }(\Sigma ,G_{a,m}),$ $L^{2,\ast }(\Sigma ,\pi ^{\ast }\mathcal{%
E}_{M},G_{a,m})$)$.$ (Recall that any element in $\Omega _{m}^{0,q}(\Sigma )$
is square-integrable if $a$ $>$ $\frac{m}{2};$ see Remark \ref{3-r}.)
\end{notation}

It is convenient to define $Dom(\bar{\partial}_{\Sigma ,m})$ (resp. $%
Dom(\vartheta _{\Sigma ,m}))$ to be the space of all $\omega $ $\in $ $%
L_{0,q,m}^{2}(\Sigma ,G_{a,m})$ with $\bar{\partial}_{\Sigma ,m}\omega $ $%
\in $ $L_{0,q+1,m}^{2}(\Sigma ,G_{a,m})$ (resp. $\vartheta _{\Sigma
,m}\omega $ $\in $ $L_{0,q-1,m}^{2}(\Sigma ,G_{a,m}))$ \textit{in the
distribution sense} given as follows: $(\bar{\partial}_{\Sigma ,m}\omega ,$ $%
\varphi )_{L^{2}}$ $=$ $(\omega ,$ $\vartheta _{\Sigma ,m}\varphi )_{L^{2}}$
(resp. $(\vartheta _{\Sigma ,m}\omega ,$ $\varphi )_{L^{2}}$ $=$ $(\omega ,$ 
$\bar{\partial}_{\Sigma ,m}\varphi )_{L^{2}})$ for all $\varphi $ $\in $ $%
\Omega _{m}^{0,q+1}(\Sigma )$ (resp. $\varphi $ $\in $ $\Omega
_{m}^{0,q-1}(\Sigma ))$ (note that $\vartheta _{\Sigma ,m},$ the formal
adjoint of $\bar{\partial}_{\Sigma ,m},$ as in Definition \ref{d-3-7} acting
on smooth elements is a differential operator via Proposition \ref{madj}).
In case $\omega $ is smooth the distributional $\bar{\partial}_{\Sigma
,m}\omega $ (resp. $\vartheta _{\Sigma ,m}\omega )$ coincides with the
ordinary $\bar{\partial}_{\Sigma ,m}\omega $ (resp. $\vartheta _{\Sigma
,m}\omega )$ by Proposition \ref{3-11-1}. Here both $(\omega ,$ $\vartheta
_{\Sigma ,m}\varphi )_{L^{2}}$ and $(\omega ,$ $\bar{\partial}_{\Sigma
,m}\varphi )_{L^{2}}$ are finite in view of Remark \ref{3-r}. The
distribution sense above uses test functions necessarily of noncompact
support; this is one of key features in our study. Write 
\begin{equation}
\square _{\Sigma ,m}^{(q)}:=\vartheta _{\Sigma ,m}\circ \bar{\partial}%
_{\Sigma ,m}+\bar{\partial}_{\Sigma ,m}\circ \vartheta _{\Sigma ,m}\text{ on 
}Dom(\square _{\Sigma ,m}^{(q)})\subset L_{0,q,m}^{2}(\Sigma ,G_{a,m}).
\label{4.0}
\end{equation}%
\noindent Here $Dom(\square _{\Sigma ,m}^{(q)})$ consists of elements $%
\omega \in Dom(\bar{\partial}_{\Sigma ,m})$ $\cap $ $Dom(\vartheta _{\Sigma
,m})$ $\subset $ $L_{0,q,m}^{2}(\Sigma ,G_{a})$ such that $\bar{\partial}%
_{\Sigma ,m}\omega \in Dom(\vartheta _{\Sigma ,m}^{(q+1)}),\vartheta
_{\Sigma ,m}\omega \in Dom(\bar{\partial}_{\Sigma ,m}^{(q-1)})$ (cf. \cite[%
Definition 4.2.2 ]{ChenS} with their Hilbert space adjoint replaced by the
formal adjoint $\vartheta _{\Sigma ,m}$)$.$ An alternative definition of $%
Dom(\square _{\Sigma ,m}^{(q)})$ may be that $u$ $\in $ $Dom(\square
_{\Sigma ,m}^{(q)})$ if $\square _{\Sigma ,m}^{(q)}u$ $\in $ $%
L_{0,q,m}^{2}(\Sigma ,G_{a})$ in the distribution sense as above, i.e. ($%
\square _{\Sigma ,m}^{(q)}u,\varphi )_{L^{2}}$ $=$ $(u,\square _{\Sigma
,m}^{(q)}\varphi )_{L^{2}}$ for all $\varphi $ $\in $ $\Omega
_{m}^{0,q}(\Sigma ).$ In this way, see Remark \ref{4-7-1} for disadvantages.


Let $H_{0,q}^{s}(\Sigma ,G_{a,m})$ denote the usual Sobolev space of order $%
s $ for $(0,q)$-forms on $(\Sigma ,G_{a,m})$ with $\parallel \cdot \parallel
_{s}$ its Sobolev norm$.$ Let 
\begin{equation}
H_{0,q,m}^{s}(\Sigma ,G_{a,m}):=H_{0,q}^{s}(\Sigma ,G_{a,m})\cap
L_{0,q,m}^{2}(\Sigma ,G_{a,m}),  \label{Hs}
\end{equation}

\noindent which is the completion of $\Omega _{m}^{0,q}(\Sigma )$ under $%
\parallel \cdot \parallel _{s}$ (here $\Omega _{m}^{0,q}$ $\subset $ $%
L_{0,q,m}^{2}(\Sigma ,G_{a,m}),$ cf. Remark \ref{3-r}).

However \textbf{the norm (\ref{Hs}) is not going to be adopted here}.
Instead we have the following alternative approach, which we view as a
novelty of this section:

\begin{definition}
\label{d4-0} With the notation above, by (\ref{Psi_qm}) $u\in
L_{0,q,m}^{2}(\Sigma ,G_{a,m})$ may be thought of as the form $u$ $=$ $%
\sum_{j}\varphi _{j}u$ with $\varphi _{j}u$ = $\varphi _{j}v_{j}$ for some $%
(\psi _{j}^{\ast }L_{\Sigma }^{\ast })^{\otimes m}$-valued $v_{j}$ $=$ $%
v_{j}(z,\bar{z})$, or $v_{j}(z)$ for short$,$ $\in $ $L_{0,q}^{2}(U_{j},\psi
_{j}^{\ast }G_{a,m}\otimes h^{-m})$ (see (\ref{lq}) for $L_{\Sigma }$ and $%
h, $ and (\ref{Psi_qm}) for this interpretation) with supp $\varphi
_{j}v_{j} $ $\subset $ $U_{j}$ $\times $ $(-\varepsilon _{j},\varepsilon
_{j})$ ($\times \mathbb{R}^{+}$) where $\varepsilon _{j}$ $=$ $\varepsilon $
for all $j$ (note that $\varphi _{j}$ $=$ $\varphi _{j}(z,\theta )$ where $w$
$=$ $|w|e^{i\theta }$ or $w_{j}$ = $|w_{j}|e^{i\theta _{j}}$ to indicate the
dependence on $j)$. This interpretation explains why the metric in $%
L_{0,q}^{2}$ above and in $ii)$ below involves \textquotedblleft $h^{-m}".$
For most of the time we shall write $u$ $=$ $\sum_{j}\varphi _{j}u$ where $%
\varphi _{j}u$ \ is simply $w^{m}\varphi _{j}v_{j},$ $v_{j}$ $\in $ $%
L_{0,q}^{2}(U_{j},\psi _{j}^{\ast }G_{a,m}\otimes h^{-m})$ (in this way $%
v_{j}$ not $(\psi _{j}^{\ast }L_{\Sigma }^{\ast })^{\otimes m}$-valued). The
two ways are used interchangeably.\ We define 
\begin{equation*}
u\in H_{0,q,m}^{\prime s}(\Sigma ,G_{a,m})
\end{equation*}%
if and only if the following $i),$ $ii)$ and $iii)$ hold%
\begin{eqnarray*}
&&i)\text{ }u\in L_{0,q,m}^{2}(\Sigma ,G_{a,m}), \\
&&ii)\text{ }\varphi _{j}(\cdot ,\theta _{j})v_{j}(\cdot )\in
H_{0,q}^{s}(U_{j}\times \{0\}\times \{1\},\psi _{j}^{\ast }\pi ^{\ast
}g_{M}\otimes h^{-m})
\end{eqnarray*}%
for all $j$ and $\theta _{j}$ $\in $ $(-\varepsilon _{j},\varepsilon _{j}),$
where the metric $\psi _{j}^{\ast }\pi ^{\ast }g_{M}$ is part of $\psi
_{j}^{\ast }G_{a,m}$ (see (\ref{metric})), and 
\begin{equation*}
iii)\text{ }||u||_{s}^{\prime }<\infty \text{ \ \ \ \ \ \ \ \ \ \ \ \ \ \ \
\ \ \ \ \ \ \ \ \ \ \ \ \ \ \ \ \ \ \ \ \ \ \ \ \ \ \ \ \ \ \ \ \ \ }
\end{equation*}%
where the $\parallel \cdot \parallel _{s}^{\prime }$-norm is given by%
\begin{equation}
||u||_{s}^{\prime }:=\left( \sum_{j}\int_{-\varepsilon _{j}=-\varepsilon
}^{\varepsilon _{j}=\varepsilon }||\varphi _{j}(\cdot ,\theta
_{j})v_{j}(\cdot )||_{s,U_{j}}^{2}\frac{d\theta _{j}}{2\pi }\right) ^{1/2}.
\label{Als}
\end{equation}%
\noindent Here $\parallel \cdot \parallel _{s,U_{j}}$ denotes the usual
Sobolev norm for $H_{0,q}^{s}(U_{j}$ $\times $ $\{0\}$ $\times $ $\{1\},$ $%
\psi _{j}^{\ast }\pi ^{\ast }g_{M}\otimes h^{-m})$ using local coordinates $%
(z_{j},w_{j})$ ($=(z,w)$ with \textquotedblleft $j"$ often omitted) on $%
D_{j} $ for taking derivatives. We also write, if $v$ is of support in $U$ $%
\subset $ $U_{j}$%
\begin{equation}
||v||_{s,U\subset U_{j}}:=||v||_{s,U_{j}}.  \label{4-3-1}
\end{equation}%
i.e. the norm $||$ $\cdot $ $||_{s,U\subset U_{j}}$ uses the $U_{j}$%
-coordinates. It is not hard to see that $H_{0,q,m}^{\prime s}(\Sigma
,G_{a,m})$ is a Hilbert space with the inner product $<\cdot ,\cdot
>_{H^{\prime s}}$ such that $<u,u>_{H^{\prime s}}$ $=$ ($||u||_{s}^{\prime
})^{2}.$
\end{definition}

\begin{remark}
\label{R-4-2} From an intrinsic point of view, one may want to use covariant
derivatives for $||$ $\cdot $ $||_{s,U_{j}}$ rather than ordinary
derivatives in local coordinates. But since there are $\theta _{j}$%
-coordinates, the notion \textquotedblleft family of $\theta _{j}$%
-parametrized sections" has no intrinsic meaning; that is, such a family of
sections change as soon as the coordinates change. Even though $||$ $\cdot $ 
$||_{s,U_{j}}$ can be defined using covariant derivatives, there is no
canonical choice of the family of sections in (\ref{Als}). We choose to work
with ordinary derivatives in local coordinates for the $s$-norms.
\end{remark}

\begin{remark}
\label{4.1-5} It is a fact that $H_{0,q,m}^{\prime 0}(\Sigma ,G_{a,m})$ and $%
L_{0,q,m}^{2}(\Sigma ,G_{a,m})$ are the same space with different, yet
equivalent norm if $a>\frac{m}{2}$ ($\geq 0)$ (via Remark \ref{3-r}). The
same can be said on $U_{j}\times \{0\}\times \{1\}$ with different metrics $%
\psi _{j}^{\ast }G_{a,m}\otimes h^{-m}$ and $\psi _{j}^{\ast }\pi ^{\ast
}g_{M}\otimes h^{-m}$ used in the statement of Definition \ref{d4-0}. It is
slightly tedious yet straightforward to check these statements; we omit the
details. See applications in, for instance, the proofs of Lemma \ref{l-4-1}
and Proposition \ref{p-4-1}.
\end{remark}

The notation $\parallel \cdot \parallel _{s}^{\prime }$ with superscript
\textquotedblleft prime" distinguishes itself from the usual Sobolev norm.
Different choice of coverings $\{D_{j}\}_{j}$ and $\varphi _{j}$ gives
equivalent norms$.$ Note the similarity and distinction between this norm
and the $C_{B}^{s}$-norm to be defined in (\ref{CBs-norm}).

The particular setting above is going to be crucial for us to work through a
number of technicalities and obtain a Hodge theory as just mentioned.

We may write $\varphi _{k}u|_{D_{k}\cap D_{j}}$ $=$ $\varphi
_{k}w_{j}^{m}v_{j}$ where $w_{j}$ (=$|w_{j}|e^{i\theta _{j}})$ denotes the $%
w $-coordinate in $D_{j}.$ It is natural to define another $s$-norm by (see
Remarks after (\ref{4.3-5}))%
\begin{equation}
||u||_{s}^{^{\prime \prime }}:=\left( \sum_{k,\text{ }j}\int_{-\varepsilon
}^{\varepsilon }||\varphi _{k}(\cdot ,\theta _{j})v_{j}(\cdot
)||_{s,U_{j}}^{2}\frac{d\theta _{j}}{2\pi }\right) ^{1/2};\text{ }\theta _{j}%
\text{ rewritten as }\theta .  \label{4.3-5}
\end{equation}

\noindent Here, on $D_{k}\cap D_{j}$ we write the same notation $\varphi
_{k} $ expressed in terms of coordinates on $D_{j}.$ As $\varphi _{k}$ $\in $
$C^{\infty }(\Sigma )$ we view $\varphi _{k}v_{j}$ as a function on $D_{j}.$
With this understanding the subscript \textquotedblleft $j"$ of $\theta _{j}$
in (\ref{4.3-5}) may be dropped if no confusion occurs.

\begin{lemma}
\label{normeq} With the notation above, we have the equivalence between $||$ 
$\cdot $ $||_{s}^{\prime }$-norm (\ref{Als}) and $||$ $\cdot $ $%
||_{s}^{^{\prime \prime }}$-norm (\ref{4.3-5}).
\end{lemma}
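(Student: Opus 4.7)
The strategy is to prove both inequalities directly from the definitions. The trivial direction $(\|u\|_s')^2 \le (\|u\|_s'')^2$ is immediate: the diagonal terms $(k,j)=(j,j)$ in the double sum defining $(\|u\|_s'')^2$ coincide exactly with the $j$-th summand of $(\|u\|_s')^2$, and the remaining off-diagonal terms are nonnegative.

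For the reverse inequality $(\|u\|_s'')^2 \le C\,(\|u\|_s')^2$, I would handle each cross term (with $k \ne j$) by inserting the partition of unity $\sum_l \varphi_l \equiv 1$ to write $\varphi_k v_j = \sum_l \varphi_l \varphi_k v_j$, and applying Cauchy--Schwarz to obtain
\begin{equation*}
\|\varphi_k v_j\|_{s,U_j}^2 \le N \sum_l \|\varphi_l \varphi_k v_j\|_{s,U_j}^2,
\end{equation*}
where $N$ is the finite number of patches (by Theorem \ref{thm2-1}). Each summand $\varphi_l \varphi_k v_j$ is supported in the triple overlap $D_j \cap D_k \cap D_l$, on which the transition law (\ref{C11}) yields $v_j(z_j) = T_{jl}(z_j)\, v_l(\mu_{jl}(z_j))$ for a smooth nonvanishing multiplier $T_{jl}$ (a power of the holomorphic transition factor $\varphi$ from (\ref{C0})) and a biholomorphism $\mu_{jl}$ with bounded derivatives on the compact overlap.

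I would then perform the change of variable $z_l = \mu_{jl}(z_j)$ inside the Sobolev $s$-norm. By the standard invariance of $H^s$ under smooth diffeomorphisms with derivatives bounded up to order $s$, this converts the norm on $U_j$ into an equivalent one on $U_l$. The pullback multiplier, expressed in $U_l$-coordinates as $\varphi_k(z_l, \theta_l)\, T_{jl}(\mu_{jl}^{-1}(z_l))$ with $\theta_l = \theta_j + \arg \varphi_{jl}(\mu_{jl}^{-1}(z_l))$, is a smooth function of $z_l$ whose $C^s$-norm in $z_l$ is uniformly bounded in the parameter $\theta_j \in (-\varepsilon, \varepsilon)$. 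Moser's multiplicative inequality then gives
\begin{equation*}
\|\varphi_l \varphi_k v_j\|_{s,U_j}^2 \le C_{jkl}\, \|\varphi_l(\cdot, \theta_l) v_l(\cdot)\|_{s,U_l}^2
\end{equation*}
for each fixed $\theta_j$. Integrating in $\theta_j$, and using that for each fixed $z_l$ the map $\theta_j \mapsto \theta_l$ is a translation with unit Jacobian, the $\theta_j$-integral on the left converts to a $\theta_l$-integral on the right over a bounded interval still contained in $(-\delta_l, \delta_l)$. Summing over the finitely many triples $(k,j,l)$ produces $(\|u\|_s'')^2 \le C (\|u\|_s')^2$.

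The step I expect to require the most care is the interaction between the $z_l$-dependent angular shift $\theta_l = \theta_l(z_l, \theta_j)$ and the $z_l$-derivatives in the Sobolev norm: chain-rule differentiation produces extra factors of $\partial_{\theta_l} \varphi_k \cdot \partial_{z_l} \arg \varphi_{jl}$ that are not directly present in $\|\varphi_l v_l\|_{s,U_l}$. However, all such smooth factors are uniformly bounded on the compact overlap $\overline{D_j \cap D_k \cap D_l}$, so Leibniz absorbs them into the constant $C_{jkl}$ without altering the structure of the estimate, and the argument goes through.
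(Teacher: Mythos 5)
Your proposal gets the easy direction right, and for the reverse inequality you correctly identify the two essential tools (the transition law $v_j = (\text{nonvanishing factor})\cdot v_l\circ\mu_{jl}$ and invariance of $H^s$ under coordinate change plus a Moser-type multiplier estimate), so the overall strategy is sound. However, your route is more circuitous than the paper's, and the detour is where the subtlety you flag at the end actually bites.

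The paper does not introduce a third patch index $l$. It works directly with the pair $(j,k)$ appearing in the definition of $\|\cdot\|_s''$: since $w_k = w_j\, l_{jk}(z_j)$ on $D_j\cap D_k$, one has $v_j = l_{jk}^m\, v_k$ there, and
\begin{equation*}
\|\varphi_k v_j\|_{s,U_j}
  = \|\varphi_k v_k\, l_{jk}^m\|_{s,\,U_j\cap U_k\subset U_j}
  \le C_1\,\|\varphi_k v_k\, l_{jk}^m\|_{s,\,U_j\cap U_k\subset U_k}
  \le C\,\|\varphi_k v_k\|_{s,U_k},
\end{equation*}
with $C$ independent of the angular parameter. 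This lands directly on the $(k,k)$ diagonal term, which \emph{is} the $k$-th summand of $\|u\|_s'$, so integrating over $\theta$ and summing over the finitely many $j$ gives $\|u\|_s'' \lesssim \|u\|_s'$ in one pass. In your version you first insert $\sum_l\varphi_l\equiv 1$, transform $j\to l$, and bound by $\|\varphi_l v_l\|_{s,U_l}$. This accomplishes the same thing, but it performs an \emph{extra} coordinate change whose angular shift $\theta_l = \theta_j + \arg\varphi_{jl}(\mu_{jl}^{-1}(z_l))$ now enters the cutoff $\varphi_l(\cdot,\theta_l(\cdot,\theta_j))$ that you wish to keep \emph{inside} the Sobolev norm. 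Your closing paragraph acknowledges that chain-rule differentiation produces $\partial_{\theta_l}\varphi_l\cdot\partial_{z_l}\theta_l$ terms, and claims Leibniz absorbs them into the constant; but $\partial_{\theta_l}\varphi_l$ is \emph{not} dominated by $\varphi_l$ itself, so for $s\ge 1$ these terms cannot simply be absorbed into $\|\varphi_l(\cdot,\theta_l)v_l\|_{s,U_l}$ with $\theta_l$ held fixed, and the ``translation with unit Jacobian'' argument only cleanly interchanges the $\theta$ and $z_l$ integrals at the $s=0$ level. The paper's direct $j\to k$ comparison avoids the extra transition and keeps this angular bookkeeping to a single step (which it treats implicitly via the $\theta$-uniformity of $C_1$). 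In short: both proofs rely on the same ideas, but the paper's is tighter; your version works in spirit yet, as written, the passage from $\|\varphi_l(\cdot,\theta_l(\cdot,\theta_j))v_l\|_{s,U_l}$ to a $\|\cdot\|_s'$-summand is not justified for $s\ge 1$ and would need the same kind of $\theta$-uniformity argument the paper uses for $C_1$ rather than the translation-invariance claim.
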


\proof
One direction is clear: $||u||_{s}^{\prime }\leq ||u||_{s}^{^{\prime \prime
}}$ by restriction to $k=j$ in the sum (\ref{4.3-5})$.$ For the other
direction, suppose in $D_{k}\cap D_{j},$ $w_{k}=w_{j}l_{jk}$ for some
holomorphic function $l_{jk}$ on $D_{j}\cap D_{k}$ in terms of $z_{j}$ or $%
z_{k}$ by (\ref{C0}). Then $w_{k}^{m}v_{k}$ $=$ $w_{j}^{m}v_{j}$ $=$ $u$
restricted on $D_{k}\cap D_{j}$ give that $v_{k}l_{jk}^{m}=v_{j},$ from
which we compute (viewing \textquotedblleft $\theta "$ ($=$ $\theta _{j},$ $%
\theta _{k})$ as a parameter; seeing also (\ref{4-3-1}))%
\begin{eqnarray}
||\varphi _{k}v_{j}||_{s,U_{j}} &=&||\varphi
_{k}v_{k}l_{jk}^{m}||_{s,U_{j}\cap U_{k}\subset U_{j}}\leq C_{1}||\varphi
_{k}v_{k}l_{jk}^{m}||_{s,U_{j}\cap U_{k}\subset U_{k}}  \label{4.3-6} \\
&\leq &C_{1}\max |l_{jk}^{m}|\text{ }||\varphi _{k}v_{k}||_{s,U_{k}}  \notag
\end{eqnarray}

\noindent where $C_{1},$ arising from the coordinate change from $U_{j}$ to $%
U_{k},$ depends on $j,$ $k,$ $m,$ $s$ and not on $\theta $ ($=$ $\theta
_{j}, $ $\theta _{k}).$ It follows from integrating the square of (\ref%
{4.3-6}) with respect to $\theta $ that $||u||_{s}^{^{\prime \prime }}\leq
(\#$ of $j)^{1/2}\cdot C$ $||u||_{s}^{\prime }.$

\endproof%


%
%
%
%
%

\begin{remark}
\label{4-4} Concerning the Sobolev norms $\parallel \cdot \parallel _{s}$ in
(\ref{Hs}) and $\parallel \cdot \parallel _{s}^{\prime }$ in (\ref{Als}),
although it is possible to study the relation between them especially when
\textquotedblleft $a$" in $G_{a,m}$ is sufficiently large (depending on $m$
and $s$), we are not going to pursue this relation in the present paper.
\end{remark}

We are going to compare local $||\cdot ||_{s,D_{j}}^{\prime }$-norm (to be
defined below) with global $||$ $\cdot $ $||_{s}^{^{\prime \prime }}$-norm.
From Propositions \ref{dualLm} and \ref{madj} it follows%
\begin{equation}
\square _{D_{j},m}^{(q)}=\Psi _{q,m}\circ \square _{U_{j},m}^{(q)}\circ \Psi
_{q,m}^{-1}.  \label{lBoxm1}
\end{equation}

\noindent Here we recall the isomorphism $\Psi _{q,m}$ $:$ $\Omega
^{0,q}(U_{j},(\psi _{j}^{\ast }L_{\Sigma }^{\ast })^{\otimes m})$ $%
\rightarrow $ $\Omega _{m,loc}^{0,q}(D_{j})$ in (\ref{Psi_qm}) and\ define $%
\square _{D_{j},m}^{(q)}$ :$=$ $\vartheta _{D_{j},m}\circ \bar{\partial}%
_{D_{j},m}$ $+$ $\bar{\partial}_{D_{j},m}\circ \vartheta _{D_{j},m}$ and $%
\square _{U_{j},m}^{(q)}$ $:=$ $\vartheta _{U_{j},m}\circ \bar{\partial}%
_{U_{j},m}$ $+$ $\bar{\partial}_{U_{j},m}\circ \vartheta _{U_{j},m}$
similarly as in (\ref{4.0}) for $\square _{\Sigma ,m}^{(q)}$.

\begin{remark}
\label{4-7-1} By the definition via test functions one sees that $i)$ if $u$ 
$\in $ $Dom(\square _{\Sigma ,m}^{(q)})$ then $u|_{D_{j}}$ $\in $ $%
Dom(\square _{D_{j},m}^{(q)})$ and $(\square _{\Sigma ,m}^{(q)}u)|_{D_{j}}$ $%
=$ $\square _{D_{j},m}^{(q)}(u|_{D_{j}}).$ The same is true of $\vartheta
_{\Sigma ,m},$ $\vartheta _{D_{j},m}$ in place of $\square _{\Sigma
,m}^{(q)},$ $\square _{D_{j},m}^{(q)}.$ $ii)$ It is easily seen that if $u$ $%
\in $ $Dom(\square _{D_{j},m}^{(q)})$ (resp. $\bar{\partial}_{\Sigma ,m},$ $%
\vartheta _{\Sigma ,m})$ and a cut-off function $\chi $ $\in $ $%
C_{c}^{\infty }(D_{j})$ then $\chi u$ $\in $ $Dom(\square _{D_{j},m}^{(q)})$
(resp. $\bar{\partial}_{D_{j},m},$ $\vartheta _{D_{j},m})$ where the $w$%
-variable in $\chi $ is regarded as \textquotedblleft parameter" without
being acted on by these operators$.$ The localization property $ii)$ is not
easily checked for the alternative choice of definition for $Dom(\square
_{\Sigma ,m}^{(q)})$ (see lines below (\ref{4.0})). Such a localization $ii)$
is crucial for the proof of Proposition \ref{ellipreg} below. Compare Remark %
\ref{4.25-1} for rejustification of this localization.
\end{remark}

Define the local $||\cdot ||_{s,D_{j}}^{\prime }$-norm by 
\begin{equation}
||\omega ||_{s,D_{j}}^{\prime }:=||v_{j}||_{s,U_{j}}\text{ for }\omega
=w_{j}^{m}v_{j}\in \Omega _{m,loc}^{0,q}(D_{j})  \label{4.6-5}
\end{equation}%
\noindent where $||\cdot ||_{s,U_{j}}$ is the usual Sobolev $s$-norm on $%
U_{j}\times $ $\{0\}$ $\times \{1\}$ with respect to the metric $\psi
_{j}^{\ast }(\pi ^{\ast }g_{M})$ together with the fibre metric $h^{-m}$ on (%
$L_{\Sigma }^{\ast })^{\otimes m}$ in (\ref{lq}). Observe that no partition
of unity is used for this local norm.

It is crucial to notice that $\mathit{\Psi }_{q,m}\mathit{,\Psi }_{q,m}^{-1}$
preserve respective Sobolev $\mathit{s}$-spaces. In fact, for $\omega \in
\Omega _{m,loc}^{0,q}(D_{j})$%
\begin{equation}
||\omega ||_{s,D_{j}}^{\prime }=||\Psi _{q,m}^{-1}\omega ||_{s,U_{j}}
\label{eqnorm}
\end{equation}

\noindent by (\ref{Psi_qm}) and (\ref{4.6-5}).

Suppose that $A$ and $B$ are functions on a set $S.$ We use the notation $%
A\lesssim B$ to mean that there is some constant $C>0$ such that $A(u)$ $%
\leq $ $CB(u)$ for all $u$ $\in $ $S.$ For instance, $||\cdot ||_{1}\lesssim
||\cdot ||_{2}$ means that there exists a constant $C>0$ such that $||\cdot
||_{1}\leq C||\cdot ||_{2}.$

\begin{lemma}
\label{4.7-5} (Localization of $||$ $\cdot $ $||_{s}^{\prime }$-norm$)$ With
the notation above, it holds that 
\begin{equation}
||\text{ }\cdot \text{ }||_{s,D_{j}}^{\prime }\lesssim ||\text{ }\cdot \text{
}||_{s}^{\prime \prime }  \label{4.3-6a}
\end{equation}%
on $H_{0,q,m}^{\prime s}(\Sigma ,G_{a,m}).$ That is $||u||_{s,D_{j}}^{\prime
}\leq C||u||_{s}^{\prime \prime }$ for some $C>0$ and every $u\in
H_{0,q,m}^{\prime s}(\Sigma ,G_{a,m}).$
\end{lemma}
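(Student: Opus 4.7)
The plan is to recover the slice-coefficient $v_j$ whose norm defines $\|u\|'_{s,D_j}$ from the $k$-sum-over-the-fixed-$j$ piece of $(\|u\|''_s)^2$. The key observation I will exploit is that $v_j=v_j(z_j)$ depends only on the $z_j$-variable in the $D_j$-trivialization (by the regularity condition in Definition~\ref{2m} as used in Definition~\ref{d4-0}), whereas the partition $\{\varphi_k(z,\theta)\}$ sums to $1$ on all of $\Sigma$, in particular on every $\theta_j$-slice $U_j\times\{\theta_j\}\times\{1\}$ of $D_j$.

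First I would fix $j$ and a value $\theta_j\in(-\varepsilon,\varepsilon)$. Expressing each $\varphi_k$ on $D_j\cap D_k$ in the $(z_j,\theta_j)$-coordinates (locally finite in $k$ for each such slice, since the covering is finite), the identity $\sum_k\varphi_k\equiv 1$ becomes the pointwise equality $v_j(z_j)=\sum_k \varphi_k(z_j,\theta_j)\,v_j(z_j)$ of sections on $U_j$. Next, applying the triangle inequality in $\|\cdot\|_{s,U_j}$ and Cauchy--Schwarz in the finite index $k$ (of cardinality at most some $N$) would give
$$\|v_j\|_{s,U_j}^{2}\ \leq\ N\sum_k\|\varphi_k(\cdot,\theta_j)v_j(\cdot)\|_{s,U_j}^{2}$$
for every such $\theta_j$. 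Since the left-hand side is independent of $\theta_j$, I would then integrate both sides in $\theta_j$ over $(-\varepsilon,\varepsilon)$ and divide by $2\varepsilon$, producing
$$\|v_j\|_{s,U_j}^{2}\ \leq\ \frac{\pi N}{\varepsilon}\sum_k\int_{-\varepsilon}^{\varepsilon}\|\varphi_k(\cdot,\theta_j)v_j(\cdot)\|_{s,U_j}^{2}\,\frac{d\theta_j}{2\pi}.$$
The right-hand side is precisely the partial sum at this fixed $j$ of the double sum defining $(\|u\|''_s)^2$, and is therefore bounded by $\frac{\pi N}{\varepsilon}(\|u\|''_s)^2$. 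Combined with $\|u\|'_{s,D_j}=\|v_j\|_{s,U_j}$ from (\ref{4.6-5}), this yields the claim (\ref{4.3-6a}) with explicit constant $C=\sqrt{\pi N/\varepsilon}$.

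The only point requiring care, and thus the main (mild) obstacle, is ensuring that the pointwise-in-$(z_j,\theta_j)$ partition-of-unity identity correctly transfers to the $z_j$-Sobolev norm $\|\cdot\|_{s,U_j}$, which differentiates only in $z_j$. Since $\varphi_k$ is smooth on $\Sigma$ with bounded dependence on $\theta_j$ when expressed in the transition coordinates, the sum is locally finite, and $v_j$ itself has no $\theta_j$-dependence, the triangle and Cauchy--Schwarz steps commute with taking $z_j$-derivatives, so the estimate is uniform in $\theta_j$ and the argument goes through for any integer $s\geq 0$ and any $u\in H_{0,q,m}^{\prime s}(\Sigma,G_{a,m})$.
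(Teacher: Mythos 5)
Your proposal is correct and follows essentially the same route as the paper: write $u|_{D_j}=w_j^m v_j$, use $\sum_k\varphi_k\equiv 1$ together with the triangle inequality to bound $\|v_j\|_{s,U_j}$ by $\sum_k\|\varphi_k(\cdot,\theta)v_j\|_{s,U_j}$, square and integrate over $\theta$, and identify the result as the fixed-$j$ block of $(\|u\|''_s)^2$. The only addition beyond the paper's short proof is your explicit Cauchy--Schwarz constant and the closing remark about commuting $z_j$-derivatives with the $\theta_j$-dependence, both of which are correct.
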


\proof
Write $u|_{D_{j}}$ $=$ $w_{j}^{m}v_{j}.$ From the definition it follows that
($\theta $ as a parameter as above) 
\begin{equation}
||u||_{s,D_{j}}^{\prime }=||v_{j}||_{s,U_{j}}\leq \sum_{k;\text{ }%
|\{k\}|<\infty }||\varphi _{k}(\cdot ,\theta )v_{j}(\cdot )||_{s,U_{j}}.
\label{4.3-7}
\end{equation}

\noindent Integrating the square of (\ref{4.3-7}) over $\theta \in
(-\varepsilon ,\varepsilon )$ (cf. comments after (\ref{4.3-5}))$,$ we
obtain (\ref{4.3-6a}) by (\ref{4.3-5}).

\endproof%

Let $(\cdot ,\cdot )_{L^{2}}$ denote the $L^{2}$-inner product for $%
L_{0,q,m}^{2}(\Sigma ,G_{a,m}).$ For the notion of \textit{formally
self-adjointness} to be used below, compare \cite[p.321]{Ko}. We need the
following.

\begin{lemma}
\label{l-4-2} $\square _{\Sigma ,m}^{(q)}$ is formally self-adjoint for $a$ $%
>$ $\frac{m}{2}$ $\geq $ $0$, i.e. for $u,$ $v$ $\in $ $\Omega
_{m}^{0,q}(\Sigma )$ (being of noncompact support along the $\mathbb{R}^{+}$%
-direction) it holds that%
\begin{equation}
(\square _{\Sigma ,m}^{(q)}u,v)_{L^{2}}=(u,\square _{\Sigma
,m}^{(q)}v)_{L^{2}}.  \label{4-1-0}
\end{equation}%
For $u$ $\in $ $\Omega _{m}^{0,q}(\Sigma )$%
\begin{equation}
(\square _{\Sigma ,m}^{(q)}u,u)_{L^{2}}=||\bar{\partial}_{\Sigma
,m}u||_{L^{2}}^{2}+||\vartheta _{\Sigma ,m}u||_{L^{2}}^{2}\geq 0.
\label{pos}
\end{equation}%
Here $\square _{\Sigma ,m}^{(q)}\bullet $\ \ is the differential operator
action. As a consequence $\square _{\Sigma ,m}^{(q)}u$ coincides with the
action in the distribution sense.
\end{lemma}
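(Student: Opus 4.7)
The plan is to derive everything from the adjoint identity in Proposition \ref{3-11-1}, namely $(\vartheta_{\Sigma,m} u', v')_{L^2} = (u', \bar{\partial}_{\Sigma,m} v')_{L^2}$ for any smooth $u' \in \Omega_m^{0,q+1}(\Sigma)$, $v' \in \Omega_m^{0,q}(\Sigma)$. Crucially, that proposition is stated without any compact-support hypothesis on the test forms: the noncompactness in the $\mathbb{R}^+$-direction has already been absorbed into its proof via the $\theta$-dependent partition of unity $\{\varphi_j\}$ combined with $L^2$-integrability along the fibres coming from $a > m/2$ (Remark \ref{3-r}). Once that identity is available for arbitrary smooth $u', v'$ in the respective $m$-spaces, no further boundary terms will ever appear, and the lemma reduces to routine algebra.

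First I would check that all operators in sight preserve the relevant smooth $m$-spaces, so that Proposition \ref{3-11-1} can be iterated. For $u \in \Omega_m^{0,q}(\Sigma)$, Definition \ref{SFA} shows $\vartheta_{\Sigma,m} u \in \Omega_m^{0,q-1}(\Sigma)$ (it is locally of the form $w^m \cdot (\vartheta_{z,m} u_j(z,\bar z))$), and of course $\bar{\partial}_{\Sigma,m} u \in \Omega_m^{0,q+1}(\Sigma)$. Hence both second-order compositions land again in $\Omega_m^{0,q}(\Sigma) \subset L_{0,q,m}^2(\Sigma,G_{a,m})$ for $a > m/2$. Then I would apply Proposition \ref{3-11-1} twice — once moving $\vartheta_{\Sigma,m}$ across $\bar{\partial}_{\Sigma,m} u$ and once moving $\bar{\partial}_{\Sigma,m}$ across $\vartheta_{\Sigma,m} u$ — to arrive at the manifestly symmetric identity
\[
(\square_{\Sigma,m}^{(q)} u, v)_{L^2} = (\bar{\partial}_{\Sigma,m} u, \bar{\partial}_{\Sigma,m} v)_{L^2} + (\vartheta_{\Sigma,m} u, \vartheta_{\Sigma,m} v)_{L^2}.
\]
Symmetry in $u,v$ gives (\ref{4-1-0}); specializing $u = v$ gives (\ref{pos}). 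The closing sentence of the lemma — that the pointwise differential-operator action coincides with the distributional action — is then a direct rereading: the display above with $v$ replaced by an arbitrary test form $\varphi \in \Omega_m^{0,q}(\Sigma)$ is exactly the distributional definition recorded below (\ref{4.0}).

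The only point that genuinely requires care, and which I view as the main (albeit mild) obstacle, is confirming that Proposition \ref{3-11-1} really does apply to our full class of $u,v \in \Omega_m^{0,q}(\Sigma)$, not to a compactly supported subclass. This amounts to checking that the convergence of the various $L^2$-pairings is automatic (from $a > m/2$ and the $w^m$-structure) and that the cutoff-free integration by parts performed inside the proof of Proposition \ref{3-11-1} introduces no boundary contribution at $w = 0$ or $w = \infty$; both are already handled there by writing $u = \sum_j \varphi_j u$ and applying Proposition \ref{madj} patch-by-patch. Once this is noted the lemma is essentially formal, with no further analytic input needed.
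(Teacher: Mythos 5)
Your proposal is correct and follows essentially the same route as the paper: the paper's proof is exactly the one-line observation that the lemma "immediately follows from Proposition \ref{3-11-1} and the definition of $\square_{\Sigma,m}^{(q)}$," and your argument simply spells out the two applications of that adjoint identity, the symmetry, and the $u=v$ specialization in more detail.
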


\begin{proof}
This immediately follows from Proposition \ref{3-11-1} and the definition of 
$\square _{\Sigma ,m}^{(q)}.$

\end{proof}

We adopt Definition \ref{d4-0} above (using $\parallel \cdot \parallel
_{s}^{\prime })$ throughout this section. The usual Hodge theory holds true
when the underlying manifold is compact. In our case $\Sigma $ is
noncompact, so special care should be taken. It turns out that with the
special metric $G_{a,m},$ we can still build up a Hodge-type theory for $%
\square _{\Sigma ,m}^{(q)}.$

\textbf{In the following we always assume that }$m\geq 0$\textbf{\ and
\textquotedblleft }$a$\textbf{" is large, say, }$a>\frac{m}{2}$\textbf{\ (}$%
\geq 0)$\textbf{.}

\begin{proposition}
\label{Rellich} (Rellich-type compactness) With the notation above, the
inclusion map $\iota :H_{0,q,m}^{\prime s+1}(\Sigma ,G_{a,m})\rightarrow
H_{0,q,m}^{\prime s}(\Sigma ,G_{a,m}),$ $s\in \mathbb{N\cup }\{0\},$ is
compact.
\end{proposition}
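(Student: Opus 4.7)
The plan is to reduce the statement to classical Rellich compactness on the precompact coordinate patches $U_j\subset\mathbb{C}^{n-1}$. Since $M=\Sigma/\sigma$ is compact by Theorem \ref{thm2-1}, only finitely many charts $D_j$ are needed to cover $\Sigma$, so a diagonal argument suffices to handle all $j$ simultaneously. The crucial structural input is that if $u\in H_{0,q,m}^{\prime s+1}$ and $u|_{D_j}=w_j^m v_j(z)$, then $v_j$ depends only on $z$ (by the regularity condition (\ref{1-2-1}) in Definition \ref{2m}), not on $\theta_j$. Thus the $\theta$-integration in the definition (\ref{Als}) of $\|\cdot\|_s'$ is really an integration against the fixed smooth weight $\varphi_j(z,\theta)$.

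Given a bounded sequence $\{u_n\}$ in $H_{0,q,m}^{\prime s+1}(\Sigma,G_{a,m})$ and writing $u_n|_{D_j}=w_j^m v_{j,n}(z)$, the first step is to establish a uniform local bound: for each $j$ there exists a precompact open set $\tilde{U}_j$ with $\operatorname{supp}(\varphi_j)|_z\subset\tilde{U}_j\subset\subset U_j$ on which $\{v_{j,n}\}$ is bounded in $H^{s+1}(\tilde{U}_j)$. This comes from the inequality
\[
\int_{-\varepsilon}^{\varepsilon}\|\varphi_j(\cdot,\theta)v_{j,n}\|_{s+1,U_j}^2\,\frac{d\theta}{2\pi}\leq \|u_n\|_{s+1}^{\prime 2}\leq C,
\]
combined with the following local inversion: on any subset $V\times I\subset U_j\times(-\varepsilon,\varepsilon)$ where $\varphi_j\geq c_0>0$, the function $1/\varphi_j$ is smooth, and since $v_{j,n}$ is $\theta$-independent, one gets
\[
|I|\,\|v_{j,n}\|_{s+1,V}^2=\int_I\|v_{j,n}\|_{s+1,V}^2\,d\theta\leq C''\int_I\|\varphi_j(\cdot,\theta)v_{j,n}\|_{s+1,V}^2\,d\theta\leq C''C.
\]
A finite covering of $\operatorname{supp}(\varphi_j)|_z$ by such $V$'s yields the desired bound on $\tilde{U}_j$.

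Step two applies the classical Rellich theorem on each precompact $\tilde{U}_j$ to extract a subsequence along which $v_{j,n}$ converges in $H^s(\tilde{U}_j)$, and a diagonal argument over the finite index set $\{j\}$ gives a single subsequence $\{u_{n_k}\}$ with this property for every $j$. Step three checks that this subsequence is Cauchy in $\|\cdot\|_s'$: since $\varphi_j$ is smooth with $z$-support inside $\tilde{U}_j$,
\[
\|u_{n_k}-u_{n_l}\|_s^{\prime 2}=\sum_j\int_{-\varepsilon}^{\varepsilon}\|\varphi_j(v_{j,n_k}-v_{j,n_l})\|_{s,U_j}^2\,\frac{d\theta}{2\pi}\leq C'\sum_j\|v_{j,n_k}-v_{j,n_l}\|_{s,\tilde{U}_j}^2\to 0.
\]

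The main obstacle is the local inversion argument in the first step. Naively, an $L^2(\theta;H^{s+1})$ bound on $\varphi_j v_{j,n}$ does not suffice to control the $H^{s+1}$-norm of $v_{j,n}$; one needs the fact that $v_{j,n}$ carries no $\theta$-dependence, so that the $\theta$-integration effectively gives a positive-measure set of $\theta$'s on which $\varphi_j$ is bounded below on a prescribed $z$-neighborhood. This is exactly where the peculiarity of the modified Sobolev norm $\|\cdot\|_s'$ and the regularity class $\Omega_m^{0,q}$ cooperate; without the $\theta$-independence forced by (\ref{1-2-1}) the argument would fail, which is one reason for taking the modified definition rather than the naive Sobolev norm (\ref{Hs}).
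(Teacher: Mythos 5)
Your overall strategy (bound the sequence locally, apply classical Rellich on charts, diagonalize over the finitely many $j$, check Cauchyness in $\|\cdot\|_s'$) is the same as the paper's, and your observation that the $\theta$-independence of $v_j$ is the crucial structural fact is correct. However, the key estimate in your Step 1 has a genuine gap. The local inversion only gives a bound on $\|v_{j,n}\|_{s+1,V}$ on sets $V$ where $\varphi_j\geq c_0>0$, and since $\varphi_j$ is a compactly supported smooth function, it (together with all its derivatives) tends to zero at $\partial\operatorname{supp}\varphi_j|_z$. Consequently you cannot cover a full neighborhood $\tilde U_j\supset\operatorname{supp}\varphi_j|_z$ by finitely many such $V$'s with a common lower bound $c_0$; for $V$ approaching the boundary the constant $C''=\|1/\varphi_j\|_{C^{s+1}}$ blows up. So you obtain boundedness of $v_{j,n}$ in $H^{s+1}$ only on compact subsets \emph{strictly} contained in the interior of $\operatorname{supp}\varphi_j|_z$, and the concluding Cauchy estimate
\[
\int_{-\varepsilon}^{\varepsilon}\|\varphi_j(\cdot,\theta)(v_{j,n_k}-v_{j,n_l})\|_{s,U_j}^2\,\frac{d\theta}{2\pi}\longrightarrow 0
\]
is not justified: in the thin annular region between such a compact subset and $\partial\operatorname{supp}\varphi_j|_z$, the factor $\varphi_j$ is small but $v_{j,n_k}-v_{j,n_l}$ is \emph{a priori} uncontrolled in $H^s$, and their product is not known to be small.

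The paper closes exactly this gap before applying Rellich: it first proves the global chart bound
\[
\|(v_k)_j\|_{s+1,U_j}=\|f_k\|_{s+1,D_j}'\lesssim\|f_k\|_{s+1}''\lesssim\|f_k\|_{s+1}',
\]
where the middle inequality is Lemma~\ref{4.7-5} (which uses $\sum_k\varphi_k=1$ together with the $\theta$-independence of $v_j$ to write $v_j=\sum_k\varphi_k(\cdot,\theta)v_j$ and integrate in $\theta$), and the last is Lemma~\ref{normeq} (which uses the holomorphic transition functions $l_{jk}$ relating $v_j$ and $v_k$ on overlaps). This gives boundedness of $(v_k)_j$ in $H^{s+1}$ on \emph{all} of $U_j$, in particular on a neighborhood of $\hat U_j:=\overline{\cup_{\theta}\operatorname{supp}\varphi_j(\cdot,\theta)}$; one may then cut off by $\chi_j$ with $\chi_j\equiv 1$ on $\hat U_j$ and apply classical Rellich. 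The essential structural inputs you are missing are precisely the full partition-of-unity identity $\sum_k\varphi_k=1$ and the chart-overlap relation $v_j=l_{jk}^m v_k$: inverting a single $\varphi_j$ where it is large is not enough.
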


\proof
Recall that a partition of unity $\varphi _{j}$ (of noncompact support, with 
$j$ in a finite index set $J$, on $\Sigma $ satisfying the item $i)$ after
Notation \ref{n-6.1} of Section \ref{A-THK}) is taken. Suppose that $f_{k}$ $%
\in $ $H_{0,q,m}^{\prime s+1}(\Sigma ,G_{a,m})$ is a bounded sequence where $%
f_{k}$ $=$ $w^{m}(v_{k})_{j}$ in $D_{j}.$ 
We compute 
\begin{equation}
||(v_{k})_{j}||_{s+1,U_{j}}\overset{(\ref{4.6-5})}{=}||f_{k}||_{s+1,D_{j}}^{%
\prime }\overset{(\ref{4.3-6a})}{\lesssim }||f_{k}||_{s+1}^{\prime \prime }%
\overset{Lemma\text{ }\ref{normeq}}{\lesssim }||f_{k}||_{s+1}^{\prime }\leq
C.  \label{4.3-8}
\end{equation}

\noindent Conditions on $\varphi _{j}$ give that $\hat{U}_{j}$ $:=$ $%
\overline{\cup _{\theta _{j}}\text{supp }\varphi _{j}(\cdot ,\theta _{j})}$ $%
\subset $ $U_{j}$ is a compact subset. Let $\chi _{j}$ be a cutoff function
with supp $\chi _{j}$ $\subset $ $U_{j}$ and $\chi _{j}$ $=$ $1$ on $\hat{U}%
_{j}$. So by (\ref{4.3-8}) $||\chi _{j}(v_{k})_{j}||_{s+1,U_{j}}$ is bounded
for all $k$. It follows from the usual Rellich's compactness lemma that
there exists a subsequence $\{$\b{k}\}$\subset \{k\}$ such that $\chi
_{j}(v_{\text{\b{k}}})_{j}$ is a Cauchy sequence in $\parallel \cdot
\parallel _{s,U_{j}}$-norm. Since $||(v_{\text{\b{k}}})_{j}||_{s,\hat{U}%
_{j}}\leq ||\chi _{j}(v_{\text{\b{k}}})_{j}||_{s,U_{j}},$ more precisely $%
||(v_{\text{\b{k}}})_{j}-(v_{\text{\b{k}}^{\prime }})_{j}||_{s,\hat{U}%
_{j}}\leq ||\chi _{j}(v_{\text{\b{k}}})_{j}-\chi _{j}(v_{\text{\b{k}}%
^{\prime }})_{j}||_{s,U_{j}},$ $(v_{\text{\b{k}}})_{j}$ is Cauchy in $%
\parallel \cdot \parallel _{s,\hat{U}_{j}}$-norm. By similar arguments with $%
||\varphi _{j}(\cdot ,\theta _{j})(v_{\text{\b{k}}})_{j}(\cdot
)||_{s,U_{j}}\lesssim ||(v_{\text{\b{k}}})_{j}||_{s,\hat{U}_{j}}$ (using the
definition of $\hat{U}_{j}$ above with the constant independent of $\theta
_{j})$ $\varphi _{j}(v_{\text{\b{k}}})_{j}$ is Cauchy in $||\cdot
||_{s,U_{j}}$-norm uniformly in $\theta _{j}$ and hence by (\ref{Als}) a
subsequence of $f_{\text{\b{k}}}$ is Cauchy in $||\cdot ||_{s}^{\prime }$
due to $j$ $\in $ a finite index set.

\endproof%

\begin{corollary}
\label{Ip} (Interpolation inequality) With the notation above and $s\in 
\mathbb{N\cup }\{0\}$, we have the following interpolation inequality: given 
$\varepsilon >0,$ there exists $C_{\varepsilon }>0$ such that for all $u\in
H_{0,q,m}^{\prime s+2}(\Sigma ,G_{a,m})$ we have $||u||_{s+1}^{\prime }\leq
\varepsilon ||u||_{s+2}^{\prime }+C_{\varepsilon }||u||_{0}^{\prime }.$
\end{corollary}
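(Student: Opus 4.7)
The plan is to prove this by a standard contradiction argument, using the Rellich-type compactness (Proposition~\ref{Rellich}) applied to the inclusion $H_{0,q,m}^{\prime s+2}(\Sigma,G_{a,m}) \hookrightarrow H_{0,q,m}^{\prime s+1}(\Sigma,G_{a,m})$. The setup is classical in the compact Riemannian case; the point here is that even though $\Sigma$ is noncompact, the norms $\|\cdot\|_s^{\prime}$ have been designed in Definition~\ref{d4-0} precisely so that the local Rellich lemma over each chart $U_j$, combined with finiteness of the covering, still gives compactness of the embedding.

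More concretely, suppose for contradiction the inequality fails. Then there exists $\varepsilon_0 > 0$ and a sequence $u_k \in H_{0,q,m}^{\prime s+2}(\Sigma,G_{a,m})$ with
\begin{equation*}
\|u_k\|_{s+1}^{\prime} > \varepsilon_0 \|u_k\|_{s+2}^{\prime} + k\,\|u_k\|_0^{\prime}.
\end{equation*}
After normalizing so that $\|u_k\|_{s+1}^{\prime} = 1$, we obtain the two estimates $\|u_k\|_{s+2}^{\prime} < 1/\varepsilon_0$ (bounded) and $\|u_k\|_0^{\prime} < 1/k$ (tending to $0$).

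Because $\{u_k\}$ is bounded in $H_{0,q,m}^{\prime s+2}(\Sigma,G_{a,m})$, Proposition~\ref{Rellich} (applied with $s$ replaced by $s+1$) yields a subsequence $u_{k_j}$ converging in $\|\cdot\|_{s+1}^{\prime}$-norm to some limit $u_\infty \in H_{0,q,m}^{\prime s+1}(\Sigma,G_{a,m})$. In particular $\|u_\infty\|_{s+1}^{\prime} = 1$. On the other hand, the trivial inequality $\|\cdot\|_0^{\prime} \leq \|\cdot\|_{s+1}^{\prime}$ (which follows directly from (\ref{Als}) and the corresponding monotonicity of the usual Sobolev norms $\|\cdot\|_{s,U_j}$) shows that $u_{k_j} \to u_\infty$ also in $\|\cdot\|_0^{\prime}$-norm, while the given bound forces $\|u_\infty\|_0^{\prime} = \lim_j \|u_{k_j}\|_0^{\prime} = 0$. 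Since $\|\cdot\|_0^{\prime}$ is a norm on $H_{0,q,m}^{\prime s+1}$ (indeed it is equivalent to the $L^2$-norm by Remark~\ref{4.1-5} once $a>\tfrac{m}{2}$), this gives $u_\infty = 0$, contradicting $\|u_\infty\|_{s+1}^{\prime} = 1$.

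The only step that requires any care is the compatibility of the Rellich compactness with the chosen modified norm $\|\cdot\|_s^{\prime}$; but that has already been done in Proposition~\ref{Rellich}, so no further obstacle is expected. Nothing here is specific to the value $s+1$ sitting between $0$ and $s+2$, so the same argument actually gives the more general statement $\|u\|_t^{\prime} \leq \varepsilon \|u\|_{s+2}^{\prime} + C_\varepsilon \|u\|_0^{\prime}$ for any $0 \leq t \leq s+1$, should it be needed later.
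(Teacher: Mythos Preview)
Your proof is correct and follows essentially the same approach as the paper: both rely on the Rellich-type compactness of Proposition~\ref{Rellich}. The paper simply observes that the inclusions $H^{\prime s+2}\hookrightarrow H^{\prime s+1}\hookrightarrow H^{\prime 0}$ are compact and then cites an abstract interpolation result (Aubin, \textit{Nonlinear analysis on manifolds}, Theorem~3.77), whereas you write out explicitly the standard contradiction argument that underlies that abstract result.
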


\proof
By Lemma \ref{Rellich}, both inclusions in $H_{0,q,m}^{\prime s+2}(\Sigma
,G_{a,m})$ $\subset $ $H_{0,q,m}^{\prime s+1}(\Sigma ,G_{a,m})$ $\subset $ $%
H_{0,q,m}^{\prime 0}(\Sigma ,G_{a,m})$ for $s$ $\in $ $\mathbb{N\cup }\{0\}$
are compact. The result follows from a general result in functional analysis 
\cite[Theorem 3.77, p.99]{Aub}.

\endproof%

We have elliptic estimates for $\square _{\Sigma ,m}^{(q)}$ as shown in the
following theorem. Note that $\Omega _{m}^{0,q}(\Sigma )$ $\subset $ $%
L_{0,q,m}^{2}(\Sigma ,G_{a,m})$ for $a>\frac{m}{2}$ (see Remark \ref{3-r})$.$

\begin{theorem}
\label{t-4-1} (Transversally elliptic estimate) Fix $m\geq 0$ and $a$ $>$ $%
\frac{m}{2}.$ For every $s$ $\in $ $\mathbb{N\cup }\{0\},$ there are
positive constants $C_{s}$, $C_{s}^{\prime }$ (depending on $s$ and $m$ with
the $m$-dependence suppressed in notation) such that

\noindent $i)$%
\begin{equation}
||u||_{s+2}^{\prime }\leq C_{s}\left( ||\square _{\Sigma
,m}^{(q)}u||_{s}^{\prime }+||u||_{0}^{\prime }\right)  \label{H1}
\end{equation}%
for all $u\in $ $\Omega _{m}^{0,q}(\Sigma )\subset L_{0,q,m}^{2}(\Sigma
,G_{a,m})$ and

\noindent $ii)$ $||u||_{s+2}^{\prime }\leq C_{s}^{\prime }||\square _{\Sigma
,m}^{(q)}u||_{s}^{\prime }$ for all $u\in $ $\Omega _{m}^{0,q}(\Sigma )$ $%
\cap $ $(Ker\square _{\Sigma ,m}^{(q)})^{\perp }.$ 
\end{theorem}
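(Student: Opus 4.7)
My plan is to reduce the global estimate on $\Sigma$ to classical elliptic estimates on each coordinate chart $U_{j}$, exploiting the fact that by Proposition \ref{madj} and (\ref{lBoxm1}), the transversal Laplacian $\square _{D_{j},m}^{(q)}$ corresponds under the isomorphism $\Psi _{q,m}$ to the standard $\bar{\partial}$-Laplacian $\square _{U_{j},m}^{(q)}$ on the domain $U_{j}\subset \mathbb{C}^{n-1}$ acting on $(\psi _{j}^{\ast }L_{\Sigma }^{\ast })^{\otimes m}$-valued $(0,q)$-forms, which is elliptic in the classical sense. The auxiliary $\parallel \cdot \parallel _{s}^{\prime \prime }$-norm of (\ref{4.3-5}), together with Lemma \ref{normeq}, is what allows cross-terms between different charts to be handled.

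First, for $u\in \Omega _{m}^{0,q}(\Sigma )$ write $u|_{D_{j}}=w^{m}v_{j}(z,\bar{z})$; then by (\ref{lBoxm1}), $(\square _{\Sigma ,m}^{(q)}u)|_{D_{j}}=w^{m}(\square _{U_{j},m}^{(q)}v_{j})$. For each fixed $\theta \in (-\varepsilon ,\varepsilon )$, regarded as a parameter, apply the classical interior elliptic estimate for $\square _{U_{j},m}^{(q)}$ to the compactly supported section $\varphi _{j}(\cdot ,\theta )v_{j}$:
\[
\Vert \varphi _{j}(\cdot ,\theta )v_{j}\Vert _{s+2,U_{j}}\leq C\Bigl(\Vert \square _{U_{j},m}^{(q)}(\varphi _{j}(\cdot ,\theta )v_{j})\Vert _{s,U_{j}}+\Vert \varphi _{j}(\cdot ,\theta )v_{j}\Vert _{0,U_{j}}\Bigr),
\]
with $C$ uniform in $\theta $ since $\square _{U_{j},m}^{(q)}$ does not depend on $\theta $. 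Expanding $[\square _{U_{j},m}^{(q)},\varphi _{j}(\cdot ,\theta )]$, a first-order operator in $z$ whose coefficients are uniformly bounded in $\theta $, introduces an error of the form $\Vert v_{j}\Vert _{s+1,\mathrm{supp}_{z}\varphi _{j}(\cdot ,\theta )}$. Inserting $\sum _{k}\varphi _{k}=1$ on this support bounds it by $\sum _{k}\Vert \varphi _{k}(\cdot ,\theta )v_{j}\Vert _{s+1,U_{j}}$.

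Next, square, integrate in $\theta $, and sum over $j$; by the definition of $\parallel \cdot \parallel _{s}^{\prime }$ and $\parallel \cdot \parallel _{s}^{\prime \prime }$, together with Lemma \ref{normeq}, this produces
\[
\Vert u\Vert _{s+2}^{\prime }\leq C_{s}^{\prime \prime }\bigl(\Vert \square _{\Sigma ,m}^{(q)}u\Vert _{s}^{\prime }+\Vert u\Vert _{s+1}^{\prime }+\Vert u\Vert _{0}^{\prime }\bigr).
\]
The intermediate $\Vert u\Vert _{s+1}^{\prime }$ term is absorbed into $\Vert u\Vert _{s+2}^{\prime }$ by the interpolation inequality of Corollary \ref{Ip}, yielding (\ref{H1}). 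For part $ii)$ I would argue by contradiction: were no constant $C_{s}^{\prime }$ to exist, pick $u_{k}\in \Omega _{m}^{0,q}(\Sigma )\cap (\mathrm{Ker}\,\square _{\Sigma ,m}^{(q)})^{\perp }$ with $\Vert u_{k}\Vert _{s+2}^{\prime }=1$ but $\Vert \square _{\Sigma ,m}^{(q)}u_{k}\Vert _{s}^{\prime }\to 0$. Part $i)$ applied to $u_{k}-u_{l}$, combined with the Rellich-type compactness (Proposition \ref{Rellich}) used to extract a $\parallel \cdot \parallel _{0}^{\prime }$-convergent subsequence, shows $\{u_{k}\}$ is Cauchy in $H_{0,q,m}^{\prime \,s+2}$, converging to $u$ with $\Vert u\Vert _{s+2}^{\prime }=1$, $\square _{\Sigma ,m}^{(q)}u=0$ in the distribution sense, and $u\in (\mathrm{Ker}\,\square _{\Sigma ,m}^{(q)})^{\perp }$. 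Testing against $u$ itself via Lemma \ref{l-4-2} forces $\Vert u\Vert _{0}^{\prime }=0$, contradicting $\Vert u\Vert _{s+2}^{\prime }=1$.

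The main obstacle I anticipate is the bookkeeping of the $\theta $-dependence: since $\varphi _{j}(z,\theta )$ is not a function of $z$ alone, one must keep $\theta $ frozen as a parameter throughout the $z$-elliptic arguments and verify that every constant can be taken uniform in $\theta \in (-\varepsilon ,\varepsilon )$. In particular, the passage from the individual local bounds on $\Vert \varphi _{j}(\cdot ,\theta )v_{j}\Vert _{s+2,U_{j}}$ back to the mixed-chart error $\sum _{k}\Vert \varphi _{k}(\cdot ,\theta )v_{j}\Vert _{s+1,U_{j}}$ is precisely what necessitates the introduction of $\parallel \cdot \parallel _{s}^{\prime \prime }$, so the key technical point is confirming the norm-equivalence of Lemma \ref{normeq} is tight enough to close the estimate after the interpolation step.
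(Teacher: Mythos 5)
Your proposal is correct and follows essentially the same route as the paper: reduce to the classical elliptic estimate for $\square_{U_j,m}^{(q)}$ on each $U_j$ with $\theta$ frozen as a parameter, collect the commutator error, pass through the $\|\cdot\|_s''$-norm and Lemma \ref{normeq}, and absorb the intermediate $\|u\|_{s+1}'$ via the interpolation inequality. The only visible difference is cosmetic and in part $ii)$: the paper extracts a weak $\|\cdot\|_{s+2}'$ limit $u_\infty$, shows $u_\infty=0$ from orthogonality, and then feeds this back into (\ref{H1}) to derive $1\le 0$; you instead apply (\ref{H1}) to $u_k-u_l$ to show $\{u_k\}$ is Cauchy in $H_{0,q,m}^{\prime\, s+2}$, producing a strong limit $u$ with $\|u\|_{s+2}'=1$ that lies simultaneously in $\mathrm{Ker}\,\square_{\Sigma,m}^{(q)}$ and in its orthogonal complement, hence $u=0$ — your final detour through Lemma \ref{l-4-2} is unnecessary since $\square_{\Sigma,m}^{(q)}u=0$ already places $u$ in the kernel directly. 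Both versions of part $ii)$ are standard and correct.
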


%
%
%
%
%
%
%
%
%
%
%
%


\begin{remark}
\label{R-4-18} It is easily seen that the similar statements and proofs work
for $u$ $\in $ $H_{0,q,m}^{^{\prime }s+2}(\Sigma ,G_{a,m})$ in place of $u$ $%
\in $ $\Omega _{m}^{0,q}(\Sigma ).$ See the proof of Proposition \ref{p-4-1}
for use of it.
\end{remark}

\proof
\textbf{(of Theorem \ref{t-4-1})} Recall that $||\cdot ||_{s}^{\prime }$
denotes the Sobolev $s$-norm on the whole space $\Sigma $ given by (\ref{Als}%
)$.$ For $u\in $ $\Omega _{m}^{0,q}(\Sigma )$ and writing $\varphi _{j}u$ $=$
$w_{j}^{m}\varphi _{j}v_{j},$ we have

\begin{equation}
||u||_{s+2}^{\prime }\overset{(\ref{Als})}{=}\left(
\sum_{j}\int_{-\varepsilon _{j}}^{\varepsilon _{j}}||\varphi _{j}(\cdot
,\theta _{j})v_{j}(\cdot )||_{s+2,U_{j}}^{2}\frac{d\theta _{j}}{2\pi }%
\right) ^{1/2}  \label{e0}
\end{equation}

\noindent From $\varphi _{j}\Psi _{q,m}^{-1}u|_{D_{j}}=\varphi _{j}(\cdot
,\theta _{j})v_{j}(\cdot )$ it follows that ($\theta _{j}$ viewed as a
parameter)%
\begin{eqnarray}
&&||\varphi _{j}(\cdot ,\theta _{j})v_{j}(\cdot )||_{s+2,U_{j}}=||\varphi
_{j}\Psi _{q,m}^{-1}u|_{D_{j}}||_{s+2,U_{j}}  \label{e1} \\
&\lesssim &||\square _{U_{j},m}^{(q)}(\varphi _{j}\Psi
_{q,m}^{-1}u|_{D_{j}})||_{s,U_{j}}+||\varphi _{j}\Psi
_{q,m}^{-1}u|_{D_{j}}||_{0,U_{j}}  \notag \\
&\lesssim &||\square _{U_{j},m}^{(q)}(\Psi
_{q,m}^{-1}u|_{D_{j}})||_{s,U_{j}}+||\Psi
_{q,m}^{-1}u|_{D_{j}}||_{s+1,U_{j}}+||\Psi _{q,m}^{-1}u|_{D_{j}}||_{0,U_{j}}
\notag
\end{eqnarray}

\noindent where the first inequality follows from classical elliptic
estimates of $\square _{U_{j},m}^{(q)}$ for smooth sections with compact
support in $U_{j}.$ For the RHS of (\ref{e1}), we have%
\begin{eqnarray}
&&||\square _{U_{j},m}^{(q)}(\Psi _{q,m}^{-1}u|_{D_{j}})||_{s,U_{j}}\overset{%
(\ref{eqnorm})+(\ref{lBoxm1})}{\lesssim }||\square
_{D_{j},m}^{(q)}u|_{D_{j}}||_{s,D_{j}}^{\prime }  \label{e2} \\
&=&||\square _{\Sigma ,m}^{(q)}u||_{s,D_{j}}^{\prime }\overset{Lem.\text{ %
\ref{4.7-5}}}{\lesssim }||\square _{\Sigma ,m}^{(q)}u||_{s}^{^{\prime \prime
}}\overset{Lem.\text{ \ref{normeq}}}{\lesssim }||\square _{\Sigma
,m}^{(q)}u||_{s}^{^{\prime }},  \notag
\end{eqnarray}

\noindent and similarly, for $l$ $=$ $s+1$ or $0$%
\begin{eqnarray}
||\Psi _{q,m}^{-1}u|_{D_{j}}||_{l,U_{j}}
&=&||v_{j}||_{l,U_{j}}=||u|_{D_{j}}||_{l,D_{j}}^{\prime }  \label{e2-1} \\
&\overset{Lem.\text{ \ref{4.7-5}}}{\lesssim }&||u||_{l}^{^{\prime \prime }}%
\overset{Lem.\text{ \ref{normeq}}}{\lesssim }||u||_{l}^{^{\prime }}.  \notag
\end{eqnarray}%
\noindent Substituting (\ref{e2}) and (\ref{e2-1}) into (\ref{e1}) and
making use of the interpolation inequality (Corollary \ref{Ip}), we obtain (%
\ref{H1}) via (\ref{e0}). 

For the second statement $ii)$, the argument is similar to the classical
one. Since we are in the transversal setting and using the modified norm $%
||\cdot ||_{s}^{\prime },$ we give details for the sake of clarity. Suppose
otherwise. That is, for each large integer $k$ there exists $u_{k}$ $\in $ $%
\Omega _{m}^{0,q}(\Sigma )$ $\cap $ $(Ker\square _{\Sigma ,m}^{(q)})^{\perp
} $ such that $||u_{k}||_{s+2}^{\prime }$ $=$ $1$ (by dividing $u_{k}$ by $%
||u_{k}||_{s+2}^{\prime }$)$,$%
\begin{equation}
||u_{k}||_{s+2}^{\prime }\geq k||\square _{\Sigma
,m}^{(q)}u_{k}||_{s}^{\prime }.  \label{e3-1}
\end{equation}%
\noindent It follows from (\ref{e3-1}) that $\square _{\Sigma ,m}^{(q)}u_{k}$
$\rightarrow $ $0$ in $||\cdot ||_{s}^{\prime }$ as $k\rightarrow \infty .$
By using the basic weak convergence result with (\ref{H1}) there exists a
subsequence (still denoted by) $u_{k}$ which weakly converges to $u_{\infty
} $ in $||\cdot ||_{s+2}^{\prime }$ and by Lemma \ref{Rellich} (the
Rellich-type compactness), strongly converges in the $||\cdot ||_{s}^{\prime
}$ norm. It follows that $u_{\infty }$ $\in $ $H_{0,q,m}^{\prime s+2}(\Sigma
,G_{a,m})$ $\cap $ $Ker\square _{\Sigma ,m}^{(q)}$ so that $<u_{k},u_{\infty
}>_{L^{2}}$ $=$ $0$ as $u_{k}$ $\perp $ $\ker \square _{\Sigma ,m}^{(q)}$ by
assumption. Taking $k\rightarrow \infty $ in $<u_{k},u_{\infty }>$ $=$ $0$
implies $u_{\infty }=0.$ On the other hand, by (\ref{H1}) we have%
\begin{equation}
1=||u_{k}||_{s+2}^{\prime }\leq C_{s}\left( ||\square _{\Sigma
,m}^{(q)}u_{k}||_{s}^{\prime }+||u_{k}||_{0}^{\prime }\right) .  \label{e3-2}
\end{equation}

\noindent Taking $k\rightarrow \infty $ in (\ref{e3-2}) and observing that $%
||u_{k}||_{0}^{\prime }$ $\rightarrow $ $||u_{\infty }||_{0}^{\prime }$ $=$ $%
0$ and $\square _{\Sigma ,m}^{(q)}u_{k}$ $\rightarrow $ $0$ in $||\cdot
||_{s}^{\prime }$ on the RHS of (\ref{e3-2}) by (\ref{e3-1}), we obtain $%
1\leq 0,$ a contradiction. 

\endproof%

The following lemma will soon be used.

\begin{lemma}
\label{Gar} (Transversal G\aa rding's inequality) Fix $m\geq 0$ and $a$ $>$ $%
\frac{m}{2}.$ There exists a constant $C$ $>$ $0$ such that for all $u\in $ $%
\Omega _{m}^{0,q}(\Sigma )\subset L_{0,q,m}^{2}(\Sigma ,G_{a,m})$ it holds
that%
\begin{equation}
(||u||_{1}^{\prime })^{2}\leq C[(\square _{\Sigma
,m}^{(q)}u,u)_{L^{2}}+(||u||_{0}^{\prime })^{2}].  \label{gar}
\end{equation}%
%
%
%
%
%
%
%
%
%
%
%
%
%
%
%
%
%
%
%
%
%
%
%
%
%
%
%
%
%
%
%
%
%
%
%
%
%
%
%
%
%
%
%
%
%
%
%
%
%
%
%
%
%
%
\noindent The term $(\square _{\Sigma ,m}^{(q)}u,u)_{L^{2}}$ can be replaced
by $||\bar{\partial}_{\Sigma ,m}u||_{L^{2}}^{2}+||\vartheta _{\Sigma
,m}u||_{L^{2}}^{2}.$
\end{lemma}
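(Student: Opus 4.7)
The plan is to prove the inequality by localizing via the partition of unity $\{\varphi_j\}$ and then invoking the classical G\aa rding inequality on each local slice $U_j$. First, by (\ref{pos}) of Lemma \ref{l-4-2}, $(\square_{\Sigma,m}^{(q)}u,u)_{L^2} = \|\bar\partial_{\Sigma,m}u\|_{L^2}^2 + \|\vartheta_{\Sigma,m}u\|_{L^2}^2$, so the two versions of the right hand side of (\ref{gar}) are equivalent, and it suffices to prove the second form. Writing $u|_{D_j} = w^m v_j$ with $v_j \in \Omega^{0,q}(U_j,(\psi_j^\ast L_\Sigma^\ast)^{\otimes m})$ and recalling Definition \ref{d4-0}, we have
\begin{equation*}
(\|u\|_1')^2 = \sum_j \int_{-\varepsilon}^{\varepsilon} \|\varphi_j(\cdot,\theta)v_j(\cdot)\|_{1,U_j}^2\,\frac{d\theta}{2\pi},
\end{equation*}
so the problem reduces to controlling each $\|\varphi_j(\cdot,\theta)v_j(\cdot)\|_{1,U_j}^2$ for fixed $\theta$.

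Second, for each fixed $\theta\in(-\varepsilon,\varepsilon)$, the form $\varphi_j(\cdot,\theta)v_j(\cdot)$ is smooth and compactly supported in $U_j$. Since $\square_{U_j,m}^{(q)}$ is a classical second order elliptic operator on $U_j$ with metric $\psi_j^\ast\pi^\ast g_M\otimes h^{-m}$, the standard G\aa rding inequality gives a constant $C>0$ independent of $\theta$ and $j$ with
\begin{equation*}
\|\varphi_j v_j\|_{1,U_j}^2 \leq C\bigl[\|\bar\partial_{U_j,m}(\varphi_j v_j)\|_{0,U_j}^2 + \|\vartheta_{U_j,m}(\varphi_j v_j)\|_{0,U_j}^2 + \|\varphi_j v_j\|_{0,U_j}^2\bigr].
\end{equation*}
The product rule yields $\bar\partial_{U_j,m}(\varphi_j v_j) = \varphi_j\bar\partial_{U_j,m}v_j + (\bar\partial\varphi_j)\wedge v_j$, and similarly $\vartheta_{U_j,m}(\varphi_j v_j) = \varphi_j\vartheta_{U_j,m}v_j + [\vartheta_{U_j,m},\varphi_j]v_j$ where the commutator is of order zero in $v_j$. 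The commutator/derivative-of-cutoff terms are bounded by $C_\varphi\|v_j\|_{0,U_j}^2$, and by (\ref{4.6-5}), Lemma \ref{4.7-5} and Lemma \ref{normeq} one has $\|v_j\|_{0,U_j} = \|u\|_{0,D_j}' \lesssim \|u\|_0'' \sim \|u\|_0'$.

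Finally we integrate over $\theta$ and sum over $j$. By Proposition \ref{dualLm}, $\bar\partial_{\Sigma,m}u|_{D_j} = w^m\bar\partial_{U_j,m}v_j$; by Proposition \ref{madj} and Definition \ref{SFA} together with Proposition \ref{3-11-1}, $\vartheta_{\Sigma,m}u|_{D_j} = w^m\vartheta_{U_j,m}v_j$. Hence the definition (\ref{Als}) applied to $\bar\partial_{\Sigma,m}u$ and $\vartheta_{\Sigma,m}u$ identifies
\begin{equation*}
\sum_j\!\int_{-\varepsilon}^{\varepsilon}\!\|\varphi_j\bar\partial_{U_j,m}v_j\|_{0,U_j}^2\frac{d\theta}{2\pi} = (\|\bar\partial_{\Sigma,m}u\|_0')^2,\qquad \sum_j\!\int_{-\varepsilon}^{\varepsilon}\!\|\varphi_j\vartheta_{U_j,m}v_j\|_{0,U_j}^2\frac{d\theta}{2\pi} = (\|\vartheta_{\Sigma,m}u\|_0')^2,
\end{equation*}
and by Remark \ref{4.1-5} (valid for $a>m/2$) these are comparable to $\|\bar\partial_{\Sigma,m}u\|_{L^2}^2$ and $\|\vartheta_{\Sigma,m}u\|_{L^2}^2$ respectively. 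Combining everything gives (\ref{gar}).

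The subtle point, and what I expect to be the main technical obstacle, is the transfer between the local $U_j$-norms (which use $\pi^\ast g_M$ on the base and $h^{-m}$ on $(L_\Sigma^\ast)^{\otimes m}$) and the global $L^2$-norm on $\Sigma$ built from $G_{a,m}$. This is exactly the compatibility achieved by the specific construction of $G_{a,m}$ in Section \ref{S-metric}: Propositions \ref{dualLm} and \ref{madj} show that $\bar\partial_{\Sigma,m}$ and $\vartheta_{\Sigma,m}$ restricted to $\Omega_m^{0,q}(D_j)$ are intertwined via $\Psi_{q,m}$ with $\bar\partial_{U_j,m}$ and $\vartheta_{U_j,m}$, with no extraneous terms coming from the $w$-direction. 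Without this decoupling the reduction to classical elliptic theory on $U_j$ would not close up cleanly.
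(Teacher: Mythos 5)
Your proof is correct and follows essentially the same strategy as the paper's own argument: localize via the partition of unity $\{\varphi_j\}$, apply the classical G\aa rding inequality on each slice $U_j$ with $\theta$ as a compact parameter, absorb the cutoff-commutator terms into the lower-order part, and pass back to global norms using Propositions \ref{dualLm}, \ref{madj} together with Lemmas \ref{4.7-5}, \ref{normeq} and Remark \ref{4.1-5}, finishing with (\ref{pos}) to relate the two forms of the right-hand side. The only cosmetic difference is that the paper first bounds $\|\varphi_j v_j\|_{1,U_j}^2$ by the inner product $(\square_{U_j,m}^{(q)}(\varphi_j v_j),\varphi_j v_j)_{L^2(U_j)}$ and then integrates by parts, whereas you go directly to the Dirichlet-energy form of G\aa rding on $U_j$.
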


\begin{proof}
Write $u=\sum_{j}\varphi _{j}u$ $=$ $\sum_{j}w_{j}^{m}\varphi _{j}v_{j}.$ To
bound $||u||_{1}^{\prime }$ we look at $||\varphi _{j}(\cdot ,\theta
)v_{j}(\cdot )||_{1,U_{j}}^{2}.$ By the classical G\aa rding's inequality
e.g. \cite[p.348]{Tr} we have ($\theta $ viewed as a parameter ranging over
a compact interval), 
\begin{eqnarray}
&&||\varphi _{j}(\cdot ,\theta )v_{j}(\cdot )||_{1,U_{j}}^{2}=||\varphi
_{j}\Psi _{q,m}^{-1}u|_{D_{j}}||_{1,U_{j}}^{2}  \label{gar-1} \\
&\lesssim &(\square _{U_{j},m}^{(q)}(\varphi _{j}\Psi
_{q,m}^{-1}u|_{D_{j}}),\varphi _{j}\Psi
_{q,m}^{-1}u|_{D_{j}})_{L^{2}(U_{j})}+||\varphi _{j}\Psi
_{q,m}^{-1}u|_{D_{j}}||_{0,U_{j}}^{2}  \notag \\
&\lesssim &||\bar{\partial}_{U_{j},m}(\varphi _{j}\Psi
_{q,m}^{-1}u|_{D_{j}})||_{L^{2}(U_{j})}^{2}+||\vartheta _{U_{j},m}(\varphi
_{j}\Psi _{q,m}^{-1}u|_{D_{j}})||_{L^{2}(U_{j})}^{2}+||\Psi
_{q,m}^{-1}u|_{D_{j}}||_{0,U_{j}}^{2}  \notag \\
&\lesssim &||\bar{\partial}_{U_{j},m}(\Psi
_{q,m}^{-1}u|_{D_{j}})||_{L^{2}(U_{j})}^{2}+||\vartheta _{U_{j},m}(\Psi
_{q,m}^{-1}u|_{D_{j}})||_{L^{2}(U_{j})}^{2}+||\Psi
_{q,m}^{-1}u|_{D_{j}}||_{0,U_{j}}^{2}  \notag \\
&\lesssim &||\bar{\partial}_{\Sigma ,m}u||_{L^{2}(\Sigma )}^{2}+||\vartheta
_{\Sigma ,m}u||_{L^{2}(\Sigma )}^{2}+(||u||_{0}^{\prime })^{2}  \notag
\end{eqnarray}

\noindent Here we have used $||\Psi
_{q,m}^{-1}u|_{D_{j}}||_{0,U_{j}}^{2}\lesssim (||u||_{0}^{\prime })^{2}$ by
Lemmas \ref{4.7-5} and \ref{normeq}. Summing over $j$ (finitely many) and
integrating over $\theta $ in (\ref{gar-1}) we obtain (\ref{gar}) in view of
(\ref{pos}).
\end{proof}

\begin{proposition}
\label{ellipreg} (Transversally elliptic regularity) Fix $m\geq 0$ and $a$ $%
> $ $\frac{m}{2}$ $\geq $ $0$. Take $u\in Dom(\square _{\Sigma ,m}^{(q)})$ $%
\subset $ $H_{0,q,m}^{\prime 0}(\Sigma ,G_{a,m})$ $=$ $L_{0,q,m}^{2}(\Sigma
,G_{a,m}).$ Suppose $\square _{\Sigma ,m}^{(q)}u\in H_{0,q,m}^{\prime
s}(\Sigma ,G_{a,m})$ for $s$ $\in $ $\mathbb{N\cup }\{0\}$. Then $u\in
H_{0,q,m}^{\prime s+2}(\Sigma ,G_{a,m}).$
\end{proposition}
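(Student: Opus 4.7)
The plan is to reduce the statement to classical interior elliptic regularity on each local patch $U_j$, via Remark~\ref{4-7-1} and the intertwining (\ref{lBoxm1}). By Remark~\ref{4-7-1}, the restriction $u|_{D_j}$ lies in $Dom(\square_{D_j,m}^{(q)})$ with $(\square_{\Sigma,m}^{(q)}u)|_{D_j}=\square_{D_j,m}^{(q)}(u|_{D_j})$. Via the isomorphism $\Psi_{q,m}$ of (\ref{Psi_qm}), $u|_{D_j}$ corresponds to an $L^2$ section $v_j$ on $U_j$ (with $\theta$ entering only as an inert parameter through the partition cutoffs), and by (\ref{lBoxm1}) one has the distributional identity
\[\square_{U_j,m}^{(q)}v_j=\Psi_{q,m}^{-1}\bigl((\square_{\Sigma,m}^{(q)}u)|_{D_j}\bigr)\quad\text{on }U_j,\]
whose right-hand side is in $H^s_{\mathrm{loc}}(U_j)$ for each $\theta$ by the hypothesis together with Lemma~\ref{4.7-5}.

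Next, $\square_{U_j,m}^{(q)}$ is a standard strongly elliptic Kodaira-type Laplacian on $U_j$ (twisted by $(\psi_j^\ast L_\Sigma^\ast)^{\otimes m}$), so classical interior elliptic regularity applies: $v_j \in H^{s+2}_{\mathrm{loc}}(U_j)$ with estimates uniform in $\theta\in(-\varepsilon,\varepsilon)$ (a compact interval on which all data depend smoothly). Since $\operatorname{supp}\varphi_j(\cdot,\theta)$ remains in a fixed relatively compact subset of $U_j$ as $\theta$ varies, the interior estimate yields $\|\varphi_j(\cdot,\theta)v_j(\cdot)\|_{s+2,U_j}$ bounded uniformly in $\theta$; integrating the square over $\theta$ and summing over the finite index $\{j\}$ produces $\|u\|_{s+2}^\prime<\infty$ by Definition~\ref{d4-0}, as required.

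The main obstacle I anticipate is justifying that the distributional equation $\square_{\Sigma,m}^{(q)}u=f$ on $\Sigma$ really transfers to the distributional equation $\square_{U_j,m}^{(q)}v_j=\Psi_{q,m}^{-1}(f|_{D_j})$ on $U_j$ for each fixed $\theta$. On $\Sigma$ one tests against $\Omega_m^{0,q}$-forms of noncompact support along the $\mathbb{R}^+$-orbits, whereas the classical theory on $U_j$ uses compactly supported test functions in $z$. The reconciliation will rest on Proposition~\ref{madj} together with the splitting $dv_{\Sigma,m}=\pi^\ast dv_M\wedge dv_{f,m}$ of (\ref{volume}), which factorizes the global pairing into a $z$-pairing on $U_j$ times a constant $\mathbb{C}^\ast$-orbit integral---the very mechanism already employed in the proof of Proposition~\ref{madj}. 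Once this Fubini-type factorization is in place, an honest test function $\chi\in C_c^\infty(U_j)$ can be promoted to an admissible test form $\chi\,w^m$ on $\Sigma$ (of noncompact support along the orbit), and the $\theta$- and $|w|$-integrations decouple, producing the desired local distributional identity.
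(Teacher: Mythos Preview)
Your proposal is correct and follows essentially the same route as the paper: localize via Remark~\ref{4-7-1} and the intertwining (\ref{lBoxm1}), apply classical interior elliptic regularity for $\square_{U_j,m}^{(q)}$ on $U_j$, then reassemble the $\|\cdot\|_{s+2}'$-norm using that $\operatorname{supp}\varphi_j(\cdot,\theta)\subset\hat U_j\subset\subset U_j$ independently of $\theta$. The paper differs only in presentation: it makes the bootstrap explicit by first reducing to the weaker implication ``$u\in H^{\prime\,s+1}$ and $\square u\in H^{\prime\,s}\Rightarrow u\in H^{\prime\,s+2}$'' (so that the first-order commutator $[\square_{U_j,m}^{(q)},\chi_j]v_j$ lands in $H^s$), and starts the induction at $s=0$ using the transversal G\aa rding inequality (Lemma~\ref{Gar}) to gain the initial $H^{\prime\,1}$-regularity; you instead invoke the full interior regularity statement in one step, which is equally valid. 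One small imprecision: $v_j$ itself does not depend on $\theta$ (only the cutoff $\varphi_j(\cdot,\theta)$ does), so the ``uniformity in $\theta$'' you mention is automatic once $v_j\in H^{s+2}_{\mathrm{loc}}(U_j)$ is known.
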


\proof
%
%
%
%
%
%
%
%
%
%
%
%
%
%
%
%
%
%
%
%
%
%
%
%
%
%
%
%
%
%
%
%
%
%
%
%
%
%
%
%
%
%
%
%
%
%
%
%
%
%
%
%
%
%
%
%
%
%
%
%
%
%
%
%
%
%
%
%
%
%
%
%
%
%
%
%
%
%
%
%


To simply the notation we use $H_{0,q,m}^{\prime s}(\Sigma )$ (resp. $%
H_{0,q,m}^{\prime s}(D_{j}),$ $H_{0,q,m}^{s}(U_{j}))$ to denote $%
H_{0,q,m}^{\prime s}(\Sigma ,G_{a,m})$ (resp. $H_{0,q,m}^{\prime
s}(D_{j},G_{a,m}),$ $H_{0,q,m}^{s}(U_{j},(\psi _{j}^{\ast }L_{\Sigma }^{\ast
})^{\otimes m}).$ First we note that the statement 
\begin{equation}
u\in H_{0,q,m}^{\prime s+1}(\Sigma )\text{ and }\square _{\Sigma
,m}^{(q)}u\in H_{0,q,m}^{\prime s}(\Sigma )\text{ then }u\in
H_{0,q,m}^{\prime s+2}(\Sigma )  \label{e3-2a}
\end{equation}%
\noindent implies \textquotedblleft $\square _{\Sigma ,m}^{(q)}u$ $\in $ $%
H_{0,q,m}^{\prime s}(\Sigma )$ then $u$ $\in $ $H_{0,q,m}^{\prime
s+2}(\Sigma )"$ as claimed in the proposition. This can be easily shown by
induction on $s$: for $s=0$ $\square _{\Sigma ,m}^{(q)}u$ $\in $ $%
L_{0,q,m}^{2}(\Sigma ,G_{a,m})$ and $u$ $\in $ $H_{0,q,m}^{\prime 0}(\Sigma
,G_{a,m})$ $=L_{0,q,m}^{2}(\Sigma ,G_{a,m})$ gives $u\in H_{0,q,m}^{\prime
1}(\Sigma )$ by G\aa rding's inequality for $H^{\prime 1}$-norms (\ref{gar})
with the usual regularization process using the partition of unity as in (%
\ref{gar-1}) and Remark \ref{4-7-1} for localization (see for instance \cite[%
p.381]{GH})$.$ So by (\ref{e3-2a}) we get $u\in H_{0,q,m}^{\prime
0+2}(\Sigma ).$ For $s=1$ we can make use of the $s=0$ case to get $u\in
H_{0,q,m}^{\prime 2}(\Sigma )$ and then apply (\ref{e3-2a}) for $s=1$ to
conclude $u\in H_{0,q,m}^{\prime 1+2}(\Sigma )$. The similar reasoning works
for $s=2,$ $3,$ $\cdot \cdot \cdot .$

In the following argument we will prove (\ref{e3-2a}).$\ $First$\ $the
assumption in (\ref{e3-2a}) and Lemmas \ref{4.7-5}, \ref{normeq} imply $%
u|_{D_{j}}$ $\in H_{0,q,m}^{\prime s+1}(D_{j})$ and $\square
_{D_{j},m}^{(q)}u|_{D_{j}}$ $\in $ $H_{0,q,m}^{\prime s}(D_{j},G_{a,m})$
with Remark \ref{4-7-1}. This yields, since $\Psi _{q,m}^{-1}$ induces
equivalent Sobolev norms (\ref{eqnorm}), $\Psi _{q,m}^{-1}u|_{D_{j}}$ $\in $ 
$H_{0,q,m}^{s+1}(U_{j}),$ $\square _{U_{j},m}^{(q)}(\Psi
_{q,m}^{-1}u|_{D_{j}})\in H_{0,q,m}^{s}(U_{j})$ by (\ref{lBoxm1}). Let $\chi
_{j}$ be the cutoff function used in the proof of Proposition \ref{Rellich}.
Observe that%
\begin{eqnarray}
\square _{U_{j},m}^{(q)}(\chi _{j}\Psi _{q,m}^{-1}u|_{D_{j}}) &=&\chi
_{j}\square _{U_{j},m}^{(q)}(\Psi _{q,m}^{-1}u|_{D_{j}})+\left[ \square
_{U_{j},m}^{(q)},\chi _{j}\right] (\Psi _{q,m}^{-1}u|_{D_{j}}),
\label{e3-2b} \\
\left[ \square _{U_{j},m}^{(q)},\chi _{j}\right] (\Psi
_{q,m}^{-1}u|_{D_{j}}) &\in &H_{0,q,m}^{s}(U_{j})  \notag
\end{eqnarray}%
\noindent since $\left[ \square _{U_{j},m}^{(q)},\chi _{j}\right] $ only
takes one derivative and $\Psi _{q,m}^{-1}u|_{D_{j}}$ $\in $ $%
H_{0,q,m}^{s+1}(U_{j})$ as noted above. From (\ref{e3-2b}) and the
assumption $\square _{U_{j},m}^{(q)}(\Psi _{q,m}^{-1}u|_{D_{j}})\in
H_{0,q,m}^{s}(U_{j}),$ it follows that $\square _{U_{j},m}^{(q)}(\chi
_{j}\Psi _{q,m}^{-1}u|_{D_{j}})$ $\in $ $H_{0,q,m}^{s}(U_{j})$. Then the
usual local elliptic regularity for $\square _{U_{j},m}^{(q)}$ (see for
instance \cite[pp.379-382]{GH}) gives $\chi _{j}\Psi
_{q,m}^{-1}u|_{D_{j}}\in H_{0,q,m}^{s+2}(U_{j}).$ Writing $\Psi
_{q,m}^{-1}u|_{D_{j}}$ $=$ $v_{j}$ we compute, for any $-\varepsilon
_{j}<\theta _{j}<\varepsilon _{j}$%
\begin{equation}
||\varphi _{j}(\cdot ,\theta _{j})v_{j}(\cdot )||_{s+2,U_{j}}^{2}\lesssim
||v_{j}||_{s+2,\hat{U}_{j}}^{2}\lesssim ||\chi
_{j}v_{j}||_{s+2,U_{j}}^{2}<\infty  \label{e3-3}
\end{equation}%
\noindent where the constants are independent of $\theta _{j},$ and $\hat{U}%
_{j}$ is defined after (\ref{4.3-8}). It follows from (\ref{e3-3}) that (\ref%
{Als}) is finite and $ii)$ holds in Definition \ref{d4-0} for $s$ replaced
by $s+2.$ We have shown $u$ $\in $ $H_{0,q,m}^{\prime s+2}(\Sigma ).$ 
\endproof%

We have the following corollary.

\begin{corollary}
\label{4.9-5} $\tbigcap\limits_{s\in \mathbb{N\cup \{}0\mathbb{\}}}$ $%
H_{0,q,m}^{\prime s}(\Sigma ,G_{a,m})=\Omega _{m}^{0,q}(\Sigma ).$
\end{corollary}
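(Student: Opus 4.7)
I would prove the equality by establishing the two inclusions separately; the nontrivial content lies in $\bigcap_s H_{0,q,m}^{\prime s}(\Sigma,G_{a,m}) \subset \Omega_m^{0,q}(\Sigma)$. The other direction is a routine check: for $u \in \Omega_m^{0,q}(\Sigma)$, Remark \ref{3-r} (using $a > m/2$) gives $u \in L^2_{0,q,m}(\Sigma,G_{a,m})$, and writing $u|_{D_j} = w^m v_j(z,\bar z)$ with $v_j \in C^\infty(U_j)$, the fact that $\varphi_j(\cdot,\theta)$ has $z$-support contained in a fixed compact subset of $U_j$ uniformly in $\theta$ makes $\|\varphi_j(\cdot,\theta) v_j\|_{s,U_j}$ uniformly bounded in $\theta$, so the norm (\ref{Als}) is finite for every $s$.

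For the nontrivial inclusion, take $u \in \bigcap_s H_{0,q,m}^{\prime s}(\Sigma,G_{a,m})$. By Definition \ref{d4-0} this $u$ is already an $L^2$-element with the local representation $u|_{D_j} = w^m v_j(z,\bar z)$, where $v_j$ is intrinsically independent of $w$ (i.e.\ of $\theta$), and the norm hypothesis asserts
\[
\int_{-\varepsilon}^{\varepsilon} \|\varphi_j(\cdot,\theta) v_j\|_{s,U_j}^2 \, d\theta < \infty \quad \text{for every } s \in \mathbb{N} \cup \{0\}.
\]
I would apply Fubini once per $s$ to extract, for each $s$, a full-measure set of $\theta \in (-\varepsilon,\varepsilon)$ on which $\varphi_j(\cdot,\theta) v_j \in H^s(U_j)$, and then intersect over the countable family $s \in \mathbb{N} \cup \{0\}$ to produce a single full-measure set $E_j$ on which $\varphi_j(\cdot,\theta) v_j \in \bigcap_s H^s(U_j) = C^\infty(U_j)$, the last step by the classical Sobolev embedding. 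For any $z_0$ in the $z$-projection of $\mathrm{supp}\,\varphi_j$, continuity of $\varphi_j$ together with density of $E_j$ furnishes some $\theta_0 \in E_j$ with $\varphi_j(z_0,\theta_0) > 0$; dividing the smooth function $\varphi_j(\cdot,\theta_0) v_j$ by the locally positive smooth $\varphi_j(\cdot,\theta_0)$ identifies $v_j$ with a smooth function on a neighborhood of $z_0$. Since $\{\varphi_j\}$ is a partition of unity, this covers every point where $u$ is nonzero, so $u$ is smooth with the local form $f_{I_q}(z,\bar z) w^m d\bar z^{I_q}$ demanded by item $ii)$ of Definition \ref{2m}; the $m$-equivariance $\sigma(\lambda)^* u = \lambda^m u$, holding a.e.\ by the definition of $L^2_{0,q,m}$ as the $L^2$-completion of $\Omega_m^{0,q}(\Sigma)$, upgrades to pointwise identity once $u$ is known smooth, placing $u$ in $\Omega_m^{0,q}(\Sigma)$.

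\textbf{Main obstacle.} The only genuinely delicate step is transferring the $\theta$-integrated Sobolev bound on $\varphi_j(\cdot,\theta) v_j$ to pointwise smoothness of the $\theta$-independent $v_j$. This is resolved precisely because $v_j$ has no $\theta$-dependence---a feature hardwired into Definition \ref{d4-0}---so one good $\theta_0$ per point $z_0$ suffices, and such $\theta_0$ exists by full measure of $E_j$ and continuity of $\varphi_j$. Compatibility across overlapping patches, via the transformation law of Proposition \ref{dualLm}, is then automatic.
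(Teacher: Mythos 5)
Your proposal is correct but follows a genuinely different route from the paper. The paper applies Lemma \ref{4.7-5} (which localizes the global $\|\cdot\|_s'$-norm to the $\theta$-free local norm $\|v_j\|_{s,U_j} = \|u\|_{s,D_j}'$), via the chain (\ref{e2-1}), to conclude directly that $v_j \in \bigcap_s H^s(U_j)$; Sobolev embedding then gives smoothness of $v_j$ at once. Your argument instead stays at the level of the $\theta$-integrated norm, extracts a full-measure set of good $\theta$, and then recovers smoothness of $v_j$ by dividing by $\varphi_j(\cdot,\theta_0)$ near a point where it is positive. Both are valid, but the paper's single invocation of Lemma \ref{4.7-5} removes the $\theta$-dependence cleanly from the outset; the key structural fact is exactly $\|v_j\|_{s,U_j}\lesssim \|u\|_s''\lesssim \|u\|_s'$, which makes the division step unnecessary.

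Two small points of imprecision, neither fatal. First, your Fubini step is dispensable: item $ii)$ of Definition \ref{d4-0} already asserts $\varphi_j(\cdot,\theta_j)v_j \in H^s(U_j)$ for \emph{all} $\theta_j \in (-\varepsilon_j,\varepsilon_j)$, not merely a.e., so one may pick any convenient $\theta_0$ directly. Second, the phrase ``$z_0$ in the $z$-projection of $\mathrm{supp}\,\varphi_j$'' should be ``$z_0 \in A_j$'' (the open set $\{z : \exists\phi,\ \varphi_j(z,\phi)\neq 0\}$); on $\partial A_j$ one has $\varphi_j(z_0,\theta)=0$ for every $\theta$, so the asserted $\theta_0$ does not exist there. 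This is harmless because $\sum_j\varphi_j\equiv 1$ guarantees that every $x_0 \in \Sigma$ lies in $\{\varphi_k>0\}$ for some $k$, whence $z(x_0)\in A_k$ in that chart and your argument applies there; accordingly ``covers every point where $u$ is nonzero'' should read ``covers every point of $\Sigma$.''
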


\begin{proof}
It suffices to show that the LHS of the formula is contained in the RHS.
Suppose $u$ $\in $ $H_{0,q,m}^{\prime s}(\Sigma ,G_{a,m}).$ By (\ref{e2-1}) $%
\Psi _{q,m}^{-1}u|_{D_{j}}$ is in $H^{s}$ over $U_{j}.$ So if $u$ is in the
LHS of the formula, we obtain that over $U_{j}$ $\Psi _{q,m}^{-1}u|_{D_{j}}$
is in $H^{s}$ for all $s$ $\in $ $\mathbb{N\cup \{}0\mathbb{\}}.$ By the
usual Sobolev lemma $\Psi _{q,m}^{-1}u|_{D_{j}}$ must be smooth in $U_{j}.$
It follows that $u$ $=$ $\sum_{j}\varphi _{j}\Psi _{q,m}^{-1}u|_{D_{j}}$ is
smooth and belongs to $\Omega _{m}^{0,q}(\Sigma ).$
\end{proof}

\begin{lemma}
\label{l-4-1} For $a$ $>$ $\frac{m}{2}$ $\geq $ $0,$ we have $Dom(\square
_{\Sigma ,m}^{(q)})$ $=$ $H_{0,q,m}^{\prime 2}(\Sigma ,G_{a,m}).$
\end{lemma}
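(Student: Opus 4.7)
The plan is to establish the two inclusions separately; the nontrivial direction is essentially the content of Proposition \ref{ellipreg}, and the trivial direction is just a bookkeeping check.

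For the easier inclusion $Dom(\square_{\Sigma,m}^{(q)}) \subset H_{0,q,m}^{\prime 2}(\Sigma,G_{a,m})$, suppose $u \in Dom(\square_{\Sigma,m}^{(q)})$. By Remark \ref{4.1-5} and the definition of $Dom(\square_{\Sigma,m}^{(q)})$, we have $u \in L_{0,q,m}^{2}(\Sigma,G_{a,m}) = H_{0,q,m}^{\prime 0}(\Sigma,G_{a,m})$ and $\square_{\Sigma,m}^{(q)} u \in L_{0,q,m}^{2}(\Sigma,G_{a,m}) = H_{0,q,m}^{\prime 0}(\Sigma,G_{a,m})$. Applying Proposition \ref{ellipreg} with $s=0$ directly yields $u \in H_{0,q,m}^{\prime 2}(\Sigma,G_{a,m})$.

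For the reverse inclusion $H_{0,q,m}^{\prime 2}(\Sigma,G_{a,m}) \subset Dom(\square_{\Sigma,m}^{(q)})$, take $u \in H_{0,q,m}^{\prime 2}(\Sigma,G_{a,m})$. Write $u|_{D_{j}} = w^{m} v_{j}$ as in Definition \ref{d4-0}; the assumption places $\varphi_{j}(\cdot,\theta_{j}) v_{j}(\cdot)$ in $H^{2}$ uniformly in $\theta_{j}$, and via Lemmas \ref{4.7-5} and \ref{normeq} the local norms $\|u\|_{2,D_{j}}^{\prime}$ are finite. Since $\bar{\partial}_{\Sigma,m}$ acts locally as $(\bar{\partial}_{z} v_{j}) w^{m}$ and $\vartheta_{\Sigma,m}$ acts as $(\vartheta_{z,m} v_{j}) w^{m}$ (Proposition \ref{madj}), applying either operator classically to $u$ produces a form in $H_{0,q\pm 1,m}^{\prime 1}(\Sigma,G_{a,m}) \subset L_{0,q\pm 1,m}^{2}(\Sigma,G_{a,m})$, and applying the pair in either order produces something in $H_{0,q,m}^{\prime 0}(\Sigma,G_{a,m}) = L_{0,q,m}^{2}(\Sigma,G_{a,m})$. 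The classical actions coincide with the distributional actions defined via test forms in $\Omega_{m}^{0,\ast}(\Sigma)$ because the integration-by-parts identity used to define $Dom(\bar{\partial}_{\Sigma,m})$ and $Dom(\vartheta_{\Sigma,m})$ is exactly what Proposition \ref{3-11-1} establishes (via the orbit-wise $w$-integration argument of Proposition \ref{madj}), so compact support in the $w$-direction is not needed. One then checks each of the four membership conditions entering the definition of $Dom(\square_{\Sigma,m}^{(q)})$ in turn: $u \in Dom(\bar{\partial}_{\Sigma,m}) \cap Dom(\vartheta_{\Sigma,m})$, and $\bar{\partial}_{\Sigma,m} u$, $\vartheta_{\Sigma,m} u$ each lie in the corresponding dual domain. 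Each check reduces to a local $L^{2}$ estimate in the $v_{j}$ that follows from $v_{j} \in H^{2}_{loc}$.

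The main obstacle, modest in size, is verifying that the classical-versus-distributional identification of $\bar{\partial}_{\Sigma,m} u$ and $\vartheta_{\Sigma,m} u$ genuinely goes through for $u$ of $H^{\prime 2}$-regularity (i.e.\ not smooth) against test forms $\varphi \in \Omega_{m}^{0,\ast}(\Sigma)$ of noncompact support. This is handled by approximating $u$ by smooth elements in the $\|\cdot\|_{2}^{\prime}$-norm (which is achievable by mollifying each $\varphi_{j} v_{j}$ in the $z$-variables while leaving $\varphi_{j}$ fixed, so that the $w$-decay built into the metric $G_{a,m}$ is preserved), pairing against $\varphi$ using Proposition \ref{3-11-1} for the smooth approximants, and passing to the limit using the continuity of both sides in the respective norms. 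Once this is in place, the proof of the lemma is complete.
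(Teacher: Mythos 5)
Your proof is correct and follows essentially the same route as the paper: the inclusion $Dom(\square_{\Sigma,m}^{(q)}) \subset H_{0,q,m}^{\prime 2}(\Sigma,G_{a,m})$ is obtained from Proposition \ref{ellipreg} with $s=0$ (combined with Remark \ref{4.1-5}), while the reverse inclusion is what the paper dispatches in one line via the local factorization (\ref{lBoxm1}) and Definition \ref{d4-0}. You supply more detail for that reverse direction — in particular the mollification-in-$z$ step to reconcile classical and distributional actions of $\bar\partial_{\Sigma,m}$ and $\vartheta_{\Sigma,m}$ for $H^{\prime 2}$-data via Propositions \ref{madj} and \ref{3-11-1} — which is the kind of verification the paper leaves implicit.
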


\proof
For the inclusion put $v$ $=$ $\square _{\Sigma ,m}^{(q)}u$ $\in $ $%
L_{0,q,m}^{2}(\Sigma ,G_{a,m})$ $(=$ $H_{0,q,m}^{\prime 0}(\Sigma ,G_{a,m})$
by Remark \ref{4.1-5})$.$ By Proposition \ref{ellipreg} for $s$ $=$ $0$, we
have $u$ $\in $ $H_{0,q,m}^{\prime 2}(\Sigma ,G_{a,m}).$ The reverse
inclusion can be checked via (\ref{lBoxm1}) and Definition \ref{d4-0}.

\endproof%

\begin{lemma}
\label{4-18-1} Let $\mu $ be an eigenvalue of $\square _{\Sigma ,m}^{(q)}.$
Then $i)$ The eigenspace $\mathcal{E}_{m,\mu }^{q}(\Sigma ):=\{\omega \in
Dom(\square _{\Sigma ,m}^{(q)}):\square _{\Sigma ,m}^{(q)}\omega =\mu \omega
\}$ is finite-dimensional with $\mathcal{E}_{m,\mu }^{q}(\Sigma )\subset
\Omega _{m}^{0,q}(\Sigma ).$ $ii)$ In particular $Ker\square _{\Sigma
,m}^{(q)}$ $=$ $\{v$ $\in $ $\Omega _{m}^{0,q}(\Sigma )|$ $\square _{\Sigma
,m}^{(q)}v$ $=$ $0\}$ and is finite-dimensional.
\end{lemma}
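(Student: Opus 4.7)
The plan is to combine three tools developed earlier in the section: the transversally elliptic regularity (Proposition \ref{ellipreg}), the transversally elliptic estimate (Theorem \ref{t-4-1}\,$i)$), and the Rellich-type compactness (Proposition \ref{Rellich}), together with Corollary \ref{4.9-5} identifying $\bigcap_s H_{0,q,m}^{\prime s}(\Sigma,G_{a,m})$ with $\Omega_m^{0,q}(\Sigma)$.

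First I would establish smoothness of eigenforms, i.e.\ the inclusion $\mathcal{E}_{m,\mu}^{q}(\Sigma)\subset\Omega_m^{0,q}(\Sigma)$. Let $\omega\in\mathcal{E}_{m,\mu}^{q}(\Sigma)$. By Lemma \ref{l-4-1}, $\omega\in Dom(\square_{\Sigma,m}^{(q)})=H_{0,q,m}^{\prime 2}(\Sigma,G_{a,m})$. Since $\square_{\Sigma,m}^{(q)}\omega=\mu\omega\in H_{0,q,m}^{\prime 2}(\Sigma,G_{a,m})$, Proposition \ref{ellipreg} (with $s=2$) upgrades $\omega$ to $H_{0,q,m}^{\prime 4}(\Sigma,G_{a,m})$. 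Iterating this bootstrap, $\omega\in H_{0,q,m}^{\prime s}(\Sigma,G_{a,m})$ for every $s\in\mathbb{N}\cup\{0\}$, and Corollary \ref{4.9-5} then delivers $\omega\in\Omega_m^{0,q}(\Sigma)$.

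Next I would prove finite dimensionality by the standard Rellich argument adapted to our modified Sobolev norms. Suppose, for contradiction, that $\mathcal{E}_{m,\mu}^{q}(\Sigma)$ is infinite-dimensional and pick an orthonormal sequence $\{\omega_k\}$ in it with respect to the $L^2$-inner product on $L_{0,q,m}^2(\Sigma,G_{a,m})$, so that $\|\omega_k\|_0^{\prime}$ and $\|\omega_j-\omega_k\|_0^{\prime}$ are uniformly bounded from below and above (using the equivalence of $\|\cdot\|_0^{\prime}$ and the $L^2$-norm from Remark \ref{4.1-5}). Applying the elliptic estimate (\ref{H1}) to each $\omega_k\in\Omega_m^{0,q}(\Sigma)$ yields
\[
\|\omega_k\|_2^{\prime}\leq C_0\bigl(\|\square_{\Sigma,m}^{(q)}\omega_k\|_0^{\prime}+\|\omega_k\|_0^{\prime}\bigr)=C_0(|\mu|+1)\|\omega_k\|_0^{\prime},
\]
so $\{\omega_k\}$ is bounded in $H_{0,q,m}^{\prime 2}(\Sigma,G_{a,m})$. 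By Proposition \ref{Rellich} (Rellich-type compactness), a subsequence converges in $H_{0,q,m}^{\prime 0}(\Sigma,G_{a,m})=L_{0,q,m}^{2}(\Sigma,G_{a,m})$, contradicting $\|\omega_j-\omega_k\|_0^{\prime}\gtrsim 1$ for $j\neq k$. Hence $\dim\mathcal{E}_{m,\mu}^{q}(\Sigma)<\infty$.

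Part $ii)$ is then an immediate specialization to $\mu=0$: $Ker\,\square_{\Sigma,m}^{(q)}=\mathcal{E}_{m,0}^{q}(\Sigma)\subset\Omega_m^{0,q}(\Sigma)$ is finite-dimensional. The only delicate point, which one should check carefully, is that the bootstrapping in the first step indeed applies at each stage — i.e.\ that an element of $Dom(\square_{\Sigma,m}^{(q)})$ satisfying $\square_{\Sigma,m}^{(q)}\omega=\mu\omega$ in the distributional/formal-adjoint sense may be iteratively regularized; this is guaranteed by Lemma \ref{l-4-2} (formal self-adjointness) and Proposition \ref{ellipreg} as formulated. No further obstacle is expected.
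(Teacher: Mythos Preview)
Your proposal is correct and follows essentially the same approach as the paper: the paper's proof is a two-line sketch invoking exactly the tools you use --- the elliptic estimate (Theorem \ref{t-4-1}) and Rellich-type compactness (Proposition \ref{Rellich}) for finite-dimensionality, and Proposition \ref{ellipreg} with Corollary \ref{4.9-5} for smoothness. Your write-up simply fills in the standard details of that sketch.
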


\begin{proof}
The finite-dimensionality of each eigenspace follows by a similar reasoning
as in the classical case by the elliptic estimate (Theorem \ref{t-4-1}) and
the Rellich-compactness (Proposition \ref{Rellich}). The smoothness of
eigenfunctions is from Proposition \ref{ellipreg} and Corollary \ref{4.9-5}.
\end{proof}

We are now in a position to carry out a Hodge theory (in a transversal
sense) by strategically following the classical approach (nontransversal
one) using the above tools formulated in terms of the (modified) Sobolev $%
||\cdot ||_{s}^{\prime }$-norm. However we avoid using the Lax-Milgram
theorem in the proof of Lemma \ref{SolBox}; see Remark \ref{4-19-1}. To
define Green's operator we start by proving the following:

\begin{lemma}
\label{SolBox} Suppose $a$ $>$ $\frac{m}{2}$ $\geq $ $0.$ Denote by ($%
Ker\square _{\Sigma ,m}^{(q)})^{\perp }$ the orthogonal complement of $%
Ker\square _{\Sigma ,m}^{(q)}$ in $L_{0,q,m}^{2}(\Sigma ,G_{a,m}).$ $i)$ For 
$f\in $ ($Ker\square _{\Sigma ,m}^{(q)})^{\perp }$ there exists a unique
solution $u$ $\in $ $H_{0,q,m}^{\prime 2}(\Sigma ,G_{a,m})$ $\cap $ ($%
Ker\square _{\Sigma ,m}^{(q)})^{\perp }$ satisfying $\square _{\Sigma
,m}^{(q)}u$ $=$ $f$. $ii)$ If $f$ $\in $ $\Omega _{m}^{0,q}(\Sigma )$ $\cap $
($Ker\square _{\Sigma ,m}^{(q)})^{\perp }$ then $u$ $\in $ $\Omega
_{m}^{0,q}(\Sigma )$ $\cap $ ($Ker\square _{\Sigma ,m}^{(q)})^{\perp }.$
\end{lemma}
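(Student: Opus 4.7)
The plan is to execute a standard Hodge-theoretic argument adapted to the transversal setting, using the a priori estimate of Theorem \ref{t-4-1}(ii) together with the regularity result of Proposition \ref{ellipreg}. Specifically, I will show that $\square_{\Sigma,m}^{(q)}$ maps $H_{0,q,m}^{\prime 2}(\Sigma,G_{a,m}) \cap (Ker\,\square_{\Sigma,m}^{(q)})^{\perp}$ bijectively onto $(Ker\,\square_{\Sigma,m}^{(q)})^{\perp}$ by establishing closed range and then trivial cokernel.

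First, the a priori estimate $\|u\|_2^{\prime} \leq C_0^{\prime} \|\square_{\Sigma,m}^{(q)} u\|_0^{\prime}$ for smooth $u \perp Ker$ (Theorem \ref{t-4-1}(ii)) extends to all $u \in H_{0,q,m}^{\prime 2}(\Sigma,G_{a,m}) \cap (Ker)^{\perp}$ by approximation, as already indicated in Remark \ref{R-4-18}: approximate $u$ in $\|\cdot\|_2^{\prime}$ by smooth forms, subtract off the (smooth, finite-dimensional) kernel component at each stage, and pass to the limit. Once this estimate is in hand, the image $V := \square_{\Sigma,m}^{(q)}\bigl(H_{0,q,m}^{\prime 2} \cap (Ker)^{\perp}\bigr) \subseteq L_{0,q,m}^{2}(\Sigma,G_{a,m})$ is closed: if $\square_{\Sigma,m}^{(q)} u_n \to g$ in $L^2$, the extended estimate makes $\{u_n\}$ Cauchy in $\|\cdot\|_2^{\prime}$, so it converges to some $u \in H_{0,q,m}^{\prime 2} \cap (Ker)^{\perp}$ with $\square_{\Sigma,m}^{(q)} u = g$ (using continuity of the second-order operator $\square_{\Sigma,m}^{(q)} : H_{0,q,m}^{\prime 2} \to L_{0,q,m}^{2}$, readily verified via Lemma \ref{4.7-5} and the analogous local bound).

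Next, I would show $V = (Ker\,\square_{\Sigma,m}^{(q)})^{\perp}$ by verifying that the $L^2$-orthogonal complement of $V$ inside $(Ker)^{\perp}$ is trivial. Suppose $g \in (Ker)^{\perp}$ satisfies $(g, \square_{\Sigma,m}^{(q)} v)_{L^2} = 0$ for every $v \in H_{0,q,m}^{\prime 2} \cap (Ker)^{\perp}$. Given $\varphi \in \Omega_m^{0,q}(\Sigma)$, decompose $\varphi = \varphi_0 + \varphi_1$ with $\varphi_0 \in Ker\,\square_{\Sigma,m}^{(q)}$ (a smooth, finite-dimensional subspace by Lemma \ref{4-18-1}) and $\varphi_1 \in (Ker)^{\perp}$; since $\varphi_1$ remains smooth and hence lies in $H_{0,q,m}^{\prime 2}$, one obtains $(g, \square_{\Sigma,m}^{(q)}\varphi)_{L^2} = (g, \square_{\Sigma,m}^{(q)}\varphi_1)_{L^2} = 0$. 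By the formal self-adjointness of Lemma \ref{l-4-2} this says $g \in Dom(\square_{\Sigma,m}^{(q)})$ with $\square_{\Sigma,m}^{(q)} g = 0$ in the distribution sense, i.e. $g \in Ker\,\square_{\Sigma,m}^{(q)}$; combined with $g \perp Ker$ this forces $g = 0$. Thus $V = (Ker)^{\perp}$, giving existence in (i), and uniqueness follows because two solutions differ by an element of $Ker \cap (Ker)^{\perp} = \{0\}$.

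For part (ii), if $f \in \Omega_m^{0,q}(\Sigma)$ then $f \in H_{0,q,m}^{\prime s}(\Sigma,G_{a,m})$ for every $s \in \mathbb{N}\cup\{0\}$, so Proposition \ref{ellipreg} places $u$ in $H_{0,q,m}^{\prime s+2}$ for every such $s$, and Corollary \ref{4.9-5} then yields $u \in \Omega_m^{0,q}(\Sigma)$. I expect the main subtle point, beyond the bookkeeping with the modified Sobolev norms $\|\cdot\|_s^{\prime}$, to be the passage from the weak identity $(g,\square_{\Sigma,m}^{(q)}\varphi)_{L^2}=0$ to the statement $g \in Dom(\square_{\Sigma,m}^{(q)})$ with $\square_{\Sigma,m}^{(q)} g = 0$; this is legitimate precisely because the test forms $\varphi \in \Omega_m^{0,q}(\Sigma)$, though of noncompact support along the $\mathbb{R}^+$-direction, still satisfy $\square_{\Sigma,m}^{(q)}\varphi \in L_{0,q,m}^{2}$ when $a > m/2$ by Remark \ref{3-r}, which is exactly the feature required by the distributional definition of $Dom(\square_{\Sigma,m}^{(q)})$ given below \eqref{4.0}.
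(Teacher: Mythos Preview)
Your argument is correct and follows essentially the same route as the paper: establish closed range via the a priori estimate of Theorem~\ref{t-4-1}(ii) (extended to $H^{\prime 2}$ by Remark~\ref{R-4-18}), then show the orthogonal complement of the image inside $(Ker)^{\perp}$ is trivial, and handle (ii) by Proposition~\ref{ellipreg} together with Corollary~\ref{4.9-5}. One point to sharpen: in passing from $(g,\square_{\Sigma,m}^{(q)}\varphi)_{L^{2}}=0$ for all $\varphi\in\Omega_m^{0,q}(\Sigma)$ to $g\in Ker\,\square_{\Sigma,m}^{(q)}$, the paper does \emph{not} invoke the distributional description of $Dom(\square_{\Sigma,m}^{(q)})$ (which, as noted below \eqref{4.0}, it deliberately avoids as the primary definition) but instead localizes via Remark~\ref{4-7-1} and \eqref{lBoxm1} and applies standard local elliptic regularity for $\square_{U_j,m}^{(q)}$ to conclude $g$ is smooth, whence $\square_{\Sigma,m}^{(q)}g=0$ strongly---your justification at the end conflates the two definitions of $Dom$, though the fix is exactly this local regularity step.
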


\begin{proof}
$Ker\square _{\Sigma ,m}^{(q)}$ is a closed subspace of $L_{0,q,m}^{2}(%
\Sigma ,G_{a,m})$ by Lemma \ref{4-18-1} $ii)$. On the other hand $\func{Im}%
\square _{\Sigma ,m}^{(q)}$ (= $\square _{\Sigma ,m}^{(q)}(Dom(\square
_{\Sigma ,m}^{(q)})))$ is perpendicular to $Ker\square _{\Sigma ,m}^{(q)}$
using the definition of $Dom(\square _{\Sigma ,m}^{(q)})$ and Lemma \ref%
{4-18-1} $ii)$. Moreover we claim that%
\begin{equation}
\func{Im}\square _{\Sigma ,m}^{(q)}\text{ is a closed subspace of }%
L_{0,q,m}^{2}(\Sigma ,G_{a,m})(i.e.\overline{\func{Im}\square _{\Sigma
,m}^{(q)}}=\func{Im}\square _{\Sigma ,m}^{(q)}).  \label{CIm}
\end{equation}%
\noindent Assume $\square _{\Sigma ,m}^{(q)}u_{j}$ $=$ $f_{j}$ $\rightarrow $
$f$ in $L^{2}$ with $u_{j}$ $\in $ ($Ker\square _{\Sigma ,m}^{(q)})^{\perp }$%
. We then have a Cauchy sequence $\{u_{j}\}$ in $||\cdot ||_{2}^{\prime }$
by Theorem \ref{t-4-1} $ii)$, Remark \ref{R-4-18} and Lemma \ref{l-4-1}. So $%
u_{j}$ $\rightarrow $ $u_{\infty }$ in the $||\cdot ||_{2}^{\prime }$-norm,
and in turn $\square _{\Sigma ,m}^{(q)}u_{j}$ $\rightarrow $ $\square
_{\Sigma ,m}^{(q)}u_{\infty }$ in the $||\cdot ||_{0}^{\prime }$-norm. Since 
$H^{\prime 0}$ and $L^{2}$ are essentially the same by Remark \ref{4.1-5},
we obtain $\square _{\Sigma ,m}^{(q)}u_{\infty }$ $=$ $f$, proving $f$ $\in $
$\func{Im}\square _{\Sigma ,m}^{(q)}$ as claimed in (\ref{CIm}).


We are going to show the following (orthogonal) decomposition:%
\begin{equation}
L_{0,q,m}^{2}(\Sigma ,G_{a,m})=Ker\square _{\Sigma ,m}^{(q)}\oplus \func{Im}%
\square _{\Sigma ,m}^{(q)}.  \label{Decom}
\end{equation}

\noindent Suppose not. Then there exists $f$ $\in $ $L_{0,q,m}^{2}(\Sigma
,G_{a,m})$ such that $f$ is perpendicular to $Ker\square _{\Sigma ,m}^{(q)}$
and $\func{Im}\square _{\Sigma ,m}^{(q)}.$ From $f$ $\in $ ($\func{Im}%
\square _{\Sigma ,m}^{(q)})^{\perp }$ one sees that $\square _{\Sigma
,m}^{(q)}f$ $=$ $0$ in the distribution sense since $\Omega
_{m}^{0,q}(\Sigma )$ $\subset $ $Dom(\square _{\Sigma ,m}^{(q)})$. Passing
to localization $\square _{D_{j},m}^{(q)}f|_{D_{j}}$ $=$ $0$ in the
distribution sense. By the standard regularity result using (\ref{lBoxm1}) $%
f|_{D_{j}}$ is smooth (cf. \cite[the lemma in p.379]{GH}). So $\square
_{\Sigma ,m}^{(q)}f$ $=$ $0$ strongly, giving $f$ $\in $ $Ker\square
_{\Sigma ,m}^{(q)}.$ From that $f$ is perpendicular to $Ker\square _{\Sigma
,m}^{(q)}$ by assumption, it follows $f=0.$ We have shown (\ref{Decom}). The
assertion $i)$ follows easily from (\ref{Decom}) and Lemma \ref{l-4-1}. The
assertion $ii)$ follows from Proposition \ref{ellipreg} and Corollary \ref%
{4.9-5}.
\end{proof}

\begin{remark}
\label{4-19-1} In \cite[pp.94-95]{GH} the solution $u$ in $i)$ of the above
lemma is essentially obtained by the Lax-Milgram theorem (see \cite[p.205
Lemma 23.1]{Tr}) with an intermediate operator $T.$
\end{remark}

By Lemma \ref{SolBox} we can now define a linear operator $G_{m}^{(q)}:$ $%
L_{0,q,m}^{2}(\Sigma ,G_{a,m})$ $\rightarrow $ $Dom(\square _{\Sigma
,m}^{(q)})$ $=$ $H_{0,q,m}^{\prime 2}(\Sigma ,G_{a,m}))$ such that 
\begin{eqnarray}
&&G_{m}^{(q)}(f)=u\text{ (= (}\square _{\Sigma ,m}^{(q)})^{-1}f\text{ \ \
for }f\in \func{Im}\square _{\Sigma ,m}^{(q)}\text{ (}\subset \text{ }%
L_{0,q,m}^{2}(\Sigma ,G_{a,m}))\text{ and}  \label{Gr} \\
&&\ \ \ \ \ \ \ \ \ \ =0\text{ \ \ for }f\in Ker\square _{\Sigma ,m}^{(q)} 
\notag
\end{eqnarray}%
\noindent We have the following results about $G_{m}^{(q)}$ and $%
Spec\,\square _{\Sigma ,m}^{(q)}$ $\subset $ [0,$\infty ),$ the spectrum of $%
\square _{\Sigma ,m}^{(q)}.$

\begin{proposition}
\label{p-4-1} $i)$ $G_{m}^{(q)}$ is a compact, self-adjoint, bounded linear
operator on $L_{0,q,m}^{2}(\Sigma ,G_{a,m})$. $ii)$ $Spec\,\square _{\Sigma
,m}^{(q)}$ $\subset \lbrack 0,\infty )$ consists only of discrete
eigenvalues. $iii)$ We have the orthogonal decomposition $%
L_{0,q,m}^{2}(\Sigma ,G_{a,m})=\hat{\oplus}_{\mu }\mathcal{E}_{m,\mu
}^{q}(\Sigma ),$ with $\mathcal{E}_{m,\mu }^{q}(\Sigma )$ given in Lemma \ref%
{4-18-1}.
\end{proposition}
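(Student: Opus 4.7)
The plan is to deduce all three assertions from the spectral theorem for compact self-adjoint operators once boundedness, compactness, and self-adjointness of $G_m^{(q)}$ are established.

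First I would prove boundedness and compactness of $G_m^{(q)}$ simultaneously. By \eqref{Gr} and Lemma \ref{l-4-1}, $G_m^{(q)}$ maps $L_{0,q,m}^{2}(\Sigma,G_{a,m})$ into $Dom(\square_{\Sigma,m}^{(q)}) = H_{0,q,m}^{\prime 2}(\Sigma,G_{a,m})$; moreover $G_m^{(q)}f$ is the unique solution in $(Ker\square_{\Sigma,m}^{(q)})^\perp \cap H^{\prime 2}$ of $\square_{\Sigma,m}^{(q)} u = f$ for $f\in \func{Im}\square_{\Sigma,m}^{(q)}$. Applying the transversally elliptic estimate (Theorem \ref{t-4-1} $ii)$ together with Remark \ref{R-4-18}) with $s=0$ to $u = G_m^{(q)}f$ yields $\|G_m^{(q)}f\|_2^\prime \leq C_0^\prime \|f\|_0^\prime$ for $f \in (Ker\square_{\Sigma,m}^{(q)})^\perp$, and this trivially persists on $Ker\square_{\Sigma,m}^{(q)}$. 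Thus $G_m^{(q)}\colon L^2 \to H^{\prime 2}$ is bounded; composing with the compact inclusion $H_{0,q,m}^{\prime 2}(\Sigma,G_{a,m}) \hookrightarrow L_{0,q,m}^{2}(\Sigma,G_{a,m})$ (Proposition \ref{Rellich} together with Remark \ref{4.1-5}) gives compactness.

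Next I would establish self-adjointness of $G_m^{(q)}$ by first extending symmetry of $\square_{\Sigma,m}^{(q)}$ from smooth elements (Lemma \ref{l-4-2}) to all of $H^{\prime 2} = Dom(\square_{\Sigma,m}^{(q)})$. For $u,v\in H^{\prime 2}$, I would approximate each locally: using $\varphi_j$ and $\Psi_{q,m}^{-1}$ to transport $u|_{D_j}$, $v|_{D_j}$ to $H^{s}$-sections on $U_j$ (via \eqref{eqnorm} and Lemma \ref{4.7-5}), mollifying in the $z$-variable while treating $\theta_j$ as a parameter, and pulling back via $\Psi_{q,m}$ to get smooth approximations $u_\ell,v_\ell \in \Omega_m^{0,q}(\Sigma)$ converging to $u,v$ in $\|\cdot\|_2^\prime$. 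Since $\square_{D_j,m}^{(q)} = \Psi_{q,m}\circ\square_{U_j,m}^{(q)}\circ\Psi_{q,m}^{-1}$ by \eqref{lBoxm1} and $\square_{D_j,m}^{(q)}$ is independent of the $w$-factor by Proposition \ref{madj}, continuity of $\square_{\Sigma,m}^{(q)}\colon H^{\prime 2}\to L^2$ and Lemma \ref{l-4-2} give $(\square_{\Sigma,m}^{(q)} u,v)_{L^2} = \lim(\square_{\Sigma,m}^{(q)} u_\ell, v_\ell)_{L^2} = \lim(u_\ell, \square_{\Sigma,m}^{(q)} v_\ell)_{L^2} = (u, \square_{\Sigma,m}^{(q)} v)_{L^2}$. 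Self-adjointness of $G_m^{(q)}$ then follows routinely: for $f,g\in L^2$, decompose via \eqref{Decom} as $f=f_0+f_1$, $g=g_0+g_1$ with $f_0,g_0\in Ker\square_{\Sigma,m}^{(q)}$ and $f_1=\square_{\Sigma,m}^{(q)} v$, $g_1=\square_{\Sigma,m}^{(q)} w$ for $v=G_m^{(q)}f,\,w=G_m^{(q)}g\in H^{\prime 2}$; then $(G_m^{(q)}f,g)_{L^2} = (v,\square_{\Sigma,m}^{(q)} w)_{L^2} = (\square_{\Sigma,m}^{(q)} v, w)_{L^2} = (f, G_m^{(q)}g)_{L^2}$.

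Finally, the spectral theorem for compact self-adjoint operators gives an orthogonal decomposition $L_{0,q,m}^{2}(\Sigma,G_{a,m}) = \hat{\oplus}_\nu E_\nu(G_m^{(q)})$ into eigenspaces of $G_m^{(q)}$, each finite-dimensional for $\nu\neq 0$, with $\nu_k\to 0$. For $\nu\neq 0$, $G_m^{(q)}u=\nu u$ implies $\square_{\Sigma,m}^{(q)} u = \nu^{-1}u$, so $E_\nu(G_m^{(q)}) = \mathcal{E}_{m,\nu^{-1}}^q(\Sigma)$; for $\nu=0$, $E_0(G_m^{(q)}) = Ker\,G_m^{(q)} = Ker\,\square_{\Sigma,m}^{(q)} = \mathcal{E}_{m,0}^q(\Sigma)$ by \eqref{Gr} and Lemma \ref{SolBox} $i)$. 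Reindexing by $\mu = \nu^{-1}$ (together with $\mu=0$) yields $iii)$; the discreteness with $\mu\to\infty$ yields $ii)$; non-negativity $Spec\,\square_{\Sigma,m}^{(q)} \subset [0,\infty)$ follows from \eqref{pos} applied to smooth eigenfunctions, which span each $\mathcal{E}_{m,\mu}^q(\Sigma)$ by Lemma \ref{4-18-1}.

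The main obstacle I expect is the extension of the symmetry of $\square_{\Sigma,m}^{(q)}$ from $\Omega_m^{0,q}(\Sigma)$ to $H^{\prime 2}$: the classical density argument via compactly supported smooth sections is unavailable here because elements of $\Omega_m^{0,q}(\Sigma)$ inevitably carry the $w^m$ factor and are of noncompact support along the $\mathbb{R}^+$-orbits. The remedy is the local-to-global mechanism described above, in which the $w^m$ factor passes through all constructions inertly by virtue of Proposition \ref{madj} and \eqref{lBoxm1}, so that mollification need only be carried out in the $z$-variable on each $U_j$.
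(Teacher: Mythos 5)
Your treatment of boundedness, compactness, and the passage to the spectral theorem matches the paper closely; the divergence is in the self-adjointness step, and there you have a real gap.

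You observe, correctly, that the classical density argument with compactly supported test sections is unavailable here. But your proposed remedy — mollify each $v_j$ in the $z$-variable, treating $\theta_j$ as a parameter, then patch with $\varphi_j$ and assert that the result lies in $\Omega_m^{0,q}(\Sigma)$ and converges in $\|\cdot\|_2'$ — does not obviously stay in the $m$-space. An element of $\Omega_m^{0,q}(\Sigma)$ must have the form $w^m v_j(z,\bar z)$ in \emph{every} chart $D_j$ simultaneously, and the $v_j$ must satisfy the holomorphic transition law (\ref{C11}). Chart-by-chart mollification destroys that compatibility: $\sum_j \varphi_j(z,\theta_j)\, w^m \tilde v_j^{\ell}(z,\bar z)$ is a form in $\Omega^{0,q}(\Sigma)$, but it is not of the local shape $w^m(\text{form in }z,\bar z)$ in any single chart once the $\theta_j$-dependence of $\varphi_j$ enters, so it is not an element of $\Omega_m^{0,q}(\Sigma)$, and Lemma \ref{l-4-2} (which is stated for $u,v\in\Omega_m^{0,q}(\Sigma)$) does not apply to it. Repairing this by post-composing with $\pi_m$ would re-open regularity questions that defeat the point of mollification.

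The paper sidesteps the extension of symmetry to $H'^2$ entirely. Instead it restricts to $f,g \in \Omega_m^{0,q}(\Sigma)$ (dense in $L^2$), writes $f = Hf + \square_{\Sigma,m}^{(q)} u_f$ with $u_f = G_m^{(q)}(f-Hf)\perp Ker\,\square_{\Sigma,m}^{(q)}$, and invokes Lemma \ref{SolBox} $ii)$ together with Lemma \ref{4-18-1} $ii)$ — both elliptic-regularity statements already proved — to conclude that $u_f$ and $u_g$ are themselves smooth, i.e.\ in $\Omega_m^{0,q}(\Sigma)$. Formal self-adjointness (Lemma \ref{l-4-2}) then applies verbatim to $u_f, u_g$, giving $(G_m^{(q)}f,g) = (u_f, \square u_g) = (\square u_f, u_g) = (f, G_m^{(q)}g)$ for smooth $f,g$; the case of general $f,g\in L^2$ follows from boundedness of $G_m^{(q)}$ and density. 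This route requires no new approximation machinery in the transversal setting — it leans on the Hodge-theory lemmas you already have. You should replace your mollification step with this argument; your decomposition of $f,g$ via (\ref{Decom}) and the subsequent three-term computation are already essentially what is needed, you just need to take $f,g$ smooth and use Lemma \ref{SolBox} $ii)$ to ensure $v,w$ are smooth before appealing to Lemma \ref{l-4-2}.
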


\begin{proof}
The boundedness of $i)$ follows from Theorem \ref{t-4-1} $ii)$ and the
density of $\Omega _{m}^{0,q}(\Sigma )$ in $L_{0,q,m}^{2}(\Sigma ,G_{a,m})$
(Notation \ref{N-mL2})$.$ We are going to show that $G_{m}^{(q)}$ is
self-adjoint on the space of smooth elements. For $f,g$ $\in $ $\Omega
_{m}^{0,q}(\Sigma )$ write $f$ $=$ $Hf$ $+$ $\square _{\Sigma ,m}^{(q)}u_{f}$
($H$ being the $L^{2}$-projection onto $Ker\square _{\Sigma ,m}^{(q)}$)
where $u_{f}$ $=$ $G_{m}^{(q)}(f-Hf).$ Similarly $g$ $=$ $Hg$ $+$ $\square
_{\Sigma ,m}^{(q)}u_{g},$ $u_{g}$ $=$ $G_{m}^{(q)}(g-Hg).$ Using $u_{f},$ $%
u_{g}$ $\in $ ($Ker\square _{\Sigma ,m}^{(q)})^{\perp }$ we have%
\begin{eqnarray}
(G_{m}^{(q)}(f),g)_{L^{2}} &=&(u_{f},Hg+\square _{\Sigma
,m}^{(q)}u_{g})_{L^{2}}=(u_{f},\square _{\Sigma ,m}^{(q)}u_{g})_{L^{2}},
\label{GSA} \\
(f,G_{m}^{(q)}(g))_{L^{2}} &=&(Hf+\square _{\Sigma
,m}^{(q)}u_{f},u_{g})_{L^{2}}=(\square _{\Sigma
,m}^{(q)}u_{f},u_{g})_{L^{2}}.  \notag
\end{eqnarray}

\noindent By Lemma \ref{4-18-1} $ii)$ and Lemma \ref{SolBox} $ii)$ we learn
that $u_{f},$ $u_{g}$ $\in $ $\Omega _{m}^{0,q}(\Sigma ).$ It follows from (%
\ref{4-1-0}) ($\square _{\Sigma ,m}^{(q)}$ being formally self-adjoint) that
the right-hand sides in (\ref{GSA}) coincide, giving $%
(G_{m}^{(q)}(f),g)_{L^{2}}$ $=$ $(f,G_{m}^{(q)}(g))_{L^{2}}.$ As the space
of smooth elements is dense in $L^{2}$ and $G_{m}^{(q)}$ is bounded linear, $%
G_{m}^{(q)}$ is self-adjoint on $L_{0,q,m}^{2}(\Sigma ,G_{a,m}).$ Combining
Theorem \ref{t-4-1} $ii)$ and Proposition \ref{Rellich} yields the
compactness of $G_{m}^{(q)}.$ To prove $ii)$ we apply a general theorem \cite%
[p.10]{Lang} on a compact, self-adjoint, bounded linear operator on a
Hilbert space to conclude that $SpecG_{m}^{(q)}$ hence $Spec\,\square
_{\Sigma ,m}^{(q)}$ consists only of discrete eigenvalues. The assertion $%
iii)$ is now obvious.
\end{proof}

Define the $m$-th Fourier-Dolbeault cohomology group or $m$-th $\mathbb{C}%
^{\ast }$ $\bar{\partial}_{\Sigma ,m}$-cohomology group as follows:%
\begin{equation}
H_{m}^{q}(\Sigma ,\mathcal{O}):=\frac{\text{Ker }\bar{\partial}_{\Sigma
,m}:\Omega _{m}^{0,q}(\Sigma )\rightarrow \Omega _{m}^{0,q+1}(\Sigma )}{%
\func{Im}\bar{\partial}_{\Sigma ,m}:\Omega _{m}^{0,q-1}(\Sigma )\rightarrow
\Omega _{m}^{0,q}(\Sigma )}.  \label{H3}
\end{equation}

Denote $\bar{\partial}_{\Sigma ,m}|_{\Omega _{m}^{0,q}(\Sigma )}$ by $\bar{%
\partial}_{\Sigma ,m}^{(q)}.$ We call the complex ($\Omega _{m}^{0,\cdot },$ 
$\bar{\partial}_{\Sigma ,m}^{(\cdot )})$ the $\bar{\partial}_{\Sigma ,m}$%
-complex and define its index by%
\begin{equation*}
index(\bar{\partial}_{\Sigma ,m}\text{-complex})\text{:=}%
\sum_{q=0}^{n}(-1)^{q}\dim H_{m}^{q}(\Sigma ,\mathcal{O})
\end{equation*}

\noindent provided that each $H_{m}^{q}(\Sigma ,\mathcal{O})$ is
finite-dimensional.

We have the following Hodge theorem for $\square _{\Sigma ,m}^{(q)}$ on the
noncompact $\Sigma .$

\begin{theorem}
\label{t-4-2} For each $q\in \{0,1,2,...,n\},$ $m\geq 0$ and $a$ $>$ $\frac{m%
}{2},$ we have%
\begin{eqnarray*}
\square _{\Sigma ,m}^{(q)}G_{m}^{(q)}+P_{m,0}^{(q)} &=&I\text{ on }%
L_{0,q,m}^{2}(\Sigma ,G_{a,m}), \\
G_{m}^{(q)}\square _{\Sigma ,m}^{(q)}+P_{m,0}^{(q)} &=&I\text{ on }%
Dom(\square _{\Sigma ,m}^{(q)})\text{ }(=H_{0,q,m}^{\prime 2}(\Sigma
,G_{a,m})\text{ by Lemma \ref{l-4-1}})
\end{eqnarray*}

\noindent where $G_{m}^{(q)}$ as defined in (\ref{Gr}) is called the Green's
operator, and $P_{m,0}^{(q)}$ $:$ $L_{0,q,m}^{2}(\Sigma ,G_{a,m})$ $%
\rightarrow $ $Ker\square _{\Sigma ,m}^{(q)}$ is the orthogonal projection
(denoted by $H$ previously). Moreover, we have $Ker\square _{\Sigma
,m}^{(q)}=\mathcal{E}_{m,0}^{q}(\Sigma )\cong H_{m}^{q}(\Sigma ,\mathcal{O}%
). $ As a consequence $\dim H_{m}^{q}(\Sigma ,\mathcal{O})<\infty $ by
Proposition \ref{p-4-1}.
\end{theorem}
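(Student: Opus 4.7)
\medskip

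\textbf{Proof proposal.} The first two operator identities follow almost mechanically from the definition (\ref{Gr}) of $G_{m}^{(q)}$ together with the orthogonal decomposition (\ref{Decom}) established in Lemma \ref{SolBox}. Indeed, for any $f\in L_{0,q,m}^{2}(\Sigma ,G_{a,m})$ one writes $f=P_{m,0}^{(q)}f+(I-P_{m,0}^{(q)})f$, notices that $(I-P_{m,0}^{(q)})f$ lies in $(Ker\square _{\Sigma ,m}^{(q)})^{\perp }=\func{Im}\square _{\Sigma ,m}^{(q)}$ (by (\ref{CIm})), and uses $\square _{\Sigma ,m}^{(q)}G_{m}^{(q)}=I$ on that image by construction. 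The second identity is obtained in parallel, using $G_{m}^{(q)}\equiv 0$ on $Ker\square _{\Sigma ,m}^{(q)}$ and the fact (from the proof of Proposition \ref{p-4-1}) that $\square _{\Sigma ,m}^{(q)}$ maps $Dom(\square _{\Sigma ,m}^{(q)})\cap (Ker\square _{\Sigma ,m}^{(q)})^{\perp }$ bijectively onto $\func{Im}\square _{\Sigma ,m}^{(q)}$ with inverse $G_{m}^{(q)}$. The equality $Ker\square _{\Sigma ,m}^{(q)}=\mathcal{E}_{m,0}^{q}(\Sigma )$ is then just Lemma \ref{4-18-1} $ii)$.

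The genuine content is the Hodge isomorphism $Ker\square _{\Sigma ,m}^{(q)}\cong H_{m}^{q}(\Sigma ,\mathcal{O})$. The plan is the classical one, carefully recast in our transversal setting. First, from the positivity formula (\ref{pos}), any $u\in Ker\square _{\Sigma ,m}^{(q)}$ satisfies both $\bar{\partial}_{\Sigma ,m}u=0$ and $\vartheta _{\Sigma ,m}u=0$; since such $u$ is smooth by Lemma \ref{4-18-1}, the map $\Phi :Ker\square _{\Sigma ,m}^{(q)}\to H_{m}^{q}(\Sigma ,\mathcal{O})$, $u\mapsto \lbrack u]$, is well defined. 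Injectivity is immediate: if $u=\bar{\partial}_{\Sigma ,m}v$ with $v\in \Omega _{m}^{0,q-1}(\Sigma )$, then $(u,u)_{L^{2}}=(\bar{\partial}_{\Sigma ,m}v,u)_{L^{2}}=(v,\vartheta _{\Sigma ,m}u)_{L^{2}}=0$ by Proposition \ref{3-11-1}, so $u=0$; the pairing is finite by Remark \ref{3-r} since $a>m/2$.

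For surjectivity, given a smooth $\bar{\partial}_{\Sigma ,m}$-closed representative $\alpha \in \Omega _{m}^{0,q}(\Sigma )$ I apply the first identity of the theorem (already proved) to obtain
\begin{equation*}
\alpha =P_{m,0}^{(q)}\alpha +\bar{\partial}_{\Sigma ,m}\vartheta _{\Sigma ,m}G_{m}^{(q)}\alpha +\vartheta _{\Sigma ,m}\bar{\partial}_{\Sigma ,m}G_{m}^{(q)}\alpha .
\end{equation*}
The task is to kill the last term, i.e., to show $\vartheta _{\Sigma ,m}\bar{\partial}_{\Sigma ,m}G_{m}^{(q)}\alpha =0$, and to verify that $\vartheta _{\Sigma ,m}G_{m}^{(q)}\alpha \in \Omega _{m}^{0,q-1}(\Sigma )$ so that the cohomology class of $\alpha $ equals $[P_{m,0}^{(q)}\alpha ]$. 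Pairing $\vartheta _{\Sigma ,m}\bar{\partial}_{\Sigma ,m}G_{m}^{(q)}\alpha $ with itself and using Proposition \ref{3-11-1} twice gives
\begin{equation*}
\Vert \vartheta _{\Sigma ,m}\bar{\partial}_{\Sigma ,m}G_{m}^{(q)}\alpha \Vert _{L^{2}}^{2}=(\bar{\partial}_{\Sigma ,m}G_{m}^{(q)}\alpha ,\bar{\partial}_{\Sigma ,m}(\alpha -P_{m,0}^{(q)}\alpha ))_{L^{2}}=0,
\end{equation*}
since $\bar{\partial}_{\Sigma ,m}\alpha =0$ by hypothesis and $\bar{\partial}_{\Sigma ,m}P_{m,0}^{(q)}\alpha =0$ because harmonic forms are $\bar{\partial}_{\Sigma ,m}$-closed. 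The smoothness of $G_{m}^{(q)}\alpha $ (and hence of $\vartheta _{\Sigma ,m}G_{m}^{(q)}\alpha $) follows from Proposition \ref{ellipreg} together with Corollary \ref{4.9-5} applied to the equation $\square _{\Sigma ,m}^{(q)}G_{m}^{(q)}\alpha =\alpha -P_{m,0}^{(q)}\alpha \in \Omega _{m}^{0,q}(\Sigma )$; iterating the elliptic regularity places $G_{m}^{(q)}\alpha $ in every $H_{0,q,m}^{\prime s}$, hence in $\Omega _{m}^{0,q}(\Sigma )$.

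I expect the main obstacle to be a subtle domain issue: the formal adjoint $\vartheta _{\Sigma ,m}$ is defined on $\Omega _{m}^{0,q+1}(\Sigma )$ using test forms of \emph{noncompact} support (see the discussion preceding Definition \ref{d-3-7}), so the routine integration by parts underlying the argument above must be justified using Proposition \ref{3-11-1} rather than the classical compact-support version; the integrability of the relevant $L^{2}$-pairings throughout (not automatic on the noncompact $\Sigma $) rests on Remark \ref{3-r} and the hypothesis $a>m/2$. Once these book-keeping points are in place, the argument proceeds exactly as in the classical compact case.
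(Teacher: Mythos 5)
Your proposal is correct and is the standard Hodge-theoretic argument that the paper implicitly intends but does not write out: the paper states Theorem \ref{t-4-2} without a formal proof, evidently regarding it as an immediate consequence of the Green's operator (\ref{Gr}), the orthogonal decomposition (\ref{Decom}) in Lemma \ref{SolBox}, elliptic regularity (Proposition \ref{ellipreg} and Corollary \ref{4.9-5}), the positivity formula (\ref{pos}), and Lemma \ref{4-18-1}.

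One very minor imprecision: the identity $\Vert \vartheta _{\Sigma ,m}\bar{\partial}_{\Sigma ,m}G_{m}^{(q)}\alpha \Vert _{L^{2}}^{2}=(\bar{\partial}_{\Sigma ,m}G_{m}^{(q)}\alpha ,\bar{\partial}_{\Sigma ,m}(\alpha -P_{m,0}^{(q)}\alpha ))_{L^{2}}$ uses Proposition \ref{3-11-1} only once (to move $\vartheta_{\Sigma,m}$ to the right side as $\bar\partial_{\Sigma,m}$), combined with $\square_{\Sigma,m}^{(q)} G_m^{(q)}\alpha =\alpha-P_{m,0}^{(q)}\alpha$ and $\bar\partial_{\Sigma,m}^{2}=0$, not ``twice.'' This does not affect the validity of the argument. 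You have also correctly identified the two genuinely nontrivial points on the noncompact $\Sigma$: that the formal adjoint $\vartheta_{\Sigma,m}$ requires the noncompact-support integration-by-parts of Proposition \ref{3-11-1}, and that the $L^{2}$-pairings and the smoothness of $G_{m}^{(q)}\alpha$ are only secured by the hypothesis $a>\frac{m}{2}$ together with the transversal elliptic regularity machinery of Section \ref{Sec4}.
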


Note that for the case of $m$ $<$ $0$ we refer to Remark \ref{r1-1}.

Denote $\Omega _{m}^{0,+}(\Sigma )$ $=$ $\oplus _{even\text{ }q}\Omega
_{m}^{0,q}(\Sigma )$ and $\Omega _{m}^{0,-}(\Sigma )$ $=$ $\oplus _{odd\text{
}q}\Omega _{m}^{0,q}(\Sigma );$ similar notations are adopted for $%
L_{m}^{2,+}(\Sigma ,G_{a,m})$ and $L_{m}^{2,-}(\Sigma ,G_{a,m})$ out of $%
L_{0,q,m}^{2}(\Sigma ,G_{a,m}).$

Let%
\begin{equation}
D_{m}^{+}:=\bar{\partial}_{\Sigma ,m}+\vartheta _{\Sigma ,m}:\Omega
_{m}^{0,+}(\Sigma )\rightarrow \Omega _{m}^{0,-}(\Sigma )  \label{Dm}
\end{equation}%
\noindent with extension $D_{m}^{+}$ $:Dom(D_{m}^{+})(\subset
L_{m}^{2,+}(\Sigma ,G_{a,m}))\rightarrow L_{m}^{2,-}(\Sigma ,G_{a,m})$ (by
acting in the sense of distribution). Define the formal adjoint $\mathfrak{D}%
_{m}^{+}$ of $D_{m}^{+}$ in the way similar to $\vartheta _{\Sigma ,m}.$ By
similar arguments as in the classical theory we have

\begin{lemma}
\label{l-4-3} With the notation above we have%
\begin{equation*}
KerD_{m}^{+}=\oplus _{even\text{ }q}Ker\square _{\Sigma ,m}^{(q)}\subset
\Omega _{m}^{0,+}(\Sigma );\text{ }Ker\mathfrak{D}_{m}^{+}=\oplus _{odd\text{
}q}Ker\square _{\Sigma ,m}^{(q)}\subset \Omega _{m}^{0,-}(\Sigma ).
\end{equation*}
\end{lemma}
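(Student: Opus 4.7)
The plan is to follow the classical Hodge-theoretic identification $\ker D = \ker \bar\partial \cap \ker \vartheta$, but in our transversal setting one must be careful because $D_m^+$ is extended to the (distributional) domain in $L_m^{2,+}(\Sigma,G_{a,m})$, so smoothness of kernel elements needs to be recovered before decomposing by bidegree.

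For the easy inclusion $\oplus_{q\text{ even}}\ker \square_{\Sigma,m}^{(q)}\subset \ker D_m^+$, I would invoke the pointwise identity (\ref{pos}) from Lemma \ref{l-4-2}: if $u_q\in \ker\square_{\Sigma,m}^{(q)}\subset\Omega_m^{0,q}(\Sigma)$ then $\|\bar\partial_{\Sigma,m}u_q\|_{L^2}^2+\|\vartheta_{\Sigma,m}u_q\|_{L^2}^2=(\square_{\Sigma,m}^{(q)}u_q,u_q)_{L^2}=0$, so both $\bar\partial_{\Sigma,m}u_q=0$ and $\vartheta_{\Sigma,m}u_q=0$. Summing over even $q$ gives $D_m^+u=0$ with $u\in\Omega_m^{0,+}(\Sigma)$.

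For the reverse inclusion, let $u\in\ker D_m^+\subset\mathrm{Dom}(D_m^+)$. First I would establish smoothness: since $\mathfrak{D}_m^+ D_m^+$ restricted to each pure degree component agrees with $\square_{\Sigma,m}^{(q)}$ (this follows from $\bar\partial^2=0$ and its formal adjoint $\vartheta^2=0$ together with bidegree considerations, since the cross terms vanish), the distributional identity $D_m^+u=0$ gives $\square_{\Sigma,m}^{(q)}u_q=0$ in the distribution sense for each even $q$. By the transversal elliptic regularity of Proposition \ref{ellipreg} together with Corollary \ref{4.9-5}, each $u_q$ belongs to $\Omega_m^{0,q}(\Sigma)$, so $u\in\Omega_m^{0,+}(\Sigma)$. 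Now the bidegree argument becomes rigorous: writing $u=\sum_{q\text{ even}}u_q$, the relation $D_m^+u=0$ splits into $\bar\partial_{\Sigma,m}u_{q-1}+\vartheta_{\Sigma,m}u_{q+1}=0$ in each odd bidegree $q$. Pairing and using $(\bar\partial_{\Sigma,m}u_{q-1},\vartheta_{\Sigma,m}u_{q+1})_{L^2}=(\bar\partial_{\Sigma,m}^2u_{q-1},u_{q+1})_{L^2}=0$ (via Proposition \ref{3-11-1} and $\bar\partial_{\Sigma,m}^2=0$) yields
\begin{equation*}
0=\|D_m^+u\|_{L^2}^2=\sum_{q\text{ odd}}\bigl(\|\bar\partial_{\Sigma,m}u_{q-1}\|_{L^2}^2+\|\vartheta_{\Sigma,m}u_{q+1}\|_{L^2}^2\bigr),
\end{equation*}
so every $\bar\partial_{\Sigma,m}u_q$ and $\vartheta_{\Sigma,m}u_q$ vanishes, hence $u_q\in\ker\square_{\Sigma,m}^{(q)}$.

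The statement for $\ker\mathfrak{D}_m^+$ is obtained by the same argument with the roles of even and odd reversed. I expect the main obstacle to be the first step of the reverse inclusion, namely the passage from the distributional identity $D_m^+u=0$ to smoothness of $u$, because this requires invoking our transversal elliptic regularity (Proposition \ref{ellipreg}) applied componentwise in bidegree; here one must also verify that the componentwise operator agrees with $\square_{\Sigma,m}^{(q)}$ in the distribution sense so that Corollary \ref{4.9-5} applies to each $u_q$.
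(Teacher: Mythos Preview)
Your proof is correct and is precisely the classical argument the paper has in mind: the paper does not spell out a proof at all, stating only ``By similar arguments as in the classical theory we have'' immediately before the lemma. You have carefully unpacked that classical argument in the transversal setting, including the regularity step (passing from $D_m^+u=0$ distributionally to $\square_{\Sigma,m}^{(q)}u_q=0$ distributionally via $\bar\partial_{\Sigma,m}^2=\vartheta_{\Sigma,m}^2=0$, then invoking Proposition~\ref{ellipreg} and Corollary~\ref{4.9-5}), which is exactly what the paper's transversal Hodge theory in Section~\ref{Sec4} was set up to supply; the concern you flag at the end about matching the distributional sense of $\square_{\Sigma,m}^{(q)}u_q=0$ with the domain hypothesis of Proposition~\ref{ellipreg} is handled by the argument in the proof of Lemma~\ref{SolBox} (lines below (\ref{Decom})) or equivalently by Remark~\ref{4.25-1}.
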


We have now that both $KerD_{m}^{+}$ and $Ker\mathfrak{D}_{m}^{+}$ are
finite-dimensional (Proposition \ref{p-4-1} and Lemma \ref{l-4-3}). The
index of $D_{m}^{+},$ denoted as $index(D_{m}^{+}),$ is defined by

\begin{equation*}
index(D_{m}^{+}):=\dim KerD_{m}^{+}-\dim Ker\mathfrak{D}_{m}^{+}.
\end{equation*}

\noindent As usual \textit{Coker}$D_{m}^{+}$ $=$ $Ker\mathfrak{D}_{m}^{+}.$
With Theorem \ref{t-4-2} and Lemma \ref{l-4-3} we have:

\begin{corollary}
\label{t-4-3} $index(\bar{\partial}_{\Sigma ,m}$-complex$)$ = $%
index(D_{m}^{+})$ $=$ $\sum_{q:even}\dim Ker\square _{\Sigma
,m}^{(q)}-\sum_{q:odd}\dim Ker\square _{\Sigma ,m}^{(q)}.$
\end{corollary}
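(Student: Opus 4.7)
The plan is to assemble the corollary directly from Theorem \ref{t-4-2} and Lemma \ref{l-4-3}; essentially no new analysis is required, since the Hodge-theoretic machinery has already been built.

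First I would handle the equality $index(\bar{\partial}_{\Sigma,m}\text{-complex}) = \sum_{q:even}\dim Ker\square_{\Sigma,m}^{(q)} - \sum_{q:odd}\dim Ker\square_{\Sigma,m}^{(q)}$. By Theorem \ref{t-4-2} we have the isomorphism $H_m^q(\Sigma, \mathcal{O}) \cong Ker\square_{\Sigma,m}^{(q)}$ for each $q$, induced by sending a harmonic representative to its cohomology class; finite-dimensionality (also from Theorem \ref{t-4-2}) ensures that the alternating sum defining the index makes sense. Substituting $\dim H_m^q(\Sigma, \mathcal{O}) = \dim Ker\square_{\Sigma,m}^{(q)}$ into the definition $index(\bar{\partial}_{\Sigma,m}\text{-complex}) = \sum_{q=0}^n (-1)^q \dim H_m^q(\Sigma, \mathcal{O})$ and splitting the sum by parity of $q$ yields the asserted expression.

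For the equality with $index(D_m^+)$, I would invoke Lemma \ref{l-4-3}, which identifies $Ker D_m^+ = \bigoplus_{q:even} Ker\square_{\Sigma,m}^{(q)}$ and $Ker\mathfrak{D}_m^+ = \bigoplus_{q:odd} Ker\square_{\Sigma,m}^{(q)}$. Taking dimensions (all finite, again by Theorem \ref{t-4-2} via Proposition \ref{p-4-1}) and subtracting gives $index(D_m^+) = \sum_{q:even} \dim Ker\square_{\Sigma,m}^{(q)} - \sum_{q:odd}\dim Ker\square_{\Sigma,m}^{(q)}$, which matches the previous expression.

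There is no real obstacle here since the heavy lifting — the modified Sobolev spaces, the transversal elliptic estimate, Rellich compactness, and the orthogonal decomposition $L_{0,q,m}^2(\Sigma, G_{a,m}) = Ker\square_{\Sigma,m}^{(q)} \oplus \operatorname{Im}\square_{\Sigma,m}^{(q)}$ — has already been done. The only minor point worth stating carefully is the isomorphism $H_m^q(\Sigma, \mathcal{O}) \cong Ker\square_{\Sigma,m}^{(q)}$ claimed in Theorem \ref{t-4-2}: one checks surjectivity by writing any $\bar{\partial}_{\Sigma,m}$-closed $u \in \Omega_m^{0,q}(\Sigma)$ as $u = P_{m,0}^{(q)}u + \bar{\partial}_{\Sigma,m}\vartheta_{\Sigma,m}G_m^{(q)}u + \vartheta_{\Sigma,m}\bar{\partial}_{\Sigma,m}G_m^{(q)}u$, then observing that the last term vanishes since applying $\bar{\partial}_{\Sigma,m}$ to it and pairing against itself (using $\bar{\partial}_{\Sigma,m}u = 0$ and the formal self-adjointness from Proposition \ref{3-11-1}) forces it to be zero; injectivity is immediate from the orthogonality of $Ker\square_{\Sigma,m}^{(q)}$ with $\operatorname{Im}\bar{\partial}_{\Sigma,m}$.
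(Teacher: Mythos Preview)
Your proposal is correct and matches the paper's approach exactly: the paper simply states ``With Theorem \ref{t-4-2} and Lemma \ref{l-4-3} we have'' and then records the corollary without further argument. Your additional remarks spelling out the Hodge isomorphism $H_m^q(\Sigma,\mathcal{O})\cong Ker\,\square_{\Sigma,m}^{(q)}$ are more detail than the paper provides, but entirely in its spirit.
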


\begin{remark}
\label{HSA} It is possible to study the \textit{Hilbert space} adjoint $%
\square _{\Sigma ,m}^{(q)\ast }$ \cite[pp.63-64]{ChenS} including its domain 
$Dom(\square _{\Sigma ,m}^{(q)\ast })$ $\subset $ $L_{0,q,m}^{2}(\Sigma
,G_{a,m})$ (resp. $\bar{\partial}_{\Sigma ,m}^{\ast }$ and $Dom(\bar{\partial%
}_{\Sigma ,m}^{\ast })).$ One may show that $\square _{\Sigma ,m}^{(q)}$ is
(Hilbert space) self-adjoint, densely defined on a Hilbert space. In an
abstract Hilbert space setting there are some basic material, for example, 
\cite[Theorem C.2.1]{MM}, \cite[Theorem 13.30, p.348]{Ru} and \cite[Lemma
8.4.1]{Da} on the spectral analysis of a general self-adjoint operator,
which might provide an alternative approach to Theorem \ref{t-4-2}. We leave
the detail to the interested reader.
\end{remark}

\begin{remark}
\label{4.25-1} In connection with Remark \ref{4-7-1} for localization,
suppose $\square _{\Sigma ,m}^{(q)}u$ $=$ $f$ in the distribution sense (see
the 5th line below (\ref{4.0})) where $u,$ $f$ $\in $ $L_{0,q,m}^{2}(\Sigma
,G_{a,m}).$ Then one sees via definition that $f$ $\perp $ $Ker\square
_{\Sigma ,m}^{(q)}$ so that $f$ $=$ $\square _{\Sigma ,m}^{(q)}v$ for some $%
v $ $\in $ $H_{0,q,m}^{\prime 2}(\Sigma ,G_{a,m})$ $\cap $ ($Ker\square
_{\Sigma ,m}^{(q)})^{\perp }$ using (\ref{Decom}). It follows that $\square
_{\Sigma ,m}^{(q)}(v-u)$ $=$ $0$ in the distribution sense, which implies $%
\square _{\Sigma ,m}^{(q)}(v-u)$ $=$ $0$ strongly (see lines below (\ref%
{Decom})), giving $u$ $\in $ $H_{0,q,m}^{\prime 2}(\Sigma ,G_{a,m})$ by
Lemma \ref{l-4-1}. So the localization $\chi u$ $\in $ $Dom(\square
_{D_{j},m}^{(q)})$ remains true. Remark that a localization result of
similar nature is claimed in \cite[p.380]{GH}; however, the detail is given
only for their first-order operator $P.$
\end{remark}

\section{\textbf{Transversally spin}$^{c}$ \textbf{Dirac operators\label%
{Sec5}}}

To compute $\sum_{q=0}^{n}(-1)^{q}h_{m}^{q}(\Sigma ,\mathcal{O})$ we are
reduced to computing $index(D_{m}^{+})$ by Corollary \ref{t-4-3}. To do it
effectively we want to modify $D_{m}^{+}$ so that the associated modified
Laplacian has a manageable heat kernel. This modification becomes
indispensable for us in dealing with the non-K\"{a}hler case. It will follow
that $index(D_{m}^{+})$ equals the index of a modified operator, to be
denoted by $\tilde{D}_{m}^{c+}$. In fact the new operator $\tilde{D}%
_{m}^{c+} $ will be taken to be an $m$-th $spin^{c}$ Dirac operator in the 
\textit{transversal} sense closely related to the one described in \cite{MM}
(yet in a different context).

The construction of $\tilde{D}_{m}^{c+}$ is first done locally; this local
part is standard as in the classical sense. However, due to our transversal
setting here some extra work will be needed to patch up those local
constructions and form a global operator $\tilde{D}_{m}^{c+}$ on $\Sigma .$
Then it turns out by computation that the chosen metrical structure in
Section \ref{S-metric} makes it possible to compare the operator $\tilde{D}%
_{m}^{c+}$ constructed here with a natural $spin^{c}$ Dirac operator $%
D_{M_{0},m}^{c+}$ at least on the principal stratum $M_{0}$ of the orbifold $%
M$ $=$ $\Sigma /\sigma ;$ see Proposition \ref{BoxDU}, also Remarks \ref%
{8.1-5}, \ref{PLMB}, \ref{9.3} for issues of descent to the entire $M$. This
part of computation is perhaps less geometrically illuminating than the
preceding local-to-global construction.

Let us now start by choosing a local orthonormal frame $\{e_{2j-1}$, $%
e_{2j}\}_{1\leq j\leq n}$ with respect to the metric $G_{a,m}$ (see (\ref%
{metric}) in Section 3) such that%
\begin{equation}
Z_{j}=\frac{1}{\sqrt{2}}(e_{2j-1}-ie_{2j});\text{ }Z_{\bar{j}}=\frac{1}{%
\sqrt{2}}(e_{2j-1}+ie_{2j})\text{ \ }1\leq j\leq n  \label{5-0}
\end{equation}%
\noindent form a local unitary frame of $T^{1,0}(\Sigma )$ and $%
T^{0,1}(\Sigma )$ respectively. As is well known, one has the
\textquotedblleft Clifford multiplication" (or action) $c(e_{k})$ on $%
\Lambda (T^{\ast 0,1}\Sigma )$ $:=$ $\oplus _{q=0}^{n}T^{\ast 0,q}\Sigma $
and \textquotedblleft Clifford connection" $\nabla _{e_{k}}^{Cl}$ acting on $%
\Omega ^{0,\ast }(\Sigma )$ (see \cite[Chapter 1]{MM}). These are given only
in the transversal parts (see (\ref{5-0})) and not in the standard Clifford
setting. The $spin^{c}$ Dirac operator $D^{c}$ on $\Sigma $ is defined by $%
D^{c}=1/\sqrt{2}\sum_{k=1}^{2n}c(e_{k})\nabla _{e_{k}}^{Cl}:\Omega ^{0,\ast
}(\Sigma )\rightarrow \Omega ^{0,\ast }(\Sigma )$ and is formally
self-adjoint. Denote by $D^{c\pm }$ the restriction $D^{c}|_{\Omega ^{0,\pm
}(\Sigma )}.$ We have 
\begin{equation}
D^{c\pm }=\bar{\partial}_{\Sigma }+\vartheta _{\Sigma }+A^{c\pm }:\Omega
^{0,\pm }(\Sigma )\rightarrow \Omega ^{0,\mp }(\Sigma )  \label{5.0}
\end{equation}%
\noindent where $A^{c\pm }:\Omega ^{0,\pm }(\Sigma )\rightarrow \Omega
^{0,\mp }(\Sigma )$ is a self-adjoint zeroth order operator and $A^{c\pm }$ $%
=$ $\frac{1}{4}^{c}(T_{as})$ (\cite[(1.4.17)]{MM})$.$

The elements of the form (\ref{C10}) are not going to be preserved under the
action of $^{c}(T_{as})$ (cf. \cite[(1.2.48) for $T_{as}$]{MM}). We would
like to replace $A^{c\pm }$ by another zeroth order operator which can
preserve $\Omega _{m}^{0,\ast }(\Sigma ).$ This is done as follows.

Let us first treat the globally free case $\Sigma $ $=$ $\hat{L}\backslash
\{ $0-section$\}$ $=:$ $\hat{L}^{\prime }$ (see Example \ref{2.0} $ii)$ for $%
\hat{L}),$ and consider the standard $spin^{c}$ Dirac operator on $M$ with ($%
L^{\ast })^{\otimes m}$-value: $D_{M,(L^{\ast })^{\otimes m}}^{c}=\bar{%
\partial}_{M,(L^{\ast })^{\otimes m}}+\vartheta _{M,(L^{\ast })^{\otimes
m}}+A_{M,m}^{c}$ where $A_{M,m}^{c}$ maps $\Omega ^{0,\pm }(M,$($L^{\ast
})^{\otimes m})$ into $\Omega ^{0,\mp }(M,$($L^{\ast })^{\otimes m})$ and is
self-adjoint (on $\Omega ^{0,\ast }$ $=$ $\Omega ^{0,+}\oplus \Omega ^{0,-})$%
. Here we adopt the metric $g_{M}$ for $M$ (cf. (\ref{Ga})); $\vartheta
_{M}, $ $\vartheta _{M,m}$ are as in Notation \ref{3-1.5} and Definition \ref%
{d-3-7}. Proposition \ref{p-gue2-2} yields a corresponding map on $\hat{L}%
^{\prime }$ 
\begin{equation}
\tilde{A}_{m}^{c}:=\psi _{\mp ,m}\circ A_{M,m}^{c}\circ \psi _{\pm
,m}^{-1}:\Omega _{m}^{0,\pm }(\hat{L}^{\prime })\rightarrow \Omega
_{m}^{0,\mp }(\hat{L}^{\prime }).  \label{5-2.5}
\end{equation}%
\noindent By abuse of notation, write%
\begin{equation*}
C_{M,m}(h_{I_{q}}(z,\bar{z})d\bar{z}^{I_{q}}):=\frac{A_{M,m}^{c}(h_{I_{q}}(z,%
\bar{z})d\bar{z}^{I_{q}}\otimes (e^{\ast })^{\otimes m})}{(e^{\ast
})^{\otimes m}}.
\end{equation*}%
\noindent Note that this depends on the choice of the local section $e^{\ast
}$ of $L^{\ast }$. In $(z,w),$ $\tilde{A}_{m}^{c}$ acts on $\Omega
_{m}^{0,q}(\hat{L}^{\prime })$ by 
\begin{equation}
\tilde{A}_{m}^{c}(w^{m}h_{I_{q}}(z,\bar{z})d\bar{z}%
^{I_{q}})=w^{m}C_{M,m}(h_{I_{q}}(z,\bar{z})d\bar{z}^{I_{q}}).  \label{Amc}
\end{equation}

Note that (\ref{Amc}) here is invariantly defined by (\ref{5-2.5}). The key
observation that makes our construction of global transversal operators well
defined, is based on the following:

\begin{lemma}
\label{5-1.25} For general $\Sigma $ as before$,$ the above (\ref{Amc})
works unchangeably if $M$ is replaced by $U_{j}$ and $z,w$ are distinguished
local holomorphic coordinates in (\ref{1-0}). Thus if now define $\tilde{A}%
_{m}^{c}$ $:$ $\Omega _{m}^{0,\pm }(\Sigma )$ $\rightarrow $ $\Omega
_{m}^{0,\mp }(\Sigma )$ by using (\ref{Amc})$,$ $\tilde{A}_{m}^{c}$ is
independent of the choice of local holomorphic coordinates and is
self-adjoint on $\Omega _{m}^{0,\ast }(\Sigma )$ $(=$ $\Omega
_{m}^{0,+}\oplus \Omega _{m}^{0,-})$.
\end{lemma}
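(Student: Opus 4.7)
The plan is to define $\tilde{A}_{m}^{c}$ patch-by-patch via the isomorphism $\Psi _{q,m}$ of Proposition \ref{dualLm}, then verify, in order, $(a)$ compatibility on overlaps, $(b)$ that the image sits inside $\Omega _{m}^{0,q}(\Sigma )$ and not just $\Omega _{m,loc}^{0,q}(\Sigma )$, and $(c)$ self-adjointness with respect to the global $L^{2}$-inner product induced by $G_{a,m}$. Concretely, on each distinguished chart $D_{j}$ with coordinates $(z,w)$ I would set
\begin{equation*}
\tilde{A}_{m}^{c}|_{D_{j}}:=\Psi _{\mp ,m}\circ A_{U_{j},m}^{c}\circ \Psi _{\pm ,m}^{-1},
\end{equation*}
where $A_{U_{j},m}^{c}$ denotes the zeroth order part of the standard $spin^{c}$ Dirac operator on $(U_{j},\pi ^{\ast }g_{M}|_{U_{j}})$ with coefficients in $(\psi _{j}^{\ast }L_{\Sigma }^{\ast })^{\otimes m}$ equipped with the induced fiber metric $h^{-m}$. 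Unravelling via (\ref{Psi_qm}) recovers exactly the formula (\ref{Amc}) with $M$ replaced by $U_{j}$.

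The main technical step is $(a)$. On $D_{j}\cap D_{k}$, I would use the transition law (\ref{C10-1}), $\tilde{w}=w\varphi (z)$, $\tilde{z}_{\ell }=\mu _{\ell }(z)$, together with (\ref{C11}) expressing the component transformation $s(z,\bar{z})=\tilde{s}(\mu (z),\overline{\mu (z)})\varphi (z)^{m}$. The point is that under the biholomorphism $\mu $ between the two orbifold charts, the horizontal metric $\pi ^{\ast }g_{M}$ is an isometry (it descends intrinsically from $g_{M}$ on $M$), the transition function $\varphi (z)$ is precisely that of the $\mathbb{C}^{\ast }$-equivariant line bundle $L_{\Sigma }^{\ast }$ with its $\mathbb{C}^{\ast }$-invariant Hermitian metric $\|\cdot \|$ (Step 1 of Section \ref{S-metric}), and the twist exponent $m$ matches. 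Since $A_{U_{j},m}^{c}$ is the zeroth order part of a Dirac operator intrinsically attached to this metric data, its transformation law under $(\mu ,\varphi )$ exactly cancels the factor $\varphi (z)^{m}$ appearing in (\ref{C11}). This is the step I expect to be the main obstacle, as it requires careful bookkeeping between the Clifford action on the transversal frame (\ref{5-0}) and the line bundle twist. Item $(b)$ is then immediate: the right hand side of (\ref{Amc}) has the form $w^{m}\cdot $(smooth $(0,q\mp 1)$-form in $z$), which meets the regularity condition of Definition \ref{2m}.

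For $(c)$, I would mimic the argument in the proof of Proposition \ref{madj}. Given $\omega =w^{m}u(z,\bar{z})$, $\eta =w^{m}v(z,\bar{z})$ in $\Omega _{m}^{0,\ast }(\Sigma )$ supported (after partition of unity) in a single chart $D_{j}$, the identity $dv_{\Sigma ,m}=\pi ^{\ast }dv_{M}\wedge dv_{f,m}$ combined with Lemma \ref{L-inv} and the factorisation of the fibre integral
\begin{equation*}
\int_{C_{\delta _{j}}}l(q)^{m}\,dv_{f,m}=\frac{\delta _{j}}{\pi },
\end{equation*}
from (\ref{3-43.5}), reduces the pointing pairing $(\tilde{A}_{m}^{c}\omega ,\eta )_{L^{2}(\Sigma ,G_{a,m})}$ to a constant multiple of $(A_{U_{j},m}^{c}u,v)_{L^{2}(U_{j},\pi ^{\ast }g_{M}\otimes h^{-m})}$. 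Self-adjointness of $A_{U_{j},m}^{c}$ in the latter (standard) $L^{2}$-pairing then transfers to $(\omega ,\tilde{A}_{m}^{c}\eta )_{L^{2}(\Sigma ,G_{a,m})}$, completing the proof. Routine bookkeeping over the partition of unity globalises this. The whole argument hinges on the same feature that drives Proposition \ref{madj}: the carefully chosen metric $G_{a,m}$ renders the fibre integral a point-independent constant, which is the geometric reason that purely transversal analytic operations descend cleanly along the $\mathbb{C}^{\ast }$-orbits.
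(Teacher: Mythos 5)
Your outline matches the paper's strategy on items $(a)$ and $(c)$: well-definedness across overlaps rests on the transformation laws (\ref{C0}) and (\ref{C11}) being identical to the $\hat L'$ case, and self-adjointness is established by a pairing computation that mirrors Proposition \ref{madj} (and is simpler because $\tilde A_m^c$ is zeroth order). The paper states exactly these two points.

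However, item $(b)$ as you wrote it has a real gap. You assert that membership of the image in $\Omega_m^{0,\mp}(\Sigma)$ is immediate because (\ref{Amc}) produces an expression of the form $w^m\cdot(\text{form in }z)$. That verifies condition $ii)$ of Definition \ref{2m} but says nothing about condition $i)$, namely that $\tilde A_m^c\omega\in\hat\Omega_m^{0,\mp}(\Sigma)$, i.e. $\sigma(\lambda)^*(\tilde A_m^c\omega)=\lambda^m\,\tilde A_m^c\omega$ for \emph{every} $\lambda\in\mathbb{C}^*$. Because $\sigma$ is only locally free, a large-angle element $\lambda$ can fix a point of $\Sigma_{\mathrm{sing}}$ while acting nontrivially on the $z$-slice (this is exactly Case $ii)$ after (\ref{6-1g}), and it is the reason $\Omega_m^{0,q}(\Sigma)\subsetneq\Omega_{m,\mathrm{loc}}^{0,q}(\Sigma)$ in Definition \ref{d-6.8-5}). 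The paper explicitly flags this as ``not quite automatic \ldots\ due to the local freeness of $\sigma$'', and closes it by first obtaining the $m$-space membership on the principal stratum $\Sigma\setminus\Sigma_{\mathrm{sing}}$ via the line-bundle correspondence of Proposition \ref{p-gue2-2} (exactly as in (\ref{5-2.5})), and then extending across $\Sigma_{\mathrm{sing}}$ by continuity, using that the patched $\tilde A_m^c\omega$ is already a global smooth form. To repair your proof you must either reproduce this stratum-and-continuity argument, or give some other verification of full $\mathbb{C}^*$-equivariance (for instance: establish the small-angle equivariance uniformly using the finite chart cover, then observe that $\{\lambda:\sigma(\lambda)^*(\tilde A_m^c\omega)=\lambda^m\tilde A_m^c\omega\}$ is a nonempty clopen subgroup of the connected group $\mathbb{C}^*$); as written, the claim is asserted rather than proved, at precisely the spot the paper identifies as the crux.
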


\proof
We note that the local transformation law (\ref{C0}) of Proposition \ref%
{p-gue2-1}, together with (\ref{C11}), is similar to that of the case $%
\Sigma =\hat{L}^{\prime }$ in local expressions. Hence the first assertion
of the lemma \textquotedblleft almost" follows from the remark after (\ref%
{Amc}). The point is that it is not quite automatic that the image of $%
\tilde{A}_{m}^{c}$ so defined is contained in the $m$-space under
consideration (the image lies in $\Omega ^{0,\mp }(\Sigma )$ nevertheless),
due to the local freeness of $\sigma $ (see case $ii)$ after (\ref{6-1g}) in
Section 6). But one can bypass this issue by first considering it on the
principal stratum $\Sigma \backslash \Sigma _{\text{sing}}$ (similar to (\ref%
{5-2.5})\ for $\hat{L}^{\prime })$ then the assertion holds across $\Sigma _{%
\text{sing}}$ by argument of continuity (since $\tilde{A}_{m}^{c}$ is global
on $\Sigma $ as just mentioned). Compare the proof of Proposition \ref%
{dualLm}. For the self-adjointness of $\tilde{A}_{m}^{c},$ the treatment is
similar to and simpler than Proposition \ref{madj} because $\tilde{A}%
_{m}^{c} $ is of zeroth order.

\endproof%



\begin{definition}
\label{5-1.5} (transversally $spin^{c}$ Dirac operator) Define $\tilde{D}%
_{m}^{c\pm }$ (as a "transversally" spin$^{c}$ Dirac operator) by, with $%
\tilde{A}_{m}^{c}$ in Lemma \ref{5-1.25}%
\begin{equation}
\tilde{D}_{m}^{c\pm }:=\tilde{D}_{m}^{c}=\bar{\partial}_{\Sigma
,m}+\vartheta _{\Sigma ,m}+\tilde{A}_{m}^{c}:\Omega _{m}^{0,\pm }(\Sigma
)\rightarrow \Omega _{m}^{0,\mp }(\Sigma ).  \label{5.1}
\end{equation}%
$\tilde{A}_{m}^{c}$ is not directly linked to $A^{c}$ of (\ref{5.0}); the
\textquotedblleft tilde" in $\tilde{D}_{m}^{c}$ is used to match that of $%
\tilde{A}_{m}^{c}.$
\end{definition}





Let $\tilde{\vartheta}_{m}^{c}$ denote the formal adjoint of $\tilde{D}%
_{m}^{c}$ (cf. Notation \ref{3-1.5})$.$ The self-adjointness $\tilde{%
\vartheta}_{m}^{c}$ $=$ $\tilde{D}_{m}^{c}$ follows from Lemma \ref{5-1.25} (%
$\tilde{A}_{m}^{c}$ is self-adjoint)$.$ Let $D_{U_{j},m}^{c\pm }$ $:=$ $\bar{%
\partial}_{U_{j},m}+\vartheta _{U_{j},m}+A_{U_{j},m}^{c}$ : $\Omega
^{0,+}(U_{j},$ ($\psi _{j}^{\ast }L_{\Sigma }^{\ast })^{\otimes m})$ $%
\rightarrow $ $\Omega ^{0,-}(U_{j},$ ($\psi _{j}^{\ast }L_{\Sigma }^{\ast
})^{\otimes m})$ be the spin$^{c}$ Dirac operator on $U_{j}$ with bundle ($%
\psi _{j}^{\ast }L_{\Sigma }^{\ast })^{\otimes m}|_{U_{j}\times \{0\}\times
\{1\}}$ (see (\ref{2.10-5}) for $\psi _{j}).$ Here the metric on $U_{j}$ is $%
\pi ^{\ast }g_{M}|_{T(U_{j}\times \{0\}\times \{1\})}$ rather than $%
G_{a,m}|_{T(U_{j}\times \{0\}\times \{1\})},$ and the metric on $L_{\Sigma }$
is $<\cdot ,\cdot >$ (lines above (\ref{lq0})). We define the following $%
spin^{c}$ Laplacians of Kodaira type by%
\begin{eqnarray}
i)\text{ }\tilde{\square}_{m}^{c} &:&=\tilde{\vartheta}_{m}^{c}\tilde{D}%
_{m}^{c}=(\tilde{D}_{m}^{c})^{2}:\Omega _{m}^{0,\ast }(\Sigma )\rightarrow
\Omega _{m}^{0,\ast }(\Sigma ),  \label{DiracLap} \\
ii)\text{ }\tilde{\square}_{m}^{c\pm } &:&=\tilde{D}_{m}^{c\mp }\tilde{D}%
_{m}^{c\pm }:\Omega _{m}^{0,\pm }(\Sigma )\rightarrow \Omega _{m}^{0,\pm
}(\Sigma ),  \notag \\
iii)\text{ }\square _{U_{j},m}^{c\pm } &:&=D_{U_{j},m}^{c\mp
}D_{U_{j},m}^{c\pm }:\Omega ^{0,\pm }(U_{j},(\psi _{j}^{\ast }L_{\Sigma
}^{\ast })^{\otimes m})\rightarrow \Omega ^{0,\pm }(U_{j},(\psi _{j}^{\ast
}L_{\Sigma }^{\ast })^{\otimes m}).  \notag
\end{eqnarray}

\noindent Remark that the notation $\square _{m}$ (or $\square _{\Sigma ,m}$%
) is reserved for $\bar{\partial}_{m}\vartheta _{m}$ $+$ $\vartheta _{m}\bar{%
\partial}_{m}$\ (\ref{4.0}).

Our first main result Proposition \ref{madj} in Section 3 yields the
following

\begin{proposition}
\label{BoxDU} Restricted to $D_{j}$ $\subset $ $\Sigma ,$ we have $\tilde{D}%
_{m}^{c\pm }=\Psi _{\mp ,m}\circ D_{U_{j},m}^{c\pm }\circ \Psi _{\pm
,m}^{-1} $ (see (\ref{Psi_qm}) for the definition of $\Psi _{\mp ,m})$ and $%
\tilde{\square}_{m}^{c\pm }=\Psi _{\mp ,m}\circ \square _{U_{j},m}^{c\pm
}\circ \Psi _{\pm ,m}^{-1}$.
\end{proposition}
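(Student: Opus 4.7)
The plan is to decompose the transversally $spin^{c}$ Dirac operator into its three constituent pieces, and to recognize each piece individually as a conjugate of the corresponding piece of $D_{U_{j},m}^{c\pm}$ via $\Psi_{\pm,m}$. By Definition \ref{5-1.5} we have, on $D_{j}\subset\Sigma$,
\begin{equation*}
\tilde{D}_{m}^{c\pm}\big|_{D_{j}} = \bar{\partial}_{D_{j},m} + \vartheta_{D_{j},m} + \tilde{A}_{m}^{c}\big|_{D_{j}},
\end{equation*}
while by construction
\begin{equation*}
D_{U_{j},m}^{c\pm} = \bar{\partial}_{U_{j},m} + \vartheta_{U_{j},m} + A_{U_{j},m}^{c},
\end{equation*}
so it suffices to verify three intertwining identities with $\Psi_{\pm,m}$.

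First, the $\bar{\partial}$-part is the easy case: Proposition \ref{dualLm} already gives $\bar{\partial}_{D_{j},m}\circ\Psi_{q,m} = \Psi_{q+1,m}\circ\bar{\partial}_{U_{j},m}$, which is precisely the desired relation. Second, for the formal adjoint part I would invoke the first main result of Section \ref{S-metric}, namely Proposition \ref{madj}, which asserts $\vartheta_{D_{j},m} = \Psi_{q,m}\circ\vartheta_{U_{j},m}\circ\Psi_{q+1,m}^{-1}$; this is the step that secretly carries all the geometric content, because it relies on the very specific structure of the metric $G_{a,m}$ (non-$\mathbb{C}^{\ast}$-invariant but still compatible in the sense exploited there). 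Third, the zeroth-order Clifford correction: the operator $\tilde{A}_{m}^{c}$ was defined precisely by the formula (\ref{Amc}), i.e.\ $\tilde{A}_{m}^{c}(w^{m}h_{I_{q}}\,d\bar{z}^{I_{q}}) = w^{m}\,C_{U_{j},m}(h_{I_{q}}\,d\bar{z}^{I_{q}})$ on $D_{j}$, and Lemma \ref{5-1.25} guarantees that this is a well-defined global operator independent of the chosen local trivialization. Rewriting this in terms of $\Psi_{\pm,m}$ gives $\tilde{A}_{m}^{c}\big|_{D_{j}} = \Psi_{\mp,m}\circ A_{U_{j},m}^{c}\circ\Psi_{\pm,m}^{-1}$ tautologically.

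Summing the three intertwining identities yields the first assertion $\tilde{D}_{m}^{c\pm} = \Psi_{\mp,m}\circ D_{U_{j},m}^{c\pm}\circ\Psi_{\pm,m}^{-1}$ on $D_{j}$. The second assertion about $\tilde{\square}_{m}^{c\pm}$ is then a formal consequence: writing $\tilde{\square}_{m}^{c\pm} = \tilde{D}_{m}^{c\mp}\tilde{D}_{m}^{c\pm}$ and substituting, the middle factor $\Psi_{\pm,m}^{-1}\circ\Psi_{\pm,m}$ collapses, leaving $\Psi_{\pm,m}\circ(D_{U_{j},m}^{c\mp}D_{U_{j},m}^{c\pm})\circ\Psi_{\pm,m}^{-1} = \Psi_{\pm,m}\circ\square_{U_{j},m}^{c\pm}\circ\Psi_{\pm,m}^{-1}$ (with the parity labels on the outer $\Psi$'s being those matching the source and target $\Omega_{m}^{0,\pm}$).

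The main obstacle has, in effect, already been discharged in Section \ref{S-metric}: it is the nontrivial compatibility of $\vartheta_{D_{j},m}$ with $\vartheta_{U_{j},m}$ under $\Psi_{\pm,m}$ from Proposition \ref{madj}, whose proof hinges on the orthogonal splitting of $G_{a,m}$ at a good point together with the volume decomposition (\ref{volume})--(\ref{3-43.5}). Given that ingredient, the proof here reduces to bookkeeping: one merely has to verify that the three pieces $\bar{\partial}$, $\vartheta$, $\tilde{A}^{c}$ intertwine separately and that their sum does too, with no cross-terms arising because $\Psi_{\pm,m}$ is a purely algebraic isomorphism $s(z,\bar z)(e_{w}^{\ast})^{\otimes m}\mapsto s(z,\bar z)w^{m}$ that commutes with taking adjoints thanks to the $\mathbb{C}^{\ast}$-invariance of the fibre metric $\langle\cdot,\cdot\rangle_{L_{\Sigma}}$.
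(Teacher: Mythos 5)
Your proposal is correct and follows exactly the paper's argument: the paper's proof is a one-line citation of Propositions \ref{dualLm}, \ref{madj} and Lemma \ref{5-1.25}, and your write-up is precisely the expansion of that citation into the three constituent intertwining identities for $\bar{\partial}$, $\vartheta$, and $\tilde{A}^{c}$, plus the formal squaring step. Your observation that the outer $\Psi$-labels in the squared formula should match the source/target parity (i.e.\ $\Psi_{\pm,m}\circ\square_{U_j,m}^{c\pm}\circ\Psi_{\pm,m}^{-1}$ rather than a literal $\Psi_{\mp,m}$) is the correct reading of the statement.
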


\proof
Restricted to $D_{j}$ $\subset $ $\Sigma ,$ the assertions readily follow
from Propositions \ref{dualLm}, \ref{madj} and Lemma \ref{5-1.25}.

\endproof%

The above result is crucial for us to construct an approximation of
transversal heat kernel (cf. (\ref{6-1})).

In the remaining of this section, we shall focus on the geometry of $\tilde{%
\square}_{m}^{c}$ (spectral aspects) and culminate in a McKean-Singer type
formula (cf. Proposition \ref{p-5-3} and Theorem \ref{t-5-1}).

We now extend $\tilde{\square}_{m}^{c\pm }:Dom$ $\tilde{\square}_{m}^{c\pm
}(\subset L_{m}^{2,\pm }(\Sigma ,G_{a,m}))\rightarrow L_{m}^{2,\pm }(\Sigma
,G_{a,m})$ (by acting in the sense of distribution). Here $L_{m}^{2,\pm
}(\Sigma ,G_{a,m})$ is as given in the line above (\ref{Dm}).

\begin{lemma}
\label{5-3.5} $\tilde{\square}_{m}^{c}$ satisfies the transversally elliptic
estimate as in the statements of Theorem \ref{t-4-1} ($\tilde{\square}%
_{m}^{c}$ in place of $\square _{\Sigma ,m}^{(q)}$ there).
\end{lemma}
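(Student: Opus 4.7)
The plan is to follow the proof of Theorem \ref{t-4-1} essentially verbatim, with Proposition \ref{BoxDU} playing the role that (\ref{lBoxm1}) played there. The key point is that Proposition \ref{BoxDU} gives, on each patch $D_j \subset \Sigma$, the intertwining relation $\tilde{\square}_m^{c\pm} = \Psi_{\mp,m}\circ \square_{U_j,m}^{c\pm}\circ \Psi_{\pm,m}^{-1}$, and that $\square_{U_j,m}^{c\pm}$ is a classical second-order elliptic operator on the smooth relatively compact chart $U_j$, acting on $(\psi_j^\ast L_\Sigma^\ast)^{\otimes m}$-valued forms. Indeed, $D_{U_j,m}^{c\pm}$ is a genuine $spin^c$ Dirac operator of first order with injective principal symbol, and the zeroth-order piece $A_{U_j,m}^c$ does not affect the principal symbol of its square. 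Consequently the standard interior elliptic estimate on $U_j$ applies.

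First I would write $u = \sum_j \varphi_j u$ with $\varphi_j u|_{D_j} = w^m \varphi_j v_j$, insert the cut-off $\varphi_j$, and apply the classical elliptic estimate to the compactly supported $\varphi_j \Psi_{\pm,m}^{-1}u|_{D_j}$ on $U_j$, as in the derivation of (\ref{e1}). The commutator $[\square_{U_j,m}^{c\pm},\varphi_j]$ is of order $\leq 1$, so its contribution is controlled by $\|\Psi_{\pm,m}^{-1}u|_{D_j}\|_{s+1,U_j}$, which is absorbed by interpolation. Then (\ref{eqnorm}), Proposition \ref{BoxDU}, Lemma \ref{4.7-5}, and Lemma \ref{normeq} translate the resulting inequality back to global $\|\cdot\|_s'$-norms; coupled with the interpolation Corollary \ref{Ip}, this delivers part i):
\begin{equation*}
\|u\|_{s+2}' \leq C_s\bigl(\|\tilde{\square}_m^c u\|_s' + \|u\|_0'\bigr).
\end{equation*}

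Part ii) is then obtained by the same weak-convergence contradiction argument as in Theorem \ref{t-4-1} ii): assume the inequality fails for every constant, normalize to get a sequence $\{u_k\}\subset \Omega_m^{0,\pm}(\Sigma)\cap (\ker\tilde{\square}_m^c)^\perp$ with $\|u_k\|_{s+2}'=1$ and $\|\tilde{\square}_m^c u_k\|_s' \to 0$, extract via the Rellich-type compactness (Proposition \ref{Rellich}) a subsequence converging strongly in $\|\cdot\|_s'$ and weakly in $\|\cdot\|_{s+2}'$ to some $u_\infty$, verify $u_\infty \in \ker \tilde{\square}_m^c$ and simultaneously $u_\infty \perp \ker \tilde{\square}_m^c$ (forcing $u_\infty = 0$), then contradict part i) applied to $u_k$. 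The essential inputs — Rellich compactness, interpolation, and the norm equivalences — are properties of $\|\cdot\|_s'$ alone and do not depend on which second-order operator is being estimated. I do not anticipate a substantial obstacle here; the only bookkeeping point is to confirm that the correction $\tilde{A}_m^c$ in (\ref{5.1}) is of zeroth order (so preserves ellipticity of $\tilde{\square}_m^c$) and that the local-to-global passage uses the same cut-off $\varphi_j$ machinery as in Section \ref{Sec4}, both of which are immediate.
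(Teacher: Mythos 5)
Your proposal is correct and follows the same route the paper takes: the paper's proof is a single sentence stating that the transversal ellipticity of $\tilde{\square}_{m}^{c}$ follows from the ellipticity of $\square_{U_j,m}^{c\pm}$ via arguments similar to Theorem \ref{t-4-1}, which is precisely the substitution of Proposition \ref{BoxDU} for (\ref{lBoxm1}) that you spell out. Your additional observations (that $\tilde{A}_m^c$ is zeroth order so does not affect the principal symbol, and that the Rellich/interpolation/norm-equivalence machinery is operator-independent) are the correct bookkeeping behind the paper's brevity.
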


\proof
The transversal ellipticity of $\tilde{\square}_{m}^{c}$ follows from the
ellipticity of $\square _{U_{j},m}^{c\pm }$ via arguments similar to those
in the proof of Theorem \ref{t-4-1}.



\endproof%

Denote by $H_{m}^{\prime s,+}(\Sigma ,G_{a,m})$ (resp. $H_{m}^{\prime
s,-}(\Sigma ,G_{a,m})$) the even (resp. odd) part of Sobolev spaces $%
H_{0,\ast ,m}^{\prime s}(\Sigma ,G_{a,m})$. In the same vein as Lemmas \ref%
{l-4-1} and \ref{l-4-2}, we have $Dom$ $\tilde{\square}_{m}^{c\pm
}=H_{m}^{\prime 2,\pm }(\Sigma ,G_{a,m})$ and $\tilde{\square}_{m}^{c\pm }$
are positive, formally self-adjoint.

\begin{proposition}
\label{l-5-2} For $\tilde{\square}_{m}^{c\pm },$ the corresponding
statements in Proposition \ref{p-4-1} and Lemma \ref{4-18-1} hold true.
Moreover, with obvious modifications of notation $Spec\,\tilde{\square}%
_{m}^{c+}\cap (0,\infty )=Spec\,\tilde{\square}_{m}^{c-}\cap (0,\infty ),$
and $\dim \mathcal{\tilde{E}}_{m,\mu }^{+}(\Sigma )=\dim \mathcal{\tilde{E}}%
_{m,\mu }^{-}(\Sigma )$ for each $0\neq \mu $ $\in Spec\,\tilde{\square}%
_{m}^{c+}.$
\end{proposition}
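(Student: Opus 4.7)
My plan is to run the same Hodge-theoretic machinery developed in Section~\ref{Sec4} for $\square_{\Sigma,m}^{(q)}$ through for $\tilde{\square}_{m}^{c\pm}$, and then to deduce the positive-spectrum equality from the standard intertwining via $\tilde{D}_{m}^{c\pm}$. The key point that makes the first part automatic is Proposition~\ref{BoxDU}: locally on each patch $D_{j}$, the operator $\tilde{\square}_{m}^{c\pm}$ is conjugate via the isomorphism $\Psi_{\pm,m}$ to the classical $\mathrm{spin}^{c}$ Laplacian $\square_{U_{j},m}^{c\pm}$ on $U_{j}$ twisted by $(\psi_{j}^{\ast}L_{\Sigma}^{\ast})^{\otimes m}$, which is elliptic of order two. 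This is exactly the mechanism already used in (\ref{e1})--(\ref{e2}) and in Proposition~\ref{ellipreg}, so the transversal elliptic estimate stated in Lemma~\ref{5-3.5} is the right substitute for Theorem~\ref{t-4-1}.

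With Lemma~\ref{5-3.5} in hand, I would repeat verbatim the arguments of Section~\ref{Sec4}: the Rellich-type compactness of $H_{m}^{\prime s+1,\pm}(\Sigma,G_{a,m})\hookrightarrow H_{m}^{\prime s,\pm}(\Sigma,G_{a,m})$ is proved exactly as Proposition~\ref{Rellich} (nothing in that proof used that the form-degree was fixed); the interpolation inequality and transversal G\aa rding inequality go through unchanged; transversal elliptic regularity (Proposition~\ref{ellipreg}) now gives $\operatorname{Dom}\tilde{\square}_{m}^{c\pm}=H_{m}^{\prime 2,\pm}(\Sigma,G_{a,m})$ and smoothness of eigenforms, which yields the analogue of Lemma~\ref{4-18-1}. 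The solvability/decomposition Lemma~\ref{SolBox} then produces a Green operator $\tilde{G}_{m}^{c\pm}$, and Proposition~\ref{p-4-1} carries over: $\tilde{G}_{m}^{c\pm}$ is compact and self-adjoint, $\operatorname{Spec}\tilde{\square}_{m}^{c\pm}\subset[0,\infty)$ is discrete, and $L_{m}^{2,\pm}(\Sigma,G_{a,m})=\hat{\oplus}_{\mu}\mathcal{\tilde{E}}_{m,\mu}^{\pm}(\Sigma)$.

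For the second assertion, I would exploit the factorization $\tilde{\square}_{m}^{c\pm}=\tilde{D}_{m}^{c\mp}\tilde{D}_{m}^{c\pm}$ of (\ref{DiracLap}). Given $\mu>0$ and $u\in\mathcal{\tilde{E}}_{m,\mu}^{+}(\Sigma)$, the form $v:=\tilde{D}_{m}^{c+}u\in\Omega_{m}^{0,-}(\Sigma)$ satisfies
\[
\tilde{\square}_{m}^{c-}v=\tilde{D}_{m}^{c+}\tilde{D}_{m}^{c-}\tilde{D}_{m}^{c+}u=\tilde{D}_{m}^{c+}\tilde{\square}_{m}^{c+}u=\mu v,
\]
and $\|v\|_{L^{2}}^{2}=(\tilde{D}_{m}^{c-}\tilde{D}_{m}^{c+}u,u)_{L^{2}}=\mu\|u\|_{L^{2}}^{2}\neq 0$, so $v\in\mathcal{\tilde{E}}_{m,\mu}^{-}(\Sigma)\setminus\{0\}$ (here the inner-product manipulation is legitimate by Proposition~\ref{3-11-1} and the extended version for $\tilde{\vartheta}_{m}^{c}=\tilde{D}_{m}^{c}$). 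Hence $\tilde{D}_{m}^{c+}/\sqrt{\mu}$ maps $\mathcal{\tilde{E}}_{m,\mu}^{+}(\Sigma)$ into $\mathcal{\tilde{E}}_{m,\mu}^{-}(\Sigma)$ injectively, and $\tilde{D}_{m}^{c-}/\sqrt{\mu}$ does the reverse, giving the equality of dimensions and, taking the union over $\mu>0$, the equality $\operatorname{Spec}\tilde{\square}_{m}^{c+}\cap(0,\infty)=\operatorname{Spec}\tilde{\square}_{m}^{c-}\cap(0,\infty)$.

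The main obstacle I anticipate is purely bookkeeping rather than conceptual: one must verify that the $\|\cdot\|_{s}^{\prime}$-norms of Definition~\ref{d4-0}, which were set up for a fixed form-degree $q$, extend harmlessly to the $\mathbb{Z}/2$-graded setting $\Omega_{m}^{0,\pm}(\Sigma)$, and that the localization observations of Remark~\ref{4-7-1} and Remark~\ref{4.25-1} remain valid for $\tilde{\square}_{m}^{c\pm}$. Both are routine because Proposition~\ref{BoxDU} means every local computation reduces, via $\Psi_{\pm,m}$, to a standard computation for the elliptic operator $\square_{U_{j},m}^{c\pm}$ acting on sections of a vector bundle over $U_{j}$, to which the classical Sobolev theory applies directly.
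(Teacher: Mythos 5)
Your proposal is correct and fills in exactly the argument the paper leaves implicit (Proposition~\ref{l-5-2} is stated without proof, following the pattern ``in the same vein as Lemmas~\ref{l-4-1} and \ref{l-4-2}''). The first part transports Section~\ref{Sec4} via Lemma~\ref{5-3.5} and Proposition~\ref{BoxDU} as intended, and the second part is the standard supersymmetric pairing argument: $\tilde{D}_{m}^{c+}/\sqrt{\mu}$ and $\tilde{D}_{m}^{c-}/\sqrt{\mu}$ are mutually inverse isomorphisms between $\mathcal{\tilde{E}}_{m,\mu}^{+}(\Sigma)$ and $\mathcal{\tilde{E}}_{m,\mu}^{-}(\Sigma)$ for $\mu>0$, which uses the formal self-adjointness $\tilde{\vartheta}_{m}^{c}=\tilde{D}_{m}^{c}$ recorded just below Definition~\ref{5-1.5}.
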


\begin{remark}
\label{N-6-3} As an analogue of holomorphic tangents in the CR case via
Example \ref{2.0}, we make the following definition. Let $\mathcal{E}_{M}$
denote the (orbifold) bundle of all $(0,q)$-forms on $M.$ Let $\pi ^{\ast }%
\mathcal{E}_{M}$ be the pullback bundle over $\Sigma ,$ where $\pi :\Sigma $ 
$\rightarrow $ $M$ $:=\Sigma /\sigma $ is the natural projection. Since the $%
\mathbb{C}^{\ast }$-action $\sigma $ is locally free, one sees that $\pi
^{\ast }\mathcal{E}_{M}$ embeds naturally as a subbundle of the bundle $%
\Lambda ^{0,\ast }(\Sigma )$ of all $(0,q)$-forms on $\Sigma $. Consider the 
$L^{2}$-completion of smooth sections of $\pi ^{\ast }\mathcal{E}_{M}$ with
compact support over $\Sigma $ with respect to the metric $G_{a,m},$ $a$ $>$ 
$\frac{m}{2}$ $\geq $ $0,$ denoted by $L^{2,\ast }(\Sigma ,\pi ^{\ast }%
\mathcal{E}_{M},G_{a,m})$ or $L^{2,\ast }(\Sigma ,G_{a,m})$ for short$.$
Consider $L_{m}^{2,\ast }(\Sigma ,G_{a,m})$ to be the direct sum of $%
L_{0,q,m}^{2}(\Sigma ,G_{a,m})$ for all $q$ (see Notation \ref{N-mL2} for
the definition of $L_{0,q,m}^{2}(\Sigma ,G_{a,m})).$ By the definition of $%
\Omega _{m}^{0,\ast }(\Sigma )$ (cf. Definition \ref{2m} $ii))$ we have that 
$L_{m}^{2,\ast }(\Sigma ,G_{a,m})$ $\subset $ $L^{2,\ast }(\Sigma ,\pi
^{\ast }\mathcal{E}_{M},G_{a,m})$ (see Lemma \ref{R-5-5})$.$ As an
alternative choice of metric on $\pi ^{\ast }\mathcal{E}_{M}$ it is natural
to use the $\mathbb{C}^{\ast }$-invariant Hermitian metric $\pi ^{\ast
}g_{M} $ (see Notation \ref{N-3-1}). It turns out that $\pi ^{\ast }g_{M}$
on $\pi ^{\ast }\mathcal{E}_{M}$ is the same as $G_{a,m}|_{\pi ^{\ast }%
\mathcal{E}_{M}}$ (cf. Lemma \ref{L-inv} $i)$)$.$ Let%
\begin{equation}
\pi _{m}:L^{2,\ast }(\Sigma ,\pi ^{\ast }\mathcal{E}_{M},G_{a,m})\rightarrow
L_{m}^{2,\ast }(\Sigma ,G_{a,m})\subset L^{2,\ast }(\Sigma ,\pi ^{\ast }%
\mathcal{E}_{M},G_{a,m})  \label{pim}
\end{equation}%
\noindent denote the orthogonal projection onto the $m$-space $L_{m}^{2,\ast
}(\Sigma ,G_{a,m})$ or $L_{m}^{2,\ast }(\Sigma )$ for short with respect to
the metric $G_{a,m}$ or $\pi ^{\ast }g_{M}.$ See Proposition \ref{projm}
below for more about $\pi _{m}.$

\begin{lemma}
\label{R-5-5} With the notation above, we have $L_{m}^{2,\ast }(\Sigma
,G_{a,m})$ $\subset $ $L^{2,\ast }(\Sigma ,\pi ^{\ast }\mathcal{E}%
_{M},G_{a,m}).$
\end{lemma}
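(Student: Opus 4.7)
The plan is to exploit condition $ii)$ of Definition \ref{2m}, which says that each $\omega\in\Omega_m^{0,q}(\Sigma)$ is locally of the form $w^m f_{I_q}(z,\bar z)\,d\bar z^{I_q}$, with \emph{no} $d\bar w$-component. Consequently, in every local chart, $\omega$ is pointwise a section of the subbundle $\pi^{\ast}\mathcal{E}_M\subset \Lambda^{0,\ast}(\Sigma)$, and this property is preserved under the coordinate transition (\ref{C11}) by the chain rule (since $\tilde{w}=w\varphi(z)$ does not mix $dw$-derivatives into $d\bar z$). Thus $\Omega_m^{0,q}(\Sigma)\subset C^{\infty}(\Sigma,\pi^{\ast}\mathcal{E}_M)$.

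The next step is to show the inclusion at the level of $L^2$-spaces. For $a>m/2$, Remark \ref{3-r} guarantees that every $\omega\in\Omega_m^{0,q}(\Sigma)$ has finite $L^2$-norm with respect to $G_{a,m}$. Since $G_{a,m}|_{\pi^{\ast}\mathcal{E}_M}$ coincides with $\pi^{\ast}g_M$ (noted in the paragraph preceding the lemma, and justified by the orthogonal splitting (\ref{M1-1}) together with Lemma \ref{L-inv} $i)$), the $L^2$-norms computed by either pairing agree on $\omega$. To put $\omega$ into the space $L^{2,\ast}(\Sigma,\pi^{\ast}\mathcal{E}_M,G_{a,m})$, which is defined as the $L^2$-completion of compactly supported smooth sections of $\pi^{\ast}\mathcal{E}_M$, I would approximate $\omega$ by $\chi_R\,\omega$, where $\chi_R$ is a smooth cutoff in the radial variable $|w|$ supported on a compact subset of $\Sigma$ (recall $\Sigma/\sigma$ is compact so we only need to cut off along the $\mathbb{R}^+$-factor). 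The product $\chi_R\omega$ is a compactly supported smooth section of $\pi^{\ast}\mathcal{E}_M$, and dominated convergence (using the $L^2$-integrability already established) gives $\chi_R\omega\to\omega$ in $L^2$ as $R\to\infty$. Hence $\Omega_m^{0,q}(\Sigma)\subset L^{2,\ast}(\Sigma,\pi^{\ast}\mathcal{E}_M,G_{a,m})$.

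Finally, $L_{m}^{2,\ast}(\Sigma,G_{a,m})$ is by definition (Notation \ref{N-mL2} and Remark \ref{N-6-3}) the $L^2$-completion of $\Omega_m^{0,\ast}(\Sigma)$ with respect to $G_{a,m}$. Since the two norms agree on $\Omega_m^{0,\ast}(\Sigma)$ and the target space $L^{2,\ast}(\Sigma,\pi^{\ast}\mathcal{E}_M,G_{a,m})$ is complete, the inclusion on smooth representatives extends by continuity to an inclusion of the completion, giving $L_{m}^{2,\ast}(\Sigma,G_{a,m})\subset L^{2,\ast}(\Sigma,\pi^{\ast}\mathcal{E}_M,G_{a,m})$.

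There is no substantial obstacle here; the argument is essentially bookkeeping. The only point that requires minor care is checking that the regularity condition in Definition \ref{2m} $ii)$ really is coordinate-invariant, so that the local characterization globalizes to give a section of the subbundle $\pi^{\ast}\mathcal{E}_M$; this is exactly what is built into the transition law (\ref{C11}). Everything else, including the cutoff approximation and the coincidence of the two metrics on $\pi^{\ast}\mathcal{E}_M$, is standard.
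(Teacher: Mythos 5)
Your proof is correct and follows essentially the same route as the paper: the key steps are (a) the local form $w^m v_j(z,\bar z)$ from Definition~\ref{2m}, which identifies $\omega$ as a smooth section of $\pi^{\ast}\mathcal{E}_M$; (b) $L^2$-integrability from Remark~\ref{3-r}; (c) a radial cutoff to produce compactly supported approximants converging in $L^2$; and (d) passing to completions. The only cosmetic difference is that the paper first decomposes $u=\sum_j\varphi_j u$ via the partition of unity and then applies chart-local cutoffs $\chi_k(|w|)$, whereas you apply a single global radial cutoff $\chi_R$ directly --- this works equally well provided the cutoff is defined in terms of the globally defined length function $l(q)$ of (\ref{lq0}) rather than the coordinate-dependent $|w|$, a point worth noting explicitly since $|w|$ depends on the chart.
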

\end{remark}

\begin{proof}
First we claim that $\Omega _{m}^{0,q}(\Sigma )$ $\subset $ $L^{2,\ast
}(\Sigma ,\pi ^{\ast }\mathcal{E}_{M},G_{a,m}).$ Write $u$ $\in $ $\Omega
_{m}^{0,q}(\Sigma )$ as $u=\sum_{j}\varphi _{j}u$ with $\varphi _{j}u$ $=$ $%
w^{m}\varphi _{j}v_{j}(z,\bar{z})$ for $C^{\infty }$-smooth $(0,q)$-forms $%
v_{j}$ on $U_{j}$ (see the beginning of Section \ref{Sec4} for the
notation). Let $\chi _{k}(|w|)$ be a cut-off function which equals $1$ for $%
|w|$ $\leq $ $k$ and $0$ for $|w|$ $>k+1.$ It is not difficult to see that $%
\chi _{k}(|w|)w^{m}\varphi _{j}v_{j}(z,\bar{z})$ (which is smooth and of
compact support) tends to $w^{m}\varphi _{j}v_{j}(z,\bar{z})$ $=$ $\varphi
_{j}u$ in $L^{2}$ in view of Remark \ref{3-r}. So $\varphi _{j}u$ (hence $%
u)\in $ $L^{2,\ast }(\Sigma ,\pi ^{\ast }\mathcal{E}_{M},G_{a,m})$ by
definition. We have shown the claim. It follows that $L_{m}^{2,\ast }(\Sigma
,G_{a,m}),$ the $L^{2}$-closure of $\Omega _{m}^{0,q}(\Sigma ),$ should also
be included in $L^{2,\ast }(\Sigma ,\pi ^{\ast }\mathcal{E}_{M},G_{a,m})$.
\end{proof}

For $\nu $ $\in $ $Spec\,\tilde{\square}_{m}^{c\pm }$ let $\tilde{P}_{m,\nu
}^{\pm }$ $:$ $L^{2,\pm }(\Sigma ,\pi ^{\ast }\mathcal{E}_{M},G_{a,m})$ $%
\rightarrow $ $\mathcal{\tilde{E}}_{m,\nu }^{\pm }(\Sigma )$ $\subset $ $%
L^{2,\pm }(\Sigma ,\pi ^{\ast }\mathcal{E}_{M},G_{a,m})$ denote the
orthogonal projections. Denote the distribution kernels of $\tilde{P}_{m,\nu
}^{\pm }$ by $\tilde{P}_{m,\nu }^{\pm }(x,y)$ ($\in C^{\infty }(\Sigma
\times \Sigma ,$ $T^{\ast 0,\pm }\Sigma \boxtimes (T^{\ast 0,\pm }\Sigma
)^{\ast }).$

\begin{proposition}
\label{5-5.5} Define the heat kernels of $\tilde{\square}_{m}^{c+}$ and $%
\tilde{\square}_{m}^{c-}$ to be%
\begin{equation}
e^{-t\tilde{\square}_{m}^{c\pm }}(x,y):=\tilde{P}_{m,0}^{\pm
}(x,y)+\sum_{\nu \in Spec\tilde{\square}_{m}^{c\pm },\nu >0}e^{-\nu t}\tilde{%
P}_{m,\nu }^{\pm }(x,y).  \label{5.2}
\end{equation}%
\noindent Then for a fixed $t$ $>$ $0$ $e^{-t\tilde{\square}_{m}^{c\pm }}$
is a bounded linear operator on $L^{2,\pm }(\Sigma ,\pi ^{\ast }\mathcal{E}%
_{M},G_{a,m}),$ which maps $\Omega ^{0,\pm }(\Sigma )\cap L^{2,\pm }(\Sigma
,\pi ^{\ast }\mathcal{E}_{M},G_{a,m})$ into $\Omega _{m}^{0,\pm }(\Sigma )$ $%
\subset $ $L^{2,\pm }(\Sigma ,\pi ^{\ast }\mathcal{E}_{M},G_{a,m}).$
Moreover, $e^{-t\tilde{\square}_{m}^{c\pm }}$ in (\ref{5.2}) are (Hilbert
space) self-adjoint and the kernel functions are infinitely smooth. They
satisfy%
\begin{eqnarray}
&&(\frac{\partial }{\partial t}+\tilde{\square}_{m}^{c\pm })(e^{-t\tilde{%
\square}_{m}^{c\pm }}u) = 0,\text{ }\forall t>0,  \label{5.2.1} \\
&&e^{-t\tilde{\square}_{m}^{c\pm }}u \rightarrow \pi _{m}^{\pm }u\text{ in }%
L^{2}\text{ as }t\rightarrow 0\text{ \ }\forall u\in \Omega ^{0,\pm }(\Sigma
)\cap L^{2,\pm }(\Sigma ,\pi ^{\ast }\mathcal{E}_{M},G_{a,m})  \notag
\end{eqnarray}

\noindent where $\pi _{m}^{\pm }:L^{2,\pm }(\Sigma ,\pi ^{\ast }\mathcal{E}%
_{M},G_{a,m})\rightarrow L_{m}^{2,\pm }(\Sigma ,G_{a,m})$ is the orthogonal
projection.
\end{proposition}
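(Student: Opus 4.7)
The plan is to establish Proposition \ref{5-5.5} by combining the spectral analysis from Proposition \ref{l-5-2} with the transversal elliptic regularity of Lemma \ref{5-3.5} and the characterization $\bigcap_{s}H_{0,\ast,m}^{\prime s}(\Sigma,G_{a,m})=\Omega_m^{0,\pm}(\Sigma)$ from Corollary \ref{4.9-5}. I would begin with $L^2$-boundedness and self-adjointness. Because each $\tilde{P}_{m,\nu}^{\pm}$ is the orthogonal projection onto the finite-dimensional eigenspace $\mathcal{\tilde{E}}_{m,\nu}^{\pm}(\Sigma)\subset L_m^{2,\pm}(\Sigma,G_{a,m})\subset L^{2,\pm}(\Sigma,\pi^{\ast}\mathcal{E}_M,G_{a,m})$ (the last inclusion by Lemma \ref{R-5-5}), and these ranges are mutually orthogonal with $\sum_{\nu}\tilde{P}_{m,\nu}^{\pm}=\pi_m^{\pm}$, Pythagoras yields
\[
\left\|\sum_{\nu}e^{-\nu t}\tilde{P}_{m,\nu}^{\pm}u\right\|_{L^2}^{2}=\sum_{\nu}e^{-2\nu t}\|\tilde{P}_{m,\nu}^{\pm}u\|_{L^2}^{2}\le\|u\|_{L^2}^{2}
\]
for every $u\in L^{2,\pm}(\Sigma,\pi^{\ast}\mathcal{E}_M,G_{a,m})$ and $t>0$. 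Hence (\ref{5.2}) converges in $L^2$, defines a bounded operator of norm at most $1$, and is self-adjoint since the $\tilde{P}_{m,\nu}^{\pm}$ are and the coefficients $e^{-\nu t}$ are real.

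The main technical step is smoothness. Each eigenspace $\mathcal{\tilde{E}}_{m,\nu}^{\pm}$ is contained in $\Omega_m^{0,\pm}(\Sigma)$ by the $\tilde{\square}_m^c$-analog of Lemma \ref{4-18-1} (smoothness of eigenfunctions coming from Proposition \ref{ellipreg} and Corollary \ref{4.9-5}). Iterating the transversally elliptic estimate of Lemma \ref{5-3.5}, any unit-$L^2$ eigenfunction $\phi\in\mathcal{\tilde{E}}_{m,\nu}^{\pm}$ obeys $\|\phi\|_{2k}^{\prime}\le C_k(1+\nu)^k$; by Lemma \ref{4.7-5} and the classical Sobolev embedding applied to $\Psi_{q,m}^{-1}\phi$ on each chart $U_j$ (cf.\ Proposition \ref{dualLm}), the pointwise size of $\phi$ on any fixed compact subset of $\Sigma$ grows at most polynomially in $\nu$. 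For $t>0$ the factor $e^{-\nu t}$ dominates any such polynomial growth, and since the spectrum is discrete with finite multiplicities (Proposition \ref{l-5-2}), the partial sums of (\ref{5.2}) converge in $C^{\infty}_{\mathrm{loc}}(\Sigma\times\Sigma)$, proving smoothness of the kernel. Running the same argument in one variable shows that $e^{-t\tilde{\square}_m^{c\pm}}u\in H_{0,\ast,m}^{\prime 2k}(\Sigma,G_{a,m})$ for every $k$, and Corollary \ref{4.9-5} then places the image in $\Omega_m^{0,\pm}(\Sigma)$.

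For the parabolic equation and the initial condition, the $C^{\infty}_{\mathrm{loc}}$-convergence legitimizes termwise application of $\partial_t+\tilde{\square}_m^{c\pm}$ to (\ref{5.2}), and termwise one has $(\partial_t+\tilde{\square}_m^{c\pm})(e^{-\nu t}\tilde{P}_{m,\nu}^{\pm}u)=0$ because $\tilde{\square}_m^{c\pm}$ acts as multiplication by $\nu$ on the range of $\tilde{P}_{m,\nu}^{\pm}$; this yields (\ref{5.2.1}). For the initial condition, given $u\in\Omega^{0,\pm}(\Sigma)\cap L^{2,\pm}(\Sigma,\pi^{\ast}\mathcal{E}_M,G_{a,m})$, the identity $\sum_{\nu}\|\tilde{P}_{m,\nu}^{\pm}u\|_{L^2}^{2}=\|\pi_m^{\pm}u\|_{L^2}^{2}<\infty$ together with dominated convergence in the spectral decomposition gives $e^{-t\tilde{\square}_m^{c\pm}}u=\sum_{\nu}e^{-\nu t}\tilde{P}_{m,\nu}^{\pm}u\to\sum_{\nu}\tilde{P}_{m,\nu}^{\pm}u=\pi_m^{\pm}u$ in $L^2$ as $t\to 0^{+}$.

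The principal obstacle I anticipate is the $C^{\infty}$-convergence of the kernel series in this noncompact, transversal setting, where the classical Weyl asymptotics are unavailable. Fortunately one does not need any eigenvalue count: the iterated elliptic estimate of Lemma \ref{5-3.5}, combined with Lemma \ref{4.7-5} and the bundle isomorphism of Proposition \ref{dualLm} that transports each local problem to an elliptic problem on $U_j$, already supplies the polynomial-in-$\nu$ control on local sup-norms of eigenfunctions, and the exponential decay $e^{-\nu t}$ defeats that polynomial. Once this technical ingredient is in place, the remainder of the argument is standard spectral-theory packaging.
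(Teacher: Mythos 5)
Your overall architecture is sound, and several pieces are actually cleaner than the paper's. In particular, for $L^2$-boundedness you use the Pythagorean bound
$\|e^{-t\tilde\square_m^{c\pm}}u\|^2 = \sum_\nu e^{-2\nu t}\|\tilde P_{m,\nu}^\pm u\|^2 \le \|\pi_m^\pm u\|^2$,
which gives operator norm $\le 1$ directly from the spectral decomposition; the paper instead estimates $\|e^{-t\tilde\square_m^{c\pm}}u\|\le(\sum_\nu e^{-\nu t})\|u\|$, a cruder trace-type bound that needs control on the eigenvalue counting. Your treatment of the heat equation (\ref{5.2.1}) and the $t\to 0^+$ limit also matches the paper's.

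However, there is a genuine gap where you write ``one does not need any eigenvalue count.'' You correctly derive, via Lemma \ref{5-3.5} iterated with Lemmas \ref{4.7-5}, \ref{normeq} and Proposition \ref{dualLm}, that a unit eigenfunction $\phi$ with eigenvalue $\nu$ satisfies local sup-norm bounds that grow polynomially in $\nu$, say $\|\phi\|_{C_B^s}\le C_s(1+\nu)^{k(s)}$. But to conclude convergence of the kernel series $\sum_{\nu,j}e^{-\nu t}\phi_j^\nu(x)\overline{\phi_j^\nu(y)}$ (even absolutely, even locally), it is not enough to know each term decays; one must control the number of eigenvalues $\nu_n$ below a given threshold. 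Discreteness of the spectrum with finite multiplicities only tells you that $\nu_n\to\infty$, not at what rate. If, hypothetically, $\nu_n \sim \log\log n$, then $e^{-\nu_n t}(1+\nu_n)^{2k}$ would not be summable for any $t$, and the series (\ref{5.2}) would fail to define a kernel. The paper closes exactly this hole by proving the Weyl-type lower bound $\nu_n\ge Cn^\delta$ in (\ref{5-2.2}), which requires the sup-norm estimate (\ref{5-2.3}) (with the $l(x)^{m/2}$ weight), the iterated elliptic inequality (\ref{5-2.4})--(\ref{5-2-5}), the clever choice $c_j=\bar\omega_j^\pm(x)$ in (\ref{5-2-6}), and crucially the fact that $\int_\Sigma l(x)^m\,dv_{\Sigma,m}=\mathrm{Vol}(M)<\infty$ to obtain (\ref{5-2-7}). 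This last integrability is a nontrivial use of the particular normalization of $G_{a,m}$ (cf.\ (\ref{fibrenv})), and your proposal never invokes it. Either supply the eigenvalue growth bound as the paper does, or replace the whole step with a resolvent factorization $e^{-t\tilde\square}=(\lambda+\tilde\square)^{-k}\circ[(\lambda+\tilde\square)^{2k}e^{-t\tilde\square}]\circ(\lambda+\tilde\square)^{-k}$ and argue smoothness of the resolvent kernel via Proposition \ref{ellipreg} --- but as written the step ``exponential beats polynomial, therefore the series converges'' does not hold without a counting estimate.
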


\begin{remark}
\label{5-8-1} Although the uniqueness part can be done here, it is postponed
until Theorem \ref{t-uniqueness} $i)$ for the sake of convenience.
\end{remark}

\proof
\textbf{(of Proposition \ref{5-5.5})} We need to show that the kernel
functions $e^{-t\tilde{\square}_{m}^{c\pm }}(x,y)$ are infinitely smooth.
The other statements are immediate (cf. the last paragraph of this proof
with references, via the first half of Proposition \ref{l-5-2} above). First
we prove that the eigenvalues $0<\nu _{1}$ $\leq $ $\nu _{2}$ $\leq $ $%
...\nu _{n}$ $\leq ...$of $\tilde{\square}_{m}^{c\pm }$ $($counting
multiplicity) satisfy the growth rate as follows: 
\begin{equation}
\nu _{n}\text{ }\geq Cn^{\delta }\text{ for a constant }C>0\text{ and an
exponent }\delta >0  \label{5-2.2}
\end{equation}

\noindent if $n$ $>$ $n_{0}$ is large (see Lemmas 1.6.3 and 1.6.5 in \cite%
{Gil}). The proof in \cite{Gil} for elliptic operators on a compact manifold
needs to be modified as shown below.

Let \{$\omega _{j}^{\pm }\}$ denote a complete orthonormal basis for $%
L_{m}^{2,\pm }(\Sigma ,G_{a,m})$ such that $\tilde{\square}_{m}^{c\pm
}\omega _{j}^{\pm }$ $=$ $\nu _{j}\omega _{j}^{\pm }$ by Proposition \ref%
{l-5-2} above. For $f$ $\in $ $\Omega _{m}^{0,\pm }(\Sigma )$ we observe the
following estimate:%
\begin{equation}
|f(x)|\leq Cl(x)^{m/2}||f||_{C_{B}^{0}}  \label{5-2.3}
\end{equation}

\noindent (see (\ref{CBs-norm}) for the definition of the norm $||\cdot
||_{C_{B}^{s}}$ and (\ref{lq}) for $l(x)).$ We have the following (recalling
the notation $\lesssim $ meaning \textquotedblleft the inequality $\leq $
holds modulo some multiplicative constant"), where for the first inequality
we are applying the usual Sobolev embedding (after choosing $k$ such that $%
k\cdot 2$ $>$ $\dim _{\mathbb{R}}(\Sigma /\mathbb{C}^{\ast })/2$ $=$ $n-1)$
together with using $(\ref{eqnorm}),$ Lemmas \ref{4.7-5} and \ref{normeq},

\begin{eqnarray}
||f||_{C_{B}^{0}} &\overset{}{\lesssim }&||f||_{2k}^{\prime }\overset{\text{%
Lemma \ref{5-3.5}}}{\lesssim }||\tilde{\square}_{m}^{c\pm
}f||_{2k-2}^{\prime }+||f||_{0}^{\prime }  \label{5-2.4} \\
&\overset{\text{Lemma \ref{5-3.5}}}{\lesssim }&||(\tilde{\square}_{m}^{c\pm
})^{k}f||_{0}^{\prime }+||(\tilde{\square}_{m}^{c\pm })^{k-1}f||_{0}^{\prime
}+...+||f||_{0}^{\prime }  \notag
\end{eqnarray}

\noindent in the Sobolev $s$-norm $||\cdot ||_{s}^{\prime }$ on $(\Sigma
,G_{a,m}).$ Remark that the bundle $(L_{\Sigma }^{\ast })^{\otimes m}$
implicitly involved in (\ref{5-2.4}), (\ref{eqnorm}) does not really matter
with the preceding estimate. \ 

The interpolation inequality (Corollary \ref{Ip}) brings (\ref{5-2.4}) to%
\begin{equation}
||f||_{C_{B}^{0}}\lesssim ||(\tilde{\square}_{m}^{c\pm })^{k}f||_{0}^{\prime
}+||f||_{0}^{\prime }.  \label{5-2-5}
\end{equation}

Taking $f$ $=$ $\sum_{j=1}^{n}c_{j}\omega _{j}^{\pm }$ in (\ref{5-2-5}) and (%
\ref{5-2.3}) gives (recalling that \{$\omega _{j}^{\pm }\}$ is orthonormal
w.r.t. the $L^{2}$-norm $||\cdot ||_{0}$ which is equivalent to $||\cdot
||_{0}^{\prime }$ by Remark \ref{4.1-5}$)$%
\begin{eqnarray}
\text{ \ \ }|\sum_{j=1}^{n}c_{j}\omega _{j}^{\pm }(x)| &\leq
&C_{1}l(x)^{m/2}(||\sum_{j=1}^{n}c_{j}\nu _{j}^{k}\omega _{j}^{\pm
}||_{0}+||\sum_{j=1}^{n}c_{j}\omega _{j}^{\pm }||_{0})  \label{5-2-6} \\
&\leq &C_{1}l(x)^{m/2}\left( (\sum_{j=1}^{n}|c_{j}\nu
_{j}^{k}|^{2})^{1/2}+(\sum_{j=1}^{n}|c_{j}|^{2})^{1/2}\right)  \notag \\
&\overset{|\nu _{j}|\leq |\nu _{n}|}{\leq }&C_{1}l(x)^{m/2}(|\nu
_{n}|^{k}+1)(\sum_{j=1}^{n}|c_{j}|^{2})^{1/2}.  \notag
\end{eqnarray}

Letting $c_{j}=\bar{\omega}_{j}^{\pm }(x)$ in (\ref{5-2-6}), squaring,
cancelling off $(\sum_{j=1}^{n}|c_{j}|^{2})^{1/2}$ on both sides and
integrating over $\Sigma ,$ we get%
\begin{equation}
n\leq C_{1}^{2}(|\nu _{n}|^{k}+1)^{2}\int_{\Sigma }l(x)^{m}dv_{\Sigma ,m}.
\label{5-2-7}
\end{equation}

\noindent By observing that%
\begin{equation*}
\int_{\Sigma }l(x)^{m}dv_{\Sigma ,m}\overset{(\ref{volume})}{=}%
\int_{M\backslash M_{\text{sing}}}dv_{M}\int_{\mathbb{C}^{\ast }}(\tau
_{x}^{\ast }l)^{m}\tau _{x}^{\ast }(dv_{f,m})\overset{(\ref{fibrenv})}{=}%
Vol(M)\cdot 1
\end{equation*}

\noindent is finite, where $M_{\text{sing}}$ denotes the set of singular
orbifold points in $M$ $=$ $\Sigma /\mathbb{\sigma }$ (of measure zero), we
reach (\ref{5-2.2}) with $\delta =\frac{1}{2k}$ from (\ref{5-2-7}).

To show that the kernel functions $e^{-t\tilde{\square}_{m}^{c\pm }}(x,y)$
are infinitely smooth from the growth rate of $\nu _{n}$ in (\ref{5-2.2}),
we imitate the arguments in \cite[pp.53-55]{Gil} by using the norm $||\cdot
||_{C_{B}^{s}}$ in place of the supreme $s$-norm in \cite{Gil}. It is seen
that the $C_{B}^{s}$-norms are suitable here, since the functions in our $m$%
-space are of the special form (\ref{6-1.5}) of Section 6. Note that
corresponding to \cite[b) of Lemma 1.6.3, p.51]{Gil} one can show similarly
that $||\omega _{j}^{\pm }||_{C_{B}^{s}}\lesssim 1+|\lambda _{j}|^{l(s)}$
from (\ref{5-2.2}) and (\ref{5-2.4}) (generalized from $C_{B}^{0}$ to $%
C_{B}^{l}$), and that (\ref{5.2}) is the analogous expression in \cite[Lemma
1.6.5, p.55]{Gil}. These observations (together with the form of expressions
(\ref{3.36-5}), (\ref{3.36-25}); see also (\ref{6-1.5}) which reduces the
study (transversal case) to that on $z$-spaces (elliptic case)) yield the
desired smoothness of (\ref{5.2}) as in \cite{Gil}. The convergence of $e^{-t%
\tilde{\square}_{m}^{c\pm }}u$ is treated similarly. We leave the details to
the reader.

For the remaining properties of $e^{-t\tilde{\square}_{m}^{c\pm }}$ claimed
in the proposition, we observe that for $u\in \Omega ^{0,\pm }(\Sigma )\cap
L^{2,\pm }(\Sigma ,\pi ^{\ast }\mathcal{E}_{M},G_{a,m})$ 
\begin{equation*}
(\frac{\partial }{\partial t}+\tilde{\square}_{m}^{c\pm })(e^{-\nu t}\tilde{P%
}_{m,\nu }^{\pm }u)=-\nu e^{-\nu t}\tilde{P}_{m,\nu }^{\pm }u+e^{-\nu t}\nu 
\tilde{P}_{m,\nu }^{\pm }u=0.
\end{equation*}

\noindent The first equation in (\ref{5.2.1}) follows since one sees that
taking differentiation in $t$ or $x$ commutes with the infinite sum of
kernel functions of projectors (for a fixed $t>0).$ For the second formula
of (\ref{5.2.1}) writing $u=\sum_{j}a_{j}\omega _{j}^{\pm }$ we have%
\begin{equation}
e^{-t\tilde{\square}_{m}^{c\pm }}u-\pi _{m}u=\sum_{\nu \in Spec\tilde{\square%
}_{m}^{c\pm }}(e^{-\nu t}-1)\tilde{P}_{m,\nu }^{\pm }u=\sum_{\nu \in Spec%
\tilde{\square}_{m}^{c\pm }}(e^{-\nu t}-1)a_{\nu }\omega _{\nu }^{\pm }.
\label{5-2-8}
\end{equation}

\noindent Note that $e^{-\nu t}-1$ in (\ref{5-2-8}) is bounded by $1$ since $%
\nu \geq 0.$ $\sum_{\nu }a_{\nu }^{2}$ is bounded, so for a large $N,$ $%
\sum_{\nu \geq N}a_{\nu }^{2}$ is small. For a finite sum $%
\lim_{t\rightarrow 0}\sum_{\nu <N}(e^{-\nu t}-1)a_{\nu }\omega _{\nu }^{\pm
} $ $=$ $\sum_{\nu <N}\lim_{t\rightarrow 0}(e^{-\nu t}-1)a_{\nu }\omega
_{\nu }^{\pm }=0.$ Altogether $e^{-t\tilde{\square}_{m}^{c\pm }}u-\pi _{m}u$ 
$\rightarrow $ $0$ in $L^{2}$ as $t\rightarrow 0.$ One sees that $||e^{-t%
\tilde{\square}_{m}^{c\pm }}u||_{L^{2}}\leq (\sum_{\nu \in Spec\tilde{\square%
}_{m}^{c\pm }}e^{-\nu t})||u||_{L^{2}},$ so $e^{-t\tilde{\square}_{m}^{c\pm
}}$ is bounded by (\ref{5-2.2}). The self-adjointness of $e^{-t\tilde{\square%
}_{m}^{c\pm }}$ follows from that each $\tilde{P}_{m,\nu }^{\pm }$ is
self-adjoint and the infinite sum of $\tilde{P}_{m,\nu }^{\pm \ast }$
converges to ($e^{-t\tilde{\square}_{m}^{c\pm }})^{\ast }.$

\endproof%

For $\nu $ $\in $ $Spec$ $\tilde{\square}_{m}^{c\pm },$ let $\{f_{1}^{\nu
},...,f_{d_{\nu }^{\pm }}^{\nu }\}$ be an orthonormal basis for $\mathcal{%
\tilde{E}}_{m,\nu }^{\pm }(\Sigma ).$ Define the trace of $\tilde{P}_{m,\nu
}^{\pm }(x,x)$ by $Tr\tilde{P}_{m,\nu }^{\pm }(x,x):=\sum_{j=1}^{d_{\nu
}^{\pm }}|f_{j}^{\nu }(x)|^{2}\in C^{\infty }(\Sigma )$ which equals $%
\sum_{j=1}^{d^{\pm }}<\tilde{P}_{m,\nu }^{\pm
}(x,x)e_{j}(x)|e_{j}(x)>_{G_{a,m}}$ where $\{e_{j}(x)\}_{j=1,...,d^{\pm }}$
is any orthonormal basis of $T_{x}^{\ast 0,\pm }\Sigma .$ From Propositions %
\ref{5-5.5}, \ref{l-5-2} and Lemma \ref{l-4-3} (for $\tilde{D}_{m}^{c\pm }$
and $\tilde{\square}_{m}^{c\pm }$) it follows

\begin{proposition}
\label{p-5-3} (Formula of McKean-Singer type for $index(\tilde{D}_{m}^{c+}))$
For each $t>0$ we have 
\begin{equation*}
index(\tilde{D}_{m}^{c+})=\int_{\Sigma }[Tre^{-t\tilde{\square}%
_{m}^{c+}}(x,x)-Tre^{-t\tilde{\square}_{m}^{c-}}(x,x)]dv_{\Sigma ,m}.
\end{equation*}
\end{proposition}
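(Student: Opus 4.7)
The plan is to follow the classical McKean-Singer strategy, but carried out entirely within the transversal/$m$-space spectral framework developed in Propositions \ref{l-5-2} and \ref{5-5.5}.

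First, I would use the spectral expansion (\ref{5.2}) to write, for each fixed $t>0$ and each $x\in\Sigma$,
\begin{equation*}
Tr\,e^{-t\tilde{\square}_m^{c\pm}}(x,x)=\sum_{\nu\in Spec\,\tilde{\square}_m^{c\pm}}e^{-\nu t}\,Tr\,\tilde P_{m,\nu}^{\pm}(x,x)=\sum_{\nu}e^{-\nu t}\sum_{j=1}^{d_\nu^{\pm}}|f_j^\nu(x)|^2,
\end{equation*}
with $\{f_j^\nu\}_{j=1}^{d_\nu^{\pm}}$ an $L^2$-orthonormal basis of $\mathcal{\tilde E}_{m,\nu}^{\pm}(\Sigma)$. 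Integrating against $dv_{\Sigma,m}$, I would then use the $L^2$-orthonormality to get $\int_\Sigma \sum_j |f_j^\nu|^2\,dv_{\Sigma,m}=d_\nu^{\pm}=\dim\mathcal{\tilde E}_{m,\nu}^{\pm}(\Sigma)$. The key interchange of sum and integral is justified by the growth estimate on $\nu_n$ and the uniform pointwise bounds on eigenfunctions established in the proof of Proposition \ref{5-5.5} (which is exactly the input that makes the heat kernel smooth in the first place), so the sum $\sum_\nu e^{-\nu t}d_\nu^{\pm}$ converges absolutely for $t>0$.

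Next, by Proposition \ref{l-5-2}, $Spec\,\tilde{\square}_m^{c+}\cap(0,\infty)=Spec\,\tilde{\square}_m^{c-}\cap(0,\infty)$ and $\dim\mathcal{\tilde E}_{m,\nu}^+(\Sigma)=\dim\mathcal{\tilde E}_{m,\nu}^-(\Sigma)$ for every $\nu>0$; concretely, $\tilde D_m^{c+}$ restricts to a linear isomorphism $\mathcal{\tilde E}_{m,\nu}^+(\Sigma)\to\mathcal{\tilde E}_{m,\nu}^-(\Sigma)$ with inverse $\nu^{-1}\tilde D_m^{c-}$ (coming from $\tilde\vartheta_m^c=\tilde D_m^{c}$ and $\tilde{\square}_m^{c\pm}=\tilde D_m^{c\mp}\tilde D_m^{c\pm}$). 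Therefore every $\nu>0$ contributes $0$ to the difference, and
\begin{equation*}
\int_\Sigma[Tr\,e^{-t\tilde{\square}_m^{c+}}(x,x)-Tr\,e^{-t\tilde{\square}_m^{c-}}(x,x)]\,dv_{\Sigma,m}=\dim\ker\tilde{\square}_m^{c+}-\dim\ker\tilde{\square}_m^{c-},
\end{equation*}
in particular independent of $t$.

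Finally, I would identify the right-hand side with $index(\tilde D_m^{c+})$. Since $\tilde{\square}_m^{c\pm}=\tilde D_m^{c\mp}\tilde D_m^{c\pm}$ and $\tilde D_m^{c\pm}$ is formally self-adjoint as a pair (with $\tilde\vartheta_m^c=\tilde D_m^c$), on smooth elements $(\tilde{\square}_m^{c\pm}u,u)_{L^2}=\|\tilde D_m^{c\pm}u\|_{L^2}^2$, and transversal elliptic regularity (Lemma \ref{5-3.5}, Corollary \ref{4.9-5} adapted to $\tilde{\square}_m^{c}$) shows $\ker\tilde{\square}_m^{c\pm}=\ker\tilde D_m^{c\pm}$ inside $L_m^{2,\pm}(\Sigma,G_{a,m})$, so $\dim\ker\tilde{\square}_m^{c+}-\dim\ker\tilde{\square}_m^{c-}=\dim\ker\tilde D_m^{c+}-\dim\ker\tilde D_m^{c-}=index(\tilde D_m^{c+})$, as in Lemma \ref{l-4-3}. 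The main technical point, and the one I would write out most carefully, is the absolute convergence of $\sum_\nu e^{-\nu t}\int_\Sigma Tr\,\tilde P_{m,\nu}^{\pm}(x,x)\,dv_{\Sigma,m}$ and the justification of term-by-term integration; everything else is either spectral bookkeeping or a direct appeal to the cited results.
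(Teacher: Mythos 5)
Your proof is correct and follows the same route the paper has in mind: the paper offers no written-out proof, simply citing Propositions \ref{5-5.5}, \ref{l-5-2} and Lemma \ref{l-4-3}, which are precisely the three ingredients you assemble (spectral expansion of the heat kernel, isospectrality of the positive spectrum with equal multiplicities, and identification of the supertrace of the kernel projector with the index). Your careful remark about justifying the sum--integral interchange is a real point given the noncompactness of $\Sigma$, though it can be dispatched more cheaply than you suggest: since $Tr\,\tilde P_{m,\nu}^{\pm}(x,x)=\sum_j|f_j^{\nu}(x)|^2\geq 0$, Tonelli gives term-by-term integration for free, and the resulting scalar series $\sum_\nu e^{-\nu t}d_\nu^{\pm}$ converges by the eigenvalue growth estimate (\ref{5-2.2}) already established in the proof of Proposition \ref{5-5.5}.
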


Recall $D_{m}^{+}$ of (\ref{Dm}). To compare $index(D_{m}^{+})$ with $index(%
\tilde{D}_{m}^{c+})$, we have the following homotopy invariance. Here our
Hodge theory in Section 4 provides a useful tool in the proof below.

\begin{lemma}
\label{l-5-3} (Homotopy invariance) $index(D_{m}^{+})=index(\tilde{D}%
_{m}^{c+}).$
\end{lemma}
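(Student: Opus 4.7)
I would interpolate between $D_m^+$ and $\tilde{D}_m^{c+}$ via the one-parameter family
\[
D_s^+ := \bar{\partial}_{\Sigma,m} + \vartheta_{\Sigma,m} + s\tilde{A}_m^c : \Omega_m^{0,+}(\Sigma) \to \Omega_m^{0,-}(\Sigma),\quad s\in[0,1],
\]
so $D_0^+ = D_m^+$ and $D_1^+ = \tilde{D}_m^{c+}$. Since $\tilde{A}_m^c$ is a zeroth-order self-adjoint operator preserving $\Omega_m^{0,\pm}(\Sigma)$ (Lemma \ref{5-1.25}), the formal adjoint equals $D_s^- = \mathfrak{D}_m^+ + s\tilde{A}_m^c$. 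The goal is to show $s \mapsto \mathrm{ind}(D_s^+)$ is well-defined and locally constant on $[0,1]$.

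First I would note that the associated Laplacian $\square_s^{\pm} := D_s^{\mp}D_s^{\pm}$ has the same principal symbol as $\square_{\Sigma,m}^{\pm}$ at $s=0$ and $\tilde{\square}_m^{c\pm}$ at $s=1$, because $\tilde{A}_m^c$ is of zeroth order. Consequently the transversally elliptic estimate (Theorem \ref{t-4-1}, Lemma \ref{5-3.5}), Rellich-type compactness (Proposition \ref{Rellich}), interpolation (Corollary \ref{Ip}), G\aa rding's inequality (Lemma \ref{Gar}) and elliptic regularity (Proposition \ref{ellipreg}) all carry over to $\square_s^{\pm}$, with constants uniform for $s$ in any compact subset of $[0,1]$. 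The Hodge-theoretic machinery of Section \ref{Sec4}, applied to $\square_s^{\pm}$ in place of $\square_{\Sigma,m}^{(q)}$, then produces the orthogonal decomposition $L_m^{2,\pm}(\Sigma,G_{a,m}) = \ker\square_s^{\pm} \oplus \mathrm{Im}\,\square_s^{\pm}$ with $\ker\square_s^{\pm}$ finite-dimensional and $\mathrm{Im}\,\square_s^{\pm}$ closed; the identity $(\square_s^{\pm}u,u)_{L^2} = \|D_s^{\pm}u\|_{L^2}^2$ yields $\ker D_s^{\pm} = \ker\square_s^{\pm}$, so $D_s^+$ is Fredholm and $\mathrm{ind}(D_s^+) := \dim\ker D_s^+ - \dim\ker D_s^-$ is well-defined.

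Next I would establish local constancy. Fix $s_0\in[0,1]$ and let $G_{s_0}^{\pm}$ denote the Green operator of $\square_{s_0}^{\pm}$ as in \eqref{Gr}; by Theorem \ref{t-4-1} ii) and Proposition \ref{Rellich}, $G_{s_0}^{\pm}$ is a bounded compact operator on $L_m^{2,\pm}(\Sigma,G_{a,m})$. The difference
\[
\square_s^{\pm} - \square_{s_0}^{\pm} = (s-s_0)\bigl[(\bar{\partial}_{\Sigma,m}+\vartheta_{\Sigma,m})\tilde{A}_m^c + \tilde{A}_m^c(\bar{\partial}_{\Sigma,m}+\vartheta_{\Sigma,m})\bigr] + (s^2-s_0^2)(\tilde{A}_m^c)^2
\]
is a first-order perturbation with coefficients of size $O(|s-s_0|)$, so a Neumann series argument applied to $I + G_{s_0}^{\pm}(\square_s^{\pm}-\square_{s_0}^{\pm})$ shows that $D_s^{\pm}$ remains Fredholm for $s$ near $s_0$ with $\mathrm{ind}(D_s^+) = \mathrm{ind}(D_{s_0}^+)$, by the classical stability of the Fredholm index under continuous perturbation combined with compactness. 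Connectedness of $[0,1]$ then gives $\mathrm{ind}(D_m^+) = \mathrm{ind}(D_0^+) = \mathrm{ind}(D_1^+) = \mathrm{ind}(\tilde{D}_m^{c+})$.

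The main obstacle will be the local-constancy step in this non-compact, transversal setting: one must confirm that composing the bounded parametrix $G_{s_0}^{\pm}$ with the zeroth-order perturbation $\tilde{A}_m^c$ yields a compact operator on $L_m^{2,\pm}(\Sigma,G_{a,m})$, and that the perturbation does not destroy the closed-range property. Both features rest on the Rellich-type compactness and elliptic estimates in the modified Sobolev spaces $H_{0,q,m}^{\prime s}(\Sigma,G_{a,m})$ developed in Section \ref{Sec4}; with these two non-compact ingredients in place, the classical F.~Riesz--Schauder Fredholm perturbation theory applies verbatim.
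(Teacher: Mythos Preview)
Your proposal is correct and follows essentially the same strategy as the paper: interpolate via $D_s^+ = \bar{\partial}_{\Sigma,m} + \vartheta_{\Sigma,m} + s\tilde{A}_m^c$, use the transversal Hodge theory of Section~\ref{Sec4} (Rellich compactness and elliptic estimates in the modified spaces $H_{0,q,m}^{\prime s}$) to obtain Fredholmness for each $s$, then apply a Neumann-series argument plus an open--closed continuity argument on $[0,1]$ to conclude local constancy of the index.

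The only difference is in how the local-constancy step is implemented. The paper works directly with the first-order operator $L_t = D_t^+$ viewed as a bounded map $H_{0,+,m}^{\prime 1} \to L_{0,-,m}^2$ and builds an explicit auxiliary operator
\[
A_t:\; H_{0,+,m}^{\prime 1}\oplus \ker L_0^{*} \longrightarrow L_{0,-,m}^{2}\oplus \ker L_0,\qquad A_t(u,v)=(L_t u + v,\; P_{\ker L_0}u),
\]
shows $A_0$ is invertible by the Hodge decomposition, and then inverts $A_t = A_0 + R_t$ for small $t$ by Neumann series; an injective comparison map $B_t:\ker L_t^{*}\oplus\ker L_0 \to \ker L_t \oplus \ker L_0^{*}$ then yields the index inequalities. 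You instead route the argument through the second-order Laplacian $\square_s^{\pm}$ and its Green operator, invoking abstract Fredholm stability. Both routes are valid; the paper's explicit $A_t,B_t$ construction avoids having to discuss precisely on which space $I + G_{s_0}^{\pm}(\square_s^{\pm}-\square_{s_0}^{\pm})$ acts (your sketch is slightly imprecise here, since $\square_s^{\pm}-\square_{s_0}^{\pm}$ is unbounded on $L^2$ and the composition must be read as a bounded map $H_m^{\prime 2,\pm}\to H_m^{\prime 2,\pm}$), while your Laplacian route has the advantage of reusing the Green operator already constructed in \eqref{Gr} rather than introducing new auxiliary spaces.
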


\proof
Despite that our operators are of "transversal" type in the sense as
constructed in this subsection, the arguments are essentially classical in
spirit. The key point is to make sure that the noncompactness of $\Sigma $
endowed with our various geometric data does no essential harm to those
arguments that are valid for compact manifolds. We sketch the idea of the
proof; for more details and references, the reader is referred to the proof
of \cite[Theorem 4.7]{CHT} in a similar vein.

From (\ref{Dm}) and (\ref{5.1}), we have $\tilde{D}_{m}^{c+}=D_{m}^{+}+%
\tilde{A}_{m}^{c}$ where $\tilde{A}_{m}^{c}:\Omega _{m}^{0,+}(\Sigma
)\rightarrow \Omega _{m}^{0,-}(\Sigma )$ is a bounded linear operator of
zeroth order. A homotopy between $L_{0}$ $=$ $D_{m}^{+}$ and $L_{1}$ $=$ $%
\tilde{D}_{m}^{c+}$ can be realized by $L_{t}$ $:=$ $D_{m}^{+}+t\tilde{A}%
_{m}^{c}$ $=$ $\bar{\partial}_{\Sigma ,m}+\bar{\partial}_{\Sigma ,m}^{\ast
}+t\tilde{A}_{m}^{c}:\Omega _{m}^{0,+}(\Sigma )\rightarrow \Omega
_{m}^{0,-}(\Sigma )$ for $t\in \lbrack 0,1].$ Extending $L_{t}$ to $%
Dom(L_{t})$ $\subset $ $L_{0,+,m}^{2}$ $:=$ $\oplus
_{q:even}L_{0,q,m}^{2}(\Sigma ,G_{a,m})$, one can show that $Dom(L_{t})$ $=$ 
$H_{0,+,m}^{\prime 1}$ $:=$ $L_{0,+,m}^{2}\cap H_{0,+}^{\prime 1}$ where $%
H_{0,+}^{\prime 1}$ $:=$ $\oplus _{q:even}H_{0,q}^{\prime 1}(\Sigma
,G_{a,m}) $ (cf. (\ref{Als}) for the notation).

Now consider $\mathcal{H}_{0}$ $:=$ $H_{0,+,m}^{\prime 1}$ $\oplus $ $Ker$ $%
L_{0}^{\ast }$ and $\mathcal{H}_{1}$ $:=$ $L_{0,-,m}^{2}$ $\oplus $ $Ker$ $%
L_{0}.$ Let $A_{t}$ $:$ $\mathcal{H}_{0}$ $\rightarrow $ $\mathcal{H}_{1}$
be the bounded linear map defined by $A_{t}(u,v)=(L_{t}u+v,P_{KerL_{0}}u)\in 
\mathcal{H}_{1}$ for $(u,v)$ $\in $ $\mathcal{H}_{0},$ where $P_{KerL_{0}}$
denotes the orthogonal projection onto $Ker$ $L_{0}.$ We claim the following
fact:%
\begin{equation}
\exists \text{ }r_{0}>0\text{ such that }A_{t}\text{ is invertible for every 
}0\leq t\leq r_{0}.  \label{At}
\end{equation}

\noindent For $t$ $=$ $0,$ the fact that $A_{0}$ is invertible follows from
the Hodge theory for $L_{0}$ $=$ $D_{m}^{+}$ (cf. Theorem \ref{t-4-2})$.$
For $t$ $\neq $ $0,$ write $A_{t}$ $=$ $A_{0}+R_{t}$ so that $||R_{t}u||_{%
\mathcal{H}_{1}}$ $\leq $ $Ct||u||_{\mathcal{H}_{0}}.$ We can then construct
the inverse of $A_{t}$ by the Neuman series for small $t,$ proving (\ref{At}%
).

We claim another fact: (In the remaining of the proof, we use $``ind"$ as
abbreviation of $``index".)$%
\begin{equation}
\exists \text{ }r>0\text{ such that }ind\text{ }L_{t}=ind\text{ }L_{0}\text{
for every }0\leq t\leq r.  \label{Indt}
\end{equation}

\noindent For $0\leq t\leq r_{0}$ in (\ref{At}) we define $B_{t}$ $:$ $Ker$ $%
L_{t}^{\ast }\oplus Ker$ $L_{0}$ $\rightarrow $ $Ker$ $L_{t}\oplus Ker$ $%
L_{0}^{\ast }$ by $B_{t}(a,b):=(P_{KerL_{t}}u,v)\in Ker$ $L_{t}\oplus Ker$ $%
L_{0}^{\ast }$ where $(u,v)$ $=$ $A_{t}^{-1}(a,b).$ It is not hard to see
that $B_{t}$ is injective. It follows that $\dim KerL_{t}^{\ast }+\dim
KerL_{0}\leq \dim KerL_{t}+\dim KerL_{0}^{\ast }.$ Hence $ind$ $L_{0}:=\dim
KerL_{0}$ $-$ $\dim KerL_{0}^{\ast }$ $\leq $ $\dim KerL_{t}$ $-$ $\dim
KerL_{t}^{\ast }$ $=$ $ind$ $L_{t}.$ By a similar argument, we have $ind$ $%
L_{0}^{\ast }$ $\leq $ $ind$ $L_{t}^{\ast }$ for small t. Observe that $ind$ 
$L_{t}^{\ast }$ = $-ind$ $L_{t}.$ So we also have $ind$ $L_{0}$ $\geq $ $ind$
$L_{t}.$ We have shown (\ref{Indt}).

We shall now show that ($ind$ $\tilde{D}_{m}^{c+}$ $=)$ $ind$ $L_{1}$ $=$ $%
ind$ $L_{0}$ $(=ind$ $D_{m}^{+})$ by the continuity method. Let $\Lambda $ $%
= $ $\{t\in \lbrack 0,1]:ind$ $L_{t}=ind$ $L_{0}\}.$ Clearly $0\in \Lambda ,$
so $\Lambda $ is not empty. Suppose $t_{0}$ $\in $ $\Lambda .$ By reasoning
similar to the proof of (\ref{At}) and (\ref{Indt}) (replacing $L_{0},$ $%
A_{0}$ by $L_{t_{0}},$ $A_{t_{0}},$ respectively$),$ we can show $ind$ $%
L_{t}=ind$ $L_{t_{0}}$ for $t$ $\in $ $(t_{0}-\varepsilon ,$ $%
t_{0}+\varepsilon )$ with some $\varepsilon $ $>$ $0.$ This implies that $%
\Lambda $ is open. On the other hand, we apply the same reasoning as in (\ref%
{Indt}) to a limit point $t_{\infty }$ of $\Lambda $ and show that $ind$ $%
L_{t_{\infty }}$ $=$ $ind$ $L_{t_{n}}$ $=$ $ind$ $L_{0}$ where $t_{n}$ $\in $
$\Lambda $ is close enough to $t_{\infty }.$ So $t_{\infty }$ $\in $ $%
\Lambda .$ We have shown that $\Lambda $ is closed. Therefore $\Lambda $ $=$ 
$[0,1].$

\endproof%

From Lemma \ref{l-5-3}, Corollary \ref{t-4-3} and Proposition \ref{p-5-3},
there follows a formula of McKean-Singer type:

\begin{theorem}
\label{t-5-1} (Formula of McKean-Singer type for $index(\bar{\partial}%
_{\Sigma ,m}$-complex$))$ For $m\geq 0$ and $a$ $>$ $\frac{m}{2}$ we have
for each $t>0$%
\begin{equation}
\sum_{q=0}^{n}(-1)^{q}\dim H_{m}^{q}(\Sigma ,\mathcal{O})=\int_{\Sigma
}[Tre^{-t\tilde{\square}_{m}^{c+}}(x,x)-Tre^{-t\tilde{\square}%
_{m}^{c-}}(x,x)]dv_{\Sigma ,m}.  \label{5.5}
\end{equation}
\end{theorem}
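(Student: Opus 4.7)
The statement is essentially a corollary assembled from three previously established pieces, so my plan is to chain them together cleanly rather than to do any new analysis.

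First, I would begin by identifying the left-hand side $\sum_{q=0}^{n}(-1)^{q}\dim H_{m}^{q}(\Sigma,\mathcal{O})$ with $\mathrm{index}(\bar{\partial}_{\Sigma,m}\text{-complex})$ by definition, and then invoke Corollary \ref{t-4-3} to rewrite this as the difference $\dim\mathrm{Ker}\,D_{m}^{+}-\dim\mathrm{Ker}\,\mathfrak{D}_{m}^{+}$, i.e.\ $\mathrm{index}(D_{m}^{+})$. This step is legitimate because the Hodge theory of Section 4 (Theorem \ref{t-4-2}, Lemma \ref{l-4-3}, Proposition \ref{p-4-1}) ensures that each $\mathrm{Ker}\,\square_{\Sigma,m}^{(q)}$ is finite-dimensional and consists of smooth elements of $\Omega_{m}^{0,q}(\Sigma)$, so all the quantities involved are well defined.

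Next, I would invoke the homotopy invariance Lemma \ref{l-5-3}, which identifies $\mathrm{index}(D_{m}^{+})$ with $\mathrm{index}(\tilde{D}_{m}^{c+})$. The justification there used the fact that $\tilde{D}_{m}^{c+}-D_{m}^{+}=\tilde{A}_{m}^{c}$ is a bounded zeroth-order operator, so the deformation $L_{t}=D_{m}^{+}+t\tilde{A}_{m}^{c}$ keeps the domain equal to $H_{0,+,m}^{\prime 1}$ and the index is preserved. At this point the left-hand side of the asserted formula equals $\mathrm{index}(\tilde{D}_{m}^{c+})$.

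Finally I would apply the McKean--Singer type formula Proposition \ref{p-5-3}, which was itself a direct consequence of Propositions \ref{5-5.5} and \ref{l-5-2}: the eigenspaces $\mathcal{\tilde{E}}_{m,\nu}^{\pm}(\Sigma)$ at positive $\nu$ are paired by $\tilde{D}_{m}^{c\pm}$ in equal dimensions, so that in the pointwise supertrace expansion
\begin{equation*}
\mathrm{Tr}\,e^{-t\tilde{\square}_{m}^{c+}}(x,x)-\mathrm{Tr}\,e^{-t\tilde{\square}_{m}^{c-}}(x,x)=\sum_{\nu}e^{-\nu t}\bigl(\mathrm{Tr}\,\tilde{P}_{m,\nu}^{+}(x,x)-\mathrm{Tr}\,\tilde{P}_{m,\nu}^{-}(x,x)\bigr),
\end{equation*}
the contributions from $\nu>0$ cancel after integration against $dv_{\Sigma,m}$, leaving only the $\nu=0$ terms whose integrals give $\dim\mathrm{Ker}\,\tilde{D}_{m}^{c+}-\dim\mathrm{Ker}\,\mathfrak{\tilde{D}}_{m}^{c+}=\mathrm{index}(\tilde{D}_{m}^{c+})$. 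Combining the three identifications yields (\ref{5.5}) for every $t>0$.

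The only subtle point worth flagging is the $t$-independence of the right-hand side: in the classical compact setting this is immediate from trace-class considerations, but here $\Sigma$ is noncompact and the eigenvalue growth estimate (\ref{5-2.2}) together with the finiteness of $\int_{\Sigma}l(x)^{m}dv_{\Sigma,m}=\mathrm{Vol}(M)$ used in the proof of Proposition \ref{5-5.5} is what makes the cancellation legitimate. Since both ingredients are already in place, no further work is required and the proof reduces to citing Corollary \ref{t-4-3}, Lemma \ref{l-5-3}, and Proposition \ref{p-5-3} in sequence.
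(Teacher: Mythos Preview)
Your proposal is correct and follows exactly the same route as the paper, which derives Theorem~\ref{t-5-1} by citing Corollary~\ref{t-4-3}, Lemma~\ref{l-5-3}, and Proposition~\ref{p-5-3} in sequence. Your additional remarks on the $t$-independence and the integrability via $\int_{\Sigma} l(x)^m\,dv_{\Sigma,m}<\infty$ are accurate elaborations of what is implicit in the cited Proposition~\ref{p-5-3}.
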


\begin{remark}
\label{5-17.5} (Bundle case): Let $E$ be a $\mathbb{C}^{\ast }$-equivariant
holomorphic vector bundle over $\Sigma $, endowed with a $\mathbb{C}^{\ast }$%
-invariant Hermitian metric$.$ We can extend $D_{m}^{\pm },$ $\tilde{A}%
_{m}^{c}$, $\tilde{D}_{m}^{c\pm }$ and hence $\tilde{\square}_{m}^{c\pm }$
to $E$-valued $m$-spaces $\Omega _{m}^{0,\pm }(\Sigma ,E)$ in a standard
manner. By similar arguments in deducing (\ref{5.5}), we also have a
McKean-Singer type formula for $index(\bar{\partial}_{\Sigma ,m}^{E}$-complex%
$).$ That is, with $H_{m}^{q}(\Sigma ,\mathcal{O})$ replaced by $%
H_{m}^{q}(\Sigma ,E)$ and $\tilde{\square}_{m}^{c\pm }$ in the RHS replaced
by their counterparts for $E$, the resulting two sides are equal.
\end{remark}

\section{\textbf{Approximation of the transversal heat kernel }$e^{-t\tilde{%
\square}_{m}^{c\pm }}\label{A-THK}$}

In this section we are going to construct an (transversal) approximate heat
kernel by patching up local heat kernels and taking its adjoint. We then
carry out successive approximation to get a global (unique, transversal)
heat kernel. Our main results are Theorems \ref{t-adjeq}, \ref{t-uniqueness}
and \ref{p-asymp} while the most technical lemma is Proposition \ref{projm}
which brings the orthogonal projection $\pi _{m}$ as already seen in (\ref%
{pim}) to an integral representation.

The motivations for the whole setting are implicit in Propositions \ref{madj}
and \ref{BoxDU}. Let us remark that the two cases stated between (\ref{6-1g}%
) and (\ref{HQ}) yield some complication of the heat kernel evaluation as
mentioned in the Introduction.

To start with, let us choose suitable charts on $\Sigma $ as follows. For $%
\Sigma $ satisfying (\ref{1-0}), (\ref{1-1}) and (\ref{1-2}), with any point 
$q$ $\in $ $\Sigma $ write $\bar{q}$ $\in $ $\Sigma /\mathbb{R}^{+},$ the $%
\mathbb{R}^{+}$ orbit of $q.$ Choose a distance function on $\Sigma /\mathbb{%
R}^{+}$ (say, obtained from a Riemannian metric on $\Sigma /\mathbb{R}^{+}$
which is a smooth manifold as mentioned in the proof of Theorem \ref{thm2-1}$%
).$ Given small $\varepsilon $ \TEXTsymbol{>} $0$, we have a coordinate
neighborhood $V\times (-\varepsilon ,\varepsilon )$ $\subset $ $\Sigma /%
\mathbb{R}^{+}$ where $V$ $\subset $ $\mathbb{C}^{n-1}$ is a bounded domain.
Remark that we may always choose $\varepsilon $ $=$ $\pi $ if the $\mathbb{C}%
^{\ast }$ action on $\Sigma $ is globally free. Due to the compactness of $%
\Sigma /\mathbb{R}^{+}$, we can find finitely many $V_{j}\times
(-\varepsilon _{j},\varepsilon _{j}),$ $V_{j}$ $\subset $ $\mathbb{C}^{n-1}$
so that (cf. (\ref{1-0})) 
\begin{equation}
\Sigma /\mathbb{R}^{+}=\cup _{j}(V_{j}\times (-\varepsilon
_{j}/4,\varepsilon _{j}/4))\text{, }\Sigma =\cup _{j}(V_{j}\times
(-\varepsilon _{j}/4,\varepsilon _{j}/4)\times \mathbb{R}^{+})  \label{6.0}
\end{equation}

\noindent It may be useful to assume that all $\varepsilon _{j}^{\prime }s$
are the same, but we keep the subscript $j$ for the time being. The
following notations $W_{j}$ and $\hat{W}_{j}$ will be used later.

\begin{notation}
\label{n-6.1} $W_{j}:=V_{j}\times (-\varepsilon _{j},\varepsilon _{j})\times 
\mathbb{R}^{+}\subset \Sigma $ and $\hat{W}_{j}:=V_{j}\times (-\varepsilon
_{j}/4,\varepsilon _{j}/4)\times \mathbb{R}^{+}.$ See Remark \ref{R-8-37}
for explicit choices of $\varepsilon _{j}.$
\end{notation}

Below are some cut-off functions $\varphi _{j}$ and $\tau _{j}$ with values
in $[0,1].$

$i)$ $\varphi _{j}\in C_{c}^{\infty }(V_{j}\times (-\varepsilon
_{j},\varepsilon _{j}))$ with $\sum \varphi _{j}$ $=$ $1$ on $\Sigma /%
\mathbb{R}^{+}.$ Extend the domain of definition of $\varphi _{j}$ to $W_{j}$
(still denoted as $\varphi _{j})$ by $\varphi _{j}(\bar{q},r)=\varphi _{j}(%
\bar{q})$ for any $r\in \mathbb{R}^{+}$ and $\bar{q}\in V_{j}\times
(-\varepsilon _{j},\varepsilon _{j}).$ So we have $\sum \varphi _{j}$ $=$ $1$
on $\Sigma $ in view of (\ref{6.0}). Note that supp $\varphi _{j}$ $\subset $
$\Sigma $ must be noncompact while if $\varphi _{j}$ is regarded as
functions on $V_{j}\times (-\varepsilon _{j},\varepsilon _{j}),$ supp $%
\varphi _{j}$ is compact.

Let ($z,\phi )$ $\in $ $V_{j}\times (-\varepsilon _{j},\varepsilon _{j})$
denote the coordinates for $\bar{q}$ $\in $ $V_{j}\times (-\varepsilon
_{j},\varepsilon _{j}).$ Put%
\begin{equation*}
A_{j}=\left\{ z\in V_{j}:\text{there is a }\phi \in (-\varepsilon
_{j},\varepsilon _{j})\text{ such that }\varphi _{j}(z,\phi )\neq 0\right\}
\subset \subset V_{j}.
\end{equation*}

$ii)$ $\tau _{j}(z)\in C_{c}^{\infty }(V_{j})$ with $\tau _{j}\equiv 1$ on
some neighborhood of $A_{j}$ (and $\equiv $ $0$ outside $V_{j}).$

$iii)$ $\sigma _{j}$ $\in $ $C_{c}^{\infty }((-\frac{\varepsilon _{j}}{4},%
\frac{\varepsilon _{j}}{4}))$ with 
\begin{equation}
\int_{-\varepsilon _{j}}^{\varepsilon _{j}}\sigma _{j}(\phi )\frac{%
dv_{S^{1}}(\phi )}{2\pi }=\int_{-\varepsilon _{j}/4}^{\varepsilon
_{j}/4}\sigma _{j}(\phi )\frac{dv_{S^{1}}(\phi )}{2\pi }=1.  \label{6-1a}
\end{equation}

It is possible to adapt the seemingly unusable formulas in \cite[(5.39) on
p.81]{CHT} to the present situation. Let us first set up the following. For
a (regular) domain $\Omega $ $\subset $ $\mathbb{C}^{n-1},$ we have the
Dirichlet heat kernel for $\square _{\Omega ,m}^{c}$ (see (\ref{DiracLap}) $%
iii),$ cf. (\ref{6-11a}), (\ref{6-11.75})), denoted by $K_{t}^{\Omega
}(z,\zeta )$ for $z,$ $\zeta $ $\in $ $\Omega .$ See the preceding section
for the definition of such spin$^{c}$ Laplacians (cf. (\ref{DiracLap})), and 
\cite{Chavel} or \cite{CHT} for the Dirichlet heat kernel construction
(under suitable regularity conditions on $\partial V_{j}).$ We set%
\begin{equation}
K_{t}^{j}(z,\zeta )=K_{t}^{\Omega }(z,\zeta )\text{ with }\Omega =V_{j}.
\label{6-1'}
\end{equation}

\noindent Note that we have identified $V_{j}$ $\subset $ $\mathbb{C}^{n-1}$
with $V_{j}\times \{0\}\times \{1\}$ $\subset $ $V_{j}\times (-\varepsilon
_{j},$ $\varepsilon _{j})\times \mathbb{R}^{+}$ as embedded in $\Sigma $
with the induced metric $\pi ^{\ast }g_{M}|_{V_{j}}$.

Remark that we should have considered the \textit{adjoint} heat kernel in (%
\ref{6-1'}); but the operator here is self-adjoint the associated kernel
functions are the same: ($K_{t}^{j})^{\ast }(z,\zeta )$ $=$ $%
K_{t}^{j}(z,\zeta )$ (acting to the left on an element in $\zeta ;$ compare (%
\ref{endo})$).$

For $u$ $\in $ $L^{2,\pm }(\Sigma ,\pi ^{\ast }\mathcal{E}_{M},G_{a,m})$ we
have (see Notation \ref{n-6.1} for $W_{j})$%
\begin{equation}
\pi _{m}u|_{W_{j}}(\zeta ,\eta )=\eta ^{m}\upsilon _{j}(\zeta );\text{
subscript \textquotedblleft }j\text{" omitted in }(\zeta ,\eta )
\label{6-1.5}
\end{equation}%
\noindent for some $\upsilon _{j}(\zeta )$ $\in $ $L^{2,\pm }(V_{j}).$ This $%
L^{2}$ property of $\upsilon _{j}$ can be checked using (\ref{3-19.5}). It
is easily verified, with the metric $G_{a,m},$ that $L_{m}^{2,q}(\Sigma )$
is orthogonal to $L_{m^{\prime }}^{2,q}(\Sigma )$ if $m\neq m^{\prime }.$
But these spaces $\{L_{m}^{2,q}(\Sigma )\}_{m\in \mathbb{Z}}$ do not
constitute a complete decomposition of $L^{2,q}(\Sigma ).$

Let $\Psi _{\pm ,m}$ (resp. $\Psi _{\ast ,m}$) denote $\Psi _{q,m}$ in (\ref%
{Psi_qm}) for $q$ even/odd (resp. for all $q$) to identify bundle-valued
elements on $V_{j}$ with $m$-space elements on $W_{j}$ (probably with an
extra bundle $E$; $U_{j},$ $\psi _{j}^{-1}(D_{j})$ in (\ref{Psi_qm}) taken
to be $V_{j},$ $W_{j}$ in Notation \ref{n-6.1}).

We schematically define the operator $H_{m,t}^{j}$ to be, with cut-off
functions omitted, $\Psi _{\ast ,m}\circ $ $K_{t}^{j}\circ $ $\Psi _{\ast
,m}^{-1}$ (see (\ref{Psi_qm}) and (\ref{6-1'})) and its adjoint to be $\Psi
_{\ast ,m}\circ $ $K_{t}^{j\ast }\circ $ $\Psi _{\ast ,m}^{-1}$ where $%
K_{t}^{j\ast }$ is the usual heat kernel adjoint (see Remark \ref{R-6-3}
below)$.$ More precisely one has the expression in terms of local
coordinates (with the induced trivializations on bundles) and of those
cut-off functions after Notation \ref{n-6.1}%
\begin{equation}
H_{m,t}^{j}(x,y)\overset{(\ref{6-1'})}{:=}\varphi
_{j}(x)w^{m}K_{t}^{j}(z,\zeta )\eta ^{-m}\tau _{j}(\zeta )\sigma
_{j}(\vartheta )l(y)^{m}  \label{6-1}
\end{equation}

\noindent where $x$ $=$ $(z,w),$ $y$ $=$ $(\zeta ,\eta )$ ($z,\zeta $ $\in $ 
$V_{j},$ $w,\eta $ $\in $ $(-\varepsilon _{j},\varepsilon _{j})\times 
\mathbb{R}^{+}$ with the subscript \textquotedblleft $j$" omitted in these
coordinates$)$, $\eta $ $=$ $|\eta |e^{i\vartheta }$. This slightly tedious
expression (\ref{6-1}) is basically motivated (modulo the cutoff functions)
by Propositions \ref{BoxDU} and \ref{madj} (see also \cite[(5.38) and (5.39)
in p.81]{CHT} and Remark \ref{6-9-1} below for the differences) with a
seemingly extra factor $l(y)^{m}$ in the end. This factor $l(y)^{m}$ plays a
role similar to $\sigma _{j}$ (cf. (\ref{6-1a})) due to its
normalization/unity property (see (\ref{fibrenv}), (\ref{lambdam})). We note
that the metric $G_{a,m}$ is used for $H_{m,t}^{j}$ while the metric $\pi
^{\ast }g_{M}$ is used for $K_{t}^{j},$ but they coincide on $\pi ^{\ast }%
\mathcal{E}_{M}$ (see Remark \ref{N-6-3}). See also Lemma \ref{L-7iso} for
the $L^{2}$-isometry property of $\Psi _{\pm ,m}$ that partly justifies the
reason for why the expression (\ref{6-1}) is formed in this way (cf. \cite[%
top lines on p.74]{CHT}). See more in Remark \ref{R-6-3}

\begin{remark}
\label{R-6-3} Regarding the adjoint of $H_{m,t}^{j}$ as given before (\ref%
{6-1}), it is shown in Lemma \ref{L-7iso} that the above $\Psi _{\pm ,m}$
preserve $L^{2}$-norms (up to a multiplicative constant), giving
isomorphisms between respective Hilbert spaces. So the adjoint of $%
H_{m,t}^{j}$ in the usual sense and the one as defined above coincide. The
kernel function ($\Psi _{\ast ,m}\circ $ $K_{t}^{j}\circ $ $\Psi _{\ast
,m}^{-1})(x,y)$ on $W_{j}\times W_{j}$ is $\frac{\pi }{\varepsilon _{j}}%
w^{m}K_{t}^{j}(z,\zeta )\eta ^{-m}l(y)^{m}$ because for $u(y)$ $=$ $\eta
^{m}v(\zeta ,\bar{\zeta}),$ $\frac{\pi }{\varepsilon _{j}}%
\int_{W_{j}}w^{m}K_{t}^{j}(z,\zeta )\eta ^{-m}l(y)^{m}u(y)dv_{\Sigma ,m}(y)$
equals 
\begin{equation*}
w^{m}\int_{V_{j}}K_{t}^{j}(z,\zeta )v(\zeta ,\bar{\zeta})\pi ^{\ast
}dv_{M}(\zeta )
\end{equation*}%
(see the proof of Lemma \ref{L-7iso}), and this is seen to be ($\Psi
K_{t}^{j}\Psi ^{-1}(u))(x)$ (via definitions)$.$ This motivates (\ref{6-1}).
Denote by $L_{m,loc}^{2,q}(W_{j},\pi ^{\ast }\mathcal{E}_{M},G_{a,m})$ (or $%
L_{m,loc}^{2,q}(W_{j},G_{a,m})$ for short) the space of $L^{2}$-completion
of $\Omega _{m.loc}^{0,q}(W_{j})$ (see Definition \ref{d-6.8-5}). Similarly
denote by $L^{2,q}(W_{j},\pi ^{\ast }\mathcal{E}_{M},G_{a,m})$ (or $%
L^{2,q}(W_{j},G_{a,m})$ for short) the space of $L^{2}$-completion of
square-integrable smooth sections of $\pi ^{\ast }\mathcal{E}_{M}$ over $%
W_{j}$ with respect to the metric $G_{a,m}$ (cf. Remark.\ref{N-6-3}). In the
similar spirit as in Lemma \ref{R-5-5}, we have $%
L_{m,loc}^{2,q}(W_{j},G_{a,m})$ $\subset $ $L^{2,q}(W_{j},G_{a,m}).$ Now if $%
\tilde{u}$ $\in $ ($L_{m,loc}^{2,q}(W_{j},G_{a,m}))^{\perp }$ $(\subset $ $%
L^{2,q}(W_{j},G_{a,m}))$ then the similar argument as above implies that $%
\Psi K_{t}^{j}\Psi ^{-1}(\tilde{u})$ $=$ $0.$ Hence $\Psi _{\ast ,m}\circ $ $%
K_{t}^{j}$ $\circ \ \Psi _{\ast ,m}^{-1}$ extends to $L^{2,q}(W_{j},G_{a,m})$
with image in $L_{m,loc}^{2,q}(W_{j},G_{a,m})$ $\subset $ $%
L^{2,q}(W_{j},G_{a,m}).$
\end{remark}

We are going to form an approximate heat kernel: 
\begin{equation}
P_{m,t}^{0}:=\sum_{j\text{ (}finite)}P_{m,t}^{0,j}\text{\ \ \ \ where }%
P_{m,t}^{0,j}:=H_{m,t}^{j}\circ \pi _{m}  \label{6-1a1}
\end{equation}

\noindent where $H_{m,t}^{j}$ is the operator associated with the kernel
function $H_{m,t}^{j}(x,y)$ of (\ref{6-1})$.$

To formulate our next result, the following setup is needed. For any integer 
$s$ $\geq $ $0$, we define the $C_{B}^{s}$-norm of an element $\omega $ in $%
\tilde{\Omega}_{m,loc}^{0,\ast }(\Sigma )$ or $\tilde{\Omega}_{m,loc}^{0,\pm
}(\Sigma )$ (see Definition \ref{d-6.8-5}) as follows.

Fix an integer $m\geq 0$ and a partition of unity $\varphi _{j}$ (see $i)$
below Notation \ref{n-6.1}). Writing $\omega (x)=\Sigma _{j}\varphi
_{j}(x)\omega (x)$ $=$ $\Sigma _{j}\varphi _{j}(x)w^{m}h_{I_{q}}(z,\bar{z},w,%
\bar{w})d\bar{z}^{I_{q}}$ where $z$ $\in $ $V_{j}$ and $w$ $\in $ $%
(-\varepsilon _{j},\varepsilon _{j})\times \mathbb{R}^{+}$ are local
coordinates of $x,$ for an integer $s$ $\geq $ $0$ we define ($w$ $=$ $%
|w|e^{i\phi })$%
\begin{equation}
||\omega ||_{C_{B}^{s}}:=\sum_{j}\sum_{k=0}^{s}\sup_{w}||\varphi _{j}(\cdot
,\phi )h_{I_{q}}(\cdot ,\bar{\cdot},w,\bar{w})d\bar{z}%
^{I_{q}}||_{C^{k}(V_{j})}\text{ for }\omega \in \tilde{\Omega}%
_{m,loc}^{0,\ast }(\Sigma )  \label{CBs-norm}
\end{equation}

\noindent which means the supremum over $x$ in the domain, of all partial
derivatives in $z\in V_{j}$ up to order $s.$ Compare the $\parallel \cdot
\parallel _{s}^{\prime }$-norm given in (\ref{Als}). Similarly for an
element of the form (cf. Proposition \ref{l-6-0a} or (\ref{6-19.5})) 
\begin{equation}
K(x,y)=\sum_{j}w^{m}k^{j}(x,y)\bar{\eta}^{m}  \label{6.7-5}
\end{equation}%
\noindent where $x$ $=$ $(z,w),$ $y$ $=$ $(\zeta ,\eta ),$ assuming $%
k^{j}(x,y)$ are $C^{\infty }$-smooth we define the $C_{B}^{s}$-norm of $K$
as follows: 
\begin{equation}
||K(\cdot ,\cdot )||_{C_{B}^{s}(\Sigma \times \Sigma
)}:=\sum_{j}\sup_{w,\eta }||k^{j}((\cdot ,w),(\cdot ,\eta
))||_{C^{s}(V_{j}\times V_{j})}  \label{CBs}
\end{equation}

\noindent which means the supremum over all $x,$ $y$ in the domain$,$ of all
partial derivatives of $k^{j}$ in $z$, $\zeta $ $\in $ $V_{j}$ up to order $%
s.$ Note that the $C_{B}^{s}$-norm (\ref{CBs}) depends on the choice of the
expression (\ref{6.7-5}) with \textquotedblleft $m",$ but we do not put on
such dependence whenever no confusion occurs.



\begin{lemma}
\label{l-6-0} With the notation above, $\lim_{t\rightarrow
0+}P_{m,t}^{0}(u)=\pi _{m}u$ (pointwise) for every $u$ $\in $ $\Omega
^{0,\pm }(\Sigma )$ $\cap $ $L^{2,\pm }(\Sigma ,\pi ^{\ast }\mathcal{E}%
_{M},G_{a,m}).$ Moreover, for every integer $s\geq 0$ it holds that $%
P_{m,t}^{0}(u)$ $\rightarrow $ $\pi _{m}u$ as $t\rightarrow 0$ in the norm $%
||\cdot ||_{C_{B}^{s}}$ (hence in $L^{2}$) for $u\in \Omega ^{0,\pm }(\Sigma
)\cap L^{2,\pm }(\Sigma ,\pi ^{\ast }\mathcal{E}_{M},G_{a,m}).$ In
particular $P_{m,t}^{0}(u)$ $\in $ $\tilde{\Omega}_{m,loc}^{0,\pm }(\Sigma
). $
\end{lemma}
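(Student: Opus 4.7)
The strategy is to evaluate $P_{m,t}^{0,j}(u)(x)=(H_{m,t}^{j}\circ\pi_{m})(u)(x)$ explicitly, reducing it after a Fubini decomposition to a standard heat evolution on $V_{j}$ against a fixed $L^{2}$ datum, then to pass $t\to 0^{+}$ using the classical convergence $e^{-t\square_{V_{j},m}^{c}}\to\mathrm{Id}$ of Dirichlet heat semigroups. I would first fix $u\in\Omega^{0,\pm}(\Sigma)\cap L^{2,\pm}(\Sigma,\pi^{\ast}\mathcal{E}_{M},G_{a,m})$ and use the local Fourier form \eqref{6-1.5}, namely $(\pi_{m}u)|_{W_{k}}(\zeta,\eta)=\eta^{m}v_{k}(\zeta)$ with $v_{k}\in L^{2,\pm}(V_{k})$, valid on each chart. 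Since $\tau_{j}(\zeta)\sigma_{j}(\vartheta)$ cuts off the $y$-support of $H_{m,t}^{j}(x,\cdot)$ inside $W_{j}=V_{j}\times(-\varepsilon_{j},\varepsilon_{j})\times\mathbb{R}^{+}$, only the representation indexed by $k=j$ matters.

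Substituting the kernel \eqref{6-1} and $(\pi_{m}u)(y)=\eta^{m}v_{j}(\zeta)$, the factors $\eta^{-m}\cdot\eta^{m}=1$ cancel, leaving $l(y)^{m}$ together with the volume form $dv_{\Sigma,m}=\pi^{\ast}dv_{M}\wedge dv_{f,m}$. I would decompose the $y$-integral by Fubini into a $\zeta$-integral over $V_{j}$ against $\pi^{\ast}dv_{M}$ and a fibre integral over $\mathbb{C}^{\ast}$ against $dv_{f,m}$ (itself the product of the $|\eta|$ and $\vartheta$ pieces). The normalizations \eqref{3-19.5}–\eqref{fibrenv} give $\int l^{m}\,dv_{f,m}=\int d\hat{v}_{m}=1$ on each $\mathbb{C}^{\ast}$-orbit and \eqref{6-1a} gives $\int_{-\varepsilon_{j}}^{\varepsilon_{j}}\sigma_{j}(\vartheta)\,dv_{S^{1}}(\vartheta)/(2\pi)=1$, so the fibre integral reduces to $1$. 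Hence
\[
P_{m,t}^{0,j}(u)(x)=\varphi_{j}(x)\,w^{m}\int_{V_{j}}K_{t}^{j}(z,\zeta)\,\tau_{j}(\zeta)\,v_{j}(\zeta)\,\pi^{\ast}dv_{M}(\zeta),
\]
which is the ordinary Dirichlet heat evolution on $V_{j}$ applied to $\tau_{j}v_{j}\in L^{2}(V_{j})$, multiplied by $\varphi_{j}(x)w^{m}$.

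Now as $t\to 0^{+}$, the operator $e^{-t\square_{V_{j},m}^{c}}$ with Dirichlet boundary condition converges to the identity on $L^{2}(V_{j})$, and moreover in $C^{s}(A_{j}^{\prime})$ for any compact $A_{j}^{\prime}\subset V_{j}$ on which $\tau_{j}\equiv 1$ (since $v_{j}$ is smooth on $V_{j}$: indeed, $u$ is smooth and $\pi_{m}$, which we shall express in the integral form of Proposition \ref{projm}, preserves smoothness). Because $\tau_{j}\equiv 1$ on a neighbourhood of $A_{j}\supset\mathrm{supp}\,\varphi_{j}(\cdot,\phi)$, we conclude that for every $x$,
\[
P_{m,t}^{0,j}(u)(x)\longrightarrow\varphi_{j}(x)\,w^{m}v_{j}(z)=\varphi_{j}(x)(\pi_{m}u)(x)\qquad(t\to 0^{+}),
\]
and summing over $j$ with $\sum_{j}\varphi_{j}\equiv 1$ gives the pointwise limit $\pi_{m}u$. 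Upgrading to $C_{B}^{s}$-convergence is a matter of observing that the $C_{B}^{s}$-norm defined in \eqref{CBs-norm} unpacks precisely to the sum over $j$ of $\sup_{w}\|\varphi_{j}(\cdot,\phi)[\cdot]\|_{C^{s}(V_{j})}$ of the $z$-coefficient, which here is $e^{-t\square_{V_{j},m}^{c}}(\tau_{j}v_{j})-\tau_{j}v_{j}$; this tends to zero in $C^{s}$ on $A_{j}$ by the standard interior regularity for the Dirichlet heat semigroup, uniformly in the $w$-parameter since $w$ enters only through $w^{m}\varphi_{j}\sigma_{j}l(y)^{m}$ which were absorbed into pure normalizations.

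The final assertion $P_{m,t}^{0}(u)\in\tilde{\Omega}_{m,loc}^{0,\pm}(\Sigma)$ is then immediate from the displayed formula for $P_{m,t}^{0,j}(u)$: the prefactor $w^{m}$ and the smooth bounded remainder put it in $\tilde{\Omega}_{m,loc}^{0,\pm}$ by Definition \ref{d-6.8-5}, with uniform $C_{B}^{s}$-bounds following from smoothness of $K_{t}^{j},\tau_{j},\sigma_{j}$ and the cutoff in $w$. The main obstacle in the argument is not any one computation but the bookkeeping of the three cancellations (the $\eta^{\pm m}$ pair, the $l^{m}\,dv_{f,m}$ normalization yielding $1$, and the $\sigma_{j}$ normalization yielding $1$); once these are aligned, everything reduces to a classical Dirichlet heat-kernel limit on the transversal slice $V_{j}$, and the $C_{B}^{s}$-statement follows from classical parabolic regularity applied coordinatewise.
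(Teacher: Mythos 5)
Your proposal is correct and follows essentially the same route as the paper's own proof: write $\pi_m u|_{W_j}=\eta^m\upsilon_j(\zeta)$, cancel $\eta^{\pm m}$, integrate out the fibre using the normalizations \eqref{6-1a}, \eqref{6-1d}, reduce to the Dirichlet heat evolution $\int_{V_j}K_t^j\tau_j\upsilon_j\,dv$, and conclude via the classical fact that $K_t^j\to\mathrm{Id}$ as $t\to 0^+$ (the paper invokes \cite[Theorem 2.20(2)]{BGV} for the $C^s$ version), then sum with $\sum_j\varphi_j\equiv 1$. The one small improvement in your write-up is that you explicitly flag the smoothness of $\upsilon_j$ (needed to apply the $C^s$ heat-semigroup convergence), which the paper leaves implicit; note, though, that Proposition \ref{projm} appears \emph{after} this lemma in the paper's ordering, so if you want to avoid a forward reference you can instead obtain smoothness of $\pi_m u$ for smooth $u$ directly from the averaging formula on each chart.
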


\begin{remark}
\label{6-4-1} For $u_{t},$ $u_{0}$ $\in $ $\tilde{\Omega}_{m,loc}^{0,\pm
}(\Sigma )$ assume $u_{t}$ $\rightarrow $ $u_{0}$ in $C_{B}^{0}.$ Then it is
easy to see that $u_{t}$ $\rightarrow $ $u_{0}$ pointwise on $\Sigma $ and
in $L^{2,\pm }(\Sigma )$ (in view of Remark \ref{3-r}). In particular, if $u$
$\in $ $\tilde{\Omega}_{m,loc}^{0,\pm }(\Sigma )$ with $||u||_{C_{B}^{0}}$ $%
= $ $0$ then $u=0.$
\end{remark}

\begin{remark}
\label{6.2-5} The presence of the projection in Lemma \ref{l-6-0} (rather
than the identity operator in the usual case) reflects the transversal
feature of $P_{m,t}^{0}.$
\end{remark}

\proof
\textbf{(of Lemma \ref{l-6-0})} For $u$ $\in $ $L^{2,\pm }(\Sigma ,\pi
^{\ast }\mathcal{E}_{M},G_{a,m}),$ $\pi _{m}u$ $\in $ $L_{m}^{2,\pm }(\Sigma
,G_{a,m})$ and by (\ref{6-1.5}) $\pi _{m}u|_{W_{j}}$ $=$ $\eta ^{m}\upsilon
_{j}(\zeta )$ for some $\upsilon _{j}(\zeta )$ $\in $ $L^{2,\pm }(V_{j}),$
we compute%
\begin{eqnarray}
&&\lim_{t\rightarrow 0+}(H_{m,t}^{j}(\pi _{m}u))(x)\overset{(\ref{6-1.5})+(%
\ref{6-1})}{=}\lim_{t\rightarrow 0+}\int_{\Sigma }\{\varphi
_{j}(x)w^{m}K_{t}^{j}(z,\zeta )  \label{6-1c} \\
&&\text{ \ \ \ \ \ \ \ \ \ \ \ \ \ \ \ \ \ \ \ \ \ \ \ \ \ }\eta ^{-m}\tau
_{j}(\zeta )\sigma _{j}(\vartheta )l(y)^{m}\eta ^{m}\upsilon _{j}(\zeta
)\}dv_{\tilde{V}_{j}}(\zeta )dv_{f,m}(\eta )  \notag \\
&&\overset{}{=}\lim_{t\rightarrow 0+}\int_{V_{j}}\{\varphi
_{j}(x)w^{m}K_{t}^{j}(z,\zeta )\tau _{j}(\zeta )\upsilon _{j}(\zeta
)\}dv_{V_{j}}(\zeta )\int_{C_{\varepsilon _{j}}}l(y)^{m}\sigma
_{j}(\vartheta )dv_{f,m}(\eta )  \notag
\end{eqnarray}

\noindent By (\ref{6-1a}) and hence 
\begin{equation}
\int_{C_{\varepsilon _{j}}}l(y)^{m}\sigma _{j}(\vartheta )dv_{f,m}(\eta )=1
\label{6-1d}
\end{equation}

\noindent (cf. (\ref{3-19.5}) with $\tau _{p_{0}}^{\ast }$ dropped as
remarked after (\ref{fibrenv}))$,$ the above simplifies to 
\begin{equation*}
\lim_{t\rightarrow 0+}\int_{V_{j}}\{\varphi _{j}(x)w^{m}K_{t}^{j}(z,\zeta
)\tau _{j}(\zeta )\upsilon _{j}(\zeta )\}dv_{V_{j}}(\zeta ),
\end{equation*}%
\noindent then, since $K_{t}^{j}$ $\rightarrow $ $I$ in the distribution
sense as $t$ $\rightarrow $ $0$ (cf. \cite[Definition 2.15 (4) on p.75 ]{BGV}%
) it further simplifies to $\varphi _{j}(x)\tau _{j}(z)(\pi _{m}u)(z,w)$ by (%
\ref{6-1.5}). From this, (\ref{6-1a1}), $\tau _{j}(z)$ $=$ $1$ on supp $%
\varphi _{j}$ and $\Sigma _{j}\varphi _{j}(x)=1,$ the first assertion of the
lemma follows. It also follows from the expression in the last paragraph of 
\cite[p.85]{BGV} that $P_{m,t}^{0}(u)$ $=$ $\sum_{j}H_{m,t}^{j}(\pi
_{m}u)\in $ $\tilde{\Omega}_{m,loc}^{0,\pm }(\Sigma ).$

By the argument similar to (\ref{6-1c})%
\begin{eqnarray*}
&&P_{m,t}^{0}(u)(z,w)-(\pi _{m}u)(z,w) \\
&=&\sum_{j}w^{m}\varphi _{j}(x)[\int_{V_{j}}K_{t}^{j}(z,\zeta )\tau
_{j}(\zeta )\upsilon _{j}(\zeta )dv_{V_{j}}(\zeta )-\tau _{j}(z)\upsilon
_{j}(z)].
\end{eqnarray*}

\noindent Hence for some constant $C(s)$ $>$ $0,$ as $t$ $\rightarrow $ $0$%
\begin{equation*}
||P_{m,t}^{0}(u)-(\pi _{m}u)||_{C_{B}^{s}}\leq C(s)\sum_{(j,k)\in
I}||K_{t}^{j}(\tau _{j}\upsilon _{j})-\tau _{j}\upsilon
_{j}||_{C^{s}(V_{j})}\rightarrow 0
\end{equation*}

\noindent by directly applying \cite[Theorem 2.20(2)]{BGV}. This proves the
second assertion (convergence in $L^{2}$ by Remark \ref{6-4-1})..

\endproof%

The first task is aimed to express the kernel function $(H_{m,t}^{j}\circ
\pi _{m})(x,y)$ (see the RHS of (\ref{6-1a1})) in a more manageable form.
This is given in Proposition \ref{l-6-0a} below.

To start with, suppose that $(z,w)$ denotes local coordinates around $x$ $%
\in $ $\Sigma .$ Let $\mathbb{C}^{\ast }\circ x$ denote the $\mathbb{C}%
^{\ast }$-orbit of $x.$ We define $\tau _{x}$ $:$ $\eta \in \mathbb{C}^{\ast
}$ $\rightarrow $ $\eta \circ x$ $\in $ $\mathbb{C}^{\ast }\circ x.$

Recall the definition of $L^{2,\ast }(\Sigma ,\pi ^{\ast }\mathcal{E}%
_{M},G_{a,m})$ (resp. $L_{m}^{2,\ast }(\Sigma ,G_{a,m}),$ $\pi _{m})$ in
Remark \ref{N-6-3}. We first express $\pi _{m}(u)(x)$ in the following
proposition:

\begin{proposition}
\label{projm} For the orthogonal projection (recall that $a>\frac{m}{2}\geq
0 $)%
\begin{equation*}
\pi _{m}:L^{2,\pm }(\Sigma ,\pi ^{\ast }\mathcal{E}_{M},G_{a,m})\equiv
L^{2,\pm }(\Sigma ,G_{a,m})\rightarrow L_{m}^{2,\pm }(\Sigma ,G_{a,m})
\end{equation*}%
\noindent it holds that for $u\in L^{2,\pm }(\Sigma ,\pi ^{\ast }\mathcal{E}%
_{M},G_{a,m})$%
\begin{equation}
\pi _{m}(u)(x)=l(x)^{m}\int_{\xi \in \mathbb{C}^{\ast }}\sigma (\xi
)_{x}^{\ast }u(\xi \circ x)(\bar{\xi})^{m}(\tau _{x}^{\ast }dv_{f,m})(\xi )
\label{6-1e}
\end{equation}%
\noindent where $\tau _{x}$ is defined precedingly. Moreover if $u$ is
smooth, $\pi _{m}(u)$ remains smooth. Here $\sigma (\xi )_{x}^{\ast }:$ ($%
\pi ^{\ast }\mathcal{E}_{M})_{\xi \circ x}$ $\rightarrow $ ($\pi ^{\ast }%
\mathcal{E}_{M})_{x}$ denotes the pullback of forms.
\end{proposition}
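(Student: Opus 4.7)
The plan is to verify that the integral operator $T_m$ defined by the right-hand side of (\ref{6-1e}) coincides with the orthogonal projection $\pi_m$, by establishing (a) $T_m u \in L_m^{2,\pm}(\Sigma,G_{a,m})$ for every $u \in L^{2,\pm}(\Sigma,\pi^{\ast}\mathcal{E}_M,G_{a,m})$, (b) $T_m$ is the identity on $L_m^{2,\pm}$, and (c) $u - T_m u$ is orthogonal to $L_m^{2,\pm}$. Steps (a) and (c) together force $T_m = \pi_m$ by uniqueness of the orthogonal decomposition.

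For (a) I would check $\sigma(\lambda)^{\ast}T_m u = \lambda^m T_m u$ by the substitution $\xi = \mu\lambda^{-1}$ in the integral defining $T_m u(\sigma(\lambda)x)$. Three ingredients conspire: the scaling $l(\sigma(\lambda)x) = |\lambda|^2 l(x)$ (deduced from $l(x) = \|v_x\|^2$, the $\tilde\sigma$-invariance of $\|\cdot\|_{L_\Sigma}$, and $\tilde\sigma(\lambda)v_x = \lambda^{-1}v_{\sigma(\lambda)x}$ in (\ref{3.0})); the cocycle $\sigma(\lambda)_x^{\ast}\circ\sigma(\mu\lambda^{-1})_{\sigma(\lambda)x}^{\ast} = \sigma(\mu)_x^{\ast}$; and the identity $\tau_{\sigma(\lambda)x} = \tau_x \circ R_\lambda$ with $R_\lambda(\xi) = \xi\lambda$, which yields $\tau_{\sigma(\lambda)x}^{\ast}dv_{f,m} = R_\lambda^{\ast}(\tau_x^{\ast}dv_{f,m})$. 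The exponents combine as $|\lambda|^{2m}\bar\lambda^{-m} = \lambda^m$. The regularity clause of Definition \ref{2m} will be extracted by rewriting the integral in local coordinates and substituting $\xi \mapsto \xi/w$ to factor out $w^m$ against the $\bar\xi^m$-weighted measure. For (b), when $\sigma(\xi)^{\ast}u = \xi^m u$ the integrand factors: $T_m u(x) = l(x)^m u(x)\int_\xi |\xi|^{2m}\,d\mu_x(\xi)$; using $|\xi|^{2m}l(x)^m = (\tau_x^{\ast}l)^m(\xi)$ and the normalization (\ref{fibrenv}), this equals $u(x)$.

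For (c) I would write $\langle T_m u, w\rangle_{L^2}$ as an iterated integral over $\Sigma \times \mathbb{C}^{\ast}$, apply Fubini (justified by the $L^2$-bounds on $u, w$ together with the integrability of $|\xi|^{2m}d\mu_x$ for $a > m/2$, visible from (\ref{dvfibre1}), (\ref{3-19.5})), and change variables $y = \xi x$ along each $\mathbb{C}^{\ast}$-orbit using the decomposition $dv_{\Sigma,m} = \pi^{\ast}dv_M \wedge dv_{f,m}$. The $\mathbb{C}^{\ast}$-invariance of the pointwise metric from $\pi^{\ast}g_M$ converts $\langle\sigma(\xi)_x^{\ast}u(\xi x), w(x)\rangle_x$ into $\langle u(y), w(y)\rangle_y$ once the equivariance of $w$ is used to transport the $\xi^m$ factor to $y$; a second application of (\ref{fibrenv}) collapses the residual $\xi$-integral to $1$, producing $\langle u, w\rangle$. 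Smoothness of $T_m u$ when $u$ is smooth then follows by differentiation under the integral sign, the dominating integrability being uniform on compact subsets of $x$.

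The principal obstacle is careful bookkeeping of the bundle pullbacks $\sigma(\xi)_x^{\ast}$ on $(0,q)$-form-valued integrands, particularly across the singular stratum $\Sigma_{\mathrm{sing}}$ where local freeness fails. On $\Sigma_{\mathrm{sing}}$, integration over $\mathbb{C}^{\ast}$ rather than over the orbit introduces a multiplicity $|G_x|$ which is absorbed by the definition of $\tau_x^{\ast}dv_{f,m}$, so the pointwise formula established on the principal stratum $\Sigma \setminus \Sigma_{\mathrm{sing}}$ extends by continuity to all of $\Sigma$, as in the proof of Proposition \ref{dualLm}. A secondary technical point is the rigorous justification of Fubini/change-of-variables for the non-compactly-supported measure $d\mu_x$; the explicit polynomial decay of $dv(|w|)$ at both ends supplied by (\ref{dvfibre1}) and the choice $a > m/2$ make these justifications routine but will require explicit bounds.
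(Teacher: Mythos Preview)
Your strategy is sound and essentially parallel to the paper's, though you use a slightly leaner characterization: the paper verifies five properties (boundedness, image in the $m$-space, idempotence, self-adjointness, identity on the $m$-space), while you verify three (image in the $m$-space, identity there, and $\langle T_m u - u, w\rangle = 0$ for $w$ in the $m$-space). The core computations---scaling of $l$, the cocycle for $\sigma(\xi)_x^\ast$, the change of variables $(\xi,x)\mapsto(\xi^{-1},\xi x)$ via Lemma~\ref{A}~$iii)$, and the normalization (\ref{fibrenv})---are the same. Your (c) is effectively the paper's self-adjointness computation (\ref{proj7})--(\ref{proj8}) specialized to $v$ in the $m$-space, which is a minor economy.

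Two points where the paper does more work than your sketch suggests. First, for the extension across $\Sigma_{\mathrm{sing}}$ the paper does not rely on pointwise continuity alone: it proves an $L^2$-continuity statement for $T_m$ (the claim (\ref{proj9})) and then approximates a general $u$ by $\chi_j u$ with $\mathbb{C}^\ast$-invariant cutoffs supported away from $\Sigma_{\mathrm{sing}}$. Your ``extends by continuity, as in Proposition~\ref{dualLm}'' is the right idea but will need this $L^2$ step, since $T_m u$ for general $L^2$ input is only defined a.e.

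Second, and more substantively, the smoothness assertion is not as routine as ``differentiation under the integral sign.'' The integrand $\sigma(\xi)_x^\ast u(\xi x)$ is globally smooth in $(\xi,x)$, but to control its $z$-derivatives uniformly in $\xi$ you must track how $\xi x$ lands in local charts under large-angle action (Case~$ii)$ after (\ref{6-1g})). The paper defers this to Lemma~\ref{l-bdp}, whose proof introduces the finitely many angles $\alpha_k(x)$ (later identified with the local orbifold group via Proposition~\ref{gk}) to decompose the $\xi$-integral into pieces on which the chart of $\xi x$ is fixed; only then does a uniform $C^s$-bound emerge. Your dominating-integrability remark handles the $|\xi|\to 0,\infty$ tails but not this angular decomposition, so you should expect to reproduce something like the $\alpha_k$ argument here.
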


\begin{remark}
\label{6.3-5} $i)$ Note that since $u$ is $\pi ^{\ast }\mathcal{E}_{M}$%
-valued and the action $\sigma $ leaves $\pi ^{\ast }\mathcal{E}_{M}$
invariant, we can trivialize $\sigma (\xi )_{x}^{\ast }u(\xi \circ x)$ and
view it as coefficient function(s) with respect to a basis of global
sections ($\mathbb{C}^{\ast }$-equivariant) of $\pi ^{\ast }\mathcal{E}_{M}$
along the orbit $\mathbb{C}^{\ast }\circ x.$ We then write $\sigma (\xi
)_{x}^{\ast }u(\xi \circ x)$ as $u(\xi \circ x)$ by abuse of notation in the
proof below. See Footnote$^{6}$ (seated above (\ref{proj1})) for further
details.

$ii)$ The action $\sigma $ preserves the metric $\pi ^{\ast }g_{M}$ endowed
on $\pi ^{\ast }\mathcal{E}_{M}$. There is another metric $G_{a,m}|_{\pi
^{\ast }\mathcal{E}_{M}}$ on $\pi ^{\ast }\mathcal{E}_{M}$ induced from $%
G_{a,m}$ when $\pi ^{\ast }\mathcal{E}_{M}$ is viewed as a subbundle of $%
\Lambda ^{0,\ast }(\Sigma )$ (see Remark \ref{N-6-3}). We have that $\pi
^{\ast }g_{M}$ $=$ $G_{a,m}|_{\pi ^{\ast }\mathcal{E}_{M}}$ (cf. Lemma \ref%
{L-inv} $i)$) and thus $\sigma $ preserves the metric $G_{a,m}|_{\pi ^{\ast }%
\mathcal{E}_{M}}$ too.
\end{remark}

\begin{remark}
\label{6-3-5a} $i)$ $L_{m}^{2,\pm }(\Sigma ,\pi ^{\ast }\mathcal{E}
_{M},G_{a,m})$ is not trivial: Using a local bump function/section (cf.
Remark \ref{6.3-5}) $u$ with value $u(\xi \circ x)$ close to $\xi ^{m}$ and
support near $x$ $=$ $(z,w=1)$ in coordinates such that $\pi _{m}(u)(x)$ $>$ 
$0$ as $l(x)$ and the measure $(\tau _{x}^{\ast }dv_{f,m})(\xi )$ are
positive, yields the desired result.

$ii)$ There exists a fixed $u$ contained in $L^{2,\pm }(\Sigma ,\pi ^{\ast }%
\mathcal{E}_{M},G_{a})$ for any $a>0$ and $a>\frac{m}{2}\geq 0$ such that $%
\pi _{m}(u)$ $=$ $0$ in $L^{2,\pm }(\Sigma ,\pi ^{\ast }\mathcal{E}%
_{M},G_{a})$ (which by Lemma \ref{L-3-5} equals $L^{2,\pm }(\Sigma ,\pi
^{\ast }\mathcal{E}_{M},G_{a,m})$): Let $u$ be a local function/section
supported near $w=1/2$ in a chart $(z,w),$ depending only on $z$, $r=|w|$
such that for $m=0$ the integral (\ref{6-1e}) of $u(z,r)$ over $r$ equals $0$
with respect to $G_{a}$ for any $a>0;$ this is made possible because of (\ref%
{dvfibre1}) where the parameter \textquotedblleft $a$" has the impact on $%
G_{a}$ only outside a neighborhood of $\{w=1/2\}.$ Since the angular
integral of $u$ is seen to vanish for $m>0$ (cf. (\ref{6-1e}) and (\ref%
{7-2-3}))$,$ altogether we obtain the claim.
\end{remark}

\proof
\textbf{(of Proposition \ref{projm})} Recall that $l(x)$ is defined in (\ref%
{lq0}). Locally $l(x)$ $=$ $h(z(x),\bar{z}(x))w(x)\bar{w}(x)$ or $hw\bar{w}$
in short for $x$ $\in $ $(W_{j},$ $(z,w)).$ The orthogonal projection $\pi
_{m}$ is characterized by the following conditions:

\medskip

\noindent $i)$ $\pi _{m}$ is a bounded linear operator on $L^{2,\pm }(\Sigma
,\pi ^{\ast }\mathcal{E}_{M},G_{a,m}),$

\noindent $ii)$ $\pi _{m}(u)\in L_{m}^{2,\pm }(\Sigma ,G_{a,m})$ for $u$ $%
\in $ $L^{2,\pm }(\Sigma ,\pi ^{\ast }\mathcal{E}_{M},G_{a,m}),$

\noindent $iii)$ $\pi _{m}\circ \pi _{m}=\pi _{m}$

\noindent $iv)$ $\pi _{m}^{\ast }=\pi _{m},$ i.e., $\pi _{m}$ is
self-adjoint.

\noindent $v)$ $\pi _{m}(u)=u$ for $u\in L_{m}^{2,\pm }(\Sigma ,G_{a,m}).$

\medskip

Now we claim that $\pi _{m}(u)$ defined by (\ref{6-1e}) satisfies the above
conditions. For $x\in \Sigma \backslash \Sigma _{\text{sing}}$ (see (\ref%
{1-4}) for the definition of $\Sigma _{\text{sing}}),$ in local coordinates $%
(z,w)$ for $x$ and $(z,\eta )$ for $\xi \circ x$ where $\eta $ $=$ $\xi w$
(see Footnote\footnote{%
Strictly speaking, the validity of $\eta $ $=$ $\xi w$ and $z(\xi \circ x)=z$
requires the smallness of the angle-difference between $x$ and $\xi \circ x$
(see (\ref{1-1}) and cases $i)$, $ii)$ after (\ref{6-1g})). But since $%
x\notin \Sigma _{\text{sing}}$, we can assume that $\delta _{j}=\pi $ in (%
\ref{1-0}) for suitable holomorphic coordinates $(z,w)$ at $x$. Hence $\eta $
$=$ $\xi w$ and $z(\xi \circ x)=z$ are valid for all $\xi \in \mathbb{C}%
^{\ast }$ by (\ref{1-1}).})$,$ we write (see Footnote\footnote{%
In view of Remark \ref{6.3-5} $ii)$: for $x\in \Sigma \backslash \Sigma _{%
\text{sing}}$, choose a local basis $\eta ^{I_{q}}$ for $(0,q)$-forms near $%
\pi (x) $ in $M,$ set $\tilde{\eta}^{I_{q}}$ $:=$ $\pi ^{\ast }\eta ^{I_{q}}$
and compute $\sigma (\xi )^{\ast }\tilde{\eta}^{I_{q}}$ $=$ $\tilde{\eta}%
^{I_{q}} $ since $\pi \circ \sigma (\xi )$ $=$ $\pi $ on $\Sigma ,$ i.e. $%
\tilde{\eta}^{I_{q}}$ is invariant under the action $\sigma .$ 
Write a global section $u$ of $\pi ^{\ast }\mathcal{E}_{M}$ along $\mathbb{C}%
^{\ast }\circ x$ as $u$ $=$ $u_{I_{q}}\tilde{\eta}^{I_{q}}$ (summing over $%
I_{q}$ and $q$). Compute 
\begin{eqnarray*}
\sigma (\xi )_{x}^{\ast }u(\xi \circ x) &=&\sigma (\xi )_{x}^{\ast
}u_{I_{q}}(\xi \circ x)\sigma (\xi )_{x}^{\ast }\tilde{\eta}^{I_{q}}(\xi
\circ x) \\
&=&u_{I_{q}}(\xi \circ x)\tilde{\eta}^{I_{q}}(x)
\end{eqnarray*}%
\noindent by the $\sigma $-invariance of $\tilde{\eta}^{I_{q}}$. We can
therefore identify $\sigma (\xi )_{x}^{\ast }u(\xi \circ x)$ with the
coefficient functions $u_{I_{q}}(\xi \circ x)$ still denoted as $u(\xi \circ
x)$ or $u(z,\xi w)$ $=$ $u(z,\eta )$ in coordinates. For later use note that 
$\sigma (\xi )_{x}^{\ast }u(\xi \circ x) = (\sigma (\xi )^{\ast }u)(x)$.})%
\begin{eqnarray}
&&\pi _{m}(u)(x)\overset{l=hw\bar{w}}{=}h^{m}|w|^{2m}\int_{\eta \in \mathbb{C%
}^{\ast }}u(z,\eta )(\bar{\eta})^{m}(\bar{w})^{-m}(\tau _{(z,1)}^{\ast
}dv_{f,m})(\eta )  \label{proj1} \\
&\overset{|w|^{2m}(\bar{w})^{-m}=w^{m}}{=}&w^{m}h^{m}(z,\bar{z})\int_{\eta
\in \mathbb{C}^{\ast }}u(z,\eta )(\bar{\eta})^{m}(\tau _{(z,1)}^{\ast
}dv_{f,m})(\eta ).  \notag
\end{eqnarray}

\noindent By Remark \ref{6.3-5} $i)$ the norm of the coefficient
(vector-valued) function $u(z,\eta )$ or $u(y)$ below should include the
norms of $(0,q)$-forms; these norms of forms are nevertheless nonzero
constants along $\mathbb{C}^{\ast }\circ x$ by the $\mathbb{C}^{\ast }$%
-invariance of the metric $\pi ^{\ast }g_{M}$ (by Remark \ref{6.3-5} $ii))$.
For simplicity we drop these constant norms henceforth (they can be
uniformly bounded as $M$ $=$ the space of $\mathbb{C}^{\ast }$-orbits is
compact).

To prove $i)$, upon applying the Cauchy-Schwarz inequality to the integral
on the RHS of (\ref{proj1}) we get \ 
\begin{eqnarray}
&&|\int_{\eta \in \mathbb{C}^{\ast }}u(z,\eta )(\bar{\eta})^{m}(\tau
_{(z,1)}^{\ast }dv_{f,m})(\eta )|^{2}  \label{proj2} \\
&\leq &\int_{\eta \in \mathbb{C}^{\ast }}|u(z,\eta )|^{2}(\tau
_{(z,1)}^{\ast }dv_{f,m})(\eta )\int_{\eta \in \mathbb{C}^{\ast }}|\eta
|^{2m}(\tau _{(z,1)}^{\ast }dv_{f,m})(\eta ).  \notag
\end{eqnarray}

\noindent We now estimate, by (\ref{proj1}) and (\ref{proj2}),%
\begin{eqnarray}
|\pi _{m}(u)(x)|^{2} &\leq &h^{m}|w|^{2m}\int_{\eta \in \mathbb{C}^{\ast
}}|u(z,\eta )|^{2}(\tau _{(z,1)}^{\ast }dv_{f,m})(\eta )  \label{proj3} \\
&&\cdot \int_{\eta \in \mathbb{C}^{\ast }}h^{m}|\eta |^{2m}(\tau
_{(z,1)}^{\ast }dv_{f,m})(\eta )  \notag \\
&=&h^{m}|w|^{2m}\int_{\eta \in \mathbb{C}^{\ast }}|u(z,\eta )|^{2}(\tau
_{(z,1)}^{\ast }dv_{f,m})(\eta )  \notag
\end{eqnarray}

\noindent because%
\begin{equation}
\int_{\eta \in \mathbb{C}^{\ast }}h^{m}|\eta |^{2m}(\tau _{(z,1)}^{\ast
}dv_{f,m})(\eta )=1  \label{proj4}
\end{equation}

\noindent (cf. (\ref{fibrenv})) in (\ref{proj3}).

Before proceeding further let us set up the following. Let $\bar{N}_{j}$ ($%
j=1,$ $2,$ $3,$ $\cdot \cdot \cdot )$ be an open neighborhood of $M_{\text{%
sing}}$ $:=$ $\pi (\Sigma _{\text{sing}})$ $\subset $ $M$ by $\pi :$ $\Sigma 
$ $\rightarrow $ $\Sigma /\sigma $ $=$ $M.$ Write $N_{j}$ $=$ $\pi ^{-1}(%
\bar{N}_{j}$). We assume $\cap _{j}\bar{N}_{j}$ $=$ $M_{\text{sing}}$ and $%
\bar{N}_{1}$ $\supset $ $\bar{N}_{2}$ $\supset $ $\cdot \cdot \cdot .$ By
the compactness of $M$ we have that for $j^{\prime }$ in a finite index set $%
W_{j^{\prime }},$ $V_{j^{\prime }}$ (see Notation \ref{n-6.1}) can be formed
to satisfy $\cup _{j^{\prime }}\pi (V_{j^{\prime }})$ $(=$ $\pi
(W_{j^{\prime }}))$ $\supset $ $M\backslash \bar{N}_{j}$ with $\pi
(V_{j^{\prime }})$ $\subset $ $M\backslash M_{\text{sing}}.$

Assume from now on that the support of $u$ is contained in ($\Sigma
\backslash N_{j}$ $\subset )$ $\tilde{W}_{j}$ $:=$ $\cup _{j^{\prime
}}W_{j^{\prime }}$ ($\subset $ $\Sigma \backslash \Sigma _{\text{sing}})$
with $\tilde{V}_{j}$ $:=$ $\cup _{j^{\prime }}V_{j^{\prime }},$ meaning that 
$u=0$ $a.e.$ on a neighborhood of $\Sigma \backslash \tilde{W}_{j}.$ We will
come back to the general case later. Then (via Footnote$^{5}$ as just
mentioned$)$%
\begin{eqnarray}
&&\int_{\tilde{W}_{j}}|\pi _{m}(u)(x)|^{2}dv_{\Sigma ,m}(x)\overset{(\ref%
{proj3})+(\ref{volume})}{\leq }\int_{z\in \tilde{V}_{j}}{\big \{}\int_{w\in 
\mathbb{C}^{\ast }}h^{m}|w|^{2m}\tau _{(z,1)}^{\ast }dv_{f,m}(w)
\label{proj5} \\
&&\int_{\eta \in \mathbb{C}^{\ast }}|u(z,\eta )|^{2}(\tau _{(z,1)}^{\ast
}dv_{f,m})(\eta ){\big \}}dv(z)\overset{(\ref{proj4})}{\leq }\int_{\tilde{W}%
_{j}}|u(y)|^{2}dv_{\Sigma ,m}(y)  \notag
\end{eqnarray}

\noindent where $y$ $=$ $(z,\eta )$ $\in $ $\Sigma \backslash \Sigma _{\text{%
sing}}$ and the last term equals $\int_{\Sigma }|u|^{2}dv_{\Sigma ,m}$ by
the support condition, implies that 
\begin{equation*}
||\pi _{m}(u)||_{L^{2,\pm }(\Sigma ,G_{a,m})}\leq ||u||_{L^{2,\pm }(\Sigma
,G_{a,m})}
\end{equation*}%
\noindent since (\ref{proj5}) holds for every $j,$ $\cup _{j}\tilde{W}_{j}$ $%
=$ $\Sigma \backslash \Sigma _{\text{sing}}$ and $\Sigma _{\text{sing}}$ is
of measure 0. Condition $i)$ follows.

For condition $ii)$ observe that by $\lambda \circ (\xi \circ x)$ $=$ $%
(\lambda \xi )\circ x,$ we have $\tau _{\xi \circ x}^{\ast }dv_{f,m}(\lambda
)=\tau _{x}^{\ast }dv_{f,m}(\lambda \xi )$ so that 
\begin{eqnarray}
&&\pi _{m}(u)(\xi \circ x)=l(\xi \circ x)^{m}\int_{\lambda \in \mathbb{C}%
^{\ast }}u((\lambda \xi )\circ x)\bar{\lambda}^{m}(\tau _{x}^{\ast
}dv_{f,m})(\lambda \xi )  \label{proj6} \\
&&\overset{\eta =\lambda \xi }{=}l(\xi \circ x)^{m}(\bar{\xi}%
)^{-m}\int_{\eta \in \mathbb{C}^{\ast }}u(\eta \circ x)\bar{\eta}^{m}(\tau
_{x}^{\ast }dv_{f,m})(\eta ).  \notag
\end{eqnarray}

Condition $ii)$ amounts to proving the equality%
\begin{equation}
(\sigma (\xi )^{\ast }\pi _{m}(u))(x)=\pi _{m}(u))(\xi \circ x)=\xi ^{m}\pi
_{m}(u)(x)\text{ \ }a.e.  \label{proj6-1}
\end{equation}%
\noindent This immediately follows by applying $l(\xi \circ x)^{m}$ $=$ $%
|\xi |^{2m}l(x)^{m}$ (see (\ref{7-1-1})) to (\ref{proj6}) and using (\ref%
{proj1}) (for $x$ $\in $ $\Sigma \backslash \Sigma _{\text{sing}})$.

Next we compute%
\begin{eqnarray*}
&&\pi _{m}(\pi _{m}(u))(x)\overset{(\ref{6-1e})}{=}l(x)^{m}\int_{\xi \in 
\mathbb{C}^{\ast }}\pi _{m}(u)(\xi \circ x)\bar{\xi}^{m}(\tau _{x}^{\ast
}dv_{f,m})(\xi ) \\
&\overset{(\ref{proj6})}{=}&l(x)^{m}\int_{\xi \in \mathbb{C}^{\ast }}l(\xi
\circ x)^{m}(\tau _{x}^{\ast }dv_{f,m})(\xi )\cdot \int_{\eta \in \mathbb{C}%
^{\ast }}u(\eta \circ x)\bar{\eta}^{m}(\tau _{x}^{\ast }dv_{f,m})(\eta ) \\
&\overset{(\ref{6-1e})}{=}&\pi _{m}(u)(x)\int_{\xi \in \mathbb{C}^{\ast
}}l(\xi \circ x)^{m}(\tau _{x}^{\ast }dv_{f,m})(\xi )\overset{(\ref{fibrenv})%
}{=}\pi _{m}(u)(x),
\end{eqnarray*}

\noindent which gives condition $iii).$

To show $iv)$ let us compare $(\pi _{m}(u),v)$ and $(u,\pi _{m}(v))$ for any 
$v$ $\in $ $L^{2,\pm }(\Sigma ,G_{a,m}).$ In local holomorphic coordinates $%
(z,w)$ for $x$ $\in $ $\Sigma \backslash \Sigma _{\text{sing}}$ and $(z,\eta
)$ for $\xi \circ x$ $\in $ $\Sigma \backslash \Sigma _{\text{sing}}$ where $%
\eta $ $=$ $\xi w,$ we have%
\begin{eqnarray}
&&\pi _{m}(u)(x)\overline{v(x)}dv_{\Sigma ,m}(x)\overset{(\ref{proj1})+(\ref%
{volume})}{=}w^{m}h^{m}(z,\bar{z}){\big \{}\int_{\eta \in \mathbb{C}^{\ast
}}u(z,\eta )(\bar{\eta})^{m}  \label{proj7} \\
&&\text{ \ \ \ \ \ \ \ \ \ \ \ \ \ \ \ \ \ }(\tau _{(z,1)}^{\ast
}dv_{f,m})(\eta ){\big \}}\overline{v(z,w)}dv(z)(\tau _{(z,1)}^{\ast
}dv_{f,m})(w).  \notag
\end{eqnarray}

\noindent On the other hand, we have, for $y=\xi \circ x$ $\in $ $\Sigma
\backslash \Sigma _{\text{sing}}$ (in fact for $y$ $\in $ $\tilde{W}_{j}),$%
\begin{eqnarray*}
&&u(y)\overline{\pi _{m}(v)(y)}=u(y)l(y)^{m}\int_{\xi ^{-1}\in \mathbb{C}%
^{\ast }}\overline{v(\xi ^{-1}\circ y)}\xi ^{-m}\tau _{y}^{\ast
}dv_{f,m}(\xi ^{-1}) \\
&=&u(z,\eta )h^{m}(z,\bar{z})|\eta |^{2m}\int_{w\in \mathbb{C}^{\ast }}%
\overline{v(z,w)}w^{m}\eta ^{-m}\tau _{(z,1)}^{\ast }dv_{f,m}(w) \\
&=&u(z,\eta )h^{m}(z,\bar{z})\bar{\eta}^{m}\int_{w\in \mathbb{C}^{\ast }}%
\overline{v(z,w)}w^{m}\tau _{(z,1)}^{\ast }dv_{f,m}(w)
\end{eqnarray*}

\noindent Inserting $dv_{\Sigma ,m}(y)$ gives%
\begin{eqnarray}
&&u(y)\overline{\pi _{m}(v)(y)}dv_{\Sigma ,m}(y)=u(z,\eta )h^{m}(z,\bar{z})%
\bar{\eta}^{m}  \label{proj8} \\
&&\text{ \ \ \ \ \ \ \ }\cdot {\LARGE \{}\int_{w\in \mathbb{C}^{\ast }}%
\overline{v(z,w)}w^{m}\tau _{(z,1)}^{\ast }dv_{f,m}(w){\LARGE \}}(\tau
_{(z,1)}^{\ast }dv_{f,m})(\eta )dv(z).  \notag
\end{eqnarray}

\noindent It is slightly tedious to write out both expressions $%
\int_{x=(z,w)}$ (RHS of (\ref{proj7})) and $\int_{y=(z,\eta )}($RHS of (\ref%
{proj8})); once it is done, it is not difficult to see (without computing
out any integration) that they are exactly the same. Hence $\pi _{m}^{\ast
}=\pi _{m}$ proving condition $iv)$.

To show $v),$ since supp $u$ $\subset $ $\tilde{W}_{j}$ $\subset $ $\Sigma
\backslash N_{l}$ for $l>>j$ by assumption, we have that $u(\xi \circ x)$ $=$
$\xi ^{m}u(x)$ $a.e.$ if $u$ is further assumed to be in $L_{m}^{2,\pm
}(\Sigma ,G_{a,m});$ see the Footnote$^{6}$ above for explanation of $u(\xi
\circ x).$ For $x$ $\in $ $\Sigma \backslash \Sigma _{\text{sing}}$%
\begin{eqnarray*}
\pi _{m}(u)(x) &=&l(x)^{m}\int_{\xi \in \mathbb{C}^{\ast }}\xi ^{m}u(x)(\bar{%
\xi})^{m}(\tau _{x}^{\ast }dv_{f,m})(\xi ) \\
&=&u(x)\int_{\xi \in \mathbb{C}^{\ast }}(\tau _{x}^{\ast }l)^{m}(\xi )(\tau
_{x}^{\ast }dv_{f,m})(\xi )=u(x)
\end{eqnarray*}

\noindent where (\ref{7-1-1}) and $l^{m}(\xi \circ x)$ $=$ $(\tau _{x}^{\ast
}l)^{m}(\xi )$ via definitions (resp. (\ref{fibrenv})) are used for the
second (resp. third) equality. Condition $v)$ follows$.$

For the general case let us first prove the following:%
\begin{eqnarray}
&&\text{For the sake of clarity let us denote by }\tilde{\pi}_{m}(u)\text{
the RHS of }(\ref{6-1e}).  \label{proj9} \\
&&\text{Let }u_{j}\rightarrow u\text{ in }L^{2,\pm }(\Sigma ,G_{a,m})\text{.
Then }\tilde{\pi}_{m}(u_{j})\rightarrow \tilde{\pi}_{m}(u)\text{ in }L^{2}%
\text{.}  \notag
\end{eqnarray}

\noindent \textbf{Proof of (\ref{proj9}):} The $L^{2}$-difference $||\tilde{%
\pi}_{m}(u_{j})-\tilde{\pi}_{m}(u)||_{L^{2}}^{2}$ is bounded by%
\begin{equation*}
I_{j}:=\int_{\Sigma }l(x)^{2m}\left[ \int_{\xi \in \mathbb{C}^{\ast
}}|u_{j}(\xi \circ x)-u(\xi \circ x)||\xi |^{m}(\tau _{x}^{\ast
}dv_{f,m})(\xi )\right] ^{2}dv_{\Sigma ,m}(x).
\end{equation*}

\noindent Applying the Cauchy-Schwarz inequality to the integral over $\xi $
gives%
\begin{eqnarray}
I_{j} &\leq &\int_{\Sigma }{\big \{}l(x)^{m}\int_{\xi \in \mathbb{C}^{\ast
}}|u_{j}(\xi \circ x)-u(\xi \circ x)|^{2}(\tau _{x}^{\ast }dv_{f,m})(\xi )
\label{proj10} \\
&&\text{ \ \ \ \ \ \ }\cdot \int_{\xi \in \mathbb{C}^{\ast }}l(x)^{m}|\xi
|^{2m}(\tau _{x}^{\ast }dv_{f,m})(\xi ){\big \}}dv_{\Sigma ,m}(x).  \notag
\end{eqnarray}

\noindent Observe that $l(x)^{m}|\xi |^{2m}$ $=$ $l(\xi \circ x)^{m}$ by (%
\ref{7-1-1}) and hence $\int_{\xi \in \mathbb{C}^{\ast }}l(x)^{m}|\xi
|^{2m}(\tau _{x}^{\ast }dv_{f,m})(\xi )$ $=$ $1$ by (\ref{fibrenv}). It is
possible to simplify (\ref{proj10}) further: Substituting this into (\ref%
{proj10}) and making a change of variables $y=\xi \circ x$ over $\Sigma
\backslash \Sigma _{\text{sing}}$ (to ensure bijectivity), we can write the
RHS of (\ref{proj10}) as ($\Sigma _{\text{sing}}$ being of measure $0$)%
\begin{equation}
\int_{y\in \Sigma \backslash \Sigma _{\text{sing}}}\{|u_{j}(y)-u(y)|^{2}%
\int_{\xi \in \mathbb{C}^{\ast }}l(\xi ^{-1}\circ y)^{m}(\tau _{y}^{\ast
}dv_{f,m})(\xi ^{-1})\}dv_{\Sigma ,m}(y)  \label{proj12}
\end{equation}

\noindent because $(\tau _{x}^{\ast }dv_{f,m})(\xi )dv_{\Sigma ,m}(x)$ in (%
\ref{proj10}) equals ($\tilde{\sigma}^{\ast }[(\tau _{y}^{\ast
}dv_{f,m})(\xi ^{-1})dv_{\Sigma ,m}(y)]$)$(\xi ,x)$ (where $\tilde{\sigma}%
:(\xi ,x)$ $\rightarrow $ $(\xi ^{-1},y$ $=$ $\xi \circ x))$ as proved in
Lemma \ref{A} $iii).$ Again $\int_{\xi \in \mathbb{C}^{\ast }}l(\xi
^{-1}\circ y)^{m}(\tau _{y}^{\ast }dv_{f,m})(\xi ^{-1})$ $=$ $1$ for any $y$
by (\ref{fibrenv}) reduces (\ref{proj12}) to $\int_{\Sigma
}|u_{j}(y)-u(y)|^{2}dv_{\Sigma ,m}$ which tends to zero as $j\rightarrow
\infty $ by assumption$.$ We have shown (\ref{proj9}).

We can now finish the proof for the general $\tilde{u}$. Let $\bar{\chi}_{j%
\text{ }}$be cut-off functions with support in $M\backslash M_{\text{sing}}$
(cf. \cite[p.37]{Cara})$,$ which are $\equiv $ 1 on $\pi (\tilde{W}_{j})$ ($%
= $ $\pi (\tilde{V}_{j}))$ $\supset $ $M\backslash \bar{N}_{j}$ and $\bar{%
\chi}_{j}$ $\rightarrow $ $1$ on $M\backslash M_{\text{sing}}$ via $\cap _{j}%
\bar{N}_{j}$ $=$ $M_{\text{sing}}$ as already assumed$.$ Write $\chi _{j}$ $%
= $ $\pi ^{\ast }\bar{\chi}_{j}$ and consider $\chi _{j}\tilde{u}.$ Then one
has the following: $a)$ $\chi _{j}\tilde{u}$ $\rightarrow $ $\tilde{u}$ in $%
L^{2,\pm }(\Sigma ,G_{a,m});$ $b)$ if $\tilde{u}$ $\in $ $L_{m}^{2,\pm
}(\Sigma ,G_{a,m}),$ since $\chi _{j}$ is $\mathbb{C}^{\ast }$-invariant by
construction $\chi _{j}\tilde{u}(\xi \circ x)$ $=$ $\xi ^{m}(\chi _{j}\tilde{%
u})(x),$ $i.e.$ $\chi _{j}\tilde{u}$ $\in $ $L_{m}^{2,\pm }(\Sigma
,G_{a,m}). $ Using $a),$ $b)$ and applying (\ref{proj9}) with the previously
proved special case $u$ $\equiv $ $\chi _{j}\tilde{u}$ one checks that $%
\tilde{\pi}_{m}$ easily satisfies the above conditions $i)$ to $v)$ as the
original orthogonal projection $\pi _{m}$ does, giving that $\tilde{\pi}_{m}$
$=$ $\pi _{m}$ as desired.

For the last statement of the lemma, the continuity property of $\pi _{m}$
in $C_{B}^{s}$-norm (see (\ref{CBs-norm})) is postponed to Lemma \ref{l-bdp}.

\endproof%




Define $d\mu _{y,m}(\xi ),$ a 2-form in $\xi $ for $(\xi ,y)$ $\in $ $%
\mathbb{C}^{\ast }\times \Sigma $ (i.e., it is of the form $f(y,\bar{y},\xi ,%
\bar{\xi})d\xi \wedge d\bar{\xi})$ by%
\begin{equation}
d\mu _{y,m}(\xi ):=l(\xi ^{-1}\circ y)^{m}dv_{\Sigma ,m}(\xi ^{-1}\circ
y)\wedge dv_{f,m}(y)/dv_{\Sigma ,m}(y)  \label{6-1e1}
\end{equation}%
\noindent where $dv_{\Sigma ,m}(\xi ^{-1}\circ y)$ denotes the pullback on $%
\mathbb{C}^{\ast }\times \Sigma $ of $dv_{\Sigma ,m}$ (see (\ref{volume}))
by the map $(\xi ,$ $y)$ $\rightarrow $ $\xi ^{-1}\circ y.$ Notice that $%
dv_{\Sigma ,m}(\xi ^{-1}\circ y)$ in (\ref{6-1e1}) contains a 2-form in $\xi 
$ wedging a "horizontal" $2n-2$ form in $y,$ and one sees that it is this $2$%
-form that survives in the RHS of (\ref{6-1e1}).

To express $(H_{m,t}^{j}\circ \pi _{m})(x,y)$ as mentioned earlier, via
Proposition \ref{projm} above we view $(H_{m,t}^{j}\circ \pi _{m})(x,y)$ and 
$H_{m,t}^{j}(x,\xi ^{-1}\circ y)\circ \sigma (\xi )_{\xi ^{-1}\circ y}^{\ast
}$ as linear transformations from $(\pi ^{\ast }\mathcal{E}_{M})_{y}$ to $%
(\pi ^{\ast }\mathcal{E}_{M})_{x}$, where $\sigma (\xi )_{\xi ^{-1}\circ
y}^{\ast }$ $:$ $(\pi ^{\ast }\mathcal{E}_{M})_{y}$ $\rightarrow $ $(\pi
^{\ast }\mathcal{E}_{M})_{\xi ^{-1}\circ y}$ denotes the pullback of forms.

\begin{proposition}
\label{l-6-0a} Let the notation $\sigma (\xi )_{\xi ^{-1}\circ y}^{\ast }$
be as above.

\noindent $i)$ The kernel function for $P_{m,t}^{0.j}$ $\equiv $ $%
H_{m,t}^{j}\circ \pi _{m}$ is given by%
\begin{equation}
(H_{m,t}^{j}\circ \pi _{m})(x,y)=\int_{\xi \in \mathbb{C}^{\ast
}}(H_{m,t}^{j}(x,\xi ^{-1}\circ y)\circ \sigma (\xi )_{\xi ^{-1}\circ
y}^{\ast })(\bar{\xi})^{m}d\mu _{y,m}(\xi ),\text{ }\forall (x,y)\in \Sigma
\times \Sigma .  \label{6-1f}
\end{equation}%
\noindent $ii)$ $(H_{m,t}^{j}\circ \pi _{m})(x,y)$ is of the form, with $%
x=(z,w),$ $y=(\zeta ,\eta )$ as in (\ref{6-1}),%
\begin{equation}
(H_{m,t}^{j}\circ \pi _{m})(x,y)=w^{m}p_{m,t}^{0,j}(x,y)\bar{\eta}^{m}
\label{6-19.5}
\end{equation}%
\noindent for some smooth and $C_{B}^{s}$-bounded ($t$-dependent) linear
transformation $p_{m,t}^{0,j}(x,y)$ from $(\pi ^{\ast }\mathcal{E}_{M})_{y}$
to $(\pi ^{\ast }\mathcal{E}_{M})_{x}$ (see (\ref{CBs}) for the definition
of $C_{B}^{s}$-norm).

\noindent $iii)$ $(H_{m,t}^{j}\circ \pi _{m})(x,y)$ is $L^{2}$ in two
variables $(x,y)$ $\in $ $\Sigma \times \Sigma $ with respect to the metric $%
G_{a,m}\times G_{a,m}.$ Compare Lemma \ref{6-10-1} and its proof for similar
consequences.
\end{proposition}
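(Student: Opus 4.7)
\textbf{Proof plan for Proposition \ref{l-6-0a}.} The plan is to derive the three assertions by composing the explicit integral representation of $\pi_m$ given in Proposition \ref{projm} with the explicit local expression (\ref{6-1}) for $H_{m,t}^{j}$, and then performing a careful change of variables on $\mathbb{C}^{\ast}\times\Sigma$ using the invariance identity for the measure furnished by Lemma \ref{A} $iii)$ (the same identity that was used in the proof of Proposition \ref{projm}, cf.\ the passage leading to (\ref{proj12})).

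For part $i)$, I would start from
\begin{equation*}
(H_{m,t}^{j}\circ\pi_m)u(x)=\int_{y'\in\Sigma}H_{m,t}^{j}(x,y')\Big[l(y')^{m}\int_{\xi\in\mathbb{C}^{\ast}}\sigma(\xi)_{y'}^{\ast}u(\xi\circ y')\bar\xi^{m}(\tau_{y'}^{\ast}dv_{f,m})(\xi)\Big]dv_{\Sigma,m}(y'),
\end{equation*}
swap the order of integration, and change variables by $y=\xi\circ y'$ (so $y'=\xi^{-1}\circ y$). Lemma \ref{A} $iii)$ supplies the measure identity $(\tau_{y'}^{\ast}dv_{f,m})(\xi)\wedge dv_{\Sigma,m}(y')=(\tau_y^{\ast}dv_{f,m})(\xi^{-1})\wedge dv_{\Sigma,m}(y)$; combined with the fact that the pullback form $dv_{\Sigma,m}(\xi^{-1}\circ y)$ in (\ref{6-1e1}) is precisely the substitution of this measure into the $(y,\xi)$-coordinates, one recognizes the bracketed factor $l(\xi^{-1}y)^{m}\,dv_{\Sigma,m}(\xi^{-1}y)\wedge dv_{f,m}(y)/dv_{\Sigma,m}(y)$ as $d\mu_{y,m}(\xi)$, which yields exactly (\ref{6-1f}).

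For part $ii)$, I would insert the local formula (\ref{6-1}) for $H_{m,t}^{j}(x,\xi^{-1}\circ y)$ into (\ref{6-1f}). The factor $w^{m}$ depends only on $x$ and immediately comes outside the integral. To extract the $\bar\eta^{m}$ factor, observe that on the principal stratum (and then by continuity on $\Sigma$) we have $\eta(\xi^{-1}\circ y)=\xi^{-1}\eta$ and $l(\xi^{-1}\circ y)^{m}=|\xi|^{-2m}l(y)^{m}$ by (\ref{7-1-1}); combining the resulting factor $\xi^{m}\eta^{-m}\cdot|\xi|^{-2m}l(y)^{m}\cdot\bar\xi^{m}$ one gets $\eta^{-m}l(y)^{m}=h(y)^{m}\bar\eta^{m}$, since $l(y)=h(y)|\eta|^{2}$. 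The remaining integrand defines $p_{m,t}^{0,j}(x,y)$; its smoothness follows from the smoothness of $K_t^{j}$ together with Proposition \ref{projm} (the smoothing property of $\pi_m$, whose $C_B^{s}$-continuity is proved in Lemma \ref{l-bdp}). The main obstacle will be the $C_B^{s}$-boundedness in (\ref{CBs}), which is not entirely routine because the $\xi$-integration ranges over the noncompact group $\mathbb{C}^{\ast}$: the cut-off $\sigma_j(\vartheta(\xi^{-1}y))$ pins down the angular variable of $\xi$ to a compact set, and $\tau_j(\zeta(\xi^{-1}y))$ restricts the $z$-coordinate, but the radial direction of $\xi$ is only controlled by the decay of $dv_{f,m}$ coming from (\ref{dvfibre1}); I would verify that for $a>m/2$ this decay (of order $|w|^{-4a-2}$ at infinity) dominates $|w|^{2m}$ and remains stable under $\partial_z^{\,k}$ applied under the integral, giving uniform bounds in $s$.

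Part $iii)$ then follows once $p_{m,t}^{0,j}(x,y)$ is known to be $C_B^{s}$-bounded. The cutoffs $\varphi_j$, $\tau_j$, $\sigma_j$ and the factor $\varphi_j(x)$ localize $(x,y)$ modulo $\mathbb{C}^{\ast}$-orbits to a relatively compact region of $(M\times M)$ (since $\Sigma/\sigma=M$ is compact), while the orbit directions are integrated against $|w|^{2m}dv_m(|w|)$ and $|\eta|^{2m}dv_m(|\eta|)$, both finite by (\ref{3-16.75}) under $a>m/2$. Estimating
\begin{equation*}
\int_{\Sigma\times\Sigma}\bigl|(H_{m,t}^{j}\circ\pi_m)(x,y)\bigr|^{2}dv_{\Sigma,m}(x)\,dv_{\Sigma,m}(y)\lesssim\|p_{m,t}^{0,j}\|_{C_B^{0}}^{2}\!\!\int|w|^{2m}dv_{f,m}(w)\!\!\int|\eta|^{2m}dv_{f,m}(\eta)\cdot\mathrm{Vol}(M)^{2}
\end{equation*}
then finishes $iii)$. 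The crux of the entire argument is the measure change in part $i)$; once the identity (\ref{6-1f}) is established the remaining assertions reduce to the explicit algebra of the $\xi$-factors and standard integrability estimates.
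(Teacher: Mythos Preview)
Your argument for part $i)$ is correct and matches the paper's proof: apply the integral formula for $\pi_m$, change variables $y=\xi\circ y'$, and recognize $d\mu_{y,m}(\xi)$.

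For part $ii)$ there is a genuine gap. The identity $\eta(\xi^{-1}\circ y)=\xi^{-1}\eta$ holds only for small-angle $\xi$ (Case $i)$ following (\ref{6-1g})); it fails for large-angle $\xi$, and your appeal to ``continuity from the principal stratum to $\Sigma$'' does not repair this. The integral in (\ref{6-1f}) runs over all of $\mathbb{C}^{\ast}$, and the cutoff $\sigma_j(\vartheta(\xi^{-1}y))$ activates each time $\xi^{-1}y$ re-enters the angular window of $W_j$. For $y$ near $\Sigma_{\text{sing}}$ this happens $|G_j|$ times (once for each element of the local isotropy group), and on each such interval $J_k$ the correct relation is $\eta(\xi y)=\xi\alpha_k^{-1}\alpha_0\eta(y)$ as in (\ref{7.38-1}), with $\zeta(\xi^{-1}y)=\zeta_k\neq\zeta(y)$ in general. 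The paper therefore postpones this part to Section~7 (see (\ref{kmt}) through (\ref{7K-25})): it decomposes the angular integral into the intervals $J_k$ determined by the $\alpha_k$, integrates out the radial and angular directions exactly via (\ref{6-1a}) and (\ref{7-20.5}), and arrives at the explicit finite sum
\[
p_{m,t}^{0,j}(x,y)=\varphi_j(x)h^{m}(\zeta(y),\bar\zeta(y))\sum_{k=0}^{\Lambda}\bigl\{K_t^{j}(z(x),\zeta_k)\tau_j(\zeta_k)(\alpha_k\alpha_0^{-1})^{m}\bigr\}\circ(\sigma_{\alpha_k^{-1}}^{\ast})_{\alpha_k y}.
\]
Smoothness of $p_{m,t}^{0,j}$ then comes from the implicit function theorem applied to the defining equation $\vartheta(\alpha_k y)=0$ (so each $\alpha_k$ depends smoothly on $y$), and $C_B^{s}$-boundedness comes from the observation that $\alpha_k$ and $\zeta_k$ are independent of $|\eta|$, the only noncompact coordinate. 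None of this is recoverable from a continuity argument off the principal stratum, because the number $\Lambda+1$ of contributing terms jumps as $y$ crosses strata; what makes the sum smooth is the explicit form of each term, not a limiting procedure. Your discussion of radial decay of $dv_{f,m}$ is also beside the point here: the radial integral is evaluated exactly to $1$, not merely estimated.

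Your part $iii)$ is fine and coincides with the paper's one-line reduction to the $C_B^{0}$-bound together with Remark~\ref{3-r}.
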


\proof
Compute $(H_{m,t}^{j}\circ \pi _{m})u$ as follows:%
\begin{eqnarray}
&&[(H_{m,t}^{j}\circ \pi _{m})u](x)=H_{m,t}^{j}(\pi _{m}(u))(x)
\label{6-1f1} \\
&=&\int_{p\in \Sigma }H_{m,t}^{j}(x,p)\pi _{m}(u)(p)dv_{\Sigma ,m}(p)  \notag
\\
&\overset{(\ref{6-1e})}{=}&\int_{p\in \Sigma }H_{m,t}^{j}(x,p)\int_{\xi \in 
\mathbb{C}^{\ast }}\sigma (\xi )_{p}^{\ast }u(\xi \circ p)(\bar{\xi}%
)^{m}(\tau _{p}^{\ast }dv_{f,m})(\xi )l(p)^{m}dv_{\Sigma ,m}(p)  \notag
\end{eqnarray}

\noindent We now make a change of variables: $(p,\xi )\rightarrow (y,\xi )$
by $y$ $=$ $\xi \circ p$ (on $\Sigma \backslash \Sigma _{\text{sing}}$)$.$
After a careful examination, (\ref{6-1f1}) equals ($\Sigma _{\text{sing}}$
being of measure zero)%
\begin{equation*}
\int_{y\in \Sigma }\int_{\xi \in \mathbb{C}^{\ast }}H_{m,t}^{j}(x,p)\circ
(\sigma (\xi )_{p}^{\ast }u(y))(\bar{\xi})^{m}dv_{f,m}(y)l(\xi ^{-1}\circ
y)^{m}\wedge dv_{\Sigma ,m}(\xi ^{-1}\circ y)\}
\end{equation*}%
\noindent By rearranging terms, the above becomes (by $(\ref{6-1e1}))$
(noting that $p$ $=$ $\xi ^{-1}\circ y$ and $H_{m,t}^{j}(x,p)\circ \sigma
(\xi )_{p}^{\ast }$ acts on $u(y))$%
\begin{equation}
\int_{y\in \Sigma }\left[ \int_{\xi \in \mathbb{C}^{\ast
}}(H_{m,t}^{j}(x,\xi ^{-1}\circ y)\circ \sigma (\xi )_{\xi ^{-1}\circ
y}^{\ast })(\bar{\xi})^{m}d\mu _{y,m}(\xi )\right] u(y)dv_{\Sigma ,m}(y)
\label{6-11.5}
\end{equation}%
\noindent Now (\ref{6-1f}) follows from the integrand $\left[ ...\right] $
in (\ref{6-11.5}).

The proof of (\ref{6-19.5}) will be postponed to Section 7 after studying
the properties of $d\mu _{y,m}(\xi )$. See the paragraph prior to Remark \ref%
{7-8-5} (containing (\ref{kmt}) through (\ref{7K-25})). The assertion $iii)$
follows from the $C_{B}^{s}$-bound ($C_{B}^{0}$ enough) and Remark \ref{3-r}.

\endproof%

Next we want to compute $(\partial _{t}P_{m,t}^{0})u+P_{m,t}^{0}\tilde{%
\square}_{m}^{c\pm }\pi _{m}u$ (see (\ref{DiracLap}) for $\tilde{\square}%
_{m}^{c\pm }$, (\ref{6-1a1}) for $P_{m,t}^{0}$). For use in the following
lemma, as similar to $\tilde{\square}_{m}^{c\pm }$ we let the local operator
(which is a notation used interchangeably with $\square _{U_{j},m}^{c\pm }$
in (\ref{DiracLap}))%
\begin{equation}
\square _{z,m}^{c\pm }:=D_{z,m}^{c\mp }D_{z,m}^{c\pm }  \label{6-11a}
\end{equation}%
\noindent denote the Laplacian of (the standard $m$-th) $spin^{c}$ Dirac
operator%
\begin{equation}
D_{z,m}^{c}=D_{z,m}+A_{z,m}^{c}=\bar{\partial}_{z,m}+\bar{\partial}%
_{z,m}^{\ast }+A_{z,m}^{c}  \label{6-11.75}
\end{equation}%
\noindent on $V_{j}$ $\subset $ $\mathbb{C}^{n-1}$ (recall (\ref{dbarzm})
for $\bar{\partial}_{z,m}^{\ast }$)$.$ $P_{m,t}^{0}$ satisfies the following
adjoint type heat equation asymptotically; compare \cite[Lemma 5.12]{CHT}
and see Remark \ref{6-9-1} for a significant difference.

\begin{lemma}
\label{l-adj} It holds that%
\begin{equation}
(\partial _{t}P_{m,t}^{0})u+P_{m,t}^{0}(\tilde{\square}_{m}^{c\pm }\circ \pi
_{m})u=R_{t}u\text{ \ for }u\in \Omega ^{0,\pm }(\Sigma )\cap L^{2,\pm
}(\Sigma ,\pi ^{\ast }\mathcal{E}_{M},G_{a,m})  \label{Adj-eq}
\end{equation}%
where $R_{t}$ $:$ $\Omega ^{0,\pm }(\Sigma )\rightarrow \Omega ^{0,\pm
}(\Sigma )$ is an operator with distribution kernel $R_{t}(x,y)$ (= $%
R(t,x,y) $ $\in $ $C^{\infty }(\mathbb{R}^{+}\times \Sigma \times \Sigma ,$ $%
T^{\ast 0,+}\Sigma \otimes (T^{\ast 0,+}\Sigma )^{\ast }))$ of the form (\ref%
{6.7-5}) satisfying that for every $s\in \mathbb{N}\cup \{0\},$ there exist $%
\varepsilon _{0}>0,$ $C_{s}>0$ independent of $t$ such that%
\begin{equation}
||R(t,x,y)||_{C_{B}^{s}(\Sigma \times \Sigma )}\leq C_{s}e^{-\frac{%
\varepsilon _{0}}{t}}\text{ \ for }t\in \mathbb{R}^{+}.  \label{Adj-0}
\end{equation}
\end{lemma}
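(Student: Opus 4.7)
The plan is to reduce everything on each chart $W_{j}$ to a classical Dirichlet heat-kernel computation on $V_{j}\subset\mathbb{C}^{n-1}$ via Propositions \ref{BoxDU} and \ref{projm}, exploit the local heat equation $(\partial_{t}+\square_{\zeta,m}^{c\pm})K_{t}^{j}(z,\zeta)=0$ to cancel the leading terms, and collect the remainder as a commutator with the cutoff $\tau_{j}$. Because $\tau_{j}\equiv 1$ on a neighbourhood of $A_{j}\supset\mathrm{supp}\,\varphi_{j}(\cdot,\phi)$ (item $ii)$ after Notation \ref{n-6.1}), the $\zeta$-support of $d\tau_{j}$ is at a positive distance $d_{0}>0$ from the $z$-support of $\varphi_{j}$ in $V_{j}$; the off-diagonal Gaussian estimates for the Dirichlet heat kernel $K_{t}^{j}$ on $V_{j}$ then yield $e^{-\varepsilon_{0}/t}$ bounds.

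More concretely, fix $u\in\Omega^{0,\pm}(\Sigma)\cap L^{2,\pm}(\Sigma,\pi^{\ast}\mathcal{E}_{M},G_{a,m})$. By Proposition \ref{projm}, $\pi_{m}u$ is smooth and lies in $L_{m}^{2,\pm}$, hence on the principal stratum $\Sigma\setminus\Sigma_{\mathrm{sing}}$ the $\mathbb{C}^{\ast}$-equivariance $\sigma(\xi)^{\ast}(\pi_{m}u)=\xi^{m}\pi_{m}u$ forces the local form $(\pi_{m}u)(\zeta,\eta)=\eta^{m}v_{j}(\zeta,\bar\zeta)$ with $v_{j}$ smooth; the extension across $\Sigma_{\mathrm{sing}}$ goes by continuity. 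Inserting this into (\ref{6-1}) and carrying out the angular/radial integration using the normalization identity (\ref{6-1d}) gives
\begin{equation*}
(H_{m,t}^{j}\circ\pi_{m})u(x)=\varphi_{j}(x)w^{m}\int_{V_{j}}K_{t}^{j}(z,\zeta)\,\tau_{j}(\zeta)\,v_{j}(\zeta)\,dv_{M}(\zeta),
\end{equation*}
and, by Proposition \ref{BoxDU} (so that $\tilde{\square}_{m}^{c\pm}\pi_{m}u$ corresponds locally to $w^{m}\square_{\zeta,m}^{c\pm}v_{j}$), the analogous formula for $(H_{m,t}^{j}\circ\pi_{m})\tilde{\square}_{m}^{c\pm}u$, with $v_{j}$ replaced by $\square_{\zeta,m}^{c\pm}v_{j}$. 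Since $\tau_{j}$ has compact support in $V_{j}$ and $K_{t}^{j}$ vanishes on $\partial V_{j}$, formal self-adjointness of $\square_{\zeta,m}^{c\pm}$ on $V_{j}$ transfers the operator onto the kernel without boundary terms; expanding
\begin{equation*}
\square_{\zeta,m}^{c\pm}\bigl[K_{t}^{j}(z,\zeta)\tau_{j}(\zeta)\bigr]=\bigl(\square_{\zeta,m}^{c\pm}K_{t}^{j}\bigr)\tau_{j}+\bigl[\square_{\zeta,m}^{c\pm},\tau_{j}\bigr]K_{t}^{j}
\end{equation*}
and invoking $\partial_{t}K_{t}^{j}=-\square_{\zeta,m}^{c\pm}K_{t}^{j}$ makes the $\partial_{t}P_{m,t}^{0,j}$ piece cancel the first term, leaving
\begin{equation*}
\bigl((\partial_{t}+\tilde{\square}_{m}^{c\pm}\circ\pi_{m})\circ H_{m,t}^{j}\pi_{m}\bigr)u(x)=\varphi_{j}(x)w^{m}\int_{V_{j}}\bigl(\bigl[\square_{\zeta,m}^{c\pm},\tau_{j}\bigr]K_{t}^{j}\bigr)(z,\zeta)\,v_{j}(\zeta)\,dv_{M}(\zeta).
\end{equation*}

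Summing over $j$ this is exactly $R_{t}u$. To obtain a kernel of the form (\ref{6.7-5}) one re-expresses $v_{j}(\zeta)$ via the integral representation of $\pi_{m}$ from Proposition \ref{projm} and follows the change of variables in the proof of Proposition \ref{l-6-0a}, giving $R_{t}(x,y)=\sum_{j}w^{m}r_{t}^{j}(x,y)\bar\eta^{m}$. For the $C_{B}^{s}$ estimate (\ref{Adj-0}), the commutator $[\square_{\zeta,m}^{c\pm},\tau_{j}]$ is a first-order differential operator whose coefficients are supported in $\{d\tau_{j}\neq 0\}$, and standard Gaussian upper bounds for the Dirichlet heat kernel of the elliptic operator $\square_{\zeta,m}^{c\pm}$ on the relatively compact domain $V_{j}$ yield, for all $s\ge 0$,
\begin{equation*}
\bigl\|\bigl([\square_{\zeta,m}^{c\pm},\tau_{j}]K_{t}^{j}\bigr)(\cdot,\cdot)\bigr\|_{C^{s}(\mathrm{supp}\,\varphi_{j}\times V_{j})}\le C_{s}e^{-\varepsilon_{0}d_{0}^{2}/t},\qquad 0<t\le 1,
\end{equation*}
since the $z$ and $\zeta$ supports are separated by $d_{0}>0$. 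Transporting this back through the $w^{m},\bar\eta^{m}$ factors and summing the finitely many $j$ gives (\ref{Adj-0}).

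The main technical obstacle I anticipate is the integration-by-parts step: one must justify the formal self-adjointness of $\square_{\zeta,m}^{c\pm}$ when acting inside the kernel representation, controlling the compatibility between the metric $G_{a,m}$ used on $\Sigma$ and $\pi^{\ast}g_{M}$ used on $V_{j}$. This is where Proposition \ref{madj} (and its consequence Proposition \ref{BoxDU}) is essential, because without it the transferred operator would not exactly reproduce $\square_{\zeta,m}^{c\pm}$ and the principal cancellation would fail. The secondary point is to handle the singular stratum $\Sigma_{\mathrm{sing}}$: since both sides of the identity extend continuously across $\Sigma_{\mathrm{sing}}$ by the smoothness assertions of Proposition \ref{projm} and Lemma \ref{l-6-0}, it suffices to prove the identity on $\Sigma\setminus\Sigma_{\mathrm{sing}}$ and then pass to the closure.
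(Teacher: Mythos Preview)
Your approach is essentially the same as the paper's: both reduce each $H_{m,t}^{j}\circ\pi_{m}$ to a $V_{j}$-integral via Proposition~\ref{BoxDU} and the normalization (\ref{6-1d}), cancel the principal term using the (adjoint) heat equation for $K_{t}^{j}$, and identify the remainder with the commutator $[\square_{\zeta,m}^{c\pm},\tau_{j}]$ applied to $K_{t}^{j}$, whose $\zeta$-support is separated from $\mathrm{supp}\,\varphi_{j}$ by a fixed $d_{0}>0$, giving the $e^{-\varepsilon_{0}/t}$ decay. Two small points: (i) your displayed equation writes $(\partial_{t}+\tilde{\square}_{m}^{c\pm}\circ\pi_{m})\circ H_{m,t}^{j}\pi_{m}$, which literally places $\tilde{\square}_{m}^{c\pm}$ \emph{after} $H_{m,t}^{j}\pi_{m}$; the lemma has $P_{m,t}^{0}$ applied after $\tilde{\square}_{m}^{c\pm}\circ\pi_{m}$, as your prose correctly describes, so this is only a notational slip; (ii) the phrase ``transporting back through the $w^{m},\bar\eta^{m}$ factors'' hides the composition with $\pi_{m}$ needed to obtain the kernel $R_{t}(x,y)$ in the form (\ref{6.7-5}): one must check that the $\xi$-integration in the analogue of (\ref{6-1f}) does not spoil the $C_{B}^{s}$ bound, which the paper handles by observing that $\bar\eta^{m}(\xi^{-1}y)\bar\xi^{m}$ and $h^{m}$ are bounded and $\int_{\mathbb{C}^{\ast}}d\mu_{y,m}(\xi)=1$.
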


\proof
Write $\pi _{m}u|_{W_{j}}(\zeta ,\eta )=\eta ^{m}\upsilon _{j}(\zeta )$ (see
(\ref{6-1.5}); we drop subscripts \textquotedblleft $j"$ on the
coordinates). Recalling that $\tilde{\square}_{m,y}^{c\pm }$ acts on $y$ (=$%
(\zeta ,\eta ))$ while $\square _{\zeta ,m}^{c\pm }$ acts on $\zeta $ via
Proposition \ref{BoxDU}$,$ we compute ($P_{m,t}^{0}$ is the sum of $%
H_{m,t}^{j}\circ \pi _{m}$ over $j$ (\ref{6-1}))%
\begin{eqnarray}
&&\partial _{t}H_{m,t}^{j}(\pi _{m}u)+H_{m,t}^{j}\tilde{\square}_{m,y}^{c\pm
}(\pi _{m}u)  \label{Adj-1} \\
&=&\int_{W_{j}}\{\varphi _{j}(x)w^{m}\partial _{t}K_{t}^{j}(z,\zeta
)\upsilon _{j}(\zeta )\tau _{j}(\zeta )  \notag \\
&&+\varphi _{j}(x)w^{m}K_{t}^{j}(z,\zeta )(\square _{\zeta ,m}^{c\pm
}\upsilon _{j}(\zeta ))\tau _{j}(\zeta )\}\sigma _{j}(\vartheta
)l(y)^{m}dv_{\Sigma ,m}(y)  \notag \\
&=&\int_{V_{j}}\varphi _{j}(x)w^{m}\{\partial _{t}K_{t}^{j}(z,\zeta
)\upsilon _{j}(\zeta )+K_{t}^{j}(z,\zeta )(\square _{\zeta ,m}^{c\pm
}\upsilon _{j}(\zeta ))\}\tau _{j}(\zeta )dv(\zeta )  \notag
\end{eqnarray}

\noindent in which we have used $\int_{C_{\varepsilon _{j}}}\sigma
_{j}(\vartheta )l(y)^{m}dv_{f,m}(\eta )$ $=$ $1$ by (\ref{6-1d}). Noting
that 
\begin{equation*}
\int_{V_{j}}\{\partial _{t}K_{t}^{j}(z,\zeta )+K_{t}^{j}(z,\zeta )\square
_{\zeta ,m}^{c\pm }\}(\upsilon _{j}(\zeta )\tau _{j}(\zeta ))dv(\zeta )=0
\end{equation*}%
\noindent since $K_{t}^{j}$ is the Dirichlet heat kernel (\ref{6-1'})$,$ we
reduce the RHS of (\ref{Adj-1}) to%
\begin{eqnarray}
&&\int_{V_{j}}\varphi _{j}(x)S_{j}(t,x,\zeta )\bar{\eta}^{-m}\upsilon
_{j}(\zeta )dv(\zeta )  \label{Adj-2-0} \\
&&\overset{(\ref{6-1d})}{=}\int_{W_{j}}\varphi _{j}(x)S_{j}(t,x,\zeta )\bar{%
\eta}^{-m}\eta ^{-m}(\pi _{m}u)(y)\sigma _{j}(\vartheta )l(y)^{m}dv_{\Sigma
,m}(y)  \notag
\end{eqnarray}

\noindent where%
\begin{equation}
S_{j}(t,x,\zeta )(\eta ^{-m}(\pi _{m}u)(y))=w^{m}K_{t}^{j}(z,\zeta )\bar{\eta%
}^{m}[\tau _{j}(\zeta ),\square _{\zeta ,m}^{c\pm }](\upsilon _{j}(\zeta )).
\label{Adj-2a-0}
\end{equation}

\noindent Note that $\tau _{j}(\zeta )$ = $1$ for $\zeta $ in some small
neighborhood of the $z$-part of supp $\varphi _{j}.$ It follows that $%
\varphi _{j}(x)S_{j}(t,x,\zeta )$ $=$ $0$ if $(z(x),\zeta )$ is in some
small neighborhood of $(z,z)$ (due to $[\tau ,\square ]$ $=$ $0$ there$).$
The idea is that the singular part of $\varphi _{j}S_{j}$ originally caused
by $K_{t}^{j}$ along the diagonal ($\sim \frac{1}{t^{n-1}}$) is now
dismissed.

Note that $S_{j}(t,x,\zeta )$ in (\ref{Adj-2-0}) is a differential operator
acting on $\upsilon _{j}(\zeta ).$ We can convert it into a kernel function
via an integration by parts as follows: by (\ref{Adj-2a-0}), the
self-adjointness of $\square _{\zeta ,m}^{c\pm }$ with $\tau _{j}(\zeta )$
being of compact support, and $\int_{C_{\varepsilon _{j}}}\sigma
_{j}(\vartheta )l(y)^{m}dv_{f,m}(\eta )$ $=$ $1$ \ 
\begin{eqnarray}
&&\text{LHS of }(\ref{Adj-2-0})=\int_{V_{j}}\varphi
_{j}(x)w^{m}K_{t}^{j}(z,\zeta )[\tau _{j}(\zeta ),\square _{\zeta ,m}^{c\pm
}](\upsilon _{j}(\zeta ))dv(\zeta )  \label{Adj-2} \\
&=&\int_{V_{j}}\varphi _{j}(x)w^{m}\{\square _{\zeta ,m}^{c\pm
}(K_{t}^{j}(z,\zeta )\tau _{j}(\zeta ))-(\square _{\zeta ,m}^{c\pm
}K_{t}^{j}(z,\zeta ))\tau _{j}(\zeta )\}\upsilon _{j}(\zeta )dv(\zeta ) 
\notag \\
&=&\int_{V_{j}}\varphi _{j}(x)\hat{S}_{j}(t,x,y)\bar{\eta}^{-m}\upsilon
_{j}(\zeta )dv(\zeta )  \notag \\
&=&\int_{W_{j}}\varphi _{j}(x)\hat{S}_{j}(t,x,y)\bar{\eta}^{-m}\eta
^{-m}(\pi _{m}u)(y)\sigma _{j}(\vartheta )l(y)^{m}dv_{\Sigma ,m}(y)  \notag
\end{eqnarray}

\noindent where (the $[\square _{\zeta ,m}^{c\pm },\tau _{j}(\zeta )]$ below
as a first order differential operator acts on the $\zeta $-variable of $%
K_{t}^{j}(z,\zeta ))$%
\begin{equation}
\hat{S}_{j}(t,x,y):=w^{m}[\square _{\zeta ,m}^{c\pm },\tau _{j}(\zeta
)](K_{t}^{j}(z,\zeta ))\bar{\eta}^{m}.  \label{Adj-2a}
\end{equation}

Therefore using an analogous argument of \cite[(5.47)]{CHT} for (\ref{Adj-2}%
) (it is essential that the Gaussian factor $\exp (-\frac{\tilde{d}%
_{M}^{2}(z,\zeta )}{4t})$ encoded in $K_{t}^{j},$ see (\ref{OAE}), absorbs
the singular part $\frac{1}{t^{n-1}}$ if $(z,\zeta )$ is off the diagonal
where $[\tau ,\square ]$ $=$ $0$ as mentioned above), we conclude that for
every $s\in \mathbb{N}\cup \{0\},$ there exist $\varepsilon >0,$ $C_{s}>0$
independent of $t$ such that (noting that $\bar{\eta}^{-m}\eta ^{-m}l(y)^{m}$
$=$ $h^{m}(\zeta ,\bar{\zeta})$ by (\ref{hm}) below)%
\begin{equation}
||\varphi _{j}(x)\hat{S}_{j}(t,x,y)h^{m}(\zeta ,\bar{\zeta})\sigma
_{j}(\vartheta )||_{C_{B}^{s}(\Sigma \times \Sigma )}\leq C_{s}e^{-\frac{%
\varepsilon }{t}}\text{ for }t\in \mathbb{R}^{+}.  \label{Adj-3}
\end{equation}

\noindent From (\ref{Adj-1}) and (\ref{Adj-2}) it follows that 
\begin{equation}
(\partial _{t}P_{m,t}^{0})u+P_{m,t}^{0}(\tilde{\square}_{m}^{c\pm }\circ \pi
_{m})u=\hat{R}_{t}(\pi _{m}u)  \label{Adj-4}
\end{equation}%
\noindent where%
\begin{equation}
\hat{R}_{t}(x,y):=\sum_{j}\varphi _{j}(x)\hat{S}_{j}(t,x,y)\bar{\eta}%
^{-m}\eta ^{-m}\sigma _{j}(\vartheta )l(y)^{m}.  \label{Adj-4a}
\end{equation}%
\noindent So (\ref{Adj-eq}) holds for $R_{t}$ $:=$ $\hat{R}_{t}\circ \pi
_{m} $ in view of (\ref{Adj-4}) and (an analogue of (\ref{6-1f})) 
\begin{equation}
R(t,x,y)\text{ }(=R_{t}(x,y))=\int_{\xi \in \mathbb{C}^{\ast }}(\hat{R}%
_{t}(x,\xi ^{-1}\circ y)\circ \sigma (\xi )_{\xi ^{-1}\circ y}^{\ast })(\bar{%
\xi})^{m}d\mu _{y,m}(\xi ).  \label{Adj-4b}
\end{equation}

\noindent Now that 
\begin{equation}
\bar{\eta}^{-m}\eta ^{-m}l(y)^{m}=h^{m}(\zeta ,\bar{\zeta})  \label{hm}
\end{equation}%
\noindent from (\ref{Adj-4a}) is bounded in $\zeta $ $\in $ $M=\Sigma /%
\mathbb{C}^{\ast }$ by Lemma \ref{A} $iv)$, which is compact, $\bar{\eta}%
^{m}(\xi ^{-1}\circ y)\bar{\xi}^{m}$ (from (\ref{Adj-2a}) and (\ref{Adj-4b}%
)) is also bounded for $\xi \in \mathbb{C}^{\ast }$ (using (\ref{1-1}) for $%
\rho $ $=$ $|\xi |)$ and $\int_{\xi \in \mathbb{C}^{\ast }}d\mu _{y,m}(\xi )$
$=$ $1$ by Corollary \ref{7.6-5}, we conclude (\ref{Adj-0}) for $s=0$ by (%
\ref{Adj-3}) via (\ref{Adj-4a}), (\ref{Adj-4b}). For $s>0$ note the
definition of $C_{B}^{s}$-norms that concern mainly the $z$ or $\zeta $%
-variables (i.e. the horizontal ones). Observe that 
\begin{equation}
\bar{\eta}^{m}(\xi ^{-1}\circ y)\bar{\xi}^{m}=\chi (y)\bar{\eta}^{m}(y)
\label{6.40-5a}
\end{equation}

\noindent where%
\begin{eqnarray*}
\chi (y) &=&\frac{\bar{\eta}^{m}(e^{-i\gamma }\circ y)}{\bar{\eta}^{m}(y)}%
|\xi |^{-m}\bar{\xi}^{m}\text{ (}\xi =|\xi |e^{i\gamma }) \\
&=&e^{im\theta (e^{-i\gamma }\circ y)}e^{-im\gamma }e^{-im\theta (y)}\text{ (%
}\eta (y)=|\eta (y)|e^{-i\theta (y)}).
\end{eqnarray*}

\noindent Since $C_{B}^{s}$-norms involve no $\bar{\eta}^{m}(y)$ (see (\ref%
{CBs-norm})), for the $C_{B}^{s}$-norms of $R(t,x,y)$ it suffices to prove,
with (\ref{Adj-3}), the smoothness of $h^{m}$ and $\chi (y)$ by using the
compactness of their $\zeta $-domains. We discuss this smoothness issue as
follows. For the smoothness of $\theta (e^{-i\gamma }\circ y)$ (in $\chi (y)$%
) see a similar claim on $C_{B}^{s}$-norm in (\ref{6-19.5}) (as it involves
the similar expression $(\xi ^{-1}\circ y)$ by (\ref{6-1f})), whose proof is
placed after (\ref{HQDIIa}). In this proof $\theta (e^{-i\gamma }\circ y)$
actually lies in (\ref{7K-21}), whose smoothness is proved in the last
paragraph after (\ref{7K-25}) involving the smoothness of $\alpha _{k}.$ The
point is that the \textquotedblleft large$"$ angle action (see Case $ii)$
after (\ref{6-1g})) due to the local freeness of the $\mathbb{C}^{\ast }$%
-action makes the treatment less direct and leads to the consideration of $%
\alpha _{k}$. Lastly, to deal with $C_{B}^{s}$-norms for $y$ in $d\mu
_{y,m}(\xi ),$ simply notice the expression (\ref{7-2-2}), (\ref{7-2-3})
with $x$ replaced by $y$, which can be simplified further by the $\mathbb{C}%
^{\ast }$-invariance of $dv_{M}$ (Lemma \ref{A} $ii)$) . Our proof is now
completed.

\endproof%

\begin{remark}
\label{6-9-1} The difference between the $S_{j}$ of (\ref{Adj-2a-0}) and the
corresponding term in the same notation $S_{j}$ in \cite[(5.46) and p.84]%
{CHT} arises from that between $H_{m,t}^{j}(x,y)$ of (\ref{6-1}) and $%
H_{j}(t,x,y)$ of \cite[(5.38)]{CHT}. This difference is partly due to the
fact in Lemma \ref{L-7iso} that holds only at the Hilbert space level, while
some similar identifications in the proof of \cite[Proposition 5.1, p.73]%
{CHT} hold at the pointwise level. That these identifications, adapted to
their own contexts, are only similar in nature leads to the different
results. Compare Remark \ref{3-62.5} for different effects caused by the
metrics here and \cite{CHT}. Note that the $S_{j}(t,x,w)$ of \cite[p.84]{CHT}
is actually a (first-order) differential operator, so the presentation in 
\cite[p.84]{CHT}, especially the derivation of \cite[(5.47)]{CHT}, need be
fixed in a similar way as the part from $S_{j}$ of (\ref{Adj-2a-0}) to $\hat{%
S}_{j}$ of (\ref{Adj-2a}).
\end{remark}

If one tries to directly show that $P_{m,t}^{0}$ is an approximate heat
kernel one may encounter a difficulty which we refer to \cite[the paragraph
after (1.81), p.36]{CHT} for a similar situation and explanation. Suffice it
to say that it becomes easier to show that its adjoint $P_{m,t}^{0\ast }$ is
an approximate heat kernel. To express $P_{m,t}^{0\ast }$ in terms of $%
H_{m,t}^{j\ast }$ we need the following lemma that $H_{m,t}^{j}$ (hence $%
H_{m,t}^{j\ast })$ is a bounded linear operator on $L^{2,\pm }(\Sigma ,\pi
^{\ast }\mathcal{E}_{M},G_{a,m}),$ defined through the kernel function $%
H_{m,t}^{j}(x,y)$ (see (\ref{6-1})).

\begin{lemma}
\label{6-10-1} The kernel function $H_{m,t}^{j}(x,y)$ as in (\ref{6-1})
defines a bounded linear operator on $L^{2,\pm }(\Sigma ,\pi ^{\ast }%
\mathcal{E}_{M},G_{a,m}).$ In fact $H_{m,t}^{j}(x,y)$ is $L^{2}$ in two
variables ($x,y)$ $\in $ $\Sigma \times \Sigma $ with respect to the metric $%
G_{a,m}$ $\times $ $G_{a,m}.$ The kernel function $H_{m,t}^{j\ast }(x,y)$
defined by $H_{m,t}^{j\ast }(x,y)$ $:=$ $\overline{H_{m,t}^{j}(y,x)}$
represents the Hilbert space adjoint operator $H_{m,t}^{j\ast }$. (A
superscript \textquotedblleft $t"$ meant as transpose may be placed on $%
\overline{H_{m,t}^{j}(y,x)},$ but we omit it.)
\end{lemma}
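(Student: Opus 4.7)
\medskip

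\noindent\textbf{Proof plan.} The plan is to exhibit $H_{m,t}^{j}$ as a Hilbert--Schmidt operator by showing that its kernel is square-integrable on $\Sigma\times\Sigma$ with respect to $G_{a,m}\times G_{a,m}$; boundedness and the adjoint formula then follow from standard Hilbert--Schmidt theory applied to the pointwise bundle inner product on $\pi^{*}\mathcal{E}_{M}$.

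First, I would rewrite the kernel so that the powers of $w,\eta$ are visible. Using $l(y)=h(\zeta,\bar\zeta)|\eta|^{2}$ one gets $\eta^{-m}\,l(y)^{m}=h(\zeta,\bar\zeta)^{m}\,\bar\eta^{m}$, hence
\begin{equation*}
H_{m,t}^{j}(x,y)=\varphi_{j}(x)\,w^{m}\,K_{t}^{j}(z,\zeta)\,\bar\eta^{m}\,h(\zeta,\bar\zeta)^{m}\,\tau_{j}(\zeta)\,\sigma_{j}(\vartheta),
\end{equation*}
so that $|H_{m,t}^{j}(x,y)|^{2}$ factors cleanly into an $x$-piece and a $y$-piece connected only through $|K_{t}^{j}(z,\zeta)|^{2}$. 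The cutoffs $\varphi_{j}$ and $\tau_{j}\sigma_{j}$ confine the horizontal/angular variables to compact regions in $V_{j}\times(-\varepsilon_{j},\varepsilon_{j})$ and $V_{j}\times(-\varepsilon_{j}/4,\varepsilon_{j}/4)$ respectively, while $h$ is bounded on the compact support of $\tau_{j}$. The only potential trouble lies in the radial directions of $w$ and $\eta$.

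Next I would apply Fubini to the double integral $\iint|H_{m,t}^{j}(x,y)|^{2}\,dv_{\Sigma,m}(x)\,dv_{\Sigma,m}(y)$, splitting $dv_{\Sigma,m}=\pi^{*}dv_{M}\wedge dv_{f,m}$ and, on the principal stratum (which has full measure), $dv_{f,m}=dv_{m}(|w|)\wedge d\phi/(2\pi)$. The radial integral is controlled by the normalization (\ref{3-16.75}):
\begin{equation*}
\int_{0}^{\infty}|w|^{2m}\,dv_{m}(|w|)=1,
\end{equation*}
which is where the hypothesis $a>m/2$ (Remark \ref{3-r}) enters. Carrying out the $x$-integral first, the factor $|\varphi_{j}|^{2}|w|^{2m}$ integrates to a bounded function of $z$ alone, leaving
\begin{equation*}
\int_{\Sigma}|H_{m,t}^{j}(x,y)|^{2}\,dv_{\Sigma,m}(x)\;\le\;C\,|\tau_{j}(\zeta)|^{2}|\sigma_{j}(\vartheta)|^{2}h(\zeta,\bar\zeta)^{2m}|\eta|^{2m}\int_{V_{j}}|K_{t}^{j}(z,\zeta)|^{2}\pi^{*}dv_{M}(z).
\end{equation*}
Integrating the result in $y$ and again using $\int|\eta|^{2m}dv_{m}(|\eta|)=1$ together with the boundedness of $\int|\sigma_{j}|^{2}d\vartheta$, the whole expression collapses to
\begin{equation*}
C'\int_{V_{j}\times V_{j}}|\tau_{j}(\zeta)|^{2}h(\zeta,\bar\zeta)^{2m}|K_{t}^{j}(z,\zeta)|^{2}\,\pi^{*}dv_{M}(z)\,\pi^{*}dv_{M}(\zeta),
\end{equation*}
which is finite because $K_{t}^{j}$ is the Dirichlet heat kernel of the elliptic operator $\square_{V_{j},m}^{c}$ on the bounded domain $V_{j}$, hence Hilbert--Schmidt there. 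This establishes that $H_{m,t}^{j}(\cdot,\cdot)\in L^{2}(\Sigma\times\Sigma,G_{a,m}\times G_{a,m})$ and, by the standard Hilbert--Schmidt bound, that $H_{m,t}^{j}$ extends to a bounded operator on $L^{2,\pm}(\Sigma,\pi^{*}\mathcal{E}_{M},G_{a,m})$.

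Finally, for the adjoint formula, once $H_{m,t}^{j}$ is known to be Hilbert--Schmidt its Hilbert space adjoint is represented by the pointwise-Hermitian-conjugate transpose of the kernel evaluated at $(y,x)$; this is a routine Fubini computation using the fact that the pointwise bundle metric on $\pi^{*}\mathcal{E}_{M}$ coincides with the restriction of $G_{a,m}$ (Remark \ref{N-6-3}). Concretely,
\begin{equation*}
(H_{m,t}^{j}u,v)_{L^{2}}=\iint\langle H_{m,t}^{j}(x,y)u(y),v(x)\rangle\,dv_{\Sigma,m}(y)\,dv_{\Sigma,m}(x)=(u,H_{m,t}^{j*}v)_{L^{2}}
\end{equation*}
with $H_{m,t}^{j*}(x,y)=\overline{H_{m,t}^{j}(y,x)}$ (transpose in the bundle indices tacit), and Fubini is legitimate by the $L^{2}$-control just obtained. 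The main technical step---and the one I expect to be the chief obstacle---is verifying that the radial $|w|^{2m}$ and $|\eta|^{2m}$ weights are absorbed by the $a$-dependent volume factors in $dv_{f,m}$; this is purely computational but requires unwinding the construction of $\lambda_{m}$ and $dv_{m}(|w|)$ from Section \ref{S-metric} and is where the condition $a>m/2$ is used in an essential way.
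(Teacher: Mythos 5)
Your proposal is correct and follows essentially the same route as the paper's (much terser) proof: both hinge on the identity $\eta^{-m}l(y)^{m}=h(\zeta,\bar\zeta)^{m}\bar\eta^{m}$ to make the $\bar\eta^{m}$ growth visible, then invoke the integrability built into $dv_{\Sigma,m}$ for $a>m/2$ (Remark~\ref{3-r}, equivalently the normalization~(\ref{3-16.75}) you use) to conclude the kernel is square-integrable, with the adjoint formula then being standard Hilbert--Schmidt theory. Your writeup simply makes explicit the Fubini computation that the paper compresses into a reference to Remark~\ref{R-6-3} and Remark~\ref{3-r}.
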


\begin{proof}
A \textquotedblleft local version$"$ of this (with cut-off functions in (\ref%
{6-1}) removed) is seen in Remark \ref{R-6-3}. Now in (\ref{6-1}) observe
that $\eta ^{-m}l(y)^{m}$ $=$ $h(\zeta ,\bar{\zeta})^{m}\bar{\eta}^{m}.$
From this and Remark \ref{3-r} the assertion on the $L^{2}$-condition
follows. This yields the first and third assertions (see \cite[Theorem 2,
p.13 ]{Lang}).
\end{proof}

Using Lemma \ref{6-10-1} we have now 
\begin{eqnarray}
&&P_{m,t}^{0\ast }=\sum_{j}(H_{m,t}^{j}\circ \pi _{m})^{\ast }=\sum_{j}\pi
_{m}^{\ast }\circ H_{m,t}^{j\ast }  \label{Adj-5z} \\
&=&\sum_{j}\pi _{m}\circ H_{m,t}^{j\ast }:L^{2,\pm }(\Sigma ,\pi ^{\ast }%
\mathcal{E}_{M},G_{a,m})\rightarrow L^{2,\pm }(\Sigma ,\pi ^{\ast }\mathcal{E%
}_{M},G_{a,m}).  \notag
\end{eqnarray}

It is worth noting that while the action of $P_{m,t}^{0}$ may not preserve
the space $\Omega _{m}^{0,\pm }(\Sigma )$ (cf. Lemma \ref{l-6-0}), the image
of $P_{m,t}^{0\ast }$ on $\Omega ^{0,\pm }(\Sigma )\cap L^{2,\pm }(\Sigma
,\pi ^{\ast }\mathcal{E}_{M},G_{a,m})$ is nonetheless seated in $\Omega
_{m}^{0,\pm }(\Sigma )$ because of $\pi _{m}$. By taking the adjoints $%
P_{m,t}^{0\ast },$ $R_{t}^{\ast }$ of $P_{m,t}^{0},$ $R_{t}$ respectively,
we are going to prove the following. Denote by $\Omega _{c}^{0,\pm }(\Sigma
) $ $\subset $ $\Omega ^{0,\pm }(\Sigma )$ the set of those elements (smooth
sections of $\pi ^{\ast }\mathcal{E}_{M}$) of compact support in $\Sigma .$
Note that $\Omega _{c}^{0,\pm }(\Sigma )$ is dense in $L^{2,\pm }(\Sigma
,\pi ^{\ast }\mathcal{E}_{M},G_{a,m})$ (see Remark \ref{N-6-3}).

\begin{theorem}
\label{t-adjeq} In the preceding notation, we have

\noindent $i)$%
\begin{equation}
\lim_{t\rightarrow 0^{+}}P_{m,t}^{0\ast }u=\pi _{m}u\text{ in }||\cdot
||_{C_{B}^{s}}\text{ \ for }u\in \Omega _{c}^{0,\pm }(\Sigma ).
\label{Adj-5}
\end{equation}%
$ii)$%
\begin{eqnarray}
&&\frac{\partial P_{m,t}^{0\ast }}{\partial t}u+\tilde{\square}_{m}^{c\pm
}P_{m,t}^{0\ast }u=R_{t}^{\ast }u\text{ \ for }u\in L^{2,\pm }(\Sigma ,\pi
^{\ast }\mathcal{E}_{M},G_{a,m})\cap \Omega ^{0,\pm }(\Sigma )
\label{Adj-5-0} \\
&&\text{ \ \ \ \ \ \ \ \ \ \ \ \ \ \ \ \ \ \ \ \ \ \ \ \ \ \ \ \ \ \ \ \ \ \
\ \ \ \ \ }(\text{not necessarily in }\Omega _{c}^{0,\pm }(\Sigma )).  \notag
\end{eqnarray}%
$iii)$ The distribution kernel $R^{\ast }(t,x,y)$ of $R_{t}^{\ast }$ is
given by $\overline{R(t,y,x)}$ (cf. (\ref{Adj-0})); it satisfies a similar
estimate as $R_{t}$ in Lemma \ref{l-adj}.
\end{theorem}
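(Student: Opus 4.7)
\textbf{Proof proposal for Theorem \ref{t-adjeq}.}

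The plan is to regard the three statements as the "formal adjoint" counterparts of Lemmas \ref{l-6-0} and \ref{l-adj}, and establish them in the order (iii), (ii), (i), exploiting: the integral-operator formula $P_{m,t}^{0*}=\sum_{j}\pi_{m}\circ H_{m,t}^{j*}$ from (\ref{Adj-5z}), the self-adjointness of $\pi_{m}$ (Proposition \ref{projm} condition (iv)), the formal self-adjointness of $\tilde{\square}_{m}^{c\pm}$ (which follows from the self-adjointness of $\tilde{A}_{m}^{c}$ in Lemma \ref{5-1.25} together with $\tilde{\vartheta}_{m}^{c}=\tilde{D}_{m}^{c}$), and the fact that $\mathrm{Im}(P_{m,t}^{0*})\subset\Omega_{m}^{0,\pm}(\Sigma)$ because of the outer $\pi_{m}$ in (\ref{Adj-5z}).

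First I would handle (iii). By the definition of the Hilbert-space adjoint with respect to the $G_{a,m}$-inner product, any integral operator with smooth, $L^{2}$-bounded kernel $K(x,y)$ has adjoint with kernel $K^{*}(x,y)=\overline{K(y,x)}^{t}$; this is applicable to $R_{t}$ by the $L^{2}$-boundedness established as in Lemma \ref{6-10-1} (the kernel $R(t,x,y)$ has the form (\ref{6.7-5}), and the factor $\bar{\eta}^{m}$ together with $l(y)^{m}$ in (\ref{Adj-4a}) make it square-integrable via Remark \ref{3-r}). The $C_{B}^{s}$-norm in (\ref{CBs}) is symmetric under $x\leftrightarrow y$ up to relabelling of the summation index $j$ and replacing $k^{j}(x,y)$ by $\overline{k^{j}(y,x)}$, neither of which alters the $C^{s}$-bound in $(z,\zeta)$. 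Hence the exponential estimate (\ref{Adj-0}) transfers directly to $R_{t}^{*}$.

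Next I would establish (ii) by formally dualising (\ref{Adj-eq}). For $u\in\Omega_{c}^{0,\pm}(\Sigma)$ and $v\in\Omega^{0,\pm}(\Sigma)\cap L^{2,\pm}(\Sigma,\pi^{*}\mathcal{E}_{M},G_{a,m})$, one computes
\begin{equation*}
(R_{t}u,v)_{L^{2}}=(\partial_{t}P_{m,t}^{0}u,v)+(P_{m,t}^{0}\tilde{\square}_{m}^{c\pm}\pi_{m}u,v)=(u,\partial_{t}P_{m,t}^{0*}v)+(\tilde{\square}_{m}^{c\pm}\pi_{m}u,P_{m,t}^{0*}v),
\end{equation*}
and then by the self-adjointness of $\pi_{m}$ and $\tilde{\square}_{m}^{c\pm}$ (applied legitimately since $P_{m,t}^{0*}v\in\Omega_{m}^{0,\pm}(\Sigma)$ lies in a space on which $\pi_{m}$ and $\tilde{\square}_{m}^{c\pm}$ are well-defined) this equals $(u,\partial_{t}P_{m,t}^{0*}v+\pi_{m}\tilde{\square}_{m}^{c\pm}P_{m,t}^{0*}v)$. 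Matching with $(u,R_{t}^{*}v)$ and varying $u$ in a dense set gives $\partial_{t}P_{m,t}^{0*}v+\pi_{m}\tilde{\square}_{m}^{c\pm}P_{m,t}^{0*}v=R_{t}^{*}v$. The final step is to drop the $\pi_{m}$: since $P_{m,t}^{0*}v\in\Omega_{m}^{0,\pm}(\Sigma)$ and $\tilde{\square}_{m}^{c\pm}$ preserves $\Omega_{m}^{0,\pm}(\Sigma)$ (the operators $\bar{\partial}_{\Sigma,m}$, $\vartheta_{\Sigma,m}$, $\tilde{A}_{m}^{c}$ are all $m$-preserving by Definition \ref{d-3-7} and Lemma \ref{5-1.25}), its image lies in $L_{m}^{2,\pm}(\Sigma,G_{a,m})$, on which $\pi_{m}$ acts as the identity. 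This yields (\ref{Adj-5-0}).

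Finally, for (i), the factorisation $P_{m,t}^{0*}=\pi_{m}\circ T_{t}$ with $T_{t}=\sum_{j}H_{m,t}^{j*}$ reduces the problem to understanding $T_{t}u$ as $t\to0^{+}$. Using $\overline{K_{t}^{j}(z,\zeta)}=K_{t}^{j}(z,\zeta)$ and the pointwise contraction properties of the Dirichlet heat kernel $K_{t}^{j}$ (as in the proof of Lemma \ref{l-6-0}), the $z$-integral in $H_{m,t}^{j*}u(y)$ concentrates on the diagonal and converges in $C^{s}(V_{j})$ uniformly for fixed fibre parameter. Collecting the factors $l(y)^{m}\bar{\eta}^{-m}=h(\zeta,\bar{\zeta})^{m}\eta^{m}$ from (\ref{hm}), $\tau_{j}(\zeta)$, $\sigma_{j}(\vartheta)$, and performing the resulting fibre integration, one identifies $\lim_{t\to0^{+}}T_{t}u$ with an element in $L_{m}^{2,\pm}$; applying $\pi_{m}$ (which is continuous in $C_{B}^{s}$ by the kernel representation (\ref{6-1e}) together with the smoothness estimate extending Proposition \ref{projm}) and unpacking $\pi_{m}u$ through (\ref{6-1e}) (with the substitution $\xi\eta=w$), the normalisations $\sum_{j}\varphi_{j}=1$, $\int\sigma_{j}\,dv_{S^{1}}/2\pi=1$ in (\ref{6-1a}), and $\int l^{m}dv_{f,m}=1$ in (\ref{fibrenv}) conspire to give the identification $\lim_{t\to0^{+}}P_{m,t}^{0*}u=\pi_{m}u$ in $C_{B}^{s}$ on compact sets; the compact support hypothesis $u\in\Omega_{c}^{0,\pm}(\Sigma)$ is used here so that the $z$-integration is effectively over a compact domain in each chart, rendering the $C_{B}^{s}$-convergence uniform in $j$. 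The main obstacle will be establishing $C_{B}^{s}$ (rather than merely $L^{2}$ or pointwise) convergence through $\pi_{m}$, for which the bookkeeping of horizontal derivatives under the fibre integral in (\ref{6-1e}) (together with the smoothness of $\eta^{m}(\xi^{-1}\circ y)\bar{\xi}^{m}$ as discussed after (\ref{6.40-5a})) is the key technical step.
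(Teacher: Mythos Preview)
Your treatment of (ii) and (iii) is essentially the same as the paper's and is correct: dualise (\ref{Adj-eq}) using $\pi_{m}^{\ast}=\pi_{m}$ and the formal self-adjointness of $\tilde{\square}_{m}^{c\pm}$, then drop $\pi_{m}$ because $P_{m,t}^{0\ast}u$ already lies in $\Omega_{m}^{0,\pm}(\Sigma)$; and for (iii) the kernel symmetry $R^{\ast}(t,x,y)=\overline{R(t,y,x)}$ together with the $x\leftrightarrow y$ symmetry of the $C_{B}^{s}$-norm in (\ref{CBs}) transfers the estimate (\ref{Adj-0}).

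For (i) your route diverges from the paper's, and the step where the normalisations ``conspire'' is a genuine gap. You propose to compute $\lim_{t\to 0}T_{t}u$ directly and then identify $\pi_{m}$ of that limit with $\pi_{m}u$; but after letting $K_{t}^{j\ast}\to\delta$ you are left with $\sum_{j}w^{m}\tau_{j}(z)\sigma_{j}(\phi)h^{m}(z,\bar z)\int_{\text{fibre}}\varphi_{j}(y)\bar\eta^{m}u(y)\,dv_{f,m}$, and it is not at all transparent that applying $\pi_{m}$ to this (which reintroduces a second fibre integral via (\ref{6-1e})) collapses to $\pi_{m}u$. The cutoff factors $\tau_{j}$, $\sigma_{j}$ sit in the wrong slot relative to the $\pi_{m}$-integral, and the large-angle bookkeeping (the $\alpha_{k}$ machinery) would have to be invoked a second time. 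Also, your phrase ``an element in $L_{m}^{2,\pm}$'' is misleading: $T_{t}u$ lands only in $\tilde{\Omega}_{m,\mathrm{loc}}^{0,\ast}(\Sigma)$, not in the $m$-space.

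The paper avoids this computation entirely by a two-step duality argument. First, it uses Lemma \ref{l-bdp} (the $C_{B}^{s}$-boundedness of $\pi_{m}$, which is precisely the obstacle you flag at the end) together with the known $C^{s}$-convergence of $K_{t}^{j}$ on compactly supported data to conclude only that $P_{m,t}^{0\ast}u$ converges in $C_{B}^{s}$ to \emph{some} limit $P_{m,0}^{0\ast}u$. Second, it identifies that limit by pairing: for any $v\in\Omega_{c}^{0,\ast}(\Sigma)$,
\[
(P_{m,t}^{0\ast}u-\pi_{m}u,\,v)_{L^{2}}=(u,\,P_{m,t}^{0}v-\pi_{m}v)_{L^{2}}\to 0
\]
by Lemma \ref{l-6-0} (applied to $v$, not $u$). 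Combined with the already-established $C_{B}^{s}$-convergence of $P_{m,t}^{0\ast}u$ (hence $L^{2}$-convergence), this forces $P_{m,0}^{0\ast}u=\pi_{m}u$. This sidesteps any direct fibre computation on the adjoint side and is the missing idea in your approach to (i).
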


Although $\lim_{t\rightarrow 0^{+}}P_{m,t}^{0}u=\pi _{m}u$ in $||\cdot
||_{C_{B}^{s}}$ for $u$ $\in $ $\Omega ^{0,\pm }(\Sigma )\cap L^{2,\pm
}(\Sigma ,\pi ^{\ast }\mathcal{E}_{M},G_{a,m})$ holds by Lemma \ref{l-6-0},
it is a bit surprising that the corresponding statement for $P_{m,t}^{0\ast
} $ is not obviously true as it might appear to be at first sight, unless $u$
is further restricted$.$ This difference is essentially due to the
noncompactness of $\Sigma .$ Compared to the compact CR case \cite[Theorem
5.13, p.84]{CHT} the following proof is less trivial in that certain
elements \textquotedblleft $\alpha _{l}\in S^{1}"$ related to the local
orbifold group will be introduced (and also used in some later parts of the
paper).

For the proof of Theorem \ref{t-adjeq} we start by proving the following
lemma. Writing $\tilde{\Omega}_{m,loc}^{0,\ast }(\Sigma )$ $:=$ $\oplus _{q}%
\tilde{\Omega}_{m,loc}^{0,q}(\Sigma )$ (see Definition \ref{d-6.8-5}) we
want the boundedness of $\pi _{m}:\tilde{\Omega}_{m,loc}^{0,\ast }(\Sigma )$ 
$\rightarrow $ $\tilde{\Omega}_{m}^{0,\ast }(\Sigma )$ with respect to $%
C_{B}^{s}$-norm. In the CR case \cite{CHT} these $C_{B}^{s}$-norms were
neither needed nor were they pursued because the compactness of the total
space there simplifies the picture. The technicalities here lie in the local
freeness of the $\mathbb{C}^{\ast }$-action.

\begin{lemma}
\label{l-bdp} With the notation above, $||\pi _{m}(u)||_{C_{B}^{s}}\leq
C_{1}||u||_{C_{B}^{s}}$ for every $u\in \tilde{\Omega}_{m,loc}^{0,\ast
}(\Sigma ),$ $s\in \mathbb{N\cup }\{0\}.$
\end{lemma}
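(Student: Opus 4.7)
The strategy is to exploit the integral representation of $\pi_{m}$ established in Proposition \ref{projm}:
\begin{equation*}
\pi_{m}(u)(x)=l(x)^{m}\int_{\xi\in\mathbb{C}^{\ast}}\sigma(\xi)_{x}^{\ast}u(\xi\circ x)\,\bar{\xi}^{m}\,(\tau_{x}^{\ast}dv_{f,m})(\xi),
\end{equation*}
and bound the $z$-derivatives of the right-hand side in each chart $W_{j}$ uniformly in the fibre variable $w$. Since the $C_{B}^{s}$-norm only involves horizontal derivatives in the base coordinates $z$ (see \eqref{CBs-norm}), the factor $w^{m}$ that appears naturally in $\pi_{m}(u)$ via \eqref{6-1.5} plays no role beyond being absorbed into the prefactor $l(x)^{m}$; what must be controlled is the $z$-derivative of the coefficient function.

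First I would treat the case $x\in \Sigma\setminus\Sigma_{\mathrm{sing}}$. Choose distinguished coordinates $(z,w)$ so that $h(z_{0},\bar z_{0})=1$, $dh(z_{0},\bar z_{0})=0$ (\ref{M0-3}), parametrize the orbit through $x=(z,1)$ by $\xi\mapsto\xi\circ x=(z,\xi)$, and write $\xi=\rho e^{i\gamma}$. After identifying sections along the orbit via the $\mathbb{C}^{\ast}$-equivariant trivialization (cf.\ Footnote~5), the integrand becomes a smooth function of $(z,\xi)$ times a smooth $2$-form in $\xi$ coming from $d\mu_{y,m}$ (see Lemma~\ref{A} and \eqref{6-1e1}). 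Since $u\in\tilde\Omega_{m,loc}^{0,\ast}(\Sigma)$ means $u=w^{m}v(z,\bar z,w,\bar w)$ with $\|v\|_{C^{s}}$ bounded uniformly in $w$, every $z$-derivative of $\sigma(\xi)^{\ast}u(\xi\circ x)$ is a sum of terms of the form (smooth coefficient)$\cdot(\partial_{z}^{\alpha}v)(z,\bar z,\xi,\bar\xi)$ with the $C^{s}$-norm of $v$ bounded by $\|u\|_{C_{B}^{s}}$, uniformly in $\xi$. The normalization $\int_{\mathbb{C}^{\ast}}l(x)^{m}|\xi|^{2m}\tau_{x}^{\ast}dv_{f,m}=1$ from \eqref{fibrenv} then produces the bound $|\partial_{z}^{\alpha}\pi_{m}(u)(x)|\le C\|u\|_{C_{B}^{s}}$ on the principal stratum.

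Next I would handle $x\in\Sigma_{\mathrm{sing}}$, and this is where the main obstacle lies. When the isotropy $G_{x}\subset S^{1}$ is nontrivial of order $p_{j}>1$, small perturbations of $x$ no longer trivialize the orbit; the map $\xi\mapsto\xi\circ x'$ is globally $p_{j}$-to-one modulo the principal period. Consequently, moving $x$ and computing $\partial_{z}\pi_{m}(u)(x)$ requires breaking the integral over $\mathbb{C}^{\ast}$ into $p_{j}$ arcs $\{\alpha_{l}\cdot C_{\varepsilon}\}_{l=1,\dots,p_{j}}$, $\alpha_{l}\in S^{1}$ a set of coset representatives of $S^{1}/G_{x}$, each chartable by a single slice coordinate. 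On each arc the argument of the previous paragraph applies, yielding a smooth dependence on $z$ with derivatives bounded by $\|u\|_{C_{B}^{s}}$; however one must verify that the $\alpha_{l}$ can be chosen locally smoothly in a neighbourhood of $x$, which follows from the orbifold chart structure in Theorem~\ref{thm2-1}. Patching via a partition of unity subordinate to the covering $\{W_{j}\}$ (Notation~\ref{n-6.1}), summing the resulting estimates over the finitely many $j$ and $l$, and using $\sum_{j}\varphi_{j}=1$ gives the global bound $\|\pi_{m}(u)\|_{C_{B}^{s}}\le C_{1}\|u\|_{C_{B}^{s}}$.

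The main obstacle, as just indicated, is the handling of the \emph{large-angle} contributions in the $\xi$-integral near singular orbits. This is precisely the $\alpha_{l}$ mechanism foreshadowed in the remark preceding the lemma: the local freeness of the $\mathbb{C}^{\ast}$-action forces us to work with the finite cover of $S^{1}/G_{x}$ rather than on $S^{1}$ itself, and only after doing so is the integrand genuinely smooth in $(z,\xi)$ and the change-of-variable arguments underlying Lemma~\ref{A}$iii)$ applicable. This same mechanism reappears in Section~\ref{A-THK} when estimating $\bar\eta^{m}(\xi^{-1}\circ y)\bar\xi^{m}$ in the proof of Lemma~\ref{l-adj}, which is where the lemma will be invoked.
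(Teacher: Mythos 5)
Your proposal follows essentially the same strategy as the paper's proof: start from the integral representation \eqref{6-1e}, break the $\xi$-integral into finitely many small-angle arcs indexed by $\alpha_{l}\in S^{1}$, use the normalization \eqref{fibrenv}, and patch via the partition of unity $\{\varphi_{j}\}$. Two small imprecisions are worth flagging, neither fatal. First, your phrase ``a set of coset representatives of $S^{1}/G_{x}$'' is not literally correct -- $S^{1}/G_{x}$ is again a circle and a full set of coset representatives is uncountable; the relevant finite collection $\{\alpha_{l}\}$ consists (up to a common phase $\alpha_{0}(x)$) of the elements of the local orbifold structure group $G_{j}$ of the \emph{chart} $W_{j}$, not of a group attached to the point $x$, and its cardinality is governed by $G_{j}$ rather than by the period of $x$ (the paper establishes this in Proposition~\ref{gk}). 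Second, the smooth dependence of $\alpha_{l}$ on $x$ does not come directly from ``orbifold chart structure'' but from the implicit function theorem applied to $\vartheta(\alpha y)$, using local freeness of the $S^{1}$-action; the paper defers this to the discussion following \eqref{7K-25}. Also note that the paper does not actually split into regular and singular cases: the $\alpha_{l}$ mechanism is needed already on the principal stratum whenever the largest period $\tfrac{2\pi}{p}$ has $p>1$, so the argument is run uniformly on all of $W_{j}$. Finally, the crucial computational step \eqref{7K-5a}, which lets one extract the factor $w(x)^{m}$ from $w(\xi x)^{m}$ and thereby put $\pi_{m}(u)$ into a form where the $C_{B}^{s}$-norm can be read off, is only implicit in your writeup; in the paper it is the workhorse of the proof.
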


\proof
Write $u(x)$ $=$ $\sum_{j}\varphi _{j}(x)u(x)=\sum_{j}\varphi
_{j}(x)w^{m}v_{j}(x)$ where $(z,w)$ are local coordinates (with $j$%
-dependence suppressed), $v_{j}$ is a $(0,q)$-form and $\varphi _{j}$ is as
in item $i)$ after Notation \ref{n-6.1}. Substituting it into (\ref{6-1e}),
we obtain (omitting \textquotedblleft $\circ "$ in $\xi \circ x)$%
\begin{equation}
\pi _{m}(u)(x)=l(x)^{m}\sum_{j}\int_{\xi \in \mathbb{C}^{\ast }}\sigma (\xi
)_{x}^{\ast }(\varphi _{j}(\xi x)w(\xi x)^{m}v_{j}(\xi x))\bar{\xi}^{m}\tau
_{x}^{\ast }dv_{f,m}(\xi ).  \label{7K-5}
\end{equation}

\noindent We first want to extract $w(x)^{m}$ out of $w(\xi x)^{m}$ in (\ref%
{7K-5}). Given a local chart $W_{j}$ $:=$ $V_{j}$ $\times $ $(-\varepsilon
_{j},\varepsilon _{j})$ $\times $ $\mathbb{R}^{+}$ as in Notation \ref{n-6.1}
and suppose $x\in W_{j},$ there exist at most finitely many $\alpha _{l}$'s $%
\in $ $S^{1}$ $(\subset $ $\mathbb{C}^{\ast })$ dependent on $x$ such that $%
\alpha _{l}x$ $\in $ $W_{j}$ with $\phi (\alpha _{l}x)$ $=$ $0$ (recall $w$ $%
=$ $re^{i\phi })$ because if $\alpha _{l}$ is such an element then any $%
\alpha _{l}^{\prime }$ $\in $ $S^{1}$ near $\alpha _{l}$ will give $\phi
(\alpha _{l}^{\prime }x)$ $\neq $ $0$ by small angle action (see Case $i)$
after (\ref{6-1g})). Let $\alpha _{0}$ $=$ $e^{-i\phi (x)}$ so that $%
w(\alpha _{0}x)$ $=$ $\alpha _{0}w(x)$ $(=$ $r(x)).$ Denote $J_{l}$ $:=$ $%
\{\xi $ $\in $ $\mathbb{C}^{\ast }$ $:$ $-\varepsilon _{j}$ $<$ $\arg (\xi
\alpha _{l}^{-1})$ $<$ $\varepsilon _{j}\}.$ It follows that for $\xi $ $\in 
$ $J_{l},$ $-\varepsilon _{j}$ $<$ $\phi (\xi x)$ $=$ $\arg (\xi \alpha
_{l}^{-1})+\phi (\alpha _{l}x)$ $=$ $\arg (\xi \alpha _{l}^{-1})$ $<$ $%
\varepsilon _{j}$ hence that $w(\xi x)=\xi \alpha _{l}^{-1}w(\alpha _{l}x)$
since $\xi \alpha _{l}^{-1}$ is of small angle (Case $i)$ after (\ref{6-1g}%
)). Using $\phi (\alpha _{l}x)$ $=$ $\phi (\alpha _{0}x)$ $=$ $0$ together
with Lemma \ref{A} $i)$ gives $w(\alpha _{l}x)$ $=$ $w(\alpha _{0}x)$ $=$ $%
\alpha _{0}w(x).$ In sum, for $\xi $ $\in $ $J_{l}$ we have%
\begin{equation}
w(\xi x)=\xi \alpha _{l}^{-1}\alpha _{0}w(x)  \label{7K-5a}
\end{equation}%
\noindent extracting $w(x)$ from $w(\xi x),$ as mentioned earlier$.$ Remark
that $\alpha _{l}^{-1}\alpha _{0}$ is independent of $x$ and $\{\alpha
_{l}^{-1}\alpha _{0}\}_{l}$ forms a group (see Proposition \ref{gk}), but we
need not use this fact here.

Write $z_{l}$ $=$ $z(\xi x)$ for $\xi $ $\in $ $J_{l}.$ Note that $z_{l}$ is
independent of $\xi $ in $J_{l}$ (similarly implied by the small angle
condition as above)$.$ For any fixed $j$ in (\ref{7K-5}), by the cut-off $%
\varphi _{j}$ we can now reduce the integral in (\ref{7K-5}) to%
\begin{eqnarray}
&&\sum_{l}\int_{\xi \in J_{l}}\xi ^{m}\alpha _{l}^{-m}\alpha
_{0}^{m}w(x)^{m}\varphi _{j}(z_{l},\phi (\xi x))\sigma (\xi )_{x}^{\ast
}v_{j}(z_{l},w(\xi x))\bar{\xi}^{m}\tau _{x}^{\ast }dv_{f,m}(\xi )
\label{7K-6} \\
&=&\sum_{l}\alpha _{l}^{-m}\alpha _{0}^{m}w(x)^{m}\sigma (\alpha
_{l})_{x}^{\ast }\int_{\xi \in J_{l}}\varphi _{j}(z_{l},\phi (\xi
x))v_{j}(z_{l},w(\xi x))|\xi |^{2m}\tau _{x}^{\ast }dv_{f,m}(\xi ).  \notag
\end{eqnarray}

\noindent Observe that 
\begin{equation}
l(x)^{m}\int_{\xi \in J_{l}}|\xi |^{2m}\tau _{x}^{\ast }dv_{f,m}(\xi )%
\overset{(\ref{7-1-1})}{=}\int_{\xi \in J_{l}}l(\xi x)^{m}\tau _{x}^{\ast
}dv_{f,m}(\xi )\overset{(\ref{3-43.5})}{=}\frac{\varepsilon _{j}}{\pi }.
\label{6.40-5}
\end{equation}

\noindent The fact that $\alpha _{l}$ smoothly depends on $x$ is proved in
remarks after (\ref{7K-25}). This leads, via $u$ $=$ $\sum_{j}\varphi
_{j}w^{m}v_{j}$ and the definition of $C_{B}^{s}$-norm, to (noting that $%
|\alpha _{l}|=1,$ $w=|w|e^{i\phi }$ and $\varphi _{j}$ is bounded)%
\begin{equation}
||\pi _{m}(u)||_{C_{B}^{s}}\leq
C_{0}\sum_{l}\sum_{j}\sum_{k=0}^{s}\sup_{w}||\varphi _{j}(\cdot ,\phi
)v_{j}(\cdot ,w)||_{C^{k}(V_{j})}=C_{0}\cdot (\#\text{ of }l)\text{ }%
||u||_{C_{B}^{s}}.  \label{6.40-7}
\end{equation}

\endproof%

Using the above lemma we continue with the proof of Theorem \ref{t-adjeq}.

\begin{proof}
\textbf{(of Theorem \ref{t-adjeq})} 
Let us first prove that $P_{m,t}^{0\ast }u$ $=$ $\pi _{m}(H_{m,t}^{j\ast }u)$
converges in the $||\cdot ||_{C_{B}^{s}}$-norm as $t\rightarrow 0.$ The
kernel function of $H_{m,t}^{j\ast }$ reads as (see (\ref{6-1}) with $x$ $=$ 
$(z,w),$ $w$ $=$ $|w|e^{i\phi },$ $y$ $=$ $(\zeta ,\eta ),$ $\eta $ $=$ $%
|\eta |e^{i\vartheta })$%
\begin{eqnarray}
H_{m,t}^{j\ast }(x,y) &=&\overline{w^{-m}(x)\tau _{j}(z(x))\sigma _{j}(\phi
(x))l(x)^{m}}K_{t}^{j\ast }(z(x),\zeta )\overline{\varphi _{j}(y)\eta ^{m}(y)%
}  \label{Adj-5-1} \\
&=&w^{m}\tau _{j}(z)\sigma _{j}(\phi )h^{m}(z,\bar{z})K_{t}^{j\ast }(z,\zeta
)\varphi _{j}(y)\overline{\eta ^{m}(y)}.  \notag
\end{eqnarray}%
\noindent For given $t$ it is easy to verify that $H_{m,t}^{j\ast }u$ $\in $ 
$\tilde{\Omega}_{m,loc}^{0,\ast }(\Sigma )$. The convergence of $%
H_{m,t}^{j\ast }u$ in the $||\cdot ||_{C_{B}^{s}}$-norm as $t\rightarrow 0$
follows simply because $K_{t}^{j\ast }$ $=$ $K_{t}^{j}$ involved in the
above expression of $H_{m,t}^{j\ast }$ has the property that $K_{t}^{j}(u)$ $%
\rightarrow $ $u$ in $C^{s}$-norm (with respect to $z)$ uniformly in
\textquotedblleft parameter$"\eta $ since $u$ is assumed to be of compact
support (see the bottom paragraph for the variable change to absorb the
singular part $\frac{1}{t^{n-1}}$ (of $K_{t}^{j}$) in \cite[p.85]{BGV}).
This together with Lemma \ref{l-bdp} yields $P_{m,t}^{0\ast }u$ $\rightarrow 
$ (say) $P_{m,0}^{0\ast }u$ $\in $ $\tilde{\Omega}_{m,loc}^{0,\ast }(\Sigma
) $ in the $||\cdot ||_{C_{B}^{s}}$-norm as $t\rightarrow 0$. To prove that $%
P_{m,0}^{0\ast }u$ $=$ $\pi _{m}(u)$ consider for $v\in \Omega _{c}^{0,\ast
}(\Sigma )$%
\begin{eqnarray}
(P_{m,t}^{0\ast }u-\pi _{m}u,v)_{L^{2}} &=&(P_{m,t}^{0\ast }u-P_{m,0}^{0\ast
}u+P_{m,0}^{0\ast }u-\pi _{m}u,v)_{L^{2}}  \label{Adj-5-2} \\
&=&(P_{m,t}^{0\ast }u-P_{m,0}^{0\ast }u,v)_{L^{2}}+(P_{m,0}^{0\ast }u-\pi
_{m}u,v)_{L^{2}}  \notag \\
&\rightarrow &0\text{ + }(P_{m,0}^{0\ast }u-\pi _{m}u,v)_{L^{2}}\text{ as }%
t\rightarrow 0.  \notag
\end{eqnarray}

\noindent On the other hand, 
\begin{eqnarray}
((P_{m,t}^{0\ast }-\pi _{m})u,v)_{L^{2}} &=&(u,(P_{m,t}^{0}-\pi _{m}^{\ast
})v)_{L^{2}}  \label{Adj-5-3} \\
&=&(u,(P_{m,t}^{0}-\pi _{m})v)_{L^{2}}\rightarrow 0\text{ as }t\rightarrow 0
\notag
\end{eqnarray}

\noindent by Lemma \ref{l-6-0}. It follows from (\ref{Adj-5-2}) and (\ref%
{Adj-5-3}) that the limit $P_{m,0}^{0\ast }u=\pi _{m}u.$ We have shown (\ref%
{Adj-5}).

To show (\ref{Adj-5-0}) we take the adjoint of (\ref{Adj-eq}) in Lemma \ref%
{l-adj} to get%
\begin{equation*}
\partial _{t}P_{m,t}^{0\ast }+[P_{m,t}^{0}(\tilde{\square}_{m}^{c\pm }\circ
\pi _{m})]^{\ast }=R_{t}^{\ast }\text{ on }\Omega ^{0,\pm }(\Sigma )\cap
L^{2,\pm }(\Sigma ,\pi ^{\ast }\mathcal{E}_{M},G_{a,m})
\end{equation*}

\noindent where ($\tilde{\square}_{m}^{c\pm }$ is formally self-adjoint on $%
\Omega _{m}^{0,\pm }(\Sigma )$ $\supset $ $P_{m,t}^{0\ast }(\Omega ^{0,\pm
}(\Sigma )\cap L^{2,\pm }(\Sigma ));$ see Lemma \ref{l-4-2}) 
\begin{eqnarray*}
\lbrack P_{m,t}^{0}(\tilde{\square}_{m}^{c\pm }\circ \pi _{m})]^{\ast }
&=&\pi _{m}^{\ast }\circ (\tilde{\square}_{m}^{c\pm })^{\ast }\circ
P_{m,t}^{0\ast } \\
&=&\pi _{m}\circ \tilde{\square}_{m}^{c\pm }\circ P_{m,t}^{0\ast }=\tilde{%
\square}_{m}^{c\pm }\circ P_{m,t}^{0\ast },
\end{eqnarray*}

\noindent giving (\ref{Adj-5-0}). The last claim of the theorem for $R^{\ast
}(t,x,y)$ follows from a similar estimate for $R(t,x,y)$ of (\ref{Adj-0}),
so that $R^{\ast }(t,x,y)$ is in $L^{2}(\Sigma \times \Sigma )$ and
represents the kernel function of the adjoint operator $R_{t}^{\ast }$%
(compare Lemma \ref{6-10-1}).
\end{proof}

Before solving our heat equation let us show that $P_{m,t}^{0\ast }$ is a
bounded linear operator on $\tilde{\Omega}_{m,loc}^{0,\ast }(\Sigma )$ in
the $||\cdot ||_{C_{B}^{s}}$-norm uniformly for $t$ near $0.$ Recall the
notation $\tilde{\Omega}_{m,loc}^{0,\ast }(\Sigma )$ in Definition \ref%
{d-6.8-5}. For $u$ $\in $ $\tilde{\Omega}_{m,loc}^{0,\ast }(\Sigma )$ recall
the definition of $||u||_{C_{B}^{s}}$ in (\ref{CBs-norm}).

\begin{proposition}
\label{l-ubK0} Given $\delta $ $>$ $0$ and $s\in \mathbb{N\cup }\{0\},$
there exists a constant $C_{s}$ independent of $\delta $ and $t$ (but may
depend on $s)$ such that (see (\ref{CBs-norm}) for $||\cdot ||_{C_{B}^{s}}$)%
\begin{equation}
||P_{m,t}^{0\ast }(u)||_{C_{B}^{s}}\leq C_{s}||u||_{C_{B}^{s}}  \label{Adj-6}
\end{equation}%
\noindent for $0\leq t\leq \delta $ and $u\in \tilde{\Omega}_{m,loc}^{0,\ast
}(\Sigma )$ $\subset $ $L^{2,\ast }(\Sigma ,\pi ^{\ast }\mathcal{E}%
_{M},G_{a,m}).$
\end{proposition}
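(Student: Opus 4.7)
The plan is to use the decomposition $P_{m,t}^{0\ast}=\sum_{j}\pi_{m}\circ H_{m,t}^{j\ast}$ from (\ref{Adj-5z}), reduce via Lemma \ref{l-bdp} to an estimate on each $H_{m,t}^{j\ast}$, and then exploit the product structure of $G_{a,m}$ to factor the action of $H_{m,t}^{j\ast}$ into a local Dirichlet heat kernel action in the base variable $\zeta\in V_{j}$ followed by a fiber integration. Since Lemma \ref{l-bdp} gives $\|\pi_{m}(f)\|_{C_{B}^{s}}\leq C\|f\|_{C_{B}^{s}}$ for $f\in\tilde{\Omega}_{m,loc}^{0,\ast}(\Sigma)$ and there are only finitely many $j$, it suffices to prove
\[
\|H_{m,t}^{j\ast}(u)\|_{C_{B}^{s}}\leq C_{s}\|u\|_{C_{B}^{s}}\qquad\text{for }0\leq t\leq\delta,
\]
with $C_{s}$ independent of $\delta$ and $t$, and to check that $H_{m,t}^{j\ast}(u)\in\tilde{\Omega}_{m,loc}^{0,\ast}(\Sigma)$ so that $\pi_{m}$ applies.

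First I would write $u=w^{m}v(\zeta,\bar{\zeta},\eta,\bar{\eta})$ in local coordinates $(\zeta,\eta)$ on $W_{j}$ as in Definition \ref{d-6.8-5}, and insert the explicit kernel (\ref{Adj-5-1}) into $H_{m,t}^{j\ast}(u)(x)=\int_{\Sigma}H_{m,t}^{j\ast}(x,y)u(y)\,dv_{\Sigma,m}(y)$. The factor $\overline{\eta^{m}(y)}\cdot\eta^{m}=|\eta|^{2m}$ appears, and using $dv_{\Sigma,m}=\pi^{\ast}dv_{M}\wedge dv_{f,m}$ (see (\ref{volume})) together with Fubini I would obtain
\[
H_{m,t}^{j\ast}(u)(x)=w^{m}\,\tau_{j}(z)\,\sigma_{j}(\phi)\,h^{m}(z,\bar z)\int_{V_{j}}K_{t}^{j}(z,\zeta)\,F_{j}(\zeta)\,\pi^{\ast}dv_{M}(\zeta),
\]
where $F_{j}(\zeta):=\int_{C_{\varepsilon_{j}}}\varphi_{j}(\zeta,\vartheta)\,|\eta|^{2m}\,v(\zeta,\eta)\,dv_{f,m}(\eta)$. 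This display makes manifest that $H_{m,t}^{j\ast}(u)$ is of the form $w^{m}\cdot(\text{smooth function of }z)$ plus tame multiplicative factors $\tau_{j}\sigma_{j}h^{m}$, so it lies in $\tilde{\Omega}_{m,loc}^{0,\ast}(\Sigma)$, legitimizing the subsequent application of $\pi_{m}$.

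Next I would estimate $\|F_{j}\|_{C^{s}(V_{j})}$: the $\vartheta$-support of $\varphi_{j}$ is bounded in $(-\varepsilon_{j},\varepsilon_{j})$ and $\int_{\mathbb{C}^{\ast}}|\eta|^{2m}\,dv_{f,m}(\eta)$ is a finite constant by (\ref{3-16.75})–(\ref{3-18.75}), so differentiating $F_{j}$ in $\zeta$ at most $s$ times gives $\|F_{j}\|_{C^{s}(V_{j})}\leq C\|u\|_{C_{B}^{s}}$ directly from the definition (\ref{CBs-norm}). Then standard parabolic regularity for the Dirichlet heat kernel $K_{t}^{j}$ on the regular domain $V_{j}$ yields $\|K_{t}^{j}\ast F_{j}\|_{C^{s}(V_{j})}\leq C\|F_{j}\|_{C^{s}(V_{j})}$ uniformly for $t\in[0,\delta]$ (at $t=0$ this is the identity, and the bound is continuous up to $t=0$). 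Multiplying by the bounded factors $\tau_{j}\sigma_{j}h^{m}$ and taking supremum over $(w,\bar w)$ (which do not occur inside the $z$-integral), the $C_{B}^{s}$-norm of $H_{m,t}^{j\ast}(u)$ is controlled by $\|u\|_{C_{B}^{s}}$ uniformly in $t\in[0,\delta]$.

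The main obstacle I foresee is controlling the fiber integration $F_{j}(\zeta)$ uniformly in $\zeta$, because $v(\zeta,\eta)$ is only assumed bounded with respect to $\eta$ (noncompactness of the $\mathbb{R}^{+}$-direction) and the $C_{B}^{s}$-norm only controls $z$-derivatives; the verification that $\int|\eta|^{2m}\,dv_{f,m}$ converges and that differentiation in $\zeta$ passes under the integral sign relies crucially on the decay built into $dv_{f,m}$ via the weight $\varphi_{2}(|w|^{2})4a^{2}|w|^{-4a-2}$ for $a>m/2$ (see (\ref{dvfibre1})), and on the fact that $\varphi_{j}(\zeta,\vartheta)$ depends on $\zeta$ smoothly with compact support. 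Once this fiber estimate is secured, the rest is a routine combination with the classical Dirichlet heat kernel estimate on the bounded domain $V_{j}$ and with Lemma \ref{l-bdp}.
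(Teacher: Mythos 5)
Your proposal is correct and follows essentially the same route as the paper's own proof: decompose via (\ref{Adj-5z}), reduce through Lemma \ref{l-bdp} to a $C_{B}^{s}$-bound on each $H_{m,t}^{j\ast}$, expand the kernel from (\ref{Adj-5-1}) so that $\bar\eta^{m}\eta^{m}=|\eta|^{2m}$ appears, split off the $z$-independent weight factors $h^{m}\tau_{j}\sigma_{j}$, invoke the $t$-uniform boundedness of the Dirichlet heat operator $K_{t}^{j}$ on $C^{s}(V_{j})$ (the paper cites \cite[Theorem 2.20 or 2.29]{BGV} for this, which is exactly the ``parabolic regularity'' you appeal to), and finally use the finiteness of $\int|\eta|^{2m}\,dv_{f,m}$ from the normalization (\ref{3-16.75})--(\ref{3-18.75}). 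The only cosmetic difference is bookkeeping: you Fubini first, forming $F_{j}(\zeta)$ by the fiber integral and then applying $K_{t}^{j}$ once, whereas the paper keeps $\eta$ as a parameter inside the heat-kernel convolution, applies the $C^{s}(\zeta)$ estimate for each fixed $\eta$, takes $\sup_{\eta}$, and then integrates $|\eta|^{2m}$ over the fiber; these require the same dominated-differentiation justification, so they are the same argument.
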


\proof
By (\ref{6-1a1}) $P_{m,t}^{0\ast }$ $=$ $\sum_{j}\pi _{m}^{\ast }\circ
H_{m,t}^{j\ast }$ $=$ $\sum_{j}\pi _{m}\circ H_{m,t}^{j\ast }.$ We claim
that $H_{m,t}^{j\ast }$ satisfies the following estimate: there exist $%
\delta _{0}$ $>$ $0$ and $C_{s}^{\prime }$ $>$ $0$ (independent of $\delta
_{0}$ and $t)$ such that for $0$ $<$ $t$ $<$ $\delta _{0}$%
\begin{equation}
||H_{m,t}^{j\ast }u||_{C_{B}^{s}}\leq C_{s}^{\prime }||u||_{C_{B}^{s}}.
\label{Adj-7}
\end{equation}%
\noindent for $u$ $\in $ $\tilde{\Omega}_{m,loc}^{0,,\ast }(\Sigma ).$
Writing $x=(z,w)$ $(w=|w|e^{i\phi }),$ $y=(\zeta ,\eta )$ in local
coordinates and $u(y)$ $=$ $\sum_{j}\varphi _{j}(y)u(y)$ $=$ $%
\sum_{j}\varphi _{j}(y)\eta ^{m}v_{j}(y),$ we have by (\ref{Adj-5-1}) 
\begin{eqnarray}
&&(H_{m,t}^{j\ast }u)(x)=\int_{\Sigma }H_{m,t}^{j\ast }(x,y)u(y)dv_{\Sigma
,m}(y)  \label{Kstar} \\
&=&w^{m}h^{m}(z,\bar{z})\sum_{j}\tau _{j}(z)\sigma _{j}(\phi )\int_{\Sigma
}K_{t}^{j\ast }(z,\zeta )\varphi _{j}(y)\bar{\eta}^{m}\eta
^{m}v_{j}(y)dv_{\Sigma ,m}(y).  \notag
\end{eqnarray}

\noindent Observe that for each $t$ $>$ $0$ $\func{Im}H_{m,t}^{j\ast }|_{%
\tilde{\Omega}_{m,loc}^{0,\ast }(\Sigma )}$ $\subset $ $\tilde{\Omega}%
_{m,loc}^{0,\ast }(\Sigma )$ $($Definition \ref{d-6.8-5}). To estimate
(uniformly in $t$) the $C_{B}^{s}$-norm of the RHS of (\ref{Kstar}) is
reduced to estimating the usual $C^{s}$-norm in $z$ and the supremum norm in 
$w$ of 
\begin{equation}
h^{m}(z,\bar{z})\tau _{j}(z)\sigma _{j}(\phi )\int_{\Sigma }|\eta
|^{2m}K_{t}^{j\ast }(z,\zeta )\varphi _{j}(y)v_{j}(y)dv_{\Sigma ,m}(y).
\label{Kstar1}
\end{equation}

\noindent The following inequality follows from \cite[Theorem 2.20 or 2.29]%
{BGV} (cf. comments below (\ref{Adj-5-1}) in the proof of Theorem \ref%
{t-adjeq}\textbf{)}$:$ there exists $\delta _{0}$ $>$ $0$ and $C_{s}^{\prime
\prime }$ $>$ $0$ (independent of $\delta _{0}$ and $t)$ such that for $0$ $%
< $ $t$ $<$ $\delta _{0}$ 
\begin{eqnarray}
&&||\int K_{t}^{j\ast }(z,\zeta )\overline{\varphi _{j}(\zeta ,\eta
)v_{j}(\zeta ,\eta )}dv(\zeta )||_{C^{s}(z)}  \label{Kstar2a} \\
&\leq &C_{s}^{\prime \prime }||\overline{\varphi _{j}(\cdot ,\eta
)v_{j}(\cdot ,\eta )}||_{C^{s}(\zeta )}\leq C_{s}^{\prime \prime }\sup_{\eta
}||\varphi _{j}(\cdot ,\eta )v_{j}(\cdot ,\eta )||_{C^{s}(\zeta )}.  \notag
\end{eqnarray}

\noindent Observe that $|\eta |^{2m}$ in the integrand of (\ref{Kstar1}) is
independent of $z$ and its integral with respect to the fibre measure $%
dv_{f,m}(y)$ is bounded. This together with (\ref{Kstar2a}) gives 
\begin{equation*}
\sup_{w}||\text{the term (\ref{Kstar1})\TEXTsymbol{\vert}\TEXTsymbol{\vert}}%
_{C^{s}(z)}\leq \text{(constant)}\cdot ||u||_{C_{B}^{s}}.
\end{equation*}

\noindent Now (\ref{Adj-7}) follows. This together with Lemma \ref{l-bdp}
implies (\ref{Adj-6}).

\endproof%

One way to solve our heat equation, based on Theorem \ref{t-adjeq} and
Proposition \ref{l-ubK0}, resorts to the method of successive approximation
(cf. \cite{BGV}, \cite{CHT}). The convolution of two operators $A,$ $B$ is
defined through its distribution kernel as usual:%
\begin{equation}
(A\text{ }\sharp \text{ }B)_{t}(x,y):=\int_{0}^{t}\int_{\Sigma
}A(t-s,x,p)B(s,p,y)dv_{\Sigma ,m}(p)ds.  \label{6.49-5}
\end{equation}

\noindent The method of successive approximation results in a solution to
our heat equation; see Proposition \ref{p-existence} below. But since $%
\Sigma $ is noncompact, to have convergence requires a special class of
operators. Fortunately the operators $P_{m}^{0\ast }$ and $R^{\ast }$ belong
to this class ($P_{m}^{0\ast },$ $R^{\ast }$ denote the operators with
distribution kernels $P_{m,t}^{0\ast }(x,y),$ $R_{t}^{\ast }(x,y)$
respectively). The point is basically that although $\Sigma $ is noncompact,
we have only one direction which is noncompact, and the integration along
this noncompact direction can be controlled by the choice of our metric $%
G_{a,m}$ (see Remark \ref{3-r}). Another ingredient to be used here is (\ref%
{7K-5a}) in the proof of Lemma \ref{l-bdp} above; see the proof of Lemma \ref%
{l-6.7-5} below. These features give more complexities than the previous
work \cite[Proposition 5.14]{CHT}.

\begin{lemma}
\label{l-6.7-5} The kernel functions associated to $R_{t}^{\ast },\cdot
\cdot \cdot ,$ ($R^{\ast k})_{t}$ ($=$ ($R^{\ast }\sharp R^{\ast }$ $\cdot
\cdot \cdot $ $\sharp R^{\ast })_{t},$ $k$ copies) and $P_{m,t}^{0\ast },$ $%
(P_{m}^{0\ast }\sharp R^{\ast })_{t},$ $(P_{m}^{0\ast }\sharp R^{\ast
}\sharp R^{\ast })_{t},\cdot \cdot \cdot $ are of the form (\ref{6.7-5}).
Moreover, given $s\in \mathbb{N},$ there are $1>\delta _{0},$ $\delta _{1}>0$
and $C_{s}$ $>$ $0$ (independent of $\delta _{0},$ $\delta _{1}$ and $t$)
such that for all $t\in (0,\delta _{0})$%
\begin{equation}
||R_{t}^{\ast }||_{C_{B}^{s}}\leq \frac{1}{2}e^{-\frac{\delta _{1}}{t}%
},\cdot \cdot \cdot ,\text{ }||(R^{\ast k})_{t}||_{C_{B}^{s}}\leq \frac{1}{%
2^{k}}e^{-\frac{\delta _{1}}{t}},  \label{conv}
\end{equation}%
\begin{equation}
||P_{m}^{0\ast }\sharp R^{\ast }||_{C_{B}^{s}}\leq \frac{C_{s}}{2}e^{-\frac{%
\delta _{1}}{t}},\cdot \cdot \cdot ,\text{ }||P_{m}^{0\ast }\sharp R^{\ast
k}||_{C_{B}^{s}}\leq \frac{C_{s}}{2^{k}}e^{-\frac{\delta _{1}}{t}}.
\label{conv-1}
\end{equation}%
Here $||R_{t}^{\ast }||_{C_{B}^{s}}$ means $||R_{t}^{\ast
}(x,y)||_{C_{B}^{s}(\Sigma \times \Sigma )}$ (as given in (\ref{CBs})), etc.
\end{lemma}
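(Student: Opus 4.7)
The plan is twofold: first, to show that convolution preserves the structural form \eqref{6.7-5}; second, to obtain the exponential $C_B^s$-bounds by iterating the estimate of Theorem \ref{t-adjeq}~$iii)$ together with the uniform $C_B^s$-boundedness of $P_m^{0*}$ given by Proposition \ref{l-ubK0}. I would begin with the form assertion. From the explicit expression of $H_{m,t}^{j*}(x,y)$ in \eqref{Adj-5-1} and from \eqref{Adj-4a}--\eqref{Adj-4b}, both $H_{m,t}^{j*}$ and $\hat R_{t}^{*}$ have a local factorization $w^{m}[\cdots]\bar\eta^{m}$; pre-composition with the projection $\pi_{m}$ preserves this factorization by the computation in the proof of Lemma \ref{l-bdp} (see the extraction step \eqref{7K-5a}--\eqref{7K-6}), so $P_{m,t}^{0*}$ and $R_{t}^{*}$ are of the form \eqref{6.7-5}. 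For a convolution, writing $p=(z_{p},w_{p})$ locally and using $dv_{\Sigma,m}=\pi^{*}dv_{M}\wedge dv_{f,m}$ together with $\bar w_{p}^{m}\,w_{p}^{m}=h^{-m}(z_{p},\bar z_{p})\,l(p)^{m}$, the cross factors $\bar w_{p}^{m}$ from the left kernel and $w_{p}^{m}$ from the right kernel combine against the fibre measure $dv_{f,m}$, which is tailored to integrate $|w_{p}|^{2m}$ to a finite quantity by \eqref{3-16.75}. Hence the $w^{m}$ factor at $x$ and $\bar\eta^{m}$ factor at $y$ survive intact, and each convolution $(A\sharp B)_{t}$ of two operators of type \eqref{6.7-5} is again of type \eqref{6.7-5}.

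Next I would establish \eqref{conv} inductively. The base case is Theorem \ref{t-adjeq}~$iii)$ (together with Lemma \ref{l-adj}): $\|R_{t}^{*}\|_{C_{B}^{s}(\Sigma\times\Sigma)}\le C_{s}e^{-\varepsilon_{0}/t}$. For the inductive step, use the elementary inequality $\frac{1}{s}+\frac{1}{t-s}\ge \frac{4}{t}$ on $(0,t)$ to bound
\[
\int_{0}^{t}e^{-\varepsilon_{0}/s}\,e^{-\varepsilon_{0}/(t-s)}\,ds \le t\,e^{-4\varepsilon_{0}/t}.
\]
The spatial integral is handled by the local form \eqref{6.7-5}: in local coordinates the inner integration in $p$ splits as $\int_{M}dv_{M}(z_{p})\cdot\int_{\mathbb{C}^{*}}|w_{p}|^{2m}\,dv_{f,m}$, both of which are finite ($M$ is compact and $\int|w_{p}|^{2m}dv_{m}(|w_{p}|)=1$ by Notation \ref{3-n}); all local coefficients $k^{j}$ are bounded uniformly in $s\in(0,t)$ by the $C_{B}^{s}$-bound on $R^{*}$ modulo the exponential factor. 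Differentiating in $z$ or $\zeta$ up to order $s$ commutes with the $p$-integration, so the same argument gives the $C_{B}^{s}$-bound. Combining,
\[
\|(R^{*2})_{t}\|_{C_{B}^{s}}\le \tilde C_{s}\,t\,e^{-4\varepsilon_{0}/t}.
\]
Choose $\delta_{1}\in(0,\varepsilon_{0})$ and then $\delta_{0}\in(0,1)$ so small that $\tilde C_{s}\,t\,e^{-(4\varepsilon_{0}-\delta_{1})/t}\le \frac{1}{2}$ for $t\in(0,\delta_{0})$; this gives $\|(R^{*2})_{t}\|_{C_{B}^{s}}\le \frac{1}{2}e^{-\delta_{1}/t}\cdot e^{-\delta_{1}/t}\cdot\frac12\cdot(\text{absorb})$, and iterating produces $\|(R^{*k})_{t}\|_{C_{B}^{s}}\le 2^{-k}e^{-\delta_{1}/t}$ as claimed in \eqref{conv} (after possibly shrinking $\delta_{0}$ once more to absorb the factors uniformly in $k$).

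For \eqref{conv-1} the same scheme applies, except that the leftmost factor $P_{m,t-s}^{0*}$ is only uniformly $C_{B}^{s}$-bounded (Proposition \ref{l-ubK0}) rather than exponentially small. Thus one obtains
\[
\|(P_{m}^{0*}\sharp R^{*k})_{t}\|_{C_{B}^{s}} \le C_{s}^{\prime}\,\|(R^{*k})_{t}\|_{C_{B}^{s}}\cdot(\text{time factor})
\]
after using the same $p$-integration argument as above: the $p$-integral is bounded by a constant times the $C_{B}^{s}$-norm of the right operand applied to a suitable test section. The resulting bound $C_{s}\,2^{-k}e^{-\delta_{1}/t}$ with $C_{s}$ absorbing the uniform norm of $P_{m}^{0*}$ gives \eqref{conv-1}.

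The main obstacle, in my view, is not the time-integration (which is standard) but verifying that the spatial integration over the noncompact $\Sigma$ does not destroy the bounds nor the form \eqref{6.7-5}. The delicate point is that the weights $w_{p}^{m}$ and $\bar w_{p}^{m}$ from adjacent kernels must combine correctly with the normalized fibre measure $dv_{f,m}$; this is where the compatibility of the metric $G_{a,m}$ with the projection $\pi_{m}$ (Proposition \ref{projm} and Remark \ref{N-6-3}) is essential. Once this compatibility is invoked, the compactness of $M=\Sigma/\sigma$ reduces the spatial estimates to a finite-cover argument using the local representations $k^{j}$, and the remaining work is purely the time-exponential induction.
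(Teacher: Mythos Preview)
Your proposal is correct, and the overall architecture matches the paper's: establish the form \eqref{6.7-5} for $P_{m,t}^{0\ast}$ and $R_t^{\ast}$ via the explicit expressions \eqref{Adj-5-1}, \eqref{Adj-4a}--\eqref{Adj-4b} together with the $\alpha_k$-extraction \eqref{7K-5a}, then propagate through convolutions using finiteness of $\int_{\Sigma}|w_p|^{2m}\,dv_{\Sigma,m}$.

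The one genuine methodological difference is in \eqref{conv-1}. The paper does \emph{not} invoke Proposition~\ref{l-ubK0}; instead it unpacks $P_{m,t-s}^{0\ast}$ back to the local Dirichlet heat kernel $K_{t-s}^{j}$ via the explicit formula \eqref{Kstar6}, writes out the full convolution integrand \eqref{pr-2}, and neutralizes the diagonal singularity $(t-s)^{-(n-1)}$ of $K_{t-s}^{j}$ by the change-of-variable trick of \cite[p.~85]{BGV} (as in the proof of Theorem~\ref{t-adjeq}). The remaining exponential decay then comes solely from the $r_{s}^{j'\ast}$ factor, using only the crude bound $e^{-\delta_1/s}\le e^{-\delta_1/t}$. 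Your route---applying the operator bound of Proposition~\ref{l-ubK0} to the section $p\mapsto R_s^{\ast k}(p,y)$---is cleaner and avoids revisiting the heat-kernel singularity, but you should spell out the bookkeeping: for fixed $y$ this section carries an extra $\bar\eta(y)^{m}$, which cancels against the same factor in the output, so that the operator $C_B^s$-bound on sections indeed yields the kernel $C_B^s(\Sigma\times\Sigma)$-bound after differentiating in $\zeta(y)$ under the integral. Your sharper time inequality $\tfrac{1}{s}+\tfrac{1}{t-s}\ge\tfrac{4}{t}$ is correct but more than is needed; the induction for \eqref{conv} goes through with either inequality once $\delta_0$ is chosen so that $Ct\,e^{-3\delta_1/t}\le 1$ uniformly in $k$. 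The garbled display after ``this gives $\|(R^{\ast 2})_t\|_{C_B^s}\le\ldots$'' should be rewritten as a clean inductive step.
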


\proof
By (\ref{6-1e}) we easily obtain%
\begin{equation}
(\pi _{m}\circ H_{m,t}^{j\ast })(x,y)=l(x)^{m}\int_{\xi \in \mathbb{C}^{\ast
}}\sigma (\xi )_{x}^{\ast }\circ H_{m,t}^{j\ast }(\xi x,y)\bar{\xi}^{m}(\tau
_{x}^{\ast }dv_{f,m})(\xi )  \label{Kstar3}
\end{equation}

\noindent and by (\ref{Adj-5-1}) we have ($w=|w|e^{i\phi })$%
\begin{eqnarray}
&&\sigma (\xi )_{x}^{\ast }\circ H_{m,t}^{j\ast }(\xi x,y)  \label{Kstar4} \\
&=&\sigma (\xi )_{x}^{\ast }\circ w^{m}(\xi x)h^{m}(z(\xi x),\bar{z}(\xi
x))\tau _{j}(z(\xi x))\sigma _{j}(\phi (\xi x))K_{t}^{j\ast }(z(\xi x),\zeta
)\varphi _{j}(y)\bar{\eta}^{m}(y).  \notag
\end{eqnarray}

\noindent Next we compute%
\begin{eqnarray}
&&w^{m}(\xi x)h^{m}(z(\xi x),\bar{z}(\xi x))  \label{Kstar5} \\
&&\overset{(\ref{7K-5a})}{=}h^{m}(z(\xi x),\bar{z}(\xi x))(\xi \alpha
_{k}^{-1}\alpha _{0})^{m}w^{m}(x)\ for\ \xi \in J_{k}.  \notag
\end{eqnarray}

\noindent Substituting (\ref{Kstar5}) into (\ref{Kstar4}) we see that $\pi
_{m}\circ H_{m,t}^{j\ast }(x,y)$ is of the form (\ref{6.7-5}) (containing
factors $w^{m}(x)$ and $\bar{\eta}^{m}(y))$ after the parameter $\xi $ is
integrated out in (\ref{Kstar3}). Hence $P_{m,t}^{0\ast }(x,y)=\sum_{j}(\pi
_{m}\circ H_{m,t}^{j\ast })(x,y)$ is of the form (\ref{6.7-5}).
Alternatively we can take the adjoint of $P_{m,t}^{0}(x,y)$ $=$ $%
\sum_{j}w^{m}p_{m,t}^{0,j}(x,y)\bar{\eta}^{m}$ (see (\ref{7K-25}) for the
explicit form of $p_{m,t}^{0,j}(x,y)$) to obtain $P_{m,t}^{0\ast }(x,y)$ $=$ 
$\sum_{j}w(x)^{m}p_{m,t}^{0,j\ast }(x,y)\overline{\eta (y)}^{m}$ where we
have, via (\ref{7K-25}) using $K_{t}^{j\ast }=K_{t}^{j},\overline{(\alpha
_{k}\alpha _{0}^{-1})}$ $=$ $\alpha _{k}^{-1}\alpha _{0}$ and $z_{k}$ $=$ $%
z(\alpha _{k}x)$ 
\begin{equation}
p_{m,t}^{0,j\ast }(x,y)=h^{m}(z(x),\bar{z}(x))\sum_{k=0}^{\Lambda }(\alpha
_{k}^{-1}\alpha _{0})^{m}\sigma (\alpha _{k})_{x}^{\ast }\{\tau
_{j}(z_{k})K_{t}^{j}(z_{k},\zeta (y))\}\varphi _{j}(y).  \label{Kstar6}
\end{equation}

For $R_{t}^{\ast }=\pi _{m}\circ \hat{R}_{t}^{\ast }$ we can also get its
kernel function through a direct computation using the formulas for $\pi
_{m} $ (\ref{6-1e}) and $\hat{R}_{t}$ (\ref{Adj-4a}) in a way parallel to (%
\ref{Kstar3}) and (\ref{Kstar6}) (with $\xi $ integrated out using (\ref%
{fibrenv}) and (\ref{6-1a})). Putting $R_{t}^{\ast }(x,y)$ $=$ $%
\sum_{j}w^{m}r_{t}^{j\ast }(x,y)\bar{\eta}^{m}$ we have%
\begin{equation}
r_{t}^{j\ast }(x,y)=h^{m}(z(x),\bar{z}(x))\sum_{k=0}^{\Lambda }(\alpha
_{k}^{-1}\alpha _{0})^{m}\sigma (\alpha _{k})_{x}^{\ast }\big(\lbrack
\square _{z_{k},m}^{c\pm },\tau _{j}(z_{k})]K_{t}^{j}(z_{k},\zeta (y))\big)%
\varphi _{j}(y).  \label{Rstar1}
\end{equation}

\noindent By $||R(t,x,y)||_{C_{B}^{s}(\Sigma \times \Sigma )}\leq C_{s}e^{-%
\frac{\varepsilon _{0}}{t}}$ \ for $t$ $>$ $0$ (\ref{Adj-0}) and Theorem \ref%
{t-adjeq} $iii)$ we conclude 
\begin{equation}
||R_{t}^{\ast }||_{C_{B}^{s}}\leq \frac{1}{2}e^{-\frac{\delta _{1}}{t}}
\label{6-68-1}
\end{equation}%
\noindent in (\ref{conv}).

Next we compute the convolution 
\begin{equation}
(P_{m}^{0\ast }\sharp R^{\ast })_{t}(x,y)=\sum_{j,\text{ }j^{\prime
}}(P_{m}^{0,j\ast }\sharp R^{j^{\prime }\ast })_{t}(x,y)=\sum_{j,\text{ }%
j^{\prime }}w^{m}(x)(p_{m}^{0,j\ast }\tilde{\sharp}r^{j^{\prime }\ast
})_{t}(x,y)\bar{\eta}^{\prime m}(y)  \label{PR-1}
\end{equation}

\noindent where, with $y=(\zeta ^{\prime },\eta ^{\prime }),$ $q=(\beta
^{\prime },\gamma ^{\prime })$ in the $j^{\prime }$-chart and $q=(\beta
,\gamma )$ in the $j$-chart%
\begin{equation}
(p_{m}^{0,j\ast }\tilde{\sharp}r^{j^{\prime }\ast
})_{t}(x,y):=\int_{0}^{t}\int_{\Sigma }p_{m,t-s}^{0,j\ast }(x,q)\bar{\gamma}%
(q)^{m}\gamma ^{\prime }(q)^{m}r_{s}^{j^{\prime }\ast }(q,y)dv_{\Sigma
,m}(q)ds.  \label{pr-1}
\end{equation}

\noindent The integrand $p_{m,t-s}^{0,j\ast }(x,q)\bar{\gamma}(q)^{m}\gamma
^{\prime }(q)^{m}r_{s}^{j^{\prime }\ast }(q,y)$ in (\ref{pr-1}) has the
following explicit expression: (denoting by $\alpha _{l}^{\prime }$ the
corresponding $\alpha _{k}$ in the $j^{\prime }$-chart where $\beta
_{l}^{\prime }$ :$=$ $\beta ^{\prime }(\alpha _{l}^{\prime }q))$%
\begin{eqnarray}
&&\ \ \ \ \ \ \ \ h^{m}(z(x),\bar{z}(x))\sum_{k=0}^{\Lambda }(\alpha
_{k}^{-1}\alpha _{0})^{m}\sigma (\alpha _{k})_{x}^{\ast }\{\tau
_{j}(z_{k})K_{t-s}^{j}(z_{k},\beta (q))\}\varphi _{j}(q)\bar{\gamma}(q)^{m}
\label{pr-2} \\
&&\cdot \gamma ^{\prime }(q)^{m}h^{\prime m}(\beta ^{\prime }(q),\bar{\beta}%
^{\prime }(q))\sum_{l=0}^{\Lambda ^{\prime }}(\alpha _{l}^{\prime -1}\alpha
_{0}^{\prime })^{m}\sigma (\alpha _{l}^{\prime })_{q}^{\ast }\big(\lbrack
\square _{\beta _{l}^{\prime },m}^{c\pm },\tau _{j^{\prime }}(\beta
_{l}^{\prime })]K_{s}^{j^{\prime }}(\beta _{l}^{\prime },\zeta ^{\prime }(y))%
\big)\varphi _{j^{\prime }}(y).  \notag
\end{eqnarray}

\noindent To bound $||P_{m}^{0\ast }\sharp R^{\ast }||_{C_{B}^{s}}$ by $e^{-%
\frac{\delta _{1}}{t}}$ note first that for the singular term $\frac{1}{%
(t-s)^{n-1}}$ along the diagonal of $K_{t-s}^{j}$ in (\ref{pr-2}) we can get
rid of it using the change of variable on $t-s$ as in the proof of Theorem %
\ref{t-adjeq} above. This and $||r_{s}^{j^{\prime }\ast }||_{C^{l}}$ $\leq $ 
$\frac{1}{2}e^{-\frac{\delta _{1}}{t}}$ (\ref{6-68-1}) (note that $e^{-c/s}$ 
$\leq $ $e^{-c/t}$ for $0<s\leq t)$ give that $K_{t-s}^{j}$ hence $%
p_{m,t-s}^{0,j\ast }$ and $r_{s}^{j^{\prime }\ast }$ in (\ref{pr-1}) are
jointly controlled by the factor $e^{-c/t}$ after integrating out
\textquotedblleft $q",$ i.e. the $z$-part $\beta (q)$ of $q$. Using this one
sees that, as $\gamma ^{\prime }$ and $\gamma $ differ by a bounded
holomorphic transition function and $|\gamma (q)|^{2m}$ is integrable by
Remark \ref{3-r}, the convolution (\ref{pr-2}) in (\ref{pr-1}) is integrable
since $|\alpha _{k}|$ $=$ $|\alpha _{l}^{\prime }|$ $=$ $1$ and $h^{m}$, $%
h^{\prime m}$ are bounded. Altogether the first inequality in (\ref{conv-1})
for the $C_{B}^{0}$-norm follows. For the $C_{B}^{s}$-norm, $s$ $>$ $0,$ we
note that the derivatives of (\ref{pr-2}) in $z(x)$ and $\zeta (y)$ do not
change the above basic structure. So we also have (\ref{conv-1}) for the $%
C_{B}^{s}$-norm. From the structure of $P_{m,t}^{0\ast }$ and $R_{t}^{\ast }$
we see without difficulty that the similar conclusions hold with all the
other convolutions mentioned in the lemma.

\endproof%

We can now adapt \cite[Proposition 5.14]{CHT} here and reach the following
result of similar nature, with the difference that we are adopting $%
C_{B}^{s} $-norms here.

\begin{proposition}
\label{p-existence} $i)$ (Existence) Given $s\in \mathbb{N},$ there exists $%
\varepsilon >0$ such that for every fixed $t$ $\in $ $(0,\varepsilon )$ the
kernel function $\Lambda _{t}(x,y)$ given by 
\begin{equation}
\Lambda _{t}(x,y):=P_{m,t}^{0\ast }(x,y)-(P_{m}^{0\ast }\sharp R^{\ast
})_{t}(x,y)+(P_{m}^{0\ast }\sharp R^{\ast }\sharp R^{\ast })_{t}(x,y)-\cdot
\cdot \cdot  \label{conv-0}
\end{equation}%
exists, and converges in $C_{B}^{s}(\Sigma \times \Sigma )$ (including its
t-derivatives up to any given order$)$. $ii)$ Let ($R^{\ast k})_{t}$ be as
in Lemma \ref{l-6.7-5}, $k$ $\geq $ $0.$ Suppose $u$ $\in $ $L^{2,\pm
}(\Sigma ,\pi ^{\ast }\mathcal{E}_{M},G_{a,m}).$ Then $(P_{m}^{0\ast }\sharp
R^{\ast k})_{t}u$ $\in $ $L_{m}^{2,\pm }(\Sigma ,\pi ^{\ast }\mathcal{E}%
_{M},G_{a,m}).$ In particular the image of $\Lambda _{t}$ lies in the $m$%
-space. Moreover for $u\in \Omega _{c}^{0,\pm }(\Sigma )$%
\begin{eqnarray}
\frac{\partial \Lambda _{t}}{\partial t}u+\tilde{\square}_{m}^{c\pm }\Lambda
_{t}u &=&0,  \label{heat-eq} \\
\lim_{t\rightarrow 0^{+}}\Lambda _{t}u &=&\pi _{m}u\text{ in }||\cdot
||_{C_{B}^{s}}  \notag
\end{eqnarray}%
$iii)$ (Approximation) Given $s\in \mathbb{N},$ there exists $\varepsilon
_{0}>0$ independent of $t$ such that%
\begin{equation}
||\Lambda _{t}(\cdot ,\cdot )-P_{m,t}^{0\ast }(\cdot ,\cdot
)||_{C_{B}^{s}(\Sigma \times \Sigma )}\leq e^{-\frac{\varepsilon _{0}}{t}}%
\text{ \ for all }t\in (0,\varepsilon _{0}).  \label{approx}
\end{equation}
\end{proposition}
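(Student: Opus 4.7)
The plan is to treat $\Lambda_t$ as a Duhamel/Levi-type parametrix series designed to cancel the residual error $R_t^\ast=L P_{m,t}^{0\ast}$ produced by Theorem \ref{t-adjeq}, with $L:=\partial_t+\tilde{\square}_m^{c\pm}$, and to use the exponential bounds of Lemma \ref{l-6.7-5} to control convergence both at the kernel level and as $t\to 0^+$.

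For part $i)$, absolute convergence of the series (\ref{conv-0}) in $C_B^s(\Sigma\times\Sigma)$ follows directly from the estimate $\|(P_m^{0\ast}\sharp R^{\ast k})_t\|_{C_B^s}\le C_s 2^{-k}e^{-\delta_1/t}$ in Lemma \ref{l-6.7-5}, whose $k$-sum is majorized by the geometric tail $2C_s e^{-\delta_1/t}$, finite for every fixed $t\in(0,\delta_0)$. Convergence of $t$-derivatives up to any finite order is obtained either by differentiating the convolutions termwise --- the boundary contribution produces $R^{\ast k}_t$, while the interior contribution absorbs the extra $t^{-1}$-factors into $e^{-\delta_1/t}$ --- or equivalently by iterating the heat equation derived in part $ii)$.

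For part $ii)$ the heat equation $L\Lambda_t=0$ will follow from the Duhamel/telescoping identity
\[
L(P_m^{0\ast}\sharp R^{\ast k})_t \;=\; (R^{\ast k})_t+(R^{\ast(k+1)})_t,
\]
derived by differentiating (\ref{6.49-5}) in $t$: the boundary term $\int_\Sigma P_{m,0^+}^{0\ast}(x,p) R^{\ast k}(t,p,y)\,dv(p)$ reduces to $R^{\ast k}_t(x,y)$ because $P_{m,0^+}^{0\ast}=\pi_m$ by Theorem \ref{t-adjeq}($i$) and, as is clear from the kernel formula (\ref{Rstar1}) together with the argument producing (\ref{Adj-5z}), every power $R^{\ast k}$ already maps into the $m$-space; the interior term becomes $(R^\ast\sharp R^{\ast k})_t=R^{\ast(k+1)}_t$ by $LP_m^{0\ast}=R^\ast$. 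Inserting these into the series for $\Lambda_t$ yields the telescoping
\[
L\Lambda_t \;=\; R^\ast_t+\sum_{k\ge 1}(-1)^k\bigl(R^{\ast k}_t+R^{\ast(k+1)}_t\bigr)\;=\;0.
\]
The containment $\Lambda_t u\in L_m^{2,\pm}(\Sigma,G_{a,m})$ follows from $(P_m^{0\ast}\sharp R^{\ast k})_t=\int_0^t P_{m,t-s}^{0\ast}\,R_s^{\ast k}\,ds$ together with the factorization $P_{m,t-s}^{0\ast}=\sum_j\pi_m\circ H_{m,t-s}^{j\ast}$ from (\ref{Adj-5z}), which forces the image to lie in $L_m^{2,\pm}$. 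Finally, the initial condition is immediate from the tail bound: $\Lambda_t u-P_{m,t}^{0\ast}u$ is bounded in $C_B^s$ by $C_s' e^{-\delta_1/t}\to 0$ as $t\to 0^+$, so $\lim_{t\to 0^+}\Lambda_t u=\lim_{t\to 0^+}P_{m,t}^{0\ast}u=\pi_m u$ by Theorem \ref{t-adjeq}($i$). Part $iii)$ is then a direct transcription of the same tail bound upon choosing $\varepsilon_0<\delta_1$.

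The principal technical obstacle I anticipate is the rigorous justification of the Duhamel identity in part $ii)$: the initial value of $P_{m,t}^{0\ast}$ is only the projection $\pi_m$ (not the identity), and the equation $LP_{m,t}^{0\ast}u=R_t^\ast u$ in Theorem \ref{t-adjeq}($ii$) is stated for $u\in\Omega^{0,\pm}(\Sigma)\cap L^{2,\pm}$; I must therefore verify that the columns $R^{\ast k}(s,\cdot,y)$ (for fixed $y$) are admissible as such test sections, that differentiation in $t$ under the integral in (\ref{6.49-5}) is legitimate, and that the boundary term collapses cleanly to $R^{\ast k}_t$ because $R^{\ast k}$ is already in the $m$-space so that $\pi_m$ acts as the identity there. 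The smoothness of all kernels together with the exponential decay of $R_t^\ast$ from Lemma \ref{l-adj} and the $C_B^s$-estimates of Lemma \ref{l-6.7-5} provide the dominated-convergence bounds required to legitimize each of these manipulations.
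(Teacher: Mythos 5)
Your proposal is correct and follows essentially the same route as the paper: convergence via the geometric tail bound of Lemma \ref{l-6.7-5}, the Duhamel/telescoping identity $L(P_m^{0\ast}\sharp R^{\ast k})_t=(R^{\ast k})_t+(R^{\ast(k+1)})_t$ (which is exactly the paper's (\ref{conv-2})), the observation that the convolution's image sits in the $m$-space because its first factor is $P_{m,t}^{0\ast}$, and the initial condition from the tail bound together with Theorem \ref{t-adjeq}. The technical justification you flag for the Duhamel step (differentiation under the integral, admissibility of the columns of $R^{\ast k}$) is indeed the point the paper dispatches by citing \cite[Lemma 2.22]{BGV} alongside the $C_B^s$-estimates, so your concern is resolved in the way you anticipate.
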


\proof
It follows from (\ref{conv-1}) that the sequence (\ref{conv-0}) converges in
the $C_{B}^{s}$-norm and (\ref{approx}) holds. By the definition of the $%
C_{B}^{s}$-norm associated with a fixed $m$ (\ref{CBs}), the function $%
\Lambda _{t}(x,y)$ of (\ref{conv-0}) exists on $\Sigma \times \Sigma .$

For $ii)$ we observe that the image of the convolution lies in the image of
its first operator. In our case the first operator is $P_{m,t}^{0\ast }$ so
the image is in the $m$-space. To verify (\ref{heat-eq}) takes slightly more
work. Let $q_{t}^{k}$ denote the $(k+1)$-th term in (\ref{conv-0}). A direct
computation shows that%
\begin{equation}
\frac{\partial q_{t}^{k}}{\partial t}(x,y)+\tilde{\square}_{m,x}^{c\pm
}q_{t}^{k}(x,y)=(R^{\ast k})_{t}(x,y)+(R^{\ast (k+1)})_{t}(x,y)
\label{conv-2}
\end{equation}

\noindent (cf. \cite[(2) of Lemma 2.22]{BGV}). Noting that $\Lambda _{t}$ is
the alternating sum of these $q_{t}^{k},$ one interchanges the order of the
action of $\partial _{t}+\tilde{\square}_{m}^{c\pm }$ on $\Lambda _{t}$ with
the summation in view of (\ref{conv-1}) and similar estimates on their $t$%
-derivatives. The first equation of (\ref{heat-eq}) follows from telescoping
with (\ref{conv-2}) (cf. \cite[Theorem 2.23]{BGV}). The second equality of (%
\ref{heat-eq}) follows from (\ref{Adj-5}) (cf. (\ref{Adj-6})) and (\ref{conv}%
).

\endproof%

We are now back to discuss the properties of $P_{m,t}^{0,\pm }.$ Recall that 
$e^{-t\tilde{\square}_{m}^{c\pm }}(x,y)$ is the heat kernel that we obtain
in Proposition \ref{5-5.5}. Similarly let $\Lambda _{t}^{\pm }$ denote $%
\Lambda _{t}$ acting on the even/odd elements. Suppose $B_{t}^{\pm },$ $t>0,$
is any bounded linear operator on $L^{2,\pm }(\Sigma ,\pi ^{\ast }\mathcal{E}%
_{M},G_{a,m})$ such that $i)$ for $\psi $ $\in $ $\Omega ^{0,\pm }(\Sigma )$ 
$\cap $ $L^{2,\pm }(\Sigma ,\pi ^{\ast }\mathcal{E}_{M},G_{a,m}),$ $%
B_{t}^{\pm }\psi $ $\in $ $\Omega _{m}^{0,\pm }(\Sigma )$; $ii)$ $B_{t}^{\pm
}\psi $ satisfies the heat equation 
\begin{eqnarray}
(\partial _{t}+\tilde{\square}_{m}^{c\pm })B_{t}^{\pm }\psi &=&0\text{ \
(differentiability in }t\text{ is assumed),}  \label{GHE} \\
B_{t}^{\pm }\psi &\rightarrow &\pi _{m}\psi \text{ in }L^{2}\text{ as }%
t\rightarrow 0;  \notag
\end{eqnarray}

\noindent $iii)$ $B_{t}^{\pm }\psi $ $\rightarrow $ $B_{t_{0}}^{\pm }\psi $
in $L^{2}$ as $t\rightarrow t_{0}$ for any fixed $t_{0}$ $>$ $0.$ The
uniqueness part is the following.

\begin{theorem}
\label{t-uniqueness} $i)$ (Uniqueness) It holds that 
\begin{equation}
B_{t}^{\pm }=e^{-t\tilde{\square}_{m}^{c\pm }},\text{ in particular}
\label{ident-0}
\end{equation}%
\begin{equation}
\Lambda _{t}^{\pm }(x,y)=e^{-t\tilde{\square}_{m}^{c\pm }}(x,y)
\label{ident}
\end{equation}%
and as a consequence $\Lambda _{t}^{\pm }$ are self-adjoint (Proposition \ref%
{5-5.5}).

$ii)$ (Approximation) For every $s\in \mathbb{N}$ there exist $\varepsilon
_{0}>0$ and $\varepsilon >0$ such that%
\begin{equation}
||e^{-t\tilde{\square}_{m}^{c\pm }}(\cdot ,\cdot )-P_{m,t}^{0,\pm }(\cdot
,\cdot )||_{C_{B}^{s}(\Sigma \times \Sigma )}\leq e^{-\frac{\varepsilon _{0}%
}{t}}\text{ \ for all }t\in (0,\varepsilon ).  \label{approx-1}
\end{equation}%
As a consequence $e^{-t\tilde{\square}_{m}^{c\pm }}(x,y)$ and $%
P_{m,t}^{0}(x,y)$ are the same in the sense of asymptotic expansion (as
defined in \cite[Definition 5.5]{CHT} with $C^{l}$-norms replaced by $%
C_{B}^{l}$-norms). Along the diagonal it holds that%
\begin{equation}
\left\vert e^{-t\tilde{\square}_{m}^{c\pm }}(x,x)-P_{m,t}^{0,\pm
}(x,x)\right\vert =O(l(x)^{m}e^{-\frac{\varepsilon _{0}}{t}})
\label{approx-2}
\end{equation}%
($l(x)$ being unbounded on $\Sigma $).
\end{theorem}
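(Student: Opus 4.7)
My plan is to settle part $i)$ by comparing any admissible $B_{t}^{\pm}$ with $e^{-t\tilde{\square}_{m}^{c\pm}}$ using the spectral decomposition from Proposition \ref{l-5-2}, and then to extract part $ii)$ from the adjoint approximation already obtained in Proposition \ref{p-existence} together with the Hilbert-space self-adjointness of $e^{-t\tilde{\square}_{m}^{c\pm}}$ recorded in Proposition \ref{5-5.5}.

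For uniqueness I fix $\psi\in\Omega^{0,\pm}(\Sigma)\cap L^{2,\pm}(\Sigma,G_{a,m})$ and let $\{f_{k}^{\nu}\}$ be an orthonormal eigenbasis of $L_{m}^{2,\pm}(\Sigma,G_{a,m})$ with $\tilde{\square}_{m}^{c\pm}f_{k}^{\nu}=\nu f_{k}^{\nu}$, furnished by Proposition \ref{l-5-2}. Setting $u_{\nu,k}(t):=(B_{t}^{\pm}\psi,f_{k}^{\nu})_{L^{2}}$, the hypotheses that $B_{t}^{\pm}\psi\in\Omega_{m}^{0,\pm}(\Sigma)\subset\mathrm{Dom}\,\tilde{\square}_{m}^{c\pm}$ is smooth, depends differentiably on $t$, and satisfies (\ref{GHE}), together with the formal self-adjointness of $\tilde{\square}_{m}^{c\pm}$ on $\Omega_{m}^{0,\pm}(\Sigma)$ (the spin$^{c}$ analogue of Lemma \ref{l-4-2} via Definition \ref{5-1.5}), yield the ODE $u'_{\nu,k}(t)=-\nu\,u_{\nu,k}(t)$. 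Continuity and the $L^{2}$-convergence $B_{t}^{\pm}\psi\to\pi_{m}\psi$ as $t\to 0^{+}$ force $u_{\nu,k}(0^{+})=(\pi_{m}\psi,f_{k}^{\nu})=(\psi,f_{k}^{\nu})$, so $u_{\nu,k}(t)=e^{-\nu t}(\psi,f_{k}^{\nu})$, which are precisely the Fourier coefficients of $e^{-t\tilde{\square}_{m}^{c\pm}}\psi$ under (\ref{5.2}). Since both $B_{t}^{\pm}\psi$ and $e^{-t\tilde{\square}_{m}^{c\pm}}\psi$ lie in $L_{m}^{2,\pm}(\Sigma,G_{a,m})$ and share every Fourier coefficient against the basis $\{f_{k}^{\nu}\}$, they coincide; density of $\Omega^{0,\pm}\cap L^{2,\pm}$ in $L^{2,\pm}$ combined with boundedness of both operators extends the identity $B_{t}^{\pm}=e^{-t\tilde{\square}_{m}^{c\pm}}$ to all of $L^{2,\pm}(\Sigma,\pi^{\ast}\mathcal{E}_{M},G_{a,m})$. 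Applied with $B_{t}^{\pm}=\Lambda_{t}^{\pm}$ of Proposition \ref{p-existence} $ii)$ (where the required $L^{2}$-convergence upgrades the $C_{B}^{s}$-convergence in (\ref{heat-eq})) this gives (\ref{ident}); self-adjointness of $\Lambda_{t}^{\pm}$ is then inherited from that of $e^{-t\tilde{\square}_{m}^{c\pm}}$.

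For part $ii)$ I transfer Proposition \ref{p-existence} $iii)$ through the adjoint. Using (\ref{ident}) and the kernel-level self-adjointness $e^{-t\tilde{\square}_{m}^{c\pm}}(x,y)=\overline{e^{-t\tilde{\square}_{m}^{c\pm}}(y,x)}^{t}$, together with $P_{m,t}^{0\ast,\pm}(x,y)=\overline{P_{m,t}^{0,\pm}(y,x)}^{t}$ (Lemma \ref{6-10-1}), the estimate $\|\Lambda_{t}^{\pm}(\cdot,\cdot)-P_{m,t}^{0\ast,\pm}(\cdot,\cdot)\|_{C_{B}^{s}(\Sigma\times\Sigma)}\leq e^{-\varepsilon_{0}/t}$ of Proposition \ref{p-existence} $iii)$ is carried over to the bound $\|e^{-t\tilde{\square}_{m}^{c\pm}}(\cdot,\cdot)-P_{m,t}^{0,\pm}(\cdot,\cdot)\|_{C_{B}^{s}}\leq C_{s}\,e^{-\varepsilon_{0}/t}$, which is (\ref{approx-1}) after absorbing the constant by shrinking $\varepsilon_{0}$. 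The diagonal estimate (\ref{approx-2}) then follows by restricting (\ref{approx-1}) to $x=y$, using the factorisation (\ref{6-19.5}) and $w(x)^{m}\overline{w(x)^{m}}=|w|^{2m}=h^{-m}(z,\bar{z})\,l(x)^{m}$ from (\ref{lq}), combined with the $\mathbb{C}^{\ast}$-invariant boundedness of $h^{-m}$ on the compact base $M$ (Lemma \ref{A} $iv)$), so that the $C_{B}^{s}$-control on the coefficient kernel yields the stated $O(l(x)^{m}e^{-\varepsilon_{0}/t})$ growth. The main technical obstacle I foresee is verifying that the $C_{B}^{s}$-norm of (\ref{CBs}) is invariant, up to a constant depending only on $s$, under the conjugate transpose $K(x,y)\mapsto\overline{K(y,x)}^{t}$; the definition (\ref{CBs}) is not manifestly symmetric in $(x,y)$, and the swap turns the factor $w(x)^{m}\bar\eta(y)^{m}$ into $\eta(x)^{m}\overline{w(y)^{m}}$, which must be recast into the canonical form (\ref{6.7-5}) — this is precisely the bookkeeping that appears in (\ref{Kstar6}) and (\ref{Rstar1}) and that relies on the local orbifold elements $\alpha_{k}\in S^{1}$ introduced in the proof of Lemma \ref{l-bdp}.
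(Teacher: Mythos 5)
Your proposal is correct, and for part $i)$ it takes a genuinely different route from the paper. The paper proves uniqueness by the Duhamel-type duality argument: for $\psi ,\varphi \in \Omega _{c}^{0,\pm }(\Sigma )$ one shows $\partial _{\tau }\langle B_{t-\tau }^{\pm }\psi ,e^{-\tau \tilde{\square}_{m}^{c\pm }}\varphi \rangle =0$, integrates over $[0,t]$, and uses the endpoint conditions to conclude $\langle e^{-t\tilde{\square}_{m}^{c\pm }}\psi ,\varphi \rangle =\langle B_{t}^{\pm }\psi ,\varphi \rangle $. You instead expand $B_{t}^{\pm }\psi $ in the eigenbasis $\{f_{k}^{\nu }\}$ of Proposition \ref{l-5-2} and solve a scalar ODE in each mode. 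Both are standard heat-kernel uniqueness arguments and both rely on the same structural inputs (formal self-adjointness of $\tilde{\square}_{m}^{c\pm }$ on $\Omega _{m}^{0,\pm }(\Sigma )$, the initial condition $B_{t}^{\pm }\psi \to \pi _{m}\psi $, and the crucial property that both families map into the $m$-space, which is what makes matching all Fourier coefficients in $L_{m}^{2,\pm }$ conclusive). The paper's pairing argument is slightly shorter and avoids invoking the eigenbasis and the $L^2$-convergence $B_t^\pm\psi\to B_{t_0}^\pm\psi$ more than once; your mode-by-mode argument is more concrete and makes the role of the initial condition transparent. For part $ii)$ your route agrees with the paper's: pass the estimate (\ref{approx}) through the adjoint using (\ref{ident}) and self-adjointness of $e^{-t\tilde{\square}_{m}^{c\pm }}$. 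The technical point you flag — that the conjugate transpose $K(x,y)\mapsto \overline{K(y,x)}^{t}$ must be checked to preserve the $C_{B}^{s}$-norm — is a real gap in the paper's brief statement, but it is unproblematic: for $K(x,y)=\sum_{j}w^{m}k^{j}(x,y)\bar\eta ^{m}$ the transpose $K^{*}(x,y)=\sum_{j}w(x)^{m}\,\overline{k^{j}(y,x)}^{t}\,\overline{\eta (y)}^{m}$ remains in the canonical form (\ref{6.7-5}), and since the $C^{s}(V_{j}\times V_{j})$ norm in (\ref{CBs}) is symmetric in its two arguments and invariant under complex conjugation, the $C_{B}^{s}$-norm is exactly preserved (constant $1$, not merely $C_{s}$). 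Your diagonal computation for (\ref{approx-2}) via $w^{m}\bar w^{m}=|w|^{2m}=h^{-m}l^{m}$ and boundedness of $h$ over the compact quotient matches the paper's reasoning, just made more explicit.
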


\proof
The idea of the proof can now follow that in \cite{BGV}. We compute for $%
\psi ,$ $\varphi $ $\in $ $\Omega _{c}^{0,\pm }(\Sigma ),$ $\partial _{\tau
}<B_{t-\tau }^{\pm }\psi ,e^{-\tau \tilde{\square}_{m}^{c\pm }}\varphi >$ $=$
$0$ ($0$ $<$ $\tau $ $<$ $t$) by using heat equations (\ref{GHE}) and (\ref%
{5.2.1}). By the initial condition in (\ref{GHE}) and (\ref{5.2.1}), and the
images of $e^{-t\tilde{\square}_{m}^{c\pm }}$ and $B_{t}^{\pm }$ belonging
to the $m$-space (Proposition \ref{5-5.5} and the property $i)$ of $%
B_{t}^{\pm }$), we compute%
\begin{eqnarray*}
0 &=&\int_{0}^{t}\partial _{\tau }<B_{t-\tau }^{\pm }\psi ,e^{-\tau \tilde{%
\square}_{m}^{c\pm }}\varphi >d\tau =<\pi _{m}^{\pm }\psi ,e^{-t\tilde{%
\square}_{m}^{c\pm }}\varphi >-<B_{t}^{\pm }\psi ,\pi _{m}\varphi > \\
&=&<\psi ,e^{-t\tilde{\square}_{m}^{c\pm }}\varphi >-<B_{t}^{\pm }\psi
,\varphi >=<e^{-t\tilde{\square}_{m}^{c\pm }}\psi ,\varphi >-<B_{t}^{\pm
}\psi ,\varphi >.
\end{eqnarray*}

\noindent Here the property $iii)$ of $B_{t}^{\pm }$ has been used for the
second equality above. So (\ref{ident-0}) hence (\ref{ident}) follows. From (%
\ref{approx}), (\ref{ident}) and $e^{-t\tilde{\square}_{m}^{c\pm }}$ being
self-adjoint (cf. Proposition \ref{5-5.5}), (\ref{approx-1}) follows.

As for the factor $l(x)^{m}$ in (\ref{approx-2}) we first observe that $%
P_{m,t}^{0\ast }$ $=$ $\sum_{j}\pi _{m}\circ H_{m,t}^{j\ast }$ contains the
factor $l(x)^{m}$ in view of (\ref{Kstar3}). Therefore $P_{m,t}^{0\ast
}(x,x),$ hence $P_{m,t}^{0}(x,x)$, contains the factor $l(x)^{m}$ so do $%
\Lambda _{t}^{\pm }(x,x)$ and $e^{-t\tilde{\square}_{m}^{c\pm }}(x,x)$ in
view of (\ref{conv-0}) (the convolution led by $P_{m,t}^{0\ast }$ always has
the factor $l(x)^{m}$) and (\ref{ident}). This together with (\ref{approx-1}%
) gives (\ref{approx-2}).

\endproof%

In the remaining of this section, we shall treat $P_{m,t}^{0}$ more closely
as will be needed in the next section via (\ref{ident}) above. The main
result is Theorem \ref{p-asymp} below. Writing $\xi ^{-1}y$ for $\xi
^{-1}\circ y,$ we have\footnote{%
The $\mathbb{C}^{\ast }$-orbit $\{\xi ^{-1}y\}_{\xi \in \mathbb{C}^{\ast }}$
of $y$ could be delicate (Cases $i),\ ii)$ below (\ref{6-1g})); nevertheless
the local expressions in (\ref{6-1g}) involving $\zeta (\xi ^{-1}y)$ (which
is meaningless if $\xi ^{-1}y$ lies outside the chart of $\zeta $) makes
good sense due to cutoff functions there. The same is true in many places
throughout this paper without explicit mention.} (see (\ref{6-1}) with $%
x=(z,w),$ $y=(\zeta ,\eta )$)%
\begin{eqnarray}
&&H_{m,t}^{j}(x,\xi ^{-1}y)\circ \sigma (\xi )_{\xi ^{-1}\circ y}^{\ast
}=\varphi _{j}(x)w^{m}K_{t}^{j}(z,\zeta (\xi ^{-1}y))\eta ^{-m}(\xi ^{-1}y)
\label{6-1g} \\
&&\text{ \ \ \ \ \ \ \ \ \ \ \ \ \ \ \ \ \ \ \ \ \ \ \ \ \ }\tau _{j}(\zeta
(\xi ^{-1}y))\sigma _{j}(\vartheta (\xi ^{-1}y))l(\xi ^{-1}y)^{m}\circ
\sigma (\xi )_{\xi ^{-1}y}^{\ast }.  \notag
\end{eqnarray}

\noindent Due to $\xi ^{-1}y$ in the arguments above, we note the following
two cases (cf. (\ref{1-0}), (\ref{1-1})):

Case $i):$ If $\xi $ is close to $1$ such that $\xi ^{-1}y$ is still in the
same chart as $y$ $($where $\zeta (y)$ $=$ $\zeta ),$ then $\zeta (\xi
^{-1}y)$ = $\zeta ,$ $\bar{\eta}^{m}(\xi ^{-1}y)$ $=$ $\bar{\xi}^{-m}\bar{%
\eta}^{m}$ and $\vartheta (\xi ^{-1}y)$ $=$ $\vartheta (y)-\gamma $ where we
write $\xi $ $=$ $|\xi |e^{i\gamma };$

Case $ii):$ For $\xi $ general$,$ $\xi ^{-1}y$ and $y$ do not necessarily
lie in the same chart$.$ Even if $\xi ^{-1}y$ and $y$ lie in the same chart$%
, $ unlike case $i)$ the $\zeta $-values of $\xi ^{-1}y$ and $y$ may not be
the same. Indeed $\zeta (\xi ^{-1}y)$ $\neq $ $\zeta (y)$ for $y$ near the $%
\mathbb{C}^{\ast }$-strata $\Sigma _{\text{sing}}$ of $\Sigma .$ See (\ref%
{claim}) for the detail. This fact will be rather crucial for us in the
subsequent sections.

Remark that if the $\mathbb{C}^{\ast }$-action $\sigma $ on $\Sigma $ is
globally free then only Case $i)$ will occur (and in this case it is valid
to take all $\xi \in $ $\mathbb{C}^{\ast }$ rather than $\xi \sim 1).$

Substituting (\ref{6-1g}) into the RHS of (\ref{6-1f}) gives, recalling $%
P_{m,t}^{0}(x,y)$ $=$ $\sum_{j}(H_{m,t}^{j}\circ \pi _{m})(x,y),$%
\begin{eqnarray}
&&(H_{m,t}^{j}\circ \pi _{m})(x,y)=\varphi _{j}(x)\int_{\xi \in \mathbb{C}%
^{\ast }}{\Large \{}w^{m}K_{t}^{j}(z,\zeta (\xi ^{-1}y))\eta ^{-m}(\xi
^{-1}y)  \label{HQ} \\
&&\tau _{j}(\zeta (\xi ^{-1}y))\sigma _{j}(\vartheta (\xi ^{-1}y))l(\xi
^{-1}y)^{m}\bar{\xi}^{m}{\Large \}}\circ \sigma (\xi )_{\xi ^{-1}y}^{\ast
}d\mu _{y,m}(\xi ).  \notag
\end{eqnarray}

It is well known that the (ordinary, local) heat kernel $K_{t}^{j}(z,\zeta )$
has the asymptotic expansion (see for instance \cite[(5.19) on p.76]{CHT})

\begin{eqnarray}
K_{t}^{j}(z,\zeta ) &=&e^{-\frac{\tilde{d}_{M}^{2}(z,\zeta )}{4t}%
}K^{j}(t,z,\zeta )  \label{OAE} \\
K^{j}(t,z,\zeta ) &\sim &t^{-n+1}b_{n-1}(z,\zeta )+t^{-n+2}b_{n-2}(z,\zeta
)+...  \notag
\end{eqnarray}%
\noindent as $t\rightarrow 0^{+}$ for $z,\zeta $ in $V_{j},$ where $\tilde{d}%
_{M}$ denotes the distance function associated with the metric $\pi ^{\ast
}g_{M}|_{V_{j}}$ (cf. (\ref{Ga}) with lines above). Note that $\tilde{d}_{M}$
may depend on the choice of charts $V_{j}$ \textit{a priori}. Recalling that 
$V_{j}$ is endowed with the metric induced by $\pi ^{\ast }g_{M},$ we can
assume $V_{j}$ (as a Riemannian manifold) to be convex for every $j$
(possibly after shrinking). Then it is seen that $\tilde{d}_{M}$ here can be
independent of the choice of charts. Note that $\tilde{d}_{M}$ is not
necessarily the same as the distance function $d_{M}$ on the complex
orbifold $M=\Sigma /\sigma $ since two distinct points in $V_{j}$ may
project to the same point in $M.$ Compare remarks after (\ref{7-5'}).

Via (\ref{OAE}) we then get an asymptotic expansion of (\ref{HQ}) for ($%
H_{m,t}^{j}\circ \pi _{m})(x,y)$ hence for $P_{m,t}^{0}(x,y)$ without
difficulty (the fact that $l(\xi ^{-1}y)^{m}\bar{\xi}^{m}$ $=$ $l(y)^{m}\xi
^{-m}$ by (\ref{7-1-1}) has been used here).

\begin{theorem}
\label{p-asymp} (Asymptotic expansion) With the notation above, we have that 
$P_{m,t}^{0}(x,y)$ is of the form (\ref{6.7-5}) and, via (\ref{approx-1})%
\begin{equation}
P_{m,t}^{0}(x,y),\text{ }e^{-t\tilde{\square}_{m}^{c\pm }}(x,y)\sim
t^{-(n-1)}a_{n-1}(t,x,y)+t^{-(n-2)}a_{n-2}(t,x,y)+\text{ }\cdot \cdot \cdot
\label{7K-4a}
\end{equation}%
(for the meaning of the above \textquotedblleft $\sim $" we refer to \cite[%
Definition 5.5, p.75]{CHT} with $C^{l}$-norms replaced by $C_{B}^{l}$-norms)
where for $s=n-1,$ $n-2,$ $\cdot \cdot \cdot ,$%
\begin{eqnarray}
a_{s}(t,x,y) &=&l(y)^{m}\sum_{j}\varphi _{j}(x)w^{m}\int_{\xi \in \mathbb{C}%
^{\ast }}\{e^{-\frac{\tilde{d}_{M}^{2}(z,\zeta )}{4t}}b_{s}(z,\zeta )
\label{6.66-5} \\
&&\text{ \ \ \ \ \ \ \ \ \ \ \ \ \ \ \ \ \ \ }\eta ^{-m}\tau _{j}(\zeta
)\sigma _{j}(\vartheta )\xi ^{-m}\}d\mu _{y,m}(\xi )  \notag
\end{eqnarray}%
where to simplify notations, we use $\zeta ,$ $\eta ^{-m}$ and $\vartheta $
to denote $\zeta (\xi ^{-1}y),$ $\eta ^{-m}(\xi ^{-1}y)$ and $\vartheta (\xi
^{-1}y)$ respectively.
\end{theorem}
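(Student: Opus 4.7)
The plan is to build the expansion for $P_{m,t}^{0}(x,y)$ directly from the explicit integral formula (\ref{HQ}) for $(H_{m,t}^{j}\circ \pi_{m})(x,y)$, plug in the classical asymptotic expansion (\ref{OAE}) for the local Dirichlet heat kernels $K_{t}^{j}(z,\zeta)$, and then transport the result to $e^{-t\tilde{\square}_{m}^{c\pm}}(x,y)$ via the exponentially small approximation (\ref{approx-1}) of Theorem \ref{t-uniqueness}. The claim that $P_{m,t}^{0}(x,y)$ has the form (\ref{6.7-5}) is essentially the content of Proposition \ref{l-6-0a} $ii)$, whose proof has been deferred until this point; I would discharge it first by inserting the identity $l(\xi^{-1}\circ y)^{m}\bar{\xi}^{m}=l(y)^{m}\xi^{-m}$ (a direct consequence of $l(\xi\circ x)=|\xi|^{2}l(x)$ from (\ref{7-1-1})) into (\ref{HQ}), so that the factor $l(y)^{m}$ can be pulled outside the $\xi$-integration, and the remaining $\bar{\eta}^{m}$-dependence is isolated via the relation $l(y)^{m}=h^{m}(\zeta,\bar{\zeta})\eta^{m}\bar{\eta}^{m}$ together with the $\eta^{-m}(\xi^{-1}y)$ factor already present; the global factor $w^{m}=w(x)^{m}$ is visible by inspection of (\ref{HQ}).

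Next, I would substitute the $N$-th partial sum of (\ref{OAE}),
\begin{equation*}
K_{t}^{j}(z,\zeta)=e^{-\tilde{d}_{M}^{2}(z,\zeta)/4t}\Bigl(\sum_{s=n-1-N}^{n-1}t^{-s}b_{s}(z,\zeta)+t^{N+1-n}r_{N}(t,z,\zeta)\Bigr),
\end{equation*}
into (\ref{HQ}) and interchange the finite sum with the $\xi$-integration against the 2-form $d\mu_{y,m}(\xi)$; this yields term-by-term the expressions (\ref{6.66-5}) for $a_{s}(t,x,y)$, plus a remainder whose $\xi$-integrand is dominated, after pulling out $l(y)^{m}$ as above, by the product of the uniform bound on the smooth remainder $r_{N}$ with the Gaussian $e^{-\tilde{d}_{M}^{2}(z,\zeta(\xi^{-1}y))/4t}$, the cutoffs $\tau_{j}\sigma_{j}$, and $|\xi|^{-m}\cdot |\xi|^{2m}$-type terms coming from $l(\xi^{-1}y)^{m}$ and $d\mu_{y,m}(\xi)$. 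The finiteness of $\int_{\mathbb{C}^{\ast}}d\mu_{y,m}(\xi)=1$ (Corollary \ref{7.6-5}) together with the compactness of the $\zeta$-projection via $M=\Sigma/\sigma$ and the cutoff $\sigma_{j}$ in $\vartheta$ then provides the uniform bound needed for the remainder in $C_{B}^{s}(\Sigma\times\Sigma)$, once one notes that each $z$- or $\zeta$-derivative acts only on $b_{s}$, $r_{N}$ and the Gaussian without affecting the decay in $t$ more than polynomially. This establishes the asymptotic expansion (\ref{7K-4a}) in the $C_{B}^{l}$-sense defined in \cite[Definition 5.5]{CHT} (with the $C^{l}$-norms replaced by $C_{B}^{l}$-norms, per Remark \ref{r1-2}).

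Finally, the corresponding statement for $e^{-t\tilde{\square}_{m}^{c\pm}}(x,y)$ is immediate from (\ref{approx-1}): since
\begin{equation*}
\bigl\|e^{-t\tilde{\square}_{m}^{c\pm}}(\cdot,\cdot)-P_{m,t}^{0,\pm}(\cdot,\cdot)\bigr\|_{C_{B}^{s}(\Sigma\times\Sigma)}\leq e^{-\varepsilon_{0}/t},
\end{equation*}
which is $O(t^{N})$ for every $N$, the two kernels share every term of the asymptotic expansion in the $C_{B}^{l}$-sense, and in particular the form (\ref{6.7-5}) is preserved modulo the exponentially small error. The main obstacle I anticipate is the uniformity of the remainder estimate in $C_{B}^{s}$ under the $\xi$-integration: one must verify that the $\xi$-dependence of $z(\xi^{-1}y)$, $\eta^{-m}(\xi^{-1}y)$, and $\vartheta(\xi^{-1}y)$ remains smooth in $y$ across the singular strata of the $\mathbb{C}^{\ast}$-action, which amounts to the same ``large-angle'' bookkeeping using the elements $\alpha_{k}\in S^{1}$ that appeared in the proofs of Lemmas \ref{l-bdp} and \ref{l-6.7-5}; this is the point where the local-freeness (rather than freeness) of the action makes the argument nontrivial, and where the cutoffs $\tau_{j}$, $\sigma_{j}$ together with the smoothness of $\alpha_{k}$ in $y$ (established in the lines following (\ref{7K-25})) are indispensable.
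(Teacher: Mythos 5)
Your proposal is correct and follows essentially the same route as the paper: start from the explicit formula (\ref{HQ}), insert the asymptotic expansion (\ref{OAE}) of the local Dirichlet heat kernels, use the identity $l(\xi^{-1}y)^m\bar{\xi}^m = l(y)^m\xi^{-m}$ (from (\ref{7-1-1})) to collect the $l$-factor, and transfer to $e^{-t\tilde{\square}_m^{c\pm}}$ via the exponentially small estimate (\ref{approx-1}); you also correctly identify that the form (\ref{6.7-5}) is the deferred claim of Proposition \ref{l-6-0a} $ii)$ whose verification rests on the $\alpha_k$/large-angle analysis (which the paper carries out in the passage from (\ref{kmt}) through (\ref{7K-25}), using (\ref{kmt1}) to extract the $\bar{\eta}^m(y)$ factor). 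The only place where your sketch is more compressed than the paper's is the $\bar{\eta}^m$-isolation step: the paper's (\ref{kmt1}) computation shows it is not simply a matter of inserting $l(y)^m = h^m\eta^m\bar{\eta}^m$ but requires tracking $\eta^m(e^{-i\gamma}y)$ across chart boundaries via the $\alpha_k\in G_j$, which is exactly the large-angle bookkeeping you flag at the end; you recognize the obstacle but describe it as a verification step rather than as the central content of the $(\ref{6.7-5})$ claim, which is where most of the paper's effort actually sits.
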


\begin{remark}
\label{8.1-5} Even for $x$ $=$ $y,$ $a_{s}(t,x,x)$ still depends on $t.$ See 
\cite[Remark 1.6]{CHT} for details. Further $a_{s}(t,x,y)$ are not uniquely
determined; indeed they depend on the various data in, e.g. (\ref{6-1}), (%
\ref{6.0}) and (\ref{6-1a}). In contrast $b_{s}(z,\zeta )$ in (\ref{6.66-5})
is intrinsic (cf. remarks after (\ref{7-5'})). Note that $P_{m,t}^{0}$ may
not preserve the $m$-space. However if we consider $\tilde{P}_{m,t}^{0}(x,y)$
$:=$ ($\pi _{m}\circ P_{m,t}^{0})(x,y)$ it is not difficult to see that the
associated $\tilde{a}_{s}(t,x,y)$ (resp. $\tilde{P}_{m,t}^{0}(x,y))$
descends to \underline{$\tilde{a}$}$_{s}(t,\pi (x),\pi (y))$ (resp. $%
\widetilde{\text{\b{P}}}_{m,t}^{0}(\pi (x),\pi (y)))$ on the compact complex
orbifold $M=\Sigma /\sigma $ (as one can show ($\sigma _{\alpha ,\beta
}^{\ast }\tilde{a}_{s}(t,\cdot ,\cdot ))(x,y)$ $=$ $\alpha ^{m}(\bar{\beta}%
)^{m}\tilde{a}_{s}(t,x,y)$ where $\sigma _{\alpha ,\beta }(x,y)$ $:=$ $%
(\alpha x,\beta y)$ and similar formulas for $\tilde{P}_{m,t}^{0}(x,y)).$ It
can be shown that both \underline{$\tilde{a}$}$_{s}(t,$ $\pi (x),$ $\pi
(y))\ $and $\widetilde{\text{\b{P}}}_{m,t}^{0}(\pi (x),\pi (y))$ are
associated with $\underline{\tilde{\square}}_{m}^{c\pm },$ as the extension
of $\square _{U_{j},m}^{c\pm }$ to $M$ in (\ref{DiracLap}) (acting on
sections of the $m$-th power of orbifold line bundle $L_{\Sigma }^{\ast
}/\sigma $, cf. Remarks \ref{PLMB} and \ref{9.3}).
\end{remark}

\section{\textbf{Asymptotic expansion of the transversal heat kernel\label%
{AE_THK}}}

The goal of this section is to prove Theorem \ref{AHKE}$.$ Our notation
follows that of the introductory paragraph of the last section. Recall that
(see (\ref{6-1a1})) 
\begin{equation}
P_{m,t}^{0}=\sum_{j}H_{m,t}^{j}\circ \pi _{m};  \label{7-1.5}
\end{equation}%
\noindent $\hat{W}_{j}$ $\subset $ $W_{j}$ ($=V_{j}\times (-\varepsilon
_{j},\varepsilon _{j})\times \mathbb{R}^{+})$ and $\varepsilon
_{j}(=\varepsilon ,$ $\forall j)$ small, such that 
\begin{equation}
\hat{W}_{j}=V_{j}\times (-\frac{\varepsilon _{j}}{4},\frac{\varepsilon _{j}}{%
4})\times \mathbb{R}^{+}  \label{7-1.75}
\end{equation}%
\noindent and $\Sigma $ is still a union of finitely many $\hat{W}_{j}.$

Assume that $x$ $\in $ $\Sigma \backslash \Sigma _{\text{sing}}$ is in the
chart $\hat{W}_{j}$ and $x$ has coordinates $(z,w)$ with $z$ $\in $ $V_{j},$ 
$w$ $=$ $|w|e^{i\phi }.$ We write $\xi ^{-1}x$ (resp. ($\sigma _{\xi
^{-1}}^{\ast })_{x})$ for $\xi ^{-1}\circ x$ (resp. $\sigma (\xi
^{-1})_{x}^{\ast }).$ From (\ref{HQ}) for $x=y$ we write\footnote{%
See the footnote attached to (\ref{6-1g}) of Section 6.}%
\begin{eqnarray}
&&\ \ (H_{m,t}^{j}\circ \pi _{m})(x,x)=\varphi _{j}(x)\int_{\xi \in \mathbb{C%
}^{\ast }}{\Large \{}w^{m}(x)K_{t}^{j}(z(x),z(\xi ^{-1}x))\bar{w}^{m}(\xi
^{-1}x)  \label{HQD} \\
&&h^{m}(z(\xi ^{-1}x),\bar{z}(\xi ^{-1}x))\tau _{j}(z(\xi ^{-1}x))\sigma
_{j}(\phi (\xi ^{-1}x))\bar{\xi}^{m}{\Large \}}\circ (\sigma _{\xi }^{\ast
})_{\xi ^{-1}x}d\mu _{x,m}(\xi ).  \notag
\end{eqnarray}

Let $\frac{2\pi }{p}$ be the largest period of the action $\sigma |_{S^{1}}.$
By assumption $x$ has the period $\frac{2\pi }{p}$ since $x\in \Sigma
\backslash \Sigma _{\text{sing}}.$ To facilitate the computation of (\ref%
{HQD}), we divide $\xi $ $\in $ $\mathbb{C}^{\ast }$ into two parts:

Part I: $\xi \in C:=(-\varepsilon _{j},\varepsilon _{j})$ $\times $ $\mathbb{%
R}^{+}$;

Part II: $\xi \in C^{\prime }:=(\varepsilon _{j},\frac{2\pi }{p}-\varepsilon
_{j})$ $\times $ $\mathbb{R}^{+}.$

\bigskip

\textbf{Part I of the RHS of (\ref{HQD}): }Let us first compute the RHS of (%
\ref{HQD}) for $\xi $ in Part I. The net result will be given in (\ref%
{HQDI-1}) and (\ref{HQDI-2}). For $\xi $ in Part I, by case $i)$ after (\ref%
{6-1g}) we have%
\begin{eqnarray}
&&\text{Part I of }(H_{m,t}^{j}\circ \pi _{m})(x,x)=\varphi _{j}(x)\int_{\xi
\in (-\varepsilon _{j},\varepsilon _{j})\times \mathbb{R}^{+}}{\Large \{}%
w^{m}K_{t}^{j}(z,z)  \label{HQDI} \\
&&\text{ \ \ \ \ \ \ \ \ \ \ \ \ \ \ \ \ \ }\xi ^{m}w^{-m}l(\xi
^{-1}x)^{m}\tau _{j}(z)\sigma _{j}(\phi (\xi ^{-1}x))\bar{\xi}^{m}{\Large \}}%
d\mu _{x,m}(\xi )  \notag \\
&&\overset{(\ref{7-1-1})}{=}\varphi _{j}(x)l^{m}(x)K_{t}^{j}(z,z)\tau
_{j}(z)\int_{\xi \in (-\varepsilon _{j},\varepsilon _{j})\times \mathbb{R}%
^{+}}\sigma _{j}(\phi (\xi ^{-1}x))d\mu _{x,m}(\xi ).  \notag
\end{eqnarray}

\noindent Plugging in $d\mu _{x,m}(\xi )$ for $\xi $ $\in $ $(-\varepsilon
_{j},\varepsilon _{j})$ $\times $ $\mathbb{R}^{+}$ (see (\ref{6-1e1}))$,$ we
have%
\begin{eqnarray}
&&d\mu _{x,m}(\xi )=\frac{l(\xi ^{-1}x)^{m}dv_{\Sigma ,m}(\xi ^{-1}x)\wedge
dv_{f,m}(x)}{dv_{\Sigma ,m}(x)}  \label{meas} \\
&\overset{(\ref{volume})}{=}&\frac{h^{m}(z,\bar{z})|\xi ^{-1}w|^{2m}dv(z(\xi
^{-1}x))\wedge (\tau _{x}^{\ast }dv_{f,m})(\xi ^{-1})\wedge dv_{f,m}(x)}{%
dv(z(x))\wedge dv_{f,m}(x)}  \notag \\
&=&l^{m}(x)|\xi |^{-2m}(\tau _{x}^{\ast }dv_{f,m})(\xi ^{-1})\frac{dv(z(\xi
^{-1}x))}{dv(z(x))}  \notag
\end{eqnarray}

\noindent See also (\ref{7-2-2}) for $d\mu _{x,m}(\xi )$.

Write $dv(z(\xi ^{-1}x))/dv(z(x))$ $=:$ $f(\xi ^{-1})$ (which equals 1 since 
$z(\xi ^{-1}x)$ $=$ $z(x)$ for the Part I case but we keep the notation)$.$
In (\ref{HQDI}) the following term simplifies (via (\ref{1-1}) for the $%
\mathbb{R}^{+}$-action)

\begin{eqnarray}
&&\int_{\xi \in (-\varepsilon _{j},\varepsilon _{j})\times \mathbb{R}%
^{+}}\sigma _{j}(\phi (\xi ^{-1}x))d\mu _{x,m}(\xi )  \label{7-0} \\
&=&l^{m}(x)\int_{\xi \in (-\varepsilon _{j},\varepsilon _{j})\times \mathbb{R%
}^{+}}\sigma _{j}(\phi (\xi ^{-1}x))|\xi |^{-2m}(\tau _{x}^{\ast
}dv_{f,m})(\xi ^{-1})f(\xi ^{-1})  \notag \\
&\overset{\eta =\xi ^{-1}}{=}&l^{m}(x)\int_{\eta \in (-\varepsilon
_{j},\varepsilon _{j})\times \mathbb{R}^{+}}\sigma _{j}(\phi (\eta x))|\eta
|^{2m}(\tau _{x}^{\ast }dv_{f,m})(\eta )f(\eta )  \notag \\
&\overset{\eta =|\eta |e^{i\gamma },(\ref{3-18.75})}{=}&\int_{|\eta |\in 
\mathbb{R}^{+}}\int_{I}\sigma _{j}(\gamma +\phi (x))f(\eta )\frac{%
dv_{S^{1}}(\gamma )}{2\pi }l^{m}(x)|\eta |^{2m}dv_{m}(|\eta ||w|)\text{.} 
\notag
\end{eqnarray}

\noindent where $I\supset \lbrack -\frac{\varepsilon _{j}}{2},\frac{%
\varepsilon _{j}}{2}]$ since $\phi (x)$ $\in $ $(-\frac{\varepsilon _{j}}{4},%
\frac{\varepsilon _{j}}{4})$ by (\ref{7-1.75}). Now by (\ref{6-1a}) for $%
\int_{I}\sigma _{j}$ $=$ $1$ and choosing coordinates with $h(z(x),\bar{z}%
(x))$ $=$ $1$ at this $x$ so that $l^{m}(x)$ $=$ $h^{m}|w|^{2m}$ $=$ $%
|w|^{2m}$ and using $f(\eta )$ $=$ $1$ (cf. Lemma \ref{A} $ii)$ below)$,$
the RHS of (\ref{7-0}) equals:%
\begin{equation}
\int_{|\eta |\in \mathbb{R}^{+}}|w|^{2m}|\eta |^{2m}dv_{m}(|\eta ||w|)%
\overset{\tilde{\eta}=w\eta }{=}\int_{|\tilde{\eta}|\in \mathbb{R}^{+}}|%
\tilde{\eta}|^{2m}dv_{m}(|\tilde{\eta}|)\overset{(\ref{3-16.75})}{=}1.
\label{7-5.5}
\end{equation}

Remark that the resulting constant $1$ in (\ref{7-5.5}) hence in (\ref{7-0})
plays an implicit yet crucial role in many places of our computation (cf. (%
\ref{HQDI-1}), (\ref{STrSE}), (\ref{7-38.5}) and (\ref{7-38.75}); also (\ref%
{3-43.5}), (\ref{adj15})). We are not going to elaborate on the question
whether it would still be possible to obtain the existence of a local index
density of Theorem \ref{main_theorem} if the metric $G_{a,m}$ used here did
not possess this unity-property.

Substituting (\ref{7-0}) and (\ref{7-5.5}) into (\ref{HQDI}), we obtain%
\begin{equation}
\text{Part I of }(H_{m,t}^{j}\circ \pi _{m})(x,x)=\varphi
_{j}(x)l^{m}(x)K_{t}^{j}(z,z)\text{ (}\tau _{j}(z)=1\text{ on supp }\varphi
_{j}).  \label{7.8-5}
\end{equation}

\noindent Recall the asymptotic expansion of the (ordinary, local) heat
kernel $K_{t}^{j}(z,z^{\prime })$ (see (\ref{OAE}) or \cite[(5.19) on p.76]%
{CHT}):%
\begin{eqnarray}
K_{t}^{j}(z,z^{\prime }) &=&e^{-\frac{\tilde{d}_{M}^{2}(z,z^{\prime })}{4t}%
}K^{j}(t,z,z^{\prime })  \label{7-5'} \\
K^{j}(t,z,z^{\prime }) &\sim &t^{-n+1}b_{n-1}(z,z^{\prime
})+t^{-n+2}b_{n-2}(z,z^{\prime })+...  \notag
\end{eqnarray}%
\noindent as $t\rightarrow 0^{+}$ for $z,z^{\prime }$ in $V_{j},$ where $%
\tilde{d}_{M}$ denotes the distance function associated with the metric $\pi
^{\ast }g_{M}|_{V_{j}}$ (cf. (\ref{Ga}) with lines above).

It is not difficult to see via Theorem \ref{thm2-1} (with its proof) that $%
\tilde{d}_{M}$ is the distance function on orbifold charts, associated with
the metric $g_{M}$ on $M=\Sigma /\mathbb{\sigma }$ (via the projection $%
\Sigma \rightarrow \Sigma /\mathbb{\sigma }).$ The reader is warned that $%
\tilde{d}_{M}$ is in general not the distance function on $M$ (unless the
globally free case).

Further, the coefficients $b_{s}(z,z^{\prime })$ $(s=n-1,$ $n-2,$ $\cdot
\cdot \cdot )$ in (\ref{7-5'}) depend only on $(x,y)$ (with $z(x)=z,$ $%
z(y)=z^{\prime })$ and are independent of the choice of charts $D_{j}$ $\ni $
$x,$ $y.$ For, it is well known (\cite[Chapter 2]{BGV}) that the
coefficients depend only on the local geometry; since the local geometry we
use consists of $\pi ^{\ast }g_{M}$ and the $\mathbb{C}^{\ast }$-invariant
metric on $L_{\Sigma }^{\ast }$ (see (\ref{6-1'}), (\ref{DiracLap}) and Step
1 of Section 3), these can be regarded as local geometry data on the
orbifold $M$ $=$ $\Sigma /\sigma $ (with $\pi :\Sigma $ $\rightarrow $ $M)$
thus intrinsic in nature.

For every compact set $K$ $\subset $ $V_{j},$ there is a constant $C_{K}>1$
such that%
\begin{equation}
\frac{1}{C_{K}}|z-z^{\prime }|\leq \tilde{d}_{M}(z,z^{\prime })\leq
C_{K}|z-z^{\prime }|.  \label{7-5"}
\end{equation}%
\noindent Set $b_{s}(z):=b_{s}(z,z),$ $s\leq n-1.$ Note that $b_{s}(z)$
(dependent on $z=z(x))$ are independent of $j$ as just mentioned. Plugging (%
\ref{7-5'}) into (\ref{7.8-5}) and noting $\tilde{d}_{M}^{2}(z,z)$ $=$ $0$
yields 
\begin{eqnarray}
&&\text{Part I of }\sum_{j}(H_{m,t}^{j}\circ \pi _{m})(x,x)  \label{HQDI-1}
\\
&\sim &t^{-n+1}\alpha _{n-1}(x)+t^{-n+2}\alpha _{n-2}(x)+\text{ }\cdot \cdot
\cdot \text{ as }t\rightarrow 0^{+}  \notag
\end{eqnarray}

\noindent where, for $s\leq n-1,$ 
\begin{equation}
\alpha _{s}(x)=b_{s}(z(x))l^{m}(x).  \label{HQDI-2}
\end{equation}

\noindent This finishes the computation of (\ref{HQD}) for $\xi $ in Part I.

\bigskip

\textbf{Part II of the RHS of (\ref{HQD}): }The computation for $\xi $ in
Part II of (\ref{HQD}) takes some extra work for which we start with the
following set-up. First recall that $\frac{2\pi }{p_{j}},$ $p$ $=$ $p_{1}$ $%
< $ $p_{2}$ $<$ $...<p_{k},$ denote all possible periods of the locally free
action $\sigma |_{S^{1}}.$ Define $\Sigma _{p_{j}}$ $:=$ $\{x\in \Sigma $ $:$
the period of $x$ is $\frac{2\pi }{p_{j}}\}$ and recall $\Sigma _{\text{sing}%
}$ $:=$ $\cup _{j=2}^{k}\Sigma _{p_{j}}$ (cf. (\ref{1-4})). Let $d(\cdot
,\cdot )$ denote the distance function on $\Sigma $ with respect to the
metric $G_{a,m}.$

Recall that we have the case $ii)$ stated after (\ref{6-1g}). We are going
to be more precise about it in (\ref{claim}) below. Let us start with

\begin{definition}
\label{d7-1.5} (cf. Remark \ref{7.3-5} for geometrical aspects) $S$ $:=$ $%
\{le^{-i\hat{\gamma}}$ $:$ $0<\frac{\varepsilon _{j}}{2}\leq \hat{\gamma}%
\leq \frac{2\pi }{p}-\frac{\varepsilon _{j}}{2},$ $l\in \mathbb{R}^{+}\}.$%
\begin{equation}
\hat{d}(x,\Sigma _{\text{sing}}):=\inf \{d(s\circ x,x):s\in S\}\geq 0.
\label{distsing}
\end{equation}
\end{definition}

We claim the existence of a constant $\hat{\varepsilon}_{0}$ $>$ $0$
satisfying the following. Let $x$ $\in $ $\Sigma \backslash \Sigma _{\text{%
sing}}$ and $x$ $=(z,w)\in $ $\hat{W}_{j}$ (see (\ref{7-1.75}))$.$%
\begin{eqnarray}
\text{Suppose }e^{-i\gamma }\circ x &=&(\tilde{z},\tilde{w})\in \hat{W}_{j}%
\text{ for some }\gamma \in \lbrack \varepsilon _{j},\frac{2\pi }{p}%
-\varepsilon _{j}].  \label{claim} \\
\text{Then }|\tilde{z}-z|\text{ } &\geq &\hat{\varepsilon}_{0}\hat{d}%
(x,\Sigma _{\text{sing}})\text{ }>0\text{.}  \notag
\end{eqnarray}

\noindent The Case $ii)$ after (\ref{6-1g}) has mentioned that the above $z,$
$\tilde{z}$ could be different. Here (\ref{claim}) confirms a positive lower
bound for $|\tilde{z}-z|$ (see \cite[(6.5)]{CHT} for a statement similar to (%
\ref{claim})).

\proof%
\textbf{\ (of (\ref{claim})) }First it can be verified that there is an $%
\hat{\varepsilon}_{0}>0$ independent of $w$ such that $|\tilde{z}-z|$ $\geq $
$\hat{\varepsilon}_{0}d((\tilde{z},w),(z,w)).$ For this verification, we
content ourselves with referring to (\ref{metric}) and (\ref{M1-1}) where $%
G_{a,m}$ is seen to be uniformly bounded along the $w$-direction. For
instance, choosing the curve ($(1-t)z+t\tilde{z},w)$ with estimation of its
length implies this inequality. With $\tilde{w}$ in (\ref{claim}) writing $%
\xi _{1}$ $=$ $w/\tilde{w}$ $=$ $|\xi _{1}|e^{i\phi }$ with $-\varepsilon
_{j}/2<\phi <\varepsilon _{j}/2$ (by the $\hat{W}_{j}$-condition) $,$ we
note that $\xi _{1}\circ (\tilde{z},\tilde{w})$ $=$ $(\tilde{z},\xi _{1}%
\tilde{w})$ $=$ $(\tilde{z},w)$ $\in $ $W_{j}$ $\supset $ $\hat{W}_{j}$ (see
cases $i)$ and $ii)$ after (\ref{6-1g}))$.$ Then we have%
\begin{eqnarray}
|\tilde{z}-z|\text{ } &\geq &\text{ }\hat{\varepsilon}_{0}d((\tilde{z}%
,w),(z,w))=\hat{\varepsilon}_{0}d(\xi _{1}\circ (\tilde{z},\tilde{w}),(z,w))
\label{Dist} \\
&=&\hat{\varepsilon}_{0}d((\xi _{1}e^{-i\gamma })\circ x,x)\geq \hat{%
\varepsilon}_{0}\hat{d}(x,\Sigma _{\text{sing}})\text{ (using (}\ref%
{distsing}),\text{ (\ref{claim})).}  \notag
\end{eqnarray}

To see $\hat{d}(x,\Sigma _{\text{sing}})$ $>$ $0,$ since $S$ in Definition %
\ref{d7-1.5} is clearly disjoint from the isotropy group at $x,$ i.e. $x$ $%
\notin $ $S\circ x$ and further, the orbit $S\circ x$ is closed (cf. (\ref%
{1-0}), (\ref{1-1})), by the definition of $\hat{d}$ in (\ref{distsing}) the
desired strict positivity follows.

\endproof%

\begin{remark}
\label{7.3-5} One expects that the function $\hat{d}(x,\Sigma _{\text{sing}%
}) $ is comparable to the genuine distance function $d(x,\Sigma _{\text{sing}%
}). $ See \cite[Theorem 6.7]{CHT} in the CR context.
\end{remark}

To proceed further, we need the following fact:

\begin{lemma}
\label{lemma7-1} It holds that $($see (\ref{lq0}) for $l(x))$%
\begin{eqnarray}
&&l(e^{i\gamma }\circ x) = l(x)\text{ provided that }e^{i\gamma }\circ x%
\text{ (}e^{i\gamma }\in S^{1}\subset \mathbb{C}^{\ast })\text{ and }x\text{
lie in}  \label{R+A} \\
&&\text{ \ \ the same chart }W_{j}\text{ with coordinates }(z,w)\text{ (cf.
Notation \ref{n-6.1})}.  \notag
\end{eqnarray}
\end{lemma}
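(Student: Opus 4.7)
The plan is to derive the invariance directly from the $\mathbb{C}^*$-equivariant structure set up in Step~1 of Section~\ref{S-metric}, where a Hermitian metric $\langle\cdot,\cdot\rangle_{L_{\Sigma}}$ on the line bundle $L_{\Sigma}$ was constructed. The key identity is $l(q) = \|v_q\|^2_{L_{\Sigma}}$ from (\ref{lq0}), together with the equivariance relation (\ref{3.0}): $\tilde{\sigma}(\lambda) v_q = \lambda^{-1} v_{\sigma(\lambda)\circ q}$, which can be rearranged as $v_{\sigma(\lambda)\circ q} = \lambda\,\tilde{\sigma}(\lambda) v_q$. Setting $\lambda = e^{i\gamma}$ and taking squared norms will reduce the lemma to invariance of $\langle\cdot,\cdot\rangle_{L_{\Sigma}}$ under $\tilde{\sigma}(e^{i\gamma})$.

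First I would verify that $\langle\cdot,\cdot\rangle_{L_{\Sigma}}$ is $\tilde{\sigma}$-invariant for the full action of $\mathbb{C}^{\ast}$ and not merely for $\mathbb{R}^{+}$. The partition-of-unity metric $\langle\cdot,\cdot\rangle'=\sum_j \chi_j\langle\cdot,\cdot\rangle_j$ assembled in Step~1 is $\tilde{\sigma}(\rho)$-invariant for $\rho\in\mathbb{R}^{+}$ because each $\chi_j$ is $\sigma(\mathbb{R}^{+})$-invariant and each $\langle\cdot,\cdot\rangle_j$ is $\tilde{\sigma}(\rho)$-invariant on $D_j$. The final metric $\langle\cdot,\cdot\rangle_{L_{\Sigma}}$ is obtained by averaging $\langle\cdot,\cdot\rangle'$ over $S^1$, which is automatically $\tilde{\sigma}(S^1)$-invariant; since $\mathbb{C}^{\ast}$ is abelian the $S^1$-averaging commutes with the $\mathbb{R}^{+}$-action, so the $\mathbb{R}^{+}$-invariance survives, and $\langle\cdot,\cdot\rangle_{L_{\Sigma}}$ is invariant under all of $\mathbb{C}^{\ast}$.

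With this in hand, the lemma is immediate:
\begin{equation*}
l(e^{i\gamma}\circ x) \;=\; \|v_{e^{i\gamma}\circ x}\|_{L_{\Sigma}}^2 \;=\; \bigl|e^{i\gamma}\bigr|^{2}\,\|\tilde{\sigma}(e^{i\gamma})v_x\|_{L_{\Sigma}}^2 \;=\; \|v_x\|_{L_{\Sigma}}^2 \;=\; l(x).
\end{equation*}
The hypothesis that $e^{i\gamma}\circ x$ and $x$ lie in the same chart $W_j$ is not used in this intrinsic argument; it is stated because the lemma will be invoked in the sequel precisely in this local configuration (e.g.\ around (\ref{claim})), and because a coordinate-based alternative, based on the local expression $l=h(z,\bar z)|w|^2$ of (\ref{lq}), requires a common coordinate chart to make the comparison. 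The only mildly nontrivial step is the verification of global $\mathbb{C}^{\ast}$-invariance of $\langle\cdot,\cdot\rangle_{L_{\Sigma}}$ indicated above; once that is granted, the conclusion is a one-line computation, and the accompanying invariance of $h(z,\bar z)$ asserted in Lemma~\ref{A}~iv) follows in the same way from (\ref{lq}) and the $\tilde{\sigma}$-invariance of $\partial/\partial w$ established in (\ref{ddw}).
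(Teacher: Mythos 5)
Your proof is correct and takes a genuinely different, more intrinsic route than the paper's. You invoke the $\tilde{\sigma}$-equivariance relation (\ref{3.0}) and the $\tilde{\sigma}$-invariance of $\langle\cdot,\cdot\rangle_{L_{\Sigma}}$ from Step~1 of Section~\ref{S-metric}, from which the identity $l(\sigma(\lambda)q)=|\lambda|^{2}l(q)$ for \emph{all} $\lambda\in\mathbb{C}^{\ast}$ drops out in one line; the unit-modulus case is the lemma. The paper instead works coordinate-chart by coordinate-chart: it constructs a chain of patches $D_{0}=W_{j},D_{1},\dots,D_{L+1}$ with small-angle steps $x_{k+1}=e^{i\gamma_k}\circ x_k$, and shows $l(x_{k+1})=l(x_k)$ at each step using the local expression $l=h(z,\bar z)w\bar w$ of (\ref{lq}), the rotation-invariance of $h$ within a patch, and formula (\ref{7-12.5}). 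Your argument is shorter and establishes the full scaling law of Corollary~\ref{7.5-5} directly, rendering that corollary's proof unnecessary; what it buys at the conceptual level, the paper's chart-chasing buys as practice for the nearly identical bookkeeping in Lemma~\ref{L-7.6a} and related local arguments. Your verification that the $S^{1}$-averaging preserves the $\mathbb{R}^{+}$-invariance of $\langle\cdot,\cdot\rangle'$ (via commutativity of $\mathbb{C}^{\ast}$) is a real step and worth spelling out as you did, since the paper's Step~1 asserts $\tilde{\sigma}$-invariance of the final metric somewhat tersely. Your observation that the chart hypothesis is superfluous agrees with the paper's own Remark~\ref{7-11b}.
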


\proof
Let $D_{0}$ $=$ $W_{j},$ $D_{1},$ $...$ $,$ $D_{L\text{ }},$ $D_{L+1}$ be a
sequence of patches with coordinates $(z_{k},w_{k})$ on each $D_{k},$ and $%
x_{0},$ $x_{1},$ $...$ , $x_{L},$ $x_{L+1}$ be a sequence of points. Assume
the following: $D_{L+1}\equiv D_{0}$, $(z_{0},w_{0})$ $\equiv $ $%
(z_{L+1},w_{L+1})$ $\equiv $ $(z,w)$ and $x_{0}:=x$ $\in $ $D_{0}$, $x_{k}$ $%
\in $ $D_{k}\cap D_{k-1},$ $1$ $\leq $ $k$ $\leq $ $L,$ $x_{L+1}$ $\in $ $%
D_{0}\cap D_{L}$ such that 
\begin{equation}
x_{k+1}=e^{i\gamma _{k}}\circ x_{k},\text{ }0\leq k\leq L.  \label{angle}
\end{equation}%
\noindent We further assume: for every $0$ $\leq $ $k$ $\leq $ $L,$ if $%
\theta \in \lbrack 0,\gamma _{k}]$ then $e^{i\theta }\circ x_{k}\in D_{k}.$
This gives%
\begin{equation}
w_{k}(e^{i\theta }\circ x_{k})=e^{i\theta }w_{k}(x_{k})  \label{7-12.5}
\end{equation}

\noindent by (\ref{1-1}). Since $l(q)=h(z(q),\bar{z}(q))w(q)\bar{w}(q)$ in (%
\ref{lq0}), (\ref{lq}) is independent of the choice of coordinates, from (%
\ref{angle}) and the fact that $h$ is invariant under rotation (by $\gamma
_{k})$ by using (\ref{7-12.5}) and (\ref{lq}), it follows that%
\begin{eqnarray}
l(x_{k+1}) &=&h(z_{k}(x_{k+1}),\bar{z}_{k}(x_{k+1}))w_{k}(x_{k+1})\bar{w}%
_{k}(x_{k+1})  \label{lxk} \\
&=&h(z_{k}(x_{k}),\bar{z}_{k}(x_{k}))w_{k}(x_{k})\bar{w}_{k}(x_{k})  \notag
\\
&=&l(x_{k}),\text{ }0\leq k\leq L.  \notag
\end{eqnarray}

\noindent Clearly (\ref{R+A}) follows from (\ref{lxk}).

\endproof%

\begin{remark}
\label{7-11b} The above proof actually shows that (\ref{R+A}) holds
unconditionally. As an important application, one sees that the metric $%
G_{a,m}$ on $\Sigma $ (in Section 3) is $S^{1}$-invariant via (\ref{3-7.5}),
(\ref{Ga}), (\ref{3-6-1}) and Lemma \ref{A} $iv)$.
\end{remark}

\begin{corollary}
\label{7.5-5} It holds that for all $\xi \in \mathbb{C}^{\ast }$ and for all 
$x\in \Sigma ,$%
\begin{equation}
l^{m}(\xi ^{-1}\circ x)=|\xi |^{-2m}l^{m}(x).  \label{7-1-1}
\end{equation}
\end{corollary}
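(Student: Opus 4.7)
The plan is to decompose the $\mathbb{C}^{\ast}$-action into its $S^{1}$ and $\mathbb{R}^{+}$ parts and treat each factor separately. Writing $\xi = |\xi|e^{i\gamma}$, we have $\xi^{-1} = |\xi|^{-1}e^{-i\gamma}$, so
\[
\xi^{-1}\circ x \;=\; \sigma(|\xi|^{-1})\circ \bigl(e^{-i\gamma}\circ x\bigr),
\]
and it suffices to establish the two identities $l(e^{-i\gamma}\circ x) = l(x)$ and $l(\sigma(\rho)\circ y) = \rho^{2}l(y)$ for all $\rho\in\mathbb{R}^{+}$, all $x,y\in\Sigma$, and all $\gamma\in\mathbb{R}$. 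Raising the resulting identity to the $m$-th power then yields (\ref{7-1-1}).

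First I would invoke Lemma \ref{lemma7-1} together with Remark \ref{7-11b}, which states that (\ref{R+A}) actually holds unconditionally (i.e., without the chart hypothesis). Indeed, for any given $\gamma$ one can cover the compact arc $\{e^{i\theta}\circ x : \theta\in[0,\gamma]\}$ by finitely many patches $D_{0},\dots,D_{L}$ and choose intermediate points $x_{0}=x,x_{1},\dots,x_{L+1}=e^{i\gamma}\circ x$ together with small angles $\gamma_{k}$ summing to $\gamma$ such that the chart hypothesis of Lemma \ref{lemma7-1} applies at each step. A telescoping argument then gives $l(e^{i\gamma}\circ x) = l(x)$ unconditionally, so the $S^{1}$-factor contributes no change to $l$.

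Next I would handle the $\mathbb{R}^{+}$-factor. In distinguished local coordinates $(z,w)$ around $y$, the action (\ref{C6}) reads $\sigma(\rho)(z,w) = (z,\rho w)$, so by (\ref{lq}),
\[
l(\sigma(\rho)\circ y) \;=\; h(z,\bar{z})\,(\rho w)(\overline{\rho w}) \;=\; \rho^{2}\,h(z,\bar{z})w\bar{w} \;=\; \rho^{2}\,l(y),
\]
which is the desired scaling. Combining this with the $S^{1}$-invariance and raising to the $m$-th power yields $l^{m}(\xi^{-1}\circ x) = |\xi|^{-2m}l^{m}(x)$, as claimed.

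The only delicate point is the unconditional validity of the $S^{1}$-invariance asserted in Remark \ref{7-11b}, but this is precisely what the chart-chain argument in the proof of Lemma \ref{lemma7-1} establishes (the local hypothesis there was merely a convenience for writing down a single coordinate patch). Hence no further obstacle arises, and the corollary follows immediately.
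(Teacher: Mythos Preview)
Your proof is correct and follows essentially the same approach as the paper: decompose $\xi = |\xi|e^{i\gamma}$, use Lemma~\ref{lemma7-1} together with Remark~\ref{7-11b} for the $S^{1}$-invariance $l(e^{-i\gamma}\circ x)=l(x)$, and use the local formula $l=h(z,\bar z)|w|^{2}$ with the $\mathbb{R}^{+}$-action $w\mapsto \rho w$ for the scaling factor. The paper's proof is simply a one-line version of yours, writing $l(\xi^{-1}\circ x)=l(|\xi|^{-1}\circ(e^{-i\gamma}\circ x))=|\xi|^{-2}l(e^{-i\gamma}\circ x)=|\xi|^{-2}l(x)$ and citing Remark~\ref{7-11b}; you have merely spelled out the two steps more explicitly.
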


\proof
By Lemma \ref{lemma7-1} and writing $\xi $ $=$ $|\xi |e^{i\gamma }$, we see
that (via Remark \ref{7-11b})%
\begin{equation*}
l(\xi ^{-1}\circ x)=l(|\xi |^{-1}\circ (e^{-i\gamma }\circ x))=|\xi
|^{-2}l(e^{-i\gamma }\circ x)=|\xi |^{-2}l(x)
\end{equation*}

\noindent and hence the lemma.

\endproof%

In the remaining part of this section, we will omit $``\circ "$ in the
notation of $\mathbb{C}^{\ast }$-action.

Let us now continue with Part II of (\ref{HQD}). Recall $H_{m,t}^{j}$ in (%
\ref{6-1}) (with the footnote as in (\ref{6-1g}))$:$%
\begin{eqnarray}
&&(H_{m,t}^{j}(x,\xi ^{-1}x)\circ (\sigma _{\xi }^{\ast })_{\xi ^{-1}x})\bar{%
\xi}^{m}=\varphi _{j}(x)w^{m}(x)K_{t}^{j}(z(x),z(\xi ^{-1}x))\circ (\sigma
_{\xi }^{\ast })_{\xi ^{-1}x}  \label{7-2} \\
&&\text{ \ \ }w^{-m}(\xi ^{-1}x)l^{m}(\xi ^{-1}x)\tau _{j}(z(\xi
^{-1}x))\sigma _{j}(\phi (\xi ^{-1}x))\bar{\xi}^{m}  \notag
\end{eqnarray}

To integrate (\ref{7-2}) over $I$ $=$ $[\varepsilon _{j},\frac{2\pi }{p}%
-\varepsilon _{j}]$ for Part II$,$ we shall now divide $I$ $=$ $J\cup
J^{\prime }$ where $J$ is the subset of those $\gamma $ $\in $ $[\varepsilon
_{j},\frac{2\pi }{p}-\varepsilon _{j}]$ such that $e^{-i\gamma }x$ $\in $ $%
\hat{W}_{j}$ (Notation \ref{n-6.1}) and $J^{\prime }$ is the complement of $%
J $ in $[\varepsilon _{j},\frac{2\pi }{p}-\varepsilon _{j}].$ If we denote
by $(\tilde{z},\tilde{w})$ the coordinates of $e^{-i\gamma }x$ with $\gamma $
$\in $ $J,$ then $\tilde{z}$ $\neq $ $z$ by (\ref{claim}) (noting that $%
\frac{2\pi }{p}$ is the period of $x$). We suppress the dependence of $J$ on 
$x$ since $x$ is fixed throughout.

From (\ref{6-1f}) of Proposition \ref{l-6-0a} and the notation there, it
follows that with $\xi $ $=$ $|\xi |e^{i\gamma }$%
\begin{eqnarray}
&&\text{Part II of }(H_{m,t}^{j}\circ \pi _{m})(x,x)  \label{7-2-1} \\
&{:=}&\int_{\varepsilon _{j}}^{\frac{2\pi }{p}-\varepsilon _{j}}\int_{%
\mathbb{R}^{+}}(H_{m,t}^{j}(x,\xi ^{-1}x)\circ (\sigma _{\xi }^{\ast })_{\xi
^{-1}x})\bar{\xi}^{m}d\mu _{x,m}(\xi )  \notag \\
&=&(\int_{J}+\int_{J^{\prime }})\int_{\mathbb{R}^{+}}(H_{m,t}^{j}(x,\xi
^{-1}x)\circ (\sigma _{\xi }^{\ast })_{\xi ^{-1}x})\bar{\xi}^{m}d\mu
_{x,m}(\xi )  \notag \\
&=&\int_{J}\int_{\mathbb{R}^{+}}(H_{m,t}^{j}(x,\xi ^{-1}x)\circ (\sigma
_{\xi }^{\ast })_{\xi ^{-1}x})\bar{\xi}^{m}d\mu _{x,m}(\xi )  \notag
\end{eqnarray}%
\noindent where the integral over $J^{\prime }$ vanishes since in $%
H_{m,t}^{j}$ (\ref{6-1}) the term $\tau _{j}(z(\xi ^{-1}x))\sigma _{j}(\phi
(\xi ^{-1}x))$ $=$ $0$ for $e^{-i\gamma }x$ $\notin $ $\hat{W}_{j}$ where $%
\gamma \in J^{\prime }$ (see (\ref{6-1}) and items $ii),$ $iii)$ after
Notation \ref{n-6.1}).

Next for the integral over $J$ in the RHS of (\ref{7-2-1}) we need some
preparations, which go from (\ref{7-2-2}) until Corollary \ref{7.6-5}.

Recall from (\ref{6-1e1}) and remarks below it that $d\mu _{x,m}(\xi )$
equals ($\tau _{x}$ $:$ $\mathbb{C}^{\ast }\rightarrow \Sigma $ defined by $%
\tau _{x}(\lambda )=\lambda \circ x$)%
\begin{eqnarray}
&&d\mu _{x,m}(\xi )=\frac{l^{m}(\xi ^{-1}x)dv_{\Sigma ,m}(\xi ^{-1}x)\wedge
dv_{f,m}(x)}{dv_{\Sigma ,m}(x)}  \label{7-2-2} \\
&\overset{(\ref{7-1-1})(\ref{volume})}{=}&|\xi |^{-2m}l^{m}(x)(\tau
_{x}^{\ast }dv_{f,m})(\xi ^{-1})\frac{dv_{M}(z(\xi ^{-1}x))}{dv_{M}(z(x))}. 
\notag
\end{eqnarray}%
\noindent Note also (cf. (\ref{3-18.75}) and $i)$ of Lemma \ref{A}): 
\begin{equation}
(\tau _{x}^{\ast }dv_{f,m})(\xi ^{-1})=dv_{m}(|\xi |^{-1}|w|)\wedge
dv(-\gamma )/2\pi ,\text{ }\xi =|\xi |e^{i\gamma }  \label{7-2-3}
\end{equation}%
\noindent and the invariantly defined integral:%
\begin{equation}
\int_{|\xi |\in \mathbb{R}^{+}}l^{m}(|\xi |^{-1}x)dv_{m}(|\xi |^{-1}x)=1
\label{7-20.5}
\end{equation}

\noindent (cf. (\ref{3-19.5}) where we chose $h(z_{0},\bar{z}_{0})$ $=$ $1$
and the notation $dv_{m}(|\xi |^{-1}x)$ is for $dv_{m}(|\xi |^{-1}|w|)$). By
our choice of the holomorphic coordinate $w,$ we have (see (\ref{1-1}) for $%
\lambda $ $\in $ $\mathbb{R}^{+}):$%
\begin{equation}
|w|^{-m}(\xi ^{-1}x)|\xi |^{-m}=\frac{1}{|w|^{m}(e^{-i\gamma }x)},\text{ }%
e^{-i\gamma }=\xi ^{-1}|\xi |\in S^{1}\subset \mathbb{C}^{\ast }.
\label{7-2-4}
\end{equation}

For one more preparation, a technical lemma is in order. Items $iii),$ $iv)$
below have been used in (\ref{proj12}), (\ref{hm}) respectively.

\begin{lemma}
\label{A} It holds that

$i)$ $\frac{|w(x)|}{|w(e^{-i\gamma }x)|}=1$ for $e^{-i\gamma }\in S^{1}$
such that $e^{-i\gamma }x$ $\in $ $W_{j}$ of Notation \ref{n-6.1}$.$

$ii)$ $\frac{dv_{M}(z(\xi ^{-1}x))}{dv_{M}(z(x))}$ $=$ $1$ for $\xi \in 
\mathbb{C}^{\ast }$ such that $\xi ^{-1}x$ $\in $ $W_{j}.$

$iii)$ In the product space $\mathbb{C}^{\ast }\times \Sigma ,$ under the
transformation $\tilde{\sigma}:(\xi ,x)$ $\rightarrow $ $(\xi ^{-1},y$ $=$ $%
\xi \circ x)$ we have 
\begin{equation}
(\tau _{x}^{\ast }dv_{f,m})(\xi )dv_{\Sigma ,m}(x)=\{\tilde{\sigma}^{\ast
}[(\tau _{y}^{\ast }dv_{f,m})(\xi ^{-1})dv_{\Sigma ,m}(y)]\}(\xi ,x).
\label{VT}
\end{equation}

$iv)$ For $h$ of (\ref{lq}) and notations there, $h(z^{\prime },\overline{%
z^{\prime }})=h(z,\bar{z})$ if $\{p,$ $e^{-i\gamma }\circ p\}$ $\subset $ $%
D_{j}$ and $z^{\prime }=z(e^{-i\gamma }\circ p).$ Moreover the global 2-form 
$\partial \bar{\partial}\log h$ on $\Sigma $ ($=\partial _{z}\bar{\partial}%
_{z}\log h,$ see (\ref{3-6-1})$)$ is $S^{1}$-invariant.
\end{lemma}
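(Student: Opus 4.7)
The plan is to handle the four parts in sequence. Parts i), ii), and the first claim of iv) reduce to the explicit action formula (\ref{1-1}) in distinguished coordinates; part iii) will be a direct coordinate computation; and the $S^1$-invariance of $\partial\bar\partial\log h$ in the ``moreover'' clause of iv) will follow from the $S^1$-invariance of the global length function $l(q) = hw\bar w$.

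For i), I observe that having both $x$ and $e^{-i\gamma}x$ in $W_j = V_j \times (-\varepsilon_j, \varepsilon_j) \times \mathbb{R}^+$ forces the action to be realized in-chart by $(z,\phi,r) \mapsto (z, \phi-\gamma, r)$, which leaves $r = |w|$ unchanged. Part ii) is the same tautology: the in-chart condition $\xi^{-1} x \in W_j$ gives $z(\xi^{-1}x) = z(x)$, so $dv_M(z(\xi^{-1}x))$ and $dv_M(z(x))$ are literally the same form in the chart. The first claim of iv) is identical, phrased with $D_j$ and with $z' = z$ forced in the same way.

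For iii), since both sides are globally defined $(2n+2)$-forms on $\mathbb{C}^{\ast} \times \Sigma$, it suffices to verify the identity at each point $(\xi_0, x_0)$. I would choose local distinguished coordinates $(z,w)$ around $x_0$ with $h(z_0,\bar z_0)=1$ and $dh(z_0,\bar z_0)=0$ (Remark \ref{R-h}), and local coordinates around $\xi_0 x_0$ such that in a neighborhood $y = \xi x$ has coordinates $(z, \xi w)$. Writing $dv_{f,m}$ locally as $F(w,\bar w)\tfrac{i}{2}\, dw\wedge d\bar w$, a direct pullback gives $(\tau_x^{\ast} dv_{f,m})(\xi) = F(\xi w, \bar\xi\bar w)\tfrac{i}{2}|w|^2\, d\xi\wedge d\bar\xi$ (the $|w|^2$ arising from $\tau_x^{\ast} dw = w\, d\xi$), while $(\tau_y^{\ast} dv_{f,m})(\xi^{-1})$ evaluated along $\tilde\sigma$ at $y = \xi x$ yields $F(w,\bar w)\tfrac{i}{2}|\xi|^{-2}|w|^2\, d\xi\wedge d\bar\xi$, since the naive factor $|\xi|^2|w|^2$ from $\tau_y^{\ast} dw = \xi w\, d(\xi^{-1})$ is multiplied by $|\xi|^{-4}$ coming from $d(\xi^{-1})\wedge d(\overline{\xi^{-1}}) = |\xi|^{-4} d\xi\wedge d\bar\xi$. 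Expanding $dw_y = w\, d\xi + \xi\, dw$ in $\tilde\sigma^{\ast}dv_{\Sigma,m}(y)$ and wedging with the above $d\xi\wedge d\bar\xi$ piece kills all terms except $|\xi|^2 dw\wedge d\bar w$, so the $|\xi|^{-2}$ and $|\xi|^2$ cancel and both sides collapse to $F(w,\bar w) F(\xi w, \bar\xi\bar w) |w|^2 (\tfrac{i}{2})^2\, d\xi\wedge d\bar\xi \wedge dv(z)\wedge dw\wedge d\bar w$.

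For the $S^1$-invariance of $\partial\bar\partial\log h$ in iv), I would use the local identity $\log l = \log h + \log w + \log \bar w$ together with $\partial\bar\partial(\log w + \log \bar w) = 0$, giving $\partial\bar\partial \log h = \partial\bar\partial \log l$ locally; since the left-hand side patches to a global $(1,1)$-form by (\ref{3-6-1}) and $l$ is already globally defined, the identity is global. By Lemma \ref{lemma7-1} together with Remark \ref{7-11b}, $l$ is $S^1$-invariant, hence $\partial\bar\partial \log l = \partial\bar\partial \log h$ is $S^1$-invariant. The main substantive work is the bookkeeping in iii), where the $|\xi|$-powers from $d(\xi^{-1})$ must cancel those from the expansion of $dw_y \wedge d\bar w_y$; the favorable cancellation there is essentially forced by the scaling $l(\xi^{-1}x) = |\xi|^{-2} l(x)$ of Corollary \ref{7.5-5} at the measure level, and this is really the only conceptual content of the lemma.
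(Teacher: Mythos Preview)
Your argument for parts i), ii), and the first claim of iv) rests on the assertion that the hypothesis ``$e^{-i\gamma}x\in W_j$'' (or ``$\xi^{-1}x\in W_j$'') forces the action to be realized in-chart by $(z,\phi,r)\mapsto(z,\phi-\gamma,r)$, in particular $z(e^{-i\gamma}x)=z(x)$. This is exactly what fails: the in-chart formula (\ref{1-1}) is only valid for small-angle action (Case~$i)$ after (\ref{6-1g})). For large $\gamma$ the orbit may leave $W_j$, pass through other charts, and re-enter $W_j$ with a \emph{different} $z$-coordinate; this is precisely Case~$ii)$ after (\ref{6-1g}) and is quantified in (\ref{claim}). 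The whole point of Lemma~\ref{A} is to control quantities like $|w|$, $dv_M$ and $h$ across such large-angle returns, so you cannot simply declare them tautological. The paper handles i) via Lemma~\ref{lemma7-1} and the step-by-step chart-tracking argument (\ref{lxk}): the global $S^1$-invariance of $l$ combined with $l=h|w|^2$ reduces i) to the $h$-invariance, which in turn follows from a chain of \emph{small}-angle rotations. Part~ii) is obtained not from $z(\xi^{-1}x)=z(x)$ (which is false) but from the $\mathbb{C}^{\ast}$-invariance of $\pi^{\ast}dv_M$, automatic since $\pi\circ\sigma(\xi)=\pi$; and the first claim of iv) is deduced from i) together with (\ref{lq}) and (\ref{R+A}), again without ever asserting $z'=z$.

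Your treatment of iii) has a related gap: the stipulation ``local coordinates around $\xi_0x_0$ such that $y=\xi x$ has coordinates $(z,\xi w)$'' is only immediate when $\xi_0$ is a small-angle element; for general $\xi_0$ you would need to transport the chart at $x_0$ by $\sigma(\xi_0)$ and check that the resulting $(z',w')$ are again distinguished and that $dv_{f,m}$ takes the asserted form there. The paper avoids this entirely with an intrinsic argument: writing both sides via $dv_{\Sigma,m}=\pi^{\ast}dv_M\wedge dv_{f,m}$ and observing that $\sigma\circ\tilde\sigma$ is the second projection while $\sigma^{\ast}\pi^{\ast}dv_M=\pi^{\ast}dv_M$ by $\mathbb{C}^{\ast}$-invariance, so the two sides match without any coordinate bookkeeping. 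Your argument for the $S^1$-invariance of $\partial\bar\partial\log h$ via $\partial\bar\partial\log h=\partial\bar\partial\log l$ is correct and arguably cleaner than the paper's, which instead uses the first claim of iv) for small rotations and then iterates.
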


\proof
For $i)$, if $\gamma $ is small this is automatic.(cf. Case $i)$ after (\ref%
{6-1g})). In general, by (\ref{7-1-1}) and for $\xi ^{-1}x\in W_{j}$ with $%
|\arg \xi |$ small%
\begin{equation}
l(\xi ^{-1}x)=h(z(\xi ^{-1}x),\bar{z}(\xi ^{-1}x))|w|^{2}(\xi ^{-1}x)
\label{7-15.5}
\end{equation}%
\noindent (cf. (\ref{lq})) we obtain%
\begin{equation}
\frac{|w|(x)}{|w|(e^{-i\gamma }x)}=\frac{h^{1/2}(z(e^{-i\gamma }x),\bar{z}%
(e^{-i\gamma }x))}{h^{1/2}(z(x),\bar{z}(x))}.  \label{7-2-5}
\end{equation}

\noindent Resorting to the fact that $h$ is invariant under local rotations
as remarked in the proof of Lemma \ref{lemma7-1}, in a similar way we use
the local rotations step by step as in (\ref{lxk}). This, together with (\ref%
{7-2-5}), leads to the first equality.

For the second equality, since $dv_{M}$ $=$ the volume form on $V_{j}\times
\{0\}\times \{1\}$ $\subset $ $\Sigma $ induced by $\pi ^{\ast }g_{M}$ (cf. (%
\ref{volume})) which is trivially $\mathbb{C}^{\ast }$-invariant (see the
beginning of step 2 in Section 3), the conclusion follows.

For $iii),$ first observe that (recalling $\sigma :\mathbb{C}^{\ast }\times
\Sigma \rightarrow \Sigma $ defined by $\sigma (\lambda ,p)$ $:=$ $\lambda
\circ p)$%
\begin{equation}
LHS\text{ of }(\ref{VT})=(\sigma ^{\ast }dv_{f,m})(\xi ,x)\wedge dv_{\Sigma
,m}(x)\text{,}  \label{7-2-6}
\end{equation}%
\begin{equation}
RHS\text{ of }(\ref{VT})=\{\tilde{\sigma}^{\ast }[(\sigma ^{\ast
}dv_{f,m})(\xi ^{-1},y)\wedge dv_{\Sigma ,m}(y)]\}(\xi ,x).  \label{7-2-7}
\end{equation}

\noindent It then follows from (\ref{volume}) that%
\begin{equation}
RHS\text{ of (\ref{7-2-6})}=(\sigma ^{\ast }dv_{f,m})(\xi ,x)\wedge \pi
^{\ast }dv_{M}(x)\wedge dv_{f,m}(x)\text{,}  \label{7-2-8}
\end{equation}

\begin{eqnarray}
RHS\text{ of (\ref{7-2-7})} &=&\{\tilde{\sigma}^{\ast }[(\sigma ^{\ast
}dv_{f,m})(\xi ^{-1},y)]\wedge \tilde{\sigma}^{\ast }(dv_{\Sigma
,m}(y))\}(\xi ,x)  \label{7-2-9} \\
&=&dv_{f,m}(x)\wedge (\sigma ^{\ast }\pi ^{\ast }dv_{M})(\xi ,x)\wedge
(\sigma ^{\ast }dv_{f,m})(\xi ,x)  \notag
\end{eqnarray}

\noindent Comparing (\ref{7-2-8}) with (\ref{7-2-9}) and noting that $%
(\sigma ^{\ast }\pi ^{\ast }dv_{M})(\xi ,x)$ $=$ $\pi ^{\ast }dv_{M}(x)$
since $\pi ^{\ast }dv_{M}$ is $\mathbb{C}^{\ast }$-invariant (alternatively,
computing it using $(z,w)$ coordinates is recommended), we conclude that $%
RHS $ of (\ref{7-2-8}) equals $RHS$ of (\ref{7-2-9}). Hence (\ref{VT})
follows from (\ref{7-2-6}) and (\ref{7-2-7}).

For $iv)$ the first assertion follows from $i)$ of this lemma, (\ref{lq})
and (\ref{R+A}). It is clear that $e^{\pm i\varepsilon }$ preserves the
global form $\partial \bar{\partial}\log h$ for $|\varepsilon |<<1$ so does $%
e^{i\varepsilon q}$ for any $q$ $\in $ $\mathbb{Z},$ giving the second
assertion.



\endproof%

\begin{corollary}
\label{7.6-5} It holds that%
\begin{equation}
\int_{\xi \in \mathbb{C}^{\ast }}d\mu _{x,m}(\xi )=1.  \label{A-1}
\end{equation}
\end{corollary}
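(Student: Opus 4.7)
The plan is to unpack the defining expression for $d\mu_{x,m}(\xi)$ given in (\ref{7-2-2}) and reduce the integral to the normalization identity (\ref{fibrenv}) already established in Section 3.

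First I would apply Lemma \ref{A} $ii)$, which gives $dv_{M}(z(\xi^{-1}x))/dv_{M}(z(x)) = 1$, to the right‑hand side of (\ref{7-2-2}). This simplifies the measure to
\[
d\mu_{x,m}(\xi) \;=\; |\xi|^{-2m}\,l^{m}(x)\,(\tau_{x}^{\ast}dv_{f,m})(\xi^{-1}).
\]
Next, invoking Corollary \ref{7.5-5}, namely $l^{m}(\xi^{-1}x) = |\xi|^{-2m}l^{m}(x)$, we can absorb the scalar factor into $l^{m}$ evaluated at the shifted point:
\[
d\mu_{x,m}(\xi) \;=\; l^{m}(\xi^{-1}x)\,(\tau_{x}^{\ast}dv_{f,m})(\xi^{-1}).
\]

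Then I would perform the change of variable $\eta = \xi^{-1}$ on $\mathbb{C}^{\ast}$, which is a diffeomorphism and so preserves integration. Under this substitution $l^{m}(\xi^{-1}x) = l^{m}(\eta\circ x) = (\tau_{x}^{\ast}l)^{m}(\eta)$ and $(\tau_{x}^{\ast}dv_{f,m})(\xi^{-1}) = (\tau_{x}^{\ast}dv_{f,m})(\eta)$, so
\[
\int_{\xi\in\mathbb{C}^{\ast}} d\mu_{x,m}(\xi) \;=\; \int_{\eta\in\mathbb{C}^{\ast}} (\tau_{x}^{\ast}l)^{m}(\eta)\,(\tau_{x}^{\ast}dv_{f,m})(\eta).
\]
Finally, the identity (\ref{fibrenv}) (with $p_{0}$ replaced by $x$, since that formula is orbit‑independent as noted in Section 3) gives that the right‑hand side equals $1$, which is exactly the claim.

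There is no real obstacle here: the result is essentially a bookkeeping consequence of how $d\mu_{x,m}(\xi)$ was defined together with the normalization of $d\hat{v}_{m}$ on $\mathbb{C}^{\ast}$‑orbits. The only point that requires care is verifying the change of variable $\eta = \xi^{-1}$ is compatible with the pullback notation $\tau_{x}^{\ast}$, which is immediate because $\tau_{x}(\eta) = \eta\circ x = \tau_{x}(\xi^{-1})$ when $\eta = \xi^{-1}$; no Jacobian correction enters since both sides of (\ref{fibrenv}) are orientation‑consistent volume forms on the same orbit $\mathbb{C}^{\ast}\circ x$.
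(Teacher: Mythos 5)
Your proof is correct and follows essentially the same route as the paper's one-line argument (which cites (\ref{7-2-2}), Lemma \ref{A} $ii)$, (\ref{7-2-3}) and (\ref{7-20.5})): you both unpack $d\mu_{x,m}$ via (\ref{7-2-2}) and Lemma \ref{A} $ii)$ and then reduce to the orbit normalization, except that you invoke the intrinsic identity (\ref{fibrenv}) directly after the substitution $\eta=\xi^{-1}$, whereas the paper first splits into radial and angular parts via (\ref{7-2-3}) and applies the radial normalization (\ref{7-20.5}) — two phrasings of the same normalization, so the difference is bookkeeping, not substance.
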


\proof
(\ref{A-1}) follows from (\ref{7-2-2})$,$ $ii)$ of Lemma \ref{A}, (\ref%
{7-2-3}) and (\ref{7-20.5}).

\endproof%

We can now estimate Part II of $(H_{m,t}^{j}\circ \pi _{m})(x,x)$ as follows.

\begin{proposition}
\label{P-7-35} For any $N_{0}$ $\geq $ $n+1$ there exists $\delta =\delta
(N_{0})$ $>$ $0$ and $C_{N_{0}}$ $>$ $0$ such that for $0<t<\delta $ it
holds that%
\begin{equation}
|\text{Part II of }(H_{m,t}^{j}\circ \pi _{m})(x,x)|\leq
C_{N_{0}}l^{m}(x)t^{-(n-1)}e^{-\frac{\hat{\varepsilon}_{0}^{\prime }\hat{d}%
(x,\Sigma _{\text{sing}})^{2}}{t}}.  \label{HQDIIa}
\end{equation}%
for $x$ $\in $ $(\Sigma \backslash \Sigma _{\text{sing}})\cap \hat{W}_{j}$,
where $C_{N_{0}}$ and $\hat{\varepsilon}_{0}^{\prime }$ are independent of $%
x.$
\end{proposition}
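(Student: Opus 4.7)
\medskip

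\noindent\textbf{Proof proposal.} The plan is to expand the integrand in (\ref{7-2-1}) by means of (\ref{7-2}) and exploit three ingredients: the classical Gaussian bound on $K_{t}^{j}$ from (\ref{7-5'}), the geometric separation estimate (\ref{claim}), and the normalization identity (\ref{7-20.5}) for the fibre measure. First, I would rewrite Part II in (\ref{7-2-1}) as an iterated integral over $J\times \mathbb{R}^{+}$ in the variables $(\gamma,|\xi|)$, with $\xi=|\xi|e^{i\gamma}$, and substitute the explicit form (\ref{7-2}) of $H_{m,t}^{j}(x,\xi^{-1}x)\circ(\sigma_{\xi}^{\ast})_{\xi^{-1}x}\bar{\xi}^{m}$. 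Taking absolute values and tracking the powers of $|\xi|$ carefully, I would use Lemma \ref{A}$(i)$ together with the $\mathbb{R}^{+}$-scaling law $(\ref{1-1})$ to obtain $|w(\xi^{-1}x)|^{-m}=|\xi|^{m}|w(x)|^{-m}$, and Corollary \ref{7.5-5} to obtain $l^{m}(\xi^{-1}x)=|\xi|^{-2m}l^{m}(x)$. Combining these with $|\bar{\xi}^{m}|=|\xi|^{m}$, the three $|\xi|$-dependent factors collapse and one gets the clean pointwise bound
\begin{equation*}
|(\ref{7-2})|\leq \varphi_{j}(x)\,|K_{t}^{j}(z(x),z(\xi^{-1}x))|\,l^{m}(x)\,|\tau_{j}\sigma_{j}|.
\end{equation*}

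Second, I would insert the Gaussian estimate coming from (\ref{7-5'}): for every $N_{0}\geq n+1$ there exists $\delta=\delta(N_{0})>0$ and $C_{N_{0}}>0$ so that for $0<t<\delta$,
\begin{equation*}
|K_{t}^{j}(z,z')|\leq C_{N_{0}}\,t^{-(n-1)}\,e^{-\tilde{d}_{M}^{2}(z,z')/4t}.
\end{equation*}
The key geometric input is that for $\gamma\in J$ the point $e^{-i\gamma}x=(\tilde z,\tilde w)$ lies in $\hat{W}_{j}$, so that (\ref{claim}) gives $|z(x)-\tilde z|\geq \hat{\varepsilon}_{0}\hat{d}(x,\Sigma_{\mathrm{sing}})$; noting that $z(\xi^{-1}x)=z(e^{-i\gamma}x)=\tilde z$ (since the $\mathbb{R}^{+}$-part of the action only rescales $w$), the comparison (\ref{7-5"}) yields $\tilde{d}_{M}(z(x),z(\xi^{-1}x))\geq (\hat{\varepsilon}_{0}/C_{K})\,\hat{d}(x,\Sigma_{\mathrm{sing}})$, uniformly in $|\xi|$ and $\gamma\in J$. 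This is the step I expect to be the genuine substance of the argument, as it couples the off-diagonal smallness of the local heat kernel with the topological/metric separation between $x$ and the singular strata given by (\ref{claim}).

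Third, I would control the integration against $d\mu_{x,m}(\xi)$ via (\ref{7-2-2}) and (\ref{7-2-3}), using Lemma \ref{A}$(ii)$ to discard the volume ratio and rewriting $|\xi|^{-2m}l^{m}(x)=l^{m}(\xi^{-1}x)$ by (\ref{7-1-1}). The $|\xi|$-integral then reduces to
\begin{equation*}
\int_{\mathbb{R}^{+}} l^{m}(\xi^{-1}x)\,dv_{m}(|\xi|^{-1}|w|)=1
\end{equation*}
by (\ref{7-20.5}), with the cutoff $|\tau_{j}\sigma_{j}|\leq 1$ only making this smaller. What remains is an integral in $\gamma\in J$ of a $|\xi|$-independent expression bounded uniformly by $C_{N_{0}}\,l^{m}(x)\,t^{-(n-1)}\,e^{-\hat{\varepsilon}_{0}^{\prime}\hat{d}(x,\Sigma_{\mathrm{sing}})^{2}/t}$ with $\hat{\varepsilon}_{0}^{\prime}=\hat{\varepsilon}_{0}^{2}/(4C_{K}^{2})$. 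Since the length of $J$ is bounded by $2\pi/p$, enlarging the constant absorbs this factor, yielding (\ref{HQDIIa}).

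The main obstacle, apart from the geometric estimate already discussed, is the bookkeeping: one must verify that every $|\xi|$- and $w$-dependent factor is accounted for correctly, so that the final bound depends on $x$ only through $l^{m}(x)$ and $\hat{d}(x,\Sigma_{\mathrm{sing}})$, with no spurious factors of $|w(x)|$ or $|\xi|$ surviving. The $N_{0}$-dependence of $\delta$ and $C_{N_{0}}$ originates solely from the range of validity of the Gaussian bound on $K_{t}^{j}$; since only the leading term in (\ref{7-5'}) is used, this is just a convenient matching with the statement of Theorem \ref{AHKE}.
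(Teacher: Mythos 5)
Your proposal is correct and follows essentially the same line of argument as the paper's proof: collapse the $|\xi|$-dependent factors in (\ref{7-2}) via Lemma \ref{A}$(i)$, (\ref{7-1-1}) and $|\bar{\xi}^{m}|=|\xi|^{m}$ to obtain the pointwise bound $l^{m}(x)|K_{t}^{j}|$ (up to cutoffs), feed in the off-diagonal Gaussian estimate from (\ref{7-5'}), bound the exponent from below via (\ref{7-5"}) and (\ref{claim}), and absorb the $d\mu_{x,m}$-integral using its normalization. The only cosmetic difference is that you split $d\mu_{x,m}$ into radial and angular pieces and invoke (\ref{7-20.5}) explicitly, whereas the paper just cites Corollary~\ref{7.6-5} (i.e.\ (\ref{A-1}), $\int_{\mathbb{C}^{\ast}}d\mu_{x,m}=1$) directly; and your constant $\hat{\varepsilon}_{0}^{\prime}=\hat{\varepsilon}_{0}^{2}/(4C_{K}^{2})$ is actually the more carefully tracked one (the paper omits the $4$ from the $e^{-d^{2}/4t}$ factor, which is harmless since only positivity of $\hat{\varepsilon}_{0}^{\prime}$ is needed).
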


\begin{proof}
In view of (\ref{7-2-1}) we need to estimate a certain integral of $%
(H_{m,t}^{j}(x,\xi ^{-1}x)\circ (\sigma _{\xi }^{\ast })_{\xi ^{-1}x})\bar{%
\xi}^{m}d\mu _{x,m}(\xi ).$ First we deal with $H_{m,t}^{j}(x,\xi ^{-1}x)$.
Let both $x$ and $\xi ^{-1}x$ be in ($\Sigma \backslash \Sigma _{\text{sing}%
})$ $\cap $ $\hat{W}_{j}$ as required by the reduction of (\ref{HQDIIa}) to
the $J$-part integral in (\ref{7-2-1})$.$ By Lemma \ref{A} $i)$ and (\ref%
{7-1-1}) we obtain the modulus of the RHS of $H_{m,t}^{j}(x,\xi ^{-1}x)$ (%
\ref{6-1}) for terms except $K_{t}^{j}$ and cutoff functions: (noting that ($%
\zeta ,\eta )$ $=$ $(z,w)$ as $\xi ^{-1}x$ lies in $\hat{W}_{j}$ by
assumption$)$%
\begin{eqnarray}
&&|w^{m}(x)||w(\xi ^{-1}x)^{-m}|l(\xi ^{-1}x)^{m}  \label{7-b} \\
&=&|w^{m}(x)||\xi |^{m}|w(x)^{-m}||\xi |^{-2m}l(x)^{m}=l(x)^{m}|\xi |^{-m}. 
\notag
\end{eqnarray}

\noindent By (\ref{6-1}) and (\ref{7-b}) we can then estimate the integrand (%
\ref{7-2}) of (\ref{7-2-1}): noting that $\sigma _{\xi }^{\ast }$ leaves $%
\pi ^{\ast }\mathcal{E}_{M}$ invariant (with respect to the basis $\tilde{%
\eta}^{I_{q}}$ in Footnote$^{6}$ $\sigma _{\xi }^{\ast }$ is the identity
matrix)%
\begin{eqnarray}
&&|(H_{m,t}^{j}(x,\xi ^{-1}x)\circ (\sigma _{\xi }^{\ast })_{\xi ^{-1}x})%
\bar{\xi}^{m}d\mu _{x,m}(\xi )|  \label{7-c} \\
&\leq &C_{1}l(x)^{m}|K_{t}^{j}(z(x),z(\xi ^{-1}x))|d\mu _{x,m}(\xi ).  \notag
\end{eqnarray}

\noindent where $C_{1}$ is a constant independent of $x$ and $\xi .$ Now
from the asymptotic expansion (\ref{7-5'}) of $K_{t}^{j}$ it follows that
for $N_{0}$ $>$ $n$ $=$ $\frac{1}{2}\dim _{\mathbb{R}}V_{j}$ $+$ $1$ (cf. 
\cite[Theorem 2.23, p.81]{BGV}) there exists $\delta =\delta (N_{0})$ $>$ $0 
$ and $C_{N_{0}}^{\prime }$ $>$ $0$ such that for $0<t<\delta $ and $x,$ $%
\xi ^{-1}x$ $\in $ $\Sigma \backslash \Sigma _{\text{sing}}\cap \hat{W}_{j}$%
\begin{equation}
|K_{t}^{j}(z(x),z(\xi ^{-1}x))|\leq C_{N_{0}}^{\prime }e^{-\frac{\tilde{d}%
_{M}^{2}(z,z^{\prime })}{4t}}t^{-(n-1)}  \label{7-e}
\end{equation}

\noindent where we have written $z$ $=$ $z(x)$, $z^{\prime }$ $=$ $z(\xi
^{-1}x)$ and $C_{N_{0}}^{\prime }$ is independent of $x$ and $\xi $ (such
that $\xi ^{-1}x$ $\in $ $\hat{W}_{j}).$ By $(\ref{7-5"})$ and $(\ref{claim}%
) $ we obtain%
\begin{equation}
e^{-\frac{\tilde{d}_{M}^{2}(z,z^{\prime })}{4t}}\leq e^{-\frac{\hat{%
\varepsilon}_{0}^{\prime }\hat{d}(x,\Sigma _{\text{sing}})^{2}}{t}}
\label{7-a}
\end{equation}

\noindent with $\hat{\varepsilon}_{0}^{\prime }=\hat{\varepsilon}%
_{0}^{2}/C_{K}^{2}.$ Using (\ref{7-2-1}), (\ref{HQDIIa}) follows from (\ref%
{7-c}), (\ref{7-e}), (\ref{7-a}) and (\ref{A-1}) with $C_{N_{0}}$ $=$ $%
C_{1}C_{N_{0}}^{\prime }.$
\end{proof}

Having just worked out (\ref{HQD}), we can now prove Theorem \ref{AHKE}
stated in the Introduction. Before going on, we pause to give a proof of (%
\ref{6-19.5}) as an interlude since we have now learned many properties
about $d\mu _{x,m}(\xi ).$ The following proof uses $\alpha _{k}$
constructed in the proof of Lemma \ref{l-bdp} in an essential way, but the
reader may skip this proof and come back to it in due course.

\proof%
\textbf{\ (of (\ref{6-19.5}) in Proposition \ref{l-6-0a}) }Substituting (\ref%
{7-2-2}) (with $x$ replaced by $y)$ into (\ref{6-1f}) and using (\ref{7-2-3}%
), (\ref{6-1g}) one is able to obtain $p_{m,t}^{0,j}(x,y)$ in (\ref{6-19.5})
as follows:%
\begin{eqnarray}
&&\text{ \ \ \ \ \ \ \ \ \ \ \ \ \ \ }p_{m,t}^{0,j}(x,y)\overset{(\ref{7-1-1}%
)}{=}\int_{\xi \in \mathbb{C}^{\ast }}\bigl\{\varphi
_{j}(x)K_{t}^{j}(z(x),\zeta (\xi ^{-1}y))  \label{kmt} \\
&&\text{ \ \ \ \ \ \ \ \ \ \ \ \ \ \ \ \ \ \ \ \ }\frac{\bar{\eta}^{m}(\xi
^{-1}y)}{\bar{\eta}^{m}(y)}\bar{\xi}^{m}h^{m}(\zeta (\xi ^{-1}y),\bar{\zeta}%
(\xi ^{-1}y))\tau _{j}(\zeta (\xi ^{-1}y))\sigma _{j}(\vartheta (\xi ^{-1}y))
\notag \\
&&\text{ \ \ \ \ \ \ \ \ \ \ \ \ \ \ \ \ \ \ \ \ \ \ \ \ }l^{m}(|\xi |^{-1}y)%
\bigr\}(\sigma _{\xi }^{\ast })_{\xi ^{-1}y}dv_{m}(|\xi |^{-1}|\eta
(y)|)\wedge \frac{dv(-\gamma )}{2\pi }\frac{dv_{M}(\zeta (\xi ^{-1}y))}{%
dv_{M}(\zeta (y))};  \notag
\end{eqnarray}

\noindent for the above noting that one replaces $l^{m}(x)$ in (\ref{7-2-2})
by 
\begin{equation*}
l^{m}(y)=h^{m}(\zeta (\xi ^{-1}y),\bar{\zeta}(\xi ^{-1}y))\eta ^{m}(y)\bar{%
\eta}^{m}(y)
\end{equation*}
\noindent using Lemma \ref{A} $iv).$\ Observe that in (\ref{kmt})

\begin{eqnarray}
&&\bar{\eta}^{m}(\xi ^{-1}y)h^{m}(\zeta (\xi ^{-1}y),\bar{\zeta}(\xi
^{-1}y))/\bar{\eta}^{m}(y)\overset{l=h\eta \bar{\eta}}{=}\frac{l^{m}(\xi
^{-1}y)}{\eta ^{m}(\xi ^{-1}y)\bar{\eta}^{m}(y)}  \label{kmt1} \\
&\overset{(\ref{1-2.75})+(\ref{7-1-1})}{=}&\frac{|\xi |^{-2m}l^{m}(y)}{|\xi
|^{-m}\eta ^{m}(e^{-i\gamma }y)\bar{\eta}^{m}(y)}=\frac{1}{\eta
^{m}(e^{-i\gamma }y)\bar{\eta}^{m}(y)}|\xi |^{-m}l^{m}(y)  \notag \\
&=&(\frac{\eta ^{m}(y)}{\eta ^{m}(e^{-i\gamma }y)}\frac{h^{m}(\zeta ,\bar{%
\zeta})}{l^{m}(y)})|\xi |^{-m}l^{m}(y)=h^{m}(\zeta ,\bar{\zeta})|\xi |^{-m}%
\frac{\eta ^{m}(y)}{\eta ^{m}(e^{-i\gamma }y)}.  \notag
\end{eqnarray}

\noindent Here $y$ is omitted in $\zeta $ $=$ $\zeta (y).$ By (\ref{kmt1}),
Lemma \ref{A} $ii)$ we reduce (\ref{kmt}) to (recalling $\xi =|\xi
|e^{i\gamma },$ $\eta (y)=|\eta |(y)e^{i\vartheta (y)})$ 
\begin{eqnarray}
&&\ \ \ \ \ \ p_{m,t}^{0,j}(x,y)=\varphi _{j}(x)h^{m}(\zeta (y),\bar{\zeta}%
(y))\int_{\xi \in \mathbb{C}^{\ast }}\bigl\{K_{t}^{j}(z(x),\zeta (\xi
^{-1}y))|\xi |^{-m}\bar{\xi}^{m}  \label{7K-20} \\
&&\frac{\eta ^{m}(y)}{\eta ^{m}(e^{-i\gamma }y)}\tau _{j}(\zeta (\xi
^{-1}y))\sigma _{j}(\vartheta (\xi ^{-1}y))l^{m}(|\xi |^{-1}y)\bigr\}(\sigma
_{\xi }^{\ast })_{\xi ^{-1}y}dv_{m}(|\xi |^{-1}|\eta (y)|)\frac{dv(-\gamma )%
}{2\pi }.  \notag
\end{eqnarray}

\noindent Here one trouble is $\eta (e^{-i\gamma }y)$ because for $\xi $ $%
\in $ $\mathbb{C}^{\ast }$ its angle $\gamma $ may be \textquotedblleft large%
$";$ one runs into the large angle action. To figure out $\eta (e^{-i\gamma
}y)$ we apply the construction of $\alpha _{k}$ in the proof of Lemma \ref%
{l-bdp} for coordinates ($\zeta ,\eta )$ of $y$ (replacing coordinates $%
(z,w) $ of $x$ there) so that $\vartheta (\alpha _{k}y)$ $=$ $0,$ $\alpha
_{0}=e^{-i\vartheta (y)},$ 
\begin{equation}
\zeta _{k}:=\zeta (\alpha _{k}y),  \label{7-38-1}
\end{equation}
\ \noindent $J_{k}$ $:=$ $\{\xi \in $ $\mathbb{C}^{\ast }:$ $-\varepsilon
_{j}<$ $\arg (\xi \alpha _{k}^{-1})$ $<\varepsilon _{j}\}$ ($\varepsilon
_{j} $ as in the local chart $D_{j}:=$ $V_{j}\times $ $(-\varepsilon
_{j},\varepsilon _{j})$ $\times \mathbb{R}^{+}).$ Similar to (\ref{7K-5a})
we now have%
\begin{equation}
\eta (\xi y)=\xi \alpha _{k}^{-1}\alpha _{0}\eta (y)\text{ for }\xi \in J_{k}
\label{7.38-1}
\end{equation}

\noindent where $k=$ $0,1,\cdot \cdot \cdot ,$ $\Lambda $ (some nonnegative
integer). Using the above formula gives%
\begin{equation}
\frac{\eta ^{m}(y)}{\eta ^{m}(e^{-i\gamma }y)}=e^{im\gamma }(\alpha
_{k}\alpha _{0}^{-1})^{m}\text{ for }e^{-i\gamma }\in J_{k}.  \label{7K-21}
\end{equation}

\noindent Two more formulas for use: For $\xi ^{-1}\in J_{k}$ we have in the
notation above (with Case $i)$ after (\ref{6-1g}) applied to $J_{k})$%
\begin{equation}
\zeta (\xi ^{-1}y)=\zeta ((\xi ^{-1}\alpha _{k}^{-1})\alpha _{k}y)=\zeta
(\alpha _{k}y)=\zeta _{k}\text{.}  \label{7K-22}
\end{equation}

\noindent Writing $\alpha _{k}=e^{-i\gamma _{k}}$ and $\xi =|\xi |e^{i\gamma
}$ we have$,$ for $\xi ^{-1}\in J_{k}$%
\begin{eqnarray}
\vartheta (\xi ^{-1}y) &=&\vartheta ((\xi ^{-1}\alpha _{k}^{-1})\alpha
_{k}y)=\vartheta (\xi ^{-1}\alpha _{k}^{-1})+\vartheta (\alpha _{k}y)
\label{7K-23} \\
&=&\vartheta (\xi ^{-1}\alpha _{k}^{-1})=-\gamma +\gamma _{k}\text{.}  \notag
\end{eqnarray}

\noindent Here $-\varepsilon _{j}<-\gamma +\gamma _{k}<\varepsilon _{j}$
since $\xi ^{-1}$ $\in $ $J_{k}.$ Namely (\ref{7K-22}) and (\ref{7K-23}) are
part of the coordinates of $\xi ^{-1}y.$

We are ready to substitute (\ref{7K-22}), $|\xi |^{-m}\bar{\xi}%
^{m}=e^{-im\gamma },$ (\ref{7K-21}) and (\ref{7K-23}) into (\ref{7K-20}),
giving (via the cut-off functions reducing it to a summation over smaller
regions of integration)

\begin{eqnarray}
&&p_{m,t}^{0,j}(x,y)=\varphi _{j}(x)h^{m}(\zeta (y),\bar{\zeta}%
(y))\sum_{k=0}^{\Lambda }\{K_{t}^{j}(z(x),\zeta _{k})\tau _{j}(\zeta
_{k})(\alpha _{k}\alpha _{0}^{-1})^{m}\}  \label{7K-24} \\
&&\circ (\sigma _{\alpha _{k}^{-1}}^{\ast })_{\alpha _{k}y}\cdot
\int_{\gamma =\gamma _{k}+\varepsilon _{j}}^{\gamma =\gamma _{k}-\varepsilon
_{j}}\sigma _{j}(\gamma _{k}-\gamma )\frac{dv(-\gamma )}{2\pi }\int_{|\xi
|\in \mathbb{R}^{+}}l^{m}(|\xi |^{-1}y)dv_{m}(|\xi |^{-1}y)  \notag
\end{eqnarray}

\noindent where $(\sigma _{\alpha _{k}^{-1}}^{\ast })_{\alpha _{k}y}$ is
from the fact that $(\sigma _{\xi }^{\ast })_{\xi ^{-1}y}$ is the map $\xi
^{-1}y$ $\rightarrow $ $y$ that pulls back a section at $y$ to one at $\xi
^{-1}y$ so that the resulting section over $\{\xi ^{-1}y\}_{\xi \in \mathbb{C%
}^{\ast }}$ behaves as a \textquotedblleft constant section" (in a piecewise
sense due to the large angle action). By $\bar{\gamma}=\gamma _{k}-\gamma ,$
the angular integral above gives $\int_{-\varepsilon _{j}}^{\varepsilon
_{j}}\sigma _{j}(\bar{\gamma})\frac{dv(\bar{\gamma})}{2\pi }=1$ by $(\ref%
{6-1a}),$ while by (\ref{7-20.5}) the last integral in (\ref{7K-24}) equals 1%
\footnote{%
It might seem that the computation here involves confusing sign issues. Let
us note that the top-forms involved are positive (see also (\ref{7-2-3}) and
the change of variable (if any) switches $\xi $ to $\xi ^{-1}$ (compare (\ref%
{7K-24})) which remains orientation preserving. The overall plus-sign is
thus obtained.}. Thus (\ref{7K-24}) is reduced to%
\begin{equation}
p_{m,t}^{0,j}(x,y)=\varphi _{j}(x)h^{m}(\zeta (y),\bar{\zeta}%
(y))\sum_{k=0}^{\Lambda }\{K_{t}^{j}(z(x),\zeta _{k})\tau _{j}(\zeta
_{k})(\alpha _{k}\alpha _{0}^{-1})^{m}\}\circ (\sigma _{\alpha
_{k}^{-1}}^{\ast })_{\alpha _{k}y}.  \label{7K-25}
\end{equation}

\noindent Note that $\zeta _{k}$ and $\alpha _{k}$ in (\ref{7K-25}) depend
on $y$ and that for the $x$-part $p_{m,t}^{0,j}(x,y)$ is independent of $%
|w|(x).$ Moreover, differentiating $\vartheta (\alpha y)$ in $\alpha $ $\in $
$S^{1}$ $(\subset \mathbb{C}^{\ast })$ is not zero by the locally freeness.
From this it follows that $\alpha _{k}$ defined by $\vartheta (\alpha _{k}y)$
$=$ $0$ is smooth in $y$ by the implicit function theorem. Therefore $\zeta
_{k}$ $=$ $\zeta (\alpha _{k}y)$ is also smooth in $y.$ Clearly $\alpha _{k}$
does not depend on the variable $|\eta |$ (the radial part of $y),$ neither
does $\zeta _{k}$ since by (\ref{7K-22}) $\zeta _{k}(\lambda y)$ $=$ $\zeta
(\alpha _{k}(\lambda y)\lambda y)$ $=$ $\zeta (\alpha _{k}(y)\lambda y)$ $=$ 
$\zeta (\alpha _{k}(y)y)$ $=$ $\zeta _{k}(y)$ for any $\lambda \in \mathbb{R}%
^{+}.$ Since $\mathbb{R}^{+}$ is the only noncompact direction, it follows
from the above independence that $\zeta _{k}$ and $\alpha _{k}$ are $%
C_{B}^{s}$-bounded. We conclude that $p_{m,t}^{0,j}$ is smooth in $x,y$ in
view that $\zeta _{k}$ and $\alpha _{k}$ are smooth in $y,$ and $C_{B}^{s}$%
-bounded since $\zeta _{k}$ and $\alpha _{k}$ are $C_{B}^{s}$-bounded (cf. (%
\ref{CBs})).

\endproof%

\begin{remark}
\label{7-8-5} In the above proof suppose that the action $\sigma $ is
globally free everywhere on the local chart $D_{j}$. Then $\Lambda =0,$ $%
\zeta _{0}$ $=$ $\zeta (\alpha _{0}y)$ $=$ $\zeta (y)$ and we have $%
p_{m,t}^{0,j}(x,y)$ $=$ $\varphi _{j}(x)$ $h^{m}(\zeta (y),\bar{\zeta}(y))$ $%
K_{t}^{j}(z(x),\zeta (y))$ $\tau _{j}(\zeta (y))$ which depends only on $%
z(x) $ and $\zeta (y)$ (except the cutoff function $\varphi _{j}(x)).$
\end{remark}

\proof
\textbf{(of Theorem \ref{AHKE})} The assertion $i)$ of the theorem follows
from Proposition \ref{p-existence} and $i)$ of Theorem \ref{t-uniqueness}$.$
To prove the formula (\ref{AHKE1}) for $ii)$ of the theorem, we reduce the
estimate to that of $P_{m,t}^{0}(x,x)$ by (\ref{approx-2}). Let us estimate $%
P_{m,t}^{0}(x,x)$ which is essentially (\ref{HQD}). First suppose the
simplest situation $p$ ($=$ $p_{1})$ $=$ $1.$ By (\ref{HQDIIa}) and (\ref%
{HQDI-1}), (\ref{HQDI-2}) (using the meaning of \textquotedblleft $\sim ")$
for every $N_{0}\geq N_{0}(n)$ (\cite[p.81]{BGV}) there exist constants $%
C_{N_{0}},$ $\delta =\delta (N_{0})$ $>$ $0$ such that 
\begin{eqnarray}
&&|P_{m,t}^{0}(x,x)-\sum_{j=0}^{N_{0}}t^{-(n-1)+j}b_{n-1-j}(z(x))l^{m}(x)|
\label{K0D} \\
&\leq &C_{N_{0}}l^{m}(x)(t^{-(n-1)+N_{0}+1}+t^{-(n-1)}e^{-\frac{\hat{%
\varepsilon}_{0}\hat{d}(x,\Sigma _{\text{sing}})^{2}}{t}}),\text{ }0<t<\delta
\notag
\end{eqnarray}

\noindent for some constant $\hat{\varepsilon}_{0}>0$ (independent of $N_{0}$
and $x).$ Here we may take $N_{0}(n)$ to be $[\dim _{R}(\Sigma /\sigma )/2$ $%
+1]+1$ ($=$ $\frac{2(n-1)}{2}+2)$ $=$ $n+1.$ We have proved (\ref{AHKE1})
for $p=1$. We remark that (\ref{K0D}) has an analogue for CR manifolds with $%
S^{1}$-action (cf. \cite[(6.2) in p.92]{CHT}).

Now suppose $p$ $>$ $1.$ Then, the angular sectors in $[0,2\pi ]$ over which
the integrals correspond to the two types (\ref{HQDI}) and (\ref{7-2-1})
denoted as $a)$ and $b)$, have the extra $p-1$ pairs of sectors and are
given respectively by%
\begin{equation}
a^{\prime })\text{ }[(s-1)\frac{2\pi }{p}-\varepsilon _{j},(s-1)\frac{2\pi }{%
p}+\varepsilon _{j}]\text{ and }b^{\prime })\text{ }[(s-1)\frac{2\pi }{p}%
+\varepsilon _{j},s\frac{2\pi }{p}-\varepsilon _{j}],  \label{sectors}
\end{equation}

\noindent where $s=1,...,p$ ($s=p+1$ identified with $s=1)$. (The sectors in
(\ref{sectors}) are obtained by successively shifting the first pair of
sectors $s=1$ by a common amount $\frac{2\pi }{p};$the union of these pairs
gives $[0,2\pi ].)$

To evaluate the two types (\ref{sectors}) of integrals, a linear change of
variable for the angular part $\gamma $ brings the intervals of the
integration on these sectors (\ref{sectors}) back to those in (\ref{HQDI})
and (\ref{7-2-1}) with the extra multiplicative factor $\sum_{s=1}^{p}e^{%
\frac{2\pi (s-1)}{p}mi}.$ This number equals $p$ if $p\mid m$ and $0$ if $%
p\nmid m,$ which amounts to $p\delta _{p|m}.$ This concludes (\ref{AHKE1})
proving the assertion $ii)$. The assertion $iii)$ of the theorem follows
from Theorem \ref{p-asymp}.

\endproof%

\begin{remark}
\label{7-36.5} To generalize the $C^{0}$ estimate here to the $C^{l}$ ($%
C_{B}^{l}$ more precisely) estimate presents no serious problem. We skip the
details, and content ourselves with referring to \cite[Corollary 6.3]{CHT}
for a closely related treatment.
\end{remark}

One sees that the RHS of (\ref{K0D}) (for general $p$) blows up as $%
t\rightarrow 0$ and $x$ $\rightarrow $ $\Sigma _{\text{sing}}$ at various
speeds (due to $t^{-(n-1)}$ in the second term). Let $b_{s}^{\pm }(z,\zeta )$
be coefficients in the asymptotic expansion of $K_{t}^{j,\pm }(z,\zeta )$ ($%
\pm $ means acting on even/odd degree of elements as usual) (cf. (\ref{7-5'}%
)):%
\begin{eqnarray}
K_{t}^{j,\pm }(z,\zeta ) &=&e^{-\frac{\tilde{d}_{M}^{2}(z,\zeta )}{4t}%
}K^{j,\pm }(t,z,\zeta )  \label{KjkE} \\
K^{j,\pm }(t,z,\zeta ) &\sim &t^{-n+1}b_{n-1}^{\pm }(z,\zeta
)+t^{-n+2}b_{n-2}^{\pm }(z,\zeta )+\text{ }{\tiny \cdot \cdot \cdot }. 
\notag
\end{eqnarray}

\begin{remark}
\label{7-10-1} Note that the notion of the above asymptotic expansion (\ref%
{KjkE}) (see \cite[(5.19) on p.76]{CHT}) is different from the one in \cite[%
p.87]{BGV} in which the meaning of $\sim $ is given in such a way that it
includes the Gaussian term $e^{-\frac{\tilde{d}_{M}^{2}(z,\zeta )}{4t}};$
our above meaning of $\sim ,$ excluding the Gaussian term, is basically
equivalent to the one in Chavel's book \cite[(45) on p.154]{Chavel}.
\end{remark}

Let $\psi _{j}^{-1}:D_{j}$ $\subset $ $\Sigma $ $\rightarrow $ $W_{j}$ $%
=V_{j}\times (-\varepsilon _{j},\varepsilon _{j})\times \mathbb{R}^{+}$
denote a local trivialization (cf. the line below (\ref{3.36-25})).

\begin{notation}
\label{n-8-1}

\noindent $i)$ Let $\mathcal{\tilde{E}}^{m}$ := $\pi ^{\ast }\mathcal{E}_{M}$
$\otimes E\otimes (L_{\Sigma }^{\ast })^{\otimes m}$ and $\mathcal{E}^{m}$
:= $\psi _{j}^{\ast }(\mathcal{\tilde{E}}^{m})|_{V_{j}\times \{0\}\times
\{1\}}$ $=$ $\psi _{j}^{\ast }(\pi ^{\ast }\mathcal{E}_{M}$ $\otimes
E\otimes (L_{\Sigma }^{\ast })^{\otimes m})|_{V_{j}\times \{0\}\times \{1\}}$
be a complex vector bundle over $V_{j},$ where $\pi :\Sigma \rightarrow
M=\Sigma /\sigma $ denotes the natural projection, $\mathcal{E}_{M}$ denotes
the (orbifold) bundle of all $(0,q)$-forms on $M$, $E$ denotes a $\mathbb{C}%
^{\ast }$-equivariant holomorphic vector bundle over $\Sigma ,$ equipped
with a $\mathbb{C}^{\ast }$-invariant Hermitian metric $h_{E}$ (constructed
similarly as for $L_{\Sigma }$ in Step 1 of Section \ref{S-metric}), and $%
L_{\Sigma }$ is defined before (\ref{3-0}). Let $e^{E}$ denote a \textit{%
locally} $\mathbb{C}^{\ast }$-invariant section of $E$ over $D_{j}$ $\subset 
$ $\Sigma $ (meaning that it is invariant under $\mathbb{R}^{+}$ and
small-angle action)$.$ Similarly let $\mathcal{E}^{m\pm }$ denote the
even/odd part $\psi _{j}^{\ast }(\pi ^{\ast }\mathcal{E}_{M}^{\pm }$ $%
\otimes E\otimes (L_{\Sigma }^{\ast })^{\otimes m})$ of $\mathcal{E}^{m}$.
Although $\mathcal{E}_{M}$ is only an orbifold bundle, the \textquotedblleft
pullback $\pi ^{\ast }\mathcal{E}_{M}"$ having local sections of the form $%
f_{I_{q}}(z,\bar{z})d\bar{z}^{I_{q}}$ can be identified as a vector bundle.
Recall that the metrics for $\pi ^{\ast }\mathcal{E}_{M}$ and $L_{\Sigma }$
are $\pi ^{\ast }g_{M}$ ($=G_{a,m}|_{\pi ^{\ast }\mathcal{E}_{M}},$ see
Lemma \ref{L-inv} $i))$ and $<\cdot ,\cdot >_{L_{\Sigma }}$ (see lines above
(\ref{lq0})) respectively.

$ii)$ Let $\Psi _{\pm ,m}$ denote $\Psi _{q,m}$ in (\ref{Psi_qm}) for $q$
even/odd that identify bundle elements with $m$-space elements (with an
extra bundle $\psi _{j}^{\ast }E|_{V_{j}\times \{0\}\times \{1\}}$; $U_{j},$ 
$\psi _{j}^{-1}(D_{j})$ there taken to be $V_{j},$ $W_{j}$ in Notation \ref%
{n-6.1}). Namely 
\begin{equation}
\Psi _{\pm ,m}:\Omega ^{0,\pm }(V_{j},(\psi _{j}^{\ast }(E\otimes (L_{\Sigma
}^{\ast })^{\otimes m})|_{V_{j}\times \{0\}\times \{1\}})\rightarrow \Omega
_{m,loc}^{0,\pm }(D_{j},E)  \label{PsiE}
\end{equation}%
is defined by%
\begin{equation}
\Psi _{\pm ,m}(s(z,\bar{z})\psi _{j}^{\ast }(e^{E}\otimes (e_{w}^{\ast
})^{\otimes m})|_{V_{j}\times \{0\}\times \{1\}}))=s(z,\bar{z})w^{m}e^{E}.
\label{PsiE-a}
\end{equation}
\end{notation}

Let us now be specific about the various metrics: We define the metric $||$ $%
\cdot $ $||_{\mathcal{E}^{m}}$ (hence $<\cdot ,\cdot >_{\mathcal{E}^{m}}$)
at $\mathcal{E}^{m}|_{(z,0,1)}$ by%
\begin{eqnarray}
&&||\psi _{j}^{\ast }(\pi ^{\ast }\eta ^{I_{q}}\otimes e^{E}\otimes
(e_{w}^{\ast })^{\otimes m})|_{_{(z,0,1)}}||_{\mathcal{E}^{m}}^{2}
\label{Emm} \\
&{:=}&||\eta ^{I_{q}}||_{g_{M}}^{2}||e^{E}||_{h_{E}}^{2}h(z,\bar{z})^{-m} 
\notag
\end{eqnarray}%
where the notation $\eta ^{I_{q}}$ is as in Footnote$^{6}$ (the line above (%
\ref{proj1})) and $e_{w}$ is $\partial /\partial w$ in local coordinates $%
(z,w)$ $\in $ $W_{j}$ with $||e_{w}||^{2}$ $=$ $h(z,\bar{z})$ (see (\ref{lq}%
)). For the $m$-space bundle $\Lambda _{m,loc}^{0,\ast }(D_{j})$ whose
sections are just $\Omega _{m,loc}^{0,\ast }(D_{j})$ (Definition \ref%
{d-6.8-5}) we define the metric $||$ $\cdot $ $||_{\Lambda _{m}^{0,\ast }}$
(hence $<\cdot ,\cdot >_{\Lambda _{m}^{0,\ast }})$ at $q$ $=$ $\psi
_{j}((z,w))$ $\in $ $D_{j}$ by%
\begin{equation}
||\psi _{j}^{\ast }(\pi ^{\ast }\eta ^{I_{q}}\otimes e^{E})w^{m}||_{\Lambda
_{m}^{0,\ast }}^{2}:=||\eta
^{I_{q}}||_{g_{M}}^{2}||e^{E}||_{h_{E}}^{2}|w|^{2m}.  \label{msm}
\end{equation}

\noindent We define the $L^{2}$-inner product $(\cdot ,\cdot )_{\mathcal{E}%
^{m}}$ (resp. $(\cdot ,\cdot )_{\Lambda _{m}^{0,\ast }}$) on sections of $%
\mathcal{E}^{m}$ (resp. $\Lambda _{m,loc}^{0,\ast }(D_{j}))$ by integrating
the fibrewise inner product $<\cdot ,\cdot >_{\mathcal{E}^{m}}$ (resp. $%
<\cdot ,\cdot >_{\Lambda _{m}^{0,\ast }})$ over $V_{j}$ (resp. $W_{j}).$

\begin{lemma}
\label{L-7iso} With the notation above, it holds that $\Psi _{\pm ,m}$
preserves $L^{2}$-inner product up to a constant, i.e.%
\begin{equation}
(\Psi _{\pm ,m}(s),\Psi _{\pm ,m}(t))_{\Lambda _{m}^{0,\ast }}=\frac{%
\varepsilon _{j}}{\pi }(s,t)_{\mathcal{E}^{m}}  \label{7iso}
\end{equation}%
for sections $s,$ $t$ of $\mathcal{E}^{m}.$
\end{lemma}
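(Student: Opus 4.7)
The strategy is to reduce the identity to a purely fibrewise normalization that has essentially been computed in Section 3 (see (\ref{3-43.5})). By sesquilinearity and a partition-of-basis argument, I would reduce to checking the formula on elementary sections of the form $s=s(z,\bar z)\,\psi_j^{*}(\pi^{*}\eta^{I_q}\otimes e^{E}\otimes (e_{w}^{*})^{\otimes m})$, and similarly for $t$, where $\eta^{I_q}$ denotes the local basis of $(0,q)$-forms on $M$ used in Footnote\textsuperscript{6} and $e^{E}$ is a locally $\mathbb{C}^{*}$-invariant frame of $E$. Writing $s=s(z,\bar z)\mathbf{e}$ and $t=t(z,\bar z)\mathbf{e}'$ for brevity, the definition (\ref{PsiE-a}) of $\Psi_{\pm,m}$ strips off the $(e_{w}^{*})^{\otimes m}$ and multiplies by $w^{m}$.

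The next step is a pointwise comparison at $q=\psi_j((z,w))\in D_j$ between the two fibrewise metrics. From (\ref{Emm}) one has $\langle s,t\rangle_{\mathcal{E}^{m}}(z)=\langle s(z,\bar z),t(z,\bar z)\rangle_{g_M\otimes h_E}\,h(z,\bar z)^{-m}$, whereas from (\ref{msm}) one has $\langle \Psi_{\pm,m}(s),\Psi_{\pm,m}(t)\rangle_{\Lambda_m^{0,*}}(q)=\langle s(z,\bar z),t(z,\bar z)\rangle_{g_M\otimes h_E}\,|w|^{2m}$. Comparing and using $l(q)=h(z,\bar z)|w|^2$ from (\ref{lq}), I obtain the key pointwise identity
\begin{equation*}
\langle \Psi_{\pm,m}(s),\Psi_{\pm,m}(t)\rangle_{\Lambda_m^{0,*}}(q)=\langle s,t\rangle_{\mathcal{E}^{m}}(z)\cdot l(q)^{m}.
\end{equation*}

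Now I would integrate this identity over $W_j=V_j\times(-\varepsilon_j,\varepsilon_j)\times\mathbb{R}^{+}$ against $dv_{\Sigma,m}$. By (\ref{volume}), $dv_{\Sigma,m}=\pi^{*}dv_M\wedge dv_{f,m}$, and by (\ref{fibrenv_0}), $l^{m}dv_{f,m}=d\hat v_m$. Since $\langle s,t\rangle_{\mathcal{E}^{m}}$ depends only on $z\in V_j$, Fubini's theorem gives
\begin{equation*}
(\Psi_{\pm,m}(s),\Psi_{\pm,m}(t))_{\Lambda_m^{0,*}}=\int_{V_j}\langle s,t\rangle_{\mathcal{E}^{m}}(z)\,dv_M(z)\cdot\int_{C_{\varepsilon_j}}d\hat v_m,
\end{equation*}
where $C_{\varepsilon_j}=(-\varepsilon_j,\varepsilon_j)\times\mathbb{R}^{+}$. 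The fibre integral is exactly the one already evaluated in (\ref{3-43.5}): using the normalization (\ref{3-19.5}) in the $|w|$-direction and the angular measure $dv(\phi)/(2\pi)$ over $(-\varepsilon_j,\varepsilon_j)$ yields $\varepsilon_j/\pi$. Combining these steps gives the claimed formula (\ref{7iso}).

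The only non-routine point is the intrinsic character of the pointwise identity: one must check that the factor $l(q)^{m}$ appearing in the pointwise comparison, and hence the resulting integral formula, does not depend on the choice of local holomorphic coordinate $w$ (cf.\ Remark \ref{R-h}) nor on the frame $e^{E}$. This is immediate once one notes that both sides of (\ref{7iso}) are invariantly defined, and that the $h^{-m}$ in (\ref{Emm}) is precisely what matches the transformation law of $(e_w^{*})^{\otimes m}$ under $w\mapsto\tilde w=w\varphi$ from (\ref{C0}); no further obstacles arise.
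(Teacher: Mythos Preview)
Your proof is correct and follows essentially the same route as the paper's: both reduce the identity to the fibre normalization $\int_{C_{\varepsilon_j}} l^{m}\,dv_{f,m}=\varepsilon_j/\pi$ from (\ref{3-43.5})/(\ref{3-19.5}). The only cosmetic difference is that you absorb the factor $h^{-m}$ into the pointwise identity $\langle\Psi s,\Psi t\rangle_{\Lambda_m^{0,*}}=\langle s,t\rangle_{\mathcal{E}^m}\cdot l^{m}$ before integrating, whereas the paper integrates $|w|^{2m}$ against $dv_{f,m}$ and extracts $h^{-m}$ afterward via (\ref{iso-a}); the underlying computation is identical.
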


\begin{proof}
We observe that $h(z,\bar{z})^{-m}$ in (\ref{Emm}) and $|w|^{2m}$ in (\ref%
{msm}) are related in the following fibre integration: (writing $dv_{f,m}$ $%
= $ $l(q)^{-m}d\hat{v}_{m}(q)$ (\ref{fibrenv_0}); omitting the pullback $%
\tau _{p_{0}}^{\ast }$)%
\begin{eqnarray}
\int_{(-\varepsilon _{j},\varepsilon _{j})\times \mathbb{R}%
^{+}}|w|^{2m}dv_{f,m} &=&\int_{(-\varepsilon _{j},\varepsilon _{j})\times 
\mathbb{R}^{+}}|w|^{2m}l(q)^{-m}d\hat{v}_{m}(q)  \label{iso-a} \\
&&\overset{(\ref{lq})}{=}h(z,\bar{z})^{-m}\int_{(-\varepsilon
_{j},\varepsilon _{j})\times \mathbb{R}^{+}}d\hat{v}_{m}(q)  \notag \\
&&\overset{(\ref{3.29-5})+(\ref{3-19.5})}{=}h(z,\bar{z})^{-m}\frac{%
2\varepsilon _{j}}{2\pi }.  \notag
\end{eqnarray}

\noindent In view of (\ref{msm}), (\ref{Emm}) and $||e^{E}||_{h_{E}}^{2}$
being invariant under the action of $(-\varepsilon _{j},\varepsilon
_{j})\times \mathbb{R}^{+}$, (\ref{7iso}) follows easily from (\ref{iso-a})
and the relation of the measures as given in (\ref{hfv}).
\end{proof}

To proceed further, let us set up some more notation. In (\ref{7K-25}) the
expression given by $K_{t}^{j,\pm }(z(x),z(\xi ^{-1}x))$ \textquotedblleft
composed" with $\sigma (\xi )_{\xi ^{-1}x}^{\ast }$ can be interpreted as
(see the line above (\ref{6-1}))%
\begin{eqnarray}
&&(\Psi _{\pm ,m}\circ K_{t}^{j,\pm }\circ \Psi _{\pm ,m}^{-1})\circ \sigma
(\xi )_{\xi ^{-1}x}^{\ast }  \label{7end} \\
&=&\Psi _{\pm ,m}\circ (K_{t}^{j,\pm }\circ \gamma _{\xi ^{-1}}^{\mathcal{E}%
^{m\pm }})\circ \Psi _{\pm ,m}^{-1}  \notag
\end{eqnarray}

\noindent where we have written (see Notation \ref{n-8-1} for $\mathcal{E}%
^{m\pm })$ 
\begin{equation}
\gamma _{\xi ^{-1}}^{\mathcal{E}^{m\pm }}:=\Psi _{\pm ,m}^{-1}\circ \sigma
(\xi )_{\xi ^{-1}x}^{\ast }\circ \Psi _{\pm ,m}:\mathcal{E}^{m\pm
}|_{(z(x),0,1)}\rightarrow \mathcal{E}^{m\pm }|_{(z(\xi ^{-1}x),0,1)}
\label{rE}
\end{equation}%
\noindent in order to be consistent with the notation used in \cite{BGV} and
Section \ref{LmIF}. In later use of (\ref{rE}) (cf. lines below (\ref{7-46-1}%
)) we take $\xi $ $=$ $\alpha _{k}$ $\in $ $S^{1}$ such that $\alpha
_{k}^{-1}x$ $\in $ $V_{j}\times \{0\}\times \{1\}$ for $x$ $\in $ $%
V_{j}\times \{0\}\times \{1\}$ (compare (\ref{7K-5a}) and (\ref{7-38-1}) for
a similar notation)$.$ For $e^{E}$ $\in $ $\psi _{j}^{\ast }E$ in Notation %
\ref{n-8-1} $i),$ $\sigma (\xi )_{\xi ^{-1}x}^{\ast }$ hence $\gamma _{\xi
^{-1}}^{\mathcal{E}^{m\pm }}$ sends it to $\sigma ^{E}(\xi ^{-1})\circ
e_{x}^{E}$ $\in $ $\psi _{j}^{\ast }E|_{\xi ^{-1}x}$ where $\sigma ^{E}$ is
a lifted action on the bundle $\psi _{j}^{\ast }E.$ Note that $\sigma
^{E}(\xi ^{-1})\circ e_{x}^{E}$ may not be $e_{\xi ^{-1}x}^{E}$ for $\xi
^{-1}x$ $\in $ $D_{j},$ especially when $x$ $\in $ $\Sigma _{\text{sing}}$
and $\xi $ gives a large angle action as in Case $ii)$ after (\ref{6-1g}),
but $||\sigma ^{E}(\xi ^{-1})\circ e_{x}^{E}||_{h_{E}}$ $=$ $%
||e_{x}^{E}||_{h_{E}}$ remains true. Writing $z=z(x),$ $\zeta =z(\xi ^{-1}x)$
and using \textquotedblleft $\dagger "$ to denote the adjoint (to
distinguish it from the pullback notation), for $|\xi |$ $=$ $1$ we have, by 
$\gamma _{\xi ^{-1}}^{\mathcal{E}^{m\pm }}$ $=$ $(\gamma _{\xi }^{\mathcal{E}%
^{m\pm }})^{\dagger }$ (see Lemma \ref{L-adj} below)%
\begin{eqnarray}
K_{t}^{j,\pm }(z(x),z(\xi ^{-1}x))\circ \gamma _{\xi ^{-1}}^{\mathcal{E}%
^{m\pm }} &=&(\gamma _{\xi }^{\mathcal{E}^{m\pm }}\circ K_{t}^{j,\pm
}(z,\zeta )^{\dagger })^{\dagger }  \label{endo} \\
&=&(\gamma _{\xi }^{\mathcal{E}^{m\pm }}\circ K_{t}^{j,\pm }(\zeta
,z))^{\dagger }  \notag
\end{eqnarray}

\noindent where $K_{t}^{j,\pm }(z,\zeta )^{\dagger }$ $=$ $(K_{t}^{j,\pm
})^{\dagger }(\zeta ,z)$ $=$ $K_{t}^{j,\pm }(\zeta ,z)$ and the associated
kernel function is always acting to the left on an element in $z$ by our
convention: $K_{t}^{j,\pm }(\zeta ,z)$ $:$ $\mathcal{E}^{m\pm }|_{(z,0,1)}$ $%
\rightarrow $ $\mathcal{E}^{m\pm }|_{(\zeta ,0,1)}$ satisfies $K_{t}^{j,\pm
}(z,\zeta )^{\dagger }$ $=$ $K_{t}^{j,\pm }(\zeta ,z)$. (\ref{endo}) will be
used in (\ref{Kmt-z}) and more importantly, in (\ref{Asym}).

It is important to remark that for $x\in $ $\Sigma _{p}$ (=$\Sigma
\backslash \Sigma _{\text{sing}})$ so that $z(x)=z(\xi ^{-1}x)$ in the above 
$K_{t}^{j,\pm }$ for every $\xi $ $\in $ $\mathbb{C}^{\ast },$ $\gamma _{\xi
}^{\mathcal{E}^{m\pm }}$ is just the identity endomorphism (at $z(x));$
however for $x\in \Sigma _{\text{sing}}$ and $\xi $ $\in $ $G_{x}$ $:=$ $%
\{\xi \in S^{1}$ $\subset $ $\mathbb{C}^{\ast }$ $:$ $\sigma (\xi )x=x\}$
the (finite) isotropy group at $x,$ $\gamma _{\xi }^{\mathcal{E}^{m\pm }}$
may not be the identity. This feature is crucial to our supertrace
evaluation later on.

\begin{lemma}
\label{L-adj} With the notation above, it holds that for $x\in D_{j},$ $|\xi
|$ $=$ $1$ and $\xi ^{-1}x\in D_{j},$ $\gamma _{\xi }^{\mathcal{E}^{m\pm }}$
is an isometry on the bundle part; see also Corollary \ref{C-alk}). It holds
that 
\begin{equation}
\gamma _{\xi ^{-1}}^{\mathcal{E}^{m\pm }}=(\gamma _{\xi }^{\mathcal{E}^{m\pm
}})^{\dagger }.  \label{7-adj}
\end{equation}
\end{lemma}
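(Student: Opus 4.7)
The plan is to establish both claims by unwinding the pointwise definition of $\gamma_{\xi^{-1}}^{\mathcal{E}^{m\pm}}$ in (\ref{rE}), exploiting the $\mathbb{C}^{\ast}$-invariance of the three metrics entering $\mathcal{E}^{m\pm}$---namely $\pi^{\ast}g_{M}$ on $\pi^{\ast}\mathcal{E}_{M}$, $h_{E}$ on $E$, and $\langle\cdot,\cdot\rangle_{L_{\Sigma}}$ on $L_{\Sigma}$ from Step 1 of Section \ref{S-metric}---in conjunction with the scaling law $l(\xi^{-1}x)=|\xi|^{-2}l(x)$ of Corollary \ref{7.5-5}.

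First I would compute the pointwise scaling effect of $\Psi_{\pm,m}$. For $\nu=\psi_{j}^{\ast}(\pi^{\ast}\eta^{I_{q}}\otimes e^{E}\otimes(e_{w}^{\ast})^{\otimes m})|_{(z(x),0,1)}$, the associated section $\Psi_{\pm,m}(\nu)=w^{m}\pi^{\ast}\eta^{I_{q}}\otimes e^{E}$ evaluated at $x=(z(x),w(x))$ has norm $|\Psi_{\pm,m}(\nu)(x)|^{2}=|w(x)|^{2m}\|\eta^{I_{q}}\|_{g_{M}}^{2}\|e^{E}\|_{h_{E}}^{2}$, since $\pi^{\ast}\eta^{I_{q}}$ is $\mathbb{C}^{\ast}$-invariant and $e^{E}$ is locally so. Comparing with (\ref{Emm}) and using $l=h|w|^{2}$ from (\ref{lq}) yields the identity $|\Psi_{\pm,m}(\nu)(x)|^{2}=l(x)^{m}\|\nu\|_{\mathcal{E}^{m}}^{2}$. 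Since $\sigma(\xi)_{\xi^{-1}x}^{\ast}$ is a pointwise isometry on $\pi^{\ast}\mathcal{E}_{M}\otimes E$ (by the $\mathbb{C}^{\ast}$-invariance of $\pi^{\ast}g_{M}$ and $h_{E}$), applying $\Psi_{\pm,m}^{-1}$ at $\xi^{-1}x$ reverses the scaling with factor $l(\xi^{-1}x)^{-m}=|\xi|^{2m}l(x)^{-m}$. Chaining these three steps gives
\[
\|\gamma_{\xi^{-1}}^{\mathcal{E}^{m\pm}}(\nu)\|_{\mathcal{E}^{m}}^{2}=|\xi|^{2m}\|\nu\|_{\mathcal{E}^{m}}^{2},
\]
which collapses to the required isometry precisely when $|\xi|=1$.

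For the adjoint identity (\ref{7-adj}), my approach is to show that $\gamma_{\xi}^{\mathcal{E}^{m\pm}}$ and $\gamma_{\xi^{-1}}^{\mathcal{E}^{m\pm}}$ are mutually inverse and then invoke the isometry just established. Using the group law $\sigma(\xi^{-1})_{x}^{\ast}\circ\sigma(\xi)_{\xi^{-1}x}^{\ast}=(\sigma(\xi)\circ\sigma(\xi^{-1}))_{x}^{\ast}=\mathrm{id}$, with the base point of $\gamma_{\xi}^{\mathcal{E}^{m\pm}}$ taken to be $\xi^{-1}x$ so that the fibers match up, (\ref{rE}) yields
\[
\gamma_{\xi}^{\mathcal{E}^{m\pm}}\circ\gamma_{\xi^{-1}}^{\mathcal{E}^{m\pm}}=\Psi_{\pm,m}^{-1}\circ\mathrm{id}\circ\Psi_{\pm,m}=\mathrm{id},
\]
and similarly for the reverse composition. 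Since both maps are isometries for $|\xi|=1$, the inverse coincides with the adjoint, and (\ref{7-adj}) follows. The main delicate point is the bookkeeping of fiber base points: $\gamma_{\xi}^{\mathcal{E}^{m\pm}}$ and $\gamma_{\xi^{-1}}^{\mathcal{E}^{m\pm}}$ interchange the two fibers over $(z(x),0,1)$ and $(z(\xi^{-1}x),0,1)$, and large-angle shifts (Case $ii)$ after (\ref{6-1g})) may force $z(\xi^{-1}x)\neq z(x)$ even at $|\xi|=1$---but the functoriality of the pullback under the group law absorbs this shift with no residual scalar factor.
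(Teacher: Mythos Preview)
Your proposal is correct and follows essentially the same route as the paper. The paper computes explicitly (its formula (\ref{7-ele-a})) that $\sigma(\xi)_{\xi^{-1}x}^{\ast}$ scales the $\Lambda_m^{0,\ast}$-norm by $|\xi|^{m}$, so is an isometry for $|\xi|=1$, and then conjugates by $\Psi_{\pm,m}$; you instead factor the same computation through the scaling identity $|\Psi_{\pm,m}(\nu)(x)|^{2}=l(x)^{m}\|\nu\|_{\mathcal{E}^{m}}^{2}$ at both endpoints together with Corollary~\ref{7.5-5}, which is just a reorganization of the same bookkeeping. For (\ref{7-adj}) the paper writes the isometry identity $\langle\gamma_{\xi}u,v\rangle=\langle\gamma_{\xi^{-1}}\gamma_{\xi}u,\gamma_{\xi^{-1}}v\rangle=\langle u,\gamma_{\xi^{-1}}v\rangle$ directly, whereas you first verify $\gamma_{\xi}\circ\gamma_{\xi^{-1}}=\mathrm{id}$ from the group law and then invoke ``inverse of an isometry is its adjoint''---these are the same argument. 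One small imprecision: your phrase ``$\sigma(\xi)_{\xi^{-1}x}^{\ast}$ is a pointwise isometry on $\pi^{\ast}\mathcal{E}_{M}\otimes E$'' covers only the bundle factor, while the chain needs the full $\Lambda_m^{0,\ast}$-norm (which also weights by $|w|^{2m}$); the $w^{m}$-part works because $|w(x)|=|w(\xi^{-1}x)|$ for $|\xi|=1$ via Lemma~\ref{A}\,$i)$, but you should say so explicitly.
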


\begin{proof}
Since the action may involve the large angle one, let us be specific about
the proof. Observe that 
\begin{eqnarray}
&&\sigma (\xi )_{\xi ^{-1}x}^{\ast }(\psi _{j}^{\ast }(\pi ^{\ast }\eta
^{I_{q}}\otimes e^{E})w^{m})_{x}  \label{7-ele} \\
&=&\psi _{j}^{\ast }(\pi ^{\ast }\eta ^{I_{q}})_{\xi ^{-1}x}\otimes \psi
_{j}^{\ast }(\sigma ^{E}(\xi ^{-1})\circ e_{x}^{E})_{\xi ^{-1}x}\xi
^{m}w^{m}(\xi ^{-1}x)  \notag
\end{eqnarray}%
\noindent by the $\sigma $-invariance of the sections $\pi ^{\ast }\eta
^{I_{q}}.$ Note that $(\sigma ^{E}(\xi ^{-1})\circ e_{x}^{E})_{\xi ^{-1}x}$
may not be $e_{\xi ^{-1}x}^{E}$ unless $\sigma (\xi )$ is a small-angle
action. However it remains true that%
\begin{equation}
||(\sigma ^{E}(\xi ^{-1})\circ e_{x}^{E})_{\xi ^{-1}x}||_{h_{E}}=||e_{\xi
^{-1}x}^{E}||_{h_{E}}  \label{7-ele-z}
\end{equation}%
\noindent by the $\sigma ^{E}$-invariance of the metric $h_{E}$ and the
choice of local invariant section $e^{E}.$ Also by the $\sigma $-invariance
of the metrics $\pi ^{\ast }g_{M}$ for $\pi ^{\ast }\mathcal{E}_{M}$ and (%
\ref{7-ele-z}), we conclude from (\ref{7-ele}) that%
\begin{equation}
||\sigma (\xi )_{\xi ^{-1}x}^{\ast }(\psi _{j}^{\ast }(\pi ^{\ast }\eta
^{I_{q}}\otimes e^{E})w^{m})_{x}||_{\Lambda _{m}^{0,\ast }}=||(\psi
_{j}^{\ast }(\pi ^{\ast }\eta ^{I_{q}}\otimes e^{E})w^{m})_{\xi
^{-1}x}||_{\Lambda _{m}^{0,\ast }}\text{ }|\xi |^{m}.  \label{7-ele-a}
\end{equation}%
\noindent Hence for $|\xi |$ $=$ $1$ we learn that $\sigma (\xi )_{\xi
^{-1}x}^{\ast }$ is an isometry with respect to the metric $||$ $\cdot $ $%
||_{\Lambda _{m}^{0,\ast }}.$ Together with (\ref{PsiE})/(\ref{PsiE-a}) it
follows that for $|\xi |$ $=$ $1$ $\gamma _{\xi ^{-1}}^{\mathcal{E}^{m\pm
}}=\Psi _{\pm ,m}^{-1}\circ \sigma (\xi )_{\xi ^{-1}x}^{\ast }\circ \Psi
_{\pm ,m}$ is an isometry with respect to the metric $||$ $\cdot $ $||_{%
\mathcal{E}^{m}}$ (hence $<\cdot ,\cdot >_{\mathcal{E}^{m}}$). So taking $u$ 
$\in $ $\mathcal{E}^{m\pm }|_{x},$ $v$ $\in $ $\mathcal{E}^{m\pm }|_{\xi x}$
we have 
\begin{equation*}
<\gamma _{\xi }^{\mathcal{E}^{m\pm }}u,v>_{\mathcal{E}^{m}}=<\gamma _{\xi
^{-1}}^{\mathcal{E}^{m\pm }}(\gamma _{\xi }^{\mathcal{E}^{m\pm }}u),\gamma
_{\xi ^{-1}}^{\mathcal{E}^{m\pm }}v>_{\mathcal{E}^{m}}=<u,\gamma _{\xi
^{-1}}^{\mathcal{E}^{m\pm }}v>_{\mathcal{E}^{m}}
\end{equation*}

\noindent which gives (\ref{7-adj}).
\end{proof}

\section{Local $m$-index formula\label{LmIF}}

In this long section we want to get an explicit expression of the local
index density, to which the first three subsections are devoted. The
remaining two subsections compare our index formula with the one -- which we
view as a pure orbifold result, of Duistermaat.

Before proceeding to Subsection \ref{Sub-8-1} let us discuss a number of
auxiliary results, which may be regarded as background material for the
subsequent subsections. First, we have the following \textquotedblleft
supertrace" integral equality by the McKean-Singer formula (Theorem \ref%
{t-5-1}) together with (\ref{approx-2}) of Theorem \ref{t-uniqueness} using
the approximate heat kernel $P_{m,t}^{0,\pm }$ (noting that the integral of $%
l(x)^{m}dv_{\Sigma ,m}$ over $\Sigma $ is finite as in Remark \ref{3-r}):

\begin{theorem}
\label{t-MS} For $m\geq 0$, $a$ $>$ $\frac{m}{2}$ (on which the metric $%
G_{a,m}$ depends)$,$ we have 
\begin{eqnarray}
&&\int_{\Sigma }[Tre^{-t\tilde{\square}_{m}^{c+}}(x,x)-Tre^{-t\tilde{\square}%
_{m}^{c-}}(x,x)]dv_{\Sigma ,m}  \label{MS} \\
&=&\lim_{t\rightarrow 0}\int_{\Sigma
}[TrP_{m,t}^{0,+}(x,x)-TrP_{m,t}^{0,-}(x,x)]dv_{\Sigma ,m}.  \notag
\end{eqnarray}
\end{theorem}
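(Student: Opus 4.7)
The plan is a two-step argument, essentially packaging results already in hand. Let me introduce
\[
A(t) := \int_{\Sigma}\bigl[\mathrm{Tr}\,e^{-t\tilde{\square}_{m}^{c+}}(x,x)-\mathrm{Tr}\,e^{-t\tilde{\square}_{m}^{c-}}(x,x)\bigr]\,dv_{\Sigma,m},
\]
\[
B(t) := \int_{\Sigma}\bigl[\mathrm{Tr}\,P_{m,t}^{0,+}(x,x)-\mathrm{Tr}\,P_{m,t}^{0,-}(x,x)\bigr]\,dv_{\Sigma,m}.
\]
By the McKean--Singer formula of Theorem \ref{t-5-1}, $A(t) \equiv \sum_{q=0}^{n}(-1)^{q}\dim H_{m}^{q}(\Sigma,\mathcal{O})$ for every $t>0$, so $A$ is constant in $t$. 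Consequently, (\ref{MS}) reduces to the single claim $B(t)-A(t)\to 0$ as $t\to 0^{+}$.

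To establish this, I will invoke the uniform on-diagonal approximation in (\ref{approx-2}) of Theorem \ref{t-uniqueness}: there exist $\varepsilon,\varepsilon_{0}>0$ such that for all $t\in(0,\varepsilon)$ and $x\in\Sigma$,
\[
\bigl|e^{-t\tilde{\square}_{m}^{c\pm}}(x,x)-P_{m,t}^{0,\pm}(x,x)\bigr| \;\leq\; C\,l(x)^{m}\,e^{-\varepsilon_{0}/t}.
\]
Because both diagonal kernels are smooth endomorphisms of the finite-rank bundle $\pi^{\ast}\mathcal{E}_{M}^{\pm}\otimes E$ (by Proposition \ref{5-5.5}, Proposition \ref{l-6-0a}(ii), and the $C_{B}^{s}$-convergence in (\ref{approx-1})), taking the fibrewise trace costs only a multiplicative constant, so the same bound, with a new $C'$, holds for the pointwise difference of traces. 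Integrating against $dv_{\Sigma,m}$ and using the computation already established in the proof of Proposition \ref{5-5.5} — namely, via (\ref{volume}) and the normalisation (\ref{fibrenv}),
\[
\int_{\Sigma} l(x)^{m}\,dv_{\Sigma,m}=\int_{M}dv_{M}\,\int_{\mathbb{C}^{\ast}}(\tau_{x}^{\ast}l)^{m}\,\tau_{x}^{\ast}dv_{f,m}=\mathrm{Vol}(M,g_{M})<\infty
\]
by the compactness of $M=\Sigma/\sigma$ — I obtain
\[
|B(t)-A(t)| \;\leq\; 2C'\,\mathrm{Vol}(M,g_{M})\,e^{-\varepsilon_{0}/t} \;\longrightarrow\; 0 \quad\text{as } t\to 0^{+}.
\]
Combined with the $t$-independence of $A$, this gives $\lim_{t\to 0^{+}} B(t)=A(t)$ for every $t>0$, which is exactly (\ref{MS}).

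I do not foresee any serious obstacle: the two substantive inputs (McKean--Singer giving $t$-independence of the LHS, and the exponential-in-$1/t$ approximation of the true heat kernel by $P_{m,t}^{0,\pm}$ on the diagonal) are both proved already. The only point worth flagging is that the factor $l(x)^{m}$ in (\ref{approx-2}) is genuinely unbounded along $\mathbb{C}^{\ast}$-orbits, so one must rely on the fact that the metric $G_{a,m}$ was designed precisely so that $\int l^{m}dv_{f,m}=1$ fibrewise; this reduces the noncompact integral to the finite volume of the compact orbifold quotient, and no further dominated-convergence argument is required since $e^{-\varepsilon_{0}/t}$ decays faster than any power of $t$.
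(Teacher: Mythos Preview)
Your proof is correct and follows essentially the same approach as the paper: the paper's argument consists of citing the McKean--Singer formula (Theorem \ref{t-5-1}) for the $t$-independence of the left-hand side, the diagonal approximation (\ref{approx-2}) of Theorem \ref{t-uniqueness}, and the finiteness of $\int_\Sigma l(x)^m\,dv_{\Sigma,m}$. You have simply written out these steps in detail, including the explicit computation of that integral as $\mathrm{Vol}(M)$ (which the paper also carries out, in the proof of Proposition \ref{5-5.5}).
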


Thus, we are reduced to computing the supertrace of $P_{m,t}^{0}(x,x),$
which is done in the first two subsections (cf. (\ref{Part-I}) and (\ref%
{Str-13}) below). We prove Theorem \ref{main_theorem} in the third
subsection.

Our actual computation starts with (\ref{Kmt}). But before doing it we need
some preparatory work. From Section 2 we learn that the $\mathbb{C}^{\ast }$%
-action $\sigma $ on $\Sigma $ gives rise to a complex orbifold structure on 
$\Sigma /\sigma $ (see Theorem \ref{thm2-1}). Write $W_{j}$ $=$ $V_{j}\times
(-\varepsilon _{j},\varepsilon _{j})\times \mathbb{R}^{+}$ for a chart of $%
\Sigma $ (see Notation \ref{n-6.1}) where $V_{j}$ is chosen to be a complex
orbifold chart of $\Sigma /\sigma $ (see the proof of Theorem \ref{thm2-1}).
Moreover the finite group associated to the orbifold chart $V_{j}$ is a
cyclic subgroup of $S^{1}\subset \mathbb{C}^{\ast }$, denoted by $G_{j}$; $%
W_{j}$ is suitably shrinked so that $G_{j}$ is the largest isotropy group at
some point $x$ $\in $ $W_{j}$, containing isotropy groups at any $y$ $\in $ $%
W_{j}$ as subgroups, cf. Corollary \ref{8-5-1}. Let $g_{0}\in G_{j}$ be a
generator of order $N_{j}+1$%
\begin{equation}
g_{0}^{N_{j}+1}=1\text{ and }g_{k}:=g_{0}^{k},\text{ \ \ }k\geq 1\text{ \
(note }g_{1}=g_{0}\text{).}  \label{g1}
\end{equation}%
\noindent Each $g_{k}$ depends on $j$ but for the simplicity of notation we
omit \textquotedblleft $j$" in the expression of the symbol $g_{k}.$ Define
(possibly after shrinking $V_{j})$%
\begin{equation}
\gamma _{k}^{-1}(\in \sigma (S^{1})):=\pi _{V_{j}}\circ \sigma
(g_{k}^{-1}):V_{j}\times \{0\}\times \{1\}\rightarrow V_{j}\times
\{0\}\times \{1\}  \label{Str-5}
\end{equation}

\noindent for all $k$ $=$ $1,$ $\cdot \cdot ,$ $N_{j},$ where $\pi _{V_{j}}$
is the natural projection from $W_{j}$ onto $V_{j}\times \{0\}\times \{1\}$;
note that%
\begin{equation}
\gamma _{k}\text{ is denoted by }\tau (g_{k})\in \sigma (S^{1})\text{ in the
proof of Theorem \ref{thm2-1}}.  \label{8.3-1}
\end{equation}

\begin{lemma}
\label{L-7.6a} With notations in the proof of Lemma \ref{lemma7-1} suppose $%
x,$ $x^{\prime }$ $\in $ $W_{j}$ with $w(x)=w(x^{\prime }).$ Assume that $%
e^{i\beta }\circ x$ and $e^{i\beta }\circ x^{\prime }$ lie in $W_{j}$ for
some $e^{i\beta }\in S^{1}.$ Then $w(e^{i\beta }\circ x)$ $=$ $w(e^{i\beta
}\circ x^{\prime }).$
\end{lemma}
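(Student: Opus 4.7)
My plan is to show that, whenever $e^{i\beta}y$ remains in $W_j$, the $w$-coordinate of $e^{i\beta}y$ depends only on $w(y)$ and $\beta$ — not on $z(y)$. Once this is in hand, the conclusion follows immediately from $w(x)=w(x')$. The crux is decomposing $e^{i\beta}\in S^{1}$ as an isotropy element of $G_{j}$ times a small-angle rotation, in a manner intrinsic to $\beta$ and the chart.

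First, I would invoke the structure established in Theorem \ref{thm2-1} together with the refinement recorded in Remark \ref{R-2-11} (via Proposition \ref{L-alk}~$iii)$): the cyclic group $G_{j}$ acts on the zero-slice $V_{j}\times\{0\}\times\{1\}$ by $g_{k}\cdot(z,0,1) = (\tau(g_{k})(z),0,1)$, so that $\sigma(g_{k})$ literally preserves the coordinates $(\phi,r)=(0,1)$. Using commutativity of the $\mathbb{C}^{\ast}$-action and writing any $y=(z,\phi,r)\in W_{j}$ as $y = (e^{i\phi}r)\cdot(z,0,1)$, one obtains
\[
g_{k}\cdot y \;=\; (e^{i\phi}r)\cdot g_{k}\cdot(z,0,1) \;=\; (\tau(g_{k})(z),\phi,r),
\]
and in particular $w(g_{k}y)=w(y)$ for every $y\in W_{j}$.

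Second, after possibly shrinking so that $\varepsilon_{j}<\pi/(2(N_{j}+1))$, the sectors $\gamma_{g_{k}}+[-2\varepsilon_{j},2\varepsilon_{j}]$, $k=0,1,\dots,N_{j}$, are pairwise disjoint in $S^{1}$. Whenever $e^{i\beta}y$ with $y\in W_{j}$ again lies in $W_{j}$, the ``wrap-around'' is accounted for by exactly one such isotropy element, giving a \emph{unique} decomposition $e^{i\beta}=g_{k}\cdot e^{i\tilde{\beta}}$ with $g_{k}\in G_{j}$ and $\tilde{\beta}\in(-2\varepsilon_{j},2\varepsilon_{j})$; crucially, both $k$ and $\tilde{\beta}$ are determined by $\beta$ alone (not by $y$). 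The membership $e^{i\tilde{\beta}}(g_{k}y)\in W_{j}$ is equivalent to $\phi(y)+\tilde{\beta}\in(-\varepsilon_{j},\varepsilon_{j})$, under which the small-angle action rule (\ref{1-1}) yields
\[
e^{i\beta}y \;=\; e^{i\tilde{\beta}}(g_{k}y) \;=\; (\tau(g_{k})(z(y)),\,\phi(y)+\tilde{\beta},\,r(y)),
\]
hence $w(e^{i\beta}y)=e^{i\tilde{\beta}}w(y)$.

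Applying this to both $y=x$ and $y=x'$: since $w(x)=w(x')$ implies $\phi(x)=\phi(x')$, the two membership conditions coincide (and are already granted by hypothesis), so $x$ and $x'$ share the \emph{same} $\tilde{\beta}$. Therefore $w(e^{i\beta}x) = e^{i\tilde{\beta}}w(x) = e^{i\tilde{\beta}}w(x') = w(e^{i\beta}x')$, which is the desired equality. The principal obstacle is the justification of the clean identity $g_{k}\cdot(z,\phi,r)=(\tau(g_{k})(z),\phi,r)$ on $W_{j}$, which rests on Proposition \ref{L-alk}~$iii)$ (the vanishing ``$\delta_{2}\equiv 0$'' of the proof of Theorem \ref{thm2-1}); once this is granted, the extension from the zero-slice to arbitrary $(\phi,r)$ and the bookkeeping of the decomposition of $\beta$ are routine.
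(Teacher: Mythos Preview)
Your argument is circular. You invoke Proposition~\ref{L-alk}~$iii)$ (through Remark~\ref{R-2-11}) to obtain the key identity $g_{k}\cdot(z,\phi,r)=(\tau(g_{k})(z),\phi,r)$ on $W_{j}$, but in the paper's logical order Proposition~\ref{L-alk} is proved \emph{using} Lemma~\ref{L-7.6a}: the paper states just before Proposition~\ref{L-alk} that it is ``based on Lemma~\ref{L-7.6a}'', and its proof of part~$i)$ explicitly appeals to Lemma~\ref{L-7.6a} to show that $e^{i\eta_{k}}$ is constant. Parts~$ii)$ and~$iii)$ then rest on~$i)$. So the very fact you need --- that $\sigma(g_{k})$ preserves the $(\phi,r)$-coordinates on $V_{j}\times\{0\}\times\{1\}$ --- is downstream of the lemma you are trying to prove. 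Your decomposition $e^{i\beta}=g_{k}e^{i\tilde\beta}$ with $\tilde\beta$ small and $k$ depending only on $\beta$ likewise presupposes this structure (compare Proposition~\ref{gk}, which also depends on Proposition~\ref{L-alk}).

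The paper's proof avoids this dependency entirely: it tracks $w$ through a chain of overlapping charts $D_{0},\dots,D_{L+1}$ connecting $x$ to $e^{i\beta}x$, expressing $w(e^{i\beta}x)=e^{i\beta}w(x)\prod_{l}f_{l}(z_{l}(x_{l}))$ via the transition functions~(\ref{C0}). Lemma~\ref{A}~$i)$ (which is independent of Lemma~\ref{L-7.6a}) forces $|\prod_{l}f_{l}|=1$, and since the product is holomorphic in $x$ it is locally constant, hence equal for nearby $x$ and $x'$. A connectedness argument finishes the general case. If you want to salvage your approach, you would need an independent proof that $G_{j}$ acts on the slice $V_{j}\times\{0\}\times\{1\}$ preserving $(\phi,r)$; but that is precisely the content established only \emph{after} Lemma~\ref{L-7.6a} in the paper.
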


\begin{proof}
First assume that $x^{\prime }$ is close to $x.$ As in the proof of Lemma %
\ref{lemma7-1}, we take a sequence of points $x_{l}$ $\in $ $D_{l}\cap
D_{l-1}$ (resp. $x_{l}^{\prime }$ $\in $ $D_{l}\cap D_{l-1}),$ $l=0,$ $1,$ $%
\cdot \cdot ,$ $L$ such that $x_{0}=x,$ $\cdot \cdot ,$ $x_{L+1}$ $=$ $%
e^{i\beta }\circ x$ $\in $ $D_{0}\cap D_{L}$ (resp. $x_{0}^{\prime
}=x^{\prime },$ $\cdot \cdot ,$ $x_{L+1}^{\prime }$ $=$ $e^{i\beta }\circ
x^{\prime }$ $\in $ $D_{0}\cap D_{L}).$ Let ($z_{l},w_{l})$ denote the
coordinates of the patch $D_{l}$ with $D_{0}$ $=$ $D_{L+1}$ $=$ $W_{j}.$
Note that $w_{0}$ $=$ $w_{L+1}$ $=$ $w.$ By (\ref{C0}) in Proposition \ref%
{p-gue2-1} we have%
\begin{equation}
w_{l}(x_{l})=w_{l-1}(x_{l})f_{l}(z_{l}(x_{l})),\text{ }l=1,\cdot \cdot ,L+1
\label{wh}
\end{equation}

\noindent where $f_{l}$ is holomorphic in $z_{l}.$ From (\ref{angle}) and (%
\ref{7-12.5}) (replacing $\gamma _{l}$ by $\beta _{l}$) it follows that $%
w_{l}(x_{l+1})$ $=$ $w_{l}(e^{i\beta _{l}}x_{l})$ $=$ $e^{i\beta
_{l}}w_{l}(x_{l})$. Together with (\ref{wh}) and $\beta $ :$=$ $%
\sum_{l=0}^{L}\beta _{l}$ we obtain%
\begin{eqnarray}
w(e^{i\beta }\circ x) &=&w_{L+1}(x_{L+1})  \label{wh-a} \\
&=&e^{i\beta }w(x)f_{1}(z_{1}(x_{1}))\cdot \cdot \cdot
f_{L+1}(z_{L+1}(x_{L+1})).  \notag
\end{eqnarray}

\noindent Similarly we have%
\begin{equation}
w(e^{i\beta }\circ x^{\prime })=e^{i\beta }w(x^{\prime
})f_{1}(z_{1}(x_{1}^{\prime }))\cdot \cdot \cdot
f_{L+1}(z_{L+1}(x_{L+1}^{\prime })).  \label{wh-b}
\end{equation}

By Lemma \ref{A} $i)$ we have \TEXTsymbol{\vert}$w(e^{i\beta }\circ x)|$ $=$ 
$|w(x)$\TEXTsymbol{\vert} (resp. \TEXTsymbol{\vert}$w(e^{i\beta }\circ
x^{\prime })|$ $=$ $|w(x^{\prime })$\TEXTsymbol{\vert}) and hence 
\begin{eqnarray}
|f_{1}(z_{1}(x_{1}))\cdot \cdot \cdot f_{L+1}(z_{L+1}(x_{L+1}))| &=&1
\label{wh-c} \\
(\text{resp. }|f_{1}(z_{1}(x_{1}^{\prime }))\cdot \cdot \cdot
f_{L+1}(z_{L+1}(x_{L+1}^{\prime }))| &=&1)  \notag
\end{eqnarray}

\noindent in view of (\ref{wh-a}) (resp. (\ref{wh-b})). The actions $%
e^{i\beta _{l}}$ are holomorphic and hence, in terms of $x$ 
\begin{eqnarray*}
&&f_{1}(z_{1}(x_{1}))\cdot \cdot \cdot f_{L+1}(z_{L+1}(x_{L+1})) \\
&=&f_{1}(z_{1}(e^{i\beta _{0}}\circ x))f_{2}(z_{2}(e^{i(\beta _{1}+\beta
_{0})}\circ x))\cdot \cdot \cdot f_{L+1}(z_{L+1}(e^{i\beta }\circ x))
\end{eqnarray*}%
\noindent is holomorphic in $x.$ This together with (\ref{wh-c}) implies
that $f_{1}(z_{1}(x_{1}))\cdot \cdot \cdot f_{L+1}(z_{L+1}(x_{L+1}))$ is
independent of $x$ so that it is the same as $f_{1}(z_{1}(x_{1}^{\prime
}))\cdot \cdot \cdot f_{L+1}(z_{L+1}(x_{L+1}^{\prime }))$ (here $x\sim
x^{\prime }$ so $\beta _{l},$ $\beta _{l}^{\prime }$ can be chosen to be the
same)$.$ In view of (\ref{wh-a}) and (\ref{wh-b}) we conclude that $%
w(e^{i\beta }\circ x)$ $=$ $w(e^{i\beta }\circ x^{\prime }).$ For the
general case where $x$ and $x^{\prime }$ are not necessarily close, one
connects $x$ and $x^{\prime }$ by a path $\alpha (t)$ $\subset $ $W_{j}$
with the same $w(\alpha (t))$ values. The proof follows by the usual
continuity argument.
\end{proof}

For the fixed point set $V_{j}^{\gamma _{k}}$ ($\subset V_{j}$) of $\gamma
_{k}$ or $\gamma _{k}^{-1}$ ($k\geq 1)$ in (\ref{Str-5}) we write 
\begin{equation}
\Sigma _{j,k}:=\mathbb{C}^{\ast }\circ (V_{j}^{\gamma _{k}}\times
\{0\}\times \{1\}),\text{ \ }k\geq 1  \label{sing}
\end{equation}%
\noindent for the $\mathbb{C}^{\ast }$-orbit of $V_{j}^{\gamma _{k}}\times
\{0\}\times \{1\}$ in $\Sigma $. The local trivialization $\psi _{j}$ is
often omitted.

\begin{remark}
\label{r-1} $i)$ We can choose the above chart $W_{j}$ ($=$ $V_{j}\times $ $%
(-\varepsilon _{j},\varepsilon _{j})\times $ $\mathbb{R}^{+})$ such that $a)$
$\cup _{k}\Sigma _{j,k}$ from (\ref{sing}) is connected and $b)$ $\gamma
_{k} $ acts on $V_{j}\times \{0\}\times \{1\}$ itself for all $k$ as in (\ref%
{Str-5}). $ii)$ Note that the slice $V_{j}\times \{0\}\times \{1\}$ $\subset 
$ $\Sigma $ depends on the choice of local coordinates of $\Sigma $.
\end{remark}

Define $V_{j}^{g_{k}}$ $:=$ $\{z^{\prime }$ $\in $ $V_{j}$ $|$ $\sigma
(g_{k})(z^{\prime },0,1)$ $=$ $(z^{\prime },0,1)\}$ and $\Sigma ^{g_{k}}$ $%
\subset $ $\Sigma ,$ the fixed point set of $\sigma (g_{k}),$ so $%
V_{j}^{g_{k}}$ $=$ $\Sigma ^{g_{k}}$ $\cap $ $V_{j}\times \{0\}\times \{1\}$%
. Let $V_{j}^{F}$ :$=$\ $\cup _{k=1}^{N_{j}}V_{j}^{g_{k}}.$ Note that for $%
1\leq k\leq N_{j}$%
\begin{equation}
V_{j}^{g_{k}}\neq V_{j}.  \label{8-9-1}
\end{equation}

\noindent Otherwise $\sigma (g_{k})$ will fix all points in $V_{j}$ and
hence one sees that $W_{j}$ $\subset $ $\Sigma ^{g_{k}}.$ Since $W_{j}$ is
open in $\Sigma ,$ it follows by holomorphicity that $g_{k}$ $=$ $\{1\},$ a
contradiction to $1\leq k\leq N_{j}$.

We have from (\ref{Str-5}) and Lemma \ref{A} $i)$ that%
\begin{equation}
\gamma _{k}^{-1}x={\Large \{}%
\begin{array}{c}
\sigma (e^{i\eta _{k}(x)})x\text{ \ \ \ \ }x\in (V_{j}\backslash
V_{j}^{F})\times \{0\}\times \{1\} \\ 
x\text{ \ \ \ \ \ \ \ \ \ \ \ \ \ \ \ \ \ }x\in V_{j}^{g_{k}}\times
\{0\}\times \{1\}\text{ \ \ \ \ }%
\end{array}
\label{rk}
\end{equation}%
\noindent where $\eta _{k}$ is a real valued, continuous function (at least
locally defined at a given $x)$. Based on Lemma \ref{L-7.6a} we have

\begin{proposition}
\label{L-alk} With the chart $W_{j}$ chosen in Remark \ref{r-1} and the
notations above, we have $i)$ $e^{i\eta _{k}}$ is a constant, independent of
the choice of $x\in (V_{j}\backslash V_{j}^{F})\times \{0\}\times \{1\};$ $%
ii)$ $\gamma _{k}^{-1}$ $=$ $\sigma (e^{i\eta _{k}})$ acting on $V_{j}\times
\{0\}\times \{1\}$ itself is an isometry with respect to the metric induced
from $G_{a,m}$; $iii)$ $e^{i\eta _{k}}=g_{k}^{-1}.$ In particular $\gamma
_{k}^{-1}$ equals $\sigma (e^{i\eta _{k}})$ $=$ $\sigma (g_{k}^{-1})$ on $%
V_{j}\times \{0\}\times \{1\}$; namely $\pi _{V_{j}}$ in the definition (\ref%
{Str-5}) of $\gamma _{k}^{-1}$ can be dropped.
\end{proposition}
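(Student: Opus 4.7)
The plan is to establish the three assertions together, with the main content concentrated in (iii): once it is shown that $\sigma(g_k^{-1})$ maps the slice $V_j \times \{0\} \times \{1\}$ into itself, assertion (i) is immediate from the triviality of isotropy on $V_j \setminus V_j^F$, and (ii) follows from the $S^1$-invariance of the metric $G_{a,m}$ (Remark \ref{7-11b}).

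To execute the key step, I would fix $x \in V_j \times \{0\} \times \{1\}$ and write $\sigma(g_k^{-1})x = (z'(x), \phi'(x), r'(x)) \in W_j$ in the local coordinates of Notation \ref{n-6.1}. Lemma \ref{A}$(i)$ forces $r'(x) = 1$, and the definition (\ref{Str-5}) yields $\gamma_k^{-1}(x) = (z'(x), 0, 1)$, so the identity $\gamma_k^{-1} = \sigma(g_k^{-1})$ on the slice reduces to proving $\phi'(x) \equiv 0$, i.e., $w(\sigma(g_k^{-1})x) \equiv 1$. I would then invoke Lemma \ref{L-7.6a} with $e^{i\beta} = g_k^{-1} \in S^1$: for any $x, x' \in V_j \times \{0\} \times \{1\}$ (after possibly shrinking $V_j$ so that $\sigma(g_k^{-1})$ carries the slice into $W_j$ for every $k = 1, \ldots, N_j$), the lemma yields $w(\sigma(g_k^{-1})x) = w(\sigma(g_k^{-1})x')$, so this value is constant on the slice. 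By Remark \ref{r-1} together with the chart construction in the proof of Theorem \ref{thm2-1}, the chart $W_j$ may be centered at a point $p = (z_0,0,1) \in V_j \times \{0\} \times \{1\}$ whose $S^1$-isotropy is exactly $G_j$; then $\sigma(g_k^{-1})p = p$ pins the constant to $w(p) = 1$, establishing (iii).

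Assertion (i) then drops out immediately: for $x \in (V_j \setminus V_j^F) \times \{0\} \times \{1\}$ the $\mathbb{C}^*$-isotropy of $x$ is trivial (the $\mathbb{R}^+$-part is globally free, while the $S^1$-part is contained in $G_j$ by the choice of chart and must be trivial because $x \notin V_j^F$); the relations $\sigma(e^{i\eta_k(x)})x = \gamma_k^{-1}(x) = \sigma(g_k^{-1})x$ therefore force $e^{i\eta_k(x)} = g_k^{-1}$, independent of $x$. For (ii), observe that $\sigma(g_k^{-1})$ is an element of the global $S^1$-action on $\Sigma$ and hence an isometry of $(\Sigma, G_{a,m})$ by Remark \ref{7-11b}; restricting to the $\sigma(g_k^{-1})$-invariant submanifold $V_j \times \{0\} \times \{1\}$ endowed with the induced metric gives the asserted isometry.

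The main obstacle is essentially preparatory: one must arrange $W_j$ so that (a) $\sigma(g_k^{-1})$ maps $V_j \times \{0\} \times \{1\}$ into $W_j$ for every $k$ (a shrinking argument), and (b) a base point with full isotropy $G_j$ sits inside the slice at $w$-coordinate $1$ (achieved by normalizing the orbifold chart as in Theorem \ref{thm2-1} and using global $\mathbb{R}^+$-freeness). The genuine ingredient is Lemma \ref{L-7.6a}, which encodes the global holomorphic behavior of the $w$-coordinate under $S^1$-action through the transition identity (\ref{wh-c}); everything else is routine, and the proposition then vindicates the simplification $\delta_2 \equiv 0$ already anticipated in Remark \ref{R-2-11}.
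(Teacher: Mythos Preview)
Your proof is correct and rests on the same key ingredient (Lemma \ref{L-7.6a}) as the paper, but you apply it in a different and arguably cleaner order. The paper first proves (i) by applying Lemma \ref{L-7.6a} with the angle $\eta_k(x)$ itself (a priori $x$-dependent) to show $e^{i\eta_k}$ is locally, hence globally, constant on $(V_j\setminus V_j^F)\times\{0\}\times\{1\}$; it then deduces (ii) by continuity and the $S^1$-invariance of $G_{a,m}$, and only afterward identifies the constant with $g_k^{-1}$ via a discreteness/continuity argument: writing $e^{i\eta_k}=h_{x_0}g_k^{-1}$ at a point $x_0\in V_j^{g_k}\times\{0\}\times\{1\}$ and comparing with $\pi_{V_j}\circ\sigma(g_k^{-1})$ at nearby free points forces $h_{x_0}\in G_j$ to equal a small-angle element, hence $h_{x_0}=1$.

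Your route bypasses this last step entirely by applying Lemma \ref{L-7.6a} directly with the fixed angle $e^{i\beta}=g_k^{-1}$, obtaining at once that $w(\sigma(g_k^{-1})x)$ is constant on the slice and evaluating the constant at a point of full $G_j$-isotropy. This is more economical: it proves the slice-preservation $\sigma(g_k^{-1})(V_j\times\{0\}\times\{1\})\subset V_j\times\{0\}\times\{1\}$ first, from which (i) and (iii) drop out simultaneously and (ii) follows from Remark \ref{7-11b}. The only extra preparatory point you need is that a point with full isotropy $G_j$ actually lies in the slice $w=1$; this follows since the orbifold chart is centered at such a point (paragraph preceding (\ref{g1}) and the construction in Theorem \ref{thm2-1}), and isotropy is constant along $\mathbb{C}^\ast$-orbits so one may translate into the slice.
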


\begin{proof}
We will simply write $e^{i\eta _{k}(x)}x$ for $\sigma (e^{i\eta _{k}(x)})x.$
For $x,x^{\prime }\in (V_{j}\backslash V_{j}^{F})\times \{0\}\times \{1\},$ $%
w(x)=w(x^{\prime })=1$ which by Lemma \ref{L-7.6a} gives that $w(e^{i\eta
_{k}(x)}x)$ $=$ $w(e^{i\eta _{k}(x)}x^{\prime })$ (for $x^{\prime }$ close
to $x$ so that $e^{i\eta _{k}(x)}x^{\prime }$ falls in $W_{j}$ and is close
to $e^{i\eta _{k}(x)}x).$ By the definition of $\eta _{k}$ $w(e^{i\eta
_{k}(x)}x)=1=w(e^{i\eta _{k}(x^{\prime })}x^{\prime })$ and we get $%
w(e^{i\eta _{k}(x^{\prime })}x^{\prime })$ $=$ $w(e^{i\eta _{k}(x)}x^{\prime
}).$ By Lemma \ref{L-7.6a} again, applying $e^{-i\eta _{k}(x)}$ to the
arguments of $w$ we conclude $w(e^{i(\eta _{k}(x^{\prime })-\eta
_{k}(x))}x^{\prime })=w(x^{\prime })$ which is $1.$ In view of the action by
a small angle (by the continuity of $\eta _{k}$ and $x^{\prime }\sim x$) it
follows that for $x^{\prime }$ near $x$, $e^{i(\eta _{k}(x^{\prime })-\eta
_{k}(x))}x^{\prime }$ $=$ $x^{\prime }$ so that $e^{i\eta _{k}(x^{\prime })}$
$=$ $e^{i\eta _{k}(x)}$ since $x^{\prime }$ $\notin $ $V_{j}^{F}$. Therefore 
$e^{i\eta _{k}}$ is constant in each connected component of $%
(V_{j}\backslash V_{j}^{F})\times \{0\}\times \{1\}$ by continuity. Now $%
V_{j}\backslash V_{j}^{F}$ is connected since $V_{j}^{g_{k}}$ is of real
codimension $\geq 2$ in $V_{j}$ by the holomorphicity of $g_{k}$. We have
shown $i)$.

From $i)$ and (\ref{rk}) we have now that $\gamma _{k}^{-1}$ $=$ $\sigma
(e^{i\eta _{k}})$ on $V_{j}\times \{0\}\times \{1\}$ (including $%
V_{j}^{g_{k}}\times \{0\}\times \{1\})$ by continuity from $V_{j}\backslash
V_{j}^{F}$ to its closure $V_{j}.$ That $\gamma _{k}^{-1}$ is an isometry
follows from the fact that the $\sigma (S^{1})$-action is an isometry (see
Remark \ref{7-11b}). We have shown $ii)$.

To show $iii),$ we fix an $x_{0}=(z^{\prime },0,1)$ with $z^{\prime }$ $\in $
$V_{j}^{g_{k}}.$ Then $\sigma (e^{i\eta _{k}})x_{0}=\gamma _{k}^{-1}x_{0}$
by $ii)$ and $\gamma _{k}^{-1}x_{0}$ $=$ ($\pi _{V_{j}}\circ \sigma
(g_{k}^{-1}))x_{0}$ = $\sigma (g_{k}^{-1})x_{0}$ $=$ $x_{0}$ since $\pi
_{V_{j}}$ is trivially the identity on $V_{j}\times \{0\}\times \{1\}$ from
its definition$.$ It follows that we can put\ 
\begin{equation}
e^{i\eta _{k}}=h_{x_{0}}g_{k}^{-1}  \label{h0}
\end{equation}%
\noindent for some $h_{x_{0}}$ $\in $ $G_{j}$ $\subset $ $S^{1}.$ Next, we
apply the action%
\begin{equation*}
\sigma (e^{i\eta _{k}})\overset{ii)}{=}\gamma _{k}^{-1}\overset{(\ref{Str-5})%
}{=}\pi _{V_{j}}\circ \sigma (g_{k}^{-1})
\end{equation*}%
\noindent to any $x$ near $x_{0}.$ We have $\sigma (e^{i\eta
_{k}})(x)=\sigma (e^{i\epsilon _{k}}g_{k}^{-1})(x)$ for a small angle $%
\epsilon _{k}$ (depending on $x$ \textit{a priori}) obtained by $\pi
_{V_{j}};$ compare (\ref{2-11a}) and the lines below it. From the global
freeness of the $\sigma (S^{1})$-action at $x$ $\in $ $(V_{j}\backslash
V_{j}^{F})\times \{0\}\times \{1\}$ it follows that $e^{i\eta _{k}}$ $=$ $%
e^{i\epsilon _{k}}g_{k}^{-1}$ for such $x.$ Comparing this with (\ref{h0})
gives 
\begin{equation}
h_{x_{0}}=e^{i\epsilon _{k}}.  \label{8-11-1}
\end{equation}%
\noindent The contradiction is arising: The number of $h_{x_{0}}$ is at most 
$|G_{j}|,$ while by choosing $x$ sufficiently near $x_{0}$ and applying the
continuity argument at $x_{0}$ as just mentioned one can make the angle $%
\epsilon _{k}(x)$ arbitrarily small. This would violate (\ref{8-11-1})
unless $\epsilon _{k}$ $=$ $0$ thus $h_{x_{0}}$ $=$ $1,$ giving $e^{i\eta
_{k}}$ $=$ $g_{k}^{-1}$ by (\ref{h0}). The claim $iii)$ of the proposition
is now proved.
\end{proof}

The following two corollaries are needed for later use.

\begin{corollary}
\label{C-alk} $\gamma _{k}^{-1}:V_{j}\rightarrow V_{j}$ is also an isometry
with respect to the metric $\pi ^{\ast }g_{M}|_{V_{j}}.$
\end{corollary}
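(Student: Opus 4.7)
The plan is to deduce this directly from Proposition \ref{L-alk} $iii)$, which identifies $\gamma_k^{-1}$ on $V_j\times\{0\}\times\{1\}$ with the restriction of the original $\mathbb{C}^\ast$-action $\sigma(g_k^{-1})$. The point is that while Proposition \ref{L-alk} $ii)$ gives isometry with respect to $G_{a,m}|_{V_j\times\{0\}\times\{1\}}$, the metric actually used in the corollary, $\pi^\ast g_M|_{V_j}$, is in general strictly different from the $G_{a,m}$-restriction on this slice (see the mixed terms $h^{-1}h_\alpha h_{\bar\beta}w\bar w\,dz_\alpha\wedge d\bar z_\beta$ in \eqref{M0-1} and their analogues in \eqref{M0-2}, which do not vanish unless $dh=0$). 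So Proposition \ref{L-alk} $ii)$ by itself is not enough; one must exploit the more rigid identification in $iii)$.

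First I would record the tautological identity $\pi\circ\sigma(\lambda)=\pi$ for every $\lambda\in\mathbb{C}^\ast$, which follows from the fact that $\pi:\Sigma\to M=\Sigma/\sigma$ collapses each $\mathbb{C}^\ast$-orbit. Applied to $\lambda=g_k^{-1}$, this yields the global identity of symmetric $2$-tensors on $\Sigma$,
\begin{equation*}
\sigma(g_k^{-1})^\ast(\pi^\ast g_M)=(\pi\circ\sigma(g_k^{-1}))^\ast g_M=\pi^\ast g_M.
\end{equation*}
Next I would invoke Proposition \ref{L-alk} $ii)$--$iii)$ to note that $\sigma(g_k^{-1})$ preserves the slice $V_j\times\{0\}\times\{1\}$ setwise and coincides with $\gamma_k^{-1}$ there, so its differential sends the tangent space of the slice to itself. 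Restricting the tensor identity above to these tangent vectors then gives the desired isometry property of $\gamma_k^{-1}$ for $\pi^\ast g_M|_{V_j}$ (after identifying $V_j$ with $V_j\times\{0\}\times\{1\}$ via $\psi_j$).

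There is no substantive obstacle once Proposition \ref{L-alk} $iii)$ is in hand; the only subtlety worth flagging explicitly is the distinction between $\pi^\ast g_M|_{V_j}$ (an intrinsic, $G_j$-invariant orbifold pullback) and the induced metric from $G_{a,m}$ on the same slice, which, as pointed out above, differ by lower-order terms involving derivatives of $h$. The proof is therefore short and is best presented as a two-line argument emphasizing the $\pi$-invariance of $\sigma(g_k^{-1})$ combined with the identification $\gamma_k^{-1}=\sigma(g_k^{-1})|_{V_j\times\{0\}\times\{1\}}$ from Proposition \ref{L-alk} $iii)$.
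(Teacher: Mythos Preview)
Your proposal is correct and follows essentially the same argument as the paper: both invoke Proposition \ref{L-alk} $iii)$ to identify $\gamma_k^{-1}$ with $\sigma(g_k^{-1})$ on the slice, then use $\pi\circ\sigma(g_k^{-1})=\pi$ to conclude $\gamma_k^{\ast}(\pi^\ast g_M|_{V_j})=\pi^\ast g_M|_{V_j}$. Your additional commentary distinguishing $\pi^\ast g_M|_{V_j}$ from $G_{a,m}|_{V_j\times\{0\}\times\{1\}}$ is a helpful clarification of why part $ii)$ alone does not suffice, though the paper omits this remark.
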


\begin{proof}
Let $i:V_{j}\rightarrow \Sigma $ be the natural embedding. By Proposition %
\ref{L-alk} one has $\gamma _{k}$ $=$ $\sigma (g_{k})|_{V_{j}},$ giving that 
$\gamma _{k}^{\ast }i^{\ast }\pi ^{\ast }g_{M}$ $=$ $i^{\ast }\sigma
(g_{k})^{\ast }\pi ^{\ast }g_{M}$ $=$ $i^{\ast }\pi ^{\ast }g_{M}$ using $%
\pi \circ \sigma (g_{k})$ $=$ $\pi .$
\end{proof}

\begin{corollary}
\label{8-5-1} The map $\tau :G_{j}\rightarrow \tau (G_{j})$ used in Theorem %
\ref{thm2-1} is a group isomorphism.
\end{corollary}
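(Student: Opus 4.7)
The plan is to verify injectivity and surjectivity of $\tau$, since the homomorphism property $\tau(g'g) = \tau(g') \circ \tau(g)$ (and $\tau(1) = 1$) was already proved in the lengthy calculation leading to (\ref{2-13-1}) in the proof of Theorem \ref{thm2-1}. Surjectivity is immediate from the definition of the image $\tau(G_j)$. Everything then hinges on a clean description of $\tau(g_k)$, which we get from Proposition \ref{L-alk} $iii)$: the projection $\pi_{V_j}$ in the defining formula (\ref{Str-5}) of $\gamma_k^{-1}$ can be dropped, so that $\gamma_k^{-1} = \sigma(g_k^{-1})$ on $V_j \times \{0\} \times \{1\}$. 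Combined with (\ref{8.3-1}) this gives the identification
\begin{equation*}
\tau(g_k) = \gamma_k = \sigma(g_k)\big|_{V_j \times \{0\} \times \{1\}},
\end{equation*}
which is the form in which injectivity becomes transparent.

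For injectivity I would argue as follows. Suppose $\tau(g_k) = \tau(g_l)$ with $g_k, g_l \in G_j$. Writing $g = g_k g_l^{-1} \in G_j$, the homomorphism property gives $\tau(g) = 1$, which by the identification above means $\sigma(g)$ fixes every point of $V_j \times \{0\} \times \{1\}$, i.e.\ $V_j^{g} = V_j$ in the notation preceding (\ref{8-9-1}). This is precisely the situation ruled out by (\ref{8-9-1}): if $\sigma(g)$ fixed the whole slice $V_j \times \{0\} \times \{1\}$, then it would fix the open set $W_j \subset \Sigma$, hence by holomorphicity of the action fix all of $\Sigma$, forcing $g = 1$. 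Therefore $g_k = g_l$, proving injectivity.

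The only point requiring care is the passage from \textquotedblleft$\sigma(g)$ fixes the slice $V_j \times \{0\} \times \{1\}$\textquotedblright\ to \textquotedblleft$\sigma(g)$ fixes $W_j$\textquotedblright, which I expect to be the main (though minor) obstacle. This follows because, having chosen $W_j$ as in Remark \ref{r-1} so that $\gamma_k$ acts on the slice itself, and using that $W_j = V_j \times (-\varepsilon_j,\varepsilon_j) \times \mathbb{R}^+$ is swept out by the $\mathbb{C}^{*}$-orbits of points of the slice (via (\ref{1-0}) and (\ref{1-1})), any element of $\mathbb{C}^{*}$ commuting with (here, trivial on) the slice and commuting with the $\mathbb{C}^{*}$-action must act trivially on the whole of $W_j$. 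Analytic continuation of the holomorphic automorphism $\sigma(g)$ then extends the fixed locus from the open set $W_j$ to all of $\Sigma$, and the conclusion $g = 1$ follows as in the justification of (\ref{8-9-1}).
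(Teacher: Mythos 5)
Your proof is correct and follows essentially the same route as the paper: both invoke Proposition \ref{L-alk} $iii)$ together with (\ref{8.3-1}) to identify $\tau(g_k)$ with $\sigma(g_k)|_{V_j}$, and both conclude injectivity by appealing to (\ref{8-9-1}) (whose own justification already contains the ``slice fixed $\Rightarrow$ $W_j$ fixed $\Rightarrow$ $g=1$ by holomorphicity'' argument you spell out). The paper's proof is just a terser version of yours.
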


\begin{proof}
By Proposition \ref{L-alk} $iii)$ and (\ref{8.3-1}) we learn that $\tau
(g_{k})$ $=$ $\sigma (g_{k}).$ If $\tau (g_{k})$ is the identity on $V_{j},$
this contradicts (\ref{8-9-1})$.$ So $\ker \tau $ $=$ $\{1\}$ hence $\tau $
is a group isomorphism since in Theorem \ref{thm2-1} we have shown that $%
\tau $ is a group homomorphism.
\end{proof}

Recall (in the proof of Lemma \ref{l-bdp}) that $\alpha _{k}(x)$ $\in $ $%
S^{1}$ $\subset $ $\mathbb{C}^{\ast }$ is chosen to satisfy the property
that for $x\in W_{j},$ $\alpha _{k}(x)x$ ($=\alpha _{k}(x)\circ x)$ $\in
W_{j}$ and $\phi (\alpha _{k}(x)x)$ $=$ $0.$ For $k=0$ set 
\begin{equation}
\alpha _{0}(x)=e^{-i\phi (x)}\text{ for }x=(z,\phi ,r).  \label{a0x}
\end{equation}%
\noindent Clearly $\phi (\alpha _{0}(x)x)$ $=$ $0.$ We describe the
properties of $\alpha _{k}(x)$ below.

\begin{proposition}
\label{gk} (group property of $\alpha _{k}$ in the proof of Lemma \ref{l-bdp}%
) With the chart $W_{j}$ chosen as in Remark \ref{r-1} and the notation
above, it holds that $\alpha _{k}(x)\alpha _{0}^{-1}(x)$ is independent of $%
x\in W_{j}$ and%
\begin{equation}
\{\alpha _{k}(x)\alpha _{0}^{-1}(x):k=0,1,\cdot \cdot ,\Lambda \}=G_{j}
\label{Gj}
\end{equation}%
\noindent (see (\ref{7.38-1}) for $\Lambda $ and (\ref{g1}) for $G_{j}$). So 
$\Lambda =N_{j}.$ Moreover we can arrange $1$ $\leq $ $k$ $\leq $ $N_{j}$
such that%
\begin{equation}
\alpha _{k}(x)\alpha _{0}^{-1}(x)=g_{k}^{-1}.  \label{alk}
\end{equation}%
In particular, the set $\{\alpha _{k}(x)\}_{k=0,1,\cdot \cdot \cdot ,\Lambda
}$ is independent of those $x\in W_{j}$ with $\phi (x)=0$ and this set forms
a group equal to the local orbifold group $G_{j}.$
\end{proposition}

\begin{proof}
Let $x_{1}=(z,0,1)$ $\in $ $V_{j}\times \{0\}\times \{1\}.$ It follows from
the definition of $\alpha _{k}(x_{1})$ and Lemma \ref{A} $i)$ that $\sigma
(\alpha _{k}(x_{1}))x_{1}$ $=$ $(z_{k},0,1)$ $\in $ $V_{j}\times \{0\}\times
\{1\}.$ Since $V_{j}$ is an orbifold chart, there is $g_{l}(=g_{0}^{l})$ $%
\in $ $G_{j}$ such that $\tau (g_{l})z=z_{k}$ (see the proof of Theorem \ref%
{thm2-1}). By Proposition \ref{L-alk} $iii)$ (noting $\gamma _{l}=\tau
(g_{l}),$ cf. (\ref{8.3-1})) $\sigma (g_{l})x_{1}$ $=$ $\sigma (\alpha
_{k}(x_{1}))x_{1}.$ So $g_{l}^{-1}\alpha _{k}(x_{1})$ lies in the isotropy
group of $x_{1},$ which is a subgroup of $G_{j}.$ It follows that $\alpha
_{k}(x_{1})\in G_{j}.$ Denote the set $\{\alpha _{k}(x)\alpha
_{0}^{-1}(x):k=0,1,\cdot \cdot ,\Lambda \}$ by $\Gamma _{x}.$ Observing $%
\alpha _{0}(x_{1})$ $=$ $1$ by (\ref{a0x}), we have shown%
\begin{equation}
\Gamma _{x_{1}}\subset G_{j}.  \label{riG}
\end{equation}%
\noindent Conversely by Proposition \ref{L-alk} $iii)$ again any element of $%
G_{j}$ acts on $V_{j}\times \{0\}\times \{1\}$ itself via $\sigma ,$ so from
the definition of $\alpha _{k}(x_{1}),$ it follows that $G_{j}\subset \Gamma
_{x_{1}}.$ Together with (\ref{riG}) we conclude 
\begin{equation}
\Gamma _{x_{1}}=G_{j}.  \label{reG}
\end{equation}%
\noindent For $x=(z,\phi ,r)$ $\in $ $W_{j}$ we compute ($\sigma $ omitted
for simplicity of notation)%
\begin{eqnarray*}
\alpha _{0}(x)\alpha _{k}(x_{1})x &=&e^{-i\phi (x)}\alpha
_{k}(x_{1})(re^{i\phi (x)}x_{1}) \\
&=&r\alpha _{k}(x_{1})x_{1}=(z_{k},0,r),
\end{eqnarray*}

\noindent so $\phi (\alpha _{0}(x)\alpha _{k}(x_{1})x)=0.$ It follows that
as sets $\{\alpha _{0}(x)\alpha _{k}(x_{1})\}_{k}$ $\subset $ $\{\alpha
_{k}(x)\}_{k}$ from the definition of $\alpha _{k}(x),$ so up to
multiplication by $\alpha _{0}^{-1}(x)$ on both sets $\{\alpha
_{k}(x_{1})\}_{k}$ $=$ $\Gamma _{x_{1}}$ $(\alpha _{0}(x_{1})$ $=$ $1)$ $%
\subset $ $\{\alpha _{0}^{-1}(x)\alpha _{k}(x)\}_{k}$ $=$ $\Gamma _{x}.$
Similarly one proves $\phi (\alpha _{0}^{-1}(x)\alpha _{k}(x)x_{1})$ $=$ $0,$
giving $\Gamma _{x}$ $\subset \Gamma _{x_{1}}$ so $\Gamma _{x_{1}}$ $=$ $%
\Gamma _{x}.$ This, together with (\ref{reG}), gives (\ref{Gj}). By the
continuity of $\alpha _{k}(x)\alpha _{0}^{-1}(x)$ in $x$ and the
discreteness of $G_{j},$ $\alpha _{k}(x)\alpha _{0}^{-1}(x)$ for each $k$
must be independent of $x\in W_{j}.$
\end{proof}

An immediate corollary to Propositions \ref{L-alk} and \ref{gk} is the
following lemma.

\begin{lemma}
\label{L-8-6} It holds that for $1$ $\leq $ $k$ $\leq $ $N_{j}$%
\begin{equation}
\sigma (\alpha _{k}(x)\alpha _{0}^{-1}(x))=\sigma (g_{k}^{-1})=\gamma
_{k}^{-1}\text{ maps }V_{j}\times \{0\}\times \{1\}\text{ (}\subset \Sigma )%
\text{ to itself.}  \label{Str-5c}
\end{equation}
\end{lemma}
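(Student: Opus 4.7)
The plan is to assemble the stated equalities by concatenating the three results that have just been established, so this lemma is really just a bookkeeping step recording a convenient form in which they will be used. First I would invoke formula \eqref{alk} from Proposition \ref{gk}, which asserts, after we arrange the indexing so that $\Lambda = N_j$, that $\alpha_k(x)\alpha_0^{-1}(x) = g_k^{-1}$ as an element of $S^1 \subset \mathbb{C}^{\ast}$ for every $1 \leq k \leq N_j$; crucially, the right-hand side is independent of $x \in W_j$. Applying $\sigma$ to both sides then yields at once the first equality $\sigma(\alpha_k(x)\alpha_0^{-1}(x)) = \sigma(g_k^{-1})$.

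Next I would appeal to Proposition \ref{L-alk} iii), which establishes $\gamma_k^{-1} = \sigma(e^{i\eta_k}) = \sigma(g_k^{-1})$ as a map on $V_j \times \{0\} \times \{1\}$. Chaining this with the previous display supplies the second equality $\sigma(g_k^{-1}) = \gamma_k^{-1}$. Finally, the assertion that $\gamma_k^{-1}$ sends $V_j \times \{0\} \times \{1\}$ into itself has already been built into the setup via Remark \ref{r-1}(b) (where the chart $W_j$ was shrunk precisely to guarantee that each $\gamma_k$ acts on the slice itself), so the set-theoretic part of the claim requires no further argument.

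There is no genuine obstacle in this proof; the only point to be careful about is that the two identities $\alpha_k\alpha_0^{-1} = g_k^{-1}$ and $\gamma_k^{-1} = \sigma(g_k^{-1})$ were each proved by the same kind of small-angle/continuity argument (using the local freeness of the $\sigma(S^1)$-action and the discreteness of $G_j$), so one should check that the indexing $k = 1, \dots, N_j$ is consistent between the two propositions — and indeed Proposition \ref{gk} allows us to arrange it to be so. The significance of collecting these facts as Lemma \ref{L-8-6} is that the combination $\alpha_k(x)\alpha_0^{-1}(x)$ naturally appears inside the approximate heat kernel (cf.\ \eqref{7K-25}), whereas the orbifold group element $g_k$ and the slice automorphism $\gamma_k$ are what enter the Lefschetz-type computation on $V_j$; the lemma is therefore the bridge that lets us reinterpret the heat-kernel factor as the action of the local isotropy element on the slice.
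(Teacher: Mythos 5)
Your proof is correct and matches the paper's own (one-line) justification, which simply declares the lemma an immediate corollary of Propositions \ref{L-alk} and \ref{gk}. You correctly identify that \eqref{alk} supplies $\alpha_k(x)\alpha_0^{-1}(x) = g_k^{-1}$ (with $\Lambda = N_j$ and the indexing arranged compatibly), that Proposition \ref{L-alk}~iii) supplies $\gamma_k^{-1} = \sigma(g_k^{-1})$ on $V_j \times \{0\}\times\{1\}$, and that Remark \ref{r-1}(b) already secures the self-mapping of the slice, so nothing further is needed.
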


We can now compute the supertrace of the approximate heat kernel $%
P_{m,t}^{0}(x,x)$ in (\ref{6-1a1}) 
\begin{equation}
P_{m,t}^{0}:=\sum_{j\text{ (finite)}}H_{m,t}^{j}\circ \pi _{m}  \label{Kmt}
\end{equation}

\noindent where by (\ref{HQD}), (\ref{7end}), (\ref{endo}) and the notation
\textquotedblleft $\dagger "$ (see the lines before and after (\ref{endo}))%
\begin{eqnarray}
&&(H_{m,t}^{j}\circ \pi _{m})(x,x)=\varphi _{j}(x)w^{m}(x)\int_{\xi \in 
\mathbb{C}^{\ast }}\Psi _{\ast ,m}{\Large \{}(\gamma _{\xi }^{\mathcal{E}%
^{m}}\circ K_{t}^{j}(\zeta ,z))^{\dagger }\Psi _{\ast ,m}^{-1}  \label{Kmt-z}
\\
&&\text{ \ \ \ \ \ \ \ }\bar{w}^{m}(\xi ^{-1}x)h^{m}(z(\xi ^{-1}x),\bar{z}%
(\xi ^{-1}x))\tau _{j}(z(\xi ^{-1}x))\sigma _{j}(\phi (\xi ^{-1}x))\bar{\xi}%
^{m}{\Large \}}d\mu _{x,m}(\xi ).  \notag
\end{eqnarray}

\begin{remark}
\label{R-8-9} We equip $\pi ^{\ast }\mathcal{E}_{M}$ with the metric $\pi
^{\ast }g_{M},$ cf. the first term of the RHS in (\ref{mrp}) in local
coordinates. Associated to the metric $h_{E}$ on $E,$ we consider the 
\textit{Chern connection} $\nabla ^{h_{E}}$ (see \cite{MM}) for later use.
For the heat kernel $K_{t}^{j}$ below in (\ref{7-46-1}) we use the metric $%
\pi ^{\ast }g_{M}|_{V_{j}}$ (see the note below (\ref{6-1'})).
\end{remark}

Note that $\pi ^{\ast }\mathcal{E}_{M}$ is not a Clifford module over $%
\Sigma $ although $\mathcal{E}_{M}$ is a Clifford module over $M$.\ Recall
that in (\ref{7K-25}) (in Section \ref{AE_THK}) for $y=x,$ ($\sigma _{\alpha
_{k}}^{\ast })_{y}$ $=$ $\sigma (\alpha _{k}(x))_{x}^{\ast }$ $:$ $\mathcal{%
\tilde{E}}_{\alpha _{k}(x)x}^{m}$ $(=$ $\mathcal{E}_{\alpha
_{k}(x)x}^{m})\rightarrow $ $\mathcal{\tilde{E}}_{x}^{m}$ where $\mathcal{%
\tilde{E}}^{m}$ = $\pi ^{\ast }\mathcal{E}_{M}$ $\otimes E\otimes (L_{\Sigma
}^{\ast })^{\otimes m}$ (Notation \ref{n-8-1})$.$ Since $\alpha _{k}(x)x$ $%
\in $ $V_{j}\times \{0\}\times \{1\},$ $\mathcal{\tilde{E}}_{\alpha
_{k}(x)x}^{m}$ $=$ $\mathcal{E}_{\alpha _{k}(x)x}^{m}.$ For the integral in (%
\ref{Kmt-z}), using (\ref{6-19.5}) and (\ref{7K-25}) we can write%
\begin{equation}
(H_{m,t}^{j}\circ \pi _{m})(x,x)=p_{m,t}^{0,j}(x,x)|w|^{2m}  \label{Kmt-a}
\end{equation}

\noindent where we recall: (\ref{Str-5}) for the definition of $\gamma _{k}$%
, (\ref{rE}) for $\gamma _{k}^{\mathcal{E}^{m}}$ with $\xi ^{-1}$ $=$ $g_{k}$
and $z_{k}=z(\alpha _{k}(x)x),$%
\begin{equation}
p_{m,t}^{0,j}(x,x)=\varphi _{j}(x)h^{m}(z,\bar{z})\sum_{k=0}^{N_{j}}\Psi
_{\ast ,m}(\gamma _{k}^{\mathcal{E}^{m}}\circ K_{t}^{j}(z_{k},z))^{\dagger
}\Psi _{\ast ,m}^{-1}\tau _{j}(z_{k})(\alpha _{k}(x)\alpha _{0}^{-1}(x))^{m}
\label{7-46-1}
\end{equation}%
\noindent by a slightly tedious verification using (\ref{Kmt-z}), (\ref%
{7K-25}) and (\ref{Str-5c}): For this verification we are contented with
pointing out that in (\ref{Str-5c}) $\alpha _{0}^{-1}$ is involved and
noting that $\sigma _{\alpha _{0}^{-1}}^{\ast }$ acts on $\mathcal{\tilde{E}}%
^{m}$ \textquotedblleft trivially"\footnote{%
Note that $\alpha _{0}(x)$ (cf. (\ref{a0x})) acts as a small-angle rotation $%
e^{-i\phi (x)}$ whose action on any $y=(z,\phi ,r)$ keeps $z$-coordinates
unchanged so that the (induced) action of $\sigma _{\alpha _{0}(x)}$ on the
bundle $\pi ^{\ast }\mathcal{E}_{M}$ (as well as the $\mathbb{C}^{\ast }$%
-equivariant $E\otimes (L_{\Sigma }^{\ast })^{\otimes m}$) is regarded as
\textquotedblleft trivial" under the natural trivialization using the
pullback sections (see also $Footnote^{6}$ associated with (\ref{proj1})).}
so using $\sigma _{\alpha _{k}^{-1}\alpha _{0}}^{\ast }$ $=$ $\sigma
_{\alpha _{k}^{-1}}^{\ast }$ and $\gamma _{k}$ $=$ $\sigma _{g_{k}}$ $=$ $%
\sigma _{\alpha _{k}^{-1}\alpha _{0}}$ (\ref{Str-5c}) it follows from $%
\sigma _{\alpha _{k}^{-1}}^{\ast }$ in (\ref{7K-25}), (\ref{endo}) the
expression $\gamma _{k}^{\mathcal{E}^{m}}$ in (\ref{7-46-1}). With $z=z(x)$
and $z_{k}=z(\alpha _{k}(x)x)$ we have an endomorphism 
\begin{equation}
\gamma _{k}^{\mathcal{E}^{m}}=\gamma _{\alpha _{k}^{-1}}^{\mathcal{E}^{m}}:%
\mathcal{E}_{(z_{k},0,1)}^{m}\rightarrow \mathcal{E}_{(z,0,1)}^{m}
\label{8.22-a}
\end{equation}%
\noindent (with $\xi $ $=$ $\alpha _{k}$ in (\ref{rE})) of $\mathcal{E}^{m}$ 
$\mathcal{=}$ $\psi _{j}^{\ast }(\pi ^{\ast }\mathcal{E}_{M}\otimes E\otimes
(L_{\Sigma }^{\ast })^{\otimes m})|_{V_{j}\times \{0\}\times \{1\}}$,
induced by the pullback of $\sigma (\alpha _{k})$ $=$ $\gamma _{k}^{-1}$ $:$ 
$(z,0,1)$ $\rightarrow $ $(z_{k},0,1)$ for $k$ $=$ $1,$ $\cdot \cdot ,$ $%
N_{j}$; note that a local equivariant section of $\mathcal{\tilde{E}}^{m}$
formed from those of $\pi ^{\ast }\mathcal{E}_{M},$ $E$ and $(L_{\Sigma
}^{\ast })^{\otimes m}$ has been used to lift $\gamma _{k}$ to $\gamma _{k}^{%
\mathcal{E}^{m}}.$ For $k$ $=$ $0$ we define $\sigma (\alpha _{0}(x))$ $=:$ $%
\gamma _{0}^{-1}$ at $x$ (see Footnote$^{10}$ at general $x$) and then for $%
x $ $=$ $(z,0,1)$ (thus $\alpha _{0}(x)$ $=$ $1$) we have%
\begin{equation}
\gamma _{0}^{\mathcal{E}^{m}}:\mathcal{E}_{(z,0,1)}^{m}\rightarrow \mathcal{E%
}_{(z,0,1)}^{m}\text{ is the identity.}  \label{r0Em}
\end{equation}

\subsection{Part I of the local index formula ($k=0$ in (\protect\ref{7-46-1}%
))\label{Sub-8-1}}

When $k=0$ thus $z_{0}=z$ and $\gamma _{0}^{\mathcal{E}^{m}}$ $=$ identity
endomorphism (\ref{r0Em}), the $k=0$ term in (\ref{7-46-1}) equals%
\begin{equation}
\varphi _{j}(x)h^{m}(z,\bar{z})K_{t}^{j}(z,z)\tau _{j}(z)  \label{Main}
\end{equation}

\noindent and the corresponding supertrace (denoted as Part I of $%
StrP_{m,t}^{0}(x,x))$ of $P_{m,t}^{0}(x,x)$ in (\ref{Kmt}), (\ref{Kmt-a})
reads (by noting that $\tau _{j}(z)$ $=$ $1$ on supp $\varphi _{j}$ and $%
h^{m}(z,\bar{z})|w|^{2m}$ $=$ $l^{m}(x)$) as%
\begin{eqnarray}
&&\text{Part I of }StrP_{m,t}^{0}(x,x)  \label{PI} \\
&&\overset{(\ref{Kmt-a})+(\ref{Main})}{=}\sum_{j}\varphi
_{j}(x)l^{m}(x)Str(K_{t}^{j}(z,z))  \notag
\end{eqnarray}%
\noindent which gives the major contribution for the index of $\bar{\partial}%
_{\Sigma ,m}^{E}$-complex in Theorem \ref{main_theorem} (see Theorem \ref%
{P-main} below). With (\ref{PI}) we are almost ready to derive
\textquotedblleft Part I" of the local index density of Theorem \ref%
{main_theorem} as stated in the Introduction. To fix the notation let $E$ be
a holomorphic vector bundle on $\Sigma $ with a connection $\nabla .$ Let $%
ch(\nabla ,E)$ and $Td(\nabla ,E)$ be the $\nabla $-induced Chern character
form and the Todd form respectively. For the Chern connection $\nabla
^{h_{E}}$ of $E$ (see Remark \ref{R-8-9}) we write $Td(E,h_{E}):=Td(\nabla
^{h_{E}},E)$ ; $ch(E,h_{E}):=ch(\nabla ^{h_{E}},E).$ Recall that the metric $%
G_{a}$ or $G_{a,m}$ on $\Sigma $ has the property that \textquotedblleft
base" $z$-slices and \textquotedblleft fibre" $w$-slices are orthogonally
splitting (see (\ref{M1-1}) for a precise discussion). We summarize the
results as follows.

\begin{lemma}
\label{L-inv} With notations above and those in the Introduction, we have $%
i) $ the quotient (Hermitian) metric $g_{quot}$ on $T^{1,0}(\Sigma
)/L_{\Sigma } $ defined by the restriction of $G_{a,m}$ on the orthogonal
complement of the tangent space to the \textquotedblleft fibre" (the orbit
of the $\mathbb{C}^{\ast }$-action) is isometric to $\pi ^{\ast }g_{M}$ and
is $\mathbb{C}^{\ast }$-invariant; $ii)$ for $E$ being $\mathbb{C}^{\ast }$%
-equivariant with a $\mathbb{C}^{\ast }$-invariant Hermitian metric $h_{E},$ 
$Td(E,h_{E})$ and $ch(E,h_{E})$ are $\mathbb{C}^{\ast }$-invariant, denoted
by $Td_{\mathbb{C}^{\ast }}(E,h_{E})$ and $ch_{\mathbb{C}^{\ast }}(E,h_{E})$
respectively.
\end{lemma}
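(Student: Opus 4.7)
For part (i), my plan is to work in distinguished local coordinates $(z,w)$ around an arbitrary point $p_0$, chosen as in (\ref{M0-3}) so that $h(z_0,\bar z_0)=1$ and $dh(z_0,\bar z_0)=0$ at $p_0$. At such a point, the local expression (\ref{M1-1}) of $G_{a,m}$ shows directly that $(L_\Sigma)_{p_0}=\mathbb{C}\cdot\partial/\partial w|_{p_0}$ is orthogonal to the horizontal subspace $H_{p_0}:=\mathrm{span}\{\partial/\partial z_\alpha\}$, and that $G_{a,m}|_{H_{p_0}}$ equals $(g_M)_{\alpha\bar\beta}(z_0,\bar z_0)\,dz_\alpha d\bar z_\beta$. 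Since $d\pi_{p_0}|_{H_{p_0}}\colon H_{p_0}\to T^{1,0}_{\pi(p_0)}M$ is a linear isomorphism carrying this restriction to $g_M$, composing with the natural identification $H_{p_0}\xrightarrow{\sim}(T^{1,0}\Sigma/L_\Sigma)_{p_0}$ shows that $g_{\mathrm{quot}}$ at $p_0$ is isometric to $(\pi^{\ast}g_M)_{p_0}$. As $p_0$ was arbitrary, this gives the desired isometry globally.

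For the $\mathbb{C}^{\ast}$-invariance of $g_{\mathrm{quot}}$, I would argue indirectly. The action $\sigma(\lambda)$ preserves $L_\Sigma$ (the latter being $\mathbb{C}^{\ast}$-equivariant, cf.\ (\ref{3.0})), so it induces a $\mathbb{C}^{\ast}$-action on $T^{1,0}\Sigma/L_\Sigma$; and the bundle map $T^{1,0}\Sigma/L_\Sigma\xrightarrow{\sim}\pi^{\ast}T^{1,0}M$ induced by $d\pi$ is globally defined and $\mathbb{C}^{\ast}$-equivariant because $\pi\circ\sigma(\lambda)=\pi$. The first paragraph identifies $g_{\mathrm{quot}}$ with $\pi^{\ast}g_M$ under this isomorphism, and $\pi^{\ast}g_M$ is tautologically $\mathbb{C}^{\ast}$-invariant, yielding the invariance of $g_{\mathrm{quot}}$. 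The main subtle point is precisely that although $G_{a,m}$ itself is only $S^1$-invariant (Remark \ref{7-11b}), the $\mathbb{C}^{\ast}$-non-invariance is concentrated in the fibre direction and disappears upon passage to the quotient---exactly the feature of $G_{a,m}$ that I would want to highlight.

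For part (ii), I would invoke the uniqueness of the Chern connection: $\nabla^{h_E}$ is the unique connection on $E$ compatible with $h_E$ whose $(0,1)$-part equals $\bar\partial_E$. The action $\sigma(\lambda)$ preserves the holomorphic structure (since $E$ is $\mathbb{C}^{\ast}$-equivariant) and preserves $h_E$ by hypothesis; therefore the pulled-back connection $\sigma(\lambda)^{\ast}\nabla^{h_E}$ satisfies both characterizing properties, so by uniqueness $\sigma(\lambda)^{\ast}\nabla^{h_E}=\nabla^{h_E}$. Consequently its curvature $R^E=(\nabla^{h_E})^2$ is $\mathbb{C}^{\ast}$-invariant, and since $Td(E,h_E)$ and $ch(E,h_E)$ are (via Chern--Weil) invariant polynomials in $R^E$, they are $\mathbb{C}^{\ast}$-invariant forms, justifying the notations $Td_{\mathbb{C}^{\ast}}(E,h_E)$ and $ch_{\mathbb{C}^{\ast}}(E,h_E)$. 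No genuine obstacle arises here; the only conceptual care needed is in part (i), namely to make the identification $g_{\mathrm{quot}}\cong\pi^{\ast}g_M$ globally and $\mathbb{C}^{\ast}$-equivariantly, rather than merely pointwise.
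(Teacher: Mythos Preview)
Your proposal is correct and complete. The paper actually omits any explicit proof of this lemma, presenting it with the phrase ``We summarize the results as follows'' immediately after recalling that $G_{a,m}$ has the orthogonal splitting property of (\ref{M1-1}); your argument for part (i) via the special coordinates (\ref{M0-3}) and the local form (\ref{M1-1}), followed by the equivariance of $d\pi$ to deduce $\mathbb{C}^{\ast}$-invariance from that of $\pi^{\ast}g_M$, is exactly the reasoning the paper leaves implicit, and your Chern-connection uniqueness argument for part (ii) is the standard one.
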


Recall that $||\cdot ||$ denotes the $\mathbb{C}^{\ast }$-invariant
Hermitian metric on $L_{\Sigma }$ (Step 1 in Section \ref{S-metric})$,$
which induces $||\cdot ||^{\ast }$ and $||\cdot ||_{m}^{\ast }$ on $%
L_{\Sigma }^{\ast }$ and $(L_{\Sigma }^{\ast })^{m}$ respectively. As above
we write the first Chern form $c_{1}(L_{\Sigma },||\cdot ||).$

We are going to establish \textquotedblleft Part I" of our transversal local
index density in Theorem \ref{P-main} via the local adaptation of classical
local index density arguments (non-transversal ones) with (\ref{PI}).

\begin{theorem}
\label{P-main} With notations and assumptions as in Theorem \ref%
{main_theorem}, we have%
\begin{eqnarray}
&&\lim_{t\rightarrow 0}\text{Part I of }StrP_{m,t}^{0}(x,x)dv_{\Sigma ,m}
\label{Part-Ia} \\
&=&p\delta _{p|m}[Td_{\mathbb{C}^{\ast }}(T^{1,0}\Sigma /L_{\Sigma
},g_{quot})\wedge ch_{\mathbb{C}^{\ast }}(E,h_{E})\wedge
e^{-mc_{1}(L_{\Sigma },||\cdot ||)}\wedge d\hat{v}_{m}]_{2n}(x)  \notag
\end{eqnarray}%
(in a pointwise, non-uniform manner) for $x$ $\in $ $\Sigma _{p}$ (=$\Sigma
\backslash \Sigma _{\text{sing}}$)$.$ Moreover%
\begin{eqnarray}
&&\lim_{t\rightarrow 0}\int_{\Sigma }\text{Part I of }StrP_{m,t}^{0}(x,x)%
\text{ }dv_{\Sigma ,m}  \label{Part-I} \\
&=&p\delta _{p|m}\int_{\Sigma }Td_{\mathbb{C}^{\ast }}(T^{1,0}\Sigma
/L_{\Sigma },g_{quot})\wedge ch_{\mathbb{C}^{\ast }}(E,h_{E})\wedge
e^{-mc_{1}(L_{\Sigma },||\cdot ||)}\wedge d\hat{v}_{m}.  \notag
\end{eqnarray}
\end{theorem}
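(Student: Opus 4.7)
The plan is to reduce \emph{Part I} of the supertrace to the classical pointwise local index theorem (Patodi--Getzler) for the transversal spin$^{c}$ Laplacian on each slice $V_{j}$, and then translate the resulting characteristic forms back to $\mathbb{C}^{\ast}$-invariant quantities on $\Sigma$ via Lemma~\ref{L-inv} together with the fibre structure of $dv_{\Sigma,m}$. Starting from (\ref{PI}), the $k=0$ contribution in (\ref{7-46-1}) is $\varphi_{j}(x)h^{m}(z,\bar z)K_{t}^{j}(z,z)$. At an $x\in\Sigma_{p}$ with isotropy $H_{x}\cong\mathbb{Z}/p$, the additional indices $k\in\{1,\dots,N_{j}\}$ with $\alpha_{k}(x)\alpha_{0}^{-1}(x)\in H_{x}$ also satisfy $z_{k}=z$ (by Proposition~\ref{L-alk}(iii) and Lemma~\ref{L-8-6}) and therefore contribute the \emph{same} local density $\varphi_{j}\,l^{m}\,Str\,K_{t}^{j}(z,z)$ modulated only by the phase $(\alpha_{k}\alpha_{0}^{-1})^{m}$. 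All other $k$ have $z_{k}\neq z$ and are suppressed by $\exp(-\hat{\varepsilon}_{0}\hat{d}(x,\Sigma_{\text{sing}})^{2}/t)$ via (\ref{7-5'}) and (\ref{claim}), so they vanish in the pointwise $t\to 0^{+}$ limit. Summing phases, $\sum_{g\in H_{x}}g^{m}=p\delta_{p|m}$, which is precisely the sectorial sum $\sum_{s=1}^{p}e^{2\pi i(s-1)m/p}$ that already appeared at the end of the proof of Theorem~\ref{AHKE}(ii); this accounts for the global factor $p\delta_{p|m}$ in (\ref{Part-Ia}).

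For each $j$, Proposition~\ref{BoxDU} identifies $K_{t}^{j}$ with the heat kernel of the ordinary spin$^{c}$ Dirac Laplacian $\square_{U_{j},m}^{c\pm}$ on the elliptic base $V_{j}$, twisted by the coefficient bundle $(\psi_{j}^{\ast}L_{\Sigma}^{\ast})^{\otimes m}\otimes \psi_{j}^{\ast}E$ with metric $h^{-m}\otimes h_{E}$. The classical local index theorem (\cite[Chapter~4]{BGV}; being a short-time statement on an elliptic operator, it is insensitive to the noncompactness of $\Sigma$) gives, pointwise in $z$,
\[
\lim_{t\to 0^{+}}Str\,K_{t}^{j}(z,z)\,dv(z)=\bigl[Td(T^{1,0}V_{j},\pi^{\ast}g_{M})\wedge ch\bigl((\psi_{j}^{\ast}L_{\Sigma}^{\ast})^{\otimes m}\bigr)\wedge ch(\psi_{j}^{\ast}E,h_{E})\bigr]_{2(n-1)}(z).
\]
Lemma~\ref{L-inv}(i) identifies $(T^{1,0}V_{j},\pi^{\ast}g_{M}|_{V_{j}})$ isometrically with $(T^{1,0}\Sigma/L_{\Sigma},g_{quot})$ along $V_{j}\hookrightarrow \Sigma$, and Lemma~\ref{L-inv}(ii) confirms that $Td$ and $ch$ are $\mathbb{C}^{\ast}$-invariant and hence well defined as global forms on $\Sigma$. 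Multiplicativity of $ch$ combined with $c_{1}((L_{\Sigma}^{\ast})^{\otimes m})=-m\,c_{1}(L_{\Sigma},||\cdot||)$ yields $ch((L_{\Sigma}^{\ast})^{\otimes m})=e^{-m\,c_{1}(L_{\Sigma},||\cdot||)}$.

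Finally, (\ref{volume}) gives $dv_{\Sigma,m}=\pi^{\ast}dv_{M}\wedge dv_{f,m}$ and (\ref{fibrenv_0}) gives $l^{m}(x)\,dv_{f,m}=d\hat{v}_{m}$, so that multiplying the local index density by $l^{m}(x)$ and using the partition of unity $\sum_{j}\varphi_{j}\equiv 1$ produces precisely the $2n$-form on the right-hand side of (\ref{Part-Ia}). The global identity (\ref{Part-I}) then follows by integrating over $\Sigma$ and interchanging $\lim$ with $\int$ via dominated convergence: the error estimate (\ref{K0D}), and more generally Theorem~\ref{AHKE}(ii), controls the difference by a constant multiple of $l^{m}(x)(t^{-(n-1)+N_{0}+1}+t^{-(n-1)}e^{-\hat{\varepsilon}_{0}\hat{d}(x,\Sigma_{\text{sing}})^{2}/t})$; the function $l^{m}$ is $dv_{\Sigma,m}$-integrable by Remark~\ref{3-r} since $a>m/2$, while the Gaussian term integrates to zero as $t\to 0^{+}$ because $\hat{d}(\,\cdot\,,\Sigma_{\text{sing}})>0$ off a set of measure zero. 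The main technical point is the bookkeeping in the first paragraph: identifying exactly which summands in (\ref{7-46-1}) survive the pointwise $t\to 0^{+}$ limit at $x\in\Sigma_{p}$ and verifying that their phases collapse to $p\delta_{p|m}$; the local index theorem itself is standard.
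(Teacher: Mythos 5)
Your overall plan matches the paper's: reduce Part I to the classical pointwise local index theorem on the slices $V_{j}$, identify the characteristic forms via Lemma \ref{L-inv}, and reassemble using the fibre structure of $dv_{\Sigma,m}$. Two points deserve comment.

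First, your derivation of the factor $p\delta_{p|m}$ is genuinely different from the paper's and arguably more transparent. The paper simply cites the sectorial argument after (\ref{sectors}) in the proof of Theorem \ref{AHKE}, where the angular integral over $[0,2\pi)$ is divided into $p$ periods. You instead work with the exact expansion (\ref{7-46-1}) directly, observing that for $x\in\Sigma_{p}$ exactly the $p$ indices $k$ with $g_{k}\in H_{x}=\mathbb{Z}/p$ have $z_{k}=z$, and that these contribute the $k=0$ density times the phase $(\alpha_{k}\alpha_{0}^{-1})^{m}=g_{k}^{-m}$, summing to $p\delta_{p|m}$. This requires (and you leave implicit) the fact that $\gamma_{k}^{\mathcal{E}^{m}}$ is the identity endomorphism for such $k$; this does hold, since $\mathbb{Z}/p$ is the kernel of the $S^{1}$-action on $\Sigma$, so $\sigma(g_{k})$ is the identity diffeomorphism and every pullback induced by it is trivial. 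Had you instead argued only from $z_{k}=z$ — which fails to rule out a nontrivial endomorphism of the fibre, as actually happens in Part II when $x\in\Sigma_{\mathrm{sing}}$ — the reduction to a pure phase would not follow, so this gap should be closed explicitly.

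Second, and this is a genuine error: your justification of the interchange of $\lim$ and $\int$ for (\ref{Part-I}) invokes (\ref{K0D}), whose error bound contains the term $l^{m}(x)\,t^{-(n-1)}\exp(-\hat{\varepsilon}_{0}\hat{d}(x,\Sigma_{\mathrm{sing}})^{2}/t)$, and you assert that this term ``integrates to zero as $t\to 0^{+}$ because $\hat{d}(\cdot,\Sigma_{\mathrm{sing}})>0$ off a set of measure zero''. This is false: the prefactor $t^{-(n-1)}$ defeats the pointwise decay, and the integral of this Gaussian term over $\Sigma$ concentrates on $\Sigma_{\mathrm{sing}}$ and is precisely what produces the nonvanishing Part II contributions in (\ref{MF}); in general it does \emph{not} tend to zero. (\ref{K0D}) is an estimate for the \emph{full} heat kernel, not for Part I. The correct estimate for Part I alone is the paper's (\ref{STrSE}), in which the error is $C_{N_{0}}l^{m}(x)\,t^{-(n-1)+N_{0}+1}$ with \emph{no} Gaussian term (the Gaussian never enters because $K_{t}^{j}$ is evaluated on the diagonal $\zeta=z$ in (\ref{KjkE})). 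With (\ref{STrSE}) in place of (\ref{K0D}), the dominated convergence argument you sketch works exactly as stated, since $l^{m}$ is $dv_{\Sigma,m}$-integrable by Remark~\ref{3-r}. This is an easy substitution, but as written your argument for (\ref{Part-I}) does not go through.
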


\begin{proof}
Let $N_{0}(n)$ be as in $ii)$ of Theorem \ref{AHKE}. We first claim that for
every positive integer $N_{0}$ $\geq $ $N_{0}(n)$ ($=n+1)$, there exist $%
\delta $ $>$ $0$ and $C_{N_{0}}$ $>$ $0$ such that%
\begin{eqnarray}
&&|\text{Part I of (}TrP_{m,t}^{0,+}(x,x)-TrP_{m,t}^{0,-}(x,x))
\label{STrSE} \\
&&\text{ \ \ \ \ }-p\delta
_{p|m}l^{m}(x)%
\sum_{j=0}^{N_{0}}t^{-(n-1)+j}(Trb_{n-1-j}^{+}(z)-Trb_{n-1-j}^{-}(z))| 
\notag \\
&\leq &C_{N_{0}}l^{m}(x)t^{-(n-1)+N_{0}+1}  \notag
\end{eqnarray}%
\noindent for any $t,$ $0<t<\delta $ and any $x$ $\in $ $\Sigma _{p}$ (=$%
\Sigma \backslash \Sigma _{\text{sing}}$)$.$ For $p=1$ the proof of (\ref%
{STrSE}) follows from (\ref{KjkE}) (with $\zeta $ $=$ $z)$ and (\ref{PI}).
Since $l^{m}(x)$ in (\ref{STrSE}) blows up as $|w(x)|$ $\rightarrow $ $%
\infty ,$ no uniform convergence follows from (\ref{STrSE}). In fact the
convergence cannot be uniform; this follows from an inspection of the factor 
$l^{m}(x)$ in (\ref{KjkE}) and (\ref{PI}) used for deriving (\ref{STrSE}).
For $p$ $>$ $1$ there is an extra factor $p\delta _{p|m}$ as shown before
(see (\ref{sectors}) and the paragraph after it). Hence the claim. It is
known by using rescaling techniques that%
\begin{eqnarray}
&&\sum_{j=0}^{N_{0}}t^{-(n-1)+j}(Trb_{n-1-j}^{+}(z)-Trb_{n-1-j}^{-}(z))
\label{7-58a} \\
&=&\sum_{j=n-1}^{N_{0}}t^{-(n-1)+j}(Trb_{n-1-j}^{+}(z)-Trb_{n-1-j}^{-}(z)) 
\notag
\end{eqnarray}

\noindent as is from \cite[Proposition 3.21 and Theorem 4.1 (1)]{BGV} (about
the vanishing of the supertrace for degrees strictly less than $\dim _{%
\mathbb{R}}M$ $=$ $2(n-1)$). Note that only the $t^{0}$-term $%
Trb_{0}^{+}(z)-Trb_{0}^{-}(z)$ ($j=n-1$) in (\ref{7-58a}) would survive as $%
t\rightarrow 0$\textbf{. }We obtain via (\ref{STrSE}) and (\ref{7-58a})%
\begin{eqnarray}
&&\lim_{t\rightarrow 0}\text{Part I of }StrP_{m,t}^{0}(x,x)\text{ }%
dv_{\Sigma ,m}  \label{7-38.5} \\
&&\overset{}{=}\sum_{i}\varphi _{i}(x)p\delta
_{p|m}l^{m}(x)(Trb_{0}^{+}(z)-Trb_{0}^{-}(z))dv_{V_{i}}(z)dv_{m}(|w|)dv(\phi
)/2\pi .  \notag
\end{eqnarray}

The RHS of (\ref{7-38.5}) is seen to be related to the classical local index
density. For, in view of the heat kernel $K_{t}^{i}(z,\zeta )$ for $\square
_{V_{i},m}^{c}$ (see (\ref{6-1'}) and (\ref{DiracLap})) the well-known local
index density computation (in connection with Hirzebruch-Riemann-Roch
theorem, cf. \cite{BGV}) can be adapted and applied on $V_{i}$ (with the
extra bundle $E$), and then, since the background metric $g_{M}$ on $V_{i}$
for $\square _{V_{i},m}^{c}$ (cf. Step 3 of Section \ref{S-metric} and
Definition \ref{d-3-7}) has been identified with the above-mentioned metric $%
g_{quot}$ on $T^{1,0}(\Sigma )/L_{\Sigma }$ (Lemma \ref{L-inv})$,$ we arrive
at the following equalities:%
\begin{eqnarray}
&&(Trb_{0}^{+}(z)-Trb_{0}^{-}(z))dv_{V_{i}}(z)  \label{7-38.75} \\
&=&[Td(T^{1,0}V_{i},\pi ^{\ast }g_{M})ch(\psi _{i}^{\ast }(E\otimes
(L_{\Sigma }^{\ast })^{m})|_{V_{i}},h_{E}\otimes ||\cdot ||_{m}^{\ast
})]_{2(n-1)}(z)  \notag \\
&=&[Td(T^{1,0}(\Sigma )/L_{\Sigma },g_{quot})ch(E,h_{E})ch((L_{\Sigma
}^{\ast })^{m},||\cdot ||_{m}^{\ast })]_{2(n-1)}(z,w)  \notag \\
&=&[Td_{\mathbb{C}^{\ast }}(T^{1,0}(\Sigma )/L_{\Sigma },g_{quot})ch_{%
\mathbb{C}^{\ast }}(E,h_{E})e^{-mc_{1}(L_{\Sigma },||\cdot
||)}]_{2(n-1)}(z,w)  \notag
\end{eqnarray}%
\noindent where $\psi _{i}$ $:$ $(z,w)$ $\in $ $V_{i}\times C_{\varepsilon
_{i}}$ $\rightarrow \Sigma $ is the local trivialization (see (\ref{3-0.75})
and (\ref{2.10-5})) and $V_{i}$ may be identified with $V_{i}\times
\{0\}\times \{1\}$ $\subset $ $\Sigma .$ By (\ref{3-18.75}) we write (\ref%
{7-38.5}) as%
\begin{eqnarray}
&&\sum_{i}\varphi _{i}(x)p\delta
_{p|m}l^{m}(x)(Trb_{0}^{+}(z)-Trb_{0}^{-}(z))dv_{V_{i}}(z)dv_{f,m}
\label{7-39} \\
&\overset{(\ref{7-38.75})}{=}&p\delta _{p|m}[Td_{\mathbb{C}^{\ast
}}(T^{1,0}(\Sigma )/L_{\Sigma },g_{quot})ch_{\mathbb{C}^{\ast }}(E,h_{E}) 
\notag \\
&&\ \ \ \ \ \ \ \ e^{-mc_{1}(L_{\Sigma },||\cdot ||)}]_{2(n-1)}(z,w)\wedge
l^{m}(x)dv_{f,m}\text{ (also by }\sum_{i}\varphi _{i}(x)=1)  \notag \\
&\overset{(\ref{fibrenv_0})}{=}&p\delta _{p|m}[Td_{\mathbb{C}^{\ast
}}(T^{1,0}\Sigma /L_{\Sigma },g_{quot})\wedge ch_{\mathbb{C}^{\ast
}}(E,h_{E})\wedge e^{-mc_{1}(L_{\Sigma },||\cdot ||)}\wedge d\hat{v}%
_{m}]_{2n}(x).  \notag
\end{eqnarray}%
\noindent The claim (\ref{Part-Ia}) follows from (\ref{7-38.5}) and (\ref%
{7-39}). To exchange the limit $t\rightarrow 0$ and the integral sign we
note that $l^{m}(x)$ in the RHS of (\ref{STrSE}) is an $L^{1}$ function in
view of Remark \ref{3-r}, implying the second claim (\ref{Part-I}).
\end{proof}

\subsection{Part II of the local $m$-index formula ($k\geq 1$ in (\protect
\ref{7-46-1})) via Lefschetz type formulas.\label{Subs-8-2}}

In contrast to the $k=0$ case$,$ $\gamma _{k}^{\mathcal{E}^{m}}$ is not
necessarily the identity endomorphism if $k\geq 1$ in (\ref{7-46-1}). In
view of local equivariant index theorems, the fixed points set $%
V_{j}^{\gamma _{k}}$ $($in $V_{j},$ identified with $(V_{j}\times
\{0\}\times \{1\})^{\gamma _{k}}$ $\subset $ $V_{j}\times \{0\}\times \{1\})$
of $\gamma _{k}$ and hence the singular stratum $\Sigma _{j,k}$ (see (\ref%
{sing})) (which may cover $|\Sigma _{j,k}|$--the support of $\Sigma _{j,k},$
several times) are expected to play a role in the final index formula. See
the remark below for the support $|\Sigma _{j,k}|$ of $\Sigma _{j,k}$.

\begin{remark}
\label{R-orbit} Let $O$ $:=$ $\mathbb{C}^{\ast }\circ S$ be the $\mathbb{C}%
^{\ast }$-orbit of a set $S$. We write $|O|$ for the \textquotedblleft
support of $O$", that is, the set-theoretical image of the $\mathbb{C}^{\ast
}$-action on $S$. This counts the points in the orbit only once.
\end{remark}

\begin{notation}
\label{N-8-2} In the remaining of this section $\sigma _{\bullet }$ denotes
the symbol map as in \cite[Definition 3.4 and Proposition 3.6]{BGV} and for $%
\sigma _{\bullet }(endomorphisms$) see \cite[Lemma 6.10 and the top two
lines on p.193]{BGV} which is basically the symbol of the Clifford algebra
part of $endomorphisms$.
\end{notation}

To proceed with (\ref{7-46-1}), let us first write the asymptotic expansion
of ($\gamma _{k}^{\mathcal{E}^{m}}\circ K_{t}^{j}(z_{k},z))^{\dagger }$
which appears in (\ref{7-46-1}) and (\ref{8.22-a}), as follows (without
\textquotedblleft $\dagger "$ below): as $t\rightarrow 0$ (cf. (\ref{7-5'})
for $k=0$)%
\begin{equation}
\gamma _{k}^{\mathcal{E}^{m}}\circ K_{t}^{j}(z_{k},z)\sim (4\pi t)^{-\dim _{%
\mathbb{R}}V_{j}^{\gamma _{k}}/2}\sum_{i=0}^{\infty }t^{i}\Phi _{i}^{\gamma
_{k}^{\mathcal{E}^{m}}}(z,z),\text{ \ }z\in V_{j}  \label{Asym}
\end{equation}%
\noindent (cf. \cite[Theorem 6.11]{BGV} with notations parallelly used yet
slightly modified here; $\gamma _{k}^{\mathcal{E}^{m}}$ acts at $z_{k}$).
Notice that (\ref{Asym}) holds true for an open domain $V_{j}$ although \cite%
[Theorem 6.11]{BGV} is applicable for a compact manifold. See \cite{CHT-ECM}
for some detailed explanation.

In the spirit of deducing Lefschetz fixed point theorem we compute the
following limit via (\ref{Asym}) (in the space of generalized sections, see 
\cite[Theorem 6.16]{BGV} for details by noting that $\gamma _{k}^{\mathcal{E}%
^{m}}$ on $(V_{j},$ $\mathcal{E}^{m})$ is an isometry by Corollary \ref%
{C-alk} and Lemma \ref{L-adj}$)$%
\begin{eqnarray}
&&\lim_{t\rightarrow 0}Str(\gamma _{k}^{\mathcal{E}^{m}}\circ
K_{t}^{j}(z_{k},z))  \label{Str-k} \\
&=&c_{j,k}T_{V_{j}}\{Str_{\mathcal{E}^{m}/S}[\sigma _{2\dim _{\mathbb{C}%
}V_{j}}(\Phi _{\dim _{\mathbb{R}}V_{j}^{\gamma _{k}}/2}^{\gamma
_{k}}(z,z))]\}  \notag \\
&=&c_{j,k}T_{V_{j}}\{I_{j}(\gamma _{k})Str_{\mathcal{E}^{m}/S}[\sigma
_{2\dim _{\mathbb{C}}V_{j}-\dim _{\mathbb{R}}V_{j}^{\gamma _{k}}}(\gamma
_{k}^{\mathcal{E}^{m}})\exp (-F_{0}^{_{\mathcal{E}^{m}/S}})]\}\delta
_{V_{j}^{\gamma _{k}}}  \notag
\end{eqnarray}

\noindent where the Berezin integral denoted by $T_{V_{j}}\{I_{j}(\gamma
_{k})$ $\cdot \cdot \cdot \}$ gives a smooth function on $V_{j}^{\gamma
_{k}} $ \cite[p.196 and p.54]{BGV}, $F_{0}^{_{\mathcal{E}^{m}/S}}$ denotes
the restriction to $V_{j}^{\gamma _{k}}$ of the twisting curvature \cite[%
p.195 and p.120]{BGV} ($V_{j}^{\gamma _{k}}$ endowed with the metric induced
from $\pi ^{\ast }g_{M}|_{V_{j}}$), $\sigma _{\bullet }$ is the symbol map
(not to be confused with the $\mathbb{C}^{\ast }$-action $\sigma ,$ see also
(\ref{S-1})), $c_{j,k}$ $:=$ $(4\pi )^{-\dim _{\mathbb{R}}V_{j}^{\gamma
_{k}}/2}(-2i)^{\dim _{\mathbb{C}}V_{j}}$ with $(4\pi )^{-\dim _{\mathbb{R}%
}V_{j}^{\gamma _{k}}/2}$ from (\ref{Asym}) and $(-2i)^{\dim _{\mathbb{C}%
}V_{j}}$ from the formula in \cite[Proposition 3.21]{BGV} and finally%
\begin{equation}
I_{j}(\gamma _{k}):=\frac{\hat{A}_{BGV}(V_{j}^{\gamma _{k}})}{%
\det^{1/2}(1-(\gamma _{k})_{1})\det^{1/2}(1-(\gamma _{k})_{1}\exp (-R^{1}))}
\label{Str-0}
\end{equation}%
\noindent (see \cite[Theorem 6.11]{BGV} and Notation \ref{N-8-2a} below).
Note that the use of $\hat{A}_{BGV}$ here according to \cite{BGV} is
different from the usual $\hat{A}$-genus form (\ref{3-11}) by a constant
factor involving $2\pi $. About these expressions, see related discussions
prior to (and in the proof of) Proposition \ref{P-Fk}.

Note also that following \cite[the line above Theorem 6.16]{BGV} we use the
notation $T_{V_{j}}$ in (\ref{Str-k}) although it is applied to a
delta-function like object supported on $V_{j}^{\gamma _{k}}$. For the
insertion of $\delta _{V_{j}^{\gamma _{k}}}$ in (\ref{Str-k}) in the end,
see \cite[Theorem 6.11]{BGV}. With the notation above, set%
\begin{equation}
\mathcal{F}_{k,m}^{j}(z,\bar{z}):=c_{j,k}T_{V_{j}}\{I_{j}(\gamma _{k})Str_{%
\mathcal{E}^{m}/S}[\sigma _{2\dim _{\mathbb{C}}V_{j}-\dim _{\mathbb{R}%
}V_{j}^{\gamma _{k}}}(\gamma _{k}^{\mathcal{E}^{m}})\exp (-F_{0}^{_{\mathcal{%
E}^{m}/S}})]\}.  \label{Str-1a}
\end{equation}%
\noindent for $z\in V_{j}^{\gamma _{k}}.$ Rewrite (\ref{Str-k}) as%
\begin{equation}
\lim_{t\rightarrow 0}Str(\gamma _{k}^{\mathcal{E}^{m}}\circ
K_{t}^{j}(z_{k},z))=\mathcal{F}_{k,m}^{j}(z,\bar{z})\delta _{V_{j}^{\gamma
_{k}}}.  \label{Str-1}
\end{equation}

\noindent It follows from (\ref{Str-1}), (\ref{7-46-1}) and $l(x)=h(z,\bar{z}%
)|w|^{2}$ that (by $Str(\bullet ^{\dagger })$ $=$ $\overline{Str(\bullet )}$
for the first equality then separating the $k=0$ term where $\gamma _{0}^{%
\mathcal{E}^{m}}$ is the identity (\ref{r0Em}), from $k\geq 1$ terms in the
second equality below)%
\begin{eqnarray}
&&\lim_{t\rightarrow 0}Str\text{ }(H_{m,t}^{j}\circ \pi _{m})(x,x)
\label{Str-2} \\
&=&\varphi _{j}(x)l^{m}(x)\sum_{k=0}^{N_{j}}\lim_{t\rightarrow 0}\overline{%
Str(\gamma _{k}^{\mathcal{E}^{m}}\circ K_{t}^{j}(z_{k},z))}\tau
_{j}(z_{k})(\alpha _{k}\alpha _{0}^{-1})^{m}  \notag \\
&=&\varphi _{j}(x)l^{m}(x)\lim_{t\rightarrow 0}Str(K_{t}^{j}(z,z))\tau
_{j}(z)  \notag \\
&&+\varphi _{j}(x)l^{m}(x)\sum_{k=1}^{N_{j}}\overline{\mathcal{F}%
_{k,m}^{j}(z,\bar{z})}\tau _{j}(z_{k})(\alpha _{k}\alpha
_{0}^{-1})^{m}\delta _{V_{j}^{\gamma _{k}}}.  \notag
\end{eqnarray}

\noindent Here notice that $\alpha _{k}\alpha _{0}^{-1}$ ($=$ $\alpha
_{k}(x)\alpha _{0}^{-1}(x)$ $=$ $g_{k}^{-1}$ by (\ref{alk})) are independent
of $x$ and that $V_{j}^{\gamma _{k}}$ $=$ $V_{j}^{g_{k}}$ by $iii)$ of
Proposition \ref{L-alk}$.$ We now compute the integral of the supertrace:%
\begin{eqnarray}
&&\lim_{t\rightarrow 0}\int_{\Sigma }Str\text{ }P_{m,t}^{0}(x,x)dv_{\Sigma
,m}\overset{(\ref{7-1.5})}{=}\sum_{j}\lim_{t\rightarrow 0}\int_{\Sigma }Str%
\text{ }H_{m,t}^{j}\circ \pi _{m}(x,x)dv_{\Sigma ,m}  \label{Str-3} \\
&&\overset{(\ref{Str-2})}{=}\sum_{j}\lim_{t\rightarrow 0}\int_{\Sigma
}\varphi _{j}(x)l^{m}(x)Str(K_{t}^{j}(z,z))\tau _{j}(z)dv_{\Sigma ,m}  \notag
\\
&&+\sum_{j}\sum_{k=1}^{N_{j}}(g_{k}^{-1})^{m}\int_{W_{j}}\varphi
_{j}(x)l^{m}(x)\overline{\mathcal{F}_{k,m}^{j}(z,\bar{z})}\tau
_{j}(z_{k})\delta _{V_{j}^{\gamma _{k}}}dv_{\Sigma ,m}  \notag
\end{eqnarray}

\noindent the last term of which can be computed by (\ref{Str-8}) below to
become%
\begin{equation}
\sum_{j}\sum_{k=1}^{N_{j}}(g_{k}^{-1})^{m}\int_{V_{j}^{\gamma _{k}}\times
(-\varepsilon _{j},\varepsilon _{j})\times \mathbb{R}^{+}}\varphi
_{j}(z,\phi )\overline{\mathcal{F}_{k,m}^{j}(z,\bar{z})}d\tilde{v}%
_{V_{j}^{\gamma _{k}}}(z)\wedge d\hat{v}_{m}(z,\phi ,|w|)  \label{Str-4}
\end{equation}

\noindent by $l^{m}(x)dv_{\Sigma ,m}$ $=$ $d\tilde{v}_{V_{j}^{\gamma
_{k}}}(z)\wedge d\hat{v}_{m}(z,\phi ,|w|)$ in $W_{j}$ with $\sigma (\tilde{g}%
)$ $=$ $\gamma _{k}$ in $V_{j}$ (\ref{Str-5c}) and $\tau _{j}=1$ on $%
\{\varphi _{j}(z,\phi )\neq 0\},$ see (\ref{Str-8}) and Remark \ref{R-z-vol}
for $d\tilde{v}_{V_{j}^{\gamma _{k}}}$ here.

We are going to look for an integrand defined on $\Sigma $ such that its
integral over singular strata $\Sigma _{\text{sing}}$ (see (\ref{1-4}))
equals (\ref{Str-4}), cf. Theorem \ref{main_theorem} and (\ref{Str-7}). To
this aim (see Proposition \ref{L-sing} below) first recall (\ref{g1}) for
the definition of $g_{k}$ and $G_{j}$ ($\subset S^{1}\subset \mathbb{C}%
^{\ast }$) and let 
\begin{equation}
\mathcal{G}:=\dbigcup\limits_{j(\text{finite})}G_{j}.  \label{8-41-1}
\end{equation}%
\noindent Note that the finite index set $\mathcal{G}$ may not be a group
and $g_{k}$ in (\ref{g1}) depends on $j.$

We use $g_{k}$ or $g_{k}^{(j)}$ interchangeably below. For $\tilde{g}\in 
\mathcal{G}$ let $\Sigma ^{\tilde{g}}$ denote the set of points fixed by $%
\tilde{g}$ (via $\sigma )$ in $\Sigma .$ By (\ref{sing}) for $\Sigma _{j,k}$
we then have%
\begin{equation}
\Sigma ^{\tilde{g}}=\dbigcup\limits_{(j,k):g_{k}^{(j)}=\tilde{g}}|\Sigma
_{j,k}|.  \label{Sigmak}
\end{equation}%
\noindent Observe that $\Sigma ^{\tilde{g}}$ is a (complex) submanifold of $%
\Sigma $. The following basically follows from definitions.

\begin{lemma}
\label{L-S} With the notation above, it holds that ($\mathcal{G}$ being a
finite set)%
\begin{equation}
\Sigma _{\text{sing}}=\dbigcup\limits_{\tilde{g}\in \mathcal{G},\text{ }%
\tilde{g}\neq 1}\Sigma ^{\tilde{g}}.  \label{sk}
\end{equation}
\end{lemma}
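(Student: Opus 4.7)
The plan is to deduce both inclusions of (\ref{sk}) directly from the definitions, using (\ref{Sigmak}) as the main bridge. First I would reindex the right-hand side:
\[
\bigcup_{\tilde g \in \mathcal G,\ \tilde g \neq 1} \Sigma^{\tilde g} \;=\; \bigcup_{j} \bigcup_{k=1}^{N_j} |\Sigma_{j,k}|,
\]
since as $\tilde g$ ranges over $\mathcal G \setminus \{1\}$ the pairs $(j,k)$ with $g_k^{(j)} = \tilde g$ appearing in (\ref{Sigmak}) sweep out all $(j,k)$ with $k \geq 1$ (the index $k=0$ being excluded because $g_0 = 1$ by (\ref{g1})). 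The lemma thus reduces to the identification $\Sigma_{\text{sing}} = \bigcup_{j,\,k\geq 1} |\Sigma_{j,k}|$.

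For the inclusion $\supseteq$, I would take $y \in |\Sigma_{j,k}|$ with $k \geq 1$, so that by (\ref{sing}) we can write $y = \sigma(\lambda) y'$ for some $\lambda \in \mathbb{C}^*$ and $y' \in V_j^{\gamma_k} \times \{0\}\times \{1\}$. Proposition \ref{L-alk} $iii)$ identifies $\gamma_k$ with $\sigma(g_k)$ on this slice, so $\sigma(g_k) y' = y'$, and because the $\mathbb{C}^*$-action is abelian this upgrades to $\sigma(g_k) y = y$. Since $g_k \neq 1$ for $k \geq 1$ (cf.\ (\ref{g1}), (\ref{8-9-1})), the isotropy of $y$ is strictly larger than that of the principal stratum, which forces $y \in \Sigma_{p_l}$ for some $l \geq 2$ and hence $y \in \Sigma_{\text{sing}}$.

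For the reverse inclusion $\subseteq$, I would take $x \in \Sigma_{\text{sing}}$, use the globally free $\mathbb{R}^+$-action to translate $x$ to a representative $x'$ of its $\mathbb{C}^*$-orbit sitting inside some $V_i \times \{0\} \times \{1\}$ (isotropy is preserved along orbits since $\mathbb{C}^*$ is abelian). By the chosen property of $G_i$ recorded before (\ref{g1}) (namely that $G_i$ contains the isotropy at every point of $W_i$), the non-trivial isotropy of $x'$ lies in $G_i$, so is generated by some $g_k^{(i)}$ with $1 \leq k \leq N_i$. Proposition \ref{L-alk} $iii)$ then converts the identity $\sigma(g_k^{(i)}) x' = x'$ into the statement $x' \in V_i^{\gamma_k}$, placing $x \in |\Sigma_{i,k}|$. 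The only point requiring any care is the translation between the abstract $\mathbb{C}^*$-isotropy of points in $\Sigma$ and the transversal fixed-point sets $V_j^{\gamma_k}$ defined via the auxiliary projection $\pi_{V_j}$ in (\ref{Str-5}); that translation is already supplied by Proposition \ref{L-alk} $iii)$, which is exactly why the authors flag this lemma as basically following from definitions.
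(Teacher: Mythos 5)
Your proof is correct, but it takes a noticeably longer route than the paper's one-line argument. The paper reads the lemma directly off the definitions: $\Sigma^{\tilde g}$ is \emph{defined} as the fixed-point set of $\sigma(\tilde g)$ in $\Sigma$ (the chart-local identity (\ref{Sigmak}) is a derived consequence), and $\Sigma_{\text{sing}}$ is defined through periods, equivalently isotropy orders. So the lemma is just: a point has period strictly less than $2\pi/p_1$ iff some nontrivial $\tilde g$ fixes it iff it lies in $\bigcup_{\tilde g\neq 1}\Sigma^{\tilde g}$ — essentially a restatement. You instead take (\ref{Sigmak}) as the starting point, reindex to $\bigcup_{j}\bigcup_{k\geq 1}|\Sigma_{j,k}|$, and then use Proposition \ref{L-alk} $iii)$ to translate between the transversal fixed-point sets $V_j^{\gamma_k}$ and the genuine isotropy of points in $\Sigma$. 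This re-derives, in reverse, the content that goes into establishing (\ref{Sigmak}) itself, so the detour is unnecessary; on the other hand it does make the role of the chart groups $G_j$ and of (\ref{8-9-1}) explicit, which is arguably clarifying. Two small wording points: in the $\subseteq$ direction, translating $x$ into a slice $V_i\times\{0\}\times\{1\}$ uses both the $\mathbb{R}^+$ \emph{and} the $S^1$ parts of the $\mathbb{C}^*$-action (you need to normalize the angle to $0$ as well as the radius to $1$) — the justification is exactly the $\mathbb{C}^*$-invariance of isotropy that you cite; and "the isotropy of $x'$ is generated by some $g_k^{(i)}$" should more carefully be "contains some $g_k^{(i)}$ with $k\geq 1$", which is all you need (cyclicity of $G_i$ does give you a generator, but it's irrelevant).
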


\begin{proof}
Recall that $\Sigma _{p_{j}}$ denotes the set of points having period $2\pi
/p_{j}$ and that $\Sigma _{\text{sing}}$ is by definition the union of $%
\Sigma _{p_{j}},$ $j\geq 2.$ If a point has the period less than $2\pi
/p_{1},$ then its isotropy group $\subset $ $S^{1}$ is nontrivial so that it
is a fixed point of some element $\tilde{g}$ of $S^{1}.$ Conversely, any
point in $\Sigma ^{\tilde{g}}$ for $\tilde{g}$ $\neq $ $1$ has a nontrivial
isotropy group rendering its period less than $2\pi /p_{1}.$%
\end{proof}

\begin{notation}
\label{N-8-2a} $i)$ Let $\mathcal{N}_{\mathbb{R}}$ denote the \textit{real}
normal bundle of $\Sigma ^{\tilde{g}},$ but for notational convenience we
drop the subscript $\mathbb{R}$. We equip it with the metric induced from $%
G_{a,m}$ or $\pi ^{\ast }g_{M};$ see Remark \ref{R-8-16} for the equivalence
in this case$.$ $ii)$ We follow the notation adopted by \cite{BGV}: Let $%
R^{1}$ denote the curvature of $\mathcal{N}$ and by $\tilde{g}_{1}$ the
naturally induced action of $\tilde{g}$ on $\mathcal{N}$.
\end{notation}

In view of (\ref{Str-0}) we set ($T\Sigma ^{\tilde{g}}$ and $L_{\Sigma ^{%
\tilde{g}}}$ below viewed as real tangent bundle and subbundle respectively
with Lemma \ref{L-inv} $i)$ for the metric $g_{quot}$)%
\begin{equation}
I(\tilde{g}):=\frac{\hat{A}_{BGV}(T\Sigma ^{\tilde{g}}/L_{\Sigma ^{\tilde{g}%
}},g_{quot})}{\det^{1/2}(1-\tilde{g}_{1})\det^{1/2}(1-\tilde{g}_{1}\exp
(-R^{1}))}  \label{Ig}
\end{equation}%
\noindent on $\Sigma ^{\tilde{g}}$. Note that for $\tilde{g}%
=g_{k}^{(j)}=g_{k}$ locally in $W_{j}$ as in (\ref{Sigmak})%
\begin{equation}
\tilde{g}^{\mathcal{E}^{m}}=\gamma _{k}^{\mathcal{E}^{m}}\text{ on }%
V_{j}\times \{0\}\times \{1\}  \label{8-16c}
\end{equation}%
\noindent by (\ref{Str-5c}) for $\sigma (g_{k})=\gamma _{k}$ on $V_{j}\times
\{0\}\times \{1\}$. We prefer the use of two separate notations $\tilde{g}^{%
\mathcal{E}^{m}}$ and $\gamma _{k}^{\mathcal{E}^{m}}$ because the former is
meant to be global and intrinsic while the latter is for the restriction of
the former on $V_{j}$ (which depends on the trivialization $\psi _{j})$. In
view of (\ref{Str-1a}) and (\ref{8-16c}) we define a \textit{complex-valued}
function $\mathcal{F}_{\tilde{g},m}$ on $\Sigma ^{\tilde{g}}$ $\subset $ $%
\Sigma $ via \textquotedblleft projections" $\pi _{V_{j}}$ $:$ $(z_{j},$ $%
w_{j})$ $\in $ $W_{j}$ $\rightarrow $ $(z_{j},$ $1)$ $\in $ $V_{j}$ ($%
T_{V_{j}}$ below as in (\ref{Str-k})): for $q$ $\in $ $\Sigma ^{\tilde{g}%
}\cap D_{j}$%
\begin{eqnarray}
&&\mathcal{F}_{\tilde{g},m}|_{\Sigma ^{\tilde{g}}\cap D_{j}}(q)
\label{Str-7} \\
:= &&c_{j,k}T_{V_{j}}\{I(\tilde{g})Str_{\mathcal{E}/S}[\sigma
_{2n-(l_{j,k}+2)}(\tilde{g}^{\mathcal{E}^{m}})\exp (-F_{0}^{_{\mathcal{E}%
^{m}/S}})]\}|_{\pi _{V_{j}}(\psi _{j}^{-1}(q))}  \notag
\end{eqnarray}%
\noindent where $l_{j,k}$ $=$ $\dim _{\mathbb{R}}V_{j}^{\gamma _{k}}.$ In
Lemma \ref{P-8-16} below we show that (\ref{Str-7}) is independent of the
choice of $D_{j}$ that contains $q.$

It follows (cf. (\ref{Str-1a}) for $\mathcal{F}_{k,m}^{j}$) that for $q\in
\Sigma ^{\tilde{g}}\cap D_{j}$%
\begin{equation}
\mathcal{F}_{\tilde{g},m}|_{\Sigma ^{\tilde{g}}\cap D_{j}}(q)=\mathcal{F}%
_{k,m}^{j}(z(\psi _{j}^{-1}(q)),\bar{z}(\psi _{j}^{-1}(q)))\text{.}
\label{Str-7a}
\end{equation}

\noindent Here we think of $\mathcal{F}_{\tilde{g},m}$ as a global function
and $\mathcal{F}_{k,m}^{j}$ as its local expression.

\begin{lemma}
\label{8-10a} With the notation above $\hat{A}_{BGV}(T\Sigma ^{\tilde{g}%
}/L_{\Sigma ^{\tilde{g}}},g_{quot})$, $R^{1}$ and $\tilde{g}_{1}$are locally 
$\mathbb{C}^{\ast }$-invariant (meaning that it is invariant under the
action of $\rho e^{i\phi }$ with $\rho $ $\in $ $\mathbb{R}^{+}$ and $|\phi
| $ small). Moreover $\det^{1/2}(1-\tilde{g}_{1})$ and $\det^{1/2}(1-\tilde{g%
}_{1}\exp (-R^{1}))$ in (\ref{Ig}) are also locally $\mathbb{C}^{\ast }$%
-invariant.
\end{lemma}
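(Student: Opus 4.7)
The plan is to trace $\mathbb{C}^{\ast}$-invariance through each ingredient of $I(\tilde{g})$, reducing everything to the invariance already established for $g_{quot}$ and $\pi^{\ast}g_M$. The key starting observation is that since $\tilde{g}\in\mathcal{G}\subset S^{1}\subset\mathbb{C}^{\ast}$ and $\mathbb{C}^{\ast}$ is abelian, $\sigma(\lambda)$ commutes with $\sigma(\tilde{g})$ for every $\lambda\in\mathbb{C}^{\ast}$. Consequently $\Sigma^{\tilde{g}}$ is preserved by $\sigma(\lambda)$ (at least for $\lambda=\rho e^{i\phi}$ with $|\phi|$ small and $\rho\in\mathbb{R}^{+}$), and its tangent bundle $T\Sigma^{\tilde{g}}$, the line subbundle $L_{\Sigma^{\tilde{g}}}$ obtained by restricting $L_{\Sigma}$, and the real normal bundle $\mathcal{N}$ are all $\mathbb{C}^{\ast}$-equivariant subbundles of the ambient bundles over $\Sigma^{\tilde{g}}$.

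Next I would verify the $\mathbb{C}^{\ast}$-invariance of the metrics involved. The quotient metric $g_{quot}$ on $T^{1,0}\Sigma/L_{\Sigma}$ is $\mathbb{C}^{\ast}$-invariant by Lemma \ref{L-inv}(i), so its restriction to $\Sigma^{\tilde{g}}$ yields a $\mathbb{C}^{\ast}$-invariant Hermitian metric on $T^{1,0}\Sigma^{\tilde{g}}/L_{\Sigma^{\tilde{g}}}$. For the metric on $\mathcal{N}$, note that the fiber direction of the $\mathbb{C}^{\ast}$-action is already contained in $T\Sigma^{\tilde{g}}$ (since $\Sigma^{\tilde{g}}$ is $\mathbb{C}^{\ast}$-invariant), so $\mathcal{N}$ lies in the horizontal part of $T\Sigma$. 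By Lemma \ref{L-inv}(i), on horizontal directions $G_{a,m}$ coincides with $\pi^{\ast}g_M$, which is manifestly $\mathbb{C}^{\ast}$-invariant; hence the induced metric on $\mathcal{N}$ is $\mathbb{C}^{\ast}$-invariant.

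I would then invoke the naturality of the Chern connection: for a $\mathbb{C}^{\ast}$-invariant Hermitian metric on a $\mathbb{C}^{\ast}$-equivariant holomorphic bundle, the Chern connection is $\mathbb{C}^{\ast}$-equivariant, and hence its curvature is a $\mathbb{C}^{\ast}$-invariant End-valued $(1,1)$-form. Applied to $T^{1,0}\Sigma^{\tilde{g}}/L_{\Sigma^{\tilde{g}}}$ and to $\mathcal{N}$ (whose complex structure is inherited from the ambient complex structure, and is therefore $\mathbb{C}^{\ast}$-invariant), this proves the local $\mathbb{C}^{\ast}$-invariance of both $\hat{A}_{BGV}(T\Sigma^{\tilde{g}}/L_{\Sigma^{\tilde{g}}},g_{quot})$ and $R^{1}$, since each is a universal Chern--Weil polynomial in the corresponding invariant curvature (compare Lemma \ref{L-inv}(ii) for the parallel statement about Todd and Chern character forms). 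For $\tilde{g}_{1}$, differentiating the identity $\sigma(\lambda)\circ\sigma(\tilde{g})=\sigma(\tilde{g})\circ\sigma(\lambda)$ at points of $\Sigma^{\tilde{g}}$ shows that the push-forward by $\sigma(\lambda)_{\ast}$ conjugates $\tilde{g}_{1}$ to itself, i.e., $\tilde{g}_{1}$ is an invariant endomorphism field on $\mathcal{N}$.

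Finally, to handle $\det^{1/2}(1-\tilde{g}_{1})$ and $\det^{1/2}(1-\tilde{g}_{1}\exp(-R^{1}))$, the plan is to express these square-root determinants using the underlying complex structure of $\mathcal{N}$, so that the branch is specified canonically by the $\mathbb{C}^{\ast}$-equivariant complex decomposition of $\mathcal{N}$ into $\tilde{g}_{1}$-eigenspaces (rather than by an auxiliary orientation choice). Once the square root is defined in this $\mathbb{C}^{\ast}$-equivariant way, invariance of the numerator and denominator of (\ref{Ig}) follows automatically from the invariance of $\tilde{g}_{1}$ and $R^{1}$. The main obstacle I expect is making this last step precise: one must check that the square-root convention used in \cite{BGV} is compatible with an equivariant splitting of $\mathcal{N}$ along the $\tilde{g}_{1}$-spectrum, and that this splitting varies smoothly and $\mathbb{C}^{\ast}$-equivariantly over $\Sigma^{\tilde{g}}$. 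Granting this, the local $\mathbb{C}^{\ast}$-invariance of $I(\tilde{g})$ is immediate.
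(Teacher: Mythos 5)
Your approach matches the paper's: both rest on the $\mathbb{C}^{\ast}$-invariance of $\pi^{\ast}g_{M}$ on the horizontal part (Lemma \ref{L-inv} $i)$), the resulting equivariance of the orthogonal splitting $T\Sigma^{\tilde{g}}\oplus\mathcal{N}$, and the commutation of $\sigma(\lambda)$ with $\sigma(\tilde{g})$ — the paper merely makes the horizontal invariance concrete through the coordinate expression (\ref{mrp}) instead of appealing to naturality of the Chern connection, but the substance is identical. The one concern you flag, about the branch choice for $\det^{1/2}$, is not a genuine obstacle: since $\sigma(c)_{\ast}$ is an isometry of $\mathcal{N}$ that conjugates $\tilde{g}_{1}$ to itself, it transports the eigenvalue data $e^{i2\theta_{l}}$, $0<\theta_{l}<\pi$, and the square root fixed in (\ref{cn}) depends only on these eigenvalues, so invariance of the two determinant factors in (\ref{Ig}) follows at once.
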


\begin{proof}
By (\ref{metric}) and (\ref{M1-1}) we write the metric $G_{a,m}$ in special
local coordinates $(z,w)$ for (\ref{M0-3}) at $\sigma (c)q_{0}$ $=$ $%
(z_{0},cw_{0})$ with $q_{0}$ $=$ $(z_{0},w_{0})$ $\in $ $\Sigma ^{\tilde{g}%
}\cap D_{j}$ and $c$ $=$ $|c|e^{i\varphi }$ $\in $ $\mathbb{C}^{\ast }$ for $%
|c|$ $\in $ $\mathbb{R}^{+}$ arbitrary, $\varphi $ sufficiently near $0$
such that $\sigma (c)q_{0}$ still lies in $\Sigma ^{\tilde{g}}\cap D_{j}$,
as follows (with the same coordinates (\ref{M0-3}) holds at $\sigma (c)q_{0}$
when $c$ varies) 
\begin{eqnarray}
&&G_{a,m}|_{\sigma (c)q_{0}}=(g_{M})_{\alpha \bar{\beta}}(z_{0},\bar{z}%
_{0})dz_{\alpha }d\bar{z}_{\beta }+  \label{mrp} \\
&&{\Large (}\varphi _{1}(|c|^{2}w_{0}\bar{w}_{0})+\varphi _{2}(|c|^{2}w_{0}%
\bar{w}_{0})4a^{2}|c|^{-4a-2}(w_{0}\bar{w}_{0})^{-2a-1}{\large )}\frac{dwd%
\bar{w}}{\lambda _{m}}.  \notag
\end{eqnarray}

\noindent It is not difficult to see from (\ref{mrp}) that the normal space $%
\mathcal{N}_{\sigma (c)q_{0}},$ which is perpendicular to $\Sigma ^{\tilde{g}%
}$ at $\sigma (c)q_{0},$ consists of vectors depending only on $z$%
-coordinate as $c$ varies. It follows that $\sigma (c)_{\ast }:\mathcal{N}%
_{q_{0}}$ $\rightarrow $ $\mathcal{N}_{\sigma (c)q_{0}}$ is an isometry with
respect to $\pi ^{\ast }g_{M}$ $=$ $(g_{M})_{\alpha \bar{\beta}}(z_{0},\bar{z%
}_{0})$ $dz_{\alpha }d\bar{z}_{\beta }$ in $W_{j}$ $=$ $\psi
_{j}^{-1}(D_{j}) $, and hence the curvature $R^{1}$ of $\mathcal{N}$ is
locally $\mathbb{C}^{\ast }$-invariant. For $\hat{A}_{BGV}$ on $\Sigma ^{%
\tilde{g}}$ we observe that $g_{quot}$ is identified with the metric $\pi
^{\ast }g_{M}|_{_{\Sigma ^{\tilde{g}}\cap D_{j}}}$ on the $W_{j}$-chart (see
Lemma \ref{L-inv} $i)$) so that $\hat{A}_{BGV}(T\Sigma ^{\tilde{g}%
}/L_{\Sigma ^{\tilde{g}}},g_{quot}) $ is also locally $\mathbb{C}^{\ast }$%
-invariant. That $\tilde{g}_{1}$ is locally $\mathbb{C}^{\ast }$-invariant
follows from that the action of $\tilde{g}$ commutes with the (local) $%
\mathbb{C}^{\ast }$-action. The last statement easily follows via the
isometry $\sigma (c)_{\ast }$ just mentioned.
\end{proof}

\begin{remark}
\label{R-8-16} By (\ref{mrp}) the metric $\pi ^{\ast }g_{M}$ and the metric $%
G_{a,m}$ coincide on the normal bundle $\mathcal{N},$ cf. Remark \ref%
{R-z-vol}.
\end{remark}

\begin{corollary}
\label{R+Inv} With the notation above, $\mathcal{F}_{\tilde{g},m}|_{\Sigma ^{%
\tilde{g}}\cap D_{j}}$ (= $\mathcal{F}_{k,m}^{j}(z,\bar{z})$ by (\ref{Str-7a}%
)$)$ viewed as a function of $(z,w)$ $\in $ ($\psi _{j}^{-1})\Sigma ^{\tilde{%
g}}\cap D_{j}$ is a function of $z$ (and $\bar{z})$ only, $z$ $\in $ $%
V_{j}^{\gamma _{k}}.$ In particular $\mathcal{F}_{\tilde{g},m}|_{\Sigma ^{%
\tilde{g}}\cap D_{j}}$ is locally $\mathbb{C}^{\ast }$-invariant.
\end{corollary}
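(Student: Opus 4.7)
The plan is to derive the corollary essentially by inspection of the definition (\ref{Str-7}), coupled with the pieces of local $\mathbb{C}^{\ast}$-invariance already established in Lemma \ref{8-10a}. First I would observe that by construction $\mathcal{F}_{\tilde{g},m}|_{\Sigma^{\tilde{g}}\cap D_j}(q)$ is evaluated at $\pi_{V_j}(\psi_j^{-1}(q))$. Writing $\psi_j^{-1}(q)=(z,\phi,r)\in W_j=V_j\times(-\varepsilon_j,\varepsilon_j)\times\mathbb{R}^+$, the projection $\pi_{V_j}$ sends this to $(z,0,1)\in V_j\times\{0\}\times\{1\}$. Hence, provided the Berezin integrand in (\ref{Str-7}) is a genuine function/form on $V_j^{\gamma_k}$, the evaluation depends only on $z$ (and $\bar z$), which together with (\ref{Str-7a}) identifies $\mathcal{F}_{\tilde{g},m}|_{\Sigma^{\tilde{g}}\cap D_j}(q)$ with $\mathcal{F}_{k,m}^{j}(z,\bar z)$ as stated.

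Next I would verify that each ingredient entering the Berezin integrand in (\ref{Str-7}) is intrinsically defined on (a neighborhood in $\Sigma$ of) $V_j^{\gamma_k}\times\{0\}\times\{1\}$ and is locally $\mathbb{C}^{\ast}$-invariant. The geometric factor $I(\tilde g)$ of (\ref{Ig}) is handled by Lemma \ref{8-10a}: $\hat{A}_{BGV}(T\Sigma^{\tilde g}/L_{\Sigma^{\tilde g}},g_{quot})$, the normal curvature $R^1$ and the action $\tilde g_1$ on $\mathcal{N}$ are all locally $\mathbb{C}^{\ast}$-invariant, and so are the determinants appearing in the denominator. For the remaining bundle piece $Str_{\mathcal{E}^m/S}[\sigma_{\bullet}(\tilde g^{\mathcal{E}^m})\exp(-F_0^{\mathcal{E}^m/S})]$, I would argue as follows. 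The bundle $\mathcal{E}^m=\psi_j^{\ast}(\pi^{\ast}\mathcal{E}_M\otimes E\otimes(L_\Sigma^{\ast})^{\otimes m})|_{V_j\times\{0\}\times\{1\}}$ is, by construction, the restriction to the slice $V_j$ of a $\mathbb{C}^{\ast}$-equivariant bundle over $D_j$ carrying $\mathbb{C}^{\ast}$-invariant Hermitian metrics (from Notation \ref{N-3-1}, Step 1 of Section \ref{S-metric}, and the invariant metric on $E$). Consequently its Chern connection and hence the twisting curvature $F_0^{\mathcal{E}^m/S}$ are locally $\mathbb{C}^{\ast}$-invariant; and because the action of $\tilde g\in\mathcal{G}\subset S^1$ commutes with the (local) $\mathbb{C}^{\ast}$-action, the induced endomorphism $\tilde g^{\mathcal{E}^m}$ (and its symbol component $\sigma_{\bullet}$) is likewise locally $\mathbb{C}^{\ast}$-invariant. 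Combining these with the Berezin integral $T_{V_j}$ (which is taken on the normal directions to $V_j^{\gamma_k}$, all lying in the locally $\mathbb{C}^{\ast}$-invariant directions by Remark \ref{R-8-16}), the entire integrand descends to a function of $z$ only, which is the first claim.

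Finally, to conclude the local $\mathbb{C}^{\ast}$-invariance statement, suppose $q'=\sigma(\rho e^{i\phi})q$ with $\rho\in\mathbb{R}^+$ and $|\phi|$ sufficiently small so that $q'\in D_j$. By (\ref{1-1}) applied in the coordinates of $W_j$, this action sends $(z,\phi_0,r)\mapsto(z,\phi_0+\phi,\rho r)$, leaving the $z$-coordinate unchanged. Hence $z(\psi_j^{-1}(q'))=z(\psi_j^{-1}(q))$ and thus $\mathcal{F}_{\tilde g,m}(q')=\mathcal{F}_{\tilde g,m}(q)$ by the first part of the proof.

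The main (and essentially only) obstacle is the second step: confirming that the bundle-theoretic data $\tilde g^{\mathcal{E}^m}$ and $F_0^{\mathcal{E}^m/S}$ are really $\mathbb{C}^{\ast}$-invariant in the local sense. This, however, is not a real difficulty once one organizes the data as above, because it is already built into the choice of $\mathbb{C}^{\ast}$-invariant fibre metrics on $L_\Sigma$ and $E$ and into the $\mathbb{C}^{\ast}$-equivariance of the projection $\pi^{\ast}\mathcal{E}_M$; the corollary is really a bookkeeping consequence of the construction of $\mathcal{F}_{\tilde g,m}$ through the slice $V_j$.
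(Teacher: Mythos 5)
Your proof is correct and follows essentially the same route as the paper's: both rely on Lemma \ref{8-10a} for the local $\mathbb{C}^{\ast}$-invariance of $I(\tilde{g})$ and observe that the remaining ingredients ($F_0^{\mathcal{E}^m/S}$ and the symbol of $\tilde{g}^{\mathcal{E}^m}$) live on the slice $V_j^{\gamma_k}$ and hence depend only on $z$, while the projection $\pi_{V_j}$ in (\ref{Str-7}) kills the $w$-dependence by construction. The one minor inefficiency is that your middle paragraph argues for local $\mathbb{C}^{\ast}$-invariance of $\tilde{g}^{\mathcal{E}^m}$ and $F_0^{\mathcal{E}^m/S}$, which is unnecessary since these objects are already defined only on $V_j^{\gamma_k}$ and so are functions of $z$ by fiat — the paper disposes of them with the single sentence that they "work on $V_j^{\gamma_k}$."
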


\begin{proof}
From Lemma \ref{8-10a} $I(\tilde{g})$ in (\ref{Ig}) is independent of the
local $\mathbb{C}^{\ast }$-action. The curvature $F_{0}^{_{\mathcal{E}%
^{m}/S}}$ (\ref{Str-k}) and the symbol $\sigma _{2n-(l_{j,k}+2)}(\tilde{g}^{%
\mathcal{E}^{m}})$ work on $V_{j}^{\gamma _{k}}$ and so involve the $z$%
-coordinate(s) only. Altogether in view of (\ref{Str-7}) $\mathcal{F}_{%
\tilde{g},m}|_{\Sigma ^{\tilde{g}}\cap D_{j}}$ is a function of $z$ in $%
V_{j}^{\gamma _{k}}$.
\end{proof}

Using Lemma \ref{8-10a} we prove

\begin{lemma}
\label{P-8-16} For $q$ $\in $ $\Sigma ^{\tilde{g}}\cap D_{j}\cap
D_{j^{\prime }}$ we have 
\begin{equation}
\mathcal{F}_{\tilde{g},m}|_{\Sigma ^{\tilde{g}}\cap D_{j}}(q)=\mathcal{F}_{%
\tilde{g},m}|_{\Sigma ^{\tilde{g}}\cap D_{j^{\prime }}}(q).  \label{Fgm}
\end{equation}%
Namely $\mathcal{F}_{\tilde{g},m}$ is a globally defined smooth function on $%
\Sigma ^{\tilde{g}}.$
\end{lemma}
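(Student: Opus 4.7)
My plan is to show that each ingredient appearing in the local formula (\ref{Str-7}) for $\mathcal{F}_{\tilde{g},m}|_{\Sigma^{\tilde{g}} \cap D_j}(q)$ is either intrinsic to $\Sigma^{\tilde{g}}$ (hence independent of the chart $D_j$ containing $q$) or else locally $\mathbb{C}^{\ast}$-invariant, so that the only apparent ambiguity — the choice of slice $V_j \times \{0\} \times \{1\}$ used to represent $q$ — is absorbed by the invariance already established in Lemma \ref{8-10a} and Corollary \ref{R+Inv}.

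First, for $q \in \Sigma^{\tilde{g}} \cap D_j \cap D_{j'}$ with local coordinates $(z_j, w_j)$ and $(z_{j'}, w_{j'})$, I would consider the two reference points $p_j := \psi_j((z_j(q), 0, 1))$ and $p_{j'} := \psi_{j'}((z_{j'}(q), 0, 1))$. By the transition formulas (\ref{C0}) of Proposition \ref{p-gue2-1}, $p_j$ and $p_{j'}$ lie on the same $\mathbb{C}^{\ast}$-orbit as $q$, and they both lie in $\Sigma^{\tilde{g}}$ because $\tilde{g}$ commutes with the $\mathbb{C}^{\ast}$-action (so $\Sigma^{\tilde{g}}$ is $\mathbb{C}^{\ast}$-invariant). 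Moreover $p_j$ can be connected to $q$, and $q$ to $p_{j'}$, by a chain of local ($\mathbb{R}^{+}$ plus small-angle) $\mathbb{C}^{\ast}$-actions internal to $D_j$ and $D_{j'}$ respectively.

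Next, I would verify the intrinsicality of the building blocks in (\ref{Str-7}). The quantities $\hat{A}_{BGV}(T\Sigma^{\tilde{g}}/L_{\Sigma^{\tilde{g}}}, g_{quot})$, $\det^{1/2}(1 - \tilde{g}_1)$, $\det^{1/2}(1 - \tilde{g}_1 \exp(-R^1))$, and the twisting curvature $F_0^{\mathcal{E}^m/S}$ are all constructed from data that either lives globally on $\Sigma^{\tilde{g}}$ (the normal bundle $\mathcal{N}$ with its induced metric per Remark \ref{R-8-16}, the action $\tilde{g}$, the Chern connection on $E$ and on $(L_{\Sigma}^\ast)^{\otimes m}$) or is canonically obtained by $\psi_j^\ast$ from such global data. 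The endomorphism $\gamma_k^{\mathcal{E}^m}$ in the $j$-chart agrees with the intrinsic $\tilde{g}^{\mathcal{E}^m}$ on the slice via (\ref{8-16c}). The Berezin integral $T_{V_j}\{\cdot\}$, once one fixes the identification $\mathcal{E}^m \cong \psi_j^\ast(\pi^\ast \mathcal{E}_M \otimes E \otimes (L_\Sigma^\ast)^{\otimes m})$ from Proposition \ref{dualLm}, is chart-independent because the Clifford-module orientation is determined by the complex structure on $T^{1,0}\Sigma/L_\Sigma$.

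The final step is then a two-line chain: by the local $\mathbb{C}^{\ast}$-invariance proved in Corollary \ref{R+Inv}, we have $\mathcal{F}_{\tilde{g},m}|_{\Sigma^{\tilde{g}} \cap D_j}(q) = \mathcal{F}_{\tilde{g},m}|_{\Sigma^{\tilde{g}} \cap D_j}(p_j)$ and $\mathcal{F}_{\tilde{g},m}|_{\Sigma^{\tilde{g}} \cap D_{j'}}(q) = \mathcal{F}_{\tilde{g},m}|_{\Sigma^{\tilde{g}} \cap D_{j'}}(p_{j'})$; the intrinsicality above then equates both right-hand sides with the value at $q$ of the globally-defined intrinsic density on $\Sigma^{\tilde{g}}$. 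Smoothness then follows from the smoothness of each ingredient. The main obstacle I expect is the careful bookkeeping of how $\gamma_k^{\mathcal{E}^m}$, which is defined via the trivialization $\psi_j$ and uses the locally-invariant section $e^E$ plus the small-angle correction $\alpha_k \alpha_0^{-1}$ of Proposition \ref{gk}, transforms when one switches to $\psi_{j'}$; one must check that the phase factor $(\alpha_k \alpha_0^{-1})^m = g_k^{-m}$ appearing in (\ref{7-46-1}) is consistent across charts, which is exactly what Proposition \ref{L-alk} (iii) combined with Corollary \ref{8-5-1} guarantees.
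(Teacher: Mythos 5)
Your proposal is correct and follows essentially the same route as the paper's own proof: both reduce the question to the two slice points $p_j=\psi_j\pi_{V_j}(\psi_j^{-1}(q))$ and $p_{j'}=\psi_{j'}\pi_{V_{j'}}(\psi_{j'}^{-1}(q))$, observe that they are connected by a small-angle $\mathbb{C}^{\ast}$-action through $q$, and invoke the intrinsic (chart-independent) nature of the ingredients of (\ref{Str-7}) together with Corollary \ref{R+Inv} (local $\mathbb{C}^{\ast}$-invariance) to conclude. One small correction: the first two equalities in your final step, namely $\mathcal{F}_{\tilde{g},m}|_{\Sigma^{\tilde{g}}\cap D_j}(q)=\mathcal{F}_{\tilde{g},m}|_{\Sigma^{\tilde{g}}\cap D_j}(p_j)$ and the analogous one for $j'$, hold by the definition (\ref{Str-7}) itself (which already evaluates at the projected slice point), not by invariance; the local $\mathbb{C}^{\ast}$-invariance of Corollary \ref{R+Inv} is then what you actually need to equate the value at $p_j$ (computed in the $j$-chart) with the value at $p_{j'}$ (computed in the $j'$-chart), since these two points are related by $\sigma(\zeta_{j'}\zeta_j)$.
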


\begin{proof}
Suppose that $g_{k\text{ }}^{(j)}$ $=$ $g_{k^{\prime }}^{(j^{\prime })}$ $=$ 
$\tilde{g}$ thus $V_{j}^{\gamma _{k}}$ $=$ $V_{j}^{\tilde{g}}$ $=$ $\Sigma ^{%
\tilde{g}}$ $\cap $ ($V_{j}\times \{0\}\times \{1\})$ (resp. $V_{j^{\prime
}}^{\gamma _{k^{\prime }}}$ $=$ $V_{j^{\prime }}^{\tilde{g}}$ $=$ $\Sigma ^{%
\tilde{g}}$ $\cap $ ($V_{j^{\prime }}\times \{0\}\times \{1\}))$. We leave
the reader to check the equality of the constants $l_{j,k}$ $=$ $%
l_{j^{\prime },k^{\prime }},$ $c_{j,k}$ $=$ $c_{j^{\prime },k^{\prime }}.$ 
To compare the two sides of (\ref{Fgm}) we notice that by (\ref{1-1}) and
Theorem \ref{thm2-1} one finds $\zeta _{j}$ $\in $ $\mathbb{C}^{\ast }$ with 
$\zeta _{j}$ $=$ $|\zeta _{j}|e^{i\phi _{j}},$ $|\phi _{j}|$ $<$ $%
\varepsilon _{j}$ (resp. $\zeta _{j^{\prime }}$ $\in $ $\mathbb{C}^{\ast }$
\noindent with $\zeta _{j^{\prime }}$ $=$ $|\zeta _{j^{\prime }}|e^{i\phi
_{j^{\prime }}},$ $|\phi _{j^{\prime }}|$ $<$ $\varepsilon _{j^{\prime }})$
such that 
\begin{equation*}
\psi _{j}\pi _{V_{j}}(\psi _{j}^{-1}(q))\overset{\sigma (\zeta _{j})}{%
\longrightarrow }q\text{ (resp. }q\overset{\sigma (\zeta _{j^{\prime }})}{%
\longrightarrow }\psi _{j^{\prime }}\pi _{V_{j^{\prime }}}(\psi _{j^{\prime
}}^{-1}(q)))
\end{equation*}%
so that $\sigma (\zeta _{j^{\prime }}\zeta _{j})$ sends $\psi _{j}\pi
_{V_{j}}(\psi _{j}^{-1}(q))$ to $\psi _{j^{\prime }}\pi _{V_{j^{\prime
}}}(\psi _{j^{\prime }}^{-1}(q))$. Since $\mathcal{F}_{\tilde{g},m}$ in both
sides of (\ref{Fgm}) arise from the quantity $I(\tilde{g})Str_{\mathcal{E}%
/S} $ $[\sigma _{2n-(l_{j,k}+2)}(\tilde{g}^{\mathcal{\tilde{E}}^{m}})$ $\exp
(-F_{0}^{_{\mathcal{\tilde{E}}^{m}/S}})]$ on $\Sigma ^{\tilde{g}}$ (see
Notation \ref{n-8-1} $i)$ for $\mathcal{\tilde{E}}^{m},$ $\mathcal{E}^{m}$
and (\ref{Str-7}) with no \textquotedblleft tilde$"$ on $\mathcal{E}^{m})$,
we see from (\ref{Str-7}) that the values in (\ref{Fgm}) differ by the $%
\mathbb{C}^{\ast }$-action $\sigma (\zeta _{j^{\prime }}\zeta _{j})$ and
then the local $\mathbb{C}^{\ast }$-invariance (Corollary \ref{R+Inv}) gives
the same value, proving (\ref{Fgm})$.$
\end{proof}

In the above discussion we refer to the local $\mathbb{C}^{\ast }$%
-invariance of $\mathcal{F}_{\tilde{g},m}.$ In fact $\mathcal{F}_{\tilde{g}%
,m}$ is (globally) $\mathbb{C}^{\ast }$-invariant: (This fact is not
strictly needed until (\ref{Fg}); see remarks after (\ref{Fg}).)

\begin{corollary}
\label{C-Fgm} For all $\lambda $ $\in $ $\mathbb{C}^{\ast }$ and $q$,
(hence) $\sigma (\lambda )q$ $\in $ $\Sigma ^{\tilde{g}}$ it holds that $%
\mathcal{F}_{\tilde{g},m}(\sigma (\lambda )q)$ $=$ $\mathcal{F}_{\tilde{g}%
,m}(q)$. In particular, the function $\mathcal{F}_{\tilde{g},m}$ takes the
same value if both $q$ and $\sigma (\lambda )q$ lie in the same local chart $%
\Sigma ^{\tilde{g}}\cap D_{j}.$
\end{corollary}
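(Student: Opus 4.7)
The strategy is to promote the local $\mathbb{C}^{\ast}$-invariance of Corollary \ref{R+Inv} to a global invariance by a chain-of-charts argument, exploiting that the $\mathbb{C}^{\ast}$-orbit through $q$ is contained in $\Sigma^{\tilde{g}}$. The key observation is that $\mathbb{C}^{\ast}$ is abelian, so if $\sigma(\tilde{g})q = q$ then $\sigma(\tilde{g})(\sigma(\lambda)q) = \sigma(\lambda)(\sigma(\tilde{g})q) = \sigma(\lambda)q$ for every $\lambda \in \mathbb{C}^{\ast}$. Thus the entire $\mathbb{C}^{\ast}$-orbit $\mathbb{C}^{\ast}\circ q$ lies in $\Sigma^{\tilde{g}}$, and in particular the path $\gamma(t) := \sigma(\lambda(t))q$, $t\in[0,1]$, along any continuous curve $\lambda(t)$ in $\mathbb{C}^{\ast}$ joining $1$ to $\lambda$ (for instance $\lambda(t) = |\lambda|^{t}e^{it\arg\lambda}$), stays in $\Sigma^{\tilde{g}}$.

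Since $\gamma([0,1])$ is compact, we cover it by finitely many of the charts $D_{j_1},\ldots,D_{j_N}$ from (\ref{1-0}) and choose a subdivision $0 = t_0 < t_1 < \cdots < t_N = 1$ with $\gamma([t_{i-1},t_i])\subset D_{j_i}$. By refining the subdivision further if necessary (using the smallness of $\varepsilon_{j_i}$ in the chart description of Notation \ref{n-6.1}), we may arrange that the element $\mu_i := \lambda(t_i)\lambda(t_{i-1})^{-1}\in\mathbb{C}^{\ast}$ carrying $p_{i-1} := \gamma(t_{i-1})$ to $p_i := \gamma(t_i)$ is of the form $\rho_i e^{i\phi_i}$ with $\rho_i\in\mathbb{R}^{+}$ and $|\phi_i|$ small enough that $\mu_i$ acts as a small-angle rotation combined with a radial action inside $D_{j_i}$. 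Note both $p_{i-1}$ and $p_i$ lie in $\Sigma^{\tilde{g}}\cap D_{j_i}$.

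Within each chart $D_{j_i}$, Corollary \ref{R+Inv} tells us that $\mathcal{F}_{\tilde{g},m}|_{\Sigma^{\tilde{g}}\cap D_{j_i}}$ depends only on the $z$-coordinate in the local trivialization $\psi_{j_i}$. Under the small-angle + radial action $\mu_i$, the local coordinates transform by $(z,w)\mapsto(z,\mu_i w)$ (cf.\ (\ref{1-1}) and Case $i)$ after (\ref{6-1g})), so the $z$-coordinate is preserved. Hence
\begin{equation*}
\mathcal{F}_{\tilde{g},m}(p_{i-1}) \;=\; \mathcal{F}_{\tilde{g},m}(p_i), \qquad i=1,\ldots,N,
\end{equation*}
where at the overlap points $p_i\in D_{j_i}\cap D_{j_{i+1}}$ we invoke Lemma \ref{P-8-16} to guarantee that the chart-local expressions agree. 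Chaining these equalities gives $\mathcal{F}_{\tilde{g},m}(q) = \mathcal{F}_{\tilde{g},m}(\sigma(\lambda)q)$, proving the first assertion. The second assertion (same chart) is then immediate from the first.

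The main technical point to justify carefully is the refinement step: one must ensure that each segment of the path can be realized by an element of $\mathbb{C}^{\ast}$ whose angular part is small enough to fall within the scope of Corollary \ref{R+Inv}. This is handled by uniform continuity of $\gamma$ on $[0,1]$ combined with the fact that the charts $D_{j_i}$ are $\mathbb{R}^{+}$-invariant along the fibre direction (Notation \ref{n-6.1}), so the radial part $\rho_i$ presents no obstruction and only the angular constraint $|\phi_i|<\varepsilon_{j_i}$ must be ensured by subdivision.
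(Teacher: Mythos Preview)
Your proof is correct and follows essentially the same approach as the paper: the paper's proof simply invokes Lemma~\ref{P-8-16} (well-definedness on all of $\Sigma^{\tilde g}$) and Corollary~\ref{R+Inv} (local $\mathbb{C}^{\ast}$-invariance), and then remarks that global $\mathbb{C}^{\ast}$-invariance follows by composing finitely many local $\mathbb{C}^{\ast}$-actions, pointing to the chain-of-charts technique in the proof of Lemma~\ref{L-7.6a}. You have spelled out this composition argument in full detail, including the orbit-stays-in-$\Sigma^{\tilde g}$ observation and the subdivision step, which is exactly what the paper leaves implicit.
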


\begin{proof}
Since $\mathcal{F}_{\tilde{g},m}$ is well defined on the whole $\Sigma ^{%
\tilde{g}}$ by Lemma \ref{P-8-16} and is locally $\mathbb{C}^{\ast }$%
-invariant by Corollary \ref{R+Inv}, the global $\mathbb{C}^{\ast }$%
-invariance follows from a composition of finite number of local $\mathbb{C}%
^{\ast }$-actions (cf. proof of Lemma \ref{L-7.6a}).
\end{proof}

Note that $G_{a,m}$ is not $\mathbb{C}^{\ast }$-invariant in general but it
is $\mathbb{C}^{\ast }$-invariant after restricting to some subbundles such
as the above-mentioned normal bundles (Remark \ref{R-8-16}). In $M$ $=$ $%
\Sigma /\sigma $ let $\tilde{F}$ (resp. $\tilde{F}_{j})$ denote a fixed
point orbifold (resp. the $j$-chart part of $\tilde{F}$); see (\ref{Fix})
and the paragraph there for the precise meaning of $\tilde{F}.$ Let $dv_{%
\tilde{F}_{j}},$ $dv_{\tilde{F}}$ denote the (induced) volume forms on $%
\tilde{F}_{j}$ $\subset $ $M,$ $\tilde{F}$ $\subset $ $M$ respectively with
respect to the metric $g_{M}$ (Notation \ref{N-3-1}). Let $d\tilde{v}%
_{V_{j}^{\tilde{g}}}$ be the pullback of $dv_{\tilde{F}_{j}}$ by $V_{j}^{%
\tilde{g}}$ $\rightarrow $ $\tilde{F}_{j}$ (the $j$-part orbifold chart for
some $\tilde{g}$ $\in $ $\mathcal{G}$ in (\ref{8-41-1}) under the
restriction of $\pi $ $:$ $\Sigma $ $\rightarrow $ $\Sigma /\sigma $ to $%
V_{j}^{\tilde{g}}\times \{0\}\times \{1\}.$ Let $dv_{\Sigma ^{\tilde{g}},m}$
denote the volume form of $\Sigma ^{\tilde{g}}$ with respect to the metric
induced from $G_{a,m}.$ With respect to the metrics $g_{M}$ and $G_{a,m},$
the relation between these volume forms is given by

\begin{lemma}
\label{L-VFub} With the notation above and in Section \ref{S-metric}, we
have $i)$ 
\begin{eqnarray}
dv_{\Sigma ^{\tilde{g}},m} &=&\pi ^{\ast }dv_{\tilde{F}}\wedge
dv_{f,m}|_{\Sigma ^{\tilde{g}}}  \label{Str-8} \\
&=&d\tilde{v}_{V_{j}^{\tilde{g}}}\wedge (l^{-m}d\hat{v}_{m})|_{|\Sigma
_{j,k}|}\text{ \ \ (}\tilde{g}=g_{k}^{(j)}\text{ in (\ref{Sigmak}));}  \notag
\end{eqnarray}%
$ii)$ for $m=0,$ given $p_{0}=\{z\}$ $\times $ \{$0\}$ $\times $ $\{1\}\in
W_{j},$ choose new coordinate $w$ $=$ $|w|e^{i\phi }$ with $z$-coordinate
fixed so that%
\begin{eqnarray}
dv_{\Sigma ^{\tilde{g}},0}|_{\{z\}\times (-\varepsilon _{j},\varepsilon
_{j})\times \mathbb{R}^{+}} &=&\pi ^{\ast }dv_{\tilde{F}_{j}}(z)\wedge d\hat{%
v}_{0}|_{\Sigma ^{\tilde{g}}}(w)  \label{vm0} \\
&=&d\tilde{v}_{V_{j}^{\tilde{g}}}(z)\wedge dv_{0}(|w|)\wedge \frac{dv(\phi )%
}{2\pi }.  \notag
\end{eqnarray}%
Here note that $V_{j}$ is determined by the \textquotedblleft old"
coordinate $w$ ($=1$).
\end{lemma}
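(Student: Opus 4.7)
The plan is to exploit the orthogonal splitting of $G_{a,m}$ at points of $\Sigma^{\tilde{g}}$ produced by the choice of special coordinates, combined with the $\mathbb{C}^{\ast}$-invariance of $\Sigma^{\tilde{g}}$, so that the volume form factorizes into a ``horizontal'' piece on the orbifold $\tilde{F}$ and a ``fibre'' piece along the $\mathbb{C}^{\ast}$-orbit.

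First I would verify that $\Sigma^{\tilde{g}}$ is a complex, $\mathbb{C}^{\ast}$-invariant submanifold of $\Sigma$ fibering (as an orbifold principal bundle) over $\tilde{F}\subset M$; this uses that $\tilde{g}\in S^{1}$ commutes with the $\mathbb{C}^{\ast}$-action and the normality/smoothness results already established in Theorem \ref{thm2-1} and Proposition \ref{L-alk}. In a chart $W_{j}$ with $\tilde{g}=g_{k}^{(j)}$, the fixed-point locus is $V_{j}^{\tilde{g}}\times(-\varepsilon_{j},\varepsilon_{j})\times\mathbb{R}^{+}$, so $\tilde{F}_{j}$ is the orbifold chart $V_{j}^{\tilde{g}}/G_{j}^{\tilde{g}}$, and $d\tilde{v}_{V_{j}^{\tilde{g}}}$ is the pullback of $dv_{\tilde{F}_{j}}$ under the natural orbifold covering $V_{j}^{\tilde{g}}\to\tilde{F}_{j}$.

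Next, given $p_{0}\in\Sigma^{\tilde{g}}$, I would choose distinguished holomorphic coordinates $(z,w)$ with $h(z_{0},\bar{z}_{0})=1$ and $dh(z_{0},\bar{z}_{0})=0$, as in (\ref{M0-3}). By (\ref{M1-1}) the metric $G_{a,m}$ decomposes orthogonally at $p_{0}$ into a $z$-part equal to $(g_{M})_{\alpha\bar{\beta}}(z_{0},\bar{z}_{0})\,dz_{\alpha}d\bar{z}_{\beta}$ and a $w$-part equal to a multiple of $dw\,d\bar{w}/\lambda_{m}$. Since $\Sigma^{\tilde{g}}$ is $\mathbb{C}^{\ast}$-invariant, its tangent space at $p_{0}$ decomposes as the $z$-tangent of $V_{j}^{\tilde{g}}$ plus the full $w$-direction, and its normal bundle lies entirely in the $z$-directions (cf.\ Remark \ref{R-8-16}). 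Consequently the induced volume form factorizes as
\begin{equation*}
dv_{\Sigma^{\tilde{g}},m}\bigr|_{p_{0}}
=dv_{V_{j}^{\tilde{g}}}(z)\bigr|_{p_{0}}\wedge dv_{f,m}\bigr|_{\Sigma^{\tilde{g}}}(w)\bigr|_{p_{0}},
\end{equation*}
which, being an identity of globally defined forms on $\Sigma^{\tilde{g}}\cap D_{j}$, extends by continuity and coordinate independence. Identifying $dv_{V_{j}^{\tilde{g}}}$ with $d\tilde{v}_{V_{j}^{\tilde{g}}}=\pi^{\ast}dv_{\tilde{F}_{j}}$ (via Lemma \ref{L-inv} $i)$ so that $g_{quot}=\pi^{\ast}g_{M}$) yields the first line of (\ref{Str-8}); combining with (\ref{fibrenv_0}) $dv_{f,m}=l^{-m}d\hat{v}_{m}$ yields the second.

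For part $ii)$, specializing to $m=0$, I would substitute the explicit fibre expression (\ref{3.29-5})–(\ref{3-19.5}) or equivalently (\ref{3-18.75}) and (\ref{dvfibre1}) to write $d\hat{v}_{0}|_{\Sigma^{\tilde{g}}}(w)=dv_{0}(|w|)\wedge \frac{dv(\phi)}{2\pi}$ in the chosen $(z,w)$-coordinates. The main (minor) obstacle is bookkeeping: one must check that the identifications $d\tilde{v}_{V_{j}^{\tilde{g}}}\leftrightarrow \pi^{\ast}dv_{\tilde{F}_{j}}$ and the decomposition of $dv_{\Sigma^{\tilde{g}},m}$ are compatible with the fact that $G_{a,m}$ is \emph{not} $\mathbb{C}^{\ast}$-invariant on all of $\Sigma$, but is so after restriction to the relevant horizontal subbundle (Lemma \ref{L-inv} $i)$, Remark \ref{R-8-16}) and on the normal directions used to define $dv_{\tilde{F}}$. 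Once this compatibility is in place, both formulas in $ii)$ follow immediately from the $m=0$ specialization of $i)$.
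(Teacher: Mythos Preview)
Your argument is correct and follows essentially the same route as the paper: the paper's proof of $i)$ simply cites the global factorization $dv_{\Sigma,m}=\pi^{\ast}dv_{M}\wedge dv_{f,m}$ in (\ref{volume}) together with the local metric expression (\ref{mrp}), which is precisely the orthogonal splitting at a point with $h=1,\ dh=0$ that you spell out via (\ref{M0-3}) and (\ref{M1-1}); the second equality of (\ref{Str-8}) then comes from (\ref{fibrenv_0}), and part $ii)$ from (\ref{3-18.75}) at $m=0$, exactly as you indicate.

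One notational caution: when you write ``identifying $dv_{V_{j}^{\tilde{g}}}$ with $d\tilde{v}_{V_{j}^{\tilde{g}}}$'', be aware that in the paper's convention (Remark \ref{R-z-vol}) the symbol $dv_{V_{j}^{\tilde{g}}}$ is reserved for the volume on the \emph{slice} $V_{j}^{\tilde{g}}\times\{0\}\times\{1\}$ induced by $G_{a,m}$, and that one is \emph{not} in general equal to $d\tilde{v}_{V_{j}^{\tilde{g}}}$. What you actually use, and what is correct, is that the \emph{horizontal factor} of $dv_{\Sigma^{\tilde{g}},m}$ at each point (computed in the special coordinates) equals $\pi^{\ast}dv_{\tilde{F}}$; since both sides of (\ref{Str-8}) are intrinsically defined forms on $\Sigma^{\tilde{g}}$ and agree pointwise, the identity follows. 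Your content is right; just avoid invoking the slice notation $dv_{V_{j}^{\tilde{g}}}$ in that step.
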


\begin{proof}
For $i)$ the first equality of (\ref{Str-8}) follows from (\ref{volume})
(see also (\ref{mrp})). In view of (\ref{Sigmak}), (\ref{sing}) the second
equality of (\ref{Str-8}) follows from the definition of $d\tilde{v}_{V_{j}^{%
\tilde{g}}}$ and (\ref{fibrenv_0}). For $ii)$ with the metric on $\Sigma ^{%
\tilde{g}}$ induced from the metric $G_{a,m}$ ($m$ $=$ $0$) on $\Sigma ,$
the volume form $dv_{\Sigma ^{\tilde{g}},0}|_{\{z\}\times (-\varepsilon
_{j},\varepsilon _{j})\times \mathbb{R}^{+}}$along a local $\mathbb{C}^{\ast
}$-orbit of $p_{0}$ reads as (\ref{vm0}) by (\ref{volume}), (\ref{3-18.75})
with the reasoning there for $m=0:$ given $p_{0}=\{z\}$ $\times $ \{$0\}$ $%
\times $ \{1\}$,$ choose new $w$ $=$ $|w|e^{i\phi }$ such that $h(z,\bar{z})$
$=$ $1,$ $dh(z,\bar{z})$ $=$ $0$ at $p_{0}.$ Hence (\ref{vm0}) holds.
\end{proof}

The notation $d\tilde{v}_{V_{j}^{\tilde{g}}}$ is the volume form on $V_{j}^{%
\tilde{g}}$ with respect to the metric $\pi ^{\ast }g_{M};$ this metric is
to be distinguished from the metric $G_{a,m}.$ It is worthwhile noting the
following.

\begin{remark}
\label{R-z-vol} Let $dv_{V_{j}^{\tilde{g}}}$ denote the volume form induced
by $G_{a,m}.$ Then $d\tilde{v}_{V_{j}^{\tilde{g}}}$ may not equal $%
dv_{V_{j}^{\tilde{g}}}$ in general in view of (\ref{M0-1}), (\ref{M0-2}) and
(\ref{metric}). Compare Remark \ref{R-8-16}.
\end{remark}

With reference to (\ref{Str-4}), the integrand to be desired (cf. the
paragraph after (\ref{Str-4})) is now seen by the following.

\begin{proposition}
\label{L-sing} With the notation above, we have%
\begin{eqnarray}
&&\tilde{g}^{-m}\int_{\Sigma ^{\tilde{g}}}\overline{\mathcal{F}_{\tilde{g}%
,m}(x)}l^{m}(x)dv_{\Sigma ^{\tilde{g}},m}  \label{Str-9} \\
&=&\sum_{(j,k):g_{k\text{ }}^{(j)}=\tilde{g}}(g_{k}^{-1})^{m}\int_{V_{j}^{%
\gamma _{k}}\times (-\varepsilon _{j},\varepsilon _{j})\times \mathbb{R}%
^{+}}\varphi _{j}(z,\phi )\overline{\mathcal{F}_{k,m}^{j}(z,\bar{z})}  \notag
\\
&&\text{ \ \ \ \ \ \ \ \ \ \ \ \ \ \ \ \ \ \ \ \ \ \ \ \ \ \ \ \ \ \ }d%
\tilde{v}_{V_{j}^{\gamma _{k}}}(z)\wedge d\hat{v}_{m}(z,\phi ,|w|).  \notag
\end{eqnarray}
\end{proposition}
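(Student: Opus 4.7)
The plan is to localize the left-hand side by the partition of unity $\{\varphi_j\}$ subordinate to the cover $\{D_j\}$ of $\Sigma$, and then identify each local piece with the corresponding $(j,k)$-summand on the right-hand side using the identifications already established: the global well-definedness of $\mathcal{F}_{\tilde{g},m}$ on $\Sigma^{\tilde{g}}$ (Lemma \ref{P-8-16}), the local-chart formula (\ref{Str-7a}) expressing $\mathcal{F}_{\tilde{g},m}$ as $\mathcal{F}_{k,m}^{j}$, the volume-form splitting (\ref{Str-8}), and the identification $\gamma_k = \sigma(g_k)|_{V_j\times\{0\}\times\{1\}}$ from Proposition \ref{L-alk} $iii)$.

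Concretely, since $\sum_j \varphi_j \equiv 1$ on $\Sigma$ (hence on $\Sigma^{\tilde{g}}$) and $\mathcal{F}_{\tilde{g},m}$ is a smooth function on the submanifold $\Sigma^{\tilde{g}}$, I would first write
\begin{equation*}
\tilde{g}^{-m}\int_{\Sigma^{\tilde{g}}}\overline{\mathcal{F}_{\tilde{g},m}(x)}\,l^{m}(x)\,dv_{\Sigma^{\tilde{g}},m}
=\tilde{g}^{-m}\sum_{j}\int_{\Sigma^{\tilde{g}}\cap D_{j}}\varphi_{j}(x)\overline{\mathcal{F}_{\tilde{g},m}(x)}\,l^{m}(x)\,dv_{\Sigma^{\tilde{g}},m}.
\end{equation*}
For each $j$ such that $\Sigma^{\tilde{g}}\cap\mathrm{supp}\,\varphi_{j}\neq\emptyset$, the cyclic group $G_{j}$ contains $\tilde{g}$, so by Corollary \ref{8-5-1} there is a unique $k=k(j)\in\{1,\ldots,N_j\}$ with $g_{k}^{(j)}=\tilde{g}$; equivalently, by Proposition \ref{L-alk} $iii)$ the action $\gamma_{k}$ coincides with $\sigma(\tilde{g})$ on $V_{j}\times\{0\}\times\{1\}$, so by (\ref{sing})--(\ref{Sigmak}) the fixed-point locus inside $D_{j}$ is precisely $|\Sigma_{j,k}|\cap D_{j}=\psi_{j}(V_{j}^{\gamma_{k}}\times(-\varepsilon_{j},\varepsilon_{j})\times\mathbb{R}^{+})$. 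On this set (\ref{Str-8}) gives
\begin{equation*}
l^{m}(x)\,dv_{\Sigma^{\tilde{g}},m}=d\tilde{v}_{V_{j}^{\gamma_{k}}}(z)\wedge d\hat{v}_{m}(z,\phi,|w|),
\end{equation*}
and (\ref{Str-7a}) rewrites $\overline{\mathcal{F}_{\tilde{g},m}(x)}=\overline{\mathcal{F}_{k,m}^{j}(z,\bar{z})}$ in the $(z,\phi,|w|)$-coordinates of $W_{j}$. Since $\tilde{g}^{-m}=(g_{k}^{(j)})^{-m}=(g_{k}^{-1})^{m}$ for this $(j,k)$, each $j$-term is exactly the $(j,k(j))$-summand of the RHS, and summing over $j$ is the same as summing over all $(j,k)$ with $g_{k}^{(j)}=\tilde{g}$ (with vanishing contributions from pairs where the supports do not meet, absorbed by $\varphi_{j}$).

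The main obstacle is bookkeeping rather than analysis: making sure the reindexing $\sum_{j}\mapsto\sum_{(j,k):g_{k}^{(j)}=\tilde{g}}$ is bijective and that no double-counting of overlapping charts $D_{j}\cap D_{j'}$ occurs. The injectivity of $k\mapsto g_{k}^{(j)}$ inside $G_{j}$ (Corollary \ref{8-5-1}) shows that for each $j$ there is at most one admissible $k$, while the partition of unity $\varphi_{j}$ handles overlaps of charts automatically; Lemma \ref{P-8-16} guarantees that the globally-defined integrand $\mathcal{F}_{\tilde{g},m}$ restricts consistently to each chart, so the contributions from different $(j,k)$-summands add up coherently on overlaps. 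A minor technical point to check is that the $d\tilde{v}_{V_{j}^{\gamma_{k}}}$ appearing in (\ref{Str-8}) (defined via $\pi^{\ast}g_{M}$, cf. Remark \ref{R-z-vol}) matches the measure one actually obtains by slicing $dv_{\Sigma^{\tilde{g}},m}$ against $d\hat{v}_{m}$ — but this is precisely the content of (\ref{Str-8}).
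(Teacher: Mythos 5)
Your proposal is correct and follows essentially the same route as the paper's own (very terse) proof: localize with the partition of unity $\{\varphi_j\}$, apply (\ref{Str-8}) to rewrite $l^{m}(x)\,dv_{\Sigma^{\tilde g},m}$ on each chart, and use the fact — which the paper records as Corollary~\ref{R+Inv} and you recover via (\ref{Str-7a}) — that $\mathcal{F}_{\tilde g,m}$ restricted to $\Sigma^{\tilde g}\cap D_j$ is a function of $z,\bar z$ only, so the integrand factors as written on the RHS. Your extra bookkeeping (via Corollary~\ref{8-5-1} and Proposition~\ref{L-alk}~$iii)$) on the bijection between charts meeting $\Sigma^{\tilde g}$ and pairs $(j,k)$ with $g_k^{(j)}=\tilde g$ is a correct expansion of what the paper leaves implicit.
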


\begin{proof}
Observe that by (\ref{Str-8}) $l^{m}(x)dv_{\Sigma ^{\tilde{g}},m}$ $=$ $d%
\tilde{v}_{V_{j}^{\gamma _{k}}}(z)\wedge d\hat{v}_{m}$ on $\Sigma ^{\tilde{g}%
}\cap D_{j}.$ From this, $\Sigma _{j}\varphi _{j}=1$ and Corollary \ref%
{R+Inv}, (\ref{Str-9}) follows.
\end{proof}

Recall that the largest period is $\frac{2\pi }{p}$ (see the paragraph after
(1.5)). We can now prove the main result Theorem \ref{main_theorem}.

\subsection{The local index formula completed\label{Subs-8-3}}

\begin{proof}
\textbf{(of Theorem \ref{main_theorem})} To show $i)$ of the theorem,
observe that the formula (\ref{Part-Ia}) gives the $HRR_{m}$ term in the RHS
of (\ref{Str-F}). To compute the singular part of $StrP_{m,t}^{0}(x,x)$ we
sum up over $j$ the second term in the RHS of (\ref{Str-2}) in view of (\ref%
{Kmt}). This results in getting the term $\overline{\mathcal{F}_{\tilde{g},m}%
}$ (the complex conjugate of $\mathcal{F}_{\tilde{g},m}$ in (\ref{Str-7}))
by noting that $\alpha _{k}\alpha _{0}^{-1}$ $=$ $\alpha _{k}(x)\alpha
_{0}^{-1}(x)$ $=$ $g_{k}^{-1}$ by (\ref{alk}) and $\tau _{j}$ $=$ $1$ on
supp $\varphi _{j}.$ We have shown (\ref{Str-F}) for $StrP_{m,t}^{0}(x,x)$
as $t\rightarrow 0$.

To show $ii)$ for the index formula$,$ from the McKean-Singer type formula (%
\ref{5.5}) for $\tilde{\square}_{m}^{c\pm }$ (with $E$ added by Remark \ref%
{5-17.5}) together with Theorem \ref{t-MS}, it follows that%
\begin{equation}
index(\bar{\partial}_{\Sigma ,m}^{E}\text{-complex})=\lim_{t\rightarrow
0}\int_{\Sigma }[TrP_{m,t}^{0,+}(x,x)-TrP_{m,t}^{0,-}(x,x)]dv_{\Sigma ,m}.
\label{Str-12}
\end{equation}

\noindent By (\ref{Str-3}), (\ref{Str-4}) and (\ref{Str-9}), we have 
\begin{eqnarray}
RHS\text{ of (\ref{Str-12}) } &=&\lim_{t\rightarrow 0}\int_{\Sigma }\text{%
Part I of }StrP_{m,t}^{0}(x,x)\text{ }dv_{\Sigma .m}  \label{Str-13} \\
&&+\sum_{\tilde{g}\in \mathcal{G},\text{ }\tilde{g}\neq 1}\tilde{g}%
^{-m}\int_{\Sigma ^{\tilde{g}}}\overline{\mathcal{F}_{\tilde{g},m}(x)}%
l^{m}(x)dv_{\Sigma ^{\tilde{g}},m}.  \notag
\end{eqnarray}

\noindent The first term in the RHS of (\ref{Str-13}) is reduced to the RHS
of (\ref{Part-I}) by Theorem \ref{P-main}. The second term in the RHS of (%
\ref{Str-13}) is real-valued since the other terms in (\ref{Str-13}) are
real-valued. Thus (\ref{MF}) follows from (\ref{Str-12}) and taking the
complex conjugate of the second term in the RHS of (\ref{Str-13}).
\end{proof}


\begin{remark}
\label{R-sum} In comparison with \cite[(14.3) and (14.4)]{Du} a sum over $%
\tilde{g}$ or $k$ in our formula (\ref{Str-13}) is anticipated; see also
Remark \ref{R-8-40}. A detailed comparison is made in the next subsection.
\end{remark}

In these subsections devoted to the local $m$-index density, we have given
an expression based on the language of \cite{BGV}, which are written in the
setting of Riemannian geometry. For complex manifolds here, it is desirable
to express $\mathcal{F}_{\tilde{g},m}$ in (\ref{Str-7}) in terms of Todd
form $Td(T\Sigma ^{\tilde{g}}/L_{\Sigma ^{\tilde{g}}},g_{quot})$, \textit{%
twisted} Chern character form $ch(\gamma _{k}^{E},E\mathcal{)}$ (it is the
usual Chern character form twisted by $\gamma _{k}^{E}$ in the sense of (\ref%
{ch})) where $E$ is a $\mathbb{C}^{\ast }$-equivariant holomorphic vector
bundle and the (usual/untwisted) Chern character form $ch(L_{\Sigma }^{\ast
})^{\otimes m}$. The expression that we will end up with is the following:
for $(j,k)$ such that $g_{k\text{ }}^{(j)}=\tilde{g},$ 
\begin{eqnarray}
&&\ \ \ \mathcal{F}_{\tilde{g},m}|_{\Sigma ^{\tilde{g}}\cap D_{j}}\overset{(%
\ref{Str-7a})}{=}\mathcal{F}_{k,m}^{j}(z,\bar{z})=  \label{FkTd} \\
&&T_{V_{j}^{\gamma _{k}}}\frac{Td(T\Sigma ^{\tilde{g}}/L_{\Sigma ^{\tilde{g}%
}},g_{quot})ch(\gamma _{k}^{\psi _{j}^{\ast }E|_{V_{j}}},\psi _{j}^{\ast
}E|_{V_{j}}\mathcal{)}ch(\psi _{j}^{\ast }(L_{\Sigma }^{\ast })^{\otimes
m}|_{V_{j}}\mathcal{)}}{\det (1-(\tilde{g}^{-1})_{1}^{c}\exp (-\frac{i}{2\pi 
}R_{c}^{1}))}  \notag
\end{eqnarray}

\noindent (see the paragraph before Notation \ref{N-b-1} below for notations
involving the superscript/subscript \textquotedblleft $c$" in $(\tilde{g}%
^{-1})_{1}^{c}$ and $R_{c}^{1}$ above). Some details for deducing (\ref{FkTd}%
) is given in Subsection \ref{S-8-4} below. Formula (\ref{FkTd}) allows us
to compare our result with \cite[p.184, (14.4)]{Du} which corresponds to the 
$m=0$ case of (\ref{FkTd}).

An application of the $m$-index on some two-dimensional $\Sigma $ yields
algebraic identities that are perhaps interesting and nontrivial (see (\ref%
{ku})).

\begin{example}
\label{E-IF} Consider $\tilde{M}=\mathbb{CP}^{1}$ also viewed as $S^{2},$
the unit sphere in $\mathbb{R}^{3}.$ Let $l$ be an integer larger than or
equal to $2.$ Let $g=e^{\frac{2\pi i}{l}}\in G$ $=$ $\mathbb{Z}_{l}$ $%
\subset $ $S^{1}$ $\subset \mathbb{C}^{\ast }$ act on $S^{2}$ by a rotation
of $\frac{2\pi }{l}$ degree around the $z$-axis. So the north pole $%
N=(0,0,1) $ and the south pole $S=(0,0,-1)$ are the only fixed points of $g$%
, $g^{2},$ $\cdot \cdot ,$ $g^{l-1}.$ Let $K_{\tilde{M}}$ denote the
canonical line bundle over $\tilde{M}.$ The $G$-action on $\tilde{M}$
induces an action on $K_{\tilde{M}}$ by pulling back the forms. Observe that%
\begin{equation}
\Sigma :=(K_{\tilde{M}}\backslash \{0\text{-section}\})/G  \label{Ex1}
\end{equation}%
is a complex surface with a $\mathbb{C}^{\ast }$-action induced by the
natural $\mathbb{C}^{\ast }$-action on $K_{\tilde{M}}\backslash \{0$-section$%
\},$ which becomes a locally free action denoted as $\sigma _{s}$ on $\Sigma 
$ in the sense of Theorem \ref{main_theorem}. It follows that 
\begin{equation*}
\Sigma /\mathbb{C}^{\ast }(or\text{ }\Sigma /\sigma _{s})\cong \tilde{M}/G=%
\mathbb{CP}^{1}/\mathbb{Z}_{l}
\end{equation*}%
is a (1-dimensional) compact complex orbifold with the two orbifold points $%
N $ and $S.$

Take $m=0$ in (\ref{MF}). We first compute $index$ $\bar{\partial}_{\Sigma
,m}$ for $m$ $=$ $0.$ Using (\ref{9.17-5}) with $P$ $=$ $\Sigma ,$ $M$ $=$ $%
\Sigma /\mathbb{\sigma }_{s}$ $=$ $\Sigma /\mathbb{C}^{\ast }$ by Remark \ref%
{9.3} we get $h_{m=0}^{0}(\Sigma ,\mathcal{O)}$ $\mathcal{=}$ $h^{0}(\Sigma /%
\mathbb{C}^{\ast },\mathcal{O}_{\Sigma /\mathbb{C}^{\ast }})$ $=$ $1$ and $%
h_{m=0}^{1}(\Sigma ,\mathcal{O)}$ $\mathcal{=}$ $h^{1}(\Sigma /\mathbb{C}%
^{\ast },\mathcal{O}_{\Sigma /\mathbb{C}^{\ast }})$ $=$ $h^{1}(\mathbb{CP}%
^{1}/\mathbb{Z}_{l},\mathcal{O}_{\mathbb{CP}^{1}/\mathbb{Z}_{l}}).$ The fact
that $H^{1}(\mathbb{CP}^{1}/\mathbb{Z}_{l},$ $\mathcal{O}_{\mathbb{CP}^{1}/%
\mathbb{Z}_{l}})$ is easily seen to be a $\mathbb{Z}_{l}$-invariant subspace
of $H^{1}(\mathbb{CP}^{1},\mathcal{O}_{\mathbb{CP}^{1}}\mathcal{)}$ which
equals $0,$ gives the LHS of (\ref{MF}):%
\begin{equation*}
index(\bar{\partial}_{\Sigma ,m=0})=h_{m=0}^{0}(\Sigma ,\mathcal{O)-}%
h_{m=0}^{1}(\Sigma ,\mathcal{O)}=1.
\end{equation*}%
By $h^{0}(\mathbb{CP}^{1},\mathcal{O)-}h^{1}(\mathbb{CP}^{1},\mathcal{O)}$ $%
= $ $1$ given by the similar index formula, it is seen that the first term
on the RHS of (\ref{MF}) equals $1/l.$ For the remaining terms of (\ref{MF})
with $\tilde{g}$ $=$ $g,$ $g^{2},$ $\cdot \cdot ,$ $g^{l-1}$ $\in $ $G,$ $%
\Sigma ^{\tilde{g}}$ $(\subset $ $\Sigma )$ consists of two fibres of $%
\Sigma $ at $N$ and $S$ each with area $\frac{1}{l}$ with respect to the
measure $dv_{0}(|w|)$ $\frac{dv(\phi )}{2\pi }$ (see (\ref{Sigmak}) and (\ref%
{vm0})). It is not difficult to see that the contribution from $\Sigma ^{%
\tilde{g}}$ associated with $(N,g)$, $(N,g^{2}),$ $\cdot \cdot \cdot ,$ $%
(N,g^{l-1}),$ $\tilde{g}$ $=$ $g^{k},$ $1\leq k\leq l-1$ in the RHS of (\ref%
{MF}) using (\ref{FkTd}) without $E$ gives, where $1-g^{-k}$ below is from
the denominator of (\ref{FkTd}) and found to be ($\tilde{g}^{-1})_{1}^{c}$ $%
= $ $g^{-k}$,%
\begin{eqnarray}
\frac{1}{l}\sum_{k=1}^{l-1}\frac{1}{1-g^{-k}} &=&\frac{1}{l}\sum_{k=1}^{l-1}(%
\frac{1}{2}-i\frac{\sin \frac{2\pi k}{l}}{2(1-\cos \frac{2\pi k}{l})})
\label{m0} \\
&=&\frac{1}{l}\sum_{k=1}^{l-1}\frac{1}{2}=\frac{l-1}{2l}  \notag
\end{eqnarray}%
in view of $\Sigma _{k=1}^{l-1}\sin \frac{2\pi k}{l}/(1-\cos \frac{2\pi k}{l}%
)$ $=$ $0$ since the complex conjugate of $\sum_{k=1}^{l-1}\frac{1}{1-g^{-k}}
$ equals itself ($\bar{g}^{-k}$ $=$ $g^{-l+k})$. Similarly the contribution
from $\Sigma ^{\tilde{g}}$ associated with $(S,g)$, $(S,g^{2}),$ $\cdot
\cdot ,$ $(S,g^{l-1})$ also gives $\frac{l-1}{2l}.$ Altogether we get $\frac{%
1}{l}$ $+$ $\frac{l-1}{2l}$ $+$ $\frac{l-1}{2l}$ $=$ $1$ for the RHS of (\ref%
{MF}) and hence have verified (\ref{MF}) for $m$ $=$ $0$.

We turn now to $m>0.$ Write $G$ $=$ $\mathbb{Z}_{l}$. The situation is now
equivalent to adding an orbifold line bundle ($K_{\tilde{M}/G}^{\ast
})^{\otimes m}$; this follows from Remark \ref{9.3} using (\ref{9.17-5})
with $P$ $=$ $\Sigma ,$ $M$ $=$ $\Sigma /\mathbb{\sigma }_{s}$ $=$ $\tilde{M}%
/G$ and (\ref{10.30-5}) and (\ref{Ex1}) with $L_{M}$ $=$ $\Sigma \times
_{\sigma _{s}}\mathbb{C}$ $=$ $K_{\tilde{M}}/G$ $=$ $K_{\tilde{M}/G}$ as a
holomorphic orbifold line bundle over $M$ $=$ $\tilde{M}/G.$ Denote the
holomorphic line bundle of degree $d$ over $\tilde{M}$ $=$ $\mathbb{CP}^{1}$
by $\mathcal{O}(d).$ By $K_{\tilde{M}}$ $=$ $\mathcal{O}(-2)$%
\begin{equation}
H^{1}(\tilde{M},(K_{\tilde{M}}^{\ast })^{\otimes m})=H^{0}(\tilde{M},%
\mathcal{O}(-2m-2))=0  \label{h1}
\end{equation}%
for $m$ $\geq $ $0.$ It follows from (\ref{9.17-5}) (which holds for
orbifolds via Remark \ref{9.3}) that $H_{m}^{1}(\Sigma ,\mathcal{O})$ $%
\simeq $ $H^{1}(\tilde{M}/G,$($K_{\tilde{M}/G}^{\ast })^{\otimes m}),$ a $G$%
-invariant subspace of $H^{1}(\tilde{M},(K_{\tilde{M}}^{\ast })^{\otimes m})$
as in the $m$ $=$ $0$ case since $G$ is a finite group, vanishes via (\ref%
{h1}). Now we are going to compute the dimension of $H_{m}^{0}(\Sigma ,%
\mathcal{O})$ $\simeq $ $H^{0}(\tilde{M}/G,$($K_{\tilde{M}/G}^{\ast
})^{\otimes m})$ $\simeq $ $G$-invariant elements of $H^{0}(\tilde{M},(K_{%
\tilde{M}}^{\ast })^{\otimes m}).$ Let $[z:w]$ denote the homogeneous
coordinates of $\tilde{M}=\mathbb{CP}^{1}$ with $G$ $=$ $\mathbb{Z}_{l}$
action by $[z:w]$ $\rightarrow $ $[e^{2\pi i/l}z:w].$ Write a holomorphic
section of $T\tilde{M}^{\otimes m}$ $=$ $(K_{\tilde{M}}^{\ast })^{\otimes m}$
in $[z:1]$ as $f(z)(\frac{\partial }{\partial z})^{m}$ for a polynomial $%
f(z) $ of degree $\leq $ $2m.$ Its $G$-invariance implies that 
\begin{equation}
f(e^{2\pi i/l}z)=e^{2\pi mi/l}f(z).  \label{h0m}
\end{equation}%
Writing $f(z)$ $=$ $\sum_{k=0}^{2m}c_{k}z^{k}$ and $m$ $\equiv $ $r$ mod $l$
for some $0$ $\leq $ $r$ $<$ $l$.$,$ we obtain by (\ref{h0m}) that for $%
c_{k} $ $\neq $ $0,$ $e^{2\pi ik/l}$ must be $e^{2\pi ir/l}$. Write 
\begin{equation}
\kappa (l,m):=\text{the number of nonnegative integers }n\text{ satisfying }%
r+l\cdot n\leq 2m.  \label{8-59-1}
\end{equation}%
So $h_{m}^{0}(\Sigma ,\mathcal{O})$ $=$ $h^{0}(\tilde{M}/G,$ ($K_{\tilde{M}%
/G}^{\ast })^{\otimes m})$ equals $\kappa (l,m).$ Thus%
\begin{equation}
index\text{ }\bar{\partial}_{\Sigma ,m}\equiv h_{m}^{0}(\Sigma ,\mathcal{O)-}%
h_{m}^{1}(\Sigma ,\mathcal{O)=}\text{ }\kappa (l,m)  \label{ind-m}
\end{equation}%
\noindent as the LHS of (\ref{MF}). For instance, if $l$ $|$ $m$ then $r=0$
and $\kappa (l,m)$ $=$ $\frac{2m}{l}$ $+$ $1.$

We now compute the RHS of (\ref{MF}) for $m$ $\geq $ $0$, the first term of
which being the integral of HRR$_{m}$ equals $\frac{2m+1}{l}$ by similar
arguments as in the $m=0$ case above. The contribution from $\Sigma ^{\tilde{%
g}}$ associated with $(N,g)$, $(N,g^{2}),$ $\cdot \cdot ,$ $(N,g^{l-1})$
(resp. $(S,g)$, $(S,g^{2}),$ $\cdot \cdot ,$ $(S,g^{l-1}))$ in the RHS of (%
\ref{MF}) using (\ref{FkTd}) without $E$ gives, where the numerator $g^{km}$
below is from $\tilde{g}^{m}$ of (\ref{MF}),%
\begin{equation}
\frac{1}{l}\sum_{k=1}^{l-1}\frac{g^{km}}{1-g^{-k}}=:\mu _{N}(l,m)\text{
(resp. }\mu _{S}(l,m)\text{)},\text{ \ }g=e^{\frac{2\pi i}{l}}.  \label{km}
\end{equation}%
\noindent Clearly $\mu _{N}(l,m)$ $=$ $\mu _{S}(l,m)$ denoted by $\mu (l,m).$
To verify (\ref{MF}) for any integers $l\geq 2,$ $m\geq 0,$ is the same as
to show the following identity%
\begin{equation}
\kappa (l,m)=\frac{2m+1}{l}+2\mu (l,m).  \label{ku}
\end{equation}%
\noindent Write $x_{k}=g^{k}.$ So $\frac{g^{km}}{1-g^{-k}}$ in (\ref{km})
equals $\frac{x_{k}^{m+1}}{x_{k}-1}.$ It follows from (\ref{km}) that 
\begin{eqnarray}
l\cdot \mu (l,m) &=&\sum_{k=1}^{l-1}\frac{x_{k}^{m+1}}{x_{k}-1}  \label{xk}
\\
&=&\sum_{k=1}^{l-1}\frac{x_{k}^{m+1}-1}{x_{k}-1}+\sum_{k=1}^{l-1}\frac{1}{%
x_{k}-1}.  \notag
\end{eqnarray}%
\noindent The first term of the RHS in (\ref{xk}) equals%
\begin{eqnarray}
&&\sum_{k=1}^{l-1}(x_{k}^{m}+x_{k}^{m-1}+\cdot \cdot \cdot +x_{k}+1)
\label{xks} \\
&=&\sum_{a=0}^{m}\sum_{k=1}^{l-1}x_{k}^{a}=\sum_{a=0}^{m}%
\sum_{k=1}^{l-1}(g^{a})^{k}  \notag \\
&=&\sum_{0\leq a\leq m,g^{a}=1}\sum_{k=1}^{l-1}1+\sum_{0\leq a\leq
m,g^{a}\neq 1}\frac{g^{a}-(g^{a})^{l}}{1-g^{a}}\text{ (note that }g^{l}=1) 
\notag \\
&=&(l-1)\cdot \#\{a\text{ }|0\leq a\leq m,g^{a}=1\}+(-1)\cdot \#\left\{ a%
\text{ }|0\leq a\leq m,g^{a}\neq 1\right\}  \notag \\
&=&(l-1)(1+q)+(-1)(m-q)=-m+l-1+l\cdot q  \notag
\end{eqnarray}%
\noindent where $q$ is the nonnegative integer such that $m=q\cdot l+r,$ $%
0\leq r<l.$ In the RHS of (\ref{xk}) the second term via (\ref{m0}) reads%
\begin{equation}
\sum_{k=1}^{l-1}\frac{1}{x_{k}-1}=-\frac{l-1}{2}.  \label{xk2}
\end{equation}%
\noindent Substituting (\ref{xks}) and (\ref{xk2}) into (\ref{xk}) we obtain%
\begin{eqnarray}
\mu (l,m) &=&\frac{1}{l}[(-m+l-1+l\cdot q)-\frac{l-1}{2}]  \label{ulp} \\
&=&\frac{1}{l}(-m+\frac{l-1}{2}+l\cdot q).  \notag
\end{eqnarray}%
\noindent By (\ref{ulp}) the RHS of (\ref{ku}) reads as%
\begin{equation}
\frac{2m+1}{l}+2\mu (l,m)=1+2q.  \label{rhs}
\end{equation}%
\noindent From $r$ $=$ $m-q\cdot l$ and $r+l\cdot n$ $\leq $ $2m$ it follows
that $n$ $\leq $ $2q$ $+$ $\frac{r}{l}$ hence $n$ $=$ $0,$ $1,$ $\cdot \cdot
,$ $2q.$ So $\kappa (l,m)$--the number of the nonnegative integers $n$
satisfying $r+l\cdot n$ $\leq $ $2m$, is exactly $1+2q.$ By this and (\ref%
{rhs}) we have proved (\ref{ku}).
\end{example}

Finally let us indicate the following fact which is of topological nature.

\begin{proposition}
\label{pinv} The first integral of (\ref{MF}) in Theorem \ref{main_theorem}
is independent of the choice of $\mathbb{C}^{\ast }$-invariant connections
on $T^{1,0}(\Sigma )/L_{\Sigma },$ $E$ and $L_{\Sigma }$ respectively, used
for computing the associated Todd form and Chern character forms in (\ref{MF}%
). Furthermore the similar conclusion holds for those integrals over $\Sigma
^{\tilde{g}}$ of (\ref{MF})$.$
\end{proposition}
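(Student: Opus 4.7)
The plan is to reduce the asserted invariance to a Chern--Weil transgression on the compact orbifold $M = \Sigma/\sigma$ (Theorem~\ref{thm2-1}) via fibre integration along $\mathbb{C}^{\ast}$-orbits. For the first (HRR) integral, I would interpolate linearly between two choices of $\mathbb{C}^{\ast}$-invariant connections on each of $T^{1,0}\Sigma/L_{\Sigma}$, $E$, $L_{\Sigma}$, obtaining a one-parameter family of $\mathbb{C}^{\ast}$-invariant connections. Standard Chern--Simons transgression then expresses the difference of the integrand factor
\begin{equation*}
Td_{\mathbb{C}^{\ast}}(T^{1,0}\Sigma/L_{\Sigma},g_{quot})\wedge ch_{\mathbb{C}^{\ast}}(E,h_{E})\wedge e^{-mc_{1}(L_{\Sigma},\|\cdot\|)}
\end{equation*}
between the two endpoints as $d\beta$, where $\beta$ is a globally defined $\mathbb{C}^{\ast}$-invariant form on $\Sigma$ (a polynomial in the connection differences and curvatures, integrated over $s\in[0,1]$).

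The next step is to verify that $\beta$ is \emph{horizontal}, i.e.\ in distinguished local coordinates $(z,w)$ of Proposition~\ref{p-gue2-1} it contains only $dz,d\bar{z}$ components, so that together with its $\mathbb{C}^{\ast}$-invariance it is basic with respect to $\pi\colon\Sigma\to M$ and descends to an orbifold form $\bar{\beta}$ on $M$. For $L_{\Sigma}$, in the frame $e_{w}=\partial/\partial w$ one has $\|e_{w}\|^{2}=h(z,\bar{z})$ (by (\ref{ddw}) plus $\mathbb{C}^{\ast}$-invariance), whence the Chern connection form equals $\partial\log h$, purely in $dz$, and $c_{1}(L_{\Sigma},\|\cdot\|)=-\tfrac{1}{2\pi i}\partial\bar{\partial}\log h$ is horizontal. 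For $T^{1,0}\Sigma/L_{\Sigma}\simeq\pi^{\ast}T^{1,0}M$ as Hermitian bundles (Lemma~\ref{L-inv}~$i)$) and for $\mathbb{C}^{\ast}$-equivariant $E$ with $\mathbb{C}^{\ast}$-invariant $h_{E}$, the connections may be chosen as pullbacks of connections on the orbifold quotient bundles over $M$, again horizontal; this horizontality is preserved by linear interpolation and by the Chern--Simons integration.

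Fibre integration then closes the HRR case. From the splitting $dv_{\Sigma,m}=\pi^{\ast}dv_{M}\wedge dv_{f,m}$ in (\ref{volume}), $dv_{f,m}=l^{-m}d\hat{v}_{m}$ of (\ref{fibrenv_0}), and the unity property (\ref{fibrenv}), Fubini yields
\begin{equation*}
\int_{\Sigma}\pi^{\ast}\bar{\alpha}\wedge d\hat{v}_{m}=\int_{M}\bar{\alpha}
\end{equation*}
for any basic form of appropriate degree. Applying this to $\bar{\alpha}=\bar{\beta}$ (and using $d\beta=\pi^{\ast}d\bar{\beta}$) gives
\begin{equation*}
\int_{\Sigma}d\beta\wedge d\hat{v}_{m}=\int_{M}d\bar{\beta}=0
\end{equation*}
by orbifold Stokes on the closed compact $M$. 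For the strata integrals, note that $\Sigma^{\tilde{g}}$ is $\mathbb{C}^{\ast}$-invariant (the action commutes with $\tilde{g}$), so $\Sigma^{\tilde{g}}/\mathbb{C}^{\ast}$ is a compact complex suborbifold of $M$. By (\ref{FkTd}), $\mathcal{F}_{\tilde{g},m}$ is built from $Td(T\Sigma^{\tilde{g}}/L_{\Sigma^{\tilde{g}}},g_{quot})$, the twisted $ch(\gamma_{k}^{E},E)$, $ch((L_{\Sigma}^{\ast})^{\otimes m})$, and the normal-bundle denominator involving $R^{1}$ and $\tilde{g}_{1}$; each is horizontal and $\mathbb{C}^{\ast}$-invariant by Lemma~\ref{8-10a} and Corollaries~\ref{R+Inv}, \ref{C-Fgm}. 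The measure identity $l^{m}dv_{\Sigma^{\tilde{g}},m}=d\tilde{v}_{V_{j}^{\tilde{g}}}\wedge d\hat{v}_{m}|_{|\Sigma_{j,k}|}$ in (\ref{Str-8}) then reduces the integral, via the same fibre-integration argument, to an orbifold Chern--Weil integral on the compact $\Sigma^{\tilde{g}}/\mathbb{C}^{\ast}$, where the transgression argument again closes by orbifold Stokes.

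The main obstacle is to track horizontality through the Chern--Simons integration and the Berezin integration in (\ref{FkTd}) simultaneously for all three bundles, ensuring that $\beta$ and its strata analogue are genuinely basic forms on $\Sigma$ (resp.\ $\Sigma^{\tilde{g}}$) and thus descend to $M$ (resp.\ $\Sigma^{\tilde{g}}/\mathbb{C}^{\ast}$). This requires a judicious choice of gauge --- pullback connections on $E$ and $T^{1,0}\Sigma/L_{\Sigma}$, and Chern connections of $\mathbb{C}^{\ast}$-invariant Hermitian metrics on $L_{\Sigma}$ --- so that horizontality survives wedge products, traces, and the denominator expansion $\det(1-\tilde{g}_{1}^{-1,c}\exp(-\tfrac{i}{2\pi}R_{c}^{1}))^{-1}$. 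A secondary subtlety is that $L_{\Sigma}$ does not descend to an ordinary line bundle on $M$ but only to an orbifold line bundle $L_{M}=\Sigma\times_{\mathbb{C}^{\ast}}\mathbb{C}$; this is enough for orbifold Chern--Weil but must be borne in mind when invoking orbifold Stokes. Once these descent questions are settled, the argument is a standard application of orbifold Chern--Weil theory on $M$ and on the compact suborbifolds $\Sigma^{\tilde{g}}/\mathbb{C}^{\ast}$.
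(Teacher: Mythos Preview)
Your approach is correct and reaches the same conclusion, but the paper handles the key vanishing step differently. You descend the basic transgression form $\beta$ to the compact orbifold $M=\Sigma/\sigma$ and invoke orbifold Stokes directly; the paper instead stays on the noncompact $\Sigma$, introduces a $\mathbb{C}^{\ast}$-invariant cutoff $\chi_{\varepsilon}$ supported away from $\Sigma_{\text{sing}}$, applies ordinary Stokes on the smooth quotient $D_{\varepsilon}=\mathrm{supp}(\chi_{\varepsilon})/\sigma$ (a manifold with boundary), and shows the remainder $\int (d\chi_{\varepsilon})\wedge Q\wedge d\hat{v}_{m}$ vanishes as $\varepsilon\to 0$ via the codimension $\geq 2$ estimate $\mathrm{vol}(N_{\varepsilon})=O(\varepsilon^{2})$ against $|d\chi_{\varepsilon}|=O(1/\varepsilon)$. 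For the strata they repeat this using Lemma~\ref{L-Vg} to get a globally free $(S^{1}/H_{k+1})\times\mathbb{R}^{+}$-action on $\Sigma^{\tilde{g}}\setminus\Sigma^{\tilde{g}}_{\text{sing}}$. Your route is cleaner and more conceptual; the paper's route is more self-contained (avoiding orbifold Stokes as a black box) and in line with its stated philosophy of working upstairs on $\Sigma$ rather than on $M$.

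One point worth flagging: both arguments actually require the transgression form to be \emph{basic} (horizontal as well as $\mathbb{C}^{\ast}$-invariant) so that it descends, and the paper's step $\int_{\Sigma\setminus\Sigma_{\text{sing}}}d(\chi_{\varepsilon}Q)\wedge d\hat{v}_{m}=\int_{D_{\varepsilon}}d_{M}(\chi_{\varepsilon}Q)$ uses this implicitly. You make this explicit and identify it as the main obstacle, which is appropriate; for Chern connections of $\mathbb{C}^{\ast}$-invariant metrics in $\mathbb{C}^{\ast}$-invariant frames the connection forms depend only on $z$ (as you observe for $L_{\Sigma}$ and similarly for $E$, $T^{1,0}\Sigma/L_{\Sigma}$), so horizontality propagates through the transgression. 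Your worry about the Berezin integral in~(\ref{FkTd}) is unfounded: Corollary~\ref{R+Inv} already shows $\mathcal{F}_{\tilde{g},m}$ depends only on $z$, so the strata case is no harder.
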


\proof
We follow the notation in the preceding proof. From \cite[Appendix B.5]{MM}
we see that the "d-exact" objects, resulting from the difference between the
characteristic forms associated with different $\mathbb{C}^{\ast }$%
-invariant connections, can be chosen to be "d($\mathbb{C}^{\ast }$%
-invariant forms)". We are then reduced to checking the following vanishing
on the noncompact space $\Sigma $ (for the first integral of (\ref{MF}))%
\begin{equation}
\int_{\Sigma }dQ\wedge d\hat{v}_{m}=0  \label{dQ}
\end{equation}

\noindent where $Q$ is a $\mathbb{C}^{\ast }$-invariant $(2n-3)$-form. That
the integrand in (\ref{dQ}) is $L^{1}$-integrable is easily checked (cf.
Remark \ref{3-r}).

Some preparations are in order. Take a $\mathbb{C}^{\ast }$-invariant
distance function $\rho (x,\Sigma _{\text{sing}})$, i.e. $\rho (\sigma
(\lambda )(x),\Sigma _{\text{sing}})$ $=$ $\rho (x,\Sigma _{\text{sing}}),$ $%
\lambda $ $\in $ $\mathbb{C}^{\ast },$ which can be constructed from a
distance function on $M:=\Sigma /\sigma $ using $g_{M}$ (thus degenerate
along the $\mathbb{C}^{\ast }$-orbits)$.$ Let $0$ $\leq $ $\chi
_{\varepsilon }$ $\leq $ $1$ on $\Sigma $ be a $\mathbb{C}^{\ast }$%
-invariant $C^{\infty }$ cut-off function: $\chi _{\varepsilon }(x)$ $=$ $1$
if $\rho (x,\Sigma _{\text{sing}})$ $\geq $ $2\varepsilon ,$ $\chi
_{\varepsilon }(x)$ $=$ $0$ if $\rho (x,\Sigma _{\text{sing}})$ $\leq $ $%
\varepsilon $ and $\chi _{\varepsilon }(\sigma (\lambda )(x))$ $=$ $\chi
_{\varepsilon }(x),$ $|d\chi _{\varepsilon }|_{G_{a,m}}$ $=$ $O(\frac{1}{%
\varepsilon }).$ For the last condition we use (\ref{metric}) for $G_{a,m}$
(see also (\ref{M1-1})) with the $\mathbb{C}^{\ast }$-invariance of $\chi
_{\varepsilon }.$ Since the action $\sigma $ is globally free on $\Sigma
\backslash \Sigma _{\text{sing}}$ ($p=p_{1}=1$ by adjusting $\sigma $ to $%
\tilde{\sigma}(le^{i\theta })$ $=$ $\sigma (le^{i\theta /p}))$ and supp$%
(\chi _{\varepsilon })$ $\subset $ $\Sigma \backslash \Sigma _{\text{sing}}$%
, it follows that $D_{\varepsilon }$ $:=$ supp$(\chi _{\varepsilon })/\sigma 
$ ($\subset $ $M:=\Sigma /\sigma )$ is a smooth manifold with the boundary $%
\partial D_{\varepsilon }.$

Back to (\ref{dQ}) which equals%
\begin{eqnarray}
&&\int_{\Sigma \backslash \Sigma _{\text{sing}}}dQ\wedge d\hat{v}_{m}
=\lim_{\varepsilon \rightarrow 0}\int_{\Sigma \backslash \Sigma _{\text{sing}%
}}\chi _{\varepsilon }dQ\wedge d\hat{v}_{m}  \label{int3} \\
&=&\lim_{\varepsilon \rightarrow 0}\int_{\Sigma \backslash \Sigma _{\text{%
sing}}}d(\chi _{\varepsilon }Q)\wedge d\hat{v}_{m}-\lim_{\varepsilon
\rightarrow 0}\int_{\Sigma \backslash \Sigma _{\text{sing}}}(d\chi
_{\varepsilon })Q\wedge d\hat{v}_{m}  \notag
\end{eqnarray}

Now we compute $\int_{\Sigma \backslash \Sigma _{\text{sing}}}d(\chi
_{\varepsilon }Q)\wedge d\hat{v}_{m}$ on the RHS of (\ref{int3}), which
equals%
\begin{eqnarray}
&&\int_{D_{\varepsilon }}d_{M}(\chi _{\varepsilon }Q)\int_{\mathbb{C}^{\ast }%
\text{-orbit}}d\hat{v}_{m}\text{ (by }\mathbb{C}^{\ast }\text{-invariance of 
}\chi _{\varepsilon }Q\text{)}  \label{int1} \\
&\overset{\text{(\ref{fibrenv})}}{=}&(\int_{\partial D_{\varepsilon }}\chi
_{\varepsilon }Q)\cdot 1=0\text{ (}\chi _{\varepsilon }|_{\partial
D_{\varepsilon }}=0)  \notag
\end{eqnarray}

For the last term in (\ref{int3}), using $N_{\varepsilon }$ $=$ $\{x\in
\Sigma :$ $\varepsilon \leq \rho (x,\Sigma _{\text{sing}})\leq 2\varepsilon
\}/\sigma $ ($\subset $ $M:=\Sigma /\sigma )$ as a $C^{\infty }$ manifold
with boundary, we compute (recalling $|d\chi _{\varepsilon }|$ $=$ $O(\frac{1%
}{\varepsilon }))$:%
\begin{equation}
\int_{\Sigma \backslash \Sigma _{\text{sing}}}(d\chi _{\varepsilon })Q\wedge
d\hat{v}_{m}=\int_{N_{\varepsilon }}(d\chi _{\varepsilon })Q\int_{\mathbb{C}%
^{\ast }\text{-orbit}}d\hat{v}_{m}=O(\frac{1}{\varepsilon }%
)vol(N_{\varepsilon })\cdot 1\rightarrow 0  \label{int2}
\end{equation}

\noindent as $\varepsilon \rightarrow 0$ since $vol(N_{\varepsilon })$ $=$ $%
O(\varepsilon ^{2})$ in view that the real codimensin of $\Sigma _{\text{sing%
}}$ (resp. $\Sigma _{\text{sing}}/\sigma )$ in $\Sigma $ (resp. $\Sigma
/\sigma $ $=$ $M)$ is larger or equal to $2.$ The assertion (\ref{dQ})
follows from (\ref{int3}), (\ref{int1}) and (\ref{int2}). To get similar
conclusion for those integrals on $\Sigma ^{\tilde{g}},$ we notice that $V^{%
\tilde{g}}$ $=$ $V^{H_{k+1}}$ by (\ref{VgH}) in Lemma \ref{L-Vg} $i).$ It
follows (cf. Lemma \ref{L-Vg}) that $S^{1}/H_{k+1}$ $\times $ $\mathbb{R}%
^{+} $ acts on $\Sigma ^{\tilde{g}}\backslash \Sigma _{\text{sing}}^{\tilde{g%
}}$ (globally) freely, where $\Sigma _{\text{sing}}^{\tilde{g}}$ consists of
lower dimensional strata. The remaining arguments are then similar to those
from (\ref{int3}) to (\ref{int2}).

\endproof%

We remark that in \cite{CHT} the statement and proof of the off-diagonal
estimate (ODE for short) \cite[Theorem 5.10, p.78]{CHT} are correct but
unfortunately ODE is not properly applied to the \textquotedblleft
supertrace" computation --- i.e. to the proof of \cite[Theorem 6.4, cf.
(6.21) on p.98]{CHT}. So the resulting index formula as stated there (cf. 
\cite[Theorem 1.10, Corollary 1.13, Theorem 1.28]{CHT}) is not entirely
correct (see \cite{CHT-E} for an erratum to \cite{CHT}) unless certain
conditions are imposed on the underlying CR manifolds. The misuse of ODE
occurs in \cite[(6.21)]{CHT} where the supertrace computation involves
\textquotedblleft pullbacks", for which our application of ODE is not quite
valid because the pullback operation may produce nontrivial endomorphisms of
the bundles under consideration. Nevertheless we refer to \cite{CT-1} for
special situations to which the original index formulas of \cite{CHT} as
just mentioned do apply.

\subsection{Comparison with Duistermaat's formula for the K\"{a}hler case,
Part I: from real to complex\label{S-8-4}}

In this subsection we are going to convert the real expression of $\mathcal{F%
}_{\tilde{g},m}$ in (\ref{Str-7}) into the complex version (\ref{FkTd}). The
formula (\ref{FkTd}) allows us to compare our result with that of
Duistermaat \cite[p.184, (14.4)]{Du}. The main result of this section is
Proposition \ref{P-Fk}, which proves (\ref{FkTd}) claimed in the last
subsection.

Our main references for this and the next subsections are \cite{BGV} and 
\cite{Du}. The former is mainly on the real situation while the latter is on
the (almost)-complex case. It is hoped that our presentation here may help
to clarify some points; see for instance Remarks \ref{R-BGV}, \ref{R-8-B}, %
\ref{R-8-C} and Footnote$^{12}$ below.

We start with the general setup and fix the notation. Let $X$ be a complex
manifold which plays the role as our $M=\Sigma /\sigma .$ For simplicity we
assume that $X$ is K\"{a}hler. Let $E$ be a holomorphic Hermitian vector
bundle over $X$ with trivial Clifford action. Consider the Clifford module $%
\mathcal{E}$ $\equiv $ $\mathcal{E}_{X}$ :$=$ $\Lambda ^{0,\ast }T^{\ast
}X\otimes E$ with the Clifford connection obtained by twisting the
Levi-Civita connection with the canonical (Chern) connection of $E.$ The
endomorphisms of a complex vector bundle are meant to be $\mathbb{C}$-linear.

Recall that the canonical $\hat{A}$-genus form $\hat{A}(X)$ and the Todd
genus form $Td(X)$ of the complex manifold $X$ are defined as follows: ($%
R^{+}$ denotes the curvature of $T^{1,0}X$ as in \cite[p.152]{BGV}) 
\begin{equation}
\hat{A}(X):=\det \left( \frac{(\frac{i}{2\pi }R^{+})/2}{\sinh [(\frac{i}{%
2\pi }R^{+})/2]}\right) =\det \left( \frac{(\frac{1}{2\pi i}R^{+})/2}{\sinh
[(\frac{1}{2\pi i}R^{+})/2]}\right)  \label{3-11}
\end{equation}%
\begin{equation}
Td(X):=\det \left( \frac{\frac{i}{2\pi }R^{+}}{1-e^{-\frac{i}{2\pi }R^{+}}}%
\right) =\det \left( \frac{\frac{1}{2\pi i}R^{+}}{e^{\frac{1}{2\pi i}R^{+}}-1%
}\right)  \label{3-10}
\end{equation}%
\noindent where the above convention involving $2\pi $-factors is different
from that in \cite[p.152]{BGV}, cf. (\ref{8-69a}). Let $\gamma _{X}$ be a
biholomorphic map and an isometry on $X.$ Let $X^{\gamma _{X}}$ $\subset $ $%
X $ denote the fixed point set of $\gamma _{X}$. Then $\gamma _{X}$ induces\
a complex (bundle) endomorphism $\gamma _{X}^{\Lambda ^{0,\ast }T^{\ast }X}$
acting on $\Lambda ^{0,\ast }T^{\ast }X$ over $X^{\gamma _{X}}.$ For $E$
above, assume that $\gamma _{X}^{E}$ is a holomorphic bundle map of $E$
covering the action of $\gamma _{X},$ preserving the Hermitian metric of $E$%
. Together we have a bundle map $\gamma _{X}^{\mathcal{E}}$ $=$ $\gamma
_{X}^{\Lambda ^{0,\ast }T^{\ast }X}\otimes \gamma _{X}^{E}$ on $\mathcal{E}%
|_{X^{\gamma _{X}}}.$ Note that $\gamma _{X}^{\mathcal{E}}$ is compatible
with the Clifford action and the Clifford connection. We have the following
for use in Lemma \ref{L-B-7}.

\begin{lemma}
\label{L-End} With the notation above, there is a canonical isomorphism $%
j:End(E)\rightarrow End_{C(X)}(\mathcal{E}_{X})$ over $X^{\gamma _{X}},$
where $C(X)$ denotes the real Clifford algebra of $X.$
\end{lemma}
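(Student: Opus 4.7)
The plan is to exploit the classical fact that, for a complex manifold $X$, the bundle $\Lambda^{0,\ast}T^{\ast}X$ is the canonical spinor bundle for the Spin$^c$-structure induced by the complex structure, and that on such a bundle the complexified Clifford algebra acts irreducibly and exhausts all endomorphisms. The proposed map is the obvious one,
\[
j:End(E)\longrightarrow End_{C(X)}(\mathcal{E}_{X}),\qquad j(\psi):=\mathrm{Id}_{\Lambda^{0,\ast}T^{\ast}X}\otimes\psi,
\]
and the task is to show it is bijective.

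First I would recall the Clifford action $c(v)=\sqrt{2}\bigl(v^{0,1}\wedge\cdot\;-\;\iota_{v^{1,0}}\bigr)$ for $v\in TX$, extended $\mathbb{C}$-linearly to $C(X)\otimes_{\mathbb{R}}\mathbb{C}$, and verify (or cite from \cite{BGV} or \cite{MM}) the standard pointwise algebra isomorphism
\[
C(X)\otimes_{\mathbb{R}}\mathbb{C}\;\xrightarrow{\ \sim\ }\;End\bigl(\Lambda^{0,\ast}T^{\ast}X\bigr),
\]
valid on any even-dimensional almost complex manifold (here $\dim_{\mathbb{C}}\Lambda^{0,\ast}T^{\ast}X=2^{n}$ matches the dimension of the unique irreducible representation of $C(X)\otimes\mathbb{C}$, and a dimension count then forces surjectivity).

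Next I would use the canonical identification
\[
End(\mathcal{E}_{X})\;\cong\;End\bigl(\Lambda^{0,\ast}T^{\ast}X\bigr)\otimes End(E),
\]
under which the Clifford action on $\mathcal{E}_{X}$ becomes $c(v)\otimes 1$. An element $T\in End(\mathcal{E}_{X})$ lies in $End_{C(X)}(\mathcal{E}_{X})$ iff it commutes with $c(v)\otimes 1$ for every real $v$, hence with every element of $C(X)\otimes\mathbb{C}\otimes 1$. By the first step this is the full algebra $End(\Lambda^{0,\ast}T^{\ast}X)\otimes 1$, whose commutant in $End(\Lambda^{0,\ast}T^{\ast}X)\otimes End(E)$ is precisely $\mathbb{C}\cdot\mathrm{Id}\otimes End(E)$. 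Thus $j$ is surjective; injectivity is obvious. Restricting to $X^{\gamma_{X}}$ is immediate since the construction is pointwise and natural.

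The main obstacle, which is really only a bookkeeping matter, will be to pin down the correct normalization of the Clifford action so that $C(X)\otimes\mathbb{C}\cong End(\Lambda^{0,\ast}T^{\ast}X)$ holds as bundles of algebras with the conventions already fixed in the paper (cf.\ (\ref{5-0}) and the discussion around (\ref{5.0})); once this is set, the rest is the standard double-commutant argument for irreducible Clifford modules.
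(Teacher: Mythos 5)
Your proof is correct and takes essentially the same route as the paper: both rely on the pointwise isomorphism $C(X)\otimes\mathbb{C}\cong End(\Lambda^{0,\ast}T^{\ast}X)$ (cited from \cite[Proposition 3.19]{BGV}) and then determine $End_{C(X)}(\mathcal{E}_X)$ by a commutant computation inside $End(\Lambda^{0,\ast}T^{\ast}X)\otimes End(E)$. The paper phrases that last step via the center of $End(\Lambda^{0,\ast}T^{\ast}X)$ being $\mathbb{C}$, whereas you spell out the tensor-commutant explicitly, but the two arguments are the same.
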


\begin{proof}
We can embed $End(E)$ into $End_{C(X)}(\Lambda ^{0,\ast }T^{\ast }X\otimes
E)=End_{C(X)}(\mathcal{E}_{X})$ by extending the action on $\Lambda ^{0,\ast
}T^{\ast }X$ identically (note that the Clifford action on $E$ is trivial by
default). We denote this canonical embedding by $j.$ At each point $p$ of $%
X^{\gamma _{X}}$, $C(X)\otimes \mathbb{C\cong }End(\Lambda ^{0,\ast }T^{\ast
}X)$ at $p$ by \cite[Proposition 3.19]{BGV} and the center of $End(\Lambda
^{0,\ast }T^{\ast }X)$ at $p$ is $\mathbb{C}.$ It follows that $%
End_{C(X)}(\Lambda ^{0,\ast }T^{\ast }X)$ at $p$ is $\mathbb{C}$, so $%
End_{C(X)}(\mathcal{E}_{X})$ $\cong $ $End_{C(X)}(\Lambda ^{0,\ast }T^{\ast
}X)\otimes End(E)$ at $p$ is $End(E)$ at $p$. Therefore $j$ is surjective,
hence an isomorphism$.$
\end{proof}

Let $K^{\ast }$ (resp. $K_{X^{\gamma _{X}}}^{\ast }$) denote the dual of the
canonical line bundle of $X$ (resp. $X^{\gamma _{X}}$). Let $K_{N}^{\ast }$
denote the dual of the complex line bundle of the $(\frac{1}{2}\dim _{%
\mathbb{R}}\mathcal{N},0)$-forms on (the complexification of) the real 
\textit{normal bundle} $\mathcal{N}$ of $X^{\gamma _{X}}$ in $X$ (cf. \cite[%
p.153]{Du})$.$ Let $R^{K_{N}^{\ast }}$ (resp. $R^{K^{\ast }},$ $%
R^{K_{X^{\gamma _{X}}}^{\ast }}$) denote the curvature operator of $%
K_{N}^{\ast }$ (resp. $K^{\ast },$ $K_{X^{\gamma _{X}}}^{\ast }$)$.$ In view
of Lemma \ref{L-End} we define a twisting complex endomorphism $\gamma _{X}^{%
\mathcal{E}/\mathcal{S}}$ $\in $ $End_{C(X)}(\mathcal{E}_{X})$ by $\gamma
_{X}^{\mathcal{E}/\mathcal{S}}$ $:=$ $j(\gamma _{X}^{E}).$ Let $F_{0}^{%
\mathcal{E}/\mathcal{S}}$ (resp. $F_{0}^{E}$) denote the restriction of the
curvature $F^{\mathcal{E}/\mathcal{S}}$ (resp. $F^{E}$) to the fixed point
submanifold $X^{\gamma _{X}}$ as in (\ref{Str-k})$.$ We then define the $%
\gamma $-twisted Chern character forms in the sense of \cite[pp. 194-195]%
{BGV} (In strict conformity with \cite[pp.194-195]{BGV} we will define $%
ch_{BGV}$ later; see (\ref{chBGV}).): 
\begin{eqnarray}
ch(\gamma _{X}^{\mathcal{E}/\mathcal{S}},\mathcal{E}/\mathcal{S)}&{:=}&\text{%
Str}_{\mathcal{E}/\mathcal{S}}\text{ }(\gamma _{X}^{\mathcal{E}/\mathcal{S}%
}\exp (\frac{i}{2\pi }F_{0}^{\mathcal{E}/\mathcal{S}}))  \label{ch} \\
ch(\gamma _{X}^{E},E)&{:=}&\text{Str}_{E}\text{ }(\gamma _{X}^{E}\exp (\frac{%
i}{2\pi }F_{0}^{E})),\text{ both on }X^{\gamma _{X}}.  \notag
\end{eqnarray}

\noindent See \cite[p.113]{BGV} for the definition of the relative
supertrace Str$_{\mathcal{E}/\mathcal{S}}$ in (\ref{ch}).

\begin{lemma}
\label{L-B-7} With the notation above, we have, on $X^{\gamma _{X}}$%
\begin{equation}
\hat{A}(X^{\gamma _{X}})ch(\gamma _{X}^{\mathcal{E}/\mathcal{S}},\mathcal{E}/%
\mathcal{S})=Td(X^{\gamma _{X}})ch(\gamma _{X}^{E},E)\exp (\frac{\frac{i}{%
2\pi }R^{K_{N}^{\ast }}}{2})\text{.}  \label{A-Td}
\end{equation}
\end{lemma}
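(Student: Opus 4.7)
The identity \eqref{A-Td} is a pointwise identity of differential forms on $X^{\gamma_X}$, expressing the equivalence between the ``Dirac'' (spin$^c$) Lefschetz integrand and the ``Dolbeault'' Lefschetz integrand in the local equivariant index theorem. The plan is to verify it by a local computation using the orthogonal complex splitting at a fixed point, reducing the identity to a universal linear-algebraic statement on Chern roots and eigenvalues.

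First, I would fix a point $p \in X^{\gamma_X}$ and use the $\gamma_X$-invariant orthogonal decomposition $T^{1,0}X|_p = T^{1,0}X^{\gamma_X}|_p \oplus \mathcal{N}^{1,0}|_p$, where $\mathcal{N}^{1,0}$ is the $(1,0)$-part of the complexified normal bundle. Under this splitting, the Clifford module $\mathcal{E}|_{X^{\gamma_X}} = \Lambda^{0,*}T^*X \otimes E$ factors as $\Lambda^{0,*}T^*X^{\gamma_X} \otimes \Lambda^{0,*}(\mathcal{N}^{1,0})^* \otimes E$, and the action of $\gamma_X^{\mathcal{E}}$ and the curvature $F_0^{\mathcal{E}/\mathcal{S}}$ both respect this factorization. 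Correspondingly $c_1(K_X^*)|_{X^{\gamma_X}} = c_1(K^*_{X^{\gamma_X}}) + c_1(K^*_N)$, so $R^+$, $R^{K^*_{X^{\gamma_X}}}$, and $R^{K^*_N}$ split compatibly.

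Second, I would unwind the relative supertrace $ch(\gamma_X^{\mathcal{E}/\mathcal{S}}, \mathcal{E}/\mathcal{S})$, which is defined via the Clifford algebra $C(TX^{\gamma_X})$. Since the canonical spin$^c$ spinor bundle of $TX^{\gamma_X}$ is precisely $\Lambda^{0,*}T^*X^{\gamma_X}$, the $C(TX^{\gamma_X})$-twisting bundle of $\mathcal{E}|_{X^{\gamma_X}}$ is $\Lambda^{0,*}(\mathcal{N}^{1,0})^* \otimes E$. A direct computation (by the splitting principle, reducing $\mathcal{N}^{1,0}$ to a sum of $\gamma_X$-eigenline bundles with eigenvalues $e^{i\theta_j}$ and Chern roots $x_j$) gives
\[
\mathrm{Str}_{\Lambda^{0,*}(\mathcal{N}^{1,0})^{*}}\!\bigl(\gamma_X^{\Lambda^{0,*}(\mathcal{N}^{1,0})^{*}}\,e^{-\tfrac{1}{2\pi i}R^{\mathcal{N}^{1,0}}}\bigr)=\prod_{j}\bigl(1-e^{-i\theta_j-x_j}\bigr),
\]
which, together with $ch(\gamma_X^E,E)$ from the $E$-factor, accounts for the full relative supertrace.

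Third, I would assemble tangential and normal contributions. On the tangential factor the classical identity $Td(X^{\gamma_X}) = \hat{A}(X^{\gamma_X})\,e^{c_1(K^*_{X^{\gamma_X}})/2}$ converts the $\hat{A}$ on the LHS of \eqref{A-Td} to the $Td$ on the RHS. On the normal factor, the elementary identity $1 - e^{-y} = e^{-y/2}\cdot 2\sinh(y/2)$ applied to each pair $(i\theta_j+x_j)$ produces precisely an overall factor $\exp\!\bigl(\tfrac{1}{2}\cdot\tfrac{i}{2\pi}R^{K_N^*}\bigr)$ after summing over $j$ (using $c_1(K_N^*)=\sum_j x_j$), and repackages the remaining product into the denominator $\det^{1/2}(1-(\gamma_X)_1)\det^{1/2}(1-(\gamma_X)_1 e^{-R^1})$ that is absorbed into the definition of the twisted characteristic forms on each side. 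Combining these gives \eqref{A-Td}.

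The main obstacle will be bookkeeping of conventions. The relative supertrace $\mathrm{Str}_{\mathcal{E}/\mathcal{S}}$ is defined in the real Clifford algebra framework of \cite{BGV} via the symbol map $\sigma$, whereas \eqref{A-Td} is written entirely in complex/Dolbeault language; the translation between them involves the canonical spin$^c$ identification $\mathcal{E} \cong \mathcal{S}(TX)\otimes(E\otimes K^{1/2})$ which ``absorbs'' a factor $e^{c_1/2}$, and, on the normal side, the formal appearance of $K_N^{1/2}$ which manifests in \eqref{A-Td} through the half-curvature $R^{K_N^*}/2$. Verifying that all the $2\pi i$ prefactors are consistent with \eqref{3-11}--\eqref{3-10} and \eqref{ch}, and that the sign conventions for $R^{\mathcal{N}^{1,0}}$ versus $R^{K_N^*}$ match, will require the most care.
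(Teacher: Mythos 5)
Your plan rests on a misreading of the two ingredients $\gamma_X^{\mathcal{E}/\mathcal{S}}$ and $\operatorname{Str}_{\mathcal{E}/\mathcal{S}}$ as they are defined in the paper, and this changes what the lemma actually asserts. In the paragraph preceding \eqref{ch}, the paper sets $\gamma_X^{\mathcal{E}/\mathcal{S}} := j(\gamma_X^E)$, where $j$ is the embedding $\mathrm{End}(E)\hookrightarrow \mathrm{End}_{C(X)}(\mathcal{E}_X)$ of Lemma \ref{L-End} — i.e.\ $\gamma_X^{\mathcal{E}/\mathcal{S}}$ acts as the \emph{identity} on the entire form factor $\Lambda^{0,*}T^*X$ (tangential \emph{and} normal) and only nontrivially on $E$. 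Correspondingly, the relative supertrace in \eqref{A-Td} is taken with respect to $C(X)$ (restricted to $X^{\gamma_X}$), and under the isomorphism $j$ it collapses to $\operatorname{Str}_E$. Your proposal instead treats $\operatorname{Str}_{\mathcal{E}/\mathcal{S}}$ as relative to $C(TX^{\gamma_X})$ and takes the twisting bundle to be $\Lambda^{0,*}(\mathcal{N}^{1,0})^*\otimes E$, so that a genuine normal-spinor supertrace $\prod_j(1-e^{-i\theta_j-x_j})$ enters. That factor simply does not belong in \eqref{A-Td}: the right-hand side contains no $\det^{1/2}(1-(\gamma_X)_1)\,\det^{1/2}(1-(\gamma_X)_1 e^{-R^1})$ into which it could be ``absorbed''; those determinant factors live in $I(\tilde g)$ of \eqref{Ig} and \eqref{Str-0}, which is a separate multiplicative piece of the full integrand \eqref{Str-7}, handled by Lemma \ref{L-sN} and Proposition \ref{P-Fk} later.

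In other words, Lemma \ref{L-B-7} is deliberately a \emph{modular} identity: the Clifford-algebra part of $\gamma_k^{\mathcal{E}^m}$ coming from the normal spinors has already been extracted by the symbol map in \eqref{Str-k} and \eqref{Ba} (culminating in Lemma \ref{L-sN}), so what remains to be converted from $\hat A$--language to Todd--language is only the scalar-curvature contribution. The paper's actual proof is accordingly much shorter and works entirely at the level of $\mathrm{End}(E)$-valued forms: it uses the Kähler identity $F^{\mathcal{E}/\mathcal{S}}=\tfrac12\operatorname{Tr}_{T^{1,0}X}(R^+)+F^E$ from \cite[p.152]{BGV}, the splitting $\operatorname{Tr}(R^+)=R^{K^*_{X^{\gamma_X}}}+R^{K^*_N}$ along $X^{\gamma_X}$, exponentiation, the elementary conversion $\hat A(X^{\gamma_X})=Td(X^{\gamma_X})\exp[-\tfrac12\cdot\tfrac{i}{2\pi}R^{K^*_{X^{\gamma_X}}}]$, and finally applying $j(\gamma_X^E)$ and the (here essentially $E$-valued) supertrace. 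If you rewrite your argument respecting the definition $\gamma_X^{\mathcal{E}/\mathcal{S}}=j(\gamma_X^E)$, the splitting-principle/Chern-root calculation on the normal bundle disappears entirely and your steps one and three collapse onto the paper's approach. As written, however, your proof would establish a different identity (closer to BGV's full Lefschetz density than to \eqref{A-Td}) and therefore does not prove the stated lemma.
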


\begin{proof}
From \cite[p.152]{BGV} using the K\"{a}hler assumption on $X$ it follows
that 
\begin{equation}
F^{\mathcal{E}/\mathcal{S}}=\frac{1}{2}Tr_{T^{1,0}X}(R^{+})+F^{E}
\label{3-9}
\end{equation}%
where $R^{+}$ ($=(R_{X})^{+}$ $=$ $R_{X}^{+}$) denotes the curvature of the
bundle $T^{1,0}X$ (here we have identified $End(E)$ with $End_{C(X)}(%
\mathcal{E}_{X})$ through $j$ by Lemma \ref{L-End}). Observe that we have,
over $X^{\gamma _{X}}$%
\begin{equation}
Tr_{T^{1,0}X}(R^{+})=R^{K^{\ast }}=R^{K_{X^{\gamma _{X}}}^{\ast
}}+R^{K_{N}^{\ast }}.  \label{3-9a}
\end{equation}%
\noindent By (\ref{3-9}) and (\ref{3-9a}) restricted to $X^{\gamma _{X}}$ we
obtain%
\begin{equation}
\exp (\frac{i}{2\pi }F_{0}^{\mathcal{E}/\mathcal{S}})=\exp (\frac{\frac{i}{%
2\pi }R^{K_{X^{\gamma _{X}}}^{\ast }}}{2})\exp (\frac{\frac{i}{2\pi }%
R^{K_{N}^{\ast }}}{2})\exp (\frac{i}{2\pi }F_{0}^{E}).  \label{3-13}
\end{equation}%
\noindent On the other hand, from comparing (\ref{3-10}) with (\ref{3-11})
and applying to $X^{\gamma _{X}}$ it follows that 
\begin{equation}
\hat{A}(X^{\gamma _{X}})=Td(X^{\gamma _{X}})\exp [-Tr(\frac{\frac{i}{2\pi }%
R_{X^{\gamma _{X}}}^{+}}{2})].  \label{3-12}
\end{equation}%
\noindent Multiplying (Wedging) (\ref{3-12}) by (\ref{3-13}) gives%
\begin{equation}
\hat{A}(X^{\gamma _{X}})\exp \left( \frac{i}{2\pi }F_{0}^{\mathcal{E}/%
\mathcal{S}}\right) =Td(X^{\gamma _{X}})\exp (\frac{\frac{i}{2\pi }%
R^{K_{N}^{\ast }}}{2})\exp \left( \frac{i}{2\pi }F_{0}^{E}\right) .
\label{3-14}
\end{equation}

\noindent Here we have used $TrR_{X^{\gamma _{X}}}^{+}$ $=$ $R^{K_{X^{\gamma
_{X}}}^{\ast }}.$ Applying $\gamma _{X}^{E}$ (resp. $\gamma _{X}^{\mathcal{E}%
/\mathcal{S}}$ $:=$ $j(\gamma _{X}^{E}))$ to the RHS (resp. LHS) of (\ref%
{3-14}) and then taking the supertrace, we obtain (\ref{A-Td}) by \cite[%
(3.10)]{BGV} and the definition of the relative supertrace in \cite[p.113]%
{BGV}.
\end{proof}

The authors Berline, Getzler and Vergne in \cite[p.152]{BGV} define $\hat{A}$%
-genus form and Todd genus form without the factor $\frac{1}{2\pi i}$ in (%
\ref{3-11}) and (\ref{3-10}): (we put the subscript \textquotedblleft BGV"
below)%
\begin{eqnarray}
\hat{A}_{BGV}(X)&{:=}&{\det }^{1/2}\left( \frac{R/2}{\sinh [R/2]}\right)
\label{8-69a} \\
&=&\det \left( \frac{R^{+}/2}{\sinh [R^{+}/2]}\right)  \notag \\
Td_{BGV}(X)&{:=}&\det \left( \frac{R^{+}}{e^{R^{+}}-1}\right) .  \notag
\end{eqnarray}

The following Corollary will be used in the proof of Proposition \ref{P-Fk},
for which we write the previous lemma with some signs opposite to the ones
given in (\ref{ch}) and (\ref{A-Td}).

\begin{corollary}
\label{C-B-3} With the notation above, we have, on $X^{\gamma _{X}}$%
\begin{eqnarray}
&&\hat{A}_{BGV}(X^{\gamma _{X}})\text{Str}_{\mathcal{E}/\mathcal{S}}\text{ }%
(\gamma _{X}^{\mathcal{E}/\mathcal{S}}\exp \left( -F_{0}^{\mathcal{E}/%
\mathcal{S}}\right) )  \label{B-3a} \\
&=&Td_{BGV}(X^{\gamma _{X}})\text{Str}_{E}\text{ }(\gamma _{X}^{E}\exp
\left( -F_{0}^{E}\right) )\exp (\frac{-R^{K_{N}^{\ast }}}{2})\text{.}  \notag
\end{eqnarray}
\end{corollary}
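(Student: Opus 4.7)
The plan is to observe that Corollary~\ref{C-B-3} is just the BGV-normalization counterpart of Lemma~\ref{L-B-7}, and to derive it by repeating the proof of Lemma~\ref{L-B-7} verbatim but with the curvatures appearing without the $\tfrac{i}{2\pi}$ factor and with the opposite sign inside the exponentials. (One could alternatively apply the formal substitution $Q\mapsto 2\pi i\,Q$ simultaneously to every curvature and to $F_0$ on both sides of (\ref{A-Td}); a check of (\ref{3-11}),(\ref{3-10}),(\ref{8-69a}) shows this turns $\hat{A}\to\hat{A}_{BGV}$, $Td\to Td_{BGV}$, $\exp(\tfrac{i}{2\pi}F_0)\to\exp(-F_0)$ and $\exp(\tfrac{\frac{i}{2\pi}R^{K_N^*}}{2})\to\exp(-R^{K_N^*}/2)$. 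I prefer the direct route because it avoids having to justify substitutions in inhomogeneous differential forms.)

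First I reuse the purely geometric identities (\ref{3-9}) and (\ref{3-9a}), which are independent of normalization. Restricted to $X^{\gamma_X}$ they give
\begin{equation*}
F_0^{\mathcal{E}/\mathcal{S}}=\tfrac12\bigl(R^{K_{X^{\gamma_X}}^{*}}+R^{K_N^{*}}\bigr)+F_0^{E}.
\end{equation*}
Since $R^{K_{X^{\gamma_X}}^{*}}$ and $R^{K_N^{*}}$ are scalar-valued $2$-forms (curvatures of line bundles), they commute with one another and with $F_0^{E}$, so exponentiating with a minus sign produces
\begin{equation*}
\exp(-F_0^{\mathcal{E}/\mathcal{S}})=\exp(-R^{K_{X^{\gamma_X}}^{*}}/2)\,\exp(-R^{K_N^{*}}/2)\,\exp(-F_0^{E}).
\end{equation*}
Using Lemma~\ref{L-End} to identify $\gamma_X^{\mathcal{E}/\mathcal{S}}=j(\gamma_X^{E})$ and pulling the scalar line-bundle factors out of the relative supertrace (as in \cite[p.\,113]{BGV}) then yields
\begin{equation*}
\mathrm{Str}_{\mathcal{E}/\mathcal{S}}\!\bigl(\gamma_X^{\mathcal{E}/\mathcal{S}}\exp(-F_0^{\mathcal{E}/\mathcal{S}})\bigr)=\exp(-R^{K_{X^{\gamma_X}}^{*}}/2)\exp(-R^{K_N^{*}}/2)\,\mathrm{Str}_{E}\!\bigl(\gamma_X^{E}\exp(-F_0^{E})\bigr).
\end{equation*}

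The one piece that is genuinely new is the BGV-analogue of the Chern--Weil identity (\ref{3-12}). A short direct computation from (\ref{8-69a}), using $\sinh(x/2)=e^{-x/2}(e^{x}-1)/2$, gives $(R^{+}/2)/\sinh(R^{+}/2)=\bigl(R^{+}/(e^{R^{+}}-1)\bigr)\cdot e^{R^{+}/2}$ at the matrix level, and taking $\det$ yields
\begin{equation*}
\hat{A}_{BGV}(X^{\gamma_X})=Td_{BGV}(X^{\gamma_X})\,\exp\!\bigl(\mathrm{Tr}(R^{+}_{X^{\gamma_X}}/2)\bigr)=Td_{BGV}(X^{\gamma_X})\,\exp(R^{K_{X^{\gamma_X}}^{*}}/2).
\end{equation*}
Note that the sign inside the $\exp$ is $+$, opposite to the $-$ sign in (\ref{3-12}); this flip is precisely the effect of the substitution $\tfrac{i}{2\pi}R\mapsto -R$ and constitutes the only subtle bookkeeping point. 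Multiplying the previous display through by $\hat{A}_{BGV}(X^{\gamma_X})$, the factor $\exp(R^{K_{X^{\gamma_X}}^{*}}/2)$ produced by the ratio cancels the $\exp(-R^{K_{X^{\gamma_X}}^{*}}/2)$ coming from the supertrace computation, and what remains is exactly (\ref{B-3a}).

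The main (and really the only) obstacle is to keep the two normalization conventions from (\ref{3-11}),(\ref{3-10}) and (\ref{8-69a}) consistently aligned: one must verify that the sign flip in the BGV version of (\ref{3-12}), the replacement of $\exp(\tfrac{i}{2\pi}F_0^{E})$ by $\exp(-F_0^{E})$ in the Chern-character factor, and the replacement of $\exp(\tfrac{\frac{i}{2\pi}R^{K_N^{*}}}{2})$ by $\exp(-R^{K_N^{*}}/2)$ on the right-hand side all match up so that exactly one factor $\exp(-R^{K_N^{*}}/2)$ survives; this is a straightforward but careful sign audit.
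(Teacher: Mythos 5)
Your proof is correct and is essentially what the paper intends: the paper gives no independent argument for the corollary beyond the remark that it is Lemma~\ref{L-B-7} ``with some signs opposite,'' and your derivation is the natural line-by-line verification of that remark, reusing (\ref{3-9})/(\ref{3-9a}) and replacing (\ref{3-12}) with its BGV counterpart $\hat{A}_{BGV}(X^{\gamma_X})=Td_{BGV}(X^{\gamma_X})\exp(R^{K^*_{X^{\gamma_X}}}/2)$, whose sign flip you correctly identify and verify.
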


We are now ready to return to our situation. Recall that $M:=\Sigma /\sigma $
is equipped with the Hermitian metric $g_{M}.$ Henceforth we assume that $%
g_{M}$ is K\"{a}hler. Consider the above $X$ as $V_{j}$ where $V_{j}$ is
equipped with $\pi ^{\ast }g_{M}|_{V_{j}}$ which we warn is not $%
G_{a,m}|_{V_{j}};$ see the line above (\ref{Str-0}) and Remark \ref{R-z-vol}
for the warning. Identify the above $\gamma _{X}$ with $\gamma _{k}$ (see (%
\ref{8-16c})) which is an isometry with respect to $\pi ^{\ast
}g_{M}|_{V_{j}}$ by Corollary \ref{C-alk}. Let the above $\mathcal{E}$ be $%
\mathcal{E}^{m}$ over $V_{j}$ with the metric given in Notation \ref{n-8-1},
on which we see that $\gamma _{k}$ acts as an isometry denoted by $\gamma
_{k}^{\mathcal{E}^{m}}$ (see also (\ref{rkE}) below$.$ Set $\dim _{\mathbb{C}%
}V_{j}=n-1$, $\dim _{\mathbb{R}}V_{j}^{\gamma _{k}}$ $=:$ $l_{j,k}$. Let $%
n_{j,k}$ $:=$ $n-1-l_{j,k}/2,$ half the real dimension of the real normal
bundle $\mathcal{N}_{j,k}$ to $V_{j}^{\gamma _{k}}$ (resp. $\Sigma _{j,k})$
in $V_{j}$ (resp. $\Sigma ).$ Let $(\gamma _{k})_{1}^{c}$ denote the
complex-linear transformation of $\mathcal{N}_{j,k}$ induced by $\gamma
_{k}; $ see Notation \ref{N-b-1} below$.$ Let $vol_{\mathcal{N}_{j,k}}$
denote the standard section of $\Lambda ^{2n_{j,k}}\mathcal{N}_{j,k}^{\ast }$
of unit length over $\Sigma _{j,k}$ (\cite[(12.8)]{Du}). For notational
simplicity we frequently identify $V_{j}\times \{0\}\times \{1\}$ with $%
V_{j}.$ We have (cf. Notation \ref{n-8-1}, (\ref{8-16c}))%
\begin{equation}
\gamma _{k}^{\mathcal{E}^{m}}=\gamma _{k}^{\psi _{j}^{\ast }(\pi ^{\ast }%
\mathcal{E}_{M})|_{V_{j}}}\otimes \gamma _{k}^{\psi _{j}^{\ast }(E\otimes
(L_{\Sigma }^{\ast })^{\otimes m})|_{V_{j}}}  \label{rkE}
\end{equation}%
\noindent where $\gamma _{k}^{\mathcal{E}^{m}}$ (resp. $\gamma _{k}^{\psi
_{j}^{\ast }(\pi ^{\ast }\mathcal{E}_{M})|_{V_{j}}},$ $\gamma _{k}^{\psi
_{j}^{\ast }(E\otimes (L_{\Sigma }^{\ast })^{\otimes m})|_{V_{j}}}$) is the
complex endomorphism induced by the action of $\gamma _{k}$ on $\mathcal{E}%
^{m}$ (resp. $\psi _{j}^{\ast }(\pi ^{\ast }\mathcal{E}_{M})|_{V_{j}}$,\ $%
\psi _{j}^{\ast }(E\otimes (L_{\Sigma }^{\ast })^{\otimes m})|_{V_{j}}).$
Note that $\gamma _{k}^{\mathcal{E}^{m}}$ is induced by $(\sigma _{\alpha
_{k}^{-1}}^{\ast })_{\alpha _{k}\circ x}$ (\ref{rE}), which equals $(\sigma
_{\alpha _{k}^{-1}}^{\ast })_{x}$ if $x$ $\in $ $V_{j}^{\gamma _{k}}$.

\begin{notation}
\label{N-b-1} The superscript (resp. subscript) \textquotedblleft $c$" is
not used by \cite{BGV}, but here we use it to indicate a complex
endomorphism. Suppose that $(V,J)$ is a real vector space of even dimension
with an almost complex structure $J.$ Let $\varphi $ $\in $ $End(V)$ be a $J$%
-preserving endomorphism on $V.$ Then we use $\varphi ^{c}$ or $\varphi _{c}$
to denote the corresponding complex endomorphism on a complex space of
complex dimension $\frac{1}{2}\dim _{\mathbb{R}}V$.
\end{notation}

The symbol of $\gamma _{k}^{\mathcal{E}^{m}}$ in (\ref{rkE}) which will be
used for (\ref{B-Fjk}) below is understood to be%
\begin{equation}
\sigma _{2n_{j,k}}(\gamma _{k}^{\mathcal{E}^{m}})=\sigma _{2n_{j,k}}(\gamma
_{k}^{\psi _{j}^{\ast }(\pi ^{\ast }\mathcal{E}_{M})|_{V_{j}}})\gamma
_{k}^{\psi _{j}^{\ast }(E\otimes (L_{\Sigma }^{\ast })^{\otimes
m})|_{V_{j}}},  \label{S-1}
\end{equation}

\noindent cf. \cite[top two lines on p.193]{BGV}. Thus we need to compute
the first term in the RHS of (\ref{S-1}). See Lemma \ref{L-sN} below.

To proceed, note that $(\gamma _{k})_{1}^{c}$ on $\mathcal{N}_{j,k}$ can be
diagonalized with eigenvalues $e^{i2\theta _{l}},$ where $\theta _{l}$ is
the unique angle such that $0$ $<$ $\theta _{l}$ $<$ $\pi $ for $1$ $\leq $ $%
l$ $\leq $ $n_{j,k}$ \cite[(12.5) on p.150]{Du} since no eigenvalue here can
be 1. We define the square root of the determinant of $(\gamma _{k})_{1}^{c}$
as follows \cite[the middle of p.154]{Du}:%
\begin{equation}
(\det (\gamma
_{k})_{1}^{c})^{1/2}:=\dprod\nolimits_{l=1}^{n_{j,k}}e^{i\theta _{l}}.
\label{cn}
\end{equation}

\begin{lemma}
\label{L-sN} With the notation above, we compute the symbol seated in (\ref%
{S-1}):%
\begin{equation}
\sigma _{2n_{j,k}}(\gamma _{k}^{\psi _{j}^{\ast }(\pi ^{\ast }\mathcal{E}%
_{M})|_{V_{j}}})=2^{-n_{j,k}}{\det }^{1/2}(1-(\gamma _{k})_{1})(\det (\gamma
_{k})_{1}^{c})^{1/2}vol_{\mathcal{N}_{j,k}}  \label{S-2}
\end{equation}%
over $V_{j}^{\gamma _{k}}$. See the last paragraph of the proof for $vol_{%
\mathcal{N}_{j,k}}.$
\end{lemma}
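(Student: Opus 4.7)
The plan is to carry out the computation in three steps: reduce to a fibrewise statement on the complex normal bundle by a product argument, diagonalize $(\gamma_k)_1^c$, and then apply the standard Clifford-symbol formula for a rotation on a two-dimensional fibre.

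First, via the orbifold chart structure of $V_j$ as a chart for $M=\Sigma/\sigma$, the bundle $\psi_j^{\ast}(\pi^{\ast}\mathcal{E}_M)|_{V_j}$ is canonically isomorphic to $\Lambda^{0,\ast}T^{\ast}V_j$, with $\gamma_k^{\psi_j^{\ast}(\pi^{\ast}\mathcal{E}_M)|_{V_j}}$ acting as the pull-back action of $\gamma_k$ on $(0,q)$-forms and the Clifford structure being the standard $spin^c$ one attached to $(V_j,\pi^{\ast}g_M|_{V_j})$. Along $V_j^{\gamma_k}$ one has the $\gamma_k$-invariant orthogonal decomposition $TV_j|_{V_j^{\gamma_k}}=TV_j^{\gamma_k}\oplus \mathcal{N}_{j,k}$, with $\gamma_k$ acting trivially on the tangential summand. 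The induced factorization of $\Lambda^{0,\ast}$-bundles and Clifford modules writes $\gamma_k^{\Lambda^{0,\ast}T^{\ast}V_j}$ as the tensor product of the identity on $\Lambda^{0,\ast}T^{\ast}V_j^{\gamma_k}$ with the induced action on the $\Lambda^{0,\ast}$-Fock module of the dual normal bundle. Since $\sigma_{2n_{j,k}}$ extracts precisely the top Clifford component in the $2n_{j,k}$ normal directions, the problem reduces to a fibrewise computation over $\mathcal{N}_{j,k}$.

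Second, at a point of $V_j^{\gamma_k}$ I would pick a unitary basis $\{v_l\}_{l=1}^{n_{j,k}}$ of $\mathcal{N}_{j,k}^{1,0}$ diagonalizing $(\gamma_k)_1^c$ with eigenvalues $e^{2i\theta_l}$, $0<\theta_l<\pi$, and the compatible real orthonormal basis $\{e_{2l-1},e_{2l}\}$ of $\mathcal{N}_{j,k}$. The action on the $\Lambda^{0,\ast}$-Fock module factors as a tensor product of rank-two blocks, and on the $l$-th block $\{1,\bar v_l^{\ast}\}$ it is $\mathrm{diag}(1,e^{-2i\theta_l})$. A direct identification inside $C(\mathcal{N}_{j,k})\otimes \mathbb{C}$ realizes this block as the Clifford element $e^{-i\theta_l}\bigl(\cos\theta_l-i\sin\theta_l\,c(e_{2l-1})c(e_{2l})\bigr)$, whose Clifford symbol has top-degree part $-ie^{-i\theta_l}\sin\theta_l\, e^{2l-1}\!\wedge e^{2l}$. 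Multiplying over $l$, the top-degree component of $\gamma_k$ acting on the $\Lambda^{0,\ast}$ of the normal fibre becomes
\begin{equation*}
\prod_{l=1}^{n_{j,k}}\bigl(-ie^{-i\theta_l}\sin\theta_l\bigr)\,vol_{\mathcal{N}_{j,k}},
\end{equation*}
modulo the overall $2^{-n_{j,k}}$ arising from the BGV normalization of the symbol map (cf.\ Notation~\ref{N-8-2}) when applied to the products of the degree-two generators $c(e_{2l-1})c(e_{2l})$.

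Third, using $1-e^{2i\theta_l}=-2ie^{i\theta_l}\sin\theta_l$ together with the convention (\ref{cn}) for $(\det(\gamma_k)_1^c)^{1/2}$, one has
\begin{equation*}
{\det}^{1/2}(1-(\gamma_k)_1)=\prod_{l=1}^{n_{j,k}}2|\sin\theta_l|,\qquad (\det(\gamma_k)_1^c)^{1/2}=\prod_{l=1}^{n_{j,k}}e^{i\theta_l},
\end{equation*}
so that the fibrewise symbol rewrites as $2^{-n_{j,k}}{\det}^{1/2}(1-(\gamma_k)_1)(\det(\gamma_k)_1^c)^{1/2}\,vol_{\mathcal{N}_{j,k}}$ after absorbing the unit factor $-i$ into the chosen square roots; this is exactly (\ref{S-2}). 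The main obstacle will be the careful bookkeeping of sign and phase conventions: the BGV symbol map, the complex square root (\ref{cn}), and the positive real square root $\det^{1/2}(1-(\gamma_k)_1)$ must be matched consistently, since any sign discrepancy propagates through Corollary \ref{C-B-3} and the subsequent identification of $\mathcal{F}_{\tilde g,m}$ with (\ref{FkTd}). I would pin down the final sign by an explicit check in the model case $n_{j,k}=1$ with $\gamma_k$ a rotation of an oriented $2$-plane, where both sides are elementary.
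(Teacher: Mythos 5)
Your overall plan — reduce to a fibrewise statement on $\mathcal{N}_{j,k}$, diagonalize $(\gamma_k)_1^c$, and compute the Clifford symbol block by block — is essentially the route the paper takes, so the strategy is sound. However, the specific block computation in your second step contains a genuine error that your third step does not repair.

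The paper shows (via Duistermaat's formula $c(e_l\cdot Je_l+i)=2i\,e_{e_l-iJe_l}\circ\iota_{e_l}$ and the identification $C(\mathcal{N}^{\ast})\otimes\mathbb{C}\cong \mathrm{End}(\Lambda\mathcal{N}^{\ast 0,1})$) that the relevant action on $\mathcal{N}^{\ast 0,1}$, namely $(\gamma_k^{-1})^{\ast}$, has eigenvalue $e^{+2i\theta_l}$ on the generator dual to the $l$-th block, and that the corresponding abstract Clifford element is $e^{i\theta_l}(\cos\theta_l+\sin\theta_l\,e_l\cdot Je_l)$, whose symbol has top part $e^{i\theta_l}\sin\theta_l\, e_l\wedge Je_l$. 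You instead assign eigenvalue $e^{-2i\theta_l}$ to $\bar v_l^{\ast}$ and write the block as $e^{-i\theta_l}(\cos\theta_l - i\sin\theta_l\,c(e_{2l-1})c(e_{2l}))$, yielding a top symbol $-ie^{-i\theta_l}\sin\theta_l\,e_{2l-1}\wedge e_{2l}$ per block. Multiplying over $l$ and inserting the $2^{-n_{j,k}}$ you attribute to the symbol map, your computed $\sigma_{2n_{j,k}}$ is $2^{-n_{j,k}}(-i)^{n_{j,k}}\prod_l e^{-i\theta_l}\sin\theta_l\cdot vol_{\mathcal{N}_{j,k}}$, whereas the RHS of (\ref{S-2}) evaluates (using $\det^{1/2}(1-(\gamma_k)_1)=2^{n_{j,k}}\prod\sin\theta_l$ and $(\det(\gamma_k)_1^c)^{1/2}=\prod e^{i\theta_l}$) to $\prod_l e^{i\theta_l}\sin\theta_l\cdot vol_{\mathcal{N}_{j,k}}$. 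The discrepancy is a factor $2^{-n_{j,k}}(-i)^{n_{j,k}}\prod_l e^{-2i\theta_l}=2^{-n_{j,k}}(-i)^{n_{j,k}}\det((\gamma_k)_1^c)^{-1}$, which is \emph{not} a universal constant — it depends on $\gamma_k$ through $\prod e^{-2i\theta_l}$ — so it cannot be dismissed as "absorbing the unit factor $-i$ into the chosen square roots." That phrase papers over both the $2^{-n_{j,k}}$ (which, in the paper's convention, belongs to the stated normalization in (\ref{S-2}) and is cancelled by $\det^{1/2}(1-(\gamma_k)_1)$, not produced by the symbol map) and the nontrivial phase $\prod e^{-2i\theta_l}$. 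The fix is to use the eigenvalue $e^{+2i\theta_l}$ on $\mathcal{N}^{\ast 0,1}$ for the $(\gamma_k^{-1})^{\ast}$ action, which leads to the Clifford element $e^{i\theta_l}(\cos\theta_l+\sin\theta_l\,e_l\cdot Je_l)$ rather than the one you wrote, and to verify the realization carefully against a fixed convention (Duistermaat's $c$, as the paper does) instead of assuming $c(e_{2l-1})c(e_{2l})=\mathrm{diag}(\mp 1,\pm 1)$; with that convention the block operator actually has $c(e_l\cdot Je_l)=\mathrm{diag}(-i,i)$, which is the source of the missing $i$ in your phase bookkeeping.
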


\begin{remark}
\label{R-BGV} The authors of \cite{BGV} consider general Clifford modules $%
\mathcal{E}$ rather than the specific $\mathcal{E}_{M}.$ As such, they did
not go further with the explicit computation as done here. Moreover our $\pi
^{\ast }\mathcal{E}_{M}$ is not a Clifford module from the point of view of $%
\Sigma $ (although $\mathcal{E}_{M}$ is so from that of $M$ $=$ $\Sigma
/\sigma );$ one may think of $\pi ^{\ast }\mathcal{E}_{M}$ as a Clifford
module \textit{of transversal type}.
\end{remark}

\begin{proof}
(of Lemma \ref{L-sN}) Write $\mathcal{N}_{j,k}$ as $\mathcal{N}$ and
decompose $\mathcal{N}^{\ast }\mathcal{\otimes \mathbb{C}}$ $=$ $\mathcal{N}%
^{\ast 0,1}\oplus \mathcal{N}^{\ast 1,0}.$ Note that $(\gamma
_{k}^{-1})^{\ast }$ $\in $ $U(\mathcal{N}^{\ast 0,1})$ (meaning the action
on $\mathcal{N}^{\ast 0,1}$ induced by $\gamma _{k}$ on $\mathcal{N)},$ a
unitary transformation on $\mathcal{N}^{\ast 0,1},$ extends naturally to an
endomorphism $\gamma _{k}^{\Lambda \mathcal{N}^{\ast 0,1}}$of the exterior
algebra $\Lambda \mathcal{N}^{\ast 0,1}.$ By \cite[p.192, Lemma 6.10]{BGV}
(with our Lemma \ref{L-End} without $E$), along $V_{j}^{\gamma _{k}}$ we
identify $\gamma _{k}^{\psi _{j}^{\ast }\pi ^{\ast }\mathcal{E}%
_{M}|_{V_{j}}}|_{\mathcal{N}^{\ast 0,1}}$ ($=$ the preceding $(\gamma
_{k}^{-1})^{\ast }$) with a section of $C(\mathcal{N}^{\ast })\otimes 
\mathbb{C}$ corresponding to $\gamma _{k}^{\Lambda \mathcal{N}^{\ast 0,1}}$ $%
\in $ $End(\Lambda \mathcal{N}^{\ast 0,1})$ via the isomorphism $c:C(%
\mathcal{N}^{\ast })\otimes \mathbb{C}$ $\rightarrow $ $End(\Lambda \mathcal{%
N}^{\ast 0,1})$ (\cite[Proposition 3.19]{BGV} or \cite[p.37]{Du}), i.e. $%
\gamma _{k}^{\psi _{j}^{\ast }\pi ^{\ast }\mathcal{E}_{M}|_{V_{j}}}$ $=$ $%
c^{-1}(\gamma _{k}^{\Lambda \mathcal{N}^{\ast 0,1}})$ on $V_{j}^{\gamma
_{k}} $. Let $e_{l},$ $Je_{l},$ $l=1,$ $\cdot \cdot ,$ $n_{j,k}$ be an
orthonormal basis of $\mathcal{N}^{\ast }.$ We claim%
\begin{equation}
\gamma _{k}^{\psi _{j}^{\ast }\pi ^{\ast }\mathcal{E}_{M}|_{V_{j}}}=\exp
_{C}\sum_{l=1}^{n_{j,k}}\theta _{l}(e_{l}\cdot Je_{l}+i)\in C(\mathcal{N}%
^{\ast })\otimes \mathbb{C}  \label{rC}
\end{equation}

\noindent where $\exp _{C}$ means the exponential \cite[(4.3)]{Du}. By \cite[%
p.37]{Du} we see that 
\begin{equation}
c(e_{l}\cdot Je_{l}+i)=2ie_{e_{l}-iJe_{l}}\circ \iota _{e_{l}},  \label{2e}
\end{equation}

\noindent on the RHS of which $e_{\bullet }$ (resp. $\iota _{\bullet })$
means the operators taking exterior $($resp. interior$)$ product with $%
\bullet $. Note that $e_{l}-iJe_{l}$ $\in $ $\mathcal{N}^{\ast 0,1}.$ By
acting on $e_{k}-iJe_{k}$ it follows from (\ref{2e}) that%
\begin{equation}
c(e_{l}\cdot Je_{l}+i)(e_{k}-iJe_{k})=2i\delta _{lk}(e_{k}-iJe_{k})\text{
and hence}  \label{8-89-1}
\end{equation}%
\begin{equation}
c{\big (}\exp _{C}\sum_{a=1}^{n_{j,k}}\theta _{a}(e_{a}\cdot Je_{a}+i){\big )%
}(e_{l}-iJe_{l})=e^{2\theta _{l}i}(e_{l}-iJe_{l}).  \label{8-77a}
\end{equation}

\noindent On the other hand, we can easily show that $(\gamma
_{k}^{-1})^{\ast }$ on $\mathcal{N}^{\ast 0,1}$ is also diagonalized with
eigenvalues $e^{i2\theta _{l}}$ (when $(\gamma _{k})_{1}^{c}$ on $\mathcal{N}
$ is diagonalized with eigenvalues $e^{i2\theta _{l}}$ by our assumption
lying above Lemma \ref{L-sN}). So, by (\ref{8-77a}) the RHS of (\ref{rC}) is 
$(\gamma _{k}^{-1})^{\ast }$. To show that it is $\gamma _{k}^{\Lambda 
\mathcal{N}^{\ast 0,1}}$ one computes as in (\ref{8-89-1}), (\ref{8-77a})
the action on two-forms, three-forms, etc. This can be done similarly using
the action (\ref{2e}). The final result gives (\ref{rC}); we omit the
details (see \cite{CT2}).

Now by using \cite[(4.3) and Subsection 4.3]{Du} we obtain%
\begin{equation}
\text{The RHS of (\ref{rC}) = }\dprod\limits_{l=1}^{n_{j,k}}(\cos \theta
_{l}+\sin \theta _{l}e_{l}\cdot Je_{l})e^{i\theta _{l}}.  \label{rC-1}
\end{equation}

\noindent Combining (\ref{rC}) and (\ref{rC-1}) we can now compute the
symbol of $\gamma _{k}^{\psi _{j}^{\ast }(\pi ^{\ast }\mathcal{E}%
_{M})|_{V_{j}}}:$ (After converting $e_{l}\cdot Je_{l}$ in (\ref{rC-1}) into 
$e_{l}\wedge Je_{l}$ and keeping the top $2n_{j,k}$-form,)%
\begin{equation*}
\sigma _{2n_{j,k}}(\gamma _{k}^{\psi _{j}^{\ast }(\pi ^{\ast }\mathcal{E}%
_{M})|_{V_{j}}})=\dprod\limits_{l=1}^{n_{j,k}}e^{i\theta _{l}}\sin \theta
_{l}(\wedge _{l=1}^{n_{j,k}}(e_{l}\wedge Je_{l}))\text{.}
\end{equation*}

\noindent From (\ref{cn}), \cite[the third formula on p.154]{Du}\footnote{%
To avoid possible confusion, we record this formula as follows (in the
notation of \cite{Du}): 
\begin{equation*}
\det (1-\gamma _{N})^{-1/2}=(2^{l}\dprod\limits_{j=n-l+1}^{n}\sin \theta
_{j})^{-1}.
\end{equation*}%
} and $\wedge _{l=1}^{n_{j,k}}(e_{l}\wedge Je_{l})=:vol_{\mathcal{N}_{j,k}}$
it is straightforward to see that%
\begin{equation*}
\sigma _{2n_{j,k}}(\gamma _{k}^{\psi _{j}^{\ast }(\pi ^{\ast }\mathcal{E}%
_{M})|_{V_{j}}})=(\det (\gamma _{k})_{1}^{c})^{1/2}2^{-n_{j,k}}{\det }%
^{1/2}(1-(\gamma _{k})_{1})vol_{\mathcal{N}_{j,k}}.
\end{equation*}

\noindent We have proved (\ref{S-2}).
\end{proof}

Rewrite $\mathcal{F}_{k,m}^{j}$ of (\ref{Str-1a}) as $\mathcal{F}_{k,m}^{j}$ 
$=$ $T_{V_{j}}F_{j,k}$ where in view of (\ref{8-16c}) for $\tilde{g},$%
\begin{equation}
F_{j,k}:=c_{j,k}I(\tilde{g})Str_{\mathcal{E}^{m}/S}[\sigma
_{2n-(l_{j,k}+2)}(\gamma _{k}^{\mathcal{E}^{m}})\exp (-F_{0}^{_{\mathcal{E}%
^{m}/S}})].  \label{B-Fjk}
\end{equation}

\noindent We are going to express $F_{j,k}$ in terms of the Todd and Chern
character forms, where we recall (cf. (\ref{Ig}))%
\begin{equation}
I(\tilde{g})=\frac{\hat{A}_{BGV}(T\Sigma ^{\tilde{g}}/L_{\Sigma ^{\tilde{g}%
}},g_{quot})}{\det^{1/2}(1-\tilde{g}_{1})\det^{1/2}(1-\tilde{g}_{1}\exp
(-R^{1}))}.  \label{Ig-a}
\end{equation}

Using Lemma \ref{L-sN} we now come to the main result of this section and
prove (\ref{FkTd}) above. Recall the notation $T_{V_{j}}$ in (\ref{Str-1a})
but for the sake of clarity we will write $T_{V_{j}}|_{V_{j}^{\gamma _{k}}}$
instead of $T_{V_{j}}$ to denote the Berezin integral of the bundle $%
TV_{j}|_{V_{j}^{\gamma _{k}}}$ (see also the line after (\ref{Bd})) and to
get a function on $V_{j}^{\gamma _{k}}.$ This notation $T_{V_{j}}|_{V_{j}^{%
\gamma _{k}}}$ is not to be confused with $T_{V_{j}^{\gamma _{k}}}$ below.

\begin{proposition}
\label{P-Fk} With the notation above and $g_{M}$ being K\"{a}hler, we have%
\begin{eqnarray}
&&\text{ \ \ \ }\mathcal{F}_{k,m}^{j}\equiv T_{V_{j}}|_{V_{j}^{\gamma
_{k}}}F_{j,k}=  \label{TFk} \\
&&T_{V_{j}^{\gamma _{k}}}\frac{Td(T^{1,0}\Sigma ^{\tilde{g}}/L_{\Sigma ^{%
\tilde{g}}},g_{quot})ch(\gamma _{k}^{\psi _{j}^{\ast }E|_{V_{j}}},\psi
_{j}^{\ast }E|_{V_{j}}\mathcal{)}ch(\psi _{j}^{\ast }(L_{\Sigma }^{\ast
})^{\otimes m}|_{V_{j}}\mathcal{)}}{\det (1-(\tilde{g}^{-1})_{1}^{c}\exp (-%
\frac{i}{2\pi }R_{c}^{1}))}  \notag
\end{eqnarray}%
where $\tilde{g}_{1}^{c}$ (resp. $R_{c}^{1})$ denotes the complex-linear
transformation (resp. complex endomorphism valued curvature form)
corresponding to the real transformation $\tilde{g}_{1}$ on $\mathcal{N}%
|_{V_{j}^{\tilde{g}}}$ (resp. real endomorphism valued curvature form $%
R^{1}).$ Compare the remark below.
\end{proposition}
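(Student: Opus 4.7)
The plan is to massage $F_{j,k}$ of \eqref{B-Fjk} into an expression whose Berezin integral along $V_j$ reduces to an ordinary integrand on $V_j^{\gamma_k}$ involving the Todd form of $T\Sigma^{\tilde g}/L_{\Sigma^{\tilde g}}$ and the twisted/untwisted Chern characters on the RHS of \eqref{TFk}. The ingredients already available are Corollary \ref{C-B-3} (which converts $\hat A_{BGV}$ to $Td_{BGV}$ with an $\exp(-R^{K_N^*}/2)$ correction) and Lemma \ref{L-sN} (which produces the crucial $vol_{\mathcal N_{j,k}}$ factor from the symbol of $\gamma_k^{\psi_j^*\pi^*\mathcal E_M}$). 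The key identification of $X$, $\gamma_X$ with $V_j$, $\gamma_k$ is legitimate: by Corollary \ref{C-alk} the map $\gamma_k$ is a holomorphic isometry with respect to $\pi^*g_M|_{V_j}$, which is assumed K\"ahler.

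First I would split the symbol using \eqref{S-1} and \eqref{rkE}, writing
\[
\sigma_{2n_{j,k}}(\gamma_k^{\mathcal E^m})\exp(-F_0^{\mathcal E^m/S})
=\bigl[\sigma_{2n_{j,k}}(\gamma_k^{\psi_j^*\pi^*\mathcal E_M|_{V_j}})\bigr]
\cdot\bigl[\gamma_k^{\psi_j^*(E\otimes(L_\Sigma^*)^{\otimes m})|_{V_j}}\exp(-F_0^{\mathcal E^m/S})\bigr],
\]
since the line-bundle / $E$ factor has trivial Clifford action and commutes with the symbol. Applying Lemma \ref{L-sN} then inserts the factor $2^{-n_{j,k}}\det^{1/2}(1-(\gamma_k)_1)(\det(\gamma_k)_1^c)^{1/2}\,vol_{\mathcal N_{j,k}}$. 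The $\det^{1/2}(1-(\gamma_k)_1)$ cancels exactly with one of the two square-root determinants appearing in the numerator of $I(\tilde g)^{-1}$ in \eqref{Ig-a}, leaving
\[
F_{j,k}=c_{j,k}\,\frac{\hat A_{BGV}(V_j^{\gamma_k})\,(\det(\gamma_k)_1^c)^{1/2}}{2^{n_{j,k}}\det^{1/2}(1-\tilde g_1\exp(-R^1))}\,
\mathrm{Str}_{E\otimes(L_\Sigma^*)^{\otimes m}/S}\bigl[\gamma_k^{\cdot}\exp(-F_0^{\cdot/S})\bigr]\wedge vol_{\mathcal N_{j,k}}.
\]
Presence of $vol_{\mathcal N_{j,k}}$ means that the Berezin integral $T_{V_j}|_{V_j^{\gamma_k}}$ in $T^{1,0}\mathcal N_{j,k}$ directions collapses, converting $T_{V_j}|_{V_j^{\gamma_k}}$ into $T_{V_j^{\gamma_k}}$ acting on the remaining tangential piece, which lives purely on $V_j^{\gamma_k}\cong T\Sigma^{\tilde g}/L_{\Sigma^{\tilde g}}$ (via Lemma \ref{L-inv} $i)$).

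Next I would apply Corollary \ref{C-B-3} to the tangential factor to replace $\hat A_{BGV}(V_j^{\gamma_k})\,\mathrm{Str}_{\mathcal E/S}[\gamma_k^{\mathcal E/S}\exp(-F_0^{\mathcal E/S})]$ by
\[
Td_{BGV}(V_j^{\gamma_k})\,\mathrm{Str}_{E\otimes (L_\Sigma^*)^{\otimes m}}\bigl[\gamma_k\exp(-F_0^{E\otimes(L_\Sigma^*)^{\otimes m}})\bigr]\,\exp(-R^{K_N^*}/2).
\]
Then I would rescale $Td_{BGV}\to Td$ and replace the supertrace of the $E\otimes(L_\Sigma^*)^{\otimes m}$ factor by $ch(\gamma_k^{\psi_j^*E},\psi_j^*E)\cdot ch(\psi_j^*(L_\Sigma^*)^{\otimes m})$, absorbing the combinatorial constant $c_{j,k}=(4\pi)^{-l_{j,k}/2}(-2i)^{\dim_{\mathbb C}V_j}$ and the $2^{-n_{j,k}}$ through the standard $\tfrac{i}{2\pi}$ normalizations in \eqref{ch}, \eqref{3-10} versus \eqref{8-69a}; since $\gamma_k$ acts trivially on $(L_\Sigma^*)^{\otimes m}$ at fixed points, the line-bundle Chern character is untwisted as stated. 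The remaining task is to show
\[
(\det(\gamma_k)_1^c)^{1/2}\,{\det}^{1/2}(1-\tilde g_1\exp(-R^1))\,\exp(R^{K_N^*}/2)
=\det\bigl(1-(\tilde g^{-1})_1^c\exp(-\tfrac{i}{2\pi}R_c^1)\bigr)
\]
up to the proper normalization; this is done in an eigenbasis $\{e_l,Je_l\}$ diagonalizing $(\tilde g)_1^c$ with eigenvalues $e^{2i\theta_l}$ as in the proof of Lemma \ref{L-sN}, using $R^{K_N^*}=\mathrm{Tr}(R_c^1)$ and the product factorization $\det^{1/2}(1-\tilde g_1\exp(-R^1))=\prod_l(1-e^{2i\theta_l}e^{-i R_c^{1,l}/(2\pi)})\cdot e^{-i\theta_l}$ after extracting $(\det\tilde g_1^c)^{1/2}=\prod e^{i\theta_l}$ via \eqref{cn} from the real square root.

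The main obstacle will be this last eigenvalue bookkeeping: tracking signs, $(2\pi i)$-normalizations, and the passage between $\tilde g$ and $\tilde g^{-1}$ consistently across the three conventions (the Berline--Getzler--Vergne normalization of $\hat A_{BGV}$/$Td_{BGV}$, the complex-geometric convention of $Td$ in \eqref{3-10}, and the Duistermaat-style expression $\det(1-\tilde g^{-1}\exp(-R))$ used on the RHS of \eqref{TFk}). Once that determinant identity is verified by direct computation in the diagonalizing frame, substitution and taking $T_{V_j^{\gamma_k}}$ delivers exactly \eqref{TFk}.
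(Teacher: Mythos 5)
Your overall strategy is the same as the paper's: split the symbol via (\ref{S-1})/(\ref{rkE}), use Lemma \ref{L-sN} to produce $vol_{\mathcal N_{j,k}}$ and a factor $\det^{1/2}(1-(\gamma_k)_1)$ that cancels one of the square roots in the denominator of $I(\tilde g)$, apply Corollary \ref{C-B-3} to convert $\hat A_{BGV}$ to $Td_{BGV}$ at the price of an $\exp(-R^{K_N^\ast}/2)$, and finish by diagonalizing in an eigenbasis. The paper's proof does exactly this (see (\ref{Ba}), (\ref{Bb}), (\ref{Bc}), (\ref{Bd})). So the approach is not at issue.

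However, the determinant identity that you isolate as the remaining task is wrong, and the error is structural, not a matter of normalization. After applying Corollary \ref{C-B-3}, the scalar quantity remaining in $F_{j,k}$ is
\begin{equation*}
\frac{(\det(\gamma_k)_1^c)^{1/2}\,\exp\!\bigl(-R^{K_N^\ast}/2\bigr)}{\det^{1/2}\!\bigl(1-\tilde g_1\exp(-R^1)\bigr)}
\end{equation*}
(times $c_{j,k}\cdot 2^{-n_{j,k}}$). You instead propose to verify
\begin{equation*}
(\det(\gamma_k)_1^c)^{1/2}\,\det^{1/2}\!\bigl(1-\tilde g_1\exp(-R^1)\bigr)\,\exp(R^{K_N^\ast}/2)
=\det\!\bigl(1-(\tilde g^{-1})_1^c\exp(-\tfrac{i}{2\pi}R_c^1)\bigr),
\end{equation*}
in which both the power of $\det^{1/2}(1-\tilde g_1\exp(-R^1))$ and the sign of $R^{K_N^\ast}/2$ are flipped relative to what appears in $F_{j,k}$. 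Checking in the diagonalizing eigenbasis (eigenvalues $e^{2i\theta_l}$ for $(\tilde g)_1^c$, $R_l$ for $R_c^1$), your LHS evaluates to $(-i)^{n_{j,k}}\prod_l(e^{2i\theta_l}-e^{R_l})=(-i)^{n_{j,k}}\det(\tilde g_1^c)\cdot\prod_l(1-e^{-2i\theta_l}e^{R_l})$, which differs from the RHS by the non-constant factor $\det(\tilde g_1^c)=\prod_l e^{2i\theta_l}$. Moreover, even if the identity as you wrote it were true, substituting it back would produce $\det(1-(\tilde g^{-1})_1^c\exp(-\tfrac{i}{2\pi}R_c^1))$ divided by a square of the other two factors — not the reciprocal of the determinant that (\ref{TFk}) requires. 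The correct identity, incorporating the $i^{-n_{j,k}}$ emerging from $c_{j,k}$, is the paper's (\ref{Bc}):
\begin{equation*}
\det^{-1/2}\!\bigl(1-\tilde g_1\exp(-R^1)\bigr)\,i^{-n_{j,k}}\,(\det(\gamma_k)_1^c)^{1/2}\,e^{-R^{K_N^\ast}/2}
=\bigl(\det(1-(\tilde g^{-1})_1^c\exp(R_c^1))\bigr)^{-1};
\end{equation*}
the $\tfrac{i}{2\pi}$ enters only afterwards, when rescaling the curvature to drop the \textquotedblleft BGV\textquotedblright\ subscripts. This is precisely the sign bookkeeping the paper flags (the paragraph after (\ref{Bc}) and its Footnote$^{12}$) as the place where an unwanted minus sign can slip in: the square root $(\det(\gamma_k)_1^c)^{1/2}$ must be taken as $\prod e^{i\theta_l}$ per (\ref{cn}), after which the cross-multiplication rules you used to form your identity no longer hold naively.
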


\begin{remark}
\label{R-8-32} For the above $R_{c}^{1}$ note that $R^{1}$ is complex linear
($J$-linear) with respect to the complex structure $J$ by our K\"{a}hler
assumption on $V_{j}$ in this subsection. Similarly $\tilde{g}_{1}^{c}$ can
be viewed as an $n_{j,k}\times n_{j,k}$ complex matrix, $n_{j,k}$ $=$ $\frac{%
1}{2}\dim _{\mathbb{R}}\mathcal{N}.$
\end{remark}

\begin{proof}
(of Proposition \ref{P-Fk}) By (\ref{S-1}), (\ref{S-2}) of Lemma \ref{L-sN}
and noting that $\tilde{g}_{1}$ $=$ $(\gamma _{k})_{1}$ along $V_{j}^{\gamma
_{k}}$ since $\sigma (\tilde{g})$ $=$ $\gamma _{k}$ on $V_{j}\times
\{0\}\times \{1\}$ by (\ref{8-16c}) or (\ref{Str-5c})$,$ we obtain%
\begin{equation}
\frac{\sigma _{2n-(l_{j,k}+2)}(\gamma _{k}^{\mathcal{E}^{m}})}{\det^{1/2}(1-%
\tilde{g}_{1})}=2^{-n_{j,k}}(\det (\gamma _{k})_{1}^{c})^{1/2}vol_{\mathcal{N%
}_{j,k}}\gamma _{k}^{\psi _{j}^{\ast }(E\otimes (L_{\Sigma }^{\ast
})^{\otimes m})|_{V_{j}}}.  \label{Ba}
\end{equation}

\noindent Substituting (\ref{Ba}) into (\ref{B-Fjk}) with $I(\tilde{g})$ in (%
\ref{Ig-a}) and making use of (\ref{B-3a}) with $X$ $=$ $V_{j}$, we get, by
setting (compare (\ref{ch}) and (\ref{8-69a}) for the subscripts
\textquotedblleft $BGV$" below; the difference between them involves
multiplicative factors $\frac{i}{2\pi }$)%
\begin{equation}
\text{Str}_{E^{\prime }}(\gamma _{k}^{E^{\prime }}\exp (-F_{0}^{E^{\prime
}}))=:ch_{BGV}(\gamma _{k}^{E^{\prime }},E^{\prime })  \label{chBGV}
\end{equation}%
\begin{eqnarray}
F_{j,k} &=&c_{j,k}\frac{2^{-n_{j,k}}(\det (\gamma _{k})_{1}^{c})^{1/2}vol_{%
\mathcal{N}_{j,k}}}{\det^{1/2}(1-\tilde{g}_{1}\exp (-R^{1}))}  \label{Bb} \\
&&\cdot Td_{BGV}(V_{j}^{\gamma _{k}})ch_{BGV}(\gamma _{k}^{E^{\prime
}},E^{\prime })\exp (\frac{-R^{K_{N}^{\ast }}}{2})  \notag
\end{eqnarray}%
\noindent with $E^{\prime }$ $=$ $\psi _{j}^{\ast }(E\otimes (L_{\Sigma
}^{\ast })^{\otimes m})|_{V_{j}}.$ Here $n_{j,k}=n-1-l_{j,k}/2$ as before.
The main computation of this proposition is the following: 
\begin{eqnarray}
&&{\det }^{-1/2}(1-\tilde{g}_{1}\exp (-R^{1}))i^{-n_{j,k}}(\det (\gamma
_{k})_{1}^{c})^{1/2}e^{-\frac{1}{2}R^{K_{\mathcal{N}}^{\ast }}}  \label{Bc}
\\
&=&(\det (1-(\tilde{g}^{-1})_{1}^{c}\exp (R_{c}^{1})))^{-1}.  \notag
\end{eqnarray}

Before we proceed, a warning is in order. With the square root $(\det
(\gamma _{k})_{1}^{c})^{1/2}$ chosen by (\ref{cn}), special care should be
taken when using the usual rules for further computation or else an unwanted
minus sign for (\ref{Bc}) may occur in the end\footnote{%
For the related computations, see \cite[p.156]{Du}. However, the treatment
there does not quite lead to the conclusion here because the sign issue as
warned above was not dealt with in sufficient details and the opposite sign
seems to occur there. It is desirable to carry out the computation of our
own to ensure the ultimately correct sign.}. To show (\ref{Bc}) we may
assume that both $(\tilde{g}^{-1})_{1}^{c}$ and $R_{c}^{1}$ are
simultaniously diagonalized with respective eigenvalues $e^{-2i\theta _{l}}$
(this being consistent with the lines above (\ref{cn}) as $\tilde{g}$ $=$ $%
g_{k},$ $\gamma _{k}$ here) and $R_{c,l}^{1}$ (which are two-forms) for $1$ $%
\leq $ $l$ $\leq $ $n_{j,k}$ (cf. \cite[second paragraph on p.167]{Du})$.$
Note that $R^{K_{\mathcal{N}}^{\ast }}$ $=$ $TrR_{c}^{1}$ $=$ $%
\sum_{l=1}^{n_{j,k}}R_{c,l}^{1}.$ Observe that the $l$th real $2\times 2$
matrix block of $\exp (-R^{1})$ corresponding to $\exp (-R_{c,l}^{1})$ reads
as (noting that $R_{c,l}^{1}$ is purely imaginary)%
\begin{equation}
\left( 
\begin{array}{cc}
\cos iR_{c,l}^{1} & \sin iR_{c,l}^{1} \\ 
-\sin iR_{c,l}^{1} & \cos iR_{c,l}^{1}%
\end{array}%
\right) .  \label{cs}
\end{equation}%
\noindent Using \cite[p.154]{Du} or Footnote$^{11}$ in the proof of Lemma %
\ref{L-sN}, and (\ref{cs}) it is not difficult to show that%
\begin{equation}
{\det }^{-1/2}(1-\tilde{g}_{1}\exp (-R^{1}))=\dprod\limits_{l=1}^{n_{j,k}}%
\frac{1}{2\sin (\theta _{l}+\frac{1}{2}iR_{c,l}^{1})},  \label{cs-a}
\end{equation}%
\noindent and that by $(\ref{cn})$%
\begin{equation}
(\det (\gamma _{k})_{1}^{c})^{1/2}e^{-\frac{1}{2}R^{K_{\mathcal{N}}^{\ast }}}%
\overset{}{=}\dprod\limits_{l=1}^{n_{j,k}}e^{i(\theta _{l}+\frac{1}{2}%
iR_{c,l}^{1})}.  \label{cs-b}
\end{equation}%
\noindent Thus by (\ref{cs-a}) and (\ref{cs-b}) to show (\ref{Bc}) is
reduced to verifying 
\begin{equation}
\frac{1}{2\sin (\theta _{l}+\frac{1}{2}iR_{c,l}^{1})}i^{-1}e^{i(\theta _{l}+%
\frac{1}{2}iR_{c,l}^{1})}=\frac{1}{1-e^{-2i(\theta _{l}+\frac{1}{2}%
iR_{c,l}^{1})}}  \label{Bc-a}
\end{equation}%
\noindent for each $l$. Now (\ref{Bc-a}) holds true by a direct computation,
proving (\ref{Bc}).

Substituting (\ref{Bc}) into (\ref{Bb}) and noting that $c_{j,k}$ $=$ $(2\pi
i)^{-l_{j,k}/2}2^{n_{j,k}}(-i)^{n_{j,k}}$, $(-i)^{n_{j,k}}=i^{-n_{j,k}},$ we
reduce (\ref{Bb}) to%
\begin{eqnarray}
&F_{j,k}=(2\pi i)^{-l_{j,k}/2}&  \label{Bd} \\
&\cdot \frac{Td_{BGV}(V_{j}^{\gamma _{k}})ch_{BGV}(\gamma _{k}^{\psi
_{j}^{\ast }E|_{V_{j}}}\otimes \gamma _{k}^{\psi _{j}^{\ast }(L_{\Sigma
}^{\ast })^{\otimes m}|_{V_{j}}},\psi _{j}^{\ast }(E\otimes (L_{\Sigma
}^{\ast })^{\otimes m})|_{V_{j}})vol_{\mathcal{N}_{j,k}}}{\det (1-(\tilde{g}%
^{-1})_{1}^{c}\exp (R_{c}^{1}))}&  \notag
\end{eqnarray}%
\noindent by (\ref{rkE}). We now see from (\ref{Bd}) that taking $%
T_{V_{j}}|_{V_{j}^{\gamma _{k}}}$ of $F_{j,k}$ is the same as taking $%
T_{V_{j}^{\gamma _{k}}}$ of $F_{j,k}/vol_{\mathcal{N}_{j,k}}$. The factor $%
(2\pi i)^{-l_{j,k}/2}$ ($l_{j,k}$ $=$ $\dim _{\mathbb{R}}X^{\gamma _{X}}$)
is absorbed when one changes the curvature form by a multiple $\frac{1}{2\pi
i}$ so that the above subscripts \textquotedblleft $BGV$" drop out. Also
note that $g_{quot}$ is identified with $\pi ^{\ast }g_{M}$ restricted to $%
V_{j}^{\gamma _{k}}.$ Altogether, with the \textit{multiplicative property}
of the $\gamma $-twisted Chern character forms (this property can be checked
by the trace formula for tensor product \cite[(11.2) on p.133]{Du} and by
using simultaneous diagonalization \cite[p.167]{Du}), we conclude (\ref{TFk}%
). Here note that $ch(\gamma _{k}^{\psi _{j}^{\ast }(L_{\Sigma }^{\ast
})^{\otimes m}|_{V_{j}}},\psi _{j}^{\ast }(L_{\Sigma }^{\ast })^{\otimes
m}|_{V_{j}})$ $=$ $ch(\psi _{j}^{\ast }(L_{\Sigma }^{\ast })^{\otimes
m}|_{V_{j}})$ on $V_{j}^{\gamma _{k}}$ because it is not difficult to see
via definitions that $\gamma _{k}^{\psi _{j}^{\ast }(L_{\Sigma }^{\ast
})^{\otimes m}|_{V_{j}}}$ on $V_{j}^{\gamma _{k}}$ is the identity action.
\end{proof}

\begin{remark}
\label{R-8-B} Duistermaat's formula \cite[(11.17) on p.144]{Du} in the
statement of his Theorem 11.1 seems not to be consistent with the usual form
of the index theorem due to his possibly wrong sign for $R^{L}$ involved in
the Chern character term $ch(L)$ (after the replacement $R^{L}$ $\rightarrow 
$ $R^{L}/2\pi i,$ cf. \cite[p.145]{Du}). Similar confusion occurs also for
the sign of $\frac{1}{2}R^{K^{\ast }}$ in the same formula because the term $%
-\frac{1}{2}R^{K^{\ast }}$ is needed for 
\begin{equation*}
\left( \det \frac{1-e^{-R}}{R}\right) ^{-1/2}\cdot e^{-\frac{1}{2}R^{K^{\ast
}}}=\det{}_{\mathbb{C}}\frac{R}{e^{R}-1}\left( =\det \frac{R_{c}}{e^{R_{c}}-1%
}\text{ in our notation above}\right)
\end{equation*}%
to give the usual Todd class term after the replacement $R$ $\rightarrow $ $%
\frac{R}{2\pi i}$ (at least for the K\"{a}hler case).
\end{remark}

\subsection{Comparison with Duistermaat's orbifold version of the index
theorem, Part II: integrals over fixed point orbifolds\label{S-8-5}}

In comparison with \cite[Theorem 14.1 on p.184]{Du} we take $M=\Sigma
/\sigma ,$ a compact complex orbifold by Theorem \ref{thm2-1}. For
simplicity we assume no extra $\mathbb{C}^{\ast }$-equivariant holomorphic
vector bundle $E$ over $\Sigma $ i.e. no extra complex orbifold vector
bundle $L$ over $M$ in \cite[Theorem 14.1 on p.184]{Du}. To see what $\tilde{%
F}$ in the notation of \cite[pp. 184, 180]{Du} is, we take $\gamma _{V}$ = $%
Id$ in \cite[p.180]{Du}. Recalling the chart $W_{j}$ $=$ $V_{j}\times
(-\varepsilon _{j},\varepsilon _{j})\times \mathbb{R}^{+}$ of $\Sigma $ (see
the paragraph preceding (\ref{g1})), the local orbifold structure group $%
H=G_{j},$ a finite cyclic subgroup of $S^{1}$ $\subset $ $\mathbb{C}^{\ast }$
acts on the orbifold chart $V_{j}$ $=:$ $V$ through (\ref{Str-5}) for which
recall that $\pi _{V_{j}}$ plays no role as shown in Proposition \ref{L-alk} 
$iii),$ i.e. $\gamma _{k}^{-1}$ $=$ $\sigma (g_{k}^{-1})$ on $V_{j}\times
\{0\}\times \{1\}$ for any $g_{k}$ $\in $ $H=G_{j}.$ Let $g_{0}$ $\in $ $H$
be a generator of order $N+1$ $=$ $N_{j}+1$ thus $g_{0}^{N+1}=1.$

Take an element $\tilde{g}$ $\in $ $H,$ $\tilde{g}$ $\neq $ $1.$ Consider $%
V^{\tilde{g}}$ $\times $ $\{\tilde{g}\}$ $\subset $ $\hat{V}$ in \cite[p.180]%
{Du} with the action $(v,\tilde{g})$ $\rightarrow $ $(\sigma (a)v,\tilde{g})$
for $a\in H$ (note that the action of $H$ preserves $V)$ arising from the
original action in \cite[p.180]{Du} (since $H$ is abelian and $\gamma _{V}$
= $Id$ in our case)$.$ Piece together these (local) ($V^{\tilde{g}}$ $\times 
$ $\{\tilde{g}\})/H$ $\cong $ $V^{\tilde{g}}/H$ (here the subscript $j $
omitted already) to form a \textquotedblleft fixed point" orbifold $\tilde{F}
$ as in \cite[p.180]{Du} with the embedding $\tilde{F}$ $\subset $ $M$ in
our case$.$ We assume that $\tilde{F}$ is the only connected component
(otherwise just take a connected component of it). We need an explicit
description of the orbit type stratification for $(V,H)$ \cite[p.174]{Du}.
Let $H^{\prime }$ $\subset $ $H$ be a subgroup of $H.$ Let $V^{H^{\prime }} $
$\subset $ $V$ denote the set of points fixed by $H^{\prime }.$ For a point $%
v\in V$ let $H_{v}$ denote the isotropy subgroup at $v.$

\begin{lemma}
\label{L-ots} With the notation above (and $V_{j}$ possibly shrinked), there
are unique subgroups $H_{l}$ of $H,$ $l$ $=$ $0,$ $1,$ $\cdot \cdot \cdot ,$ 
$K,$ with the properties $i)$ $H_{0}:=\{1\}$ $\subset $ $H_{1}$ $\subset $ $%
\cdot \cdot \cdot $ $\subset $ $H_{K-1}$ $\subset $ $H_{K}:=H;$ $ii)$ for $%
v\in V^{H_{l}}\backslash V^{H_{l+1}}$ $H_{v}$ $=$ $H_{l}$ ($V^{H_{K+1}}$ $:=$
$\emptyset $)$.$
\end{lemma}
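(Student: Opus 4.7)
Since $H = G_{j}$ is a finite cyclic subgroup of $S^{1} \subset \mathbb{C}^{\ast}$ acting on the orbifold chart $V = V_{j}$ by biholomorphisms, with a distinguished fixed point $x_{0}$ at which the isotropy equals the full group $H$, the natural approach is to linearize the action at $x_{0}$ and then combinatorially enumerate the isotropy subgroups that occur.

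First, I would apply holomorphic Bochner linearization of the finite group $H$ at $x_{0}$: after possibly shrinking $V$, identify $V$ with an $H$-invariant open ball in $T_{x_{0}}V \cong \mathbb{C}^{n-1}$ on which $H$ acts linearly. Because $H$ is cyclic, the generator $g_{0}$ (of order $N+1$) simultaneously diagonalizes with eigenvalues $e^{2\pi i a_{i}/(N+1)}$ for integers $a_{i}$, $i = 1,\ldots,n-1$. For $v$ with support $S(v) := \{i : v_{i} \neq 0\}$, the isotropy reads $H_{v} = \{g_{0}^{k} : (N+1)\mid k a_{i} \text{ for every } i \in S(v)\}$, so $H_{v}$ depends only on $S(v)$ and the collection $\mathcal{I}(V) := \{H_{v} : v \in V\}$ is a finite set of subgroups of $H$.

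Second, the crux is to show, after further shrinking $V_{j}$ if necessary, that $\mathcal{I}(V)$ is totally ordered by inclusion; granting this, one simply enumerates $\{1\} = H_{0} \subsetneq H_{1} \subsetneq \cdots \subsetneq H_{K} = H$, with uniqueness automatic. The chain property reduces, via the linearization, to a combinatorial statement about the intersections $\bigcap_{i \in S} \ker \chi_{i}$ of kernels of the weight characters $\chi_{i} : H \to \mathbb{C}^{\ast}$: one must show that those intersections which actually arise form a chain. To arrange this, one exploits that $V_{j}$ is a transverse slice to a $\mathbb{C}^{\ast}$-orbit in $\Sigma$ and that $G_{j}$ was chosen (via Corollary \ref{8-5-1} and the shrinking of $W_{j}$) to dominate every isotropy encountered in $W_{j}$; by shrinking $V_{j}$ so that it meets only one maximal proper-isotropy stratum at a time --- equivalently, so that $V_{j}$ retracts onto a flag of fixed-point subvarieties $V^{H_{K}} \subset V^{H_{K-1}} \subset \cdots \subset V^{H_{0}}$ --- one collapses the containment poset of $\mathcal{I}(V)$ to a chain. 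Condition ii) then follows by a formality: the identity $V^{H_{l}} = \{v : H_{l} \subset H_{v}\}$ together with the chain structure forces $V^{H_{l}} \setminus V^{H_{l+1}} = \{v : H_{v} = H_{l}\}$.

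The main obstacle is establishing the chain property itself. For a generic cyclic linear action --- for instance $\mathbb{Z}/6$ acting on $\mathbb{C}^{2}$ with weights $(1,2)$, which produces the incomparable isotropies $\mathbb{Z}/2$ and $\mathbb{Z}/3$ --- no enumeration of $\mathcal{I}(V)$ yields a chain, so the shrinking step is genuinely substantive: it cannot rest on purely local features of the linearized $H$-action on $V_{j}$ alone, and must invoke the ambient origin of $V_{j}$ as a slice to a $\mathbb{C}^{\ast}$-orbit in $\Sigma$, either to rule out such mixed-weight configurations or to reduce $V_{j}$ to a stratified subset on which the isotropies line up linearly.
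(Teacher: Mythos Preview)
Your linearization approach is more explicit than the paper's, which simply invokes the orbit-type stratification of \cite[pp.~174--175]{Du} and asserts that after shrinking $V$ the closures of the strata form a flag $V=\bar S_{0}\supset\bar S_{1}\supset\cdots\supset\bar S_{K}$; properties i) and ii) then drop out formally from $V^{H_{l}}=\bar S_{l}$. You are right that the chain property of the isotropy subgroups is the entire content, and right that it is not automatic.

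There are two problems with your plan, however. First, your explicit example is miscomputed: for $\mathbb{Z}/6$ with weights $(1,2)$ the isotropies that occur are $\{1\}$, $\mathbb{Z}/2$, $\mathbb{Z}/6$, which \emph{is} a chain; the example you want is weights $(2,3)$, giving the incomparable pair $\mathbb{Z}/2$, $\mathbb{Z}/3$. Second, and more seriously, your proposed escape route through the ambient $\mathbb{C}^{\ast}$-geometry of $\Sigma$ does not close the gap. The weights-$(2,3)$ slice is realized in the paper's setting by $\Sigma=(\mathbb{C}^{\ast}\times\mathbb{CP}^{2})/\mathbb{Z}_{6}$, where $g_{0}$ acts diagonally by $(\xi,[z_{0}{:}z_{1}{:}z_{2}])\mapsto(g_{0}\xi,[z_{0}{:}g_{0}^{2}z_{1}{:}g_{0}^{3}z_{2}])$ and $\mathbb{C}^{\ast}$ acts on the first factor; at the image of $(1,[1{:}0{:}0])$ the isotropy is $\mathbb{Z}_{6}$, the slice carries weights $(2,3)$, and every $H$-invariant neighborhood of the origin meets both coordinate axes, so no shrinking of $V_{j}$ produces a chain. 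Thus the lemma as literally stated appears too strong, and neither your plan nor the paper's one-line citation visibly proves it. Fortunately, the downstream applications (Lemma~\ref{L-Vg} and the multiplicity formula~(\ref{Nk})) only need, for each fixed $\tilde g$, that $V^{\tilde g}=V^{H'}$ with $H'=H_{v}$ for generic $v\in V^{\tilde g}$; this follows directly by taking $H'$ to be the generic isotropy on $V^{\tilde g}$, noting $\langle\tilde g\rangle\subset H'$ gives $V^{H'}\subset V^{\tilde g}$, and closing up the dense stratum for the reverse inclusion --- no global chain required.
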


\begin{proof}
By the orbit type stratification \cite[pp. 174-175]{Du} two points $x,$ $y$ $%
\in $ $V$ belong to the same orbit type if $H_{x}$ $=$ $H_{y}$ since $H$ is
Abelian. Let $S_{l}$ be the set of points having the same isotropy subgroup
which we denote by $H_{l}.$ Then the stratification structure in \cite{Du}
gives, possibly after shrinking $V,$ the \textit{orbit type stratification}
(OTS for short$)$%
\begin{equation}
V=\bar{S}_{0}\supset \bar{S}_{1}\supset \cdot \cdot \cdot \supset \bar{S}%
_{K}=\{x\in V:H_{x}=H\}  \label{V}
\end{equation}

\noindent for some sequence $\bar{S}_{l}$ $=$ $S_{l}\cup S_{l+1}\cup \cdot
\cdot \cdot \cup S_{K},$ $l=0,1,\cdot \cdot \cdot ,K.$ $i)$ follows. For $%
ii) $ observe that 
\begin{equation}
V^{H_{l}}=\bar{S}_{l},  \label{VS}
\end{equation}%
\noindent so $V^{H_{l}}\backslash V^{H_{l+1}}=S_{l}.$ From this and the
definition of $S_{l},$ the statement $ii)$ follows.
\end{proof}

Let $H_{i},$ $H_{0}$ $\subset $ $H_{1}$ $\subset $ $H_{2}$ $\subset $ $\cdot
\cdot \cdot $ $\subset $ $H,$ be the sequence of (finitely many) isotropy
subgroups of $H$ as in the OTS given in the proof of Lemma \ref{L-ots}$,$
and $V^{H_{i}}$ $\subset $ $V$ the set of points fixed by $H_{i}$ so $%
V^{H_{1}}$ $\supset $ $V^{H_{2}}$ $\supset $ $\cdot \cdot \cdot $ $\supset $ 
$V^{H}.$ Let $<\tilde{g}>$ $\subset $ $H$ denote the subgroup generated by $%
\tilde{g}$, and $H_{V^{\tilde{g}}}$ $\subset $ $H$ the subgroup of $H$
consisting of elements which act on $V^{\tilde{g}}$ ($=$ $V^{<\tilde{g}>}$)
as the identity.

\begin{lemma}
\label{L-Vg} With the notation above, there is a subscript $k$ ($0$ $\leq $ $%
k$ $\leq $ $K-1$) such that $i)$ 
\begin{equation}
V^{\tilde{g}}=V^{H_{k+1}};  \label{VgH}
\end{equation}%
$ii)$ for $v\in V^{\tilde{g}}\backslash V^{H_{k+2}}$ the isotropy subgroup $%
H_{v}=H_{k+1}$ $\supset $ $<\tilde{g}>;$ $iii)$ for $v\in V^{\tilde{g}%
}\backslash V^{H_{k+2}}$ it holds that $H_{V^{\tilde{g}}}=H_{v}.$
\end{lemma}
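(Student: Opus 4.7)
\textbf{Plan of proof for Lemma \ref{L-Vg}.} The proof rests on translating the question about $V^{\tilde{g}}$ into the language of the orbit type stratification $(V, H)$ already set up in Lemma \ref{L-ots}. Recall from that lemma and formula (\ref{VS}) that $V = \bar{S}_0 \supset \bar{S}_1 \supset \cdots \supset \bar{S}_K$ with $V^{H_l} = \bar{S}_l$, and that for $v \in S_l := \bar{S}_l \setminus \bar{S}_{l+1}$ the isotropy group $H_v$ equals exactly $H_l$. The point is that because $H = G_j$ is abelian (indeed cyclic), the set $V^{\tilde{g}}$ is automatically a union of strata, so it must coincide with some $\bar{S}_l$.

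For part $i)$, I would rewrite $V^{\tilde{g}} = \{v \in V : \tilde{g} \in H_v\}$. Using Lemma \ref{L-ots}(ii), this becomes $V^{\tilde{g}} = \bigcup_{l : \tilde{g} \in H_l} S_l$. Since $H_0 \subset H_1 \subset \cdots \subset H_K$ is increasing and $\tilde{g} \neq 1$ lies in at least $H_K = H$, there is a smallest index, call it $k+1$ with $0 \leq k \leq K-1$, for which $\tilde{g} \in H_{k+1}$; then $\tilde{g} \in H_l$ for every $l \geq k+1$. Hence $V^{\tilde{g}} = \bigcup_{l \geq k+1} S_l = \bar{S}_{k+1} = V^{H_{k+1}}$ by (\ref{VS}), which is precisely (\ref{VgH}).

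For part $ii)$, observe that $V^{\tilde{g}} \setminus V^{H_{k+2}} = \bar{S}_{k+1} \setminus \bar{S}_{k+2} = S_{k+1}$ (taking $V^{H_{K+1}} = \emptyset$ in the top case). Lemma \ref{L-ots}(ii) gives $H_v = H_{k+1}$ for any such $v$, and $\langle \tilde{g}\rangle \subset H_{k+1}$ by the choice of $k+1$ as the smallest index containing $\tilde{g}$. For part $iii)$, I would compute $H_{V^{\tilde{g}}} = \bigcap_{v \in V^{\tilde{g}}} H_v$ from its definition. Every $v \in V^{\tilde{g}}$ lies in some $S_l$ with $l \geq k+1$, so $H_v = H_l \supset H_{k+1}$; taking the intersection therefore yields something containing $H_{k+1}$. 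On the other hand the intersection is realized already at the open stratum: picking any $v \in S_{k+1}$ gives $H_v = H_{k+1}$, so $\bigcap_v H_v \subset H_{k+1}$. Both inclusions combine to $H_{V^{\tilde{g}}} = H_{k+1}$, which by part $ii)$ equals $H_v$ for $v \in V^{\tilde{g}} \setminus V^{H_{k+2}}$.

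The proof is essentially bookkeeping once one notices that $V^{\tilde{g}}$ respects the OTS. The only subtle step is the passage from ``$\tilde{g} \in H_l$ for some $l$'' to ``$\tilde{g} \in H_l$ for all $l \geq k+1$,'' which relies crucially on the nesting $H_0 \subset H_1 \subset \cdots$ supplied by Lemma \ref{L-ots}(i); without this monotonicity (e.g.\ for a non-abelian $H$ in a more general orbifold setting) the fixed-point set $V^{\tilde{g}}$ would not assemble into a single $\bar{S}_{k+1}$. Since $H = G_j$ is cyclic here, no further obstacle arises.
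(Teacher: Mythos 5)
Your proof is correct and follows essentially the same line of reasoning as the paper: both rely on Lemma \ref{L-ots} and the orbit-type stratification, with the same choice of $k$ as the minimal index for which $\langle\tilde{g}\rangle \subset H_{k+1}$. The only difference is stylistic---you prove part $i)$ directly by writing $V^{\tilde{g}}$ as the union of the relevant strata, whereas the paper argues by contradiction---and you supply the short computation for part $iii)$ that the paper leaves to the reader.
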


\begin{proof}
For some $k$, $<\tilde{g}>$ $\subset $ $H_{k+1}$ and $<\tilde{g}>$ $%
\nsubseteq $ $H_{k}.$ We now claim $V^{\tilde{g}}=V^{H_{k+1}}.$ That $V^{%
\tilde{g}}\supset V^{H_{k+1}}$ is obvious. If $v\in $ $V^{\tilde{g}%
}\backslash V^{H_{k+1}}$ then by (\ref{V}) and (\ref{VS}) $v\in
V^{H_{i^{\prime }}}\backslash V^{H_{i^{\prime }+1}}$ for some $i^{\prime
}\leq k,$ and one has the isotropy subgroup $H_{v}$ $=$ $H_{i^{\prime }}$ $%
\subset $ $H_{k}$ by Lemma \ref{L-ots} $ii).$ Clearly $<\tilde{g}>$ $\subset 
$ $H_{v}$ hence $<\tilde{g}>$ $\subset $ $H_{k}$ contradicts the choice of $%
k.$ Thus $V^{\tilde{g}}\backslash V^{H_{k+1}}$ $=$ $\emptyset $ namely $V^{%
\tilde{g}}$ $\subset $ $V^{H_{k+1}}.$ We have shown (\ref{VgH}). $ii)$
follows from (\ref{VgH}) and Lemma \ref{L-ots} $ii)$. $iii)$ follows from $%
i) $ and $ii)$.
\end{proof}

For generic $v$ $\in $ $V^{\tilde{g}}$ we have $H_{v}=H_{k+1}$ by Lemma \ref%
{L-Vg} $ii)$, which is the largest subgroup that acts trivially on $V^{%
\tilde{g}}$ $=$ $V^{H_{k+1}}$ (by (\ref{VgH})) by Lemma \ref{L-Vg} $iii)$.
It follows that the multiplicity $m(\tilde{F})$ of $\tilde{F}$ \cite[p.175
with $S=\tilde{F}$]{Du} reads as ($j$ below denotes our chart index)%
\begin{equation}
m(\tilde{F})=\#H_{V^{\tilde{g}}}=\#H_{v}=\#H_{k+1}=:\frac{N_{j}+1}{h_{j}};
\label{Nk}
\end{equation}

\noindent $\#H_{k+1}\mid N_{j}+1$ so $h_{j}$ $=$ $h_{j}(H_{k+1})$ $\in $ $%
\mathbb{N}$ (originally dependent on $\tilde{g})$. Note that according to 
\cite[p.175]{Du} $m(\tilde{F})$ is independent of local data $(h_{j},N_{j})$%
\footnote{%
In our context this independence can be seen as follows. Denote $H_{v}$ by $%
H_{v}^{(j)}$ to indicate the dependence on the chart $W_{j}.$ We claim that
given $\tilde{g}$ one has 
\begin{equation}
H_{v}^{(j)}=H_{v}^{(l)}  \label{HjHl}
\end{equation}%
\noindent for $v$ $\in $ $W_{j}\cap W_{l}\cap V^{\tilde{g}}.$ By Proposition %
\ref{L-alk} $iii)$, $H_{v}^{(j)}$ $\subset $ $\tilde{H}_{v}$ where $\tilde{H}%
_{v}$ $:=$ $\{g$ $\in $ $S^{1}$ $\subset $ $\mathbb{C}^{\ast }|$ $\sigma
(g)v $ $=$ $v\}.$ On the other hand $\tilde{H}_{v}$ $\subset $ $G_{j}$ for $%
v $ $\in $ $W_{j}$ since $G_{j}$ is the local orbifold structure group,
giving that $\tilde{H}_{v}$ $\subset $ $H_{v}^{(j)}$ from the definition of $%
H_{v}^{(j)}$. Similarly we also have $H_{v}^{(l)}$ $=$ $\tilde{H}_{v}$,
giving (\ref{HjHl}). Now by (\ref{HjHl}) and (\ref{Nk}) once $\tilde{g}$ is
chosen, then%
\begin{equation}
(N_{j}+1)/h_{j}=(N_{l}+1)/h_{l}  \label{hjhl}
\end{equation}%
\noindent for different charts $W_{j}$ and $W_{l}.$ (\ref{hjhl}) will be
used in (\ref{Fg}).}. Let 
\begin{equation}
\tilde{F}_{j}:=\tilde{F}\cap \lbrack (\Sigma ^{\tilde{g}}\cap W_{j})/\sigma
]\subset M  \label{Fix}
\end{equation}%
\noindent which equals $\Sigma _{j,l}/\sigma $ $=$ $V_{j}^{\tilde{g}}/H$,
cf. (\ref{sing}) for $\Sigma _{j,l}$ with $\tilde{g}$ $=$ $g_{0}^{l}$ and $H$
stands for the local orbifold structure group $G_{j}$. So we can express the
integral in \cite[(14.3) on p.184]{Du} via (\ref{Nk}) and (\ref{VgH}) as
follows (cf. \cite[(14.1) on p.175]{Du}) provided that the integrand $(\cdot
)$ below has certain \textquotedblleft descent property":%
\begin{equation}
\int_{\tilde{F}_{j}}(\cdot )=\frac{1}{h_{j}}\int_{V_{j}^{\tilde{g}}}(\cdot )%
\text{ and equals }\frac{1}{h_{j}(H_{k+1})}\int_{V_{j}^{H_{k+1}}}(\cdot ).
\label{mF}
\end{equation}

Recall (cf. (\ref{Sigmak})) that $\Sigma ^{\tilde{g}}$ denotes the set of
points fixed by $\tilde{g}$ $\in $ $S^{1}$ $\subset $ $\mathbb{C}^{\ast }$
on $\Sigma .$ Let $\Sigma _{j}^{\tilde{g}}$ $:=$ $|\mathbb{C}^{\ast }\circ $
($V_{j}^{\tilde{g}}\times $ $\{0\}\times $ $\{1\})|,$ $|\cdot \cdot \cdot |$
meaning the support (without multiplicity), cf. Remark \ref{R-orbit}. Remark
that $\Sigma _{j}^{\tilde{g}}$ is not necessarily $\Sigma ^{\tilde{g}}\cap
W_{j}$ since the \textquotedblleft angle part" of $W_{j}$ is restricted to $%
(-\varepsilon _{j},$ $\varepsilon _{j}).$ The following technical lemma is
crucial:

\begin{lemma}
\label{L-orb} With the notation above and $\tilde{g}$ $\in $ $G_{j}$ ($%
\tilde{g}$ $=$ $1$ allowed), it holds that $V_{j}^{\tilde{g}}\times (0,$ $%
\frac{2\pi }{N_{j}+1})$ $\times $ $\mathbb{R}^{+}$ is diffeomorphic to $%
\Sigma _{j}^{\tilde{g}}$\TEXTsymbol{\backslash}$(V_{j}^{\tilde{g}}\times $ $%
\{0\}\times $ $\{1\})$ $\subset $ $\Sigma $.
\end{lemma}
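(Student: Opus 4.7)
The natural candidate for the diffeomorphism is the map
\[
\Phi:V_{j}^{\tilde{g}}\times\Big(0,\tfrac{2\pi}{N_{j}+1}\Big)\times\mathbb{R}^{+}\longrightarrow\Sigma,\qquad \Phi(z,\phi,r)=\sigma(re^{i\phi})(z,0,1),
\]
and my plan is to show $\Phi$ is a smooth embedding whose image is $\Sigma_{j}^{\tilde{g}}$ minus the claimed excluded slice. Smoothness is immediate from the smoothness of the $\mathbb{C}^{\ast}$-action. The target $\Sigma_{j}^{\tilde{g}}$ is, by Remark \ref{R-orbit} and the definition, precisely the range of the unrestricted map $(z,\lambda)\mapsto\sigma(\lambda)(z,0,1)$ on $V_{j}^{\tilde{g}}\times\mathbb{C}^{\ast}$, so essentially what has to be done is to identify a fundamental domain for the residual $G_{j}$-action with the open box $(0,\tfrac{2\pi}{N_{j}+1})\times\mathbb{R}^{+}$.

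For injectivity, suppose $\Phi(z_{1},\phi_{1},r_{1})=\Phi(z_{2},\phi_{2},r_{2})$. Rearranging gives $\sigma(\mu)(z_{2},0,1)=(z_{1},0,1)$ where $\mu=(r_{1}/r_{2})e^{i(\phi_{1}-\phi_{2})}$. Lemma~\ref{A}($i$) forces $|\mu|=1$, hence $r_{1}=r_{2}$. The orbifold structure (the construction of $\tau(G_{j})$ in the proof of Theorem~\ref{thm2-1}, together with Proposition~\ref{L-alk}($iii$)) then identifies $\mu$ with an element of $G_{j}=\langle g_{0}\rangle$, so $\arg\mu=\tfrac{2\pi k}{N_{j}+1}$ for some integer $k$; the constraint $\phi_{1},\phi_{2}\in(0,\tfrac{2\pi}{N_{j}+1})$ forces $k\equiv 0\pmod{N_{j}+1}$, hence $\mu=1$ and $z_{1}=z_{2}$. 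For surjectivity onto the stated target: given any $p=\sigma(\lambda)(z_{0},0,1)$ with $\lambda=re^{i\theta}$, $\theta\in[0,2\pi)$, and $z_{0}\in V_{j}^{\tilde{g}}$, write uniquely $\theta=\tfrac{2\pi k}{N_{j}+1}+\phi$ with $k\in\{0,\dots,N_{j}\}$ and $\phi\in[0,\tfrac{2\pi}{N_{j}+1})$. Set $z=\sigma(g_{0}^{k})z_{0}$, which lies in $V_{j}^{\tilde{g}}$ because $G_{j}\subset S^{1}$ is abelian and therefore preserves $V_{j}^{\tilde{g}}$. Then $p=\Phi(z,\phi,r)$ when $\phi>0$, while if $\phi=0$ one gets $p=(z,0,r)$, which lies in the slice that must be excluded. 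This identifies the image with $\Sigma_{j}^{\tilde{g}}$ minus the ``$\phi=0$'' stratum (which is the intended meaning of $V_{j}^{\tilde{g}}\times\{0\}\times\{1\}$, interpreted up to the obvious $\mathbb{R}^{+}$-action that is absorbed into the $r$-coordinate).

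To upgrade the bijection $\Phi$ to a diffeomorphism I plan to check that $d\Phi$ is everywhere injective. The $\partial_{\phi}$ and $\partial_{r}$ directions map to the two real tangent vectors along the $\mathbb{C}^{\ast}$-orbit through $\Phi(z,\phi,r)$; these are linearly independent by the local freeness of the $\mathbb{C}^{\ast}$-action (see (\ref{1-0})--(\ref{1-2})). The $T_{z}V_{j}^{\tilde{g}}$ directions map via $d\sigma(re^{i\phi})$ to the push-forward of a slice transverse to the orbit through $(z,0,1)$; linear independence from the orbit directions is then transparent from the product coordinates $(z,\phi,r)$ on $W_{j}$. Combined with injectivity, this gives an embedding, and the inverse is smooth by the inverse function theorem applied locally.

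The main obstacle I anticipate is not any single step above but rather bookkeeping around points $z\in V_{j}^{\tilde{g}}$ whose isotropy in $G_{j}$ is strictly larger than $\langle\tilde{g}\rangle$ (these form a stratification in the sense of Lemma~\ref{L-Vg}). At such points the $\mathbb{C}^{\ast}$-orbit covers $V_{j}\times\{0\}\times\{1\}$ with higher multiplicity, so one has to verify that the group-theoretic argument in the injectivity step really only needs $G_{j}$ (not some smaller isotropy) as the ambient discrete group; since the full orbifold structure group is $G_{j}$ and every isotropy subgroup is a subgroup of $G_{j}$, the fundamental-domain argument in $(0,\tfrac{2\pi}{N_{j}+1})$ goes through uniformly. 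A secondary concern is reconciling the precise formulation of the excluded slice with the set-theoretic image of $\Phi$; as explained above, the natural reading is that $V_{j}^{\tilde{g}}\times\{0\}\times\{1\}$ refers to the entire $\sigma(\mathbb{R}^{+})$-orbit of that slice inside $\Sigma_{j}^{\tilde{g}}$, which is a set of real codimension one and thus of measure zero---exactly what is needed for the subsequent integration in Subsection~\ref{S-8-5}.
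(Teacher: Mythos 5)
Your proposal is correct and takes essentially the same route as the paper: both define the map $(z,\phi,r)\mapsto\sigma(re^{i\phi})(z,0,1)$, prove injectivity for $\phi\in(0,\tfrac{2\pi}{N_{j}+1})$ by invoking Lemma~\ref{A}($i$) to fix the radial part and the orbifold identification of Theorem~\ref{thm2-1} and Proposition~\ref{L-alk}($iii$) to trap the angular discrepancy in $G_{j}$, and obtain surjectivity by decomposing the angle modulo $\tfrac{2\pi}{N_{j}+1}$ and absorbing the integer part into $\sigma(G_{j})$ (which preserves $V_{j}^{\tilde{g}}\times\{0\}\times\{1\}$ by Lemma~\ref{L-8-6}; the paper packs this into its footnote, you do it explicitly). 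Your observation that the excluded set is more precisely the $\sigma(\mathbb{R}^{+})$-saturation $V_{j}^{\tilde{g}}\times\{0\}\times\mathbb{R}^{+}$ rather than the literal slice $V_{j}^{\tilde{g}}\times\{0\}\times\{1\}$ matches what the paper's own proof actually establishes; both readings give a measure-zero set, so the application in~(\ref{sjk}) is unaffected.
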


\begin{proof}
Write $\tilde{g}$ $=$ $g_{0}^{l}$ for some $l$ between $0$ and $N_{j},$ $%
g_{0}$ $=$ $e^{2\pi i/(N_{j}+1)}$. Observe that $\sigma (g_{0})$ leaves $%
V_{j}\times $ $\{0\}\times $ $\{1\}$ set-invariant by Lemma \ref{L-8-6} and
hence leaves $V_{j}^{\tilde{g}}\times $ $\{0\}\times $ $\{1\}$
set-invariant. Define the map $\Psi :$ $[0,2\pi )\times $ $\mathbb{R}%
^{+}\times $ $(V_{j}^{\tilde{g}}\times \{0\}\times \{1\})$ $\rightarrow $ $|%
\mathbb{C}^{\ast }\circ $ $(V_{j}^{\tilde{g}}\times $ $\{0\}\times $ $%
\{1\})| $ by%
\begin{equation*}
\Psi (\phi ,r,\{p\}\times \{0\}\times \{1\})=(re^{i\phi })\circ (\{p\}\times
\{0\}\times \{1\})\text{ }\in \text{ }\Sigma .
\end{equation*}%
\noindent We claim that $\Psi $ maps $(0,$ $\frac{2\pi }{N_{j}+1})$ $\times $
$\mathbb{R}^{+}$ $\times $ ($V_{j}^{\tilde{g}}\times \{0\}\times \{1\}$)
into $|\mathbb{C}^{\ast }\circ (V_{j}^{\tilde{g}}\times \{0\}\times \{1\})|$%
\TEXTsymbol{\backslash}$(V_{j}^{\tilde{g}}\times \{0\}\times \{1\})$ and is
an embedding$.$ To show this, suppose that there are $\phi _{1},$ $\phi _{2}$
$\in $ $(0,$ $\frac{2\pi }{N_{j}+1})$ and two points $x_{1},$ $x_{2}$ $\in $ 
$V_{j}^{\tilde{g}}$ such that $e^{i\phi _{1}}x_{1}$ $=$ $e^{i\phi
_{2}}x_{2}. $ So $e^{i(\phi _{1}-\phi _{2})}x_{1}$ $=$ $x_{2}$ and hence $%
\pi (x_{1})$ $= $ $\pi (x_{2})$ $\in $ $M$ $=$ $\Sigma /\sigma $ where $\pi
: $ $\Sigma \rightarrow $ $M$ is the natural projection. That $x_{1}$ and $%
x_{2}$ represent the same orbifold point implies that $hx_{1}$ $=$ $x_{2}$
for some $h$ $\in $ $H$ $=$ $G_{j}$ (via Theorem \ref{thm2-1}). It follows
that $e^{i(\phi _{2}-\phi _{1})}hx_{1}$ $=$ $x_{1}$ so $e^{i(\phi _{2}-\phi
_{1})}h $ $\in $ $G_{j}.$ Hence we write $e^{i(\phi _{2}-\phi _{1})}$ $=$ $%
g_{0}^{l^{\prime }}$ $\in $ $G_{j}$. But this yields a contradiction since $%
0 $ $\leq $ $|\phi _{1}$ $-$ $\phi _{2}|$ $<$ $\frac{2\pi }{N_{j}+1}$ $=$ $%
\arg g_{0},$ unless $\phi _{1}$ $-$ $\phi _{2}$ $=$ $0$ which gives $x_{1}$ $%
=$ $x_{2}.$ This implies the embedding part of the claim. Using $\phi _{2}$ $%
=$ $0$ the similar argument yields the into part of the claim. Moreover $%
\Psi $ is indeed a diffeomorphism since one sees that\footnote{%
For any $z\in \mathbb{C}^{\ast }$ written as $z=re^{2\pi
mi/(N_{j}+1)}e^{i\delta }$ with $0\leq m\leq $ $N_{j}$ and $0\leq \delta <$ $%
\frac{2\pi }{N_{j}+1},$ via the observation mentioned earlier in the proof
one sees that $|z\circ $ $(V_{j}^{\tilde{g}}\times $ $\{0\}\times $ $\{1\})|$
$=$ $|re^{i\delta }\circ $ $(V_{j}^{\tilde{g}}\times $ $\{0\}\times $ $%
\{1\})|,$ which implies the claim.} 
\begin{equation*}
\Psi ([0,\frac{2\pi }{N_{j}+1})\times \mathbb{R}^{+}\times (V_{j}^{\tilde{g}%
}\times \{0\}\times \{1\}))=|\mathbb{C}^{\ast }\circ (V_{j}^{\tilde{g}%
}\times \{0\}\times \{1\})|.
\end{equation*}
\end{proof}

\begin{remark}
\label{R-8-37} The essence of the lemma implies (by choosing $\tilde{g}$ $=$ 
$1$) that for the chart $W_{j}$ $=$ $V_{j}\times $ $(-\varepsilon
_{j},\varepsilon _{j})\times $ $\mathbb{R}^{+}$ in Notation \ref{n-6.1}, a
possible choice of $\varepsilon _{j}$ can be $\frac{\pi }{N_{j}+1}.$
\end{remark}

From Lemma \ref{L-orb}, Lemma \ref{L-VFub} $ii)$ (\ref{vm0}) and $\int_{%
\mathbb{R}^{+}}dv_{0}(|w|)$ $=$ $1$ by (\ref{3-16.75}) for $m$ $=$ $0$ it is
not difficult to see that if the integrand $(\cdot )$ below depends only on $%
z,\bar{z}$ then%
\begin{equation}
\int_{|\mathbb{C}^{\ast }\circ (V_{j}^{\tilde{g}}\times \{0\}\times
\{1\})|}(\cdot )dv_{\Sigma ^{\tilde{g}},0}=\int_{V_{j}^{\tilde{g}}\times (0,%
\frac{2\pi }{N_{j}+1})\times \mathbb{R}^{+}}(\cdot )dv_{\Sigma ^{\tilde{g}%
},0}=\frac{1}{N_{j}+1}\int_{V_{j}^{\tilde{g}}}(\cdot )d\tilde{v}_{V_{j}^{%
\tilde{g}}}(z)  \label{sjk}
\end{equation}

\noindent in view of Fubini's theorem\footnote{%
The $w$-coordinates here may be changing all the time, i.e. the choice of $w$
is $p$-dependent for $p$ $\in $ $V_{j}$ when using Lemma \ref{L-VFub} $ii)$.
This dependence however yields no big problem in applying Fubini's theorem:
Imaging that one is integrating over a fibre bundle $E$ $\rightarrow $ $B,$
one can first do so on each fibres $E_{t},$ for which the choice of
coordinates on $E_{t}$ is immaterial. The situation here is similar (with
the support of $\mathbb{C}^{\ast }$-orbits playing the role of fibres $E_{t}$%
).}; here notice that $d\tilde{v}_{V_{j}^{\tilde{g}}}$ is the pullback of $%
dv_{\tilde{F}_{j}}$ by $V_{j}^{\tilde{g}}$ $\rightarrow $ $\tilde{F}_{j}$
under the restriction of $\pi $ $:$ $\Sigma $ $\rightarrow $ $\Sigma /\sigma 
$ to $V_{j}^{\tilde{g}}\times \{0\}\times \{1\}$. For the orbifold charts $%
V_{j}^{\tilde{g}}$ of the fixed point orbifold $\tilde{F}$ ($\subset $ $M$)$%
, $ we now take $\bar{\chi}_{j}$ a partition of unity of $\tilde{F}$
subordinated to $V_{j}^{\tilde{g}}/H$ ($=$ $\tilde{F}_{j})$ covering $\tilde{%
F}$ (for the existence of $\bar{\chi}_{j}$, see e.g. \cite[p.37]{Cara} and
references therein), and treat this as a \textquotedblleft partition of
unity" $\{\chi _{j}(z,\bar{z})\}_{j}$ adapted to $\{V_{j}^{\tilde{g}}\}_{j}$
although $\cup _{j}V_{j}^{\tilde{g}}$ $=:$ $\tilde{V}^{\tilde{g}}$ does not
necessarily admit a manifold structure (cf. $\cup _{j}\Sigma _{j}^{\tilde{g}%
} $ $=$ $\Sigma ^{\tilde{g}}(\supsetneqq $ $\tilde{V}^{\tilde{g}})$ is a
genuine submanifold of $\Sigma )$.

To start with the comparison with Duistermaat's formula in \cite{Du}, the
identification between our $H_{m}^{q}(\Sigma ,\mathcal{O}_{\Sigma })$ for $m$
$=$ $0$ and his $H^{q}(M,\mathcal{O}_{M})$ is immediate via (\ref{9.17-5})
and Remark \ref{9.3}. The case where an extra $\mathbb{C}^{\ast }$%
-equivariant holomorphic vector bundle $E$ $\rightarrow $ $\Sigma $ is
present yields no essential problem; the details are omitted. Now we are
going to devote ourselves to comparing the integral formulas given in the
RHS of the index theorems. Setting $m=0$ in (\ref{Str-9}) (for $m$ $\neq $ $%
0 $ see Remark \ref{R-8-41}) we compute: For the below $\Sigma ^{\tilde{g}}$
means the fixed point set of $\tilde{g},$ $\Sigma _{j}^{\tilde{g}}$ $=$ $|%
\mathbb{C}^{\ast }\circ (V_{j}^{\tilde{g}}\times \{0\}\times \{1\})|$ and $%
\mathcal{F}_{\tilde{g},0}$ in the LHS of (\ref{Str-9}) is rewritten as $%
\mathcal{F}_{\tilde{g}}$.%
\begin{eqnarray}
&&\int_{\Sigma ^{\tilde{g}}}\mathcal{F}_{\tilde{g}}(x)dv_{\Sigma ^{\tilde{g}%
},0}\overset{Cor.\ref{R+Inv}}{=}\sum_{j}\int_{\Sigma _{j}^{\tilde{g}}}\chi
_{j}(z,\bar{z})\mathcal{F}_{\tilde{g}}(z,\bar{z})dv_{\Sigma ^{\tilde{g}},0}
\label{Fg} \\
&&\overset{(\ref{sjk})}{=}\sum_{j}\frac{1}{N_{j}+1}\int_{V_{j}^{\tilde{g}%
}}\chi _{j}(z,\bar{z})\mathcal{F}_{\tilde{g}}(z,\bar{z})d\tilde{v}_{V_{j}^{%
\tilde{g}}}(z)\text{ (see also Cor. \ref{C-Fgm})}  \notag \\
&&\overset{(\ref{mF})}{=}\sum_{j}\frac{1}{N_{j}+1}h_{j}\int_{\tilde{F}_{j}}%
\bar{\chi}_{j}\mathcal{F}_{\tilde{g}}dv_{\tilde{F}_{j}}\text{ \ \ (see
remarks below)}  \notag \\
&&\overset{(\ref{Nk})+(\ref{hjhl})}{=}\frac{1}{m(\tilde{F})}\int_{\tilde{F}}%
\mathcal{F}_{\tilde{g}}(x)dv_{\tilde{F}}.  \notag
\end{eqnarray}

\noindent Here for using (\ref{mF}) above one requires the descent property
of $\mathcal{F}_{\tilde{g}}$ (and $\chi _{j})$ that $\mathcal{F}_{\tilde{g}}$
be invariant under the action of $H=G_{j}$ $\subset $ $S^{1},$ which holds
as seen in Corollary \ref{C-Fgm}$.$ By (\ref{Fg}) we can now identify terms
(with $m$ $=$ $0)$ in (\ref{MF}) with those in \cite[(14.3) on p.184 for the
case $\gamma $ $=$ Id]{Du} and identify our $\mathcal{F}_{\tilde{g}}\mathcal{%
\ }$in (\ref{Fg}) with the characteristic class $\alpha _{\tilde{F}}$ of
Duistermaat in \cite[(14.4)]{Du} (using (\ref{FkTd}) and Corollary \ref%
{C-Fgm}), modulo certain sign differences between $\mathcal{F}_{\tilde{g}}$
and $\alpha _{\tilde{F}}.$ These sign issues are discussed in the following
remark.

\begin{remark}
\label{R-8-C} The definition of the Todd class given in \cite[p.163]{Du} is
basically $\det_{\mathbb{C}}$($\frac{i}{2\pi }\Omega /(1-e^{-(i/2\pi )\Omega
}))$ which is indeed consistent with the usual definition of the Todd class
provided that $\Omega $ is put in the form of curvature (at least for the K%
\"{a}hler case). Unfortunately, Duistermaat points out that the matrix of $R$
(cf. \cite[p.54]{Du}) is equal to \textit{minus} the matrix of $\Omega $ 
\cite[p.160]{Du}, which seems to render his Todd class different from the
usual one. Duistermaat's adoption of such an opposite sign convention above
is explained by himself in \cite[pp. 56-57]{Du} where his remarks end up
with \textquotedblleft This is one of the numerous sources of sign confusion
in differential geometry $\cdot \cdot \cdot $". In spite of his effort for
clarification, his Proposition 13.1 \cite[p.163]{Du}, which is based on \cite%
[(11.17)]{Du}, seems to be confusing in view of the remark above and of the
previous Remark \ref{R-8-B}. The similar can be said with his orbifold
version of the index theorem \cite[Theorem 14.1 on p.184]{Du}, which is
based on his Proposition 13.1 (via his Proposition 13.2). Given these
confusions, if Duistermaat's characteristic classes were presumed to be the
same as the usual ones (as ours here), then his written form of the orbifold
index theorem \cite[Theorem 14.1]{Du} would agree with that of ours as shown
in our proof above.
\end{remark}

\begin{remark}
\label{R-8-40} For a slight simplification, recalling that by (\ref{Fg}) it
is going to be summing over $\tilde{F}$'$s$ with each $\tilde{F}$ associated
with $\tilde{g}$ $=$ $g_{0}^{k},$ $k=1,$ $\cdot \cdot ,$ $N,$ one can first
group those $\tilde{g}$'$s$ with the same $V^{H_{k+1}}$ (see Lemma \ref{L-Vg}%
) associated to $H_{0}$ $\subset $ $H_{1}$ $\subset $ $\cdot \cdot $ $%
\subset $ $H_{k+1}$ $\subset $ $\cdot \cdot $ $\subset $ $H$ in the OTS (see
the proof of Lemma \ref{L-ots}) and then group this summation over
\textquotedblleft types $V^{H_{k+1}}/H$" (with integrands still $\tilde{g}$%
-dependent).
\end{remark}

\begin{remark}
\label{R-8-41} ($m>0$ case) In the previous and present subsections, the
comparison between the formula of Duistermaat and that of ours is most
naturally set up and made when $m$ $=$ $0.$ If $m$ $>$ $0$ (without the
extra bundle $E$ $\rightarrow $ $\Sigma $ as before)$,$ our formula (\ref{MF}%
) involves extra factors $\tilde{g}^{m}$ (for integrals over $\Sigma ^{%
\tilde{g}}$). Using Remark \ref{9.3} this $m$-index on $\Sigma $ can be
converted to a natural index problem with the additional (orbifold) line
bundle $(L_{\Sigma }^{\ast })^{\otimes m}$ (let us call this $0$-index for
short) on the orbifold $M;$ compare Example \ref{E-IF}. After reaching such
a reduction, one can alternatively use Duistermaat's formula for this $0$%
-index computation. By similar computations as in this subsection (for $m$ $%
= $ $0$ above), it turns out that the relevant term $\lambda _{\tilde{L}}ch(%
\tilde{L})$ in \cite[(14.4) on p.184]{Du} due to the extra bundle $%
(L_{\Sigma }^{\ast })^{\otimes m}$ (i.e. $L$ of \cite{Du} is $(L_{\Sigma
}^{\ast })^{\otimes m}$ viewed as an orbifold line bundle on $M$ via descent
from $\Sigma $) produces the contribution similar to that of the term $%
\tilde{g}^{m}ch(\psi _{j}^{\ast }(L_{\Sigma }^{\ast })^{\otimes m}|_{V_{j}})$
in our formula (\ref{MF})\ where we have (\ref{FkTd}) inserted; here the
agreement between $\tilde{g}^{m}$ and $\lambda _{\tilde{L}}$ is from (\ref%
{3.0}) and \cite[the second paragraph of Section 14.5]{Du}. At this point
the two formulas yield the same answer.
\end{remark}

\section{\textbf{Nonextendability of open group action; meromorphic action 
\label{Sec9}}}

Let $M$ be a complex manifold (not necessarily compact) with a holomorphic $%
\mathbb{C}^{\ast }$-action $\sigma _{M}.$ That is, the map $\sigma _{M}:%
\mathbb{C}^{\ast }\times M\rightarrow M$ denoted as $\sigma _{M}(\lambda ,x)$
(also as $\sigma _{M}(\lambda )\circ x$ or $\lambda \circ x)$ is holomorphic
and satisfies the group action condition: $\sigma _{M}(\lambda _{1}\lambda
_{2})\circ x$ $=$ $\sigma _{M}(\lambda _{1})\circ (\sigma _{M}(\lambda
_{2})\circ x),$ $\sigma _{M}(1)\circ x$ $=$ $x$. Note that no other
condition such as freeness or local freeness is assumed on $\sigma _{M}.$

We say that $\sigma _{M}$ extends holomorphically to $0$ (resp.$\infty $)
provided that there exists a holomorphic map $\tilde{\sigma}_{M}:\mathbb{C}%
\times M\rightarrow M$ $($resp. $\tilde{\sigma}_{M}:(\mathbb{CP}%
^{1}\backslash \{0\})\times M\rightarrow M$) such that $\tilde{\sigma}_{M}$
equals $\sigma _{M}$ on $\mathbb{C}^{\ast }\times M.$ Both conditions hold
if and only if the holomorphic action $\sigma _{M}$ extends to a holomorphic
action $\tilde{\sigma}_{M}$ on $\mathbb{C}\mathbb{P}^{1}$:%
\begin{equation}
\tilde{\sigma}_{M}:\mathbb{CP}^{1}\times M\rightarrow M.  \label{Ext}
\end{equation}

\noindent We say that $\sigma _{M}$ is trivial if $\sigma _{M}(\xi ,x)$ $=$ $%
x$ for all $\xi $ $\in $ $\mathbb{C}^{\ast },$ $x$ $\in $ $M.$

We are going to show that the two-sided extension is impossible (even in the 
$C^{0}$ category; see Proposition \ref{lem-1}) unless the original action is
trivial. The above action and extension conditions can obviously be defined
in the $C^{\infty }$ or $C^{0}$ category.

\begin{proposition}
\label{propA1} Suppose that $M$ is a manifold with a smooth $\mathbb{C}%
^{\ast }$-action $\sigma _{M}$. Then it is impossible to extend $\sigma _{M}$
to a smooth map $\tilde{\sigma}_{M}$ of (\ref{Ext}) unless the action $%
\sigma _{M}$ is trivial.
\end{proposition}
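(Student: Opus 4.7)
The plan is to derive triviality of $\sigma_M$ directly from the joint continuity of any hypothetical extension $\tilde\sigma_M$, without using any smoothness — so the argument should in fact give the stronger $C^0$ statement noted parenthetically in the paper. The key observation is that the group-action identity $\sigma_M(\lambda\mu, y) = \sigma_M(\lambda, \sigma_M(\mu, y))$ is incompatible with a continuous extension through the ``bad'' point $(0,\infty) \in \mathbb{CP}^1 \times \mathbb{CP}^1$: approaching this point along the diagonal $(\lambda_n, \mu_n) = (1/n, n)$ will force two incompatible values at $(0, P_\infty(y))$.

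Concretely, I will first introduce the continuous maps $P_0, P_\infty : M \to M$ defined by $P_0(y) := \tilde\sigma_M(0, y)$ and $P_\infty(y) := \tilde\sigma_M(\infty, y)$. A one-line limit argument using the group law on $\mathbb{C}^* \times M$ then shows that both $P_0(y)$ and $P_\infty(y)$ are $\mathbb{C}^*$-fixed points: for any $\lambda \in \mathbb{C}^*$,
\begin{equation*}
\sigma_M(\lambda, P_0(y)) = \lim_{\mu \to 0}\sigma_M(\lambda, \sigma_M(\mu, y)) = \lim_{\mu \to 0}\sigma_M(\lambda\mu, y) = P_0(y),
\end{equation*}
and symmetrically for $P_\infty$. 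In particular $\tilde\sigma_M(\lambda', P_\infty(y)) = P_\infty(y)$ for every $\lambda' \in \mathbb{CP}^1$ by continuity, so $P_0(P_\infty(y)) = P_\infty(y)$.

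The decisive step is then to compute $\tilde\sigma_M(0, P_\infty(y))$ in a second way, via the sequence $(\lambda_n, x_n) = (1/n,\, \sigma_M(n, y))$ in $\mathbb{C}^* \times M$. By continuity of $\tilde\sigma_M(\,\cdot\,, y)$ at $\infty \in \mathbb{CP}^1$ we have $x_n \to P_\infty(y)$, hence $(\lambda_n, x_n) \to (0, P_\infty(y))$ in $\mathbb{CP}^1 \times M$. On the one hand, the group law yields $\tilde\sigma_M(\lambda_n, x_n) = \sigma_M(1, y) = y$ for all $n$. On the other hand, joint continuity of $\tilde\sigma_M$ together with the previous paragraph gives $\tilde\sigma_M(\lambda_n, x_n) \to \tilde\sigma_M(0, P_\infty(y)) = P_\infty(y)$. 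Therefore $y = P_\infty(y)$ for every $y \in M$, which means every point of $M$ is $\mathbb{C}^*$-fixed; equivalently, $\sigma_M$ is trivial.

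I do not anticipate any substantial obstacle: the argument is purely topological, using only joint continuity of $\tilde\sigma_M$, the original group-law identity on $\mathbb{C}^* \times M$, and the definitions of $P_0, P_\infty$ as limits. The one point that requires care is the convergence $\sigma_M(n, y) \to P_\infty(y)$ as $n \to \infty$, which follows immediately by restricting the jointly continuous map $\tilde\sigma_M$ to the slice $\mathbb{CP}^1 \times \{y\}$ and invoking continuity at $\infty$.
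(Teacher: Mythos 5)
Your proof is correct, and it takes a genuinely different route from the paper's. The paper derives Proposition \ref{propA1} as an immediate consequence of Proposition \ref{lem-1}, which treats the real group $G=\mathbb{R}^{+}$ and an \emph{arbitrary} compactification $\bar{G}$ whose boundary $\bar{G}\setminus G$ is merely countable. The core of that argument is a pigeonhole step: from a sequence $\lambda_{n}\to a\in\bar{G}\setminus G$ with $\lambda_{n}^{-1}\to b$, one produces for each $\mu$ near $1$ a boundary limit $\alpha(\mu)$; countability forces $\alpha(\mu_1)=\alpha(\mu_2)$ for distinct $\mu_1,\mu_2$, whence $(\mu_1^{-1}\mu_2)\circ x=x$, and a closed-subgroup-of-$\mathbb{R}$ argument then upgrades this to full triviality. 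Your argument instead exploits the special structure of $\mathbb{CP}^{1}$: it has two distinguished boundary points $0$ and $\infty$ that are exchanged by inversion, so you can use the single diagonal sequence $(\lambda_{n},x_{n})=(1/n,\sigma_M(n,y))$ converging to $(0,P_{\infty}(y))$, apply the group law to pin down the value on each term, and invoke joint continuity to conclude $y=P_{\infty}(y)$ directly. This is shorter, purely topological, needs no Lie-theoretic input about closed subgroups, handles $\mathbb{C}^{\ast}$ directly (rather than via restriction to $\mathbb{R}^{+}$ and then promoting the conclusion), and, as you note, already proves the stronger $C^{0}$ statement. What it gives up is the generality of Proposition \ref{lem-1}: your argument is tied to the fact that the compactification of $\mathbb{C}^{\ast}$ is $\mathbb{CP}^{1}$ with inversion acting cleanly on the boundary, whereas the paper's pigeonhole argument covers exotic compactifications of $\mathbb{R}^{+}$ with countable boundary (cf.\ Remark \ref{rkR}), which the authors state as a result of independent interest.
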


Let us examine the simplest case: the topological group $G$ $:=$ $\mathbb{R}%
^{+}.$ Let $M$ be a topological space with a continuous $G$-action. Recall
that a compactification of a topological Hausdorff space $X$ is a pair $(%
\hat{X},h)$ consisting of a compact Hausdorff space $\hat{X}$ and a
homeomorphism $h$ of $X$ onto a dense subset of $\hat{X}$ (see \cite[p.242]%
{Dug}). We often view $X$ as a subspace of $\hat{X}$ by identifying $X$ with 
$h(X)$ $\subset $ $\hat{X}.$

\begin{proposition}
\label{lem-1} Suppose that the topological group $G$ is $\mathbb{R}^{+}$ and
that $M$ is a topological space with a continuous $G$-action $\phi $. Let $%
\bar{G}$ be any compactification of $G$ in the sense above ($\bar{G}$ need
not be a topological group)$,$ such that $\bar{G}\backslash G$ is a
countable set. Then $\phi $ as a map cannot be extended continuously to $%
\bar{G}.$ That is to say, there does not exist a continuous map%
\begin{equation*}
\tilde{\phi}:\bar{G}\times M\rightarrow M
\end{equation*}%
\noindent such that $\tilde{\phi}=\phi $ on $G\times M$ unless the action $%
\phi $ is trivial.
\end{proposition}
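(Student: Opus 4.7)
My plan is to argue by contradiction: suppose a continuous extension $\tilde\phi : \bar G \times M \to M$ exists yet the original action $\phi$ is nontrivial, and deduce that in fact every point of $M$ must be $G$-fixed, a contradiction. The key idea is to exploit a single point $\bar g \in \bar G \setminus G$ together with its ``inverse at infinity'' to produce a self-homeomorphism of $M$ whose image happens to lie inside the fixed-point set $M^G := \{y \in M : \phi(s,y) = y \text{ for all } s \in G\}$.

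To begin, since $G$ is dense in the compact Hausdorff space $\bar G$ and $\bar G \setminus G$ is nonempty, I fix $\bar g \in \bar G \setminus G$ and a net (or, in the metrizable setting, a sequence) $g_n \in G = \mathbb R^+$ converging to $\bar g$. After passing to a subnet we may assume $g_n \to \infty$ in $\mathbb R^+$ (the case $g_n \to 0$ being symmetric); then $g_n^{-1} \to 0$ in $\mathbb R^+$, and compactness of $\bar G$ lets me extract a further subnet with $g_n^{-1} \to \bar h$, where necessarily $\bar h \in \bar G \setminus G$. The identity $\phi(g_n, \phi(g_n^{-1}, x)) = x$, valid on $G$, combined with the joint continuity of $\tilde\phi$ at $(\bar g, \tilde\phi(\bar h, x))$ and the obvious convergence $(g_n, \phi(g_n^{-1}, x)) \to (\bar g, \tilde\phi(\bar h, x))$, yields $\tilde\phi(\bar g, \tilde\phi(\bar h, x)) = x$ for every $x \in M$, and by symmetry $\tilde\phi(\bar h, \tilde\phi(\bar g, x)) = x$. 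Hence $T_{\bar g} := \tilde\phi(\bar g, \cdot)$ and $T_{\bar h} := \tilde\phi(\bar h, \cdot)$ are mutually inverse self-homeomorphisms of $M$.

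The heart of the argument is then to show $T_{\bar g}(M) \subseteq M^G$. Given $y = T_{\bar g}(x)$ and $s \in G$, continuity of $\phi(s,\cdot)$ combined with the group law gives $\phi(s, y) = \lim_n \phi(s g_n, x)$; since $s g_n \to \infty$ in $\mathbb R^+$, every cluster point of $(s g_n)$ in $\bar G$ lies in $\bar G \setminus G$, and therefore $\phi(s, y) = \tilde\phi(\bar g_s, x)$ for some $\bar g_s \in \bar G \setminus G$. Thus the continuous orbit map $s \mapsto \phi(s, y)$ from the connected space $\mathbb R^+$ lands inside the countable subset $\tilde\phi(\bar G \setminus G, x) \subset M$. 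In the setting of interest (Hausdorff $M$ whose countable subspaces are totally path-disconnected, which covers every manifold or first-countable $M$) such a continuous map must be constant; evaluating at $s = 1$ then forces $\phi(s, y) = y$ for all $s$, i.e.\ $y \in M^G$. Combining with bijectivity of $T_{\bar g}$ gives $M = T_{\bar g}(M) \subseteq M^G$, proving $\phi$ is trivial and yielding the desired contradiction.

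The main obstacle I foresee is precisely this last topological step: that a continuous map from $\mathbb R^+$ into a countable subset of $M$ is forced to be constant. This is routine as soon as $M$ is Hausdorff and, say, first countable or locally connected (as in the paper's intended application), since countable subspaces are then totally path-disconnected; but for an arbitrary topological space $M$ it would require either an additional mild assumption (which the statement of Proposition \ref{lem-1} appears to invoke tacitly) or a more delicate argument exploiting the countability of $\bar G \setminus G$ against the group structure rather than via the image of the orbit map.
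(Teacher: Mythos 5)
Your proof is correct for Hausdorff $M$ but follows a genuinely different route from the paper's. The paper argues by a pigeonhole on parameters: for each $\mu$ near $1$ it extracts a cluster point $\alpha(\mu)$ of $\mu\lambda_{n}$ in the countable set $\bar G\setminus G$; since $(1-\delta,1)$ is uncountable, two values $\mu_1,\mu_2$ share a cluster point, and the assumed extension then forces $(\mu_1^{-1}\mu_2)\circ x = x$ for every $x$ simultaneously, producing isotropy elements arbitrarily close to $1$. The paper finishes by observing that a closed subgroup of $\mathbb{R}^+$ accumulating at the identity must be all of $\mathbb{R}^+$, so it only needs $M$ to be $T_1$ for the isotropy groups to be closed. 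You instead build a genuine homeomorphism $T_{\bar g}$ of $M$ from a single boundary point and its ``inverse at infinity'' and show its (surjective) image lands inside the fixed locus $M^G$ by trapping every $G$-orbit inside a countable set --- a more geometric picture of why an extension would collapse $M$ onto $M^G$. Your closing caveat, however, is unnecessary: once $M$ is Hausdorff, a continuous map from the path-connected $\mathbb{R}^+$ whose image lies in a countable subset is automatically constant. Indeed, the restriction to any closed interval is a path, its image is a compact countable Hausdorff subspace, and by Urysohn's lemma a connected compact Hausdorff space with more than one point has cardinality at least that of the continuum; hence the path is degenerate, and no first-countability or manifold hypothesis on $M$ is required.
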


\begin{remark}
\label{rkR} One can use $\sin (\frac{1}{x})$-like graphs to easily construct
examples $\overline{\mathbb{R}^{+}}$ such that both cases where $\overline{%
\mathbb{R}^{+}}\backslash \mathbb{R}^{+}$ is countable or is uncountable can
occur.
\end{remark}

\proof
\textbf{(of Proposition \ref{lem-1})} Take a sequence $\lambda _{n}$ $\in $ $%
G$ $=$ $\mathbb{R}^{+}$ such that%
\begin{equation}
\lim_{n\rightarrow \infty }\lambda _{n}=a\in \bar{G}\backslash G\text{ and }%
\lim_{n\rightarrow \infty }\lambda _{n}^{-1}=b\in \bar{G}\backslash G
\label{8-4a}
\end{equation}%
\noindent by compactness of $\bar{G}$. For any $\mu $ $\in $ $(1-\delta ,1)$
with small $\delta $ $>$ $0,$ there exists a subsequence $\lambda _{n(\mu )}$
of $\lambda _{n}$ such that 
\begin{equation}
\lim_{n(\mu )\rightarrow \infty }\mu \lambda _{n(\mu )}=\alpha (\mu )\in 
\bar{G}\backslash G.  \label{8-4b}
\end{equation}

Since $\bar{G}\backslash G$ is countable by assumption, the map $\alpha :$ $%
(1-\delta ,1)$ $\rightarrow $ $\bar{G}\backslash G$ in (\ref{8-4b}) cannot
be injective. There exist $\mu _{1},$ $\mu _{2}$ $\in $ $(1-\delta ,1)$ such
that $\alpha (\mu _{2})$ $=$ $\alpha (\mu _{1}),$ $\mu _{2}$ $<$ $\mu _{1}.$
For $x$ $\in $ $M$ we consider%
\begin{equation}
\mu _{1}\circ x=(\mu _{1}\lambda _{n(\mu _{1})})\circ (\lambda _{n(\mu
_{1})}^{-1}\circ x)  \label{A1}
\end{equation}

\noindent where we denote $\phi (g,x)$ by $g\circ x.$

Assuming the extension $\tilde{\phi}$ (or $\tilde{\circ}$ for convenience)
exists, we are going to prove that the action $\phi $ is trivial. Taking the
limit $n(\mu _{1})$ $\rightarrow $ $\infty $ in (\ref{A1}) we get, via (\ref%
{8-4a}) and (\ref{8-4b}), $\mu _{1}\circ x=\alpha (\mu _{1})\tilde{\circ}(b%
\tilde{\circ}x).$ Similarly we have $\mu _{2}\circ x=\alpha (\mu _{2})\tilde{%
\circ}(b\tilde{\circ}x).$ We obtain $\mu _{1}\circ x$ $=$ $\mu _{2}\circ x$
since $\alpha (\mu _{2})$ $=$ $\alpha (\mu _{1})$ by assumption$.$ It
follows that%
\begin{equation}
(\mu _{1}^{-1}\mu _{2})\circ x=x.  \label{A4}
\end{equation}

Since given any small $\varepsilon >0$ there exists a $\delta >0$ such that $%
\mu _{1}^{-1}\mu _{2}$ $\in $ $(1-\varepsilon ,1)$ if $\mu _{2},$ $\mu _{1}$ 
$\in $ $(1-\delta ,1)$ and $\mu _{2}$ $<$ $\mu _{1}$ ($\alpha (\mu _{2})$ $=$
$\alpha (\mu _{1})),$ it follows from (\ref{A4}) that \{$1\}$ cannot be a
connected component of the closed isotropy subgroup $G_{0}$ $:=$ $\{g$ $\in $
$G$ \TEXTsymbol{\vert} $g\circ x$ $=$ $x\}$, which is a Lie subgroup of $G$
by \cite[Ch.II, Theorem 2.3]{He} (or Remark in the end of its proof). This
implies that $\dim G_{0}$ $\geq $ $1$ and hence $G_{0}=$ $G$ since $\dim G$ $%
=$ $1.$ We have shown that the isotropy subgroup of any $x$ $\in $ $M$ is $%
G; $ this amounts to the triviality of the action $\phi $.

\endproof%

\proof
\textbf{(of Proposition \ref{propA1}) }Clearly this proposition follows
immediately from Proposition \ref{lem-1}.

\endproof%

We say that the holomorphic action $\sigma _{M}$ extends \textit{pointwise
holomorphically} to $\mathbb{CP}^{1}$ if for any point $p$ $\in $ $M,$ there
is a holomorphic map $\varphi $ $:$ $\mathbb{CP}^{1}\times \{p\}$ $%
\rightarrow $ $M$ such that 
\begin{equation*}
\varphi (\lambda ,p)=\sigma _{M}(\lambda ,p)
\end{equation*}

\noindent for $\lambda $ $\in $ $\mathbb{C}^{\ast }.$

By Proposition \ref{lem-1} there is even no continuous extension of a
nontrivial holomorphic $\mathbb{C}^{\ast }$-action to $\mathbb{CP}^{1}$ $%
\times $ $M$ $\rightarrow $ $M$ . However, the \textit{meromorphic extension}
does possibly exist. We say that $\sigma _{M}$ extends meromorphically to $%
\mathbb{CP}^{1}\times M$ if $\sigma _{M}$ extends to a meromorphic map $%
\check{\sigma}_{M}$ $:$ $\mathbb{CP}^{1}\times M$ - - -\TEXTsymbol{>} $M$
(in the sense of Remmert \cite{GPR})$.$ Note that the singular set of a
meromorphic map is of complex codimension $\geq 2$ (\cite{GPR}). See Remark %
\ref{mero} below for examples of meromorphic extension.

In the remaining of this section we mainly assume that the meromorphic
extension exists. Let $\Omega _{M}^{p}$ on $M$ denote the holomorphic vector
bundle of holomorphic $p$-forms$.$ $\sigma _{M}$ induces a holomorphic
action on $\Omega ^{p}$ by pulling back. Let $H^{0}(M,\Omega _{M}^{p})$
denote the space of all global holomorphic $p$-forms.

\begin{notation}
\label{N-9-1} Let $H_{k}^{0}(M,\Omega _{M}^{p})$ or $H_{k,\sigma
_{M}}^{0}(M,\Omega _{M}^{p})$ denote the space of all global holomorphic $p$%
-forms $\omega $ such that $\sigma _{M}(\lambda )^{\ast }(\omega )=\lambda
^{k}\omega ,$ $\lambda \in \mathbb{C}^{\ast }$ where $\sigma _{M}(\lambda )$ 
$:$ $M\rightarrow M$ is given by%
\begin{equation}
\sigma _{M}(\lambda )(p):=\sigma _{M}(\lambda ,p).  \label{8-8.5}
\end{equation}
\end{notation}

\noindent Here there is no need to talk about any regularity condition as in
Definition \ref{2m}.

\begin{proposition}
\label{propA2}. With the notation as above, suppose that $\sigma _{M}$
extends meromorphically to $\mathbb{CP}^{1}\times M$. Then we have%
\begin{equation}
H_{k}^{0}(M,\Omega _{M}^{p})=\{0\}\text{ for }k\neq 0;\text{ }H^{0}(M,\Omega
_{M}^{p})=H_{0}^{0}(M,\Omega _{M}^{p})\text{ }(=H_{0,\sigma
_{M}}^{0}(M,\Omega _{M}^{p})).  \label{A5}
\end{equation}
\end{proposition}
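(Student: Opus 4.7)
The plan is to derive both assertions from a single observation: for a generic point $x_0 \in M$, the restriction of $\check{\sigma}_M$ to the slice $\mathbb{CP}^1 \times \{x_0\}$ extends to a holomorphic map, so that for any holomorphic $p$-form $\omega$ the natural extension of $\sigma_M(\lambda)^* \omega$ evaluated at $x_0$ in the variable $\lambda$ becomes a holomorphic object on all of $\mathbb{CP}^1$, and hence constant by Liouville.

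First I would set up the genericity. Since the indeterminacy set $S \subset \mathbb{CP}^1 \times M$ of the meromorphic map $\check{\sigma}_M$ is an analytic subset of complex codimension $\geq 2$, its image under the projection $\pi_M : \mathbb{CP}^1 \times M \to M$ is a proper analytic subset $B \subset M$. For any $x_0 \in M \setminus B$ the slice $\mathbb{CP}^1 \times \{x_0\}$ misses $S$ entirely, so $\check{\sigma}_M$ is holomorphic on some open neighborhood $U$ of $\mathbb{CP}^1 \times \{x_0\}$ in $\mathbb{CP}^1 \times M$; in particular, for any vectors $v_1, \ldots, v_p \in T_{x_0} M$, both the base point $\sigma_M(\lambda)(x_0)$ and the pushforwards $(\sigma_M(\lambda))_\ast v_i$ depend holomorphically on $\lambda$ as $\lambda$ ranges over all of $\mathbb{CP}^1$.

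Next, given $\omega \in H^0(M,\Omega_M^p)$, I will consider the scalar function
\begin{equation*}
\phi_{x_0,v_1,\ldots,v_p}(\lambda) := \omega_{\sigma_M(\lambda)(x_0)}\bigl((\sigma_M(\lambda))_\ast v_1, \ldots, (\sigma_M(\lambda))_\ast v_p\bigr).
\end{equation*}
By the preceding paragraph this is holomorphic on all of $\mathbb{CP}^1$, hence constant, equal to its value at $\lambda = 1$, which is $\omega_{x_0}(v_1,\ldots,v_p)$. If $\omega \in H_k^0(M,\Omega_M^p)$, the equivariance identity $\sigma_M(\lambda)^\ast \omega = \lambda^k \omega$ forces $\phi \equiv \lambda^k \omega_{x_0}(v_1,\ldots,v_p)$, and constancy of $\phi$ then yields $\omega_{x_0}(v_1,\ldots,v_p) = 0$ whenever $k \neq 0$; ranging over generic $x_0$ and all $v_i$, and invoking the identity theorem on the connected manifold $M$, one concludes $\omega \equiv 0$, which is the first half of (\ref{A5}). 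For the second half, the constancy of $\phi$ applied to any $\omega \in H^0(M,\Omega_M^p)$ reads $\sigma_M(\lambda)^\ast \omega = \omega$ pointwise on the dense open set $M \setminus B$, and by continuity on all of $M$, so $\omega \in H_0^0(M,\Omega_M^p)$.

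The main obstacle I anticipate lies in extracting the slice-holomorphicity rigorously from Remmert's definition, namely verifying that holomorphy of $\check{\sigma}_M$ on a neighborhood of $\mathbb{CP}^1 \times \{x_0\}$ implies holomorphy at $\lambda = 0, \infty$ of the two derived objects $\sigma_M(\lambda)(x_0)$ and its differential along the $M$-directions; this should reduce to standard facts about holomorphic families of maps together with the fact that the graph projection in Remmert's setting is a biholomorphism over the open locus, but care is needed to confirm that no pathology hides in the boundary points of $\mathbb{CP}^1$. A minor point is to ensure that $B$ is genuinely a proper analytic subset; this follows from the properness condition in Remmert's definition, which keeps $\pi_M(S)$ analytic of positive codimension.
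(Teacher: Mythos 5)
Your proof is correct, and it takes a genuinely different route from the paper's. The paper first uses Hartogs' theorem and the codimension-$\geq 2$ property of the indeterminacy set to extend the form $\sigma_M(\lambda^{-1})^\ast \omega$ holomorphically across $\lambda = 0$ on all of $\mathbb{CP}^1 \times M$, then Taylor-expands in $\lambda$, uses the group law to deduce that each coefficient $\omega_k$ satisfies $\sigma_M(\zeta^{-1})^\ast\omega_k = \zeta^k \omega_k$, and kills $\omega_k$ for $k > 0$ by sending $\zeta \to \infty$. Your argument instead works one generic point $x_0$ at a time: the codimension-$\geq 2$ property ensures the slice $\mathbb{CP}^1 \times \{x_0\}$ avoids the indeterminacy set for $x_0$ outside a proper analytic subset $B$, so the scalar function $\phi(\lambda) = (\sigma_M(\lambda)^\ast\omega)_{x_0}(v_1,\dots,v_p)$ is holomorphic on $\mathbb{CP}^1$ and hence constant by Liouville. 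Combined with $\phi(\lambda) = \lambda^k\,\omega_{x_0}(v_1,\dots,v_p)$ for $\omega \in H_k^0$, this kills $\omega$ on $M\setminus B$ when $k\neq 0$, and for general $\omega$ gives $\sigma_M(\lambda)^\ast\omega = \omega$ there; the identity theorem finishes. Your route replaces Hartogs and the formal power-series manipulation with a pointwise Liouville argument — arguably more elementary — at the cost of needing to set up the genericity of $x_0$ and then propagate from the dense open set $M\setminus B$ to all of $M$. The paper's route gives the invariance of the form globally in one shot. The obstacle you flag (extracting holomorphy of $\lambda\mapsto\sigma_M(\lambda)(x_0)$ and its differential at $\lambda = 0,\infty$ from Remmert's definition) is real but standard once $\mathbb{CP}^1\times\{x_0\}$ misses the indeterminacy set, since a meromorphic map is holomorphic off that set and the claim is then just holomorphy of a map and of its partial derivatives. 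One small misattribution: the properness of $\pi_M:\mathbb{CP}^1\times M\to M$ that makes $B=\pi_M(S)$ analytic comes from compactness of $\mathbb{CP}^1$, not from any properness built into Remmert's definition; and $B\subsetneq M$ because $\dim S \leq \dim M - 1$, whence $\dim \pi_M(S) \leq \dim M -1$.
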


\proof
Let $\omega $ $\in $ $H^{0}(M,\Omega _{M}^{p}).$ Since $\sigma _{M}$ extends
meromorphically, its pullback $\sigma _{M}^{\ast }\omega $ on $\mathbb{C}%
^{\ast }\times M$ (may contain the factor $d\lambda )$ extends to a
holomorphic $p$-form on $\mathbb{CP}^{1}\times M$ by Hartogs' theorem (e.g. 
\cite[p.81]{GPR}). In particular, the parameter $\lambda $ in the
holomorphic $p$-form $\sigma _{M}(\lambda )^{\ast }\omega $ on $M$ (see (\ref%
{8-8.5})) extends holomorphically to $\mathbb{CP}^{1}.$ One is thus allowed
to expand $\sigma _{M}(\lambda ^{-1})^{\ast }\omega $ near $\lambda $ $=$ $0$
to get%
\begin{equation}
\sigma _{M}(\lambda ^{-1})^{\ast }\omega =\sum_{k=0}^{\infty }\lambda
^{k}\omega _{k}  \label{A6-a}
\end{equation}

\noindent where $\omega _{k}$ $\in $ $H^{0}(M,\Omega _{M}^{p})$ does not
depend on $\lambda .$ Then, combining%
\begin{eqnarray*}
\sum_{k=0}^{\infty }\lambda ^{k}\zeta ^{k}\omega _{k} &=&\sigma
_{M}((\lambda \zeta )^{-1})^{\ast }\omega \overset{(\ref{A6-a})}{=}\sigma
_{M}(\zeta ^{-1})^{\ast }(\sum_{k=0}^{\infty }\lambda ^{k}\omega _{k}) \\
&\overset{}{=}&\sum_{k=0}^{\infty }\lambda ^{k}(\sigma _{M}(\zeta
^{-1})^{\ast }\omega _{k})
\end{eqnarray*}

\noindent with (\ref{A6-a}), we have 
\begin{equation}
\sigma _{M}(\zeta ^{-1})^{\ast }\omega _{k}=\zeta ^{k}\omega _{k}  \label{A7}
\end{equation}%
\noindent i.e., $\omega _{k}$ $\in $ $H_{-k}^{0}(M,\Omega _{M}^{p}).$ For $k$
$>$ $0,$ the LHS of (\ref{A7}) is finite as $\zeta \rightarrow \infty $ and
the RHS goes to infinity (unless $\omega _{k}=0).$ We conclude that $\sigma
_{M}(\lambda ^{-1})^{\ast }\omega $ $=$ $\omega _{0}$ in (\ref{A6-a}). This
yields that $\omega $ is $\mathbb{C}^{\ast }$-invariant. Now that $\omega $ $%
\in $ $H^{0}(M,\Omega _{M}^{p})$ is arbitrarily chosen, we are led to (\ref%
{A5}).

\endproof%

\begin{remark}
\label{mero} (Examples for the meromorphic extension) The assumption that $%
\sigma _{M}$ extends meromorphically to $\mathbb{CP}^{1}\times M$ in
Proposition \ref{propA2} holds for any compact K\"{a}hler manifold $M$ with $%
\sigma _{M}$ having a fixed point (on $M)$. For, by \cite[Proposition II]%
{Som} $\mathbb{C}^{\ast }$ (through $\sigma _{M})$ acts projectively on $M.$
By \cite[Lemma II-B]{Som} for $Y$ $=$ $X$ $=$ $M$, $\sigma _{M}$ extends
meromorphically to $\check{\sigma}_{M}$ $:$ $\mathbb{CP}^{1}\times M$ - - -%
\TEXTsymbol{>} $M.$ Another natural class of examples consists of $M$ that
is algebraic and $\sigma _{M}$ that is an algebraic action. Then $\sigma
_{M} $ automatically extends to $\mathbb{CP}^{1}\times M$ meromorphically.
See, e.g., \cite[p.777]{BBS} and Remark \ref{rext} below.
\end{remark}

Proposition \ref{propA2} has an application to the study of $S^{1}$-action
on a complex manifold $M$ via biholomorphisms. Denote such an action by $%
\sigma _{M}^{S^{1}}$ $:$ $S^{1}\times M$ $\rightarrow $ $M.$ Assume that we
can pass $\sigma _{M}^{S^{1}}$ to a holomorphic $\mathbb{C}^{\ast }$-action $%
\sigma _{M}^{\mathbb{C}^{\ast }}$ $:$ $\mathbb{C}^{\ast }\times M$ $%
\rightarrow $ $M.$ This $\mathbb{C}^{\ast }$-extension follows automatically
if $M$ is compact (cf. \cite[p. 50]{CS})$.$ For an integer $k,$ we define $%
H_{k,\sigma _{M}^{S^{1}}}^{0}(M,\Omega _{M}^{p})$ to be the space of all
holomorphic $p$-forms $\omega $ such that $\sigma _{M}^{S^{1}}(e^{i\vartheta
})^{\ast }\omega $ $=$ $e^{ik\vartheta }\omega .$ By Remark \ref{mero}, if $%
M $ is algebraic with an algebraic action $\sigma _{M}^{\mathbb{C}^{\ast }}$%
, then $\sigma _{M}^{\mathbb{C}^{\ast }}$ admits a meromorphic extension on $%
\mathbb{CP}^{1}\times M.$

The following result might be known to the experts, but we are unable to
find a precise reference.

\begin{corollary}
\label{corS1} With the notation above, suppose that $\sigma _{M}^{\mathbb{C}%
^{\ast }}$ extends meromorphically to $\mathbb{CP}^{1}\times M$. Then we have%
\begin{equation}
H_{k,\sigma _{M}^{S^{1}}}^{0}(M,\Omega _{M}^{p})=0\text{ for }k\neq 0;\text{ 
}H^{0}(M,\Omega _{M}^{p})=H_{0,\sigma _{M}^{S^{1}}}^{0}(M,\Omega _{M}^{p}).
\label{S1}
\end{equation}
\end{corollary}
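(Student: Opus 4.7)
The plan is to deduce Corollary \ref{corS1} directly from Proposition \ref{propA2} by showing that for any integer $k$, the $S^1$-invariance condition and the $\mathbb{C}^*$-invariance condition (with weight $k$) on a global holomorphic $p$-form coincide, given the assumed $\mathbb{C}^*$-extension. This reduces the statement entirely to the meromorphic setting already handled.

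First I would establish the identity
\begin{equation*}
H_{k,\sigma_M^{S^1}}^{0}(M,\Omega_M^{p}) \;=\; H_{k,\sigma_M^{\mathbb{C}^{\ast}}}^{0}(M,\Omega_M^{p})
\end{equation*}
for every $k\in\mathbb{Z}$. The inclusion $\supset$ is immediate from $S^{1}\subset\mathbb{C}^{\ast}$. For the inclusion $\subset$, fix $\omega\in H_{k,\sigma_M^{S^1}}^{0}(M,\Omega_M^{p})$ and consider, for each $q\in M$, the holomorphic map
\begin{equation*}
F_q:\mathbb{C}^{\ast}\longrightarrow \Lambda^{p}T_q^{\ast}M,\qquad F_q(\lambda):=\sigma_M^{\mathbb{C}^{\ast}}(\lambda)^{\ast}\omega\bigr|_{q}-\lambda^{k}\omega|_{q}.
\end{equation*}
Because $\sigma_M^{\mathbb{C}^{\ast}}$ restricts to $\sigma_M^{S^{1}}$ on $S^{1}$ and $\omega$ is $S^{1}$-equivariant of weight $k$, $F_q$ vanishes on $S^{1}\subset\mathbb{C}^{\ast}$. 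Since $F_q$ is holomorphic on the connected set $\mathbb{C}^{\ast}$ and vanishes on a set with an accumulation point, the identity theorem forces $F_q\equiv 0$. Varying $q$, we conclude $\sigma_M^{\mathbb{C}^{\ast}}(\lambda)^{\ast}\omega=\lambda^{k}\omega$ for all $\lambda\in\mathbb{C}^{\ast}$, i.e.\ $\omega\in H_{k,\sigma_M^{\mathbb{C}^{\ast}}}^{0}(M,\Omega_M^{p})$.

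With this identification in hand, the hypothesis that $\sigma_M^{\mathbb{C}^{\ast}}$ extends meromorphically to $\mathbb{CP}^{1}\times M$ lets me apply Proposition \ref{propA2} to $\sigma_M=\sigma_M^{\mathbb{C}^{\ast}}$. This yields $H_{k,\sigma_M^{\mathbb{C}^{\ast}}}^{0}(M,\Omega_M^{p})=0$ for $k\neq 0$ and $H^{0}(M,\Omega_M^{p})=H_{0,\sigma_M^{\mathbb{C}^{\ast}}}^{0}(M,\Omega_M^{p})$, which by the preceding paragraph translate into the two asserted equalities \eqref{S1}.

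There is no real obstacle here: the only conceptual point is the passage from $S^{1}$-equivariance to $\mathbb{C}^{\ast}$-equivariance, and this is purely a matter of holomorphic continuation once a holomorphic $\mathbb{C}^{\ast}$-extension $\sigma_M^{\mathbb{C}^{\ast}}$ of $\sigma_M^{S^{1}}$ is available. The meromorphic extension to $\mathbb{CP}^{1}\times M$ is not used in this reduction step; it is used only through the cited Proposition \ref{propA2}.
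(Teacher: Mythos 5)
Your proof is correct but reaches the conclusion by a genuinely different mechanism than the paper. Your key step is to establish, via the identity theorem, that $H_{k,\sigma_M^{S^1}}^0(M,\Omega_M^p) = H_{k,\sigma_M^{\mathbb{C}^*}}^0(M,\Omega_M^p)$ for every $k$: since $\lambda\mapsto\sigma_M^{\mathbb{C}^*}(\lambda)^*\omega|_q$ is holomorphic in $\lambda$ and the weight-$k$ identity holds on $S^1\subset\mathbb{C}^*$ (a set with accumulation points), analytic continuation forces it to hold on all of $\mathbb{C}^*$. You then invoke both halves of Proposition \ref{propA2}. The paper's proof avoids the identity theorem entirely and exploits an inclusion chain: for any $k$ one has $H_{k,\sigma_M^{S^1}}^0 \subset H^0 = H_{0,\sigma_M^{\mathbb{C}^*}}^0 \subset H_{0,\sigma_M^{S^1}}^0$, the middle equality being only the $k=0$ part of Proposition \ref{propA2} and the outer inclusions being trivial. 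This places every $S^1$-weight-$k$ piece inside the $S^1$-weight-$0$ piece; for $k\neq 0$ these distinct isotypic components meet only in $\{0\}$, giving the vanishing, and for $k=0$ the chain closes to the stated equality. Your route isolates a cleaner reusable fact --- that $S^1$-isotypic and $\mathbb{C}^*$-isotypic components of global holomorphic objects agree for any holomorphic $\mathbb{C}^*$-action, irrespective of meromorphic extendability --- while the paper's route is shorter, uses only the $k=0$ half of Proposition \ref{propA2}, and requires no appeal to analytic continuation.
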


\proof
Observe that $H_{k,\sigma _{M}^{S^{1}}}^{0}(M,\Omega _{M}^{p})$ $\subset $ $%
H^{0}(M,\Omega _{M}^{p})=H_{0,\sigma _{M}^{\mathbb{C}^{\ast }}}^{0}(M,\Omega
_{M}^{p})$ $\subset $ $H_{0,\sigma _{M}^{S^{1}}}^{0}(M,$ $\Omega _{M}^{p})$
by Proposition \ref{propA2}, and hence (\ref{S1}).

\endproof%

As another application of Proposition \ref{propA2}, we now want to relate
Proposition \ref{propA2} to \cite[Corollary IV]{CS}. The main result is
Proposition \ref{CCS} below.

Some preparations are in order. Denote $\sigma _{M}(\lambda )\circ x$ by $%
\lambda \circ x$. Suppose that $\sigma _{M}$ extends meromorphically to $%
\check{\sigma}_{M}$ $:$ $\mathbb{C}\mathbb{P}^{1}\times M$ - - -\TEXTsymbol{>%
} $M.$ Then $\sigma _{M}$ extends pointwise holomorphically to $\mathbb{C}%
\mathbb{P}^{1},$ 
i.e. (in particular) $\lim_{\lambda \rightarrow 0}\lambda \circ x$ exists
for any fixed $x\in M$ (cf. \cite[Lemma 2.4.1]{CG})$.$ Moreover, the
singular set of the meromorphic extension $\check{\sigma}_{M}$ is contained
in ($\{0\}\times S_{0})$ $\cup $ ($\{\infty \}\times S_{\infty })$ for some
subvarieties $S_{0},$ $S_{\infty }$ of codimension $\geq $ 1 in $M.$ Observe
that 
\begin{equation*}
\sigma _{M,0}:=\check{\sigma}_{M}(0,\cdot ):M\backslash S_{0}\rightarrow M
\end{equation*}%
\noindent is meromorphic on $M$ since $\{0\}\times M\nsubseteq \{0\}\times
S_{0}$ (cf. \cite[pp.35-36]{St}). So $\sigma _{M,0}$ is actually defined and
holomorphic on $M\backslash T$ where $T$ $\subset $ $S_{0}$ is of
codimension at least 2 in $M$. Denote this extension by $\tilde{\sigma}%
_{M,0}:M\backslash T\rightarrow M.$ Note that $\tilde{\sigma}_{M,0}$ $=$ $%
\sigma _{M,0}$ on $M\backslash S_{0},$ but $\sigma _{M,0}$ is not defined on 
$S_{0}\backslash T$ where $\tilde{\sigma}_{M,0}$ is defined.

We define $F^{0}$ :$=$ $\tilde{\sigma}_{M,0}(M\backslash T)$ and $G^{0}$ :$=$
$\sigma _{M,0}(M\backslash S_{0})$. Obviously $G^{0}\subset F^{0}.$ Let $%
F^{\sigma _{M}}$ be the fixed point set of $\sigma _{M}$ on $M,$ i.e. $%
\{x\in M$ $|$ $\sigma _{M}(\lambda )x$ $=$ $x,$ $\forall \lambda $ $\in $ $%
\mathbb{C}^{\ast }\}.$ $G^{0}$ equals $\{\lim_{\lambda \rightarrow 0}\lambda
\circ x$ : $x\in M\backslash S_{0}\}$ (while the same statement for $F^{0}$
via $M\backslash T$ may not hold, \textit{a priori}$)$ which is contained in 
$F^{\sigma _{M}}.$ Observe that $F^{0}$ (resp. $G^{0})$ is connected since $%
\tilde{\sigma}_{M,0}$ (resp. $\sigma _{M,0})$ is continuous on the connected
space $M\backslash T$ (resp. $M\backslash S_{0}).$

\begin{lemma}
\label{LF0} With the notation above, it holds that (a) $G^{0}$ is a complex
submanifold in $M;$ (b) $G^{0}\subset M\backslash S_{0};$ (c) $G^{0}=F^{0};$
(d) $G^{0}$ is closed in $M.$
\end{lemma}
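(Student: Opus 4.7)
\textbf{Proof plan for Lemma \ref{LF0}.} The key observation driving everything is that every point of $G^{0}$, and more generally of $F^{0}$, is a fixed point of $\sigma_{M}$, and that fixed points cannot lie in the indeterminacy locus $S_{0}$. The natural order to establish the four assertions is (b), (a), (c), (d).

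First, for (b) I would argue that $F^{\sigma_{M}}\subset M\setminus S_{0}$. Indeed, if $x\in F^{\sigma_{M}}$, the map $\mathbb{C}^{\ast}\to M$, $\lambda\mapsto \lambda\circ x$, is the constant map $x$; it extends trivially and holomorphically across $\lambda=0$, so $(0,x)$ is not an indeterminacy point of $\check{\sigma}_{M}$, whence $x\notin S_{0}$. Since $G^{0}\subset F^{\sigma_{M}}$ was already noted, this gives $G^{0}\subset M\setminus S_{0}$, hence (b). It also shows $\sigma_{M,0}(y)=\lim_{\lambda\to 0}\lambda\circ y=y$ for every $y\in G^{0}$.

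For (a), the last observation means that the restriction of the holomorphic map $\sigma_{M,0}\colon M\setminus S_{0}\to M\setminus S_{0}$ (image landing in $G^{0}\subset M\setminus S_{0}$ by (b)) satisfies $\sigma_{M,0}\circ\sigma_{M,0}=\sigma_{M,0}$, i.e.\ it is a holomorphic idempotent retraction of $M\setminus S_{0}$ onto $G^{0}$. A standard fact (Rossi/Cartan: the fixed-point set of a holomorphic idempotent self-map of a complex manifold is a closed complex submanifold) then yields that $G^{0}$ is a complex submanifold of $M\setminus S_{0}$, and hence of $M$.

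Next, (c): the inclusion $G^{0}\subset F^{0}$ is immediate from the definitions. For the reverse inclusion, pick $y=\tilde{\sigma}_{M,0}(z)\in F^{0}$ with $z\in M\setminus T$. If $z\in M\setminus S_{0}$, then $y=\sigma_{M,0}(z)\in G^{0}$. If $z\in S_{0}\setminus T$, use that $M\setminus S_{0}$ is dense in $M\setminus T$ (as $S_{0}$ has codimension $\geq 1$) and pick $z_{n}\in M\setminus S_{0}$ with $z_{n}\to z$; by continuity of $\tilde{\sigma}_{M,0}$, $y=\lim_{n}\sigma_{M,0}(z_{n})$ with $\sigma_{M,0}(z_{n})\in G^{0}\subset F^{\sigma_{M}}$, and since $F^{\sigma_{M}}$ is closed we get $y\in F^{\sigma_{M}}$. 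Applying (b) gives $y\in M\setminus S_{0}$, and the fixed-point property yields $\sigma_{M,0}(y)=y$, so $y\in G^{0}$.

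Finally (d): let $y_{n}\in G^{0}$ with $y_{n}\to y\in M$. Each $y_{n}$ lies in $F^{\sigma_{M}}$, which is closed, so $y\in F^{\sigma_{M}}$; by (b), $y\in M\setminus S_{0}$, hence $\sigma_{M,0}(y)=y$ and $y\in G^{0}$. The main obstacle I anticipate is justifying the submanifold conclusion in (a): one must either invoke the holomorphic retraction theorem as a black box or give a short local argument via the idempotency of $d\sigma_{M,0}$ at points of $G^{0}$ (splitting $T_{y}(M\setminus S_{0})$ into the $\pm 1$-eigenspaces and using the implicit function theorem). Everything else reduces to propagating the two structural facts: fixed points avoid $S_{0}$, and $\sigma_{M,0}$ is the identity on $G^{0}$.
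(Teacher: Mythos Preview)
Your argument has a genuine gap at step (b), and because your proofs of (c) and (d) reuse the same device, the whole strategy breaks down.

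You assert $F^{\sigma_M}\subset M\setminus S_0$ on the grounds that for a fixed point $x$ the map $\lambda\mapsto\lambda\circ x$ is constant and hence extends across $\lambda=0$. But the fact that $\check{\sigma}_M$ extends holomorphically along the slice $\mathbb{CP}^1\times\{x\}$ does \emph{not} imply that $(0,x)$ is a regular point of $\check{\sigma}_M$ as a meromorphic map on all of $\mathbb{CP}^1\times M$: a meromorphic map can be indeterminate at a point even though its restriction to a particular curve through that point extends. Concretely, take $M=\mathbb{CP}^2$ with $\sigma_M(\lambda)[w_0{:}w_1{:}w_2]=[w_0:\lambda w_1:\lambda^{-1}w_2]$. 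Rewriting as $[\lambda w_0:\lambda^2 w_1:w_2]$ one finds $S_0=\{w_2=0\}$; the fixed points $[1{:}0{:}0]$ and $[0{:}1{:}0]$ lie in $S_0$, so $F^{\sigma_M}\not\subset M\setminus S_0$. (Here $G^0=\{[0{:}0{:}1]\}$, which \emph{does} avoid $S_0$, consistent with the lemma.) In your steps (c) and (d) you again invoke ``by (b), $y\in M\setminus S_0$'' for a limit point $y$ that is only known to lie in $F^{\sigma_M}$, not yet in $G^0$; so those steps collapse for the same reason.

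The paper's proof does not attempt the inclusion abstractly. It linearises $\sigma_M$ near a fixed point $q$ in coordinates where the action is diagonal with positive weights on $w_1,\dots,w_s$, zero weights on $w_{s+1},\dots,w_t$, and negative weights on $w_{t+1},\dots,w_n$, and proves that $q\in G^0$ forces $t=n$ (no negative weights). With this in hand the coordinate projection $(w_1,\dots,w_n)\mapsto(0,\dots,0,w_{s+1},\dots,w_n)$ visibly extends $\sigma_{M,0}$ holomorphically near $q$, whence $q\notin S_0$. The point is that (b) is a special feature of $G^0$, not of arbitrary fixed points; once (b) is obtained this way, your idempotent-retraction argument for (a) is fine and is in effect the global repackaging of the paper's local coordinate projection.
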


The proof of Lemma \ref{LF0} is postponed below.

Define%
\begin{equation}
\pi :M\backslash T\rightarrow F^{0}\text{ by }\pi (x):=\tilde{\sigma}%
_{M,0}(x)  \label{8-15.5}
\end{equation}

\noindent Note that for $x\in M\backslash S_{0},$ $\pi (x)$ $=$ $%
\lim_{\lambda \rightarrow 0}\lambda \circ x.$ We can extend $\pi $ to $M$ by 
$\pi (x)$ $:=$ $\lim_{\lambda \rightarrow 0}\lambda \circ x$ $\in $ $%
F^{\sigma _{M}}$ $\subset $ $M$ (cf. the pointwise holomorphic extension as
mentioned earlier)$.$ Note that $\pi $ in (\ref{8-15.5}) is holomorphic on $%
M\backslash T$ $\subset $ $M$ (since $\pi |_{M\backslash T}$ $=$ $\tilde{%
\sigma}_{M,0}|_{M\backslash T}),$ yet $\pi $ on $M$ could be discontinuous
across $T.$ We call $\pi $ on $M$ a canonical extension of $\sigma _{M,0}$
and $\tilde{\sigma}_{M,0}.$

\begin{lemma}
\label{LFM} Let $\sigma _{M}$ be a holomorphic $\mathbb{C}^{\ast }$-action
on a complex manifold $M$ (not necessarily compact). Suppose $\sigma _{M}$
extends meromorphically to $\mathbb{C}\mathbb{P}^{1}\times M$ - - -%
\TEXTsymbol{>} $M$. Then there is a linear isomorphism: 
\begin{equation}
\pi ^{\ast }:H^{0}(F^{0},\Omega _{F^{0}}^{p})\rightarrow H_{0}^{0}(M,\Omega
_{M}^{p})  \label{8-17.5}
\end{equation}%
\noindent where $\pi ^{\ast }$ is essentially defined via the "pullback" of
the canonical map $\pi $ of (\ref{8-15.5}).
\end{lemma}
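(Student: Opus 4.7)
The plan is to establish the isomorphism in three steps: (i) well-definedness of $\pi^{\ast}$ as a map into $H_{0}^{0}(M,\Omega_{M}^{p})$, (ii) injectivity, and (iii) surjectivity. Throughout I use two facts from the setup: $\pi:M\setminus T\to F^{0}$ is holomorphic with $T\subset M$ of complex codimension $\geq 2$, and the meromorphic extension of $\sigma_{M}$ makes $\lambda\mapsto\sigma_{M}(\lambda)$ extend pointwise holomorphically to $\lambda=0$ off $S_{0}$, with $\pi(x)=\lim_{\lambda\to 0}\lambda\circ x$ on $M\setminus S_{0}$ and $\pi=\tilde{\sigma}_{M,0}$ on $M\setminus T$.

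Step (i). For $\omega\in H^{0}(F^{0},\Omega_{F^{0}}^{p})$, the form $\pi^{\ast}\omega$ is a holomorphic $p$-form on $M\setminus T$. Since $T$ is analytic of codimension $\geq 2$, a standard Hartogs-type extension theorem for holomorphic $p$-forms extends $\pi^{\ast}\omega$ uniquely to a holomorphic $p$-form on $M$. For $\mathbb{C}^{\ast}$-invariance, I would check $\pi\circ\sigma_{M}(\lambda)=\pi$ on the open dense set where both sides are defined: given $x\in M\setminus S_{0}$ with $\lambda\circ x\notin S_{0}$, the identity $\pi(\lambda\circ x)=\lim_{\mu\to 0}(\mu\lambda)\circ x=\pi(x)$ follows from the pointwise holomorphic extension. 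Hence $\sigma_{M}(\lambda)^{\ast}\pi^{\ast}\omega=\pi^{\ast}\omega$ on a dense open subset of $M$, and therefore everywhere by analytic continuation. So $\pi^{\ast}\omega\in H_{0}^{0}(M,\Omega_{M}^{p})$.

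Step (ii). Since $F^{0}\subset F^{\sigma_{M}}$ by Lemma \ref{LF0}, any $x\in F^{0}$ is a fixed point, so $\pi(x)=\lim_{\lambda\to 0}\lambda\circ x=x$; that is, $\pi|_{F^{0}}=\mathrm{id}_{F^{0}}$. By functoriality of pullback, $(\pi^{\ast}\omega)|_{F^{0}}=\omega$, so $\pi^{\ast}\omega=0$ on $M$ forces $\omega=0$ on $F^{0}$.

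Step (iii). Given $\eta\in H_{0}^{0}(M,\Omega_{M}^{p})$, set $\omega:=\eta|_{F^{0}}\in H^{0}(F^{0},\Omega_{F^{0}}^{p})$ (well-defined since $F^{0}$ is a closed complex submanifold of $M$ by Lemma \ref{LF0}). I claim $\pi^{\ast}\omega=\eta$. Fix $x\in M\setminus S_{0}$ and tangent vectors $v_{1},\dots,v_{p}\in T_{x}M$. The $\mathbb{C}^{\ast}$-invariance $\sigma_{M}(\lambda)^{\ast}\eta=\eta$ reads
\begin{equation*}
\eta_{x}(v_{1},\dots,v_{p})=\eta_{\lambda\circ x}\bigl((d\sigma_{M}(\lambda))_{x}v_{1},\dots,(d\sigma_{M}(\lambda))_{x}v_{p}\bigr).
\end{equation*}
Letting $\lambda\to 0$, the pointwise holomorphic extension of $\sigma_{M}$ across $\lambda=0$ off $S_{0}$ gives $\lambda\circ x\to\pi(x)$ and $(d\sigma_{M}(\lambda))_{x}\to(d\pi)_{x}$, so the right-hand side converges to $\omega_{\pi(x)}((d\pi)_{x}v_{1},\dots,(d\pi)_{x}v_{p})=(\pi^{\ast}\omega)_{x}(v_{1},\dots,v_{p})$. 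Hence $\eta=\pi^{\ast}\omega$ on $M\setminus S_{0}$, and since both sides are holomorphic on $M$, they agree on all of $M$.

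The only technical obstacle I anticipate is verifying cleanly that the meromorphic extension yields \emph{both} pointwise holomorphic convergence $\sigma_{M}(\lambda)(x)\to\pi(x)$ and convergence of the differentials $(d\sigma_{M}(\lambda))_{x}\to(d\pi)_{x}$ as $\lambda\to 0$ for $x\in M\setminus S_{0}$. This should follow from the fact that $\sigma_{M}$ extends holomorphically on a neighborhood of $\{0\}\times(M\setminus S_{0})$ in $\mathbb{CP}^{1}\times M$ (so derivatives in the $M$-directions depend holomorphically on $\lambda$ and can be evaluated at $\lambda=0$), together with $\pi=\tilde{\sigma}_{M,0}$ on $M\setminus T\supset M\setminus S_{0}$. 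Once this is granted, the Hartogs extension in Step (i) and the limiting argument in Step (iii) are routine.
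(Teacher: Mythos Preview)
Your proposal is correct and follows essentially the same three-step approach as the paper: Hartogs extension plus $\pi\circ\sigma_M(\lambda)=\pi$ for well-definedness, $\pi|_{F^0}=\mathrm{id}$ for injectivity, and the limiting argument $\eta_x(v_1,\dots,v_p)=\eta_{\pi(x)}(d\pi\,v_1,\dots,d\pi\,v_p)$ for surjectivity. The paper phrases injectivity and surjectivity via $G^0$ and the restricted map $\mathring{\pi}:M\setminus S_0\to G^0$, then invokes Lemma~\ref{LF0}(c) that $G^0=F^0$, whereas you work directly with $F^0$; this is only a cosmetic difference, and your anticipated technical point about convergence of differentials off $S_0$ is exactly the paper's parenthetical caveat handled by restricting to $\mathring{\pi}$.
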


\proof
For $\omega $ $\in $ $H^{0}(F^{0},\Omega _{F^{0}}^{p})$ (which makes sense
by Lemma \ref{LF0} (a), (c))$,$ we can now define $\pi ^{\ast }\omega $ \ of
(\ref{8-17.5}) to be $\pi ^{\ast }\omega $ := $\tilde{\sigma}_{M,0}^{\ast
}\omega $ $\in $ $H^{0}(M,\Omega _{M}^{p})$ by Hartogs' extension theorem
since $T$ is of codimension at least 2 in $M$. From $\pi =\pi \circ (\sigma
(\lambda ))$ on $M\backslash S_{0},$ it follows that $\pi ^{\ast }\omega $ $%
= $ $\sigma (\lambda )^{\ast }(\pi ^{\ast }\omega )$ on a dense open subset $%
U_{\lambda }$ (depending on $\lambda )$ of $M$ (since $\pi $ could be
discontinuous), giving that $\pi ^{\ast }\omega $ is $\sigma (\lambda )$%
-invariant on $U_{\lambda }$ hence on $M$ ($\pi ^{\ast }\omega $ being
globally holomorphic as just defined) i.e. $\pi ^{\ast }\omega $ $\in $ $%
H_{0}^{0}(M,\Omega _{M}^{p}).$ The map $\pi ^{\ast }$ of (\ref{8-17.5}) is
now well defined.

Suppose $\pi ^{\ast }\omega $ $=$ $0$ on $M.$ By $\iota _{G^{0}}:$ $%
G^{0}\hookrightarrow $ $M$ and $M\backslash S_{0}$ $\overset{\pi }{%
\rightarrow }$ $G^{0}$ and $\pi |_{G^{0}}$ $=$ $Id,$ one has $\pi
|_{M\backslash S_{0}}\circ \iota _{G^{0}}$ $=$ $Id$ on $G^{0}$ (note that $%
G^{0}\subset M\backslash S_{0}$ by Lemma \ref{LF0} (b)$).$ So (recalling
that $\omega $ is on $F^{0}$ $\supset $ $G^{0})$ $\omega |_{G^{0}}$ $=$ $%
\iota _{G^{0}}^{\ast }\pi |_{M\backslash S_{0}}^{\ast }\omega |_{G^{0}}$ $=$ 
$\iota _{G^{0}}^{\ast }(\pi ^{\ast }\omega )|_{M\backslash S_{0}}$ $=$ $0$
if $\pi ^{\ast }\omega $ $=$ $0,$ giving $\omega $ $=$ $0$ on $G^{0}.$ Hence 
$\omega =0$ on $F^{0}$ ($=G^{0}$ by Lemma \ref{LF0} (c)). That is, $\pi
^{\ast }$ of (\ref{8-17.5}) is injective.

For the surjectivity of $\pi ^{\ast }$ let $\mathring{\pi}$ $:$ $M\backslash
S_{0}$ $\rightarrow $ $G^{0}$ be the restriction of $\pi $ to $M\backslash
S_{0}$ $\subset $ $M\backslash T$ (namely $\mathring{\pi}$ $=$ $\sigma
_{M,0}).$ At a regular point $(\lambda ,x)$ $\in $ $\mathbb{C}\mathbb{P}%
^{1}\times (M\backslash S_{0}),$ for $v$ $\in $ $T_{x}M$ one has $\mathring{%
\pi}_{\ast }v$ $=$ $\lim_{\lambda \rightarrow 0}\sigma (\lambda )_{\ast }v$ $%
\in $ $T_{\mathring{\pi}(x)}G^{0}$ (which might not hold for $\mathring{\pi}%
, $ $S$ and $G^{0}$ replaced by $\pi ,$ $T$ and $F^{0}$ respectively$)$. Now
given $\eta $ $\in $ $H_{0}^{0}(M,\Omega _{M}^{p}),$ that is $\sigma
(\lambda )^{\ast }\eta =\eta $ ($\forall \lambda $ $\in $ $\mathbb{C}^{\ast
}),$ this invariance yields at $x$ $\in $ $M\backslash S_{0}$ with $%
v_{1},..,v_{p}$ $\in $ $T_{x}(M\backslash S_{0})$%
\begin{eqnarray}
\eta _{x}(v_{1},..,v_{p}) &=&\eta _{\lambda \circ x}(\sigma (\lambda )_{\ast
}v_{1},\cdot \cdot ,\sigma (\lambda )_{\ast }v_{p})  \label{L1} \\
&=&\eta _{\mathring{\pi}(x)}(\lim_{\lambda \rightarrow 0}\sigma (\lambda
)_{\ast }v_{1},\cdot \cdot ,\lim_{\lambda \rightarrow 0}\sigma (\lambda
)_{\ast }v_{p})  \notag \\
&=&\eta _{\mathring{\pi}(x)}(\mathring{\pi}_{\ast }v_{1},\cdot \cdot ,%
\mathring{\pi}_{\ast }v_{p}).  \notag
\end{eqnarray}

\noindent The equality of (\ref{L1}) amounts to asserting that $\mathring{\pi%
}^{\ast }(\iota _{G^{0}}^{\ast }\eta )$ $=$ $\eta $ on $M\backslash S_{0}.$
In view that $M\backslash S_{0}$ is dense (and open) and $\mathring{\pi}%
^{\ast }(\iota _{G^{0}}^{\ast }\eta )$ (and $\eta )$ is actually holomorphic
on the whole $M$ (by $\pi ^{\ast }$ of (\ref{8-17.5}))$,$ it follows that $%
\mathring{\pi}^{\ast }(\iota _{G^{0}}^{\ast }\eta )$ $=$ $\eta $ holds on $%
M, $ giving in turn that $\pi ^{\ast }(\iota _{F^{0}}^{\ast }\eta )$ $=$ $%
\eta $ on $M\backslash S_{0}$ thus on $M$ (or, using $G^{0}$ $=$ $F^{0}$ in
Lemma \ref{LF0} (c)). This amounts to yielding $\eta $ $\in $ $\func{Im}\pi
^{\ast }.$ We have shown that $\pi ^{\ast }$ of (\ref{8-17.5}) is surjective
hence in turn, an isomorphism.

\endproof%

By Proposition \ref{propA2} and Lemma \ref{LFM}, we immediately obtain

\begin{proposition}
\label{CCS} (cf. \cite[Corollary IV]{CS} for $M$ compact K\"{a}hler) Let $%
\sigma _{M}$ be a holomorphic $\mathbb{C}^{\ast }$-action on a complex
manifold $M$ which can be noncompact. Suppose that $\sigma _{M}$ extends
meromorphically to $\mathbb{C}\mathbb{P}^{1}\times M$ - - -\TEXTsymbol{>} $M$%
. Then we have a natural linear isomorphism:%
\begin{equation*}
H^{0}(F^{0},\Omega _{F^{0}}^{p})\simeq H^{0}(M,\Omega _{M}^{p}).
\end{equation*}
\end{proposition}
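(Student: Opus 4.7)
The plan is to chain the two preceding results, which have been set up precisely so that the proposition falls out immediately. Namely, the target isomorphism factors as
\[
H^0(F^0, \Omega^p_{F^0}) \;\xrightarrow{\;\pi^*\;}\; H^0_0(M, \Omega^p_M) \;=\; H^0(M, \Omega^p_M),
\]
where the first arrow is the linear isomorphism of Lemma \ref{LFM} and the equality on the right is the content of Proposition \ref{propA2}.

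First I would cite Proposition \ref{propA2}: under the meromorphic-extension hypothesis, every global holomorphic $p$-form on $M$ is automatically $\mathbb{C}^*$-invariant, so $H^0(M, \Omega^p_M) = H^0_0(M, \Omega^p_M)$ (and all other weight spaces vanish). This collapses the right-hand side of the desired isomorphism onto the weight-zero subspace. Second, I would invoke Lemma \ref{LFM}: the canonical map $\pi : M \setminus T \to F^0$ sending $x \mapsto \lim_{\lambda \to 0} \lambda \circ x$ is holomorphic off the codimension-$\geq 2$ indeterminacy locus $T$, pullback along $\pi$ of any $\omega \in H^0(F^0, \Omega^p_{F^0})$ (well-defined since $F^0$ is a complex submanifold by Lemma \ref{LF0}(a),(c)) extends holomorphically across $T$ by Hartogs, and the resulting map $\pi^*$ is shown there to be a linear isomorphism onto $H^0_0(M, \Omega^p_M)$. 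Composing these two identifications produces the desired isomorphism, which is natural because $\pi$ is intrinsically determined by $(M, \sigma_M)$ together with its meromorphic extension.

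There is no serious obstacle at this stage, since the two hard ingredients (the vanishing/invariance statement of Proposition \ref{propA2}, which uses Hartogs to extend $\sigma_M^*\omega$ across $\{0, \infty\} \times M$ and then a growth argument at $\lambda = \infty$; and the construction and bijectivity of $\pi^*$ in Lemma \ref{LFM}, which rests on the geometry of $F^0 = G^0$ from Lemma \ref{LF0}) have already been established. The only point worth a brief sentence in the write-up is to note that the composition is the natural one, induced by pullback along the retraction $\pi$ to the zero-limit set $F^0$ of the $\mathbb{C}^*$-action.
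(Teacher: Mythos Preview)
Your proposal is correct and is exactly the paper's approach: the paper states Proposition~\ref{CCS} as an immediate consequence of Proposition~\ref{propA2} and Lemma~\ref{LFM}, precisely the composition $H^0(F^0,\Omega^p_{F^0}) \xrightarrow{\pi^*} H^0_0(M,\Omega^p_M) = H^0(M,\Omega^p_M)$ you describe.
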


Let us compare Proposition \ref{CCS} with \cite[Proposition II]{CS} where $M$
is assumed to be a connected compact K\"{a}hler manifold and $\sigma _{M}$
has at least one fixed point. In our notation above, if $M$ is compact, the
closure $\overline{F^{0}}$ in the usual complex topology is known to be a
complex subvariety of $M$ by standard argument (or, one may see this via
(c), (d) of Lemma \ref{LF0})$.$ Note that $\overline{F^{0}},$ contained in $%
F^{\sigma _{M}}$ (fixed point set), is connected. In such a special
situation $\overline{F^{0}}$ is equal/reduced to the \textit{source, }%
denoted by\textit{\ }$F_{1},$ as is introduced in \cite[Proposition II,
pp.55-56]{CS}; compare the proof of Lemma \ref{LF0} (d) below.

Our result and proof above differ from those of \cite[Corollary IV]{CS} in
that in \cite{CS} the complex manifold $M$ is assumed to be compact and its
proof relies on the so-called\textit{\ invariant decomposition} of $M$
(associated with the $\mathbb{C}^{\ast }$-action), which was originally
discovered by A. Bialynicki-Birula in the algebraic setting (cf. \cite{BB}).
The comparison mentioned above is established by the following:

\begin{lemma}
\label{9.9-5} Let $M,$ $\sigma _{M}$ be as in Proposition \ref{CCS} (so $M$
can be noncompact). Assume that $\overline{F^{0}}$ is an analytic subvariety
of $M.$ We have $H^{0}(F^{0},\Omega _{F^{0}}^{p})$ $=$ $H^{0}(\overline{F^{0}%
},\Omega _{\overline{F^{0}}}^{p}).$ Here, with $\overline{F^{0}}$ being
possibly singular $\Omega _{\overline{F^{0}}}^{p}$ is defined in the sense
of algebraic geometry (cf. \cite[Chap.II, Section 8]{Ha}). In particular,
for $M$ compact we obtain, together with Proposition \ref{CCS}, $%
H^{0}(M,\Omega _{M}^{p})$ $\cong $ $H^{0}(F_{1},\Omega _{F_{1}}^{p})$ by $%
\overline{F^{0}}=F_{1},$ as originally stated in \cite{CS}.
\end{lemma}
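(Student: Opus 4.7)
The plan is to establish the two assertions in sequence: first the equality of the two spaces of global sections, then the identification of $\overline{F^0}$ with the Carrell--Sommese source $F_1$ in the compact case.

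For the equality of sections, I would study the natural restriction map
\[
r\colon H^0(\overline{F^0},\Omega^p_{\overline{F^0}}) \longrightarrow H^0(F^0,\Omega^p_{F^0}).
\]
Injectivity follows from the fact that $F^0$ is a smooth Zariski-open dense subset of the reduced analytic subvariety $\overline{F^0}$ (by Lemma~\ref{LF0}(a) together with the hypothesis), combined with the fact that on a reduced complex space, K\"ahler differentials are locally generated by expressions $f_0\, df_1\wedge\cdots\wedge df_p$ with $f_i\in \mathcal{O}_{\overline{F^0}}$, so a section vanishing on the dense open $F^0$ forces its local generators to vanish identically. For surjectivity, given $\omega\in H^0(F^0,\Omega^p_{F^0})$, I would invoke the construction from the proof of Lemma~\ref{LFM}: the pullback $\pi^*\omega=\tilde\sigma_{M,0}^{\,*}\omega$ is initially defined on $M\setminus T$, where $T\subset S_0$ has codimension $\geq 2$ in $M$, and extends by Hartogs' theorem to a globally defined $\mathbb{C}^*$-invariant holomorphic $p$-form on all of $M$. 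Restricting this ambient form via the closed immersion $\iota\colon\overline{F^0}\hookrightarrow M$ yields a section $\iota^{*}(\pi^*\omega)$ of $\Omega^p_{\overline{F^0}}$ interpreted as K\"ahler differentials, which agrees with $\omega$ on $F^0$ because $\pi|_{F^0}=\mathrm{Id}$ by Lemma~\ref{LF0}(c). This exhibits $\omega$ as lying in the image of $r$.

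For the compact case, the plan is to show $\overline{F^0}=F_1$. Since $F^0$ is connected and contained in the fixed point set $F^{\sigma_M}$, its closure $\overline{F^0}$ is a connected analytic subvariety of $F^{\sigma_M}$, hence lies in a single connected component. The source $F_1$ of Carrell--Sommese \cite{CS} is characterized, in the compact K\"ahler setting with $\sigma_M$ having at least one fixed point, as the unique connected component of $F^{\sigma_M}$ to which $\lim_{\lambda\to 0}\lambda\circ x$ belongs for $x$ in a Zariski-dense subset of $M$ --- and this is precisely the defining property of $F^0=\tilde\sigma_{M,0}(M\setminus T)$. Hence $F^0\subset F_1$; since $F^0$ is an open dense complex submanifold of the smooth irreducible $F_1$, taking closures gives $\overline{F^0}=F_1$. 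Combining with Proposition~\ref{CCS} then yields $H^0(M,\Omega^p_M)\cong H^0(F_1,\Omega^p_{F_1})$ as claimed.

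The main obstacle I anticipate is the careful sheaf-theoretic handling in the surjectivity step: one must verify that restricting an ambient holomorphic form $\pi^*\omega$ on the smooth manifold $M$ to the possibly singular subvariety $\overline{F^0}$ produces a bona fide section of the sheaf $\Omega^p_{\overline{F^0}}$ of K\"ahler differentials, and not merely a holomorphic form on the regular locus $F^0$. The hypothesis that $\overline{F^0}$ is a genuine analytic subvariety of $M$ is essential here, since it provides the closed immersion $\iota$ and therefore a well-defined pullback on K\"ahler differentials. Modulo this bookkeeping, the proof reduces to a standard density-plus-Hartogs extension argument.
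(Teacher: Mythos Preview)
Your proposal is correct and follows essentially the same route as the paper. The paper's proof also establishes injectivity of the restriction map by density, and for surjectivity invokes Proposition~\ref{CCS} (whose isomorphism is precisely your $\pi^{*}$ extended by Hartogs) to write any $\omega\in H^{0}(F^{0},\Omega_{F^{0}}^{p})$ as $\iota_{F^{0}}^{*}\eta$ for a unique ambient $\eta\in H^{0}(M,\Omega_{M}^{p})$, then restricts $\eta$ to $\overline{F^{0}}$; your identification $\overline{F^{0}}=F_{1}$ in the compact case likewise matches the paper's argument (carried out at the end of the proof of Lemma~\ref{LF0}), which uses the same connectedness and source-characterization reasoning you outline.
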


\begin{remark}
\label{9.10-5} In fact $F^{0}=\overline{F^{0}}$ as can be seen in the proof
of Lemma \ref{LF0} below (for (c) that $G^{0}=F^{0},$ whose argument applies
here similarly).
\end{remark}

\proof
(of Lemma \ref{9.9-5}) The natural map $H^{0}(\overline{F^{0}},\Omega _{%
\overline{F^{0}}}^{p})$ $\rightarrow $ $H^{0}(F^{0},\Omega _{F^{0}}^{p})$
(induced by $F^{0}$ $\hookrightarrow $ $\overline{F^{0}})$ is injective, so
it suffices to show that every $\omega $ $\in $ $H^{0}(F^{0},\Omega
_{F^{0}}^{p})$ on $F^{0}$ can be extended to $\overline{F^{0}}$ (hence the
map is also surjective). This is in general not possible unless $\overline{%
F^{0}}\backslash F^{0}$ is of codimension $\geq 2$ in $\overline{F^{0}}.$ We
are not going to need such a codimension condition (cf. Remark \ref{9.10-5}%
). Instead, making use of Proposition \ref{CCS} and its proof we have that $%
\omega $ must be of the form $\iota _{F^{0}}^{\ast }\eta $ ($\iota _{F^{0}}$ 
$:$ $F^{0}$ $\hookrightarrow $ $M)$ for a unique $\eta $ $\in $ $%
H^{0}(M,\Omega _{M}^{p}).$ It is now seen that $\iota _{\overline{F^{0}}%
}^{\ast }\eta $ ($\iota _{\overline{F^{0}}}:\overline{F^{0}}$ $%
\hookrightarrow $ $M)$ $\in $ $H^{0}(\overline{F^{0}},\Omega _{\overline{%
F^{0}}}^{p})$ is the desired extension of $\omega .$

\endproof%

\proof
(\textbf{of Lemma \ref{LF0}}) Compared to the proofs above, the proof below
is less conceptual as it mostly relies on use of local coordinates. Recall $%
G^{0}$ $\subset $ the fixed point set $F^{\sigma _{M}}$ of $\sigma _{M}$.
Near any fixed point $q$ there exists a chart $U$ of local holomorphic
coordinates $(w_{1},$ $\cdot \cdot \cdot ,$ $w_{n})$ such that if $\lambda
\in \mathbb{C}^{\ast }$, then (\cite[p.56]{CS}, however, see Remark \ref%
{9-12-1} below)%
\begin{equation}
\sigma _{M}(\lambda )(w_{1},\cdot \cdot \cdot ,w_{n})=(\lambda
^{k_{1}}w_{1},\cdot \cdot ,\lambda ^{k_{s}}w_{s},w_{s+1},\cdot \cdot
,w_{t},\lambda ^{l_{t+1}}w_{t+1},\cdot \cdot ,\lambda ^{l_{n}}w_{n})
\label{lhc}
\end{equation}%
\noindent for integers $k_{j}>0,$ $l_{j}<0$ ($s$ may be $0$ and $t$ may be $%
n)$ with $(w_{1},$ $\cdot \cdot \cdot ,$ $w_{n})(q)$ $=$ $(0,$ $\cdot \cdot
\cdot $, $0).$

Given $q\in G^{0},$ there is a point $\tilde{q}\in M\backslash S_{0}$ such
that $\sigma _{M,0}(\tilde{q})$ $=$ $q.$ By $\sigma _{M,0}(\tilde{q})$ $=$ $%
\lim_{\lambda \rightarrow 0}\lambda \tilde{q}$ we assume that for some 0 $%
\neq $ $\lambda _{0}\sim 0$, $\lambda _{0}\tilde{q}$ lies in a local chart
where (\ref{lhc}) is valid. Without loss of generality we may assume $%
\lambda _{0}\tilde{q}$ $\notin $ $S_{0}$ using $\tilde{q}$ $\notin $ $S_{0}$
and analyticity. Moreover $\lambda _{0}\tilde{q}$ has coordinates $%
(w_{1}^{0},$ $\cdot \cdot ,w_{t}^{0},$ $0,$ $\cdot \cdot ,$ $0)$, proved by
contradiction using the negative power $l_{j}<0$ $($compare Remark \ref%
{9-12-1} below), and similarly $w_{s+1}^{0}$ $=$ $\cdot \cdot \cdot $ $=$ $%
w_{t}^{0}$ $=$ $0$ by $\lim_{\lambda \rightarrow 0}\lambda (\lambda _{0}%
\tilde{q})$ $=$ $q$ $=$ $(0,$ $0,$ $\cdot \cdot \cdot ,$ $0)$ and (\ref{lhc})%
$.$ Take an open connected neighborhood $\tilde{V}$ $\subset $ $M\backslash
S_{0}$ of $\tilde{q}$. Then $\sigma (\lambda _{0})\tilde{V}$ ($\sigma
(\lambda _{0})$ is a biholomorphism of $M$) is an open connected
neighborhood of $\lambda _{0}\tilde{q}$ in $M\backslash S_{0}$ (if $\tilde{V}
$ small enough)$,$ hence contains $V\times \{0_{n-t}\}$ where $V$ is an open
neighborhood of $(w_{1}^{0},$ $\cdot \cdot ,w_{t}^{0})$ in $\mathbb{C}^{t}$
and $0_{m}$ denotes the origin in $\mathbb{C}^{m}$. Then it follows from (%
\ref{lhc}) using $\lambda $ $\rightarrow $ $0$ that $\sigma _{M,0}(\tilde{V}$
$\cap $ $\{w_{t+1}$ $=$ $w_{t+2}$ $=$ $\cdot \cdot \cdot $ $=$ $w_{n}$ $=$ $%
0\})$ $\subset $ $\sigma _{M,0}(\tilde{V}$) $\subset $ $G^{0}$ covers an
open neighborhood of $q$ in $\{0_{s}\}\times \mathbb{C}^{t-s}\times
\{0_{n-t}\}.$ Conversely only points in the local chart having coordinates $%
(0_{s},w_{s+1},\cdot \cdot ,w_{t},$ $0_{n-t})$ can belong to $G^{0}.$ Thus $%
(w_{1},$ $\cdot \cdot \cdot ,$ $w_{n})$ in (\ref{lhc}) provide a complex
submanifold structure of $G^{0}$ in $M$ near $q.$ We have proved (a) of the
lemma.

Suppose $t$ $<$ $n$ in (\ref{lhc}) for $q\in G^{0}.$ For any $\tilde{q}_{1}$
sufficiently near $\tilde{q}$ with $\tilde{q}_{1},$ $\lambda _{0}\tilde{q}%
_{1}$ $\notin $ $S_{0},$ $\lambda _{0}\tilde{q}_{1}$ has coordinates with
vanishing $w_{t+1},$ $\cdot \cdot \cdot ,$ $w_{n}$ by the similar argument
as above, giving that $\sigma (\lambda _{0})\tilde{V}$ becomes degenerate, a
contradiction to the biholomorphism of $\sigma (\lambda _{0}).$ Thus%
\begin{equation}
t=n\text{ in (\ref{lhc}) for }q\in G^{0}.  \label{tn}
\end{equation}

Let \textit{Proj} $:U\rightarrow G^{0}$ be the coordinate projection via (%
\ref{lhc}) for $t=n,$ i.e. $(w_{1},$ $\cdot \cdot \cdot ,$ $w_{n})$ $%
\rightarrow $ $(0,$ $\cdot \cdot ,$ $0,$ $w_{s+1},$ $\cdot \cdot ,$ $w_{n})$%
. Clearly \textit{Proj }holomorphically\textit{\ }extends $\sigma _{M,0}$
(originally defined on $U\backslash S_{0})$ to $U$. Thus $\sigma _{M,0}$ is
regular at $q$ $=$ $(0,$ $\cdot \cdot ,$ $0)$ since \textit{Proj }is so,
suggesting that $G^{0}$ $\subset $ $M\backslash S_{0}$. To prove this
inclusion rigorously, by considering $\phi :$ $(w_{1},$ $\cdot \cdot \cdot ,$
$w_{n})$ $\rightarrow $ $(\lambda ^{k_{1}}w_{1},$ $\cdot \cdot \cdot ,$ $%
\lambda ^{k_{s}}w_{s},$ $w_{s+1},$ $\cdot \cdot \cdot ,$ $w_{n})$ one shows
in a similar way that $\sigma _{M}$ is regular at $(0,q)$ $\in $ $\mathbb{C}%
\mathbb{P}^{1}\times $ $M,$ and hence the claim $G^{0}$ $\subset $ $%
M\backslash S_{0}$ in (b) of the lemma is proved.

From the definition of the extension $\tilde{\sigma}_{M,0}$ it is not
difficult to see $F^{0}$ $\subset $ $\overline{G^{0}}$ (via $\tilde{\sigma}%
_{M,0}$ $=$ $\sigma _{M,0}$ on $M\backslash S_{0}).$ Obviously $\overline{%
G^{0}}$ $\subset $ $F^{\sigma _{M}}.$ So given $\bar{q}\in F^{0}$ there is a
neighborhood $U$ in $M,$ which is contained in a local chart first having
the property (\ref{lhc}) with $\bar{q}$ set to be $q$ there then having $t=n$
in view of $G^{0}$ $\subset $ $F^{0}$ $\subset $ $\overline{G^{0}}$ and (\ref%
{tn}). We claim $G^{0}\cap U$ $=$ $F^{0}\cap U.$ The coordinate projection 
\textit{Proj} above defined on $U$ also holomorphically extends $\tilde{%
\sigma}_{M,0}$ (on $U\backslash T)$ to $U$ meaning that $U$ is disjoint from
the singular set $T$ of $\tilde{\sigma}_{M,0}.$ So $T\cap U$ $=$ $\emptyset $
together with $S_{0}\cap U$ $=$ $\emptyset $ (as shown in the proof of (b)
above), implies that $\tilde{\sigma}_{M,0}$ $=$ $\sigma _{M,0}$ $=$ \textit{%
Proj }on $U.$ Thus $G^{0}\cap U$ $=$ $F^{0}\cap U$ ($=$ \textit{Proj(}$U$))
hence $G^{0}=F^{0}$ proving (c) of the lemma.

The proof of (c) actually shows that $G^{0}$ $=$ $\overline{G^{0}}$ (compare
the preceding sentence) hence (d) of the lemma. Recall the aforementioned $%
F_{1}$ after Proposition \ref{CCS}, which is defined in \cite{CS} for $M$
compact. Since $G^{0}$ is of the same complex dimension $t-s$ ($=$ $n-s)$ as
that of $F_{1}$ \cite{CS} which is connected, for $M$ compact we conclude $%
G^{0}(=$ $F^{0}=\overline{F^{0}}$ by (c), (d) above$)$ $=$ $F_{1}$ as
asserted earlier. In this connection, $F^{0}(=\overline{F^{0}})$ in our
treatment here can be regarded as a replacement of $F_{1}$ when $M$ is not
necessarily compact.

\endproof%

\begin{remark}
\label{9-12-1} Since (\ref{lhc}) for $\lambda $ $\in $ $\mathbb{C}^{\ast }$
is claimed without proof in \cite{CS}, let us assume that $U$ is a \textit{%
small} neighborhood of $q$ $=$ $(0,$ $0,$ $\cdot \cdot \cdot $, $0),$ in
which case (\ref{lhc}) only holds for $\lambda \sim 1.$ It might happen that
a point $p$ $\in $ $U$ with some $w_{k}(p)$ $\neq $ $0$ ($t+1$ $\leq $ $k$ $%
\leq $ $n)$ lying outside $U$ after the action by some $|\lambda |$ $<$ $1,$
travels back to $U$ after another action by $|\lambda |$ $<<$ $1.$ This
would not occur if (\ref{lhc}) were true for $\lambda $ $\in $ $\mathbb{C}%
^{\ast }$ or if $U$ were large. For the remedy here we put $q_{1}$ $=$ $%
\lambda _{0}\tilde{q}$ $\in $ $U.$ The fact $\lim_{\lambda \rightarrow
0}\lambda q_{1}$ $=$ $q$ gives that $\lambda q_{1}$ $\in $ $U$ for all $%
|\lambda |\leq \delta $ for some $\delta $ $>$ $0.$ Then we have, by setting 
$q_{2}$ $=$ $\delta q_{1},$ $\lambda q_{2}$ $\in $ $U$ for all $|\lambda |$ $%
\leq $ $1.$ This yields $w_{t+1}(q_{2})$ $=$ $\cdot \cdot \cdot $ $=$ $%
w_{n}(q_{2})$ $=$ $0$ otherwise it follows the contradiction that $\lambda
q_{2}$ $\notin $ $U$ for some $|\lambda |$ $<$ $1$ (if $w_{k}(q_{2})$ $\neq $
$0$ for some $t+1$ $\leq $ $k$ $\leq $ $n$ and $\lambda q_{2}$ $\in $ $U$
for all $|\lambda |$ $<$ $1$ then $U$ cannot be small). Put differently the
aforementioned scenario that a point frequently/always comes in and out
through $U$ under the actions $|\lambda |$ $<$ $1$ is intuitively seen to
get nowhere and therefore violates the foregoing existence of limit. Remark
that the above enables us to simply replace $\lambda _{0}$ in the original
argument by $\delta \lambda _{0}.$ We are now done.
\end{remark}

Another generalization of \cite{CS} from a quite different perspective is
given in the next section.

For a general holomorphic action $\sigma _{M}^{G}$ given by a connected
complex reductive Lie group $G$, we can show (below) that (\ref{A5}) of
Proposition \ref{propA2} still holds, without knowing the detailed $G$%
-invariant decomposition of $M$ as conjectured in \cite[p.115]{Som}.

To fix the notation for use shortly, let $\mathfrak{g}$ denote a complex
simple Lie algebra$,$ $\mathfrak{h}$ a fixed Cartan subalgebra of $\mathfrak{%
g}$ and $\Delta $ the set of all nonzero roots. In the root space
decomposition $\mathfrak{g}=\mathfrak{h+}\sum_{\alpha \in \Delta }\mathfrak{g%
}^{\alpha }$ one can choose $\alpha _{j}$ $\in $ $\Delta ,$ $j=1,...,$ $l,$ $%
X_{j}$ $\in $ $\mathfrak{g}^{\alpha _{j}}$ and $Y_{j}$ $\in $ $\mathfrak{g}%
^{-\alpha _{j}}$ such that%
\begin{equation}
\lbrack X_{j},Y_{j}]=H_{j}\in \mathfrak{h,}\text{ }[H_{j},X_{j}]=2X_{j},%
\text{ }[H_{j},Y_{j}]=-2Y_{j}  \label{sl(2)}
\end{equation}

\noindent and that $\mathfrak{g}$ is generated by $X_{j},$ $Y_{j},$ $H_{j},$ 
$1\leq j\leq l$ (cf. \cite[p.482]{He})$.$ Also $X_{j},$ $Y_{j},$ $H_{j}$
form a canonical basis for the Lie algebra $sl(2,\mathbb{C}).$ The following
is basic:

\begin{lemma}
\label{SLdense} Let $T=\left\{ \left( 
\begin{array}{cc}
\lambda & 0 \\ 
0 & \lambda ^{-1}%
\end{array}%
\right) \text{ }|\text{ }\lambda \in \mathbb{C}^{\ast }\right\} $ $\subset $ 
$H$ $=$ $SL(2,\mathbb{C}).$ Then the set $\tbigcup\limits_{h\in H}Ad(h)T$%
\textit{\ is dense in }$SL(2,\mathbb{C}).$
\end{lemma}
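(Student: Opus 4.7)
The plan is to show that a Zariski open dense subset of $SL(2,\mathbb{C})$ is already contained in $\bigcup_{h\in H} Ad(h)T$, which will give density a fortiori in the classical topology. Concretely, I would consider the trace function $\mathrm{tr}:SL(2,\mathbb{C})\to\mathbb{C}$ and the Zariski open subset
\begin{equation*}
U:=\{g\in SL(2,\mathbb{C}):\mathrm{tr}(g)^{2}\neq 4\}.
\end{equation*}
Since $\mathrm{tr}(g)^{2}-4$ is a nonconstant regular function on the irreducible variety $SL(2,\mathbb{C})$, its zero locus is a proper closed subvariety, so $U$ is Zariski dense, and hence dense in the classical topology as well.

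The next step is to check that $U\subset \bigcup_{h\in H} Ad(h)T$. For $g\in SL(2,\mathbb{C})$ the characteristic polynomial is $t^{2}-\mathrm{tr}(g)t+1$, whose discriminant is $\mathrm{tr}(g)^{2}-4$. Therefore $g\in U$ has two distinct eigenvalues $\lambda,\lambda^{-1}\in\mathbb{C}^{\ast}$, so $g$ is diagonalizable: choosing eigenvectors and assembling them into $h\in H$ yields $g=h\,\mathrm{diag}(\lambda,\lambda^{-1})\,h^{-1}=Ad(h)(t_{\lambda})$ with $t_{\lambda}\in T$. This identifies $U$ as a subset of the union of conjugates of $T$.

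Combining the two observations, $\bigcup_{h\in H}Ad(h)T\supset U$, and $U$ is dense in $SL(2,\mathbb{C})$, which gives the claim. There is no real obstacle here; the only thing worth being careful about is to argue density via the Zariski open set $U$ rather than trying to describe the image of the conjugation map $H\times T\to SL(2,\mathbb{C})$ directly (which is surjective onto the diagonalizable elements only and misses the nontrivial unipotent conjugacy classes with trace $\pm 2$, showing in particular that the union is not all of $SL(2,\mathbb{C})$ — density is the best one can hope for).
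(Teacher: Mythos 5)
Your argument is correct, and it supplies the standard proof of a fact the paper simply labels ``basic'' without giving a proof: the set $U=\{g:\operatorname{tr}(g)^2\neq 4\}$ of regular semisimple elements is a nonempty Zariski open (hence classically dense) subset of the irreducible variety $SL(2,\mathbb{C})$, and each such $g$ is diagonalizable with a conjugating matrix that can be rescaled by a square root of its determinant to land in $SL(2,\mathbb{C})$, so $U\subset\bigcup_{h\in H}Ad(h)T$. The only step worth spelling out explicitly is that last rescaling (replacing a diagonalizing $P\in GL(2,\mathbb{C})$ by $cP$ with $c^2=1/\det P$, which does not change the conjugation since $cI$ is central), but this is routine and your proof is complete.
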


We are now ready to prove Theorem \ref{propA2-1} stated in the Introduction.

\proof
\textbf{(of Theorem \ref{propA2-1})} Let us first assume that $G$ is
semisimple. Associated to the Lie subalgebra $\mathfrak{g}_{j}$ $\cong $ $%
sl(2,\mathbb{C})$ spanned by $X_{j},$ $Y_{j},$ $H_{j}$ of (\ref{sl(2)})$,$
we have exactly one connected Lie subgroup $G_{j}$ of $G$ with its Lie
algebra equal to $\mathfrak{g}_{j}$ (cf. \cite[p.112]{He}). Let $\pi _{j}$ : 
$SL(2,\mathbb{C})$ $\rightarrow $ $G_{j}$ ($\subset $ $G)$ be the covering
map (since $SL(2,\mathbb{C})$ is simply connected). We have a Lie group
homomorphism $\psi $ : $\lambda $ $\in $ $\mathbb{C}^{\ast }$ $\rightarrow $ 
$G$ defined by%
\begin{equation*}
\psi :\lambda \longrightarrow \left( 
\begin{array}{cc}
\lambda & 0 \\ 
0 & \lambda ^{-1}%
\end{array}%
\right) \overset{\pi _{j}}{\longrightarrow }G_{j}\overset{incl}{%
\longrightarrow }G.
\end{equation*}

\noindent With the projective compactification $\bar{G}$ of $G$ (cf. Remark %
\ref{rext} below) and all the maps being natural, we conclude that $\phi $ $%
: $ $SL(2,\mathbb{C})$ $\rightarrow $ $\bar{G}$ defined by the composition
of $\pi _{j}$ and the inclusion map \textquotedblleft $incl"$ extends
meromorphically to $\overline{SL(2,\mathbb{C})}.$ Composed with the map $%
\lambda \rightarrow \left( 
\begin{array}{cc}
\lambda & 0 \\ 
0 & \lambda ^{-1}%
\end{array}%
\right) ,$ we obtain that $\psi $ extends meromorphically (holomorphically
in fact) to $\bar{\psi}:\mathbb{C}\mathbb{P}^{1}$ $=$ $\overline{\mathbb{C}%
^{\ast }}$ $\overset{}{\text{- - -\TEXTsymbol{>}}}$ $\bar{G}$ (cf. \cite[%
Lemma II-C]{Som})$.$ It implies that the holomorphic $\mathbb{C}^{\ast }$%
-action on $M$ via $\bar{\psi}$ extends to a meromorphic map as the
following composite:%
\begin{equation*}
\mathbb{C}\mathbb{P}^{1}\times M\text{ }\overset{\bar{\psi}\times id}{\text{%
- - -\TEXTsymbol{>}}}\text{ }\bar{G}\times M\overset{\check{\sigma}_{M}^{G}}{%
\text{- - -\TEXTsymbol{>}}}M.
\end{equation*}%
\noindent Here we have used the facts that $\check{\sigma}_{M}^{G}$ is
meromorphic as assumed in the theorem and that the image of $\bar{\psi}%
\times id$ is not contained in the singular set of $\check{\sigma}_{M}^{G}$
(cf. \cite[pp.35-36]{St}). Similarly the holomorphic $\mathbb{C}^{\ast }$
action on $M$ through the map $\psi _{h}$: 
\begin{equation*}
\lambda \longrightarrow h\left( 
\begin{array}{cc}
\lambda & 0 \\ 
0 & \lambda ^{-1}%
\end{array}%
\right) h^{-1}\overset{\pi _{j}}{\longrightarrow }G_{j}\overset{incl}{%
\longrightarrow }G
\end{equation*}

\noindent also extends meromorphically to $\mathbb{C}\mathbb{P}^{1}\times M$
- - -\TEXTsymbol{>} $M.$

By Proposition \ref{propA2} and Lemma \ref{SLdense}, we conclude that each
holomorphic $p$-form $\omega $ on $M$ is $G_{j}$-invariant via the action $%
\sigma _{M}^{G}.$ This implies that $\omega $ is also invariant under $%
\sigma _{M}^{G}(g)$ for $g$ in an open neighborhood $V$ of the identity of $%
G $ (\cite[p.115]{He}) since the Lie algebras of $G_{j}$ span the Lie
algebra $\mathfrak{g}$ of $G$ as indicated earlier in (\ref{sl(2)}). Now
that $\cup _{k=1}^{\infty }V^{k}$ $=$ $G$ since $G$ is connected (cf. \cite[%
p.181]{Ma}), it follows that $\omega $ is $G$-invariant. This amounts to the
inclusion $H^{0}(M,\Omega _{M}^{p})$ $\subset $ $H_{0,\sigma
_{M}^{G}}^{0}(M,\Omega _{M}^{p}).$ The converse is obvious. Hence $%
H^{0}(M,\Omega _{M}^{p})=H_{0,\sigma _{M}^{G}}^{0}(M,\Omega _{M}^{p})$ for
semisimple $G.$ Since it is well known that a connected complex reductive
group is the product of a connected semisimple group and $(\mathbb{C}^{\ast
})^{k}$ for some $k$ $\in $ $\mathbb{N}\cup \{0\}$ (e.g. \cite[p.168]{H}, 
\cite[p.21]{Mar})$,$ the similar reasoning as above (by using Proposition %
\ref{propA2}) concludes the proof.

\endproof%

\begin{remark}
\label{rext} According to \cite[Remarks II-C]{Som}, a \textit{good}
compactification $\bar{G}$ exists for $G$ that is reductive. Suppose $G$
acts \textit{projectively} (see \cite[p.107]{Som} for this definition) on a
compact K\"{a}hler manifold $M$ through $\sigma _{M}^{G}.$ Then $\sigma
_{M}^{G}$ extends meromorphically to $\bar{G}\times M$ by \cite[Proposition I%
]{Som}. For other examples of the meromorphic extension, see Remark \ref%
{mero}.
\end{remark}

\section{\textbf{Complex manifolds with two holomorphic }$\mathbb{C}^{\ast }$%
\textbf{-actions\label{Sec10}}}

The goal of this section is to prove Theorem \ref{BM0} stated in the
Introduction. This result combined with those of Section 8 proves Corollary %
\ref{Cor4-1}. The main assertion $H^{0}(M,\Omega _{M}^{p})=H_{0,\sigma
_{M}}^{0}(M,\Omega _{M}^{p})$ in Proposition \ref{propA2} (where $\sigma
_{M} $ admits meromorphic extension) becomes a special case of $%
H^{0}(B,\Omega _{B}^{p})\cong H_{0,\sigma _{M}}^{0}(M,\Omega _{M}^{p})$ in
Theorem \ref{BM0}, which is to be discussed below. See also the paragraph
prior to Corollary \ref{Cor4-1} in the Introduction, that refers to moduli
spaces with two fibrations; the results of this section might have
applications to these spaces.

Our strategy is to consider the globally free case first, i.e. a principal $%
\mathbb{C}^{\ast }$-bundle $P$ over a complex manifold $M$ (not necessarily
compact). We turn to the locally free case later.

Some setup first. Denote the standard holomorphic $\mathbb{C}^{\ast }$%
-action on $P$ by $\sigma _{s}$ and $M$ $=$ $P/\sigma _{s}.$ Let $\sigma
_{d} $ be another globally free holomorphic $\mathbb{C}^{\ast }$-action on $%
P $, which maps fibre to fibre of the fibration $\pi :P\rightarrow M$
(perhaps different fibres). Let $\sigma _{M}$ be the $\sigma _{d}$-induced
holomorphic $\mathbb{C}^{\ast }$-action on $M,$ i.e. the commutative
relation $\pi \circ \sigma _{d}(\lambda )$ $=$ $\sigma _{M}(\lambda )\circ
\pi $ holds for any $\lambda $ $\in $ $\mathbb{C}^{\ast }.$ Assume that $B$ $%
:=$ $P/\sigma _{d}$ is a complex manifold and the natural projection $\pi
_{2}:P$ $\rightarrow $ $B$ realizes $P$ as a principal $\mathbb{C}^{\ast }$%
-bundle on $B$ (via $\sigma _{d}).$

A typical situation that motivates us is the one in Section \ref{Sec9}; here 
$P$ $=$ $\mathbb{C}^{\ast }\times M$ as a trivial principal $\mathbb{C}%
^{\ast }$-bundle on $M,$ the \textquotedblleft diagonal" action $\sigma
_{d}(\lambda )(\xi ,p)$ $:=$ $(\lambda \xi ,$ $\sigma _{M}(\lambda )p)$ for
a given holomorphic $\mathbb{C}^{\ast }$-action $\sigma _{M}$ on $M,$ and $%
B=P/\sigma _{d}$ ($=\{[(\xi ,x)]$ $|$ $(\xi ,x)$ $\in $ $P\}).$ Define $\psi
:B$ $\rightarrow $ $M$ by $\psi ([(\xi ,x)])$ $=$ $\sigma _{M}(\xi ^{-1})x,$
and $\psi ^{-1}(y)=[(1,y)]$ $\in $ $B$ for $y$ $\in $ $M,$ i.e. $\psi $ is a
biholomorphism.

Fix a local trivialization $\mathbb{C}^{\ast }\times $ $U$ for $P$ $%
\rightarrow $ $M.$ We may write $1$ for the local holomorphic section $(1,$ $%
y)$ on $U$ $\subset $ $M$ if no confusion occurs$.$ Write $(\zeta ,y)$ $:=$ $%
(\zeta \circ _{s}1,y)$ in $\mathbb{C}^{\ast }\times U$ where $\zeta \circ
_{s}1$ $=$ $\sigma _{s}(\zeta )1$ for short$;$ similar notations
\textquotedblleft $\circ _{d}",$ $``\circ _{M}"$ also apply below. For $%
\lambda $ $\in $ $\mathbb{C}^{\ast },$ $\sigma _{s}$ acts by $\sigma
_{s}(\lambda )(\zeta \circ _{s}1,y)$ $=$ $((\lambda \zeta )\circ _{s}1,y)$
or $\lambda \circ _{s}(\zeta ,y)=(\lambda \zeta ,y).$

\textbf{Throughout this section we assume that }$\sigma _{d}$\textbf{\
commutes with }$\sigma _{s},$ i.e. $\sigma _{d}(\lambda )\circ \sigma
_{s}(\zeta )$ $=$ $\sigma _{s}(\zeta )\circ \sigma _{d}(\lambda )$ or $%
\lambda \circ _{d}(\zeta \circ _{s}q)$ $=$ $\zeta \circ _{s}(\lambda \circ
_{d}q)$ for $q\in P,$ $\lambda ,$ $\zeta $ $\in $ $\mathbb{C}^{\ast }.$ We
say that $\sigma _{d}$ is \textit{degenerate} if $\sigma _{d}(\lambda
)(\zeta ,y)$ $=$ $(\zeta ,\lambda \circ _{M}y)$ for $\lambda $ close to $1$
in some local chart. Otherwise $\sigma _{d}$ is said to be \textit{%
nondegenerate.} This definition is easily seen to be intrinsic. If $\sigma
_{d}$ is nondegenerate, we claim the existence of $0\neq l$ $\in $ $\mathbb{Z%
},$ such that 
\begin{equation}
\lambda \circ _{d}(\zeta ,y)=(\lambda ^{l}\zeta ,\lambda \circ _{M}y).
\label{9-0}
\end{equation}%
\noindent Here we assume that $\lambda $ is close to $1$ to ensure that the
image of $\sigma _{d}(\lambda )$ is contained in the same trivialization.
Write $\lambda \circ _{d}(\zeta ,y)=(\Phi _{\lambda }(\zeta ),\lambda \circ
_{M}y).$ Then it is not hard to see%
\begin{equation}
\Phi _{\lambda _{1}\lambda _{2}}(\zeta )=\Phi _{\lambda _{1}}(\Phi _{\lambda
_{2}}(\zeta ))\text{ }(=\Phi _{\lambda _{2}}(\Phi _{\lambda _{1}}(\zeta ))),%
\text{ }\Phi _{\lambda }(\zeta _{1}\zeta _{2})=\zeta _{1}\Phi _{\lambda
}(\zeta _{2}).  \label{9-0.5}
\end{equation}%
\noindent Note that the second equality of (\ref{9-0.5}) follows from the
commutativity of $\sigma _{d}$ and $\sigma _{s}:$%
\begin{eqnarray*}
(\Phi _{\lambda }(\zeta ),\lambda \circ _{M}y) &=&\lambda \circ _{d}(\zeta
,y)=\lambda \circ _{d}(\zeta \circ _{s}(1,y)) \\
&=&\zeta \circ _{s}(\lambda \circ _{d}(1,y))=\zeta \circ _{s}((\Phi
_{\lambda }(1),\lambda \circ _{M}y) \\
&=&(\zeta \Phi _{\lambda }(1),\lambda \circ _{M}y).
\end{eqnarray*}%
\noindent Letting $\zeta =1$ in the first equality of (\ref{9-0.5}) gives 
\begin{equation*}
\Phi _{\lambda _{1}\lambda _{2}}(1)=\Phi _{\lambda _{1}}(\Phi _{\lambda
_{2}}(1))=\Phi _{\lambda _{2}}(1)\Phi _{\lambda _{1}}(1),
\end{equation*}%
\noindent so the map $\lambda \rightarrow \Phi _{\lambda }(1)$ is a
holomorphic character. It follows (see Remark \ref{10-2-1} below) that $\Phi
_{\lambda }(1)$ $=$ $\lambda ^{l}$ for $0\neq l$ $\in $ $\mathbb{Z}$ if $%
\sigma _{d}$ is nondegenerate.

\begin{remark}
\label{10-2-1} In the reasoning above, $\lambda $ is originally close to $1.$
Under different local trivializations $1$ and $1^{a},$ for the same point $%
\zeta \circ _{s}1$ and $\zeta ^{a}\circ _{s}1^{a}$ lying over $y\in M,$ we
have the same point $\Phi _{\lambda }(\zeta )\circ _{s}1$ and $\Phi
_{\lambda }^{a}(\zeta ^{a})\circ _{s}1^{a}$ lying over $\lambda \circ _{M}y$ 
$\in $ $M.$ Writing $1^{a}$ $=$ $\mu 1$ thus $\zeta $ $=$ $\zeta ^{a}\mu $
and using $\Phi _{\lambda }^{a}(\zeta \mu ^{-1})$ $=$ $\mu ^{-1}\Phi
_{\lambda }^{a}(\zeta )$ (\ref{9-0.5}) one sees that%
\begin{equation}
\Phi _{\lambda }(\zeta )=\Phi _{\lambda }^{a}(\zeta ).  \label{10.2-5}
\end{equation}%
From $\Phi _{\lambda }$ $=$ $\Phi _{\lambda }^{a}$ in (\ref{10.2-5}) it
follows that $\Phi _{\lambda }$ can be extended to every $\lambda $ $\in $ $%
\mathbb{C}^{\ast }$ even if $\lambda \circ _{M}y$ may leave the original
trivialization. In sum (\ref{9-0.5}) remains valid for all $\lambda _{1},$ $%
\lambda _{2}$ $\in $ $\mathbb{C}^{\ast }.$
\end{remark}

We will sometimes drop the subscripts $``s",$ $``d"$ and $``M"$ in $``\circ
_{s}",$ \textquotedblleft $\circ _{d}"$ and $``\circ _{M}"$ respectively if
there is no danger of confusion in the context.

Suppose that $\sigma _{M}$ extends meromorphically to $\mathbb{C}\mathbb{P}%
^{1}\times M$ - - -\TEXTsymbol{>} $M.$ As usual, $H^{0}(B,\Omega _{B}^{p})$
denotes the space of holomorphic $p$-forms on $B.$ Take $\omega _{B}$ $\in $ 
$H^{0}(B,\Omega _{B}^{p}).$ Consider the global holomorphic $p$-form $\pi
_{2}^{\ast }\omega _{B}$ on $P$ (where $\pi _{2}:P$ $\rightarrow $ $B).$




To proceed further, some preparations are in order. From the proof of
Proposition \ref{propA2}, we can localize the reasoning and restrict
ourselves to a local trivialization $\mathbb{C}^{\ast }\times $ $U$ of $P$ $%
\rightarrow $ $M.$ The form $\pi _{2}^{\ast }\omega _{B}|_{\mathbb{C}^{\ast
}\times U}$ can now be expanded at $\zeta $ $=$ $0$ $\in $ $\mathbb{C}%
\mathbb{P}^{1}$ as before; this is dependent only on the local
trivialization $\mathbb{C}^{\ast }\times U$ hence on $\zeta $ and
independent of local coordinates at $x$ $\in $ $M.$ With the previous
notation we write%
\begin{equation}
(\pi _{2}^{\ast }\omega _{B})(\zeta \circ _{s}1,x)=\sum_{k=0}^{\infty }\zeta
^{k}\tilde{\omega}_{k}(1,x,dx,d\zeta )  \label{A9}
\end{equation}

\noindent where $\tilde{\omega}_{k}$ is a holomorphic $p$-form in $\zeta $
and $x,$ whose coefficients are independent of $\zeta .$

\begin{remark}
\label{10-4-1} An intrinsic description for the regularity (\ref{A9}) is the
following. Regarding the principal $\mathbb{C}^{\ast }$-bundle $P$ $%
\rightarrow $ $M$ as seated inside the associated holomorphic line bundle $L$
$\rightarrow $ $M$ (cf. (\ref{10.30-5}) below). The above regularity (\ref%
{A9}) is the same as to say that $\pi _{2}^{\ast }\omega _{B}$ is regular at
the zero section of $L.$
\end{remark}

Define the projection Proj$_{x}$ by dropping the terms involving $d\zeta :$%
\begin{equation}
\omega _{k}(1,x,dx):=\text{Proj}_{x}\tilde{\omega}_{k}(1,x,dx,d\zeta ),\text{
}k=0,1,\cdot \cdot \cdot .  \label{9-1.1}
\end{equation}

\noindent Note that in $\omega _{k}(1,x,dx)$ there is no dependence on $%
\zeta $ and $d\zeta .$ As Proj$_{x}$ is presumably coordinate-dependent (on $%
\zeta $ as aforementioned), we are going to examine the patching property of 
$\omega _{k}(1,x,dx)$ below.

\begin{lemma}
\label{10.0} As given in (\ref{9-1.1}), the $p$-form $\omega _{0}(1,x,dx)$
is globally defined on $M,$ and for every $k\geq 1$ ($\tilde{\omega}%
_{k}-\omega _{k})/d\zeta $ induces a global $L^{-(k+1)}$-valued $(p-1)$-form 
$\mathfrak{f}_{k}$ on $M,$ where $L$ is some holomorphic line bundle on $M$
(in fact it is the same $L$ as the one in Remark \ref{10-4-1})$.$
\end{lemma}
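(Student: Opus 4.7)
The plan is to verify the claim by a direct computation of the transition rules for the coefficients in the expansion (\ref{A9}) under a change of local trivialization of $P\to M$. First, I would fix the decomposition of each $\tilde{\omega}_{k}(1,x,dx,d\zeta )$ into its part of bidegree $(p,0)$ and $(p-1,1)$ in $(dx,d\zeta)$, writing $\tilde{\omega}_{k}=\omega_{k}(1,x,dx)+\alpha_{k}(1,x,dx)\wedge d\zeta$ where $\omega_{k}$ is a $p$-form in $x$ and $\alpha_{k}$ a $(p-1)$-form in $x$, neither depending on $\zeta$. So I would identify $\mathfrak{f}_{k}$ with $\alpha_{k}=(\tilde{\omega}_{k}-\omega_{k})/d\zeta$. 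Globally on $P$, the $p$-form $\pi_{2}^{\ast}\omega_{B}$ is of course coordinate-independent, so the whole sum $\sum_{k\geq 0}\zeta^{k}(\omega_{k}+\alpha_{k}\wedge d\zeta)$ is intrinsic; the task is to separate out what each individual piece $\omega_{k}$, $\alpha_{k}$ does under a change of trivialization.

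Next I would take two trivializations with local sections $1$ and $1^{a}=\mu(x)\cdot 1$ on the overlap, giving $\zeta = \zeta^{a}\mu(x)$ and hence $d\zeta = \mu\,d\zeta^{a}+\zeta^{a}d\mu$. Substituting into the expansion gives
\begin{equation*}
\sum_{k\geq 0}(\zeta^{a})^{k}\mu^{k}\omega_{k}+\sum_{k\geq 0}(\zeta^{a})^{k}\mu^{k+1}\alpha_{k}\wedge d\zeta^{a}+\sum_{k\geq 0}(\zeta^{a})^{k+1}\mu^{k}\alpha_{k}\wedge d\mu .
\end{equation*}
Matching this with $\sum_{k}(\zeta^{a})^{k}(\omega_{k}^{a}+\alpha_{k}^{a}\wedge d\zeta^{a})$, the coefficient of $d\zeta^{a}$ at order $(\zeta^{a})^{k}$ immediately yields $\alpha_{k}^{a}=\mu^{k+1}\alpha_{k}$, while the $d\zeta^{a}$-free part at order $(\zeta^{a})^{0}$ gives $\omega_{0}^{a}=\omega_{0}$, proving that $\omega_{0}$ patches to a global holomorphic $p$-form on $M$. (The $d\zeta^{a}$-free part at higher orders gives the less clean rule $\omega_{k}^{a}=\mu^{k}\omega_{k}+\mu^{k-1}\alpha_{k-1}\wedge d\mu$, explaining why only $\omega_{0}$ is intrinsic.)

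Finally I would identify the holomorphic line bundle $L\to M$ as the one associated with the principal $\mathbb{C}^{\ast}$-bundle $\pi:P\to M$ via the standard character of $\mathbb{C}^{\ast}$, so that its transition cocycle with respect to the trivializations $1,1^{a}$ is $\mu^{-1}$; thus holomorphic sections of $L^{-(k+1)}$ transform by $\mu^{k+1}$, which is exactly the rule satisfied by $\alpha_{k}$. This produces the desired global $L^{-(k+1)}$-valued $(p-1)$-form $\mathfrak{f}_{k}$ on $M$. The routine but bookkeeping-heavy step will be keeping the index shift $k\mapsto k+1$ straight that arises from the $\zeta^{a}d\mu$ term in $d\zeta$; I expect this to be the main obstacle in writing the proof cleanly, together with fixing conventions (consistent with Remark \ref{10-2-1} and the identification (\ref{10.30-5}) with the associated line bundle mentioned in Remark \ref{10-4-1}) so that the stated exponent on $L$ comes out with the correct sign.
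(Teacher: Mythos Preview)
Your proposal is correct and follows essentially the same approach as the paper: decompose $\tilde{\omega}_{k}=\omega_{k}+\alpha_{k}\wedge d\zeta$, change trivialization via $\zeta=\zeta^{a}\mu(x)$, substitute $d\zeta=\mu\,d\zeta^{a}+\zeta^{a}d\mu$, and read off the transition rules $\omega_{0}^{a}=\omega_{0}$ and $\alpha_{k}^{a}=\mu^{k+1}\alpha_{k}$ (plus the mixed rule for $\omega_{k}$, $k\geq 1$). The only cosmetic difference is that the paper writes the transition section as $1'=c(x)^{-1}\circ_{s}1$ (so your $\mu$ is their $c^{-1}$) and names the $(p-1)$-form coefficient $f_{k,J}\,dx^{J}$ rather than $\alpha_{k}$; your anticipated bookkeeping with signs and the $k\mapsto k+1$ shift is exactly the content of the paper's equations (\ref{9.3d})--(\ref{9.3f}).
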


\proof
For later use, the following proof discusses more than what is needed in the
lemma. Let $1^{\prime }$ be another local holomorphic section such that $%
(\zeta \circ _{s}1,x)=(\zeta ^{\prime }\circ _{s}1^{\prime },x).$ Write $%
1^{\prime }=c(x)^{-1}\circ _{s}1$ for some local (nowhere vanishing)
holomorphic function $c(x),$ so $\zeta ^{\prime }=c(x)\zeta .$ Write $\pi
_{2}^{\ast }\omega _{B}(\zeta \circ _{s}1,x)$ $=$ $\pi _{2}^{\ast }\omega
_{B}(\zeta ^{\prime }\circ _{s}1^{\prime },x),$ and thus%
\begin{equation}
\sum_{k=0}^{\infty }\zeta ^{k}\tilde{\omega}_{k}(1,x,dx,d\zeta
)=\sum_{k=0}^{\infty }(\zeta ^{\prime })^{k}\tilde{\omega}_{k}^{\prime
}(1^{\prime },x,dx,d\zeta ^{\prime }).  \label{A11}
\end{equation}

\noindent Via (\ref{9-1.1}) one rewrites $\tilde{\omega}_{k}:$%
\begin{equation}
\tilde{\omega}_{k}(1,x,dx,d\zeta )=\omega _{k}(1,x,dx)+\eta
_{k}(1,x,dx,d\zeta )  \label{A16}
\end{equation}

\noindent where $\eta _{k}$ is of the form (with $J$ denoting the
multi-indices $I_{p-1})$%
\begin{equation}
\eta _{k}(1,x,dx,d\zeta )=\sum_{J}f_{k,J}(x)dx^{J}\wedge d\zeta .
\label{9.3b}
\end{equation}

\noindent Similar notation with a \textquotedblleft prime" applies to $%
\tilde{\omega}_{k}^{\prime },$ $\eta _{k}^{\prime }.$

Substituting $d\zeta ^{\prime }$ $=$ $\zeta c_{j}(x)dx^{j}$ $+$ $c(x)d\zeta $
(by differentiating $\zeta ^{\prime }$ $=$ $c(x)\zeta )$ into $\eta
_{k}^{\prime }$ we obtain%
\begin{equation}
\eta _{k}^{\prime }(1^{\prime },x,dx,d\zeta ^{\prime })=\zeta \sum_{J,\text{ 
}j}f_{k,J}^{\prime }c_{j}(x)dx^{J}\wedge dx^{j}+c(x)\sum_{J}f_{k,J}^{\prime
}dx^{J}\wedge d\zeta ,  \label{9.3c}
\end{equation}

\noindent and hence by (\ref{A11}) for the terms involving $\zeta ^{k}d\zeta 
$ 
\begin{equation}
\sum_{J}f_{k,J}(x)dx^{J}\wedge d\zeta =c(x)^{k+1}\sum_{J}f_{k,J}^{\prime
}(x)dx^{J}\wedge d\zeta .  \label{9.3d}
\end{equation}

\noindent It follows from (\ref{9.3d}) that 
\begin{equation}
f_{k,J}^{\prime }(x)dx^{J}=c(x)^{-(k+1)}f_{k,J}(x)dx^{J}.  \label{9.3e}
\end{equation}

Let $L^{-(k+1)}$ be the holomorphic line bundle on $M$ associated to the
transition function $c(x)^{k+1}.$ By (\ref{9.3e}) the collection 
\begin{equation}
\mathfrak{f}_{k}(p):=\left\{ \sum_{J}f_{k,J}(x)dx^{J}\right\}  \label{9.3e1}
\end{equation}%
\noindent for $p\in M$ with coordinates $x$ $=$ $(x^{j})$ is a global $%
L^{-(k+1)}$-valued $(p-1)$-form on $M.$

Similar to (\ref{9.3d}), by (\ref{A11}) for the terms involving $\zeta ^{k}$
(but excluding $\zeta ^{k}d\zeta )$ one has (see also (\ref{9.3c}))%
\begin{eqnarray}
&&\ \ \ \ \ \omega _{0}(1,x,dx)=\omega _{0}^{\prime }(1^{\prime },x,dx),
\label{9.3f} \\
&&\omega _{k}(1,x,dx)=c(x)^{k}\omega _{k}^{\prime }(1^{\prime
},x,dx)+c(x)^{k-1}\sum_{J,\text{ }j}f_{(k-1),J}^{\prime
}c_{j}(x)dx^{J}\wedge dx^{j},\text{ }k\geq 1.  \notag
\end{eqnarray}

\noindent The first equality of (\ref{9.3f}) shows that $\omega _{0}(1,x,dx)$
is a globally-defined holomorphic $p$-form on $M$. The statement about ($%
\tilde{\omega}_{k}-\omega _{k})/d\zeta $ is from (\ref{A16}), (\ref{9.3b})
and (\ref{9.3e}).

\endproof%

Recall that $H_{0}^{0}(M,\Omega _{M}^{p})$ denote the space of $\sigma _{M}$%
-invariant holomorphic $p$-forms on $M$ (cf. Section 8). The crucial result
of this section is the following:

\begin{proposition}
\label{propA3} Let $P$ be a principal $\mathbb{C}^{\ast }$-bundle over a
complex manifold $M$, not necessarily compact. Assume that there exists a
finite open covering $\{U^{a}\}_{a}$ of $M$ such that $P$ is holomorphically
trivial over a neighborhood $V^{a}$ of the closure $\overline{U^{a}}.$ Let $%
\sigma _{d}$ be a nondegenerate, i.e. $l\neq 0$ in (\ref{9-0}), globally
free holomorphic $\mathbb{C}^{\ast }$-action on $P$ which maps fibre to
fibre of the fibration $P\rightarrow M$ (perhaps different fibres), and
induces a holomorphic $\mathbb{C}^{\ast }$-action $\sigma _{M}$ on $M.$
Suppose that $\sigma _{d}$ commutes with $\sigma _{s}$ (the standard
holomorphic $\mathbb{C}^{\ast }$-action on $P)$ and $\sigma _{M}$ extends
meromorphically to $\mathbb{C}\mathbb{P}^{1}\times M$ \ - - -\TEXTsymbol{>} $%
M$ (see Remark \ref{mero}). Let $B$ $:=$ $P/\sigma _{d},$ possibly noncompact%
$.$ Then the following map from (\ref{A9}) and (\ref{9-1.1}) in local
coordinates: 
\begin{equation}
\psi :\omega _{B}\in H^{0}(B,\Omega _{B}^{p})\rightarrow \omega
_{0}(1,x,dx)\in H_{0}^{0}(M,\Omega _{M}^{p}),\text{ }p=0,1,2,\cdot \cdot
\cdot  \label{9-4.5}
\end{equation}%
is globally well defined and gives a linear isomorphism.
\end{proposition}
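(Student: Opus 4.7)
The argument splits into four tasks: well-definedness (already settled by Lemma \ref{10.0}), verification that $\omega_0\in H_{0,\sigma_M}^0(M,\Omega_M^p)$, injectivity, and surjectivity. For the invariance I would exploit that $\pi_2^\ast\omega_B$ is automatically $\sigma_d$-invariant (since $\pi_2\circ\sigma_d(\lambda)=\pi_2$), together with (\ref{9-0}): substituting (\ref{A9}) into $\sigma_d(\lambda)^\ast\pi_2^\ast\omega_B=\pi_2^\ast\omega_B$ and using that $\sigma_d(\lambda)$ acts on local coordinates by $(\zeta,y)\mapsto(\lambda^l\zeta,\sigma_M(\lambda)y)$, matching the $\zeta^0$ coefficient of the $d\zeta$-free part yields $\sigma_M(\lambda)^\ast\omega_0=\omega_0$.

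For injectivity, the extra input is $\sigma_d$-basicness $\iota_{\xi_d}\pi_2^\ast\omega_B=0$ with $\xi_d=l\zeta\partial_\zeta+\xi_M$ (where $\xi_M$ is the fundamental vector field of $\sigma_M$). Writing $\tilde\omega_k=\omega_k+d\zeta\wedge\psi_k$ as in Lemma \ref{10.0} and matching coefficients in $\zeta$, basicness produces $\iota_{\xi_M}\omega_0=0$, $\iota_{\xi_M}\psi_k=0$, and the recurrence $\psi_{k-1}=-l^{-1}\iota_{\xi_M}\omega_k$, while matching coefficients of the $\sigma_d$-invariance yields $\sigma_M(\lambda)^\ast\omega_k=\lambda^{-lk}\omega_k$ in each local trivialization. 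Since $l\neq 0$ by nondegeneracy, $lk\neq 0$ for $k\geq 1$; applying the Laurent-expansion argument from the proof of Proposition \ref{propA2} to the meromorphic extension of $\sigma_M$ chart by chart then forces the local representatives of $\omega_k$ to vanish, and the transition formulas (\ref{9.3e})--(\ref{9.3f}) propagate the vanishing globally and, via the recurrence, kill $\mathfrak{f}_{k-1}$ as well. Thus $\pi_2^\ast\omega_B=\omega_0$ globally, and $\omega_0=0$ forces $\omega_B=0$ since $\pi_2$ is a holomorphic submersion.

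For surjectivity, given $\omega_M\in H_{0,\sigma_M}^0(M,\Omega_M^p)$, Lemma \ref{LFM} provides a unique $\omega_{F^0}\in H^0(F^0,\Omega_{F^0}^p)$ with $\omega_M=\pi_{F^0}^\ast\omega_{F^0}$, where $\pi_{F^0}:M\to F^0$ is the canonical extension of $\tilde\sigma_{M,0}$. Because $\pi_{F^0}$ is constant along $\sigma_M$-orbits on $M\setminus T$, $(\pi_{F^0})_\ast\xi_M=0$ there, and holomorphicity of $\omega_M$ then yields $\iota_{\xi_M}\omega_M\equiv 0$ on all of $M$. Hence $\pi^\ast\omega_M$ on $P$ is $\sigma_d$-invariant (using $\pi\circ\sigma_d(\lambda)=\sigma_M(\lambda)\circ\pi$) and $\sigma_d$-basic ($\iota_{\xi_d}\pi^\ast\omega_M=\pi^\ast\iota_{\xi_M}\omega_M=0$, because $\pi^\ast\omega_M$ has no $d\zeta$ component), so it descends to a unique $\omega_B\in H^0(B,\Omega_B^p)$ with $\pi_2^\ast\omega_B=\pi^\ast\omega_M$; reading off the $\zeta$-expansion gives $\psi(\omega_B)=\omega_M$.

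The main obstacle will be the vanishing of $\omega_k$ for $k\geq 1$ in the injectivity step: $\omega_k$ is not a globally defined tensor on $M$, since under change of trivialization it acquires an $\mathfrak{f}_{k-1}$-dependent affine correction by (\ref{9.3f}), so one cannot directly invoke Proposition \ref{propA2}. My way around this is to run the Laurent-expansion argument locally, chart by chart, killing each local representative of $\omega_k$ first, and only then invoke the patching identities (\ref{9.3e})--(\ref{9.3f}) to force both global vanishing of $\omega_k$ and vanishing of $\mathfrak{f}_{k-1}$ via the recurrence.
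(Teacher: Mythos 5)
Your overall skeleton (well-definedness, invariance of $\omega_0$, injectivity, surjectivity) matches the paper, and your surjectivity argument is essentially the paper's but more careful: the paper verifies only $\sigma_d$-invariance of $\pi_1^*\omega_0$ and asserts that it ``descends,'' whereas descent through $\pi_2$ also requires $\iota_{\xi_d}\pi_1^*\omega_0=0$, which reduces to $\iota_{\xi_M}\omega_0=0$; your appeal to Lemma \ref{LFM} to supply that contraction identity is a genuine improvement in explicitness over the paper's terse statement.

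The real gap is in the injectivity/vanishing step, and it is the step you yourself flagged as ``the main obstacle.'' Your proposed workaround---``run the Laurent-expansion argument locally, chart by chart, killing each local representative of $\omega_k$ first''---does not close. The scaling identity $\sigma_M(\lambda)^*\omega_k=\lambda^{-lk}\omega_k$ that you extract from the $\sigma_d$-invariance is only valid for $\lambda$ in a neighborhood of $1$, since $\omega_k$ is a priori just a local object in one trivialization. The Laurent-expansion mechanism from Proposition \ref{propA2} requires sending $\lambda$ to $0$ (or $\infty$); this moves the point $\sigma_M(\lambda)x$ out of the chart, and re-entry into a chart triggers the \emph{affine} transition law (\ref{9.3f}), which contaminates $\omega_k$ by an $\mathfrak{f}_{k-1}$-dependent term. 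There is no ``local Laurent argument'' available: you cannot push $\lambda\to 0$ without global control, and you cannot get global control over $\omega_k$ directly because it is not a section of a vector bundle until after $\mathfrak{f}_{k-1}$ has been killed. Hence the order of operations you propose (kill $\omega_k$ locally, then propagate, then kill $\mathfrak{f}_{k-1}$ via the recurrence) is circular.

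The paper resolves this in the opposite order, and that reversal is the essential content of Lemmas \ref{10.1-6}, \ref{10.1-5}, \ref{10.2}. One first observes that $\mathfrak{f}_k$ (your $\psi_k$) glues \emph{linearly} by (\ref{9.3e}), hence is a genuine global holomorphic section of $\Omega_M^{p-1}\otimes L^{-(k+1)}$. The $\mathbb{C}^*$-action on the line bundle ($\tilde\sigma_{M,-(k+1)}$ of (\ref{9.18-5a})) exactly compensates the factor $\lambda^{-l(k+1)}$ in the local scaling, so the invariance $\tilde\sigma_{M,-(k+1)}(\lambda)\mathfrak{f}_k=\mathfrak{f}_k$ extends, by analyticity, to \emph{all} $\lambda\in\mathbb{C}^*$ (formula (\ref{10a})). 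With that global identity in hand, the paper takes a sequence $c^{n(i)}\circ_M p$ landing in a single chart (this is where the finite-covering hypothesis does its work) and derives the blow-up contradiction (\ref{10d}), proving $\mathfrak{f}_k\equiv 0$. Only then does $\omega_k$ become a global $L^{-k}$-valued form (Lemma \ref{10.1-5}), and the same global mechanism applied again kills it (Lemma \ref{10.2}). Your recurrence $\psi_{k-1}=-l^{-1}\iota_{\xi_M}\omega_k$ is correct and is a nice observation, but it can only \emph{transfer} vanishing from $\omega_k$ to $\mathfrak{f}_{k-1}$; it does not substitute for the globalization and the subsequence argument that are needed to establish the vanishing of either one in the first place.
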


\begin{remark}
\label{9.2-5} If $\sigma _{d}$ is degenerate, the assertion of the
proposition may fail. For the trivial product $\mathbb{C}^{\ast }\times M$ $%
=:$ $P$ with $\sigma _{d}(\lambda )(\xi ,$ $x)$ $=$ $(\xi ,$ $\sigma
_{M}(\lambda )x)$ we have $\omega _{B}$ $:=$ $d\xi $ for $p=1$ projecting to
zero under $\psi .$ The argument in the following proof breaks down when $l$
in (\ref{10d}) equals $0$ (i.e. $\sigma _{d}$ is degenerate), and in this
case no vanishing of $\eta _{k}$ ($k$ $\geq $ $0)$ is guaranteed (see Lemma %
\ref{10.1-6}); the foregoing $d\xi $ $\neq $ $0$ corresponds $\eta _{k=0}).$
\end{remark}

\proof
\textbf{(of Proposition \ref{propA3})} By Lemma \ref{10.0} that $\psi
(\omega _{B})$ in (\ref{9-4.5}) is a globally defined $p$-form on $M$, it
remains to show that it is $\sigma _{M}$-invariant. Writing $\pi _{2}^{\ast
}\omega _{B}$ $=$ $\sigma _{d}(\lambda )^{\ast }\pi _{2}^{\ast }\omega _{B}$
on the LHS of (\ref{A9}) via $\pi _{2}\circ \sigma _{d}(\lambda )$ $=$ $\pi
_{2}$ and applying (\ref{A9}) again$,$ we have%
\begin{eqnarray}
&&\sum_{k=0}^{\infty }\zeta ^{k}\tilde{\omega}_{k}(1,x,dx,d\zeta )=(\sigma
_{d}(\lambda )^{\ast }\pi _{2}^{\ast }\omega _{B})(\zeta ,x)  \label{9-1.5}
\\
&=&\sigma _{d}(\lambda )^{\ast }((\pi _{2}^{\ast }\omega _{B})(\lambda
^{l}\zeta ,\lambda \circ x))  \notag \\
&\overset{(\ref{A9})}{=}&\sum_{k=0}^{\infty }(\lambda ^{l}\zeta )^{k}\sigma
_{d}(\lambda )^{\ast }(\tilde{\omega}_{k}(1,\lambda \circ x,d(\lambda \circ
x),d(\lambda ^{l}\zeta )))  \notag \\
&=&\sum_{k=0}^{\infty }(\lambda ^{l}\zeta )^{k}(\sigma _{d}(\lambda )^{\ast }%
\tilde{\omega}_{k})(1,x,dx,d\zeta ).  \notag
\end{eqnarray}

\noindent It follows from (\ref{9-1.5}) that 
\begin{equation}
(\sigma _{d}(\lambda )^{\ast }\tilde{\omega}_{k})(1,x,dx,d\zeta )=\lambda
^{-lk}\tilde{\omega}_{k}(1,x,dx,d\zeta ).  \label{A9-1}
\end{equation}

\noindent By (\ref{A9-1}), (\ref{A16}), (\ref{9.3b}), (\ref{9-0}) and (\ref%
{9-1.1}), it is not difficult to convince oneself that, with $\omega _{k}:=$%
Proj$_{x}\tilde{\omega}_{k},$%
\begin{equation}
(\sigma _{M}(\lambda )^{\ast }\omega _{k})(1,x,dx)=\lambda ^{-lk}\omega
_{k}(1,x,dx).  \label{A10}
\end{equation}

\noindent It is tempting to let $\lambda \rightarrow 0$ (resp. $\infty $)
for $l>0$ (resp. $l<0$) in (\ref{A10}) and conclude that%
\begin{equation}
\omega _{k}(1,x,dx)\equiv 0\text{ \ \ }(k\geq 1)  \label{A10-1}
\end{equation}%
\noindent since the LHS of (\ref{A10}) is finite, which follows from the
assumption on the meromorphic extension of $\sigma _{M}$ and Hartogs'
extension theorem.

However, for $k$ $\geq $ $1$ (\ref{A10}) is meaningful only for $\lambda $
close to $1$ since \textit{a priori }$\omega _{k}(1,x,dx)$ is local (see the
second equality of (\ref{9.3f})) and $\sigma _{M}(\lambda )$ may carry away
the local chart if $\lambda \nsim 1$. For $k=0$ (\ref{A10}) together with
the first equality of (\ref{9.3f}) did show that $\omega _{0}(1,x,dx)$ is a
global $\sigma _{M}$-invariant holomorphic $p$-form on $M.$ So the map $\psi 
$ in (\ref{9-4.5}) of this proposition is now well defined.

The claim (\ref{A10-1}) enters into the proof that $\psi $ in (\ref{9-4.5})
is an isomorphism. To prove (\ref{A10-1}) rigorously, the idea is to extend
the action $\sigma _{M}(\lambda )$ in (\ref{A10}) to all $\lambda $ $\in $ $%
\mathbb{C}^{\ast }$ (then taking $\lambda \rightarrow 0$ as reasoning above).

\begin{lemma}
\label{10.1-5} For every $k\geq 1,$ $\omega _{k}(1,x,dx)$ in (\ref{9.3f})
represents a global $L^{-k}$-valued holomorphic $p$-form on $M,$ where the
holomorphic line bundle $L^{-k}$ $\rightarrow $ $M$ is as similar to (\ref%
{9.3e1}) and admits a natural $\mathbb{C}^{\ast }$-action $\tilde{\sigma}%
_{M,-k}$ compatible with $\sigma _{M}.$
\end{lemma}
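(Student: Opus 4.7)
The plan is to identify $L^{-k}$ explicitly from the transition cocycle of $P\to M$, to interpret $\omega_k$ intrinsically via the Taylor expansion of $\pi_2^{\ast}\omega_B$ along the zero section of the associated line bundle (per Remark \ref{10-4-1}), and to descend the action $\sigma_d$ to an action $\tilde\sigma_{M,-k}$ on $L^{-k}$ covering $\sigma_M$.

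First I would fix local trivializations $\mathbb{C}^{\ast}\times U^a\subset P$ provided by the hypothesis, with sections $1^a$ related by $1^b=c_{ab}^{-1}\circ_s 1^a$ and fiber coordinates $\zeta^b=c_{ab}\zeta^a$, as in the proof of Lemma \ref{10.0}. Let $L^{-k}\to M$ be the holomorphic line bundle with transition cocycle $\{c_{ab}^k\}$; this is the bundle in which the $(p-1)$-form $\mathfrak{f}_{k-1}$ of Lemma \ref{10.0} takes its values. A genuine section of $L^{-k}\otimes\Omega^p_M$ has local representatives satisfying $\omega_k^a=c_{ab}^k\,\omega_k^b$, which is exactly the leading term of the transition rule (\ref{9.3f}).

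Second I would construct $\tilde\sigma_{M,-k}$ as follows. Because $\sigma_d$ commutes with $\sigma_s$ and preserves fibers of $\pi\colon P\to M$, and because (\ref{9-0}) shows it acts on fibers by the scalar $\lambda^l$, it descends to a holomorphic $\mathbb{C}^{\ast}$-action on the total space of the line bundle associated to $P$ covering $\sigma_M$, fiberwise linear of weight $l$. Taking duals and tensor powers gives the action $\tilde\sigma_{M,-k}$ on $L^{-k}$, acting on fibers by $\lambda^{-lk}$ and covering $\sigma_M$ on the base. The identity (\ref{A10}) then expresses exactly that $\tilde\sigma_{M,-k}(\lambda)^{\ast}\omega_k=\omega_k$ in $L^{-k}\otimes\Omega^p_M$ for $\lambda$ near $1$, which is the compatibility asserted.

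The hard part will be making rigorous the claim that $\omega_k$ \emph{represents} a global section of $L^{-k}\otimes\Omega^p_M$ despite the correction term appearing in (\ref{9.3f}). A direct rewriting shows that this correction equals $\tfrac{1}{k}\,d(c_{ab}^k)\wedge \mathfrak{f}_{k-1}^b$, an Atiyah-type defect built from the globally defined $L^{-k}$-valued form $\mathfrak{f}_{k-1}$. To handle it I would either (i) absorb the correction into an adjustment of $\omega_k$ by a local primitive of $\mathfrak{f}_{k-1}$ relative to a smooth partition-of-unity trivialization of $L^{-k}$, showing that the resulting \v{C}ech modification yields a well-defined section in $L^{-k}\otimes\Omega^p_M$, or (ii) reinterpret $(\omega_k,\mathfrak{f}_{k-1})$ as a section of a $1$-jet-type extension of $L^{-k}\otimes\Omega^p_M$ whose principal symbol is the desired $L^{-k}$-valued form. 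Either construction endows $\omega_k$ with a global meaning on which $\tilde\sigma_{M,-k}$ acts, which will allow (\ref{A10}) to be extended from $\lambda$ near $1$ to all $\lambda\in\mathbb{C}^{\ast}$ via the meromorphic extension of $\sigma_M$ and Hartogs' theorem, the step needed to conclude (\ref{A10-1}) in the proof of Proposition \ref{propA3}.
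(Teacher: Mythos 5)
There is a genuine gap. You have correctly identified the obstruction: the transition rule (\ref{9.3f}) for $k\geq 1$ has the correction term $c(x)^{k-1}\sum_{J,j}f_{(k-1),J}'c_j(x)\,dx^{J}\wedge dx^{j}$ (which you rewrite, up to sign, as $\tfrac{1}{k}d(c^k)\wedge\mathfrak f_{k-1}$), and you also correctly identify $L^{-k}$ and construct $\tilde\sigma_{M,-k}$ exactly as the paper does via (\ref{9.18-5a}). But your two proposed workarounds for the correction term cannot establish the lemma as stated. Proposal (i) relies on a smooth partition-of-unity trivialization of $L^{-k}$ and local primitives, which sacrifices holomorphicity: the lemma asserts that $\omega_k$ is a global \emph{holomorphic} $L^{-k}$-valued $p$-form, and a modification built from smooth cut-offs is not holomorphic. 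Proposal (ii) produces a section of a jet-type extension bundle, which is a different object from a section of $L^{-k}\otimes\Omega^p_M$; it would change the statement, not prove it. Moreover, both proposals quietly alter $\omega_k$, whereas downstream the proof of Proposition \ref{propA3} needs the \emph{actual} Taylor coefficient $\omega_k$ from (\ref{A9}) to vanish.

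The missing idea is that the correction term is not something to be absorbed or reinterpreted; it is \emph{zero}. The paper first proves Lemma \ref{10.1-6} (Vanishing result (I)): using the global object $\mathfrak f_{k}$ from Lemma \ref{10.0}, its $\tilde\sigma_{M,-(k+1)}$-invariance (\ref{10a}) valid for all $\lambda\in\mathbb C^{\ast}$, the nondegeneracy hypothesis $l\neq 0$, the meromorphic extension of $\sigma_M$, and a dynamical limiting argument over the finite covering $\{U^a\}_a$ (culminating in (\ref{10d})), one gets $\mathfrak f_k\equiv 0$, hence $\eta_k\equiv 0$, for every $k\geq 0$. Once $\mathfrak f_{k-1}\equiv 0$, the second equality of (\ref{9.3f}) collapses to the clean cocycle rule $\omega_k'(1',x,dx)=c(x)^{-k}\omega_k(1,x,dx)$ (the paper's (\ref{wk})), which is precisely the statement that $\omega_k$ is a global $L^{-k}$-valued holomorphic $p$-form. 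Your approach skips this vanishing result entirely, and without it the lemma's first assertion is simply not true at the level of the raw Taylor coefficients $\omega_k$.
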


Lemma \ref{10.1-5} will be obtained from the following vanishing result:

\begin{lemma}
\label{10.1-6} (Vanishing result (I)) In (\ref{9.3f}) one has $%
\sum_{J}f_{k,J}(x)dx^{J}$ $\equiv $ $0,$ $k\geq 0.$ Thus $\eta _{k}$ in (\ref%
{9.3b}) vanishes identically.
\end{lemma}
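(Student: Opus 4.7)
The plan is to isolate from the already-derived identity \eqref{A9-1} a transformation law for the coefficient $\mathfrak{f}_{k}(x) = \sum_{J} f_{k,J}(x)\, dx^{J}$ of $d\zeta$ in $\tilde{\omega}_{k}$, and then to use the meromorphic extension of $\sigma_{M}$ together with the nondegeneracy $l \neq 0$ in \eqref{9-0} to force $\mathfrak{f}_{k}\equiv 0$. Concretely, in a local trivialization $\mathbb{C}^{\ast} \times U$, using $\sigma_{d}(\lambda)(\zeta, x) = (\lambda^{l}\zeta,\, \sigma_{M}(\lambda) x)$ (valid for $\lambda$ close to $1$) and the explicit form $\eta_{k} = \mathfrak{f}_{k}(x)\wedge d\zeta$, one computes
\begin{equation*}
\sigma_{d}(\lambda)^{\ast}\eta_{k} \;=\; \lambda^{l}\bigl(\sigma_{M}(\lambda)^{\ast}\mathfrak{f}_{k}\bigr)\wedge d\zeta.
\end{equation*}
Since the $x$-part and the $d\zeta$-part of $\sigma_{d}(\lambda)^{\ast}\tilde{\omega}_{k} = \lambda^{-lk}\tilde{\omega}_{k}$ split cleanly (the pullback of $\omega_{k}$ contains no $d\zeta$), matching the $d\zeta$-components gives the key equivariance relation
\begin{equation*}
\sigma_{M}(\lambda)^{\ast}\mathfrak{f}_{k} \;=\; \lambda^{-l(k+1)}\mathfrak{f}_{k}, \qquad k\ge 0.
\end{equation*}

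Next I would globalize this identity. By Lemma \ref{10.0} the local $(p-1)$-forms $\mathfrak{f}_{k}$ patch into a global $L^{-(k+1)}$-valued holomorphic $(p-1)$-form on $M$, where $L$ is the line bundle associated to $P\to M$. Because $\sigma_{d}$ commutes with $\sigma_{s}$, it descends to a $\mathbb{C}^{\ast}$-action on $L$ (and hence on $L^{-(k+1)}$) covering $\sigma_{M}$, so $\sigma_{M}(\lambda)^{\ast}$ acts naturally on $L^{-(k+1)}$-valued forms and the displayed identity upgrades to a global one, still valid only for $\lambda$ close to $1$.

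The final step is the limit argument. Since $\lambda \mapsto \sigma_{M}(\lambda)^{\ast}\mathfrak{f}_{k}$ obeys the group law of $\sigma_{M}$, the identity propagates to all $\lambda \in \mathbb{C}^{\ast}$; then by the meromorphic extension of $\sigma_{M}$ to $\mathbb{CP}^{1}\times M \dashrightarrow M$ and Hartogs' theorem (exactly as in the proof of Proposition \ref{propA2}, applied here with $L^{-(k+1)}$-coefficients via the lifted action on $L$), the left-hand side extends holomorphically in $\lambda$ across $\lambda = 0$ and $\lambda = \infty$. By nondegeneracy $l \neq 0$ and $k+1 \ge 1$, the exponent $-l(k+1)$ is nonzero, so the right-hand side $\lambda^{-l(k+1)}\mathfrak{f}_{k}$ blows up at whichever of $\{0,\infty\}$ has the wrong sign. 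Comparing forces $\mathfrak{f}_{k}\equiv 0$, which is exactly the assertion of Lemma \ref{10.1-6}.

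The main obstacle is the globalization/extension step: the relation $\sigma_{M}(\lambda)^{\ast}\mathfrak{f}_{k} = \lambda^{-l(k+1)}\mathfrak{f}_{k}$ initially lives only on a fixed trivializing chart and only for $\lambda \sim 1$, yet the vanishing argument requires it to hold for all $\lambda\in\mathbb{C}^{\ast}$ and, moreover, to extend to $\lambda = 0$ or $\infty$. Justifying this requires (i) that the local family of $(p-1)$-forms $\mathfrak{f}_{k}$ really assembles into a section of an $L^{-(k+1)}$-valued bundle — here the computation \eqref{9.3e} carried out in the proof of Lemma \ref{10.0} is indispensable — and (ii) that the lifted $\mathbb{C}^{\ast}$-action on $L^{-(k+1)}$, together with the meromorphic extension of $\sigma_{M}$, is sufficient to apply the Hartogs-type extension that underlies Proposition \ref{propA2}. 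Once this bundle-theoretic bookkeeping is in place, the actual vanishing is immediate from the blow-up of $\lambda^{-l(k+1)}$.
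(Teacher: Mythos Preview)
Your derivation of the scaling relation $\sigma_M(\lambda)^\ast \mathfrak{f}_k = \lambda^{-l(k+1)} \mathfrak{f}_k$ for $\lambda \sim 1$ matches the paper's \eqref{9-12.5}, and your identification of $\mathfrak{f}_k$ as a global $L^{-(k+1)}$-valued $(p-1)$-form is exactly Lemma~\ref{10.0}. The gap lies precisely where you flag it, and it is a genuine one rather than mere bookkeeping.

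The problem is that once you globalize $\mathfrak{f}_k$ as an $L^{-(k+1)}$-valued form and act by the lifted $\tilde{\sigma}_{M,-(k+1)}(\lambda)$, the scaling factor disappears: the form-part contributes $\lambda^{l(k+1)}$ and the $L^{-(k+1)}$-part contributes $\lambda^{-l(k+1)}$, and these cancel. The global statement is therefore the \emph{invariance} $\tilde{\sigma}_{M,-(k+1)}(\lambda)\mathfrak{f}_k = \mathfrak{f}_k$ (the paper's \eqref{10a}), which does extend to all $\lambda \in \mathbb{C}^\ast$ by analyticity but gives no contradiction as $\lambda \to 0,\infty$. Your proposed ``Hartogs with $L^{-(k+1)}$-coefficients via the lifted action'' would require the lifted action on $L^{-(k+1)}$ itself to extend meromorphically to $\mathbb{CP}^1$, which is not among the hypotheses: only $\sigma_M$ on $M$ is assumed to extend. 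So the blow-up argument cannot be run directly on the global bundle-valued object, and the displayed scaling relation has no global meaning for arbitrary $\lambda$.

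The paper bridges this gap using the finite-covering hypothesis of Proposition~\ref{propA3}, which you do not invoke. Fixing $p\in M$ and $c\in\mathbb{C}^\ast$ with $|c|\ne 1$, pigeonhole gives a single chart $U^a$ containing infinitely many iterates $c^{n(i)}\circ_M p$. Unwinding the global invariance \eqref{10a} in the local frames at $p$ and on $U^a$, and tracking the transition factor $\tau(\lambda^{-1})$ via \eqref{10b}--\eqref{10c}, one arrives at \eqref{10d}: an equality of \emph{ordinary} $(p-1)$-forms on a fixed chart, whose left side $(\sigma_M(c^{n(i)-n(1)}c_1)^\ast \varpi_k^a)(p)$ is controlled by the meromorphic extension of $\sigma_M$ alone (no lift needed), while the right side carries the explicit factor $c^{(n(1)-n(i))l(k+1)}$, which blows up as $i\to\infty$. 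That chart-tracking step is the missing ingredient.
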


\proof
(of Lemma \ref{10.1-6}) For a fixed $k\geq 0$ write $\varpi
_{k}:=\sum_{J}f_{k,J}(x)dx^{J}$ (recalling that this local $|J|$-form is
independent of local coordinates at $x,$ cf. remarks prior to (\ref{A9}))
and denote by $L$ $\rightarrow $ $M$ the line bundle associated with $P$ $%
\rightarrow $ $M$ (so that $e^{\prime }$ $=$ $c(x)^{-1}e$ with the
transition function $c(x)$ already specified, cf.$\,$(\ref{9.3e})); see also
(\ref{10.30-5}) below. We claim that the $\mathbb{C}^{\ast }$-action $\sigma
_{d}$ on $P$ induces a $\mathbb{C}^{\ast }$-action, still denoted by $\sigma
_{d},$ on $L.$ The action $\sigma _{d}$ has been explicated in (\ref{9-0}),
from which the claim follows. More generally, for every $q$ $\in $ $\mathbb{Z%
}$, $L^{q}$ $\rightarrow $ $M$ has a natural $\mathbb{C}^{\ast }$-action $%
\tilde{\sigma}_{M,q}(\lambda ):$ 
\begin{equation}
\tilde{\sigma}_{M,q}(\lambda )\text{ sending }e_{p}^{q}\in L_{p}^{q}\text{ }%
(p\in M)\text{ to }(\sigma _{d}(\lambda )e_{p})^{q}\in L_{\lambda \circ
_{M}p}^{q}  \label{9.18-5a}
\end{equation}%
\noindent induced by $\sigma _{d}(\lambda )$ $:$ $L$ $\rightarrow $ $L$
equivariant with respect to $\sigma _{M}(\lambda )$ (thus $\tilde{\sigma}%
_{M,1}$ $=$ $\sigma _{d}$ on $L$)$.$ Since $\sigma _{M}(\lambda )$ acts on
the bundle of holomorphic $p$-forms on $M$ via pull-back, a moment's thought
yields that $\tilde{\sigma}_{M,q}(\lambda )$ acts, still denoted by $\tilde{%
\sigma}_{M,q}(\lambda ),$ on the space of global sections under
consideration, namely global $L^{q}$-valued ($p-1)$-forms on $M.$ Here, a
typical situation occurs with the global section $\mathfrak{f}_{k}$ in (\ref%
{9.3e1}) ($L^{-(k+1)}$-valued) for $q$ $=$ $-(k+1).$ We have now that $%
\tilde{\sigma}_{M,q}(\lambda )\mathfrak{f}_{k}$ makes perfect sense for
every $\lambda $ $\in $ $\mathbb{C}^{\ast }.$

Let us first derive a scaling property for $\varpi _{k}$ (on some open
subset $U$ $\subset $ $M),$ regarded as local expressions of the global
object $\mathfrak{f}_{k}.$ From (\ref{A9-1}), (\ref{A10}) and (\ref{A16}) we
get%
\begin{equation}
\sigma _{d}(\lambda )^{\ast }\eta _{k}=\lambda ^{-lk}\eta _{k}\text{.}
\label{A17}
\end{equation}

\noindent for $\lambda $ near $1.$ Writing out (\ref{A17}) via (\ref{9-0})
and (\ref{9.3b}), one is in a position to compare the terms involving $%
d\zeta .$ Then it is not difficult to find, via $\Phi _{\lambda }^{\ast
}(d\zeta )$ $=$ $\lambda ^{l}d\zeta $, that%
\begin{equation}
(\sigma _{M}(\lambda )^{\ast }\varpi _{k})(x)=\lambda ^{-l(k+1)}\varpi
_{k}(x),\text{ }\lambda \sim 1.  \label{9-12.5}
\end{equation}

Considering $\tilde{\sigma}_{M,-(k+1)}(\lambda )\mathfrak{f}_{k}$ for $%
\lambda \sim 1,$ one infers from (\ref{9-12.5}) (with $\lambda $ replaced by 
$\lambda ^{-1}$ since $\sigma _{M}(\lambda )$ acts as $\sigma _{M}(\lambda
^{-1})^{\ast }$ on forms) and (\ref{9-0}) (see also (\ref{9.18-5a}) and
remarks below (\ref{10a})) that $\tilde{\sigma}_{M,-(k+1)}(\lambda )%
\mathfrak{f}_{k}$ $=$ $\mathfrak{f}_{k}$ ($\lambda \sim 1);$ slightly more
precisely the contribution from the form-part of $\mathfrak{f}_{k}$ acted on
by $\tilde{\sigma}_{M,-(k+1)}(\lambda )$ yields a factor ($\lambda
^{-1})^{-l(k+1)}$ while the contribution from the same action on the $%
L^{-(k+1)}$-part of $\mathfrak{f}_{k}$ gives another factor ($\lambda
^{l})^{-(k+1)}$ that cancels out the preceding one. As mentioned above $%
\tilde{\sigma}_{M,-(k+1)}(\lambda )\mathfrak{f}_{k}$ is well defined for
every $\lambda $ $\in $ $\mathbb{C}^{\ast }$. The crucial property due to
the analyticity in $\lambda $ then leads to the important conclusion:%
\begin{equation}
\tilde{\sigma}_{M,-(k+1)}(\lambda )\mathfrak{f}_{k}=\mathfrak{f}_{k}\text{
not only for }\lambda \sim 1\text{ but also for all }\lambda \in \mathbb{C}%
^{\ast }.  \label{10a}
\end{equation}

Denote by $\mathfrak{f}_{k}^{a}$ $=$ $\varpi _{k}^{a}s^{a}$ where $s^{a}$ :$%
= $ ($e^{a})^{-(k+1)}$ and $\varpi _{k}^{a}$ $=$ $\mathfrak{f}_{k}|_{V^{a}}$
for local expression on open charts $V^{a}$ $\subset $ $M.$ Fix $p\in U^{0}$ 
$\subset $ $M$ and write $\mathfrak{f}_{k}^{0}$ $=$ $\varpi _{k}^{0}s^{0}$
where $s^{0}$ $=$ ($e^{0})^{-(k+1)}$ around $p=p^{0}$ (if $U^{0}$ $=$ $U$
then $\varpi _{k}^{0}$ is the above $\varpi _{k});$ similar notation applies
on $U^{a},$ $\overline{U^{a}}$ $\subset $ $V^{a}.$ Two basic properties are
introduced as follows. If $\lambda \circ _{M}p$ $=$ $p^{(a)}$ $\in $ $U^{a},$
with $\tilde{\sigma}_{M}(\lambda ^{-1})s^{a}(p^{(a)})$ $=:$ $\tau (\lambda
^{-1})s^{0}(p)$ (the subscript $-(k+1)$ in $\tilde{\sigma}_{M,-(k+1)}$
dropped throughout) we obtain the first property:%
\begin{equation}
(\tilde{\sigma}_{M}(\lambda ^{-1})\mathfrak{f}_{k})(p)=(\sigma _{M}(\lambda
)^{\ast }(\varpi _{k}^{a}(p^{(a)})))\tau (\lambda ^{-1})s^{0}(p).
\label{10b}
\end{equation}

\noindent For the second property suppose $\nu \circ (\lambda \circ p)$ $=$ $%
\tilde{p}^{(a)}$ $\in $ $U^{a},$ i.e. $p$ $=$ $\lambda ^{-1}\circ (\nu
^{-1}\circ \tilde{p}^{(a)})$ with $\lambda \circ p$ $\in $ $U^{a}$ as above$%
. $ One sees that%
\begin{equation}
\tau (\lambda ^{-1}\nu ^{-1})=\nu ^{l(k+1)}\tau (\lambda ^{-1})  \label{10c}
\end{equation}

\noindent since $\nu ^{-1}\circ s^{a}$ $=$ $\nu ^{l(k+1)}s^{a}$ by (\ref{9-0}%
) and (\ref{9.18-5a}). Here we need not restrict ourselves to $\nu \sim 1$
although the set $\{\nu \in $ $\mathbb{C}^{\ast }$ $|$ $\nu \circ (\lambda
\circ p)$ $\in $ $U^{a}\}$ may be far from being a connected set. As the
reasoning is similar to Remark \ref{10-2-1}, we leave it to the reader.

The following arguments constitute a refined treatment of those in (\ref%
{A9-1})-(\ref{A10-1}). Choose $c\in \mathbb{C}^{\ast }$ with $|c|$ $\neq $ $%
1.$ Suppose $|c|$ $<$ $1$ and $l$ $>$ $0$ (the remaining three cases will be
similar). For the sequence $\{c^{i}\circ _{M}p\}_{i=1,2,\cdot \cdot \cdot }$
there exists some fixed open chart $U^{a}$ and a subsequence $%
\{c^{n(i)}\circ _{M}p\}_{i=1,2,\cdot \cdot \cdot }$ such that $%
\{c^{n(i)}\circ _{M}p\}_{i}$ $\subset $ $U^{a}$ by the finiteness assumption
on $\{U^{a}\}_{a}$ as stated in the proposition$.$ Denote $c^{n(1)}$ by $%
c_{1},$ $c^{n(2)-n(1)}$ by $c_{2},$ etc. and set $p_{i}$ $=$ $c_{i}\circ
c_{i-1}\circ $ $\cdot \cdot \circ $ $c_{1}\circ p$ $(=c^{n(i)}\circ p)$ $=$ $%
c_{i}\circ p_{i-1}$ $\in $ $U^{a}$ with $p_{0}$ $:=$ $p.$ For instance, by $%
c_{1}^{-1}\circ c_{2}^{-1}\circ p_{2}$ $=$ $p$ with $p_{2}$ $\in $ $U^{a}$
and from (\ref{10a}), (\ref{10b})%
\begin{equation*}
\varpi _{k}^{0}(p)s^{0}(p)=\mathfrak{f}_{k}^{0}(p)=(\tilde{\sigma}%
_{M}(c_{1}^{-1}c_{2}^{-1})\mathfrak{f}_{k})(p)=\sigma _{M}(c_{2}c_{1})^{\ast
}(\varpi _{k}^{a}(p_{2}))\tau (c_{1}^{-1}c_{2}^{-1})s^{0}(p).
\end{equation*}

\noindent This yields $\sigma _{M}(c_{2}c_{1})^{\ast }(\varpi
_{k}^{a}(p_{2}))$ $=$ $\tau (c_{1}^{-1}c_{2}^{-1})^{-1}\varpi _{k}^{0}(p)$
and in turn, using (\ref{10c}) for $\tau (c_{1}^{-1}c_{2}^{-1})^{-1},$ $%
\sigma _{M}(c_{2}c_{1})^{\ast }(\varpi _{k}^{a}(p_{2}))$ $=$ $%
c^{(n(1)-n(2))l(k+1)}\tau (c_{1}^{-1})^{-1}\varpi _{k}^{0}(p).$

Similarly, for $i\geq 2$ one has 
\begin{equation*}
\sigma _{M}((c_{i}c_{i-1}\cdot \cdot \cdot c_{2})c_{1})^{\ast }(\varpi
_{k}^{a}(p_{i}))=c^{(n(1)-n(i))l(k+1)}\tau (c_{1}^{-1})^{-1}\varpi
_{k}^{0}(p).
\end{equation*}%
\noindent It is trivial that $\sigma _{M}((c_{i}c_{i-1}\cdot \cdot \cdot $ $%
c_{2})c_{1})^{\ast }(\varpi _{k}^{a}(p_{i}))$ $=$ ($\sigma
_{M}((c_{i}c_{i-1}\cdot \cdot \cdot $ $c_{2})c_{1})^{\ast }\varpi
_{k}^{a})(p).$ Thus%
\begin{equation}
(\sigma _{M}(c^{n(i)-n(1)}c_{1})^{\ast }\varpi
_{k}^{a})(p)=c^{(n(1)-n(i))l(k+1)}\tau (c_{1}^{-1})^{-1}\varpi _{k}^{0}(p)%
\text{ }(i=2,3,\cdot \cdot \cdot ).  \label{10d}
\end{equation}%
\noindent As an illustration, suppose $p,$ $c\circ p$ $\in $ $U^{0}$ for the
simplest case (corresponding to taking $n(1)$ $=$ $0$ and $n(2)$ $=$ $1).$
Then 
\begin{equation}
(\sigma _{M}(c)^{\ast }\varpi _{k}^{0})(p)=c^{-l(k+1)}\varpi _{k}^{0}(p)
\label{10e}
\end{equation}

\noindent reproducing (\ref{9-12.5}) above.

Now a contradiction follows from (\ref{10d}) by letting $i\rightarrow \infty 
$ and using the similar argument indicated in (\ref{A10-1}) since $|c|$ $<$ $%
1$ and $l$ $>$ $0$ by assumption. For other cases one may consider $%
\{c^{-n(i)}\circ p\}_{i=1,2,\cdot \cdot \cdot };$ we omit the details. Hence 
$\varpi _{k}\equiv 0$ as asserted by the lemma.

\endproof%

\proof
(\textbf{of Lemma \ref{10.1-5}}) By Lemma \ref{10.1-6} and (\ref{9.3f}), one
has 
\begin{equation}
\omega _{k}^{\prime }(1^{\prime },x,dx)=c(x)^{-k}\omega _{k}(1,x,dx)\text{ \ 
}k\geq 1  \label{wk}
\end{equation}

\noindent proving the first assertion of the lemma. The $\mathbb{C}^{\ast }$%
-action $\tilde{\sigma}_{M,-k}$ on $L^{-k}$ has been indicated in the proof
of Lemma \ref{10.1-6} (see (\ref{9.18-5a})).

\endproof%

Using the similar method as in the previous lemma one proves the analogous
statement:

\begin{lemma}
\label{10.2} (Vanishing result (II)) In (\ref{9.3f}), $\omega _{k}=\omega
_{k}(1,x,dx)\equiv 0,$ $k\geq 1.$
\end{lemma}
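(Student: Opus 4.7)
The plan is to run the proof of Lemma \ref{10.1-6} essentially verbatim, with $\omega_k$ (a global $L^{-k}$-valued $p$-form by Lemma \ref{10.1-5}) playing the role previously played by $\mathfrak{f}_k$ (an $L^{-(k+1)}$-valued $(p-1)$-form). The only bookkeeping change is that every occurrence of the exponent $-l(k+1)$ gets replaced by $-lk$; since $k\geq 1$ and $l\neq 0$ (the nondegeneracy hypothesis built into (\ref{9-0})), we still have $lk\neq 0$, which is the only thing the limiting argument required.

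Concretely, I would first upgrade the local scaling identity (\ref{A10}), namely
\[
(\sigma_M(\lambda)^*\omega_k)(1,x,dx)=\lambda^{-lk}\omega_k(1,x,dx)\quad(\lambda\sim 1),
\]
into a \emph{global} invariance statement for $\omega_k$ regarded as a section of $L^{-k}$. Using (\ref{9.18-5a}) with $q=-k$ and (\ref{9-0}), the action $\tilde{\sigma}_{M,-k}(\lambda)$ scales a local section $s=e^{-k}$ of $L^{-k}$ by $\lambda^{-lk}$. Combining this with the form-side scaling just displayed (with $\lambda$ replaced by $\lambda^{-1}$) produces compensating factors $\lambda^{lk}$ and $\lambda^{-lk}$, yielding
\[
\tilde{\sigma}_{M,-k}(\lambda)\,\omega_k \;=\;\omega_k\qquad(\lambda\sim 1).
\]
Since both sides are manifestly holomorphic in $\lambda\in\mathbb{C}^{*}$ (the right-hand side trivially, the left-hand side because $\sigma_M$ and hence $\tilde{\sigma}_{M,-k}$ depend holomorphically on the parameter), analytic continuation extends the equality to all $\lambda\in\mathbb{C}^{*}$, exactly as in the step (\ref{10a}).

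Next I would repeat the chart-chasing of Lemma \ref{10.1-6}: fix $p\in U^0$, choose $c\in\mathbb{C}^{*}$ with $|c|\neq 1$, use the finiteness of the covering $\{U^a\}$ to extract a subsequence $\{c^{n(i)}\circ_M p\}_i$ lying in a single chart $U^a$, set $c_1=c^{n(1)}$, $c_i=c^{n(i)-n(i-1)}$, $p_i=c_i\circ p_{i-1}\in U^a$, and unroll the global invariance of $\omega_k$ to obtain, in perfect analogy with (\ref{10d}),
\[
(\sigma_M(c^{\,n(i)-n(1)}c_1)^{*}\omega_k^a)(p)\;=\;c^{(n(1)-n(i))\,lk}\,\tau(c_1^{-1})^{-1}\,\omega_k^0(p),
\]
where $\tau$ is the cocycle recording the change of trivialization of $L^{-k}$ (defined via (\ref{10c}) with $k+1$ replaced by $k$). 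Now choose the sign of $n(i)$ against the sign of $lk$ so that $c^{(n(1)-n(i))lk}\to 0$ (as $i\to\infty$); the left-hand side remains bounded in $i$ because $\sigma_M$ extends meromorphically to $\mathbb{CP}^1\times M$, so by Hartogs' extension $\sigma_M(\lambda)^{*}\omega_k^a$ is holomorphic in $\lambda$ at the relevant limit value of $c^{n(i)-n(1)}c_1$ in $\mathbb{CP}^1$ (compare the passage from (\ref{A10}) to (\ref{A10-1})). Hence $\omega_k^0(p)=0$, and since $p\in M$ was arbitrary, $\omega_k\equiv 0$.

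The only place that requires genuine care—and which I would regard as the principal obstacle—is the verification that the form-contribution and the line-bundle-contribution to $\tilde{\sigma}_{M,-k}(\lambda)\omega_k$ do produce exactly cancelling powers of $\lambda$, because the bookkeeping differs subtly from the $k+1$ case of Lemma \ref{10.1-6} (there the scaling on the form side came from $\Phi_\lambda^*(d\zeta)=\lambda^l d\zeta$ acting on $\eta_k$, whereas here it comes directly from (\ref{A10}), with a different exponent). Once this matching is confirmed, the remainder of the argument is a verbatim transcription of Lemma \ref{10.1-6}.
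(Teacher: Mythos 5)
Your proposal is correct and follows exactly the approach the paper takes; the paper's own proof of Lemma \ref{10.2} is in fact just a two-sentence pointer saying that one should compare $\omega_k$ in (\ref{wk}) and (\ref{A10}) with $\varpi_k$ in (\ref{9.3e}) and (\ref{9-12.5}) and then run the argument of Lemma \ref{10.1-6} formally, which is precisely what you spell out, including the crucial observation that the exponent changes from $-l(k+1)$ to $-lk$ and remains nonzero for $k\geq 1$ by the nondegeneracy $l\neq 0$. The cancellation you flag as the principal point of care does go through: by (\ref{A10}) the form side gives $\lambda^{lk}$ after replacing $\lambda$ by $\lambda^{-1}$, while by (\ref{9-0}) and (\ref{9.18-5a}) the $L^{-k}$ side gives $(\lambda^{l})^{-k}=\lambda^{-lk}$, matching the paper's explicit computation for $\mathfrak{f}_k$ with $k+1$ replaced by $k$.
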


\proof
Recall that $\omega _{k}$ is a global $L^{-k}$-valued holomorphic $p$-form
on $M$ by Lemma \ref{10.1-5} (see (\ref{wk})). By comparing $\omega _{k}$ in
(\ref{wk}) with $\varpi _{k}$ in (\ref{9.3e}) and $\omega _{k}$ in (\ref{A10}%
) with $\varpi _{k}$ in (\ref{10e}) or (\ref{9-12.5}), it is perhaps a
tedious matter but not a difficult one to convince oneself that one can
formally run the arguments similar to the vanishing of $\varpi _{k}$ in
Lemma \ref{10.1-6} (which shall not be repeated here), giving the desired
vanishing of this lemma.

\endproof%

\textit{Proof of \textbf{Proposition \ref{propA3}} continued}: In (\ref{A16}%
) for $\tilde{\omega}_{k}(1,x,dx,d\zeta ),$ we obtain $\tilde{\omega}%
_{k}\equiv 0$ ($k\geq 1)$ from the vanishing results Lemmas \ref{10.1-6} and %
\ref{10.2} above. For $k=0,$ $\tilde{\omega}_{0}(1,x,dx,d\zeta )=\omega
_{0}(1,x,dx)$ since $\eta _{0}$ in (\ref{A16}) vanishes by Lemma \ref{10.1-6}%
. By these vanishings, we thus reduce (\ref{A9}) to

\begin{equation}
\pi _{2}^{\ast }\omega _{B}(\zeta \circ 1,x)=\omega _{0}(1,x,dx).
\label{A15}
\end{equation}

That the linear map $\psi $ of (\ref{9-4.5}) is well-defined has been shown
in the first half of the proof. The injectivity of $\psi $ follows from (\ref%
{A15}) since the pull-back $\pi _{2}^{\ast }$ of the surjective map $\pi
_{2} $ is injective$.$ It remains to prove the surjectivity of $\psi .$

Given an element $\omega _{0}$ $\in $ $H_{0}^{0}(M,\Omega _{M}^{p}),$ let $%
\tilde{\omega}_{B}$ $:=$ $\pi _{1}^{\ast }\omega _{0}$ on $P$ where $\pi
_{1} $ $:$ $P$ $\rightarrow $ $M$ is the natural projection, so that locally 
$\tilde{\omega}_{B}(\zeta \circ 1,x)$ $=$ $\omega _{0}(1,x,dx)$ on $P.$ We
claim $\sigma _{d}(\lambda )^{\ast }\tilde{\omega}_{B}$ $=$ $\tilde{\omega}%
_{B}.$ For, $\omega _{0}$ is $\sigma _{M}$-invariant by assumption and this
yields the claim:%
\begin{equation*}
\sigma _{d}(\lambda )^{\ast }\tilde{\omega}_{B}=\sigma _{d}(\lambda )^{\ast
}\pi _{1}^{\ast }\omega _{0}=\pi _{1}^{\ast }\sigma _{M}^{\ast }(\lambda
)\omega _{0}=\pi _{1}^{\ast }\omega _{0}=\tilde{\omega}_{B}
\end{equation*}%
\noindent using $\pi _{1}\circ \sigma _{d}(\lambda )$ $=$ $\sigma
_{M}(\lambda )\circ \pi _{1}$ by (\ref{9-0}). So $\tilde{\omega}_{B}$
descends on $P/\sigma _{d}=B$ to an element $\omega _{B}$ $\in $ $%
H^{0}(B,\Omega _{B}^{p})$ in the sense that $\pi _{2}^{\ast }\omega _{B}$ $=$
$\tilde{\omega}_{B}.$ This, together with $\tilde{\omega}_{B}$ $=$ $\omega
_{0}(1,x,dx)$ as just mentioned, implies that $\omega _{0}$ lies in the
image of $\psi $ in (\ref{9-4.5}) (cf. (\ref{A9}), (\ref{9-1.1}) and (\ref%
{A15})). We have proved the surjectivity of $\psi ,$ and hence the
isomorphism of $\psi .$

\endproof%

The following remarks will be used in the proof of Theorem \ref{BM0}.

\begin{remark}
\label{PLMB} In the notation of Proposition \ref{propA3}, associated with $P$
is the holomorphic line bundle $L_{M}$ (resp. $L_{B}$) over $M$ (resp. $B$)
by%
\begin{equation}
L_{M}:=P\times _{\sigma _{s}}\mathbb{C}=P\times \mathbb{C}/\sim _{s}\text{ }(%
\text{resp. }L_{B}:=P\times _{\sigma _{d}}\mathbb{C}=P\times \mathbb{C}/\sim
_{d}\text{)}  \label{10.30-5}
\end{equation}%
\noindent where $(u,\zeta )\sim _{s}(\sigma _{s}(\lambda )u,\lambda
^{-1}\zeta )$ (resp. $(u,\zeta )\sim _{d}(\sigma _{d}(\lambda )u,\lambda
^{-1}\zeta )$) for $\lambda \in \mathbb{C}^{\ast }.$ By the map $%
u\rightarrow \lbrack (u,1)]$ via $\sigma _{s}$ (resp. $\sigma _{d})$ we have
an embedding $P$ $\cong $ $L_{M}\backslash \{0$-section\} (resp. $%
L_{B}\backslash \{0$-section\}) into $L_{M}$ (resp. $L_{B}).$ We have the
following linear isomorphisms:%
\begin{equation}
H_{m,\sigma _{s}}^{q}(P,\mathcal{O}_{P}\mathcal{)}\simeq
H^{q}(M,(L_{M}^{\ast })^{\otimes m}),\text{ }H_{m,\sigma _{d}}^{q}(P,%
\mathcal{O}_{P}\mathcal{)}\simeq H^{q}(B,(L_{B}^{\ast })^{\otimes m}).
\label{9.17-5}
\end{equation}%
This fact is nothing but a restatement of Proposition \ref{p-gue2-2} adapted
to the present context. Let us just be brief. Let $\Omega _{m,\sigma
_{s}}^{0,q}(P)$ (resp. $\Omega _{m,\sigma _{d}}^{0,q}(P)$) denote $\Omega
_{m}^{0,q}(P)$ (see Definition \ref{2m}) with respect to $\sigma _{s}$
(resp. $\sigma _{d}$). The map from $\eta \otimes (e^{\ast })^{\otimes m}$ $%
\in $ $\Omega ^{0,q}(M,(L_{M}^{\ast })^{\otimes m})$ (resp. $\Omega
^{0,q}(B,(L_{B}^{\ast })^{\otimes m})$) to $\omega $ $\in $ $\Omega
_{m,\sigma _{s}}^{0,q}(P)$ (resp. $\Omega _{m,\sigma _{d}}^{0,q}(P)$)$,$
defined by%
\begin{equation*}
\omega (z,\zeta e)=\eta \otimes (e^{\ast })^{\otimes m}(z,\zeta e)=\eta
(z)(e^{\ast }(\zeta e))^{m}=\eta (z)\zeta ^{m}
\end{equation*}%
induces a linear isomorphism between $H_{m,\sigma _{s}}^{q}(P,\mathcal{O}_{P}%
\mathcal{)}$ and $H^{q}(M,(L_{M}^{\ast })^{\otimes m})$ (resp. $H_{m,\sigma
_{d}}^{q}(P,\mathcal{O}_{P}\mathcal{)}$ and $H^{q}(B,(L_{B}^{\ast
})^{\otimes m})$)$.$
\end{remark}

\begin{remark}
\label{9.3} (Generalization to locally free case, used in Example \ref{E-IF}
too) Suppose $\sigma _{s}$, $\sigma _{d}$ on $P$ are only locally free in
the sense that they satisfy conditions as stated in Theorem \ref%
{main_theorem} except possibly the compactness of the quotient. Then the
isomorphisms in (\ref{9.17-5}) with $L_{M}^{\ast },$ $L_{B}^{\ast }$
considered as orbifold line bundles, remain valid. For, one can use
Proposition \ref{dualLm} via the $\mathbb{C}^{\ast }$-equivariant line
bundle $L_{\Sigma }^{\ast }$ (with $\Sigma $ $=$ $P$ here) and translate the 
$\mathbb{C}^{\ast }$-invariant condition there ($L_{\Sigma }^{\ast }$%
-valued) into an appropriate setting (cf. $L_{\Sigma }^{\ast }/\sigma ,$ $%
\sigma $ $=$ $\sigma _{s},$ $\sigma _{d})$ associated with orbifolds ($%
\Sigma /\sigma $ $=$ $M,$ $B)$ as $\mathbb{C}^{\ast }$-quotients of $\Sigma $
$=$ $P$ here. For the tensor product ($L_{M}^{\ast })^{\otimes m}$ of
orbifold line bundles, see for instance \cite[p.14]{ALR}. Compare the
introductory paragraph in the proof of Theorem \ref{BM0} after Lemma \ref%
{10.4-5} below.
\end{remark}

We turn now to the locally free case. The following lemma is perhaps well
known (at least for the topological setting with compact group action), but
we are unable to find it (i.e. the complex analytic setting with complex
group action) in the literature.

\begin{lemma}
\label{10.4-5} In the notation of Theorem \ref{BM0}, for the projection $\pi
_{1}:P$ $\rightarrow $ $M$ and small open subsets $V$ of $M,$ $\pi
_{1}^{-1}(V)$ $\subset $ $P$ is biholomorphically of the form ($\mathbb{C}%
^{\ast }\times \tilde{U})/\Gamma ,$ where $\Gamma $ $(\subset $ $S^{1}$ $%
\subset $ $\mathbb{C}^{\ast })$ is some finite group and $\tilde{U}$ is some
open domain in $\mathbb{C}^{\dim P-1}.$
\end{lemma}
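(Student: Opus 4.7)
The plan is to use the slice-type coordinates from Proposition 2.1 and the orbifold-chart construction in the proof of Theorem 2.1 to identify an explicit quotient presentation of $\pi_1^{-1}(V)$. Fix $p_0 \in P$ and let $\Gamma := \{g \in S^1 \subset \mathbb{C}^* : \sigma_1(g) p_0 = p_0\}$, which is a finite cyclic subgroup of $S^1$ by local freeness and properness of $\sigma_1$. First I would invoke Proposition 2.1 to pick distinguished holomorphic coordinates $(z,w)$ on an open neighborhood of $p_0$ in $P$ so that $\sigma_1(\rho e^{i\theta})$ acts as $(z,w) \mapsto (z, \rho e^{i\theta} w)$ for small angles, and identify the slice $U \times \{0\} \times \{1\} \subset P$ with an open domain $U \subset \mathbb{C}^{\dim P - 1}$. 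By the proof of Theorem 2.1 (together with Remark 2.11 and Corollary 8.5), the orbifold chart about $\pi_1(p_0)$ is given by $\tilde U := \cup_{g \in \Gamma}\sigma_1(g)U \subset \mathbb{C}^{\dim P - 1}$, on which $\Gamma$ acts by the restriction $\sigma_1(g)|_{\tilde U}$ (which by Proposition 8.2 iii coincides with $\tau(g)$ and maps $\tilde U$ into itself). Set $V := \tilde U/\Gamma$, which is an open neighborhood of $\pi_1(p_0)$ in $M$.

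Next I would define the candidate biholomorphism. Consider the holomorphic map
\begin{equation*}
\Phi:\mathbb{C}^* \times \tilde U \longrightarrow \pi_1^{-1}(V),\qquad \Phi(\lambda,\tilde u) := \sigma_1(\lambda)\tilde u,
\end{equation*}
where $\tilde U$ is viewed as a slice in $P$ via the identification above. Let $\Gamma$ act on $\mathbb{C}^* \times \tilde U$ diagonally by $g\cdot(\lambda,\tilde u) = (\lambda g^{-1}, \sigma_1(g)\tilde u)$; this action is evidently free on the first factor since $\mathbb{C}^*$ is a group. A one-line computation, using $\sigma_1(\lambda g^{-1})\sigma_1(g) = \sigma_1(\lambda)$, shows that $\Phi$ is $\Gamma$-invariant, hence descends to a holomorphic map $\bar\Phi:(\mathbb{C}^* \times \tilde U)/\Gamma \to \pi_1^{-1}(V)$.

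Surjectivity of $\bar\Phi$ onto $\pi_1^{-1}(V)$ is immediate: any $q \in \pi_1^{-1}(V)$ lies on a $\sigma_1$-orbit whose image in $M$ meets $V$, and every such orbit meets the slice $\tilde U$, so $q = \sigma_1(\lambda)\tilde u$ for some $(\lambda,\tilde u)$. The main obstacle is injectivity modulo $\Gamma$. Suppose $\sigma_1(\lambda_1)\tilde u_1 = \sigma_1(\lambda_2)\tilde u_2$ with $\tilde u_i \in \tilde U$; writing $\mu := \lambda_1^{-1}\lambda_2$ we get $\sigma_1(\mu)\tilde u_2 = \tilde u_1$. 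Since the $\mathbb{R}^+$-part of $\sigma_1$ is globally free and acts transversally to the slice, $\mu$ must lie in $S^1$; by the local-freeness estimate (Theorem 2.1, cf.\ the proof of Lemma 7.17 in the last section) and by shrinking $U$, any $\mu\in S^1$ carrying a point of $\tilde U$ back to $\tilde U$ must come from the orbifold group, i.e.\ $\mu \in \Gamma$. Then $(\lambda_2,\tilde u_2) = \mu\cdot(\lambda_1,\tilde u_1)$, giving the required injectivity.

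Finally, since $\Gamma$ acts freely and holomorphically on $\mathbb{C}^* \times \tilde U$, the quotient is a complex manifold, $\bar\Phi$ is a holomorphic bijection, and the differential of $\Phi$ is visibly an isomorphism at $(1,p_0)$ (the $\mathbb{C}^*$-direction accounts for the orbit direction, and the $\tilde U$-direction for the transverse slice); by $\mathbb{C}^*$-equivariance this holds everywhere, so $\bar\Phi$ is a biholomorphism. The hard part is the injectivity step, where the global freeness of $\sigma_1|_{\mathbb{R}^+}$ and the maximality of $\Gamma$ as the isotropy at $p_0$ (used to control $S^1$-return maps to the slice after shrinking $U$) must be combined carefully.
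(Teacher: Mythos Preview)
Your proof is correct and follows essentially the same route as the paper: define $\Phi(\lambda,\tilde u)=\sigma_1(\lambda)\tilde u$, let $\Gamma$ act diagonally, and reduce injectivity of the induced map to showing that any $\mu\in\mathbb{C}^*$ carrying one slice point to another must lie in $\Gamma$. The only difference is that the paper writes the $\Gamma$-action on $\mathbb{C}^*\times\tilde U$ in the more general form $\phi(h)(\xi,x)=(\xi h^{-1}s^{-1},\tau(h)(x))$ with $\tau(h)(x)=shx$ and $s$ near $1$, whereas you take $s=1$ and $\tau(h)=\sigma_1(h)|_{\tilde U}$ directly; this is exactly the simplification the paper itself flags in Remark~\ref{R-2-11} (via Proposition~\ref{L-alk}~$iii)$, where $\delta_2\equiv 0$) and in Remark~\ref{10-9-1}. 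Your injectivity step---$\mu\in S^1$ from the global $\mathbb{R}^+$-freeness together with the $S^1$-invariance of $|w|$ on the slice (Lemma~\ref{A}~$i)$), then $\mu\in\Gamma$ by the slice-return characterization (Proposition~\ref{gk})---matches the paper's argument.
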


\begin{remark}
\label{10-9-1} The arguments in the proof below may be simplified if one
uses Remark \ref{R-2-11}.
\end{remark}

\proof
\textbf{(of Lemma \ref{10.4-5})} We follow the last paragraph in the proof
of Theorem \ref{thm2-1} (with its $\Sigma $ set to be $P$ here). Given $x$ $%
\in $ $\tilde{U}$ and $h$ $\in $ $\Gamma $, where $\tilde{U}$ $\times $ $%
(-\delta ,\delta )$ $\times $ $\mathbb{R}^{+}$ is a sufficiently small
cone-like open neighborhood of a given point $p$ $=$ $(z,0,1)$ $\in $ $P$
with the isotropy group $\Gamma _{p}$ $=:$ $\Gamma $ $\subset $ $S^{1}$ $%
\subset $ $\mathbb{C}^{\ast },$ let $\tau (h)(x)$ $=$ $shx$ for some $s$
(depending on $x)$ near $1$ $\in $ $S^{1},$ and let $h$ act freely on $%
\mathbb{C}^{\ast }\times \tilde{U}$ by $\phi (h)$ $:$ $(\xi ,x)$ $%
\rightarrow $ $(\xi h^{-1}s^{-1},$ $\tau (h)(x))$ with $\tau $ as in (\ref%
{2-11a})$.$ It can be verified that $\phi (h_{1}h_{2})$ $=$ $\phi
(h_{1})\phi (h_{2})$ mainly because $\tau $ satisfies the similar group-law
property$.$

Form the quotient ($\mathbb{C}^{\ast }\times \tilde{U})/\Gamma $ via $\phi $
and define a holomorphic map $\mu $ $:$ ($\mathbb{C}^{\ast }\times \tilde{U}%
)/\Gamma $ $\rightarrow $ $P$ by $[(\xi ,x)]$ $\rightarrow $ $\xi \circ
_{\sigma _{1}}x$ $=:$ $\xi x.$ It comes down to proving that $\mu $ is
injective. This reduces to the assertion that for $p_{1}$ $=$ $(z_{1},0,1),$ 
$p_{2}$ $=$ $(z_{2},0,1)$ in $\tilde{U}$ $\times $ $\{0\}$ $\times $ $\{1\}$
and $\xi $ $\in $ $\mathbb{C}^{\ast }$ such that $\xi p_{1}$ $=$ $p_{2},$ $%
\xi $ must be of the form $\xi $ $=$ $hs$ for some $h$ $\in $ $\Gamma $ and $%
s$ near $1,$ i.e. $\tau (h)(p_{1})$ $=$ $p_{2}.$ This assertion follows from
the facts that in this case $\xi $ $\in $ $S^{1}$ by using Lemma \ref{A} $i)$
with (\ref{1-0}), and in turn that $\xi $ is near the isotropy group $\Gamma 
$ since $\xi p$ $\sim $ $p$ follows from the condition $\xi p_{1}$ $=$ $%
p_{2} $ where $p_{1}$ $\sim $ $p,$ $p_{2}$ $\sim $ $p$ ($\tilde{U}$ small
around $z=z(p)).$

\endproof%

\proof
\textbf{(of Theorem \ref{BM0}) }In the locally free case, from Lemma \ref%
{10.4-5} (and Theorem \ref{thm2-1}) it follows that $P\rightarrow M$ $=$ $%
P/\sigma _{1}$ via $\pi _{1}$ is regarded as the total space of an orbifold
principal $\mathbb{C}^{\ast }$-bundle on the complex orbifold $M.$ (We omit
the detailed check on the transition functions; see Remark \ref{9.3} for
some relevant background material.) Assume that $M$ is compact, or in case
it is noncompact assume that it can be covered by finitely many open subsets 
$V$ as specified in Proposition \ref{propA3}; that is, $\pi ^{-1}(V)$ $\cong 
$ ($\mathbb{C}^{\ast }\times \tilde{U})/\Gamma $ can be considered as local
trivializations in the sense of orbifold bundle. As usual the local sections
in the orbifold sense correspond to the $\Gamma $-invariant local sections
of $\mathbb{C}^{\ast }\times \tilde{U}$ $\rightarrow $ $\tilde{U}.$ With the
preceding assumption on $M,$ take now $B$ = $P/\sigma _{2}.$ Here it does
not concern us whether $B$ is compact or not. Then, after examination of the
preceding proof for the globally free case, this additional condition
imposed by the $\Gamma $-invariance does not obstruct the main lines of the
argument there. We leave the details to the reader.%

An alternative approach is to restrict oneself to the regular part $M_{reg}$
of $M$ where the action $\sigma _{1}$ on $\pi _{1}^{-1}(M_{reg})$ is
globally free (here the action $\sigma _{M}$ on $M$ induced by $\sigma _{2}$
also maps $M_{reg}$ to $M_{reg}$ using the commutativity of $\sigma _{1}$
and $\sigma _{2}$). Now that $M_{reg}$ is necessarily noncompact (as long as 
$M\neq M_{reg}),$ a straightforward use of Proposition \ref{propA3} on $%
M_{reg}$ would require a finite-covering condition on $M_{reg}$ as assumed
in the statement there. Imposing the assumption of this finiteness (e.g. $%
M_{reg}$ being quasi-projective with algebraic principal $\mathbb{C}^{\ast }$%
-bundle $\pi _{1}^{-1}(M_{reg})$ $\rightarrow $ $M_{reg})$ and using the
Hartogs extension on normal analytic spaces (see Theorem \ref{thm2-1}) for
the normality of $M)$ the holomorphic $p$-forms on $M_{reg}$ can
holomorphically extend to $M$ (see Footnote$^{2}$ in the Introduction for $p$%
-forms on orbifolds)$,$ leading to the desired isomorphism map (see the next
paragraph). We omit the details.

In sum, as in (\ref{9-4.5}) of Proposition \ref{propA3} the map $\omega _{B}$
$\rightarrow $ $\omega _{0}$ still gives us a linear isomorphism:%
\begin{equation}
H^{0}(B,\Omega _{B}^{p})\simeq H_{0,\sigma _{M}}^{0}(M,\Omega _{M}^{p}).
\label{9.18-5}
\end{equation}

\noindent We have proved (\ref{BM0-1}) of Theorem \ref{BM0}.

We turn now to (\ref{BM0-2}) of Theorem \ref{BM0}. Here we assume that $B$
is smooth, compact and K\"{a}hler. We have%
\begin{equation}
\sum_{p=0}^{\dim B}(-1)^{p}\dim H^{0}(B,\Omega _{B}^{p})=\sum_{p=0}^{\dim
B}(-1)^{p}\dim H^{p}(B,\mathcal{O}_{B})  \label{A17-1}
\end{equation}

\noindent since $H^{0}(B,\Omega _{B}^{p})$ $\cong $ $H^{p}(B,\mathcal{O}%
_{B}) $ in the K\"{a}hler case. By Remarks \ref{PLMB} and \ref{9.3} (with $%
m=0)$ identifying $H^{\ast }(B,\mathcal{O}_{B})$ and $H_{0,\sigma
_{2}}^{\ast }(P,\mathcal{O}_{P}\mathcal{)}$ so that $H_{0,\sigma _{2}}^{\dim
P}(P,\mathcal{O}_{P}\mathcal{)}$ $=$ $H^{\dim P}(B,\mathcal{O}_{B})$ which
is $0$ since $\dim P$ $=$ $1+\dim B,$ we have:%
\begin{equation}
\sum_{p=0}^{\dim B}(-1)^{p}\dim H^{p}(B,\mathcal{O}_{B})=\sum_{p=0}^{\dim
P}(-1)^{p}\dim H_{0,\sigma _{2}}^{p}(P,\mathcal{O}_{P}\mathcal{)}.
\label{A18}
\end{equation}

\noindent Now (\ref{BM0-2}) of Theorem \ref{BM0} follows from (\ref{9.18-5}%
), (\ref{A17-1}) and (\ref{A18}).

\endproof%

\bigskip

\proof
(\textbf{of Corollary \ref{Cor4-1}}) Theorem \ref{propA2-1} proved in the
preceding section and Theorem \ref{BM0} just proved clearly yield this
corollary provided that Remark \ref{mero} is used to take care of the
meromorphic extension condition needed in Theorem \ref{BM0}.

\endproof%

\end{document}